\newcommand\notion[1]{\textit{#1}\index[notion]{#1}}
\newcommand\wcnotion[2]{\textit{#1}\index[notion]{#2}}
\newcommand\wcnotionsym[3]{\textit{#1}\index[notation]{#2}\index[notion]{#3}}
\newcommand\wcsnotion[3]{\textit{#1}\index[notion]{#2!\textit{#3}}}
\newcommand\snotion[2]{\textit{#1}\index[notion]{#1!\textit{#2}}}
\newcommand\snotionsym[3]{\textit{#1}\index[notion]{#1!\textit{#3}}\index[notation]{#2!\textit{#3}}}
\newcommand\wcsnotionsym[4]{\textit{#1}\index[notation]{#2!\textit{#4}}\index[notion]{#3!\textit{#4}}}
\newcommand\wcnotation[2]{\textit{#1}\index[notation]{#2}}
\newcommand\sym[1]{\index[notation]{#1}}
\newcommand\ssym[2]{\index[notation]{#1!\textit{#2}}}
\newcommand{\exclam}{!}
\newcommand{\Zb}{\mathbb{Z}} 
\newcommand{\Nb}{\mathbb{N}}
\newcommand{\Tb}{\mathbf{T}} 
\newcommand{\Ib}{\mathbb{I}}
\newcommand{\Sb}{\mathbb{S}}
\def\-{\raisebox{.75pt}{-}}
\newcommand{\uvar}{\_}
\newcommand{\Db}{\mathbf{D}} 
\DeclareMathOperator*{\dom}{dom}
\DeclareMathOperator*{\codom}{codom}
\DeclareMathOperator{\tw}{tw}
\newcommand{\Rb}{\mathbf{R}} 
\newcommand{\Lb}{\mathbf{L}} 
\newcommand{\Fb}{\mathbf{F}} 
\DeclareMathOperator{\Gb}{G} 
\DeclareMathOperator{\N}{N}
\DeclareMathOperator{\T}{T}
\DeclareMathOperator{\J}{J}
\DeclareMathOperator*{\W}{W}
\DeclareMathOperator*{\Wm}{tW}
\DeclareMathOperator*{\Wseg}{W_{Seg}}
\DeclareMathOperator*{\Wsat}{W_{Sat}}
\DeclareMathOperator*{\M}{M}
\DeclareMathOperator*{\Mm}{tM}
\DeclareMathOperator*{\Mseg}{M_{Seg}}
\DeclareMathOperator*{\Msat}{M_{Sat}}
\DeclareMathOperator*{\I}{I}
\DeclareMathOperator*{\F}{F}
\DeclareMathOperator*{\CDA}{ADC}
\DeclareMathOperator*{\CDAB}{ADC_B}
\newcommand\omegacat{\omega\mbox{-$\cat$}}
\DeclareMathOperator\Set{Set}
\DeclareMathOperator\Sp{Sp}
\DeclareMathOperator*{\Sq}{Sq}
\DeclareMathOperator{\Hom}{Hom}
\DeclareMathOperator*{\Lfib}{LFib}
\DeclareMathOperator*{\LCartoperator}{LCart}
\newcommand{\LCart}{\mbox{$\LCartoperator$}}
\newcommand{\LCartc}{\mbox{$\LCartoperator$}^c}
\DeclareMathOperator*{\RCart}{RCart}
\newcommand{\uLCart}{\underline{\LCartoperator}}
\newcommand{\uLCartc}{\underline{\LCartoperator}^c}
\DeclareMathOperator{\uHom}{\underline{Hom}}
\DeclareMathOperator{\gHom}{\underline{Hom}_{\ominus}}
\DeclareMathOperator{\Map}{Map}
\DeclareMathOperator{\im}{Im}
\newcommand{\uni}{\underline{\omega}}
\newcommand\w[1]{\widehat{#1}}
\DeclareMathOperator*{\ev}{ev}
\DeclareMathOperator*{\Arr}{Arr}
\newcommand{\Noiun}{\N_{\tiny{(\omega,1)}}}
\newcommand{\colim}{\operatornamewithlimits{colim}}
\newcommand{\laxcolim}{\operatornamewithlimits{laxcolim}}
\newcommand{\laxlim}{\operatornamewithlimits{laxlim}}
\DeclareMathOperator{\Lan}{Lan}
\newcommand\iun{(\infty,1)}
\newcommand\io{(\infty,\omega)}
\newcommand\ioun{(\infty,\omega,1)}
\newcommand\zo{(0,\omega)}
\newcommand{\costar}{\mathbin{\overset{co}{\star}}}
\DeclareMathOperator\cst{cst}
\DeclareMathOperator\Fun{Fun}
\DeclareMathOperator\mcat{cat_m}
\DeclareMathOperator\cat{cat}
\DeclareMathOperator\grd{grd}
\newcommand\ocat{(\infty,\omega)\mbox{-$\cat$}}
\newcommand\ouncat{(\infty,\omega,1)\mbox{-$\cat$}}
\newcommand\ocatm{{(\infty,\omega)\mbox{-$\mcat$}}}
\newcommand\zocatm{(0,\omega)\mbox{-$\mcat$}}
\newcommand\zocat{(0,\omega)\mbox{-$\cat$}}
\DeclareMathOperator\zocatB{\zocat_B}
\newcommand\icat{(\infty,1)\mbox{-$\cat$}}
\newcommand\qcat{\mbox{Q$\cat$}}
\newcommand\ncat[1]{(\infty, #1)\mbox{-$\cat$}}
\newcommand\zncat[1]{(0, #1)\mbox{-$\cat$}}
\newcommand\igrd{\infty\mbox{-$\grd$}}
\newcommand\ngrd[1]{ #1\mbox{-$\grd$}}
\DeclareMathOperator{\OperatorinfiniPsh}{Psh^\infty}
\DeclareMathOperator{\OperatorinfinitPsh}{tPsh^\infty}
\DeclareMathOperator{\OperatorPsh}{Psh}
\DeclareMathOperator{\OperatormPsh}{mPsh}
\DeclareMathOperator{\OperatortPsh}{tPsh}
\newcommand\iPsh[1]{\OperatorinfiniPsh({#1})}
\newcommand\tiPsh[1]{\OperatorinfinitPsh({#1})}
\newcommand\Psh[1]{\OperatorPsh({#1})}
\newcommand\tPsh[1]{\OperatortPsh({#1})}
\newcommand\mPsh[1]{\OperatormPsh({#1})}
\DeclareMathOperator{\Sset}{\Psh{\Delta}}
\DeclareMathOperator{\U}{\mathbf{U}}
\DeclareMathOperator{\V}{\mathbf{V}}
\DeclareMathOperator{\Wcard}{\mathbf{W}}
\DeclareMathOperator{\Z}{\mathbf{Z}}
\newcommand{\ringpartial}{\mathring{\partial}}
\title{\Huge{Categorical Theory of $(\infty,\omega)$-Categories}}
\author{Félix Loubaton\\ Max Planck Institute for Mathematics}
\date{}
\begin{document}

\maketitle
\dominitoc

\cleardoublepage
\phantomsection
\addcontentsline{toc}{chapter}{Contents} 
\tableofcontents

\cleardoublepage
\phantomsection
\addcontentsline{toc}{chapter}{Introduction} 
\chapter*{Introduction}
%
%
%
%
%
%
	
A \textit{category} consists of a set of objects and, for every pair of objects $a,b$, a set of morphisms $\hom_C(a,b)$ equipped with composition operations that satisfy associativity and identity laws.

\textit{$\iun$-Categories} are a homotopical generalization of categories. Intuitively, they are defined similarly to categories, except that the sets of objects and morphisms are replaced by spaces of objects and morphisms, and the associativity and identity laws are no longer satisfied strictly but up to homotopy.

Thanks notably to the work of Joyal (\cite{Joyal_Quasi-categories_and_Kan_complexes}) and Lurie (\cite{Lurie_Htt}), most important concepts and theorems in category theory now have their $\iun$-categorical analogues. These objects have become important tools in various fields of mathematics, including algebraic geometry, algebraic topology, and representation theory.

Another generalization of the notion of a category is obtained by replacing the set of morphisms between two objects $a$ and $b$ with a category of morphisms between $a$ and $b$. These new objects are called \textit{$2$-categories}. By replacing the sets of morphisms  with $(n-1)$-categories of morphisms, one can inductively define the notion of an \textit{$n$-category} for any integer $n$.

The notion of  \textit{$(\infty,n)$-category} is obtained by accomplishing these two generalizations simultaneously. These objects are now found in many fields, particularly in derived algebraic geometry, where the  6-functor formalism is expressed and manipulated using the theory of $(\infty,2)$-categories (\cite{Gaitsgory_A_study_on_DAG}), and in topological quantum fields theory, where $(\infty,n)$-categories are essential to the formulation and proof of the cobordism hypothesis (\cite{Baez_Higher-dimensional_algebra_and_topological_quantum_field_theory}, \cite{Lurie_on_the_classification_of_topological_field_theories}, \cite{Grady_the_geometric_cobordism_hypothesis}, \cite{Calaque_a_note_on_the_category_of_cobordism}).

\vspace{1cm}

The ambition of this work is to contribute to the study of $(\infty,n)$-categories. However, restricting to a given integer $n$ has its limitations.

A first example is given by the fact that $(\infty,n)$-categories organize into an $(\infty,n+1)$-category, and this richer structure plays an important role in the theory of $(\infty,n)$-categories.

A second example comes from the \textit{Gray tensor product}, a fundamental operation for studying \textit{lax} phenomena. Informally, given a concept in category theory (colimit, natural transformation), the lax variant is obtained by replacing all commutative diagrams appearing in the definition of this concept with diagrams that commute up to higher (not necessarily invertible) cells. The Gray tensor product is then the lax variant of the Cartesian product.

\begin{example*}[examples of some Gray tensor products]
We denote by $\Db_1$ the $1$-category generated by the $1$-graph
\[\begin{tikzcd}
	0 & 1
	\arrow[from=1-1, to=1-2]
\end{tikzcd}\]
and  by $\Db_2$ the $2$-category generated by the $2$-graph
\[\begin{tikzcd}
	0 & 1
	\arrow[""{name=0, anchor=center, inner sep=0}, curve={height=-12pt}, from=1-1, to=1-2]
	\arrow[""{name=1, anchor=center, inner sep=0}, curve={height=12pt}, from=1-1, to=1-2]
	\arrow[shorten <=3pt, shorten >=3pt, Rightarrow, from=0, to=1]
\end{tikzcd}\]
The Gray tensor product of $\Db_1$ with itself, denoted by $\Db_1\otimes\Db_1$, is the $2$-category generated by the diagram
\[\begin{tikzcd}
	00 & 01 \\
	10 & 11
	\arrow[from=1-1, to=2-1]
	\arrow[from=2-1, to=2-2]
	\arrow[from=1-1, to=1-2]
	\arrow[from=1-2, to=2-2]
	\arrow[shorten <=4pt, shorten >=4pt, Rightarrow, from=1-2, to=2-1]
\end{tikzcd}\]
The Gray tensor product of $\Db_2$ with $\Db_1$, denoted by $\Db_2\otimes\Db_1$, is the $3$-category generated by the diagram
\[\begin{tikzcd}
	00 & 01 & 00 & 01 \\
	10 & 11 & 10 & 11
	\arrow[from=1-1, to=1-2]
	\arrow[""{name=0, anchor=center, inner sep=0}, from=1-1, to=2-1]
	\arrow[from=2-1, to=2-2]
	\arrow[""{name=1, anchor=center, inner sep=0}, from=1-2, to=2-2]
	\arrow[shorten <=4pt, shorten >=4pt, Rightarrow, from=1-2, to=2-1]
	\arrow[""{name=2, anchor=center, inner sep=0}, from=1-3, to=2-3]
	\arrow[from=1-3, to=1-4]
	\arrow[""{name=3, anchor=center, inner sep=0}, from=1-4, to=2-4]
	\arrow[shorten <=4pt, shorten >=4pt, Rightarrow, from=1-4, to=2-3]
	\arrow[""{name=4, anchor=center, inner sep=0}, curve={height=30pt}, from=1-1, to=2-1]
	\arrow[from=2-3, to=2-4]
	\arrow[""{name=5, anchor=center, inner sep=0}, curve={height=-30pt}, from=1-4, to=2-4]
	\arrow["{ }"', shorten <=6pt, shorten >=6pt, Rightarrow, from=0, to=4]
	\arrow["{ }"', shorten <=6pt, shorten >=6pt, Rightarrow, from=5, to=3]
	\arrow[shift left=0.7, shorten <=6pt, shorten >=8pt, no head, from=1, to=2]
	\arrow[shift right=0.7, shorten <=6pt, shorten >=8pt, no head, from=1, to=2]
	\arrow[shorten <=6pt, shorten >=6pt, from=1, to=2]
\end{tikzcd}\]
\end{example*}
As we can see from the examples, unlike the Cartesian product which takes the maximum dimension of its inputs, the Gray tensor product adds them together and thus corresponds to a functor
\[
\begin{array}{ccc}
\ncat{n}\times \ncat{m}&\to &\ncat{n+m}.\\
(C,D)&\mapsto &C\otimes D
\end{array}
\]
One can handle this by considering a truncated version of the Gray tensor product, but we believe that avoiding such a drastic operation will lead to a more natural understanding of the complex combinatorics it encodes.
To understand higher categories in the most general way possible, we will thus focus directly on \textit{$(\infty,\omega)$-categories}\footnote{This notion is sometimes called $(\infty,\infty)$-categories. See page \pageref{section omeag and infinity} for a justification of the choice of denomination made in this text.}.

\subsubsection*{A brief definition of $(m,n)$-categories for $m\in \Nb\cup\{\infty\}$ and $n\in \Nb\cup\{\omega\}$}

A \textit{globular set} is the data of a diagram of sets
\[\begin{tikzcd}
	{X_0} & {X_1} & {X_2} & {...}
	\arrow["{\pi_0^+}"', shift right=2, from=1-2, to=1-1]
	\arrow["{\pi_1^+}"', shift right=2, from=1-3, to=1-2]
	\arrow["{\pi_3^+}"', shift right=2, from=1-4, to=1-3]
	\arrow["{\pi_0^-}", shift left=2, from=1-2, to=1-1]
	\arrow["{\pi_1^-}", shift left=2, from=1-3, to=1-2]
	\arrow["{\pi_3^-}", shift left=2, from=1-4, to=1-3]
\end{tikzcd}\]
with the relations $\pi_{n-1}^{\epsilon}\pi_{n}^+ = \pi_n^{\epsilon} \pi_{n}^-$ for any $n>0$ and $\epsilon \in \{+,-\}$. We also denote by $\pi^{\epsilon}_k$ the map $X_n \to X_k$ for $k< n$ obtained by composing any string of arrows starting with $\pi^\epsilon_{k}$. An \textit{$\omega$-category} is a globular set $X$ together with
\begin{enumerate}
\item operations of \textit{compositions}
\[ X_n\times_{X_k} X_n\to X_n ~~~(0\leq k<n) \]
which associate to two $n$-cells $(x,y)$ verifying $\pi_k^+(x) = \pi_k^-(y)$, an $n$-cell $x\circ_ky$,
\item as well as \textit{units}
\[X_n\to X_{n+1}\]
which associate to an $n$-cell $x$, an $(n+1)$-cell $\Ib_x$, 
\end{enumerate}
and satisfying some associativity and units axioms which will be expected by any reader familiar with $2$-categories.
A \textit{morphism of $\omega$-categories} is a map of globular sets commuting with both operations. The category of $\omega$-categories is denoted by \textit{$\omegacat$}.

The category $\Theta$ of Joyal is the full subcategory of $\omegacat$ spanned by the \textit{globular sums}. Roughly speaking, globular sums are the $\omega$-categories obtained by "directed" gluing of \textit{globes}. In particular, globes are the easiest example of globular sums. Here are a few examples of globes and globular sums, where we identify the pasting diagrams with the $\omega$-categories they generate.

\begin{example*}[some examples of globes]
\label{exe:exemple 0}
\[\begin{tikzcd}
	{} & \bullet & {} & \bullet & \bullet & {} & \bullet & \bullet & {} & \bullet & \bullet \\
	\\
	{}
	\arrow[from=1-4, to=1-5]
	\arrow[""{name=0, anchor=center, inner sep=0}, curve={height=-24pt}, from=1-7, to=1-8]
	\arrow[""{name=1, anchor=center, inner sep=0}, curve={height=24pt}, from=1-7, to=1-8]
	\arrow["{\Db_0:=}"{description}, draw=none, from=1-1, to=1-2]
	\arrow["{\Db_1:=}"{description}, draw=none, from=1-3, to=1-4]
	\arrow["{\Db_2:=}"{description}, draw=none, from=1-6, to=1-7]
	\arrow[""{name=2, anchor=center, inner sep=0}, curve={height=-24pt}, from=1-10, to=1-11]
	\arrow[""{name=3, anchor=center, inner sep=0}, curve={height=24pt}, from=1-10, to=1-11]
	\arrow["{\Db_3:=}"{description}, draw=none, from=1-9, to=1-10]
	\arrow[shorten <=6pt, shorten >=6pt, Rightarrow, from=0, to=1]
	\arrow[""{name=4, anchor=center, inner sep=0}, shift left=3, shorten <=6pt, shorten >=6pt, Rightarrow, from=2, to=3]
	\arrow[""{name=5, anchor=center, inner sep=0}, shift right=3, shorten <=6pt, shorten >=6pt, Rightarrow, from=2, to=3]
	\arrow["\Rrightarrow"{description}, draw=none, from=5, to=4]
\end{tikzcd}\]
\end{example*}

\begin{example*}[some examples of globular sums]
\label{exe:exemple 1}
\[\begin{tikzcd}
	{} & \bullet & \bullet & \bullet & {} & \bullet & \bullet & {} & \bullet & \bullet & \bullet
	\arrow[""{name=0, anchor=center, inner sep=0}, curve={height=-24pt}, from=1-6, to=1-7]
	\arrow[""{name=1, anchor=center, inner sep=0}, curve={height=24pt}, from=1-6, to=1-7]
	\arrow[""{name=2, anchor=center, inner sep=0}, from=1-6, to=1-7]
	\arrow[from=1-2, to=1-3]
	\arrow[from=1-3, to=1-4]
	\arrow[""{name=3, anchor=center, inner sep=0}, from=1-9, to=1-10]
	\arrow[""{name=4, anchor=center, inner sep=0}, curve={height=24pt}, from=1-9, to=1-10]
	\arrow[""{name=5, anchor=center, inner sep=0}, curve={height=-24pt}, from=1-9, to=1-10]
	\arrow[""{name=6, anchor=center, inner sep=0}, curve={height=-24pt}, from=1-10, to=1-11]
	\arrow[""{name=7, anchor=center, inner sep=0}, curve={height=24pt}, from=1-10, to=1-11]
	\arrow["{a_0:=}"{description}, draw=none, from=1-1, to=1-2]
	\arrow["{a_1:=}"{description}, draw=none, from=1-5, to=1-6]
	\arrow["{a_2:=}"{description}, draw=none, from=1-8, to=1-9]
	\arrow[shorten <=3pt, shorten >=3pt, Rightarrow, from=0, to=2]
	\arrow[shorten <=3pt, shorten >=3pt, Rightarrow, from=2, to=1]
	\arrow[""{name=8, anchor=center, inner sep=0}, shift left=3, shorten <=3pt, shorten >=5pt, Rightarrow, from=3, to=4]
	\arrow[""{name=9, anchor=center, inner sep=0}, shift right=3, shorten <=3pt, shorten >=5pt, Rightarrow, from=3, to=4]
	\arrow[shorten <=3pt, shorten >=3pt, Rightarrow, from=5, to=3]
	\arrow[shorten <=6pt, shorten >=6pt, Rightarrow, from=6, to=7]
	\arrow["\Rrightarrow"{description}, shift left=1, draw=none, from=9, to=8]
\end{tikzcd}\]
\end{example*}
\begin{example*}[some examples of morphisms between globular sums]
\[\begin{tikzcd}[column sep=0.367in]
	\bullet && \bullet && \bullet & \bullet & \bullet & \bullet & \bullet && \bullet & \bullet \\
	\\
	\\
	\bullet & \bullet & \bullet && \bullet & \bullet & \bullet & \bullet & \bullet && \bullet & \bullet
	\arrow[from=4-11, to=4-12]
	\arrow[""{name=0, anchor=center, inner sep=0}, curve={height=-24pt}, from=4-6, to=4-7]
	\arrow[""{name=1, anchor=center, inner sep=0}, curve={height=24pt}, from=4-6, to=4-7]
	\arrow[""{name=2, anchor=center, inner sep=0}, curve={height=24pt}, from=4-8, to=4-9]
	\arrow[""{name=3, anchor=center, inner sep=0}, curve={height=-24pt}, from=4-8, to=4-9]
	\arrow["{f_3}", shorten <=19pt, shorten >=19pt, maps to, from=4-9, to=4-11]
	\arrow[""{name=4, anchor=center, inner sep=0}, from=4-5, to=4-6]
	\arrow["{f_2}", shorten <=19pt, shorten >=19pt, maps to, from=4-3, to=4-5]
	\arrow[""{name=5, anchor=center, inner sep=0}, curve={height=-24pt}, from=1-8, to=1-9]
	\arrow[""{name=6, anchor=center, inner sep=0}, curve={height=24pt}, from=1-8, to=1-9]
	\arrow[""{name=7, anchor=center, inner sep=0}, curve={height=-24pt}, from=1-11, to=1-12]
	\arrow[""{name=8, anchor=center, inner sep=0}, curve={height=24pt}, from=1-11, to=1-12]
	\arrow[""{name=9, anchor=center, inner sep=0}, from=1-11, to=1-12]
	\arrow["{f_1}", shorten <=19pt, shorten >=19pt, maps to, from=1-9, to=1-11]
	\arrow[curve={height=-24pt}, from=4-2, to=4-3]
	\arrow[curve={height=24pt}, from=4-1, to=4-2]
	\arrow[from=1-1, to=1-3]
	\arrow["{f_0}", shorten <=19pt, shorten >=19pt, maps to, from=1-3, to=1-5]
	\arrow[from=1-5, to=1-6]
	\arrow[from=1-6, to=1-7]
	\arrow[""{name=10, anchor=center, inner sep=0}, curve={height=-24pt}, from=4-5, to=4-6]
	\arrow[""{name=11, anchor=center, inner sep=0}, curve={height=24pt}, from=4-5, to=4-6]
	\arrow[shorten <=6pt, shorten >=6pt, Rightarrow, from=0, to=1]
	\arrow[shorten <=6pt, shorten >=6pt, Rightarrow, from=3, to=2]
	\arrow[shorten <=3pt, shorten >=3pt, Rightarrow, from=7, to=9]
	\arrow[shorten <=3pt, shorten >=3pt, Rightarrow, from=9, to=8]
	\arrow[shorten <=6pt, shorten >=6pt, Rightarrow, from=5, to=6]
	\arrow[shorten <=3pt, shorten >=3pt, Rightarrow, from=10, to=4]
	\arrow[""{name=12, anchor=center, inner sep=0}, shift left=3, shorten <=3pt, shorten >=5pt, Rightarrow, from=4, to=11]
	\arrow[""{name=13, anchor=center, inner sep=0}, shift right=3, shorten <=3pt, shorten >=5pt, Rightarrow, from=4, to=11]
	\arrow["\Rrightarrow"{description}, shift left=1, shorten <=2pt, shorten >=2pt, from=13, to=12]
\end{tikzcd}\]
\end{example*}

 For $n\in \Nb\cup \{\omega\}$, we define $\Theta_n$ as the full subcategory of $\Theta$ whose objects correspond to $n$-categories. In particular, $\Theta_0$ is the terminal category, $\Theta_1$ is $\Delta$, and $\Theta_\omega$ is $\Theta$.

Let $\ngrd{m}$ be the $\iun$-category of $m$-groupoids, i.e. $m$-truncated $\infty$-groupoids, and $n\in \Nb\cup \{\omega\}$. A \textit{$(m,n)$-category} is a functor $\Theta_n^{op}\to \ngrd{m}$ that satisfies the \textit{Segal conditions} and \textit{completeness conditions}. We denote by $(m,n)\mbox{-$\cat$}$ the $\iun$-category of $(m,n)$-categories. Since we have not given a precise definition of $\Theta$, we cannot explicitly state these conditions, but we will try to explain their essence.

\textbf{Segal conditions.} As the diagrams given in the examples suggest, every globular sum is a colimit of globes. For instance, $a_2$ is the colimit of the following diagram
\[\begin{tikzcd}
	{\Db_2} \\
	{\Db_1} & {\Db_0} & {\Db_2} \\
	{\Db_3}
	\arrow["{i_1^+}", from=2-1, to=1-1]
	\arrow["{i_1^-}"', from=2-1, to=3-1]
	\arrow["{i_0^+}"', from=2-2, to=2-1]
	\arrow["{i_0^-}", from=2-2, to=2-3]
\end{tikzcd}\]
A functor $X:\Theta_n^{op}\to \ngrd{m}$ satisfies the \textit{Segal conditions} if it sends these colimits to limits. For instance, the presheaf $X$ must send $a_2$ to the limit of the diagram 
\[\begin{tikzcd}
	{X(\Db_2)} \\
	{X(\Db_1)} & {X(\Db_0)} & {X(\Db_2)} \\
	{X(\Db_3)}
	\arrow["{\pi_1^+}"', from=1-1, to=2-1]
	\arrow["{\pi_1^-}", from=3-1, to=2-1]
	\arrow["{\pi_0^+}", from=2-1, to=2-2]
	\arrow["{\pi_0^-}"', from=2-3, to=2-2]
\end{tikzcd}\]
The morphism $X(f_1)$ then sends a pair of $2$-cells sharing a common $1$-boundary to a $2$-cell, and then corresponds to a $1$-composition. Similarly, the morphism $X(f_0)$ is a $0$-composition, and the morphism $X(f_3)$ is a unit.

\textbf{Completeness conditions.} Let $X:\Theta_n^{op}\to \ngrd{m}$ be a functor satisfying the Segal conditions. Given an integer $k\leq n$, we have two notions of equivalence on  $k$-cells of $X$, i.e.  morphisms $1\to X(\Db_k)$. 

The first one, denoted by $\sim$, is the equivalence relation coming from the $m$-groupoid structure of $\Hom(1, X(\Db_k))$.

The second is more categorical and identifies \textit{isomorphic} elements, i.e. $k$-cells $a,b$ such that there exists $(k+1)$-cells $f:a\to b$, $g:b\to a$ and equivalences
$$g\circ_k f\sim id_a~~~~~~~~~\mbox{and}~~~~~~~~~f\circ_k g\sim id_b.$$
 The presheaf $X$ satisfies the completeness condition if these two notions of equivalence coincide. 

\vspace{1cm}
This notation is compatible with the one given in \cite{Rezk_a_cartesian_of_weak_n_categories} when $k\geq n$, but it also allows to give meaning to $(k,n)$-categories for $k<n$.
 For instance, $(0,\omega)$-categories correspond to $\Theta$-sets satisfying the Segal and completeness conditions. The first condition induce an inclusion of $(0,\omega)$-categories into $\omega$-categories and the latter forces isomorphisms to be identities.

\subsubsection*{$\io$-Categories and $(\infty,\infty)$-Categories}
\label{section omeag and infinity}
As stated earlier, this work is devoted to the concept of $\io$-categories.
These objects are sometimes called $(\infty,\infty)$-categories. However, we have chosen the notation "$\io$" for several reasons.

The first reason is that it emphasizes that these two infinities represent different concepts: the first concerns the infinity of the homotopical dimension, and the second the infinity of the categorical dimension.

The second reason is that we aim to generalize higher strict categories, which are usually called $\omega$-categories. It thus seemed appropriate that the symbol $\omega$ should appear in our notion.

The third reason is that some authors seem to want to remove the prefix "$\infty$" from homotopical concepts. In this paradigm, $(\infty,\infty)$-categories would be called $\infty$-categories, which could be confusing.

Finally, the notion of $(\infty,\infty)$-categories is sometimes considered ambiguous.
Indeed, Schommer-Pries and Rezk have independently argued (\cite{134099}) that there should be more than one notion of $(\infty,\infty)$-categories. The one we use here is commonly referred to as \textit{the inductive one}, in the sense that $\ocat$ is identified with the limit of the sequence:
$$\ncat{0}\xleftarrow{\tau_0} \ncat{1}\xleftarrow{}... \leftarrow\ncat{n} \xleftarrow{\tau_{n}}\ncat{n+1}\xleftarrow{}...$$
where the functors $\tau_n$ "forget" the cells of dimension $n$. For a more detailed discussion in the (semi-)strict case, we refer to \cite{Henry_an_inductive_model_structure_for_infini_categories}.

\subsubsection*{Language of $\iun$-categories}
In this text, we will freely use the language of $\iun$-categories. As there are currently several directions for the formalization of the language of $\iun$-categories (\cite{Riehl_element_of_infini_categories}, \cite{Riehl_A_type_theory_for_synthetic_-categories}, \cite{North_Towards_a_directed_homotopy_type_theory}, \cite{Cisinski-Univalent-Directed-Type-theory}), talking about "the" language of $\iun$-categories may be confusing.

In such case, the reader may consider that we are working within the quasi-category $\qcat$ of $\Tb$-small quasi-categories for $\Tb$ a Grothendieck universe. This quasi-category may be obtained either using the coherent nerve as described in \cite[chapter 3]{Lurie_Htt}, or by considering it as the codomain of the universal cocartesian fibration with $\Tb$-small fibers as done in \cite{Cisinski_The_universal_coCartesian_fibration}. In both cases, the straightening/unstraightening correspondence provides a morphism
$$\N(\Sset_{\Tb})\to \qcat$$
that exhibits $\qcat$ as the quasi-categorical localization of $\N(\Sset_{\Tb})$ with respect to the weak equivalences of the Joyal's model structure (\cite[theorem 8.13]{Cisinski_The_universal_coCartesian_fibration}). 

The constructions we use to build new objects - (co)limits of functor between quasi-categories, quasi-categories of functor, localization of quasi-categories, sub maximal Kan complex, full sub quasi-category, adjunction, left and right Kan extension, Yoneda lemma - are well documented in the Joyal model structure (see \cite{Lurie_Htt} or \cite{Cisinski_Higher_categories_and_homotopical_algebra}), and therefore have direct incarnation in the quasi-category $\qcat$.


\addcontentsline{toc}{subsection}{Summary of results} 
\subsection*{Summary of results}
\paragraph{Chapter 1.}

The first section is devoted to the definition of $\zo$-categories and of the category $\Theta$ of Joyal. We also show that the category $\Theta$ presents the category of $\zo$-categories, and we also exhibit an other presentation of this category (corollary \ref{cor:changing theta}).

The second section begins with a review of Steiner theory, which is an extremely useful tool for providing concise and computational descriptions of $\zo$-categories. Following Ara and Maltsiniotis, we employ this theory to define the Gray tensor product, denoted by $\otimes$, in $\zo$-categories. We then introduce the Gray operations, starting with the Gray cylinder $\uvar\otimes[1]$ which is the Gray tensor product with the directed interval $[1]:=0\to 1$. Then, we have the \textit{Gray cone} and the \textit{Gray $\circ$-cone}, denoted by $\uvar\star 1$ and $1\costar \uvar$, that send an $\zo$-category $C$ onto the following pushouts:
\[\begin{tikzcd}[ampersand replacement=\&]
	{C\otimes\{1\}} \& {C\otimes[1]} \& {C\otimes\{0\}} \& {C\otimes[1]} \\
	1 \& {C\star 1} \& 1 \& {1\costar C}
	\arrow[from=1-1, to=1-2]
	\arrow[from=1-1, to=2-1]
	\arrow[from=1-2, to=2-2]
	\arrow[from=1-3, to=1-4]
	\arrow[from=1-3, to=2-3]
	\arrow[from=1-4, to=2-4]
	\arrow[from=2-1, to=2-2]
	\arrow["\lrcorner"{anchor=center, pos=0.125, rotate=180}, draw=none, from=2-2, to=1-1]
	\arrow[from=2-3, to=2-4]
	\arrow["\lrcorner"{anchor=center, pos=0.125, rotate=180}, draw=none, from=2-4, to=1-3]
\end{tikzcd}\]

We also present a formula that illustrates the interaction between the suspension and the Gray cylinder. As this formula plays a crucial role in this text, we provide its intuition at this stage.

 If $A$ is any $\zo$-category, the \textit{suspension} of $A$, denoted by $[A,1]$, is the $\zo$-category having two objects - denoted by $0$ and $1$- and such that 
$$\Hom_{[A,1]}(0,1) := A,~~~\Hom_{[A,1]}(1,0) := \emptyset,~~~\Hom_{[A,1]}(0,0)=\Hom_{[A,1]}(1,1):=\{id\}.$$
We also define $[1]\vee[A,1]$ as the gluing of $[1]$ and $[A,1]$ along the $0$-target of $[1]$ and the $0$-source of $[A,1]$. We define similarly $[A,1]\vee[1]$.
These two objects come along with \textit{whiskerings}:
$$\triangledown:[A,1]\to [1]\vee [A,1] ~~~~\mbox{and}~~~~ \triangledown:[A,1] \to [A,1]\vee [1]$$ 
that preserve the extremal points.

The $\zo$-category $[1]\otimes [1]$ is induced by the diagram:
\[\begin{tikzcd}
	00 & 01 \\
	10 & 11
	\arrow[from=1-1, to=2-1]
	\arrow[from=2-1, to=2-2]
	\arrow[from=1-1, to=1-2]
	\arrow[from=1-2, to=2-2]
	\arrow[shorten <=4pt, shorten >=4pt, Rightarrow, from=1-2, to=2-1]
\end{tikzcd}\]
and is then equal to the colimit of the following diagram: 
$$[1]\vee [1]\xleftarrow{\triangledown} [1]\hookrightarrow [[1],1]\hookleftarrow[1]\xrightarrow{\triangledown } [1]\vee [1].$$
The $\zo$-category $ [[1],1]\otimes [1]$ is induced by the diagram:
\[\begin{tikzcd}
	00 & 01 & 00 & 01 \\
	10 & 11 & 10 & 11
	\arrow[from=1-1, to=1-2]
	\arrow[""{name=0, anchor=center, inner sep=0}, from=1-1, to=2-1]
	\arrow[from=2-1, to=2-2]
	\arrow[""{name=1, anchor=center, inner sep=0}, from=1-2, to=2-2]
	\arrow[shorten <=4pt, shorten >=4pt, Rightarrow, from=1-2, to=2-1]
	\arrow[""{name=2, anchor=center, inner sep=0}, from=1-3, to=2-3]
	\arrow[from=1-3, to=1-4]
	\arrow[""{name=3, anchor=center, inner sep=0}, from=1-4, to=2-4]
	\arrow[shorten <=4pt, shorten >=4pt, Rightarrow, from=1-4, to=2-3]
	\arrow[""{name=4, anchor=center, inner sep=0}, curve={height=30pt}, from=1-1, to=2-1]
	\arrow[from=2-3, to=2-4]
	\arrow[""{name=5, anchor=center, inner sep=0}, curve={height=-30pt}, from=1-4, to=2-4]
	\arrow["{ }"', shorten <=6pt, shorten >=6pt, Rightarrow, from=0, to=4]
	\arrow["{ }"', shorten <=6pt, shorten >=6pt, Rightarrow, from=5, to=3]
	\arrow[shift left=0.7, shorten <=6pt, shorten >=8pt, no head, from=1, to=2]
	\arrow[shift right=0.7, shorten <=6pt, shorten >=8pt, no head, from=1, to=2]
	\arrow[shorten <=6pt, shorten >=6pt, from=1, to=2]
\end{tikzcd}\]
and is then equal to the colimit of the following diagram: 
 $$[1]\vee[[1],1]\xleftarrow{\triangledown} [[1]\otimes\{0\},1]\hookrightarrow[[1]\otimes[1],1]\hookleftarrow [[1]\otimes\{1\},1]\xrightarrow{\triangledown}[[1],1]\vee[1]$$
We prove a formula that combines these two examples:

\begin{iprop}[\ref{prop:appendice formula for otimes}]
In the category of $\zo$-categories, there exists an isomorphism, natural in $A$, between $[A,1]\otimes[1]$ and the colimit of the following diagram
\[\begin{tikzcd}
	{[1]\vee[A,1]} & {[A\otimes\{0\},1]} & { [A\otimes[1],1]} & {[A\otimes\{1\},1]} & {[A,1]\vee[1]}
	\arrow["\triangledown"', from=1-2, to=1-1]
	\arrow[from=1-4, to=1-3]
	\arrow["\triangledown", from=1-4, to=1-5]
	\arrow[from=1-2, to=1-3]
\end{tikzcd}\]
\end{iprop} 

We also provide similar formulas for the {Gray cone}, the {Gray $\circ$-cone} and the Gray op-cone.
\begin{iprop}[\ref{prop:appendice formula for star}]
There is a natural identification between $1\costar [A,1]$ and the colimit of the following diagram
\[\begin{tikzcd}
	{[1]\vee[A,1]} & {[A,1]} & { [A\star 1,1]}
	\arrow["\triangledown"', from=1-2, to=1-1]
	\arrow[from=1-2, to=1-3]
\end{tikzcd}\]
There is a natural identification between $[A,1]\star 1$ and the colimit of the following diagram
\[\begin{tikzcd}
	{ [1\costar A,1]} & {[A,1]} & {[A,1]\vee[1]}
	\arrow[from=1-2, to=1-1]
	\arrow["\triangledown", from=1-2, to=1-3]
\end{tikzcd}\]
\end{iprop}

We conclude this chapter by studying the functor
\[ \uvar\otimes \uvar : \Psh{\Delta} \times \Psh{\Delta} \to \Psh{\Theta} \]
which is the left Kan extension of the functor
\[ \Delta \times \Delta \xrightarrow{\uvar\otimes \uvar} \zocat \xrightarrow{\iota} \Psh{\Theta} \]
where \(\iota\) is the inclusion of \(\zo\)-categories into presheaves on \(\Theta\). In particular, we demonstrate theorem \ref{theo:otimes presserves W}, which will be of crucial importance for defining the Gray tensor product for \(\io\)-categories.

\paragraph{Chapter 2.}

The first section is a recollection of results on presentable $\iun$-categories. In particular, we provide a very useful technical lemma that gives conditions for calculating colimits in $\infty$-presheaves using (strict) presheaves. We also present results on factorization systems and the localizations they induce. Finally, we conclude by giving some results on monomorphisms in $\iun$-categories.

In the second section, we define $\io$-categories and give some basic properties. 
We also define and study \textit{discrete Conduché functor}, which are morphisms having the unique right lifting property against 
units $\Ib_{n+1}:\Db_{n+1}\to \Db_n$ for any integer $n$, and against compositions $\triangledown_{k,n}:\Db_n\to \Db_n\coprod_{\Db_k}\Db_n$ for any pair of integers $k\leq n$. This notion was originally defined and studied in the context of strict $\omega$-category by Guetta in \cite{Guetta_conduche}. We then demonstrate the following result:
\begin{itheorem}[\ref{theo:pullback along conduche preserves colimits}]
Let $f:C\to D$ be a discrete Conduché functor. The pullback functor $f^*:\ocat_{/D}\to \ocat_{/C}$ preserves colimits.
\end{itheorem}

In the third section, using the Gray tensor product for $\zo$-categories, we construct a colimit-preserving functor
\[ \uvar\otimes\uvar:\ocat\times \icat\to \ocat \]
again called the Gray tensor product.
To be able to define lax phenomena, in particular lax colimits and limits, we will need to extend the functor $\uvar\otimes\uvar$ to \(\ocat\times \ocat\). Although one might at first be tempted to use the "general" Gray tensor product, this is not the operation that will be used subsequently, particularly for stating the universal property of lax colimits and limits. We then introduce a cocontinuous bifunctor:
\[ \ominus: \ocat\times \ocat\to \ocat \]
called the \textit{enhanced Gray tensor product}. This new operation can be seen as a generalization of the Gray product with $\iun$-categories.
 
 We then introduce the Gray operations, starting with the Gray cylinder $\uvar\otimes[1]$ which is the Gray tensor product with the directed interval $[1]:=0\to 1$. Then, we have the \textit{Gray cone} and the \textit{Gray $\circ$-cone}, denoted by $\uvar\star 1$ and $1\costar \uvar$, that send an $\io$-category $C$ onto the following pushouts:
\[\begin{tikzcd}[cramped]
	{C\otimes\{1\}} & {C\otimes[1]} & {C\otimes\{0\}} & {C\otimes[1]} \\
	1 & {C\star 1} & 1 & {1\costar C}
	\arrow[from=1-1, to=1-2]
	\arrow[from=1-1, to=2-1]
	\arrow[from=1-2, to=2-2]
	\arrow[from=1-3, to=1-4]
	\arrow[from=1-3, to=2-3]
	\arrow[from=1-4, to=2-4]
	\arrow[from=2-1, to=2-2]
	\arrow["\lrcorner"{anchor=center, pos=0.125, rotate=180}, draw=none, from=2-2, to=1-1]
	\arrow[from=2-3, to=2-4]
	\arrow["\lrcorner"{anchor=center, pos=0.125, rotate=180}, draw=none, from=2-4, to=1-3]
\end{tikzcd}\]

We then provide several formulas expressing the relationship between the Gray operations and the suspension $[\uvar,1]:\ocat\to \ocat$, identical to those in the strict case.
\begin{iprop}[\ref{prop:eq for cylinder}]
There is a natural identification between $[C,1]\otimes [1]$ and the colimit of the diagram
\begin{equation}
\begin{tikzcd}
	{[1]\vee [ C,1]} & {[C\otimes\{0\},1]} & {[C\otimes [1],1]} & {[C\otimes\{1\},1]} & {[C,1]\vee[1]}
	\arrow[from=1-2, to=1-1]
	\arrow[from=1-2, to=1-3]
	\arrow[from=1-4, to=1-3]
	\arrow[from=1-4, to=1-5]
\end{tikzcd}
\end{equation}
\end{iprop}

\begin{iprop}[\ref{prop:formula for the ominus}]
There is a  natural identification between $[C,1]\ominus[b,1]$ and the colimit of the following diagram
\begin{equation}
\begin{tikzcd}[column sep = 0.3cm]
	{[b,1]\vee[C,1]} & {[C\otimes\{0\}\times b,1]} & {[(C\otimes[1])\times b,1]} & {[C\otimes\{1\}\times b,1]} & {[C,1]\vee[b,1]}
	\arrow[from=1-2, to=1-3]
	\arrow[from=1-4, to=1-3]
	\arrow[from=1-4, to=1-5]
	\arrow[from=1-2, to=1-1]
\end{tikzcd}
\end{equation}
\end{iprop}

\begin{iprop}[\ref{prop:eq for Gray cone}]
There is a natural identification between $1\costar [C,1]$ and the colimit of the diagram
\begin{equation}
\begin{tikzcd}
	{[1]\vee [C,1]} & {[C,1]} & {[C\star 1,1]}
	\arrow[from=1-2, to=1-3]
	\arrow[from=1-2, to=1-1]
\end{tikzcd}
\end{equation}
There is a natural identification between $[C,1]\star 1$ and the colimit of the diagram
\begin{equation}
\begin{tikzcd}
	{[1\costar C,1]} & {[C,1]} & {[C,1]\vee[1]}
	\arrow[from=1-2, to=1-3]
	\arrow[from=1-2, to=1-1]
\end{tikzcd}
\end{equation}
\end{iprop}

 We conclude this chapter by proving results of strictification. In particular, we demonstrate the following theorem:
\begin{itheorem}[\ref{theo:strict stuff are pushout}]
Let $C\to D$ and $C\to E$ be two morphisms between strict $\io$-categories. The $\io$-categories
$D\coprod_{C}C\star 1$,  $1\costar C\coprod_CD$, and $D\coprod_{C\otimes\{0\}}C\otimes[1]\coprod_{C\otimes \{1\}}E$ are strict. In particular, $C\star 1$, $1\costar C$, and $C\otimes[n]$ for any integer $n$, are strict.
\end{itheorem}
In the process, we will demonstrate another fundamental equation combining $C\otimes[1]$, $1\costar C$, $C\star 1$, and $[C,1]$.
\begin{itheorem}[\ref{theo:formula between pullback of slice and tensor}]
Let $C$ be an $\io$-category. The five squares appearing in the following canonical diagram are both cartesian and cocartesian:
\[\begin{tikzcd}
	& {C\otimes\{0\}} & 1 \\
	{C\otimes\{1\}} & {C\otimes[1]} & {C\star 1} \\
	1 & {1\costar C} & {[C,1]}
	\arrow[from=2-3, to=3-3]
	\arrow[from=3-2, to=3-3]
	\arrow[from=2-2, to=3-2]
	\arrow[from=2-2, to=2-3]
	\arrow[from=1-2, to=1-3]
	\arrow[from=1-3, to=2-3]
	\arrow[from=1-2, to=2-2]
	\arrow[from=2-1, to=2-2]
	\arrow[from=3-1, to=3-2]
	\arrow[from=2-1, to=3-1]
\end{tikzcd}\]
where $[C,1]$ is the \textit{suspension of $C$}.
\end{itheorem}

\paragraph{Chapter 3.}

This chapter is dedicated to the study of \textit{marked $\io$-categories}, which are pairs $(C,tC)$, where $C$ is an $\io$-category and $tC:=(tC_n)_{n>0}$ is a sequence of full sub $\infty$-groupoids of $C_n$ that include identities and are stable under composition and whiskering with (possibly unmarked) cells of lower dimensions. There are two canonical ways to mark an $\io$-category $C$. In the first, denoted by $C^\flat$, we mark as little as possible. In the second, denoted by $C^\sharp$, we mark everything.

The first section of the chapter defines these objects and establishes analogs of many results on $\io$-categories to this new framework. In particular, the \textit{marked Gray cylinder} $\uvar\otimes [1]^\sharp$ is defined. If $A$ is an $\io$-category, the underlying $\io$-category of $A^\sharp\otimes[1]^\sharp$ is $A\times [1]$, and the underlying $\io$-category of $A^\flat\otimes[1]^\sharp$ is $A\otimes[1]$. By varying the marking, and at the level of underlying $\io$-categories, we "continuously" move from the cartesian product with the directed interval to the Gray tensor product with the directed interval.

The motivation for introducing markings comes from the notion of \textit{left (and right) cartesian fibrations}. A left cartesian fibration is a morphism between marked $\io$-categories such that only the marked cells of the codomain have cartesian lifting, and the marked cells of the domain correspond exactly to such cartesian lifting. For example, a left cartesian fibration $X\to A^\sharp$ is just a "usual" left cartesian fibration where we have marked the cartesian lifts of the domain, and every morphism $C^\flat \to D^\flat$ is a left cartesian fibration.

After defining and enumerating the stability properties enjoyed by this class of left (and right) cartesian fibration, we give several characterizations of this notion in theorem \ref{theo:other characterisation of left caresian fibration}. 

The more general subclass of left cartesian fibrations that still behaves well is the class of \textit{classified left cartesian fibrations}. 
This corresponds to left cartesian fibrations $X\to A$ such that there exists a cartesian square:
\[\begin{tikzcd}
	X & Y \\
	A & {A^\sharp}
	\arrow[from=1-1, to=2-1]
	\arrow[from=2-1, to=2-2]
	\arrow[from=1-1, to=1-2]
	\arrow[from=1-2, to=2-2]
	\arrow["\lrcorner"{anchor=center, pos=0.125}, draw=none, from=1-1, to=2-2]
\end{tikzcd}\]
 where the right vertical morphism is a left cartesian fibration and $A^\sharp$ is obtained from $A$ by marking all cells. In the second section, we prove the following fundamental result:

\begin{itheorem}[\ref{theo:pullback along un marked cartesian fibration}]
Let $p:X\to A$ be a classified left cartesian fibration. Then the functor $p^*:\ocatm_{/A}\to \ocatm_{/X}$ preserves colimits.
\end{itheorem}

The third subsection is devoted to the proof of the following theorem
\begin{itheorem}[\ref{theo:left cart stable by colimit}]
Let $A$ be an $\io$-category and $F:I\to \ocatm_{/A^\sharp}$ be a diagram that is pointwise a left cartesian fibration. The colimit 
$\colim_IF$, computed in $\ocatm_{/A^\sharp}$, is a left cartesian fibration over $A^\sharp$.
\end{itheorem}

In the fourth subsection we study \textit{smooth} and \textit{proper} morphisms and we obtain the following expected result:
\begin{iprop}[\ref{prop:quillent theorem A}]
For a morphism $X\to A^\sharp$, and an object $a$ of $A$, we denote by $X_{/a}$ the marked $\io$-category fitting in the following cartesian squares. 
\[\begin{tikzcd}[cramped]
	{X_{/a}} & X \\
	{A^\sharp_{/a}} & {A^\sharp}
	\arrow[from=1-1, to=1-2]
	\arrow[from=1-1, to=2-1]
	\arrow["\lrcorner"{anchor=center, pos=0.125}, draw=none, from=1-1, to=2-2]
	\arrow[from=1-2, to=2-2]
	\arrow[from=2-1, to=2-2]
\end{tikzcd}\]
We denote by $\bot:\ocatm\to \ocat$ the functor sending a marked $\io$-category to its localization by marked cells.
\begin{enumerate}
\item Let $E$, $F$ be two elements of $\ocatm_{/A^\sharp}$ corresponding to morphisms $X\to A^\sharp$, $Y\to A^\sharp$, and
 $\phi:E\to F$ a morphism between them. We denote by $\Fb E$ and $\Fb F$ the left cartesian fibrant replacement of $E$ and $F$. 
 
The induced morphism $\Fb\phi:\Fb E\to \Fb F$ is an equivalence if and only if for any object $a$ of $A$, the induced morphism 
$$\bot X_{/a}\to \bot Y_{/a}$$ 
is an equivalence of $\io$-categories.
\item A morphism $X\to A^\sharp$ is initial if and only if for any object $a$ of $A$, $\bot X_{/a}$ is the terminal $\io$-category.
\end{enumerate}
\end{iprop}

Finally, in the last subsection, for a marked $\io$-category $I$, we define and study a (huge) $\io$-category $\uLCartc(I)$ that has classified left cartesian fibrations as objects and morphisms between classified left cartesian fibrations as arrows.

\paragraph{Chapter 4.}

This chapter aims to establish generalizations of the fundamental categorical constructions to the $\io$ case. In this new theory, the Gray product plays an essential role. Firstly, it allows the definition of the notion of \textit{lax transformation}:

\begin{idefi}
Let $f, g: A \to B$ be two morphisms between $\io$-categories. A \textit{lax transformation} between $f$ and $g$ is given by a morphism
\[ \psi: A \otimes [1] \to B, \]
whose restriction to $A \otimes \{0\} \sim A$ is equivalent to $f$, and whose restriction to $A \otimes \{1\} \sim A$ is equivalent to $g$.

We can then show that a lax transformation corresponds to the following data:
\begin{enumerate}
\item[$-$] for every object $a$ in $A$, a morphism $f(a) \to g(a)$,
\item[$-$] for every $1$-cell $a \to b$, a $2$-cell in $B$ fitting into the following diagram:
\[
\begin{tikzcd}
	{f(a)} & {g(a)} \\
	{f(b)} & {g(b)}
	\arrow[from=2-1, to=2-2]
	\arrow[from=1-2, to=2-2]
	\arrow[from=1-1, to=2-1]
	\arrow[from=1-1, to=1-2]
	\arrow[shorten <=6pt, shorten >=6pt, Rightarrow, from=1-2, to=2-1]
\end{tikzcd}
\]
\item[$-$] for every $n$-cell in $A$, an $(n+1)$-cell in $B$ fitting into a more complex version of the above diagram,
\item[$-$] multiple coherences that all these cells satisfy.
\end{enumerate} 
The usefulness of the Gray product lies in the fact that it compactly encodes all these data and coherences.

\end{idefi}

The notion of lax transformation allows us to state the Grothendieck Lax construction.
\begin{itheorem}[\ref{theo:lcartc et ghom}]
\label{theo:lax gr}
Let $\uni$ be the $\io$-category of small $\io$-categories, and $A$ an $\io$-category. There is an equivalence
\[ \int_A: \gHom(A, \uni) \sim \uLCart^c(A) \]
where $\gHom(A, \uni)$ is the $\io$-category of morphisms from $A$ to $\uni$, with $1$-cells being the lax transformations $A \otimes [1] \to \uni$, and $\uLCart^c(A)$ is the $\io$-category of left cartesian fibrations over $A$, with $1$-cells being morphisms that do not necessarily preserve cartesian liftings.
\end{itheorem}

We also obtain a very precise construction of the functor $\int_A$. Given a functor $f: A \to \uni$, the left cartesian fibration $\int_A f$ is a colimit (calculated in $\ocatm_{/A}$) of a simplicial object whose value at $n$ is of the form
\[ \coprod_{x_0, \ldots, x_n: A_0} X(x_0) \times \hom_A(x_0, \ldots, x_n) \times A_{x_n/} \to A, \]
where $A_{x/}$ is the \textit{lax slice} of $A$ over $x$.
This formula is similar to the one given in \cite{Gepner_Lax_colimits_and_free_fibration} for $\iun$-categories, and to the one given in \cite{Warren_the_strict_omega_groupoid_interpretation_of_type_theory} for strict $\omega$-categories.

The result we provide is actually stronger than previously stated, as it allows us to choose "to what extent" the transformations between functors $A \to \uni$ are lax, which induces "to what extent" the morphisms between the fibrations preserve cartesian liftings. The result we have presented corresponds to the case where the transformations are "totally lax". Applying it to the case where the transformations between functors are "not lax at all" - that is, are natural transformations - we obtain the following corollary:

\begin{icor}[\ref{cor:lcar et hom}]
\label{cor:grd}
Let $A$ be an $\io$-category. There is an equivalence
\[ \uHom(A, \uni) \sim \uLCart(A) \]
where $\uHom(A, \uni)$ is the $\io$-category of morphisms from $A$ to $\uni$, with $1$-cells being the natural transformations $A \times [1] \to \uni$, and $\uLCart(A)$ is the $\io$-category of left cartesian fibrations over $A$, with $1$-cells being morphisms that preserve cartesian liftings.
\end{icor}

In the $(\infty, n)$-categorical case, the equivalence between  $\uHom(A, \uni)$ and $\uLCart(A)$ given in Corollary \ref{cor:lcar et hom} was already proven by Nuiten in \cite{Nuiten_on_straightening_for_segal_spaces} and by Rasekh in \cite{Rasekh_yoneda_lemma_for_simplicial_spaces}. In the $(\infty, 1)$-case, the equivalence  between  $\gHom(A, \uni)$ and $\uLCartc(A)$ was already proven by Haugseng-Hebestreit-Linskens-Nuiten in \cite{haugseng2023lax}.

\vspace{1cm}

Given a locally small $\io$-category $C$, we construct the Yoneda embedding $y: C \to \widehat{C}$ where $\widehat{C} := \uHom(C^t, \uni)$. We then prove the Yoneda lemma:

\begin{itheorem}[\ref{theo:Yoneda lemma}]
Let $C$ be a locally $\U$-small $\io$-category. 
The Yoneda embedding $C \to \widehat{C}$ is fully faithful. Furthermore,
there is an equivalence between the functor
\[ \hom_{\w{C}}(y_{\uvar}, \uvar): C^t \times \w{C} \to \uni \]
and the functor 
\[ \ev: C^t \times \w{C} \to \uni. \]

Given an object $c$ of $C$, the induced equivalence on fibers:
\[ \hom_{\widehat{C}}(y_c, y_c) \sim \hom_C(c, c) \]
sends $\{id_{y_c}\}$ to $\{id_c\}$.
\end{itheorem}

Having the Yoneda lemma at our disposal with all the correct functorialities is an extremely powerful tool for developing the theory of $\io$-categories, as it encodes many and complex coherences.

In the $(\infty,n)$-categorical context, the  equivalence, non-functorial in $c$, between the functors $\hom(y_c, \uvar): \w{C} \to \uni$ and $\ev(c, \uvar): \w{C} \to \uni$ for any object $c$ of $C$ is demonstrated in \cite{Rasekh_yoneda_lemma_for_simplicial_spaces}, \cite{Hinich_colimit_in_enriched_infini_categories}, and \cite{Heine_an_equivalence_between_enricherd_infini_categorories_and_categories_with_weak_action}.

\vspace{1cm}

We then define the notion of a \textit{lax colimit}:
\begin{idefi}
Let $f: I \to C$ be a morphism between $\io$-categories, and $tI$ a marking on $I$.

A \textit{lax colimit} for $f$ relative to $(tI_n)_{n>0}$ is the universal data of
\begin{enumerate}
\item[$-$] an object $\laxcolim_I F$,
\item[$-$] for every $1$-cell $i: a \to b$ in $I$, a diagram
\[\begin{tikzcd}
	{} & {F(b)} \\
	{F(a)} & {\laxcolim_I F}
	\arrow["{F(i)}", curve={height=-30pt}, from=2-1, to=1-2]
	\arrow[from=2-1, to=2-2]
	\arrow[shorten <=8pt, shorten >=8pt, Rightarrow, from=1-2, to=2-1]
	\arrow[draw=none, from=1-1, to=2-1]
	\arrow[from=1-2, to=2-2]
\end{tikzcd}\]
where the $2$-cell present is an equivalence if $i$ is in $tI_1$.
\item[$-$] for every $2$-cell $u: i \to j$, a diagram
\[\begin{tikzcd}
	& {F(b)} & {} & {F(b)} \\
	{F(a)} & {\laxcolim_I F} & {F(a)} & {\laxcolim_I F}
	\arrow[""{name=0, anchor=center, inner sep=0}, "{F(i)}"{description}, from=2-1, to=1-2]
	\arrow[""{name=1, anchor=center, inner sep=0}, from=2-1, to=2-2]
	\arrow[from=1-2, to=2-2]
	\arrow[""{name=2, anchor=center, inner sep=0}, from=1-2, to=2-2]
	\arrow[""{name=3, anchor=center, inner sep=0}, "{F(j)}", curve={height=-30pt}, from=2-1, to=1-2]
	\arrow["{F(j)}", curve={height=-30pt}, from=2-3, to=1-4]
	\arrow[from=2-3, to=2-4]
	\arrow[from=1-4, to=2-4]
	\arrow[shorten <=8pt, shorten >=8pt, Rightarrow, from=1-4, to=2-3]
	\arrow[""{name=4, anchor=center, inner sep=0}, draw=none, from=1-3, to=2-3]
	\arrow[shift right=2, shorten <=12pt, shorten >=12pt, Rightarrow, from=2, to=1]
	\arrow[shorten <=4pt, shorten >=4pt, Rightarrow, from=3, to=0]
	\arrow[shift left=0.7, shorten <=14pt, shorten >=16pt, no head, from=2, to=4]
	\arrow[shorten <=14pt, shorten >=14pt, from=2, to=4]
	\arrow[shift right=0.7, shorten <=14pt, shorten >=16pt, no head, from=2, to=4]
\end{tikzcd}\]
where the $3$-cell present is an equivalence if $u$ is in $tI_2$,
\item[$-$] etc.
\end{enumerate}
Varying $tI$ thus allows us to adjust the "laxness" of the universal property that this object must satisfy.
\end{idefi}

We conclude this chapter by establishing generalizations of the standard results in category theory to the $\io$-categorical case, including the characterization of presheaves as completion by lax colimits, the calculation of lax Kan extensions using slices, the relationships between duality and (co)lax limits, as well as various characterizations and properties of adjunctions.

\adjustmtc
\adjustmtc
\chapter{ $\zo$-Categories and presheaves on $\Theta$}
\label{chapter:The category of zocategories}
\minitoc
\vspace{1cm}
%
%
%
%
%
%
%
%
%

The first section is devoted to the definition of $\zo$-categories and of the category $\Theta$ of Joyal. We also show that the category $\Theta$ presents the category of $\zo$-categories, and we also exhibit an other presentation of this category (corollary \ref{cor:changing theta}).

The second section begins with a review of Steiner theory, which is an extremely useful tool for providing concise and computational descriptions of $\zo$-categories. Following Ara and Maltsiniotis, we employ this theory to define the Gray tensor product, denoted by $\otimes$, in $\zo$-categories. We then introduce the Gray operations, starting with the Gray cylinder $\uvar\otimes[1]$ which is the Gray tensor product with the directed interval $[1]:=0\to 1$. Then, we have the \textit{Gray cone} and the \textit{Gray $\circ$-cone}, denoted by $\uvar\star 1$ and $1\costar \uvar$, that send an $\zo$-category $C$ onto the following pushouts:
\[\begin{tikzcd}[ampersand replacement=\&]
	{C\otimes\{1\}} \& {C\otimes[1]} \& {C\otimes\{0\}} \& {C\otimes[1]} \\
	1 \& {C\star 1} \& 1 \& {1\costar C}
	\arrow[from=1-1, to=1-2]
	\arrow[from=1-1, to=2-1]
	\arrow[from=1-2, to=2-2]
	\arrow[from=1-3, to=1-4]
	\arrow[from=1-3, to=2-3]
	\arrow[from=1-4, to=2-4]
	\arrow[from=2-1, to=2-2]
	\arrow["\lrcorner"{anchor=center, pos=0.125, rotate=180}, draw=none, from=2-2, to=1-1]
	\arrow[from=2-3, to=2-4]
	\arrow["\lrcorner"{anchor=center, pos=0.125, rotate=180}, draw=none, from=2-4, to=1-3]
\end{tikzcd}\]

We also present a formula that illustrates the interaction between the suspension and the Gray cylinder. As this formula plays a crucial role in this text, we provide its intuition at this stage.

 If $A$ is any $\zo$-category, the \textit{suspension} of $A$, denoted by $[A,1]$, is the $\zo$-category having two objects - denoted by $0$ and $1$- and such that 
$$\Hom_{[A,1]}(0,1) := A,~~~\Hom_{[A,1]}(1,0) := \emptyset,~~~\Hom_{[A,1]}(0,0)=\Hom_{[A,1]}(1,1):=\{id\}.$$
We also define $[1]\vee[A,1]$ as the gluing of $[1]$ and $[A,1]$ along the $0$-target of $[1]$ and the $0$-source of $[A,1]$. We define similarly $[A,1]\vee[1]$.
These two objects come along with \textit{whiskerings}:
$$\triangledown:[A,1]\to [1]\vee [A,1] ~~~~\mbox{and}~~~~ \triangledown:[A,1] \to [A,1]\vee [1]$$ 
that preserve the extremal points.

The $\zo$-category $[1]\otimes [1]$ is induced by the diagram:
\[\begin{tikzcd}
	00 & 01 \\
	10 & 11
	\arrow[from=1-1, to=2-1]
	\arrow[from=2-1, to=2-2]
	\arrow[from=1-1, to=1-2]
	\arrow[from=1-2, to=2-2]
	\arrow[shorten <=4pt, shorten >=4pt, Rightarrow, from=1-2, to=2-1]
\end{tikzcd}\]
and is then equal to the colimit of the following diagram: 
$$[1]\vee [1]\xleftarrow{\triangledown} [1]\hookrightarrow [[1],1]\hookleftarrow[1]\xrightarrow{\triangledown } [1]\vee [1].$$
The $\zo$-category $ [[1],1]\otimes [1]$ is induced by the diagram:
\[\begin{tikzcd}
	00 & 01 & 00 & 01 \\
	10 & 11 & 10 & 11
	\arrow[from=1-1, to=1-2]
	\arrow[""{name=0, anchor=center, inner sep=0}, from=1-1, to=2-1]
	\arrow[from=2-1, to=2-2]
	\arrow[""{name=1, anchor=center, inner sep=0}, from=1-2, to=2-2]
	\arrow[shorten <=4pt, shorten >=4pt, Rightarrow, from=1-2, to=2-1]
	\arrow[""{name=2, anchor=center, inner sep=0}, from=1-3, to=2-3]
	\arrow[from=1-3, to=1-4]
	\arrow[""{name=3, anchor=center, inner sep=0}, from=1-4, to=2-4]
	\arrow[shorten <=4pt, shorten >=4pt, Rightarrow, from=1-4, to=2-3]
	\arrow[""{name=4, anchor=center, inner sep=0}, curve={height=30pt}, from=1-1, to=2-1]
	\arrow[from=2-3, to=2-4]
	\arrow[""{name=5, anchor=center, inner sep=0}, curve={height=-30pt}, from=1-4, to=2-4]
	\arrow["{ }"', shorten <=6pt, shorten >=6pt, Rightarrow, from=0, to=4]
	\arrow["{ }"', shorten <=6pt, shorten >=6pt, Rightarrow, from=5, to=3]
	\arrow[shift left=0.7, shorten <=6pt, shorten >=8pt, no head, from=1, to=2]
	\arrow[shift right=0.7, shorten <=6pt, shorten >=8pt, no head, from=1, to=2]
	\arrow[shorten <=6pt, shorten >=6pt, from=1, to=2]
\end{tikzcd}\]
and is then equal to the colimit of the following diagram: 
 $$[1]\vee[[1],1]\xleftarrow{\triangledown} [[1]\otimes\{0\},1]\hookrightarrow[[1]\otimes[1],1]\hookleftarrow [[1]\otimes\{1\},1]\xrightarrow{\triangledown}[[1],1]\vee[1]$$
We prove a formula that combines these two examples:

\begin{iprop}[\ref{prop:appendice formula for otimes}]
In the category of $\zo$-categories, there exists an isomorphism, natural in $A$, between $[A,1]\otimes[1]$ and the colimit of the following diagram
\[\begin{tikzcd}
	{[1]\vee[A,1]} & {[A\otimes\{0\},1]} & { [A\otimes[1],1]} & {[A\otimes\{1\},1]} & {[A,1]\vee[1]}
	\arrow["\triangledown"', from=1-2, to=1-1]
	\arrow[from=1-4, to=1-3]
	\arrow["\triangledown", from=1-4, to=1-5]
	\arrow[from=1-2, to=1-3]
\end{tikzcd}\]
\end{iprop} 

We also provide similar formulas for the {Gray cone}, the {Gray $\circ$-cone} and the Gray op-cone.
\begin{iprop}[\ref{prop:appendice formula for star}]
There is a natural identification between $1\costar [A,1]$ and the colimit of the following diagram
\[\begin{tikzcd}
	{[1]\vee[A,1]} & {[A,1]} & { [A\star 1,1]}
	\arrow["\triangledown"', from=1-2, to=1-1]
	\arrow[from=1-2, to=1-3]
\end{tikzcd}\]
There is a natural identification between $[A,1]\star 1$ and the colimit of the following diagram
\[\begin{tikzcd}
	{ [1\costar A,1]} & {[A,1]} & {[A,1]\vee[1]}
	\arrow[from=1-2, to=1-1]
	\arrow["\triangledown", from=1-2, to=1-3]
\end{tikzcd}\]
\end{iprop}

We conclude this chapter by studying the functor
\[ \uvar\otimes \uvar : \Psh{\Delta} \times \Psh{\Delta} \to \Psh{\Theta} \]
which is the left Kan extension of the functor
\[ \Delta \times \Delta \xrightarrow{\uvar\otimes \uvar} \zocat \xrightarrow{\iota} \Psh{\Theta} \]
where \(\iota\) is the inclusion of \(\zo\)-categories into presheaves on \(\Theta\). In particular, we demonstrate theorem \ref{theo:otimes presserves W}, which will be of crucial importance for defining the Gray tensor product for \(\io\)-categories.

\section{Basic constructions}
\label{chapter:Basica construciton preliminaire}
\subsection{$\zo$-Categories}
\label{section:zocategories}

\begin{definition}
 A \notion{globular set} is a presheaf on the \textit{category of globes} $\Gb$, which is the category induces by the diagram
\[\begin{tikzcd}
	{\Db_0} & {\Db_1} & {\Db_2} & {...}
	\arrow["{i_0^+}", shift left=2, from=1-1, to=1-2]
	\arrow["{i_1^+}", shift left=2, from=1-2, to=1-3]
	\arrow["{i_3^+}", shift left=2, from=1-3, to=1-4]
	\arrow["{i_0^-}"', shift right=2, from=1-1, to=1-2]
	\arrow["{i_1^-}"', shift right=2, from=1-2, to=1-3]
	\arrow["{i_3^-}"', shift right=2, from=1-3, to=1-4]
\end{tikzcd}\]
with the relations $i_n^{+} i_{n-1}^\epsilon = i_n^{-} i_{n-1}^\epsilon $ for any $n>0$ and $\epsilon \in \{+,-\}$. 
For any $n>k$ and $\epsilon \in \{+,-\}$, we also denote by $i^{\epsilon}_k$ the composite $\Db_{k} \xrightarrow{i^{\epsilon}_k} \Db_{k+1}\xrightarrow{f} \Db_n$ where $f$ is any map. These and the identity arrows are the only maps in the category $\Gb$.

If $X$ is a globular set, we denotes by $X_n$ the set $X(\Db_n)$. Its elements are called \wcsnotion{$n$-cells}{cell@$n$-cell}{for $\zo$-categories}. The $0$-cells are  called \textit{objects}. The maps $X_n \to X_k$ induced by $i^\epsilon_k : \Db_k \to \Db_n$ is denoted by $\pi^\epsilon_k$.
\end{definition}

\begin{definition}
\label{defi:def of omega cat}
An \wcnotion{$\omega$-category}{category@$\omega$-category} is a globular set $X$ together with
\begin{enumerate}
\item operations of \textit{compositions}
\[ X_n\times_{X_k} X_n\to X_n ~~~(0\leq k<n) \]
which associate to two $n$-cells $(x,y)$ verifying $\pi_k^-(x) = \pi_k^+(y)$, a $n$-cells $x\circ_ky$,
\item as well as \textit{units}
\[X_n\to X_{n+1}\]
which associate to an $n$-cell $x$, a $(n+1)$-cell $\Ib_x$, 
\end{enumerate}
and satisfying the following axioms:
\begin{enumerate}

\item $\forall x \in X_n, \pi^\epsilon_n(\Ib_x) = x $.

\item $\pi^+_k (x \circ_n y) = \pi_k^{+}(x)$ and $\pi^-_k(x \circ_n y) = \pi_k^-(y)$ whenever the composition is defined and $k \leqslant n$.

\item $\pi^\epsilon_k (x \circ_n y) = \pi_k^{\epsilon}(x) \circ_n \pi^\epsilon_k(y)$ whenever the composition is defined and $k > n$.

\item $ x \circ_n \Ib_{\pi^-_n x} = x$ and $ \Ib_{\pi^+_n x} \circ_n x = x$.

\item $(x \circ_n y) \circ_n z = x \circ_n (y \circ_n z) $ as soon as one of these is defined.

\item If $k <n$
\[ (x \circ_n y) \circ_k ( z \circ_n w) = (x \circ_k z) \circ_n (y \circ_k w) \]
when the left-hand side is defined.

\end{enumerate}
A $n$-cell $a$ is \textit{non trivial}\index[notion]{non trivial $n$-cell} if is not in the image of the application $\Ib:X_{n-1}\to X_n$.

A \notion{morphism of $\omega$-categories} is a map of globular sets commuting with compositions and units. The category of $\omega$-categories is denoted by \textit{$\omegacat$}.
\end{definition}

\begin{definition}
\index[notion]{globe@$n$-globe}
By abuse of notation, we also denote by \wcnotation{$\Db_n$}{(da@$\Db_n$} the $\omega$-category obtained from the globular set represented by $\Db_n$ by adding units. This $\omega$-category
admits for any $k<n$ only two $k$-non-trivial cells, denoted by $e_k^-$ and $e_k^+$, and a single $n$-non-trivial cell, denoted by $e_n$, verifying:
\[
\begin{array}{rcl}
\pi_l^{-}(e_k^\epsilon)= e_l^{-}&\pi_l^{+}(e_k^\epsilon)= e_l^{+}& \mbox{ for $l\leq k<n$}\\
\pi_l^{-}(e_n)= e_l^{-}&\pi_l^{+}(e_n)= e_l^{+}& \mbox{ for $l\leq n$}\\
\end{array}
\]
\end{definition}
Remark furthermore that the $\omega$-category $\Db_n$ represents $n$-cells, in the sense that $\Hom(\Db_n,C)\cong C_n$. We will not make the difference between $n$-cells and the corresponding morphism $\Db_n\to C$. 

\begin{definition}
The $\omega$-category $\partial\Db_n$ is obtained from $\Db_n$ by removing the $n$-cell $e_n$. We thus have a morphism
\[i_n: \partial\Db_n\to \Db_n.\]
Note that $\partial \Db_0 = \emptyset$.

\end{definition}
\begin{definition}
We say that an $\zo$-category $X$ is a \notion{polygraph} if it can be constructed from the empty $\zo$-category by freely adding arrows with specified source and target. That is if $X$ can be obtained as a transfinite composition $\emptyset = X_0 \to X_1 \to \dots \to X_i \to \dots\to \colim X_i = X$ where for each $i$, the map $X_i \to X_{i+1}$ is a pushout of $\coprod_S \partial \Db_n \to \coprod_S \Db_{n+1}$.

 An arrow of a polygraph is said to be a \emph{generator} if it is one of the arrows that has been freely added at some stage.
 \end{definition}

Each cell in a polygraph can be written as a composite of generators or iterated unit of generators (not necessarily in a unique way). For a $n$-cell $f$, the set of generators of dimension $n$ that appear in such an expression (and even the number of times they appear) is the same for all such expressions. As a consequence, a composition of non trivial cells is always non trivial.

\begin{definition}
 \label{defi:dualities strict case}
 For any subset $S$ of $\Nb^*$, we define the functor $(\uvar)^S:\omegacat\to \omegacat$  sending a $\omega$-category $C$ to the category $C^S$ such that for any $n$, there is an isomorphism $C_n\to C_{n}^S$ that sends every $n$-cell $f$ to a cell $\overline{f}$ fulfilling
$$\pi_{n-1}^-(\overline{f})=\overline{\pi^+_{n-1}(f)}~~~~\pi_{n-1}^+(\overline{f})=\overline{\pi^-_{n-1}(f)}$$
if $i\in S$ and 
$$\pi_{n-1}^-(\overline{f})=\overline{\pi^-_{n-1}(f)}~~~~\pi_{n-1}^+(\overline{f})=\overline{\pi^+_{n-1}(f)}$$
if $i\notin S$.
These functors are called \snotion{dualities}{for $\zo$-categories} as they are inverse of themselves. Even if there are plenty of them, we will be interested in only a few of them. In particular, we have the \snotionsym{odd duality}{((b60@$(\uvar)^{op}$}{for $\zo$-categories} $(\uvar)^{op}$, corresponding to the set of odd integer, the \snotionsym{even duality}{((b50@$(\uvar)^{co}$}{for $\zo$-categories} $(\uvar)^{co}$, corresponding to the subset of non negative even integer, the \snotionsym{full duality}{((b80@$(\uvar)^{\circ}$}{for $\zo$-categories} $(\uvar)^{\circ}$, corresponding to $\Nb^*$ and the \snotionsym{transposition}{((b70@$(\uvar)^t$}{for $\zo$-categories} $(\uvar)^t$, corresponding to the singleton $\{1\}$. Eventually, we have equivalences
$$((\uvar)^{co})^{op}\sim (\uvar)^{\circ} \sim ((\uvar)^{op})^{co}.$$
\end{definition}

\begin{definition}
\label{defi:suspension zocat}
 Let $\Psh{\Gb}_{\bullet,\bullet}$ be the category of globular set with two distinguished points, i.e. of triples $(X,a,b)$ where $a$ and $b$ are elements of $X_0$.
Let $[\uvar,1]:\Gb\to \Psh{\Gb}_{\bullet,\bullet}$ be the functor sending $\Db_n$ on $(\Db_{n+1},\{0\},\{1\})$ and $i_n^{\epsilon}$ on $i_{n+1}^{\epsilon}$. This induces by left Kan extension a functor $[\uvar,1]:\Psh{\Gb}\to \Psh{\Gb}_{\bullet,\bullet}$ that we call the \textit{suspension}. We leave it to the reader to check that whenever $C$ has a structure of $\omega$-category, $[C,1]$ inherits one from it. This functor then induces a functor 
$$[\uvar,1]:\omegacat\to \omegacat$$
that we calls again the \snotionsym{suspension}{((d60@$[\uvar,1]$}{for $\zo$-categories}. Eventually, we denote by $i_0^-:\{0\}\to [C,1]$ (resp. $i_0^+:\{1\}\to [C,1]$) the morphism corresponding to the left point (resp. to the right point). For an integer $n$, we define by induction the functor $\Sigma^n:\Psh{\Gb}\to \Psh{\Gb}$\ssym{(sigma@$\Sigma^n$}{for $\zo$-categories} with the formula:
$$\Sigma^0:= id ~~~~~\Sigma^{n+1}:=\Sigma^n[\uvar,1].$$
\end{definition}

\begin{definition}
 Let $n$ be a non null integer.
A $n$-cells $f:s\to t$ is an \notion{equivalence} if there exists $n$-cells $g:t\to s$ and $g':t\to s$ such that 
$$f\circ_{n-1} g=\Ib_t~~~~~~g\circ_{n-1} f=\Ib_s$$
\end{definition}
\begin{definition}
A \wcnotion{$\zo$-category}{category1@$\zo$-category} is an $\omega$-category whose only equivalences are the identities.
These objects are called \textit{Gaunt $\omega$-categories} in \cite{Barwick_on_the_unicity_of_the_theory_of_higher_categories} and \textit{rigid $\omega$-categories} in \cite{Rezk_a_cartesian_of_weak_n_categories}. Remark that $\zo$-categories are stable under suspensions and dualities.

We denote by \wcnotation{$\zocat$}{((a20@$\zocat$}  the full subcategory of $\omegacat$ whose objects are the $\zo$-categories. 
\end{definition}

\begin{definition}
Let $n$ be an integer. An \wcnotion{$(0,n)$-category}{category2@$(0,n)$-category} is an $\zo$-category whose cell of dimension strictly higher than $n$ are units. The category of $(0,n)$-categories is denoted by \wcnotation{$\zncat{n}$}{((a10@$\zncat{n}$} and is the full subcategory of $\zocat$ whose objects are $(0,n)$-categories.
\end{definition}

\begin{construction}
\label{cons:dumb truncatoin}
 Remark that the category $\zncat{n}$ is the localization of $\zocat$ along morphisms $\Db_{k}\to \Db_{n}$ for $k\geq n$. We then have for any $n$ an adjunction 
\[\begin{tikzcd}
	{i_n:\zncat{n}} & {\zocat:\tau_n}
	\arrow[""{name=0, anchor=center, inner sep=0}, shift left=2, from=1-2, to=1-1]
	\arrow[""{name=1, anchor=center, inner sep=0}, shift left=2, from=1-1, to=1-2]
	\arrow["\dashv"{anchor=center, rotate=-90}, draw=none, from=1, to=0]
\end{tikzcd}\]
The right adjoint is called the \wcsnotionsym{$n$-truncation}{(tau@$\tau_n$}{truncation@$n$-truncation}{for $\zo$-categories}.
\end{construction}

\begin{construction}
\label{cons:inteligent truncatoin}
For any $n$, we define the colimit preserving functor $\tau^i_n:\zocat\to \zncat{n}$, called the \snotionsym{intelligent $n$-truncation}{(taui@$\tau^i_n$}{for $\zo$-categories}, sending $\Db_k$ on $\Db_{\min(n,k)}$. The functor $\tau^i_n$ fits in an adjunction
\[\begin{tikzcd}
	{\tau^i_n:\zocat} & {\zncat{n}:i_n}
	\arrow[""{name=0, anchor=center, inner sep=0}, shift left=2, from=1-1, to=1-2]
	\arrow[""{name=1, anchor=center, inner sep=0}, shift left=2, from=1-2, to=1-1]
	\arrow["\dashv"{anchor=center, rotate=-90}, draw=none, from=0, to=1]
\end{tikzcd}\]
\end{construction}
\begin{notation}
We will identify objects of $\zncat{n}$ with their image in $\zocat$ and we will then also note by $\tau_n$ and $\tau^i_n$ the composites $i_n\tau^i_n$ and $i_n\tau^i_n$.
\end{notation}

\begin{remark}
The family of truncation functor induces a sequence 
$$...\to \zncat{n+1}\xrightarrow{\tau_{n}} \zncat{n}\to...\to \zncat{1}\xrightarrow{\tau_{0}}\zncat{0}.$$
The canonical morphism
$$\zocat\to \lim_{n:\Nb}\zncat{n},$$
that sends an $\zo$-category $C$ to the sequence $(\tau_n C,\tau_n C\cong \tau_n \tau_{n+1}C)$, has an inverse given by the functor
$$\colim_{\Nb}:\lim_{n:\Nb}\zncat{n}\to \zocat$$
that sends a sequence $(C_n,  C_n\cong \tau_{n}C_{n+1}) $ to the colimit of the induced sequence:
$$i_0C_0\to i_1C_1\to...\to i_nC_n\to...$$	
 We then have an equivalence 
$$\zocat\cong \lim_{n:\Nb}\zncat{n}.$$
\end{remark}

\subsection{The category $\Theta$}
\label{subsection:the categoru theta}

\begin{definition}
 Let $n$ be a non negative integer and $\textbf{a}:=\{a_0,a_1,...,a_{n-1}\}$ a sequence of $\zo$-categories. We denote \wcnotation{$[\textbf{a},n]$}{((g00@$[\textbf{a},n]$} the colimit of the following diagram
\[\begin{tikzcd}
	& 1 && 1 && 1 \\
	{[a_0,1]} && {[a_1,1]} && {...} && {[a_{n-1},1]}
	\arrow["{i_0^+}"', from=1-2, to=2-1]
	\arrow["{i_0^-}", from=1-2, to=2-3]
	\arrow["{i_0^+}"', from=1-4, to=2-3]
	\arrow["{i_0^-}", from=1-4, to=2-5]
	\arrow["{i_0^+}"', from=1-6, to=2-5]
	\arrow["{i_0^-}", from=1-6, to=2-7]
\end{tikzcd}\]
where $[\uvar,1]$ is the suspension functor defined in \ref{defi:suspension zocat}.
\end{definition}

\begin{definition}
 \label{defi:les sommes glob}
We define \wcnotation{$\Theta$}{(theta@$\Theta$} as the smallest full subcategory of $\zocat$ that includes the terminal $\zo$-category $[0]$, and such that
for any non negative integer $n$, and any finite sequence $\textbf{a}:=\{a_0,a_1,...,a_{n-1}\}$ of objects of $\Theta$, it includes the $\zo$-category $[\textbf{a},n]$.
Objects of $\Theta$ are called \notion{globular sum}.
\end{definition}

\begin{definition}
\label{defi:thetaplus}
We denote by \wcnotation{$\Theta^+$}{(thetaplus@$\Theta^+$} the full subcategory of $\zocat$ whose objects are either globular sums or the empty $\zo$-category $\emptyset$.

We then have a canonical inclusion $\Theta\to \Theta^+$.
\end{definition}
\begin{remark}
\label{remark:morphism of theta}
A morphism $g:[\textbf{a},n]\to [\textbf{b},m]$ is exactly the data of a morphism $f:[n]\to [m]$, and for any integer $i$, a morphism
$$a_i\to \prod_{f(i)\leq k< f(i+1)}b_k.$$
\end{remark}

\begin{example}
\label{exemple:of globular sum}
For any $n$, $\Db_n$ is a globular sum. The $\zo$-category induced by the $\omega$-graph 
\[\begin{tikzcd}
	\bullet & \bullet & \bullet
	\arrow[from=1-1, to=1-2]
	\arrow[""{name=0, anchor=center, inner sep=0}, curve={height=-24pt}, from=1-2, to=1-3]
	\arrow[""{name=1, anchor=center, inner sep=0}, curve={height=24pt}, from=1-2, to=1-3]
	\arrow[""{name=2, anchor=center, inner sep=0}, from=1-2, to=1-3]
	\arrow[shorten <=3pt, shorten >=3pt, Rightarrow, from=0, to=2]
	\arrow[shorten <=3pt, shorten >=3pt, Rightarrow, from=2, to=1]
\end{tikzcd}\]
is a globular sum.
\end{example}

\begin{definition}
 For a globular sum $a$ and an integer $n$, we define $[a,n]:=[\{a,a,...,a\},n]$.\sym{((g10@$[a,n]$}
For a sequence of integer $\{n_0,..,n_k\}$ and a sequence of globular sum $\{a_0,..,a_k\}$, we define \wcnotation{$[a_0,n_0]\vee[a_1,n_1]\vee...\vee [a_k,n_k]$}{((g20@$[a_0,n_0]\vee[a_1,n_1]\vee...\vee [a_k,n_k]$} as the globular sum $[\{a_0,..,a_1,...,a_k,...\},n_0+n_1+...+n_k]$.

We denote by  $[n]$ the globular sum $[[0],n]$.
This induces a fully faithful functor $\Delta\to \Theta$ sending $[n]$ onto $[n]$.. 
\end{definition}

\begin{definition}
 We define by induction the \wcnotion{dimension}{dimension of a globular sum} of a globular sum $a$, denoted by $|a|$. The dimension of $[0]$ is $0$, and the dimension of $[\textbf{a},n]$ is the maximum of the set $\{|a_k|+1\}_{k< n}$. We denote by \wcnotation{$\Theta_n$}{(thetan@$\Theta_n$} the full subcategory of $\Theta$ whose objects are the globular sum of dimension inferior or equal to $n$.
 We set by convention $\Theta_\omega:=\Theta$.
\end{definition}
\begin{notation}
We set by convention $\omega +1:=\omega$.
\end{notation}

\vspace{1cm}

 An important property of the category $\Theta$ is that it is a \textit{Reedy elegant.}
\begin{definition}
 \label{defi:reedy}
 A \notion{Reedy category} is a small category $A$ equipped with two subcategories $A_+$, $A_-$ and a \textit{degree} function $d:ob(A)\to \Nb$ such that: 
\begin{enumerate}
\item for every non identity morphism $f:a\to b$, if $f$ belongs to $A_-$, $d(a)>d(b)$, and if $f$ belongs to $A_+$, $d(a)<d(b)$.
\item every morphism of $A$ uniquely factors as a morphism of $A_-$ followed by a morphism of $A_+$.
\end{enumerate}

A Reedy category $A$ is \wcnotion{elegant}{elegant Reedy category} if for any presheaf $X$ on $A$, for any $a\in A$ and any $c\in X(a)$, there exists a unique morphism $f:a\to a'\in A_{-}$ and a unique non degenerate object $c'\in X(a')$ such that $c=X(f)(c')$. 
\end{definition}

\begin{prop}
\label{prop:elelangat stable by slice}
Let $X$ be a presheaf on an elegant Reedy category $A$. The category $A_{/X}$ is an elegant Reedy category.
\end{prop}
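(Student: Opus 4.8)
The plan is to verify the two Reedy structure axioms and then the elegance condition for the slice category $A_{/X}$, building each piece of structure directly from that of $A$. First I would set up the data: an object of $A_{/X}$ is a pair $(a,c)$ with $a \in A$ and $c \in X(a)$, and a morphism $(a,c) \to (b,d)$ is a morphism $f : a \to b$ in $A$ with $X(f)(d) = c$. I would define the degree function on the slice by $d(a,c) := d(a)$, and declare $(A_{/X})_+$ (resp.\ $(A_{/X})_-$) to consist of those slice morphisms whose underlying map in $A$ lies in $A_+$ (resp.\ $A_-$). With these definitions, axiom (1) for the slice is immediate, since the degree and the plus/minus classification are inherited verbatim from $A$, and a slice morphism is an identity exactly when its underlying map is.

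Next I would establish the unique factorization axiom (2). Given a slice morphism $f : (a,c) \to (b,d)$, apply the factorization in $A$ to write $f = p \circ q$ with $q : a \to a'$ in $A_-$ and $p : a' \to b$ in $A_+$. The only real content is producing the intermediate object in the slice, i.e.\ choosing $c' \in X(a')$ so that both $q$ and $p$ become slice morphisms: we need $X(p)(d) = c'$ and $X(q)(c') = c$. The natural candidate is $c' := X(p)(d)$; then $X(q)(c') = X(q)X(p)(d) = X(p q)(d) = X(f)(d) = c$, so both halves are legitimate slice morphisms. Uniqueness of this factorization in $A_{/X}$ then follows from uniqueness of the factorization in $A$ together with the observation that the intermediate object $c'$ is \emph{forced} to equal $X(p)(d)$ by the requirement that $p$ be a slice morphism.

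Finally I would check elegance. Fix a presheaf $Y$ on $A_{/X}$, an object $(a,c)$, and an element $y \in Y(a,c)$. The key idea is to transport the problem to $A$. I would assemble from $Y$ a presheaf $\widetilde{Y}$ on $A$ by setting $\widetilde{Y}(a) := \coprod_{c \in X(a)} Y(a,c)$, which is exactly the left Kan extension of $Y$ along the projection $A_{/X} \to A$ (equivalently, the presheaf on $A$ classified by the total space of $Y$). Applying elegance of $A$ to $\widetilde{Y}$, the element $y$ (viewed in the component indexed by $c$) admits a unique factorization through a map $g : a \to a''$ in $A_-$ and a non-degenerate element $y''$. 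I would then observe that $g$ lifts canonically to a slice morphism $(a,c) \to (a'', c'')$ with $c'' := X(g)(\text{appropriate lift})$—more precisely the component-tracking in the coproduct records the right element of $X(a'')$—and that non-degeneracy of $y''$ in $\widetilde{Y}$ is equivalent to non-degeneracy of the corresponding element in $Y$, because a degeneracy in either presheaf is witnessed by a non-identity $A_-$-morphism, and these correspond under the projection.

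The main obstacle, and the step deserving the most care, is the elegance verification: one must check that the correspondence $Y \leftrightarrow \widetilde{Y}$ genuinely matches up non-degenerate elements and $A_-$-factorizations on the nose, so that the unique factorization supplied by elegance of $A$ descends to a \emph{unique} such factorization in the slice. In particular I would confirm that every $A_-$-morphism out of $(a,c)$ in the slice arises from a unique $A_-$-morphism out of $a$ in $A$ (which follows from axiom (2) as established above), so that the uniqueness clause transfers without loss. Once this dictionary is pinned down, elegance of $A_{/X}$ is a direct consequence of elegance of $A$.
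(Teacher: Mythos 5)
Your proof is correct and follows essentially the same route as the paper: the Reedy structure is transported along the projection $\pi: A_{/X}\to A$, and elegance is deduced from the identification $\Psh{A_{/X}}\cong \Psh{A}_{/X}$ — your construction $\widetilde{Y}(a)=\coprod_{c\in X(a)}Y(a,c)$ is exactly this isomorphism made explicit, and your dictionary between non-degenerate elements and $A_-$-factorizations is the verification the paper leaves implicit.
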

\begin{proof}
We have a canonical projection $\pi:A_{/X}\to A$. A morphism is positive (resp. negative) if it's image by $\pi$ is. The degree of an element $c$ of $A_{/X}$ is the degree of $\pi(c)$. We leave it to the reader to check that this endows $A_{/X}$ with a structure of Reedy category. 

The fact that $A_{/X}$ is elegant is a direct consequence of the isomorphism $\Psh{A_{/X}}\cong \Psh{A}_{/X}$.
\end{proof}

\begin{prop}[Berger, Bergner-Rezk]
\label{prop:theta is elegan reedy}
For any $n\in \Nb\cup\{\omega\}$, the category $\Theta_n$ is an elegant Reedy category.

A morphism $g:[\textbf{a},n]\to [\textbf{b},m]$ is \wcnotion{degenerate}{degenerate morphism of $\Theta$} (i.e a morphism of $\Theta_{-}$) if the corresponding morphism $f:[n]\to [m]$ is a degenerate morphism of $\Delta$, and for any $i<n$ and any $f(i)\leq k<f(k+1)$, the corresponding morphism $a_i\to b_k$ is degenerate. Furthermore, a morphism is degenerate  if and only if it is a epimorphism in $\Psh{\Theta}$.

A morphism is in $\Theta^+$ if and only if it is a monomorphism in $\Psh{\Theta}$. 
\end{prop}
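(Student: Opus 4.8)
The plan is to argue by induction on $n$, exploiting that $\Theta_n$ is the globular-sum (wreath) construction $\Delta\wr\Theta_{n-1}$ applied to $\Theta_{n-1}$: an object of $\Theta_n$ is a datum $[\textbf{a},n]=([n];a_0,\dots,a_{n-1})$ with $a_i\in\Theta_{n-1}$, and by Remark \ref{remark:morphism of theta} a morphism $g\colon[\textbf{a},n]\to[\textbf{b},m]$ is a pair $(f,\{\phi_i\})$ with $f\colon[n]\to[m]$ in $\Delta$ and components $\phi_i\colon a_i\to\prod_{f(i)\le k<f(i+1)}b_k$. The base case $\Theta_0$ is the terminal category, trivially elegant Reedy, and $\Theta=\Theta_\omega$ will be treated at the end as the union of the $\Theta_n$. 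First I would fix the Reedy data: take the degree $d([\textbf{a},n]):=n+\sum_{i<n}d(a_i)$ (so $d([0])=0$); declare $g$ \emph{negative} exactly under the condition stated in the proposition, i.e. $f$ a surjection of $\Delta$ and each component negative, the components over an empty product being the canonical collapse $a_i\to[0]$; and dually declare $g$ \emph{positive} when $f$ is a monomorphism of $\Delta$ and each component is positive. Using the corresponding facts in $\Delta$ and the inductive hypothesis on the $a_i$, one checks that a non-identity negative map strictly lowers $d$ and a non-identity positive map strictly raises it.

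Next I would establish the unique $(\Theta_-,\Theta_+)$-factorization. Given $g=(f,\{\phi_i\})$, factor $f=f_+f_-$ in $\Delta$ through some $[p]$, so that $f_+$ cuts $[m]$ into the $p$ intervals indexed by the edges of $[p]$ and $f_-$ is a surjection for which each edge $j$ of $[p]$ is crossed by a unique edge $i_j$ of $[n]$. Factoring each surviving component $\phi_{i_j}$ in $\Theta_{n-1}$ as a negative followed by a positive map, and collapsing the components over the edges killed by $f_-$, produces an intermediate globular sum $[\textbf{c},p]$ together with a negative map $[\textbf{a},n]\to[\textbf{c},p]$ and a positive map $[\textbf{c},p]\to[\textbf{b},m]$ whose composite is $g$. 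Uniqueness follows from uniqueness of the factorization in $\Delta$ together with the inductive uniqueness in $\Theta_{n-1}$. This shows $\Theta_n$ is a Reedy category with the asserted description of $\Theta_-$.

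For the two equivalences I would first verify, by induction and levelwise — monomorphisms and epimorphisms of $\Psh{\Theta_n}$ are detected on each $\Hom(\theta,-)$ — that every positive map is a monomorphism and every negative map is an epimorphism; on the $\Delta$-direction these are the classical statements and on the components they are the inductive hypothesis. The converses are then formal. Since $\Psh{\Theta_n}$ is a topos it is balanced, and globular sums admit no non-identity isomorphisms. Thus if $g$ is an epimorphism, writing $g=g_+g_-$ forces $g_+$ to be both a monomorphism (being positive) and an epimorphism (being a right factor of an epimorphism), hence an isomorphism, hence an identity, so $g=g_-$ is negative; symmetrically a monomorphism $g$ forces $g_-$ to be an isomorphism and hence $g=g_+$ is positive. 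This yields that $\Theta_-$ is exactly the class of epimorphisms and $\Theta_+$ exactly the class of monomorphisms of $\Psh{\Theta}$.

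Finally, elegance. I would prove directly the unique Eilenberg--Zilber decomposition of Definition \ref{defi:reedy}: for a presheaf $X$ on $\Theta_n$ and a cell $c\in X([\textbf{a},n])$, the morphism description lets one split the problem into the $\Delta$-direction (the vertex datum $f$) and the suspension-direction (the components $a_i$), and one assembles the unique maximal degeneration out of the classical Eilenberg--Zilber property of $\Delta$ and, componentwise, the inductive elegance of $\Theta_{n-1}$; equivalently one verifies the Bergner--Rezk conditions that negative maps are split epimorphisms and that spans of negative maps admit pushouts along negative maps, both of which reduce to $\Delta$ and to $\Theta_{n-1}$. I expect this to be the main obstacle: one must check that the uniqueness of the maximal degeneration is not destroyed when the two directions are combined, which is precisely where the \emph{elegance}, and not merely the Reedy-ness, of both $\Delta$ and $\Theta_{n-1}$ is used. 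Passing to $\Theta=\Theta_\omega=\bigcup_n\Theta_n$ is then immediate, since the degree and the classes $\Theta_\pm$ are compatible with the inclusions $\Theta_{n-1}\hookrightarrow\Theta_n$, so the union inherits an elegant Reedy structure.
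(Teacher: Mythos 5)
Your overall strategy --- induction on $n$ through the wreath product $\Theta_n=\Delta\wr\Theta_{n-1}$, then a union argument for $\Theta_\omega$ --- is the strategy of the sources the paper invokes (the paper's own proof is essentially a citation of Berger for the Reedy structure and of Bergner--Rezk for elegance, plus their Proposition 3.8 to pass to $\Theta_\omega$), so reconstructing that induction is a fair plan. But your reconstruction has a genuine gap at its core. A component of $g=(f,\{\phi_i\})$ is a map $\phi_i\colon a_i\to\prod_{f(i)\le k<f(i+1)}b_k$ into a \emph{product}; when $f$ is injective but not surjective, some interval has length $\geq 2$, so $\phi_i$ is not a morphism of $\Theta_{n-1}$, and neither your definition of the positive class (``$f$ mono and each component positive'') nor your factorization step (``factoring each surviving component $\phi_{i_j}$ in $\Theta_{n-1}$'') typechecks. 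If one instead reads ``each component positive'' as ``each matrix entry $a_i\to b_k$ is positive'', the definition is wrong. Concretely, take $g\colon [[1],1]\to [1]\vee[[1],1]$ with $f=d^1\colon[1]\to[2]$ and entries $[1]\to[0]$ and $id_{[1]}$, i.e.\ the map classifying the whiskering of the generating $2$-cell by the $1$-cell. This $g$ is injective on cells, hence a monomorphism of presheaves; moreover any non-identity negative map out of $[[1],1]$ sends the $2$-cell to a unit while $g$ does not, so the negative part of its Reedy factorization is an identity and $g$ must lie in $\Theta_+$ --- yet its entry $[1]\to[0]$ is not positive. Worse, with the entrywise classes $g$ admits no (negative, positive)-factorization at all: the equation $\chi_2\circ\psi=id_{[1]}$ on the second entry forces $\psi=id$, and then the first entry of the would-be positive part is the non-mono $[1]\to[0]$. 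So the structure you set up fails the Reedy axioms, and the equality $\Theta_+=\{\text{monomorphisms}\}$ fails for it as well.

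The repair is not cosmetic. The correct description is that $g\in\Theta_+$ iff $f$ is injective and each $\phi_i$ is a monomorphism into the product (equivalently, the family $(\phi_{ik})_k$ does not factor simultaneously through a non-identity degeneracy of $a_i$), and the factorization of a general $g$ requires, for each surviving edge $j$, a \emph{simultaneous} factorization of the whole family $(\phi_{i_jk})_k$ through a single degeneracy $a_{i_j}\to c_j$. Existence and uniqueness of this joint factorization is precisely the Eilenberg--Zilber/elegance property of $\Theta_{n-1}$ applied to the presheaf $\prod_k b_k$; the Reedy factorization of single morphisms of $\Theta_{n-1}$, which is all your step invokes (``uniqueness follows from uniqueness of the factorization in $\Delta$ together with the inductive uniqueness in $\Theta_{n-1}$''), is not enough. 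Consequently elegance of $\Theta_{n-1}$ must be part of the induction hypothesis and must be used already to \emph{construct} the Reedy structure on $\Theta_n$; your layering, which builds the Reedy structure from Reedy-ness alone and defers elegance to the end as ``the main obstacle'', cannot be carried out in that order. This is exactly why the Bergner--Rezk result the paper cites takes elegant Reedy input to produce elegant Reedy output.
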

\begin{proof}
The Reedy structure is a consequence of lemma 2.4 of \cite{Berger_a_cellular_nerve}. The fact that for any $n<\omega$, $\Theta_n$ is elegant is 
\cite[corollary 4.5.]{Bergner_reedy_category_and_the_theta_construction}. As for any $n<\omega$, the inclusion $\Theta_n\to \Theta$ preserves strong pushout, the characterization of elegant Reedy category given by \cite[proposition 3.8.]{Bergner_reedy_category_and_the_theta_construction} implies that $\Theta$ is also elegant.
\end{proof}

\begin{cor}
\label{cor:thetaplus is reedy}
The category $\Theta^+$ is an elegant Reedy category.
A morphism \(g: a \to b\) is in \(\Theta^+_-\) if it is in \(\Theta_-\).
A morphism \(i: a \to b\) is in \(\Theta^+_+\) if it is in \(\Theta_+\) or if \(a\) is \(\emptyset\).
\end{cor}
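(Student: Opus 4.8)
The plan is to transport the elegant Reedy structure of $\Theta$ obtained in proposition \ref{prop:theta is elegan reedy} across the inclusion $\Theta\hookrightarrow\Theta^+$, using that $\emptyset$ is a strict initial object of $\zocat$. The only new morphisms in $\Theta^+$ are the canonical arrows $\emptyset\to b$, one for each globular sum $b$; there is no non-identity arrow with target $\emptyset$. First I would fix a degree function $d$ witnessing that $\Theta$ is Reedy and define $d^+\colon \mathrm{ob}(\Theta^+)\to\Nb$ by $d^+(a)=d(a)+1$ for a globular sum $a$ and $d^+(\emptyset)=0$, so that $\emptyset$ has strictly smaller degree than every globular sum. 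I set $\Theta^+_-$ and $\Theta^+_+$ exactly as in the statement: the negative morphisms are those of $\Theta_-$ together with identities, and the positive morphisms are those of $\Theta_+$ together with every arrow out of $\emptyset$.

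Next I would verify the two Reedy axioms. Axiom (1) is immediate: a non-identity morphism of $\Theta^+_-$ lies in $\Theta_-$, so $d(a)>d(b)$ gives $d^+(a)>d^+(b)$; a positive morphism either lies in $\Theta_+$, where $d^+(a)<d^+(b)$ follows from $d(a)<d(b)$, or is an arrow $\emptyset\to b$, where $d^+(\emptyset)=0<d^+(b)$. For axiom (2), a morphism between globular sums factors uniquely by the Reedy structure of $\Theta$, while the arrow $\emptyset\to b$ admits the factorisation $\emptyset\xrightarrow{\mathrm{id}}\emptyset\to b$ with trivial negative part. Uniqueness here is forced: any factorisation $\emptyset\xrightarrow{g}c\xrightarrow{h}b$ with $g\in\Theta^+_-$ non-identity would require $d^+(\emptyset)>d^+(c)\ge 0$, which is impossible, so $g$ must be $\mathrm{id}_\emptyset$; and since $\emptyset$ is initial there is a single arrow $\emptyset\to b$.

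It then remains to check elegance. The key point is that no non-identity morphism of $\Theta^+_-$ leaves $\Theta$: out of $\emptyset$ only the identity is negative, and out of a globular sum every arrow lands in $\Theta$ since $\emptyset$ is initial. Consequently the notion of degeneracy of an element of $X(a)$ for $a$ a globular sum coincides with degeneracy in $\Psh{\Theta}$, so for such $a$ the required unique factorisation $c=X(f)(c')$ through a negative $f$ and a non-degenerate $c'$ is provided verbatim by the elegance of $\Theta$. For $a=\emptyset$, the only negative arrow out of $\emptyset$ is the identity, so every $c\in X(\emptyset)$ is non-degenerate and $c=X(\mathrm{id})(c)$ is the unique such factorisation. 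This establishes that $\Theta^+$ is elegant.

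I expect the only genuine subtlety to be the bookkeeping around the degree shift and the uniqueness clause of axiom (2): one must confirm that declaring every arrow out of $\emptyset$ positive is consistent, which comes down precisely to the absence of non-identity negative arrows out of the minimal-degree object $\emptyset$. All remaining verifications are inherited directly from proposition \ref{prop:theta is elegan reedy}.
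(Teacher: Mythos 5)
Your proof is correct and follows essentially the same route as the paper, whose own argument is just the one-line observation that $\Theta^+_- = \Theta_-$ (no new negative morphisms appear), so the elegant Reedy structure is inherited from $\Theta$. Your write-up simply makes explicit the degree shift, the factorisation of the arrows $\emptyset\to b$, and the fact that strict initiality of $\emptyset$ prevents any factorisation or degeneracy from passing through it — all details the paper leaves implicit.
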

\begin{proof}
This directly follows from the fact that \(\Theta\) is an elegant Reedy category and that \(\Theta^+_- = \Theta_-\).
\end{proof}

\begin{definition}
\label{defi:algebraic and globular}
We recall that a morphism $g:[\textbf{a},n]\to [\textbf{b},m]$ is exactly the data of a morphism $f:[n]\to [m]$, and for any integer $i$, a morphism
$$a_i\to \prod_{f(i)\leq k< f(i+1)}b_k.$$
The morphism $g$ is  \wcsnotion{globular}{globular morphism}{for $\zo$-categories} if for any $k<n$, $f(k+1)=f(k)+1$ and the morphism $a_k\to b_k$ is globular. The morphism $g$ is \wcnotion{algebraic}{algebraic morphism of $\Theta$} if it cannot be written as a composite $ig'$ where $i$ is a globular morphism.
\end{definition}

\begin{example}
\label{example:algebraic morphism}
The morphism 
\[\begin{tikzcd}
	\bullet & \bullet & \bullet && \bullet & \bullet & \bullet
	\arrow[from=1-5, to=1-6]
	\arrow[""{name=0, anchor=center, inner sep=0}, curve={height=-24pt}, from=1-6, to=1-7]
	\arrow[""{name=1, anchor=center, inner sep=0}, curve={height=24pt}, from=1-6, to=1-7]
	\arrow[""{name=2, anchor=center, inner sep=0}, from=1-6, to=1-7]
	\arrow[""{name=3, anchor=center, inner sep=0}, from=1-2, to=1-3]
	\arrow[""{name=4, anchor=center, inner sep=0}, curve={height=-24pt}, from=1-2, to=1-3]
	\arrow[from=1-1, to=1-2]
	\arrow[shorten <=14pt, shorten >=14pt, maps to, from=1-3, to=1-5]
	\arrow[shorten <=3pt, shorten >=3pt, Rightarrow, from=0, to=2]
	\arrow[shorten <=3pt, shorten >=3pt, Rightarrow, from=2, to=1]
	\arrow[shorten <=3pt, shorten >=3pt, Rightarrow, from=4, to=3]
\end{tikzcd}\]
is globular. This is not the case for the morphism
\[\begin{tikzcd}
	\bullet & \bullet & \bullet && \bullet & \bullet & \bullet
	\arrow[from=1-5, to=1-6]
	\arrow[""{name=0, anchor=center, inner sep=0}, curve={height=-24pt}, from=1-6, to=1-7]
	\arrow[""{name=1, anchor=center, inner sep=0}, curve={height=24pt}, from=1-6, to=1-7]
	\arrow[""{name=2, anchor=center, inner sep=0}, from=1-6, to=1-7]
	\arrow[""{name=3, anchor=center, inner sep=0}, curve={height=24pt}, from=1-2, to=1-3]
	\arrow[""{name=4, anchor=center, inner sep=0}, curve={height=-24pt}, from=1-2, to=1-3]
	\arrow[from=1-1, to=1-2]
	\arrow[shorten <=14pt, shorten >=14pt, maps to, from=1-3, to=1-5]
	\arrow[shorten <=3pt, shorten >=3pt, Rightarrow, from=0, to=2]
	\arrow[shorten <=3pt, shorten >=3pt, Rightarrow, from=2, to=1]
	\arrow[shorten <=6pt, shorten >=6pt, Rightarrow, from=4, to=3]
\end{tikzcd}\]
that sends the $2$-cell of the left globular sum on the $1$-composite of the two $2$-cells of the right globular sum.
\end{example}

\begin{prop}[{\cite[Proposition 3.3.10]{Ara_thesis}}]
\label{prop:algebraic ortho to globular}
Every morphism in $\Theta$ can be factored uniquely in an algebraic morphism followed by a globular morphism.
\end{prop}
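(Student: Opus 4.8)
The plan is to induct on the dimension $d:=\max(|[\textbf{a},n]|,|[\textbf{b},m]|)$, using throughout the explicit description of morphisms of $\Theta$ from remark \ref{remark:morphism of theta}: a morphism $g:[\textbf{a},n]\to[\textbf{b},m]$ is a map $f:[n]\to[m]$ together with, for each $i<n$, a tuple of maps $\alpha_i^k:a_i\to b_k$ indexed by $f(i)\le k<f(i+1)$. When $d=0$ the only object is $[0]$ and the only morphism is the identity, which is both algebraic and globular, so the base case holds.

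The key preliminary step is an inductive characterization of algebraic morphisms: $g$ is algebraic if and only if $f(0)=0$, $f(n)=m$, and for every $k<m$ the component $\alpha_i^k$ — where $i$ is the unique index with $f(i)\le k<f(i+1)$ — is itself algebraic. Both implications rest on unwinding definition \ref{defi:algebraic and globular}: a globular morphism out of a globular sum $[\textbf{c},p]$ is precisely a consecutive block $j\mapsto s+j$, $[p]\hookrightarrow[m]$, together with globular maps $c_j\to b_{s+j}$, and a factorization $g=i\circ g'$ with $i$ globular forces the block of $i$ to contain the interval $[f(0),f(n)]$ and forces, factor by factor, a factorization of each $\alpha_i^k$ through a globular map. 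Thus whenever $f(0)>0$, $f(n)<m$, or some $\alpha_i^k$ has a non-trivial globular part, one can extract a non-identity globular morphism, so $g$ is not algebraic; the converse follows from the uniqueness established below.

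I would then construct the factorization. Put $s:=f(0)$ and $p:=f(n)-f(0)$, giving a consecutive block $\bar f(j):=s+j$. For each $k$ with $s\le k<s+p$, let $i$ be the unique index with $f(i)\le k<f(i+1)$ and apply the induction hypothesis to the strictly lower-dimensional morphism $\alpha_i^k:a_i\to b_k$, obtaining its unique factorization $\alpha_i^k=\gamma_{k-s}\circ\delta_i^k$ with $\gamma_{k-s}:c_{k-s}\to b_k$ globular and $\delta_i^k:a_i\to c_{k-s}$ algebraic, the intermediate globular sum $c_{k-s}$ again living in $\Theta$. These data assemble into a globular sum $[\textbf{c},p]$, a globular morphism $i:=(\bar f,\{\gamma_j\}):[\textbf{c},p]\to[\textbf{b},m]$, and a morphism $g':=(f-s,\{(\delta_i^k)_k\}):[\textbf{a},n]\to[\textbf{c},p]$. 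Because the block of a globular morphism increments by one, it acts on the relevant component products factor by factor, and the composition of $\Theta$ gives $i\circ g'=g$; moreover $i$ is globular by construction, and $g'$ is algebraic by the characterization above, since $f-s$ spans $[0,p]$ and each $\delta_i^k$ is algebraic.

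Uniqueness is the last step, and also closes the characterization. Given two factorizations $g=i_1\circ g_1=i_2\circ g_2$ with $i_1,i_2$ globular and $g_1,g_2$ algebraic, the algebraicity of $g_1,g_2$ forces their underlying $\Delta$-maps to span their codomains, which pins the block of each $i_r$ to be exactly $[f(0),f(n)]$; hence $i_1,i_2$ share the underlying $\bar f$ and $g_1,g_2$ share $f-f(0)$. Comparing components, each $\alpha_i^k$ inherits two (algebraic, globular) factorizations, which coincide by the induction hypothesis, so $i_1=i_2$ and $g_1=g_2$. The main obstacle is the bookkeeping of the second paragraph: getting the equivalence between ``extracting a globular morphism'' and the combined endpoint/component condition exactly right, and checking that a globular morphism interacts with the product-valued component data factor by factor so that the composite really reproduces $g$. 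Once that is pinned down, the induction runs smoothly.
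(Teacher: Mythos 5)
The paper itself does not prove this proposition; it only cites Proposition 3.3.10 of Ara's thesis, so your self-contained argument is necessarily a different route, and in substance it is the right one. The induction on dimension is well-founded (each component $\alpha_i^k:a_i\to b_k$ satisfies $\max(|a_i|,|b_k|)\le d-1$), the characterization of algebraic morphisms as ``$f(0)=0$, $f(n)=m$, and all components algebraic'' is correct, the factor-by-factor construction composes correctly because a globular morphism has consecutive block and single-factor components, and the uniqueness comparison correctly reduces to the induction hypothesis applied to the component factorizations. One implicit convention you rely on should be made explicit: in definition \ref{defi:algebraic and globular}, ``cannot be written as $ig'$ with $i$ globular'' must be read as excluding identity $i$ (identities are globular), otherwise no morphism is algebraic.

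The one genuine flaw is organizational rather than mathematical: you defer the implication ``endpoint conditions and algebraic components $\Rightarrow$ algebraic'' to ``the uniqueness established below,'' yet you invoke exactly this implication in the existence step (to conclude that your constructed $g'$ is algebraic), which comes before uniqueness; moreover uniqueness as you state it does not deliver this implication, since applying it to the two factorizations $g=\mathrm{id}\circ g=i\circ g'$ would require already knowing that $g$ and $g'$ are algebraic. This circularity is easily repaired by proving the implication directly: if $g$ satisfies the conditions and $g=i\circ g'$ with $i$ globular with block $[s,s+p]$ and components $\gamma_j$, then $f(0)=0$ forces $s=0$ and $f(n)=m$ forces $p=m$, so the underlying simplicial map of $i$ is the identity of $[m]$; each component of $g$ then factors as $\alpha_i^k=\gamma_k\circ\delta_i^k$ with $\gamma_k$ globular, and algebraicity of $\alpha_i^k$ forces $\gamma_k=\mathrm{id}$ (this is precisely the definition), where every $k<m$ is reached because $f$ spans $[0,m]$; hence $i$ is the identity. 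Inserting this short argument before the construction makes your induction run without circularity.
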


\begin{remark}

Globular morphims belong to $\Theta_+$ (and so morphisms of $\Theta_-$ are algebraic) but the converse is false. For example, the second morphism of example \ref{example:algebraic morphism} is not globular but belongs to $\Theta_+$.
We then have two different factorizations  on $\Theta$: the one coming from the Reedy elegant structure (in a degenerate morphism followed by a monomorphism), and the one given in proposition \ref{prop:algebraic ortho to globular} (in an algebraic morphism followed by a globular morphism).
\end{remark}

\begin{definition}
\label{defi:definition of source et but}
We define for any globular sum $a$ and any integer $n$ a globular sum $s_n(a):=:t_n(a)$ and two morphisms
$$s_n(a)\to a\leftarrow t_n(a).$$
We first set $s_0(a):=:t_0(a) :=[0]$. The inclusion $s_0(a)\to a$ corresponds to the initial point and $t_0(a)\to a$ to the terminal point.
 For $n>0$, we define $s_n([\textbf{a},k]):=:t_n([\textbf{a},k]) :=[s_{n-1}(\textbf{a}),k]$ where $s_{n-1}(\textbf{a})$ is the sequence 
 $\{s_{n-1}(a_i)\}_{i<n}$.
 \end{definition}

\begin{example}
If $a$ is the globular sum of example \ref{exemple:of globular sum}, we have:
\[\begin{tikzcd}
	{s_1(a):=} & \bullet & \bullet & \bullet \\
	\\
	{a:=} & \bullet & \bullet & \bullet \\
	\\
	{t_1(a):=} & \bullet & \bullet & \bullet
	\arrow[from=3-2, to=3-3]
	\arrow[""{name=0, anchor=center, inner sep=0}, curve={height=-24pt}, from=3-3, to=3-4]
	\arrow[""{name=1, anchor=center, inner sep=0}, curve={height=24pt}, from=3-3, to=3-4]
	\arrow[""{name=2, anchor=center, inner sep=0}, from=3-3, to=3-4]
	\arrow[curve={height=-24pt}, from=1-3, to=1-4]
	\arrow[from=1-2, to=1-3]
	\arrow[from=5-2, to=5-3]
	\arrow[curve={height=24pt}, from=5-3, to=5-4]
	\arrow[shorten <=13pt, shorten >=13pt, maps to, from=1-3, to=3-3]
	\arrow[shorten <=13pt, shorten >=13pt, maps to, from=5-3, to=3-3]
	\arrow[shorten <=3pt, shorten >=3pt, Rightarrow, from=0, to=2]
	\arrow[shorten <=3pt, shorten >=3pt, Rightarrow, from=2, to=1]
\end{tikzcd}\]
\end{example}

\begin{definition}
\label{defi:definition of W}
The suspension functor $[\uvar,1]:\Theta\to \Theta$ induces by left Kan extension a functor 
$$[\uvar,1]:\Psh{\Theta}\to \Psh{\Theta}_{[\emptyset,1]/}$$
where $[\emptyset,1]$ is $\{0\}\coprod \{1\}$.

We define by induction on $a$ a $\Theta$-presheaf \wcnotation{$\Sp_a$}{(sp@$\Sp_{a}$} and a morphism $\Sp_a\to a$. 
If $a$ is $[0]$, we set $\Sp_{[0]}:=[0]$. For $n>0$, we define 
$\Sp_{[\textbf{a},n]}$ as the set valued presheaf on $\Theta$ obtained as the colimit of the diagram
\[\begin{tikzcd}
	& 1 && 1 && 1 \\
	{[ \Sp_{a_0},1]} && {[\Sp_{a_1},1]} && \cdots && {[\Sp_{a_{n-1}},1]}
	\arrow["{i_0^-}", from=1-2, to=2-3]
	\arrow["{i_0^+}"', from=1-4, to=2-3]
	\arrow["{i_0^+}"', from=1-2, to=2-1]
	\arrow["{i_0^-}", from=1-6, to=2-7]
	\arrow["{i_0^-}", from=1-4, to=2-5]
	\arrow["{i_0^+}"', from=1-6, to=2-5]
\end{tikzcd}\]
We define \wcnotation{$E^{eq}$}{(eeq@$E^{eq}$} as the set valued preheaves on $\Delta$ obtained as the colimit of the diagram
\[\begin{tikzcd}
	& {[1]} && {[1]} && {[1]} \\
	{[0]} && {[2]} && {[2]} && {[0]}
	\arrow[from=1-2, to=2-1]
	\arrow["{d^1}", from=1-2, to=2-3]
	\arrow["{d^1}"', from=1-6, to=2-5]
	\arrow["{d^0}"', from=1-4, to=2-3]
	\arrow["{d^2}", from=1-4, to=2-5]
	\arrow[from=1-6, to=2-7]
\end{tikzcd}\]
For any integer $n$, the functor $\Sigma^n:\Theta\to \Theta$, which is the $n$-iteration of $[\uvar,1]$, induces by left Kan extension a functor \ssym{(sigma@$\Sigma^n$}{for $\io$-categories}
$$\Sigma^n:\Psh{\Theta}\to \Psh{\Theta}_{\partial \Db_n/}.$$
 We define two sets of morphisms of $\Psh{\Theta}$:\sym{(w@$\W$}\sym{(wseg@$\Wseg$}\sym{(wsat@$\Wsat$}
$$\Wseg := \{\Sp_a\to a,~a\in\Theta\}~~~~\Wsat:= \{\Sigma^n E^{eq}\to \Db_n\}$$
and we set $$\W:=\Wseg\cup \Wsat.$$
For any $n$, we also define $$\mbox{$\W_n$}:= \W\cap \Theta_n.$$
\end{definition}

\begin{definition}
\label{defi:defi of delta theta}
We recall that for an integer $n$ and a globular sum $a$, we defined $[a,n]:=[\{a,a,...,a\},n]$.
 This defines a functor $i:\Delta[\Theta] \to \Theta$
sending $(n,a)$ on $[a,n]$ where 
 \wcnotation{$\Delta[\Theta]$}{(deltaTheta@$\Delta[\Theta]$} is the following pushout of category: 
\[\begin{tikzcd}
	{\{[0]\}\times \Theta} & \Delta\times\Theta \\
	1 & {\Delta[\Theta]}
	\arrow[from=1-1, to=1-2]
	\arrow[from=1-1, to=2-1]
	\arrow[from=1-2, to=2-2]
	\arrow[from=2-1, to=2-2]
	\arrow["\lrcorner"{anchor=center, pos=0.125, rotate=180}, draw=none, from=2-2, to=1-1]
\end{tikzcd}\]
For the sake of simplicity, we will also denote by $[a,n]$ (resp. $[n]$) the object of $\Delta[\Theta]$ corresponding to $(n,a)$ (resp. to $(n,[0])$).
 We define two sets of morphisms:\sym{(m@$\M$} \sym{(mseg@$\Mseg$}\sym{(msat@$\Msat$}
$$
\begin{array}{c}
\Mseg := \{[a,\Sp_n]\to [a,n],~a:\Theta\}\cup\{[f,1],~f\in \Wseg\}\\
\Msat:= \{E^{eq}\to [0]\} \cup\{[f,1],~f\in \Wsat\}
\end{array}$$
and we set $$\M := \Mseg \cup \Msat.$$

For an integer $n$, we define \wcnotation{$\Delta[\Theta_n]$}{(deltaThetan@$\Delta[\Theta_n]$} as the following pushout of category: 
\[\begin{tikzcd}
	{\{[0]\}\times\Theta_n} & {\Delta\times\Theta_n} \\
	1 & {\Delta[\Theta_n]}
	\arrow[from=1-1, to=1-2]
	\arrow[from=1-1, to=2-1]
	\arrow[from=1-2, to=2-2]
	\arrow[from=2-1, to=2-2]
	\arrow["\lrcorner"{anchor=center, pos=0.125, rotate=180}, draw=none, from=2-2, to=1-1]
\end{tikzcd}\]
and the functor $i$ induces a functor $\Delta[\Theta_n]\to \Theta_{n+1}$.
For any $n$, we define $$\mbox{$\M_n$}:= \M\cap \Delta[\Theta_n].$$ 
\end{definition}

\begin{definition}
Let $C$ be a category and $S$ a set of monomorphisms. A morphism is $f:x\to y$ is  \textit{$S$-local}
if it has the unique right lifting property against morphisms of $S$. An object $x$ is \textit{$S$-local} if $x\to 1$ is {$S$-local}, or equivalently, if for any $i:a\to b\in S$, the induced functor $\Hom(i,x):\Hom(b,x)\to \Hom(a,x)$ is an isomorphism.

We can easily check that $S$-local morphisms are stable by composition, left cancellation and pullback. As a consequence, any morphism between $S$-local objects is $S$-local. 
\end{definition}

\begin{construction}
Let $C$ be a presentable category and $S$ a set of monomorphisms with small codomains.
We define \textit{$C_{S}$} as the full subcategory of $C$ composed of $S$-local objects.
The theorem 4.1 of \cite{BOUSFIELD1977207} implies that  $\iota:C_S\to C$ is part of an adjunction
\[\begin{tikzcd}
	{\Fb_S:C} & {C_S:\iota}
	\arrow[""{name=0, anchor=center, inner sep=0}, shift left=2, from=1-1, to=1-2]
	\arrow[""{name=1, anchor=center, inner sep=0}, shift left=2, from=1-2, to=1-1]
	\arrow["\dashv"{anchor=center, rotate=-90}, draw=none, from=0, to=1]
\end{tikzcd}\]
where $\Fb_S:C\to C_S$ is the localization of $C$ by the smallest class of morphisms containing $S$ and stable under composition and colimit.
\end{construction}

\begin{theorem}[Berger]
\label{theo:theta and ocat}
Let $n\in \Nb\cup\{\omega\}$. 
The functor $\Psh{\Theta_n}\to \zncat{n}$ defined as the left Kan extension of the canonical inclusion $\Theta_n\to \zncat{n}$ induces an isomorphism 
$$\Psh{\Theta_n}_{\W_n}\cong \zncat{n}$$
\end{theorem}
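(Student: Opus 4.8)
The strategy is to produce an adjunction $L \dashv N$, where $L$ is the given left Kan extension and $N : \zncat{n} \to \Psh{\Theta_n}$ is the nerve sending $C$ to the presheaf $a \mapsto \Hom_{\zncat{n}}(a,C)$, and then to check that this adjunction restricts to an equivalence between the $\W_n$-local presheaves and $\zncat{n}$. Concretely, I would split the statement into three assertions: (a) $N$ is fully faithful; (b) $L$ sends every morphism of $\W_n$ to an isomorphism, so that each nerve is $\W_n$-local and $\zncat{n}$ embeds into $\Psh{\Theta_n}_{\W_n}$; and (c) conversely, every $\W_n$-local presheaf lies in the essential image of $N$. Granting these, $N$ identifies $\zncat{n}$ with the reflective subcategory of $\W_n$-local objects, and $L$ is the inverse equivalence.

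For (a), full faithfulness of $N$ is equivalent to the density of the inclusion $\Theta_n \to \zncat{n}$, i.e. to the counit $LN \to \mathrm{id}_{\zncat{n}}$ being invertible. Here I would invoke Berger's cellular nerve theorem: using the elegant Reedy structure (proposition \ref{prop:theta is elegan reedy}) and the algebraic–globular factorization (proposition \ref{prop:algebraic ortho to globular}), every $(0,n)$-category is canonically the colimit of the globular sums lying over it, because globes represent cells ($\Hom(\Db_k,C)\cong C_k$) and the globular sums encode all the composites. Observe also that on a representable $a \in \Theta_n$ one has $La \cong a$ and $Na$ is the representable presheaf, by fullness of $\Theta_n$ in $\zncat{n}$; in particular the unit $\eta_a$ is already invertible, a fact used below.

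For (b), I would check the two families of generators separately. For a spine inclusion $\Sp_a \to a$, the presheaf $\Sp_a$ is by construction an iterated pushout of suspensions of lower-dimensional spines glued along points; since $L$ preserves colimits and the globular sum $a$ is, by its very definition, this same colimit computed in $\zocat$, an induction on the dimension through the suspension gives $L\Sp_a \cong a$. For the saturation maps $\Sigma^k E^{eq} \to \Db_k$, the crucial point is gauntness: one has $L E^{eq} \cong [0]$ because the walking equivalence collapses once equivalences are forced to be identities, and $L$ commutes with $\Sigma^k$, whence $L\Sigma^k E^{eq} \cong \Db_k$. Thus $L$ inverts all of $\W_n$, every $NC$ is $\W_n$-local, and $\zncat{n}$ sits inside $\Psh{\Theta_n}_{\W_n}$.

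The main obstacle is (c): showing that a $\W_n$-local presheaf $X$ is a nerve, equivalently that the unit $\eta_X : X \to NLX$ is invertible. I would argue that $\Wseg$-locality endows $X$ with the structure of a strict $n$-category by reconstructing the compositions from the Segal decompositions — this is the recognition half of Berger's theorem — while $\Wsat$-locality guarantees that this $n$-category is gaunt, its only equivalences being identities. It then remains to identify $X$ with $N(LX)$: since $NLX$ is again $\W_n$-local by (b), and since $\eta_X$ becomes invertible after the reflection $\Fb_{\W_n}$ (so that $\eta_X$ lies in the saturated class generated by $\W_n$), a morphism between $\W_n$-local objects that is inverted by the localization must be an isomorphism, whence $\eta_X$ is invertible. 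The delicate input — precisely where Berger's nerve theorem, combined with the Bergner–Rezk treatment of the completeness/saturation condition, does the real work — is that the Segal and saturation conditions are exactly strong enough to recover a gaunt $n$-category and nothing more.
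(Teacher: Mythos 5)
The paper contains no internal argument for this theorem: its proof is the single citation to \cite[corollary 12.3]{Barwick_on_the_unicity_of_the_theory_of_higher_categories}. So your proposal can only be measured against the standard proof underlying that citation, and your outline — nerve adjunction $L\dashv N$, density/full faithfulness of $N$, $L$ inverting $\W_n$, recognition of $\W_n$-local presheaves as nerves of gaunt $n$-categories — is indeed that strategy. Parts (a) and (b) are acceptable as outlines, with two points needing more care than you give them: your computation $L(E^{eq})\cong[0]$ silently uses that colimits in $\zncat{n}$ are gaunt reflections of colimits in $\omegacat$ (in $\omegacat$ the colimit of that diagram is the walking isomorphism, not the point), which presupposes a cocompleteness/reflectivity statement for $\zncat{n}$ that you never address; and both (b) arguments use that $L$ commutes with suspension, which only holds in the appropriately bi-pointed sense since $[\uvar,1]$ does not preserve colimits as a functor to $\zocat$ itself. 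Also, what you call "Berger's theorem" only covers the Segal half for strict $\omega$-categories over all of $\Theta$; the truncation to $\Theta_n$ and the identification of $\Wsat$-locality with gauntness are inputs from Bergner--Rezk/Barwick--Schommer-Pries, as you partly acknowledge.

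The genuine gap is the final step of (c). You argue that $\eta_X\colon X\to NLX$ is invertible because "$\eta_X$ becomes invertible after the reflection $\Fb_{\W_n}$" and a map between $\W_n$-local objects inverted by the localization is an isomorphism. But since $X$ and $NLX$ are both $\W_n$-local, $\Fb_{\W_n}(\eta_X)$ is isomorphic to $\eta_X$ itself, so the premise is literally equivalent to the conclusion; and the parenthetical justification — that $\eta_X$ lies in the saturated class generated by $\W_n$ — is precisely the nontrivial content of the theorem (maps inverted by $L$ form a saturated class containing $\W_n$, but the reverse inclusion is the whole point, not something you may assume). The circularity is repairable from ingredients you already set up: once the recognition theorem gives $X\cong NC$ for a strict $n$-category $C$, which is gaunt by $\Wsat$-locality, the triangle identity $N(\epsilon_C)\circ\eta_{NC}=\mathrm{id}_{NC}$ together with invertibility of the counit $\epsilon_C$ (full faithfulness of $N$, your part (a)) shows $\eta_X$ is invertible, with no mention of the reflection at all. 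As written, however, the decisive step of (c) does not prove anything beyond what it assumes.
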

\begin{proof}
This is  \cite[corollary 12.3]{Barwick_on_the_unicity_of_the_theory_of_higher_categories}.
\end{proof}

\begin{construction}
\label{cons:derived adj stric case}
Suppose given an other category $D$ fitting in an adjunction
\[\begin{tikzcd}
	{F:C} & {D:G}
	\arrow[""{name=0, anchor=center, inner sep=0}, shift left=2, from=1-1, to=1-2]
	\arrow[""{name=1, anchor=center, inner sep=0}, shift left=2, from=1-2, to=1-1]
	\arrow["\dashv"{anchor=center, rotate=-90}, draw=none, from=0, to=1]
\end{tikzcd}\]
with unit $\nu$ and counit $\epsilon$,
as well as a set of morphisms $T$ of $D$ such that $F(S)\subset T$. 
By adjunction property, it implies that for any $T$-local object $d\in D$, $G(d)$ is $S$-local.
The previous adjunction induces a derived adjunction
\[\begin{tikzcd}
	{\Lb F:C_S} & {D_T:\Rb G}
	\arrow[""{name=0, anchor=center, inner sep=0}, shift left=2, from=1-1, to=1-2]
	\arrow[""{name=1, anchor=center, inner sep=0}, shift left=2, from=1-2, to=1-1]
	\arrow["\dashv"{anchor=center, rotate=-90}, draw=none, from=0, to=1]
\end{tikzcd}\]
where $\Lb F$ is defined by the formula $c\mapsto \Fb_T F(c)$ and $\Rb G$ is the restriction of $G$ to $D_T$. The unit is given by $\nu\circ \Fb_S$ and the counit by the restriction of $\epsilon$ to $D_T$.
\end{construction}

\begin{construction}
\label{cons:derived adjunction case n strict}
Let $n\in \Nb\cup\{\omega\}$.
 The functor $i:\Delta[\Theta_n]\to \Theta_{n+1}$ defined in definition \ref{defi:defi of delta theta} induces an adjunction:
$$
\begin{tikzcd}
	{ i_!:\Psh{\Delta[\Theta_n]}} & {\Psh{\Theta_{n+1}}:i^*}
	\arrow[shift left=2, from=1-1, to=1-2]
	\arrow[shift left=2, from=1-2, to=1-1]
\end{tikzcd}
$$
where the left adjoint is the left Kan extension of the functor $\Delta[\Theta_n]\to \Theta\to \Psh{\Theta_{n+1}}$.
Remark that there is an obvious inclusion $i_!(\M_{n+1})\subset \W_{n+1}$. In virtue of  construction \ref{cons:derived adj stric case}, this induces an adjunction between derived categories:
\begin{equation}
\label{eq:derived adjunction case n strict}
\begin{tikzcd}
	{\Lb i_!:\Psh{\Delta[\Theta_n]}_{\M_{n+1}}} & {\Psh{\Theta_{n+1}}_{\W{n+1}}:\Rb i^*}
	\arrow[shift left=2, from=1-1, to=1-2]
	\arrow[shift left=2, from=1-2, to=1-1]
\end{tikzcd}
\end{equation}
The theorem \ref{theo:theta and ocat} and the corollary \ref{cor:changing theta} (which is proved in the next section) induce equivalences
$$\zocat\cong \Psh{\Theta_{n+1}}_{\W_{n+1}}\cong \Psh{\Delta[\Theta_n]}_{\M_{n+1}}.$$
\end{construction}

\subsection{The link between presheaves on $\Theta$ and on $\Delta[\Theta]$}

\begin{definition}
\label{defi:reedycof}
Let $C$ be a cocomplete category. A functor $F:A\to C$ is \textit{Reedy cofibrant}\index[notion]{Reedy cofibrant functor} if $A$ has a structure of  an elegant Reedy category (definition \ref{defi:reedy}) and for every object $a$, the  induced morphism $\colim_{\partial a}F\to F(a)$ is a monomorphism.
\end{definition}

\begin{definition}
\label{defi:precomplet}
A class of monomorphism $T$ of a  cocomplete category $C$ is \wcnotionsym{precocomplete}{(ss@$\overline{S}$}{precocomplete set of arrows} if
\begin{enumerate}
\item[$-$] It is closed by transfinite compositions and pushouts.
\item[$-$] It is closed by \notion{left cancellation}, i.e for any pair of composable morphisms $f$ and $g$, if $gf$ and $f$ are in $T$ , so is $g$.
\item[$-$] For any Reedy cofibrant diagram $F:A\to \Arr(C)$  that is pointwise in $S$, the morphism $\colim_AF$ is in $T$.
\end{enumerate} 
For a set of monomorphisms $S$, we denote $\overline{S}$ the smallest precocomplete class of morphisms containing $S$.
\end{definition}

\begin{lemma}
\label{lemma:precomplete included into cocomplete strict case}
If $S$ is a set of monomorphisms of a cocomplete category $C$, $\overline{S}$ is included in the smallest class of morphisms of $C$ stable under colimit and composition.
\end{lemma}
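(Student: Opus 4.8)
The plan is to prove the inclusion by checking that the target class is already closed under the operations that generate $\overline{S}$. Write $\mathcal{L}$ for the smallest class of morphisms of $C$ containing $S$ and stable under colimits and composition. Since $\overline{S}$ is by definition the smallest precocomplete class containing $S$ (Definition \ref{defi:precomplet}) and $S \subseteq \mathcal{L}$, it suffices to verify that $\mathcal{L}$ satisfies the three defining clauses of precocompleteness; minimality of $\overline{S}$ then yields $\overline{S} \subseteq \mathcal{L}$.

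Two of the three clauses are formal consequences of stability under colimits and composition. A pushout is the colimit of a span, so $\mathcal{L}$ is closed under pushout. A transfinite composition $X_0 \to \colim_i X_i$ is the colimit of the arrow-diagram sending $i$ to the partial composite $X_0 \to X_i$; each partial composite lies in $\mathcal{L}$ by closure under composition, with a transfinite induction handling the limit stages, so the transfinite composite lies in $\mathcal{L}$ as well. Finally, the colimit of a Reedy cofibrant diagram $F : A \to \Arr(C)$ (Definition \ref{defi:reedycof}) that is pointwise in $\mathcal{L}$ is a colimit of arrows of $\mathcal{L}$, hence again in $\mathcal{L}$; here the Reedy cofibrancy hypothesis is not even needed beyond the fact that one is taking a colimit of arrows, stability under arbitrary colimits of arrow-diagrams being enough.

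The real obstacle is the left cancellation clause: given $gf, f \in \mathcal{L}$ with $g$ a monomorphism, one must conclude $g \in \mathcal{L}$. Cancellation is not a colimit construction, so this does \emph{not} follow formally from the two stability properties, and it is here that the argument must do something genuinely different. The plan is to exploit the localization $\Fb_S$ of the preceding construction, for which $\mathcal{L}$ is exactly the defining class: the morphisms that $\Fb_S$ carries to isomorphisms satisfy the two-out-of-three property, since from $\Fb_S(gf)$ and $\Fb_S(f)$ invertible one reads off that $\Fb_S(g) = \Fb_S(gf)\,\Fb_S(f)^{-1}$ is invertible.

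The step I expect to require the most care is precisely to guarantee that the cancelled morphism $g$ lands back in $\mathcal{L}$ itself and not merely in the (a priori larger) class of all $\Fb_S$-inverted maps; this is what must be pinned down, presumably by using that in the cocomplete setting $\mathcal{L}$ is stable enough — for instance under the idempotent splittings available in $\Arr(C)$ — to absorb such cancellations. A secondary and routine point is that the colimits of monomorphisms appearing above remain monomorphisms, so that $\mathcal{L}$ interacts correctly with the ``class of monomorphisms'' requirement built into the definition of precocompleteness.
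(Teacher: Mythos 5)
Your overall strategy coincides with the paper's: let $\mathcal{L}$ be the smallest class containing $S$ and stable under colimit and composition, and verify that $\mathcal{L}$ satisfies the three clauses of precocompleteness, so that minimality of $\overline{S}$ gives the inclusion. Your handling of transfinite compositions, pushouts and Reedy-cofibrant colimits is essentially the paper's. But the proof breaks down at exactly the step you flag as the obstacle: left cancellation. Your assertion that cancellation ``is not a colimit construction'' and ``does not follow formally from the two stability properties'' is false, and the substitute argument you sketch does not close the gap. Applying $\Fb_S$ and two-out-of-three only shows that $g$ lies in the class of morphisms inverted by $\Fb_S$, which is a priori (and in general genuinely) larger than $\mathcal{L}$ --- it is a saturated class, whereas $\mathcal{L}$ need not be saturated. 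You acknowledge that $g$ must be brought back into $\mathcal{L}$, but the appeal to ``idempotent splittings in $\Arr(C)$'' is not an argument; no mechanism is given. As written, the crucial clause of precocompleteness is unproved.

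The point you missed is that left cancellation \emph{does} follow formally from stability under colimits of arrow diagrams, by the same device that handles pushouts. Given composable $f\colon a \to b$ and $g\colon b \to c$ with $f$ and $gf$ in $\mathcal{L}$, consider the natural transformation of spans
\[
\bigl(b \xleftarrow{f} a \xrightarrow{id_a} a\bigr) \longrightarrow \bigl(b \xleftarrow{id_b} b \xrightarrow{g} c\bigr)
\]
with components $id_b$, $f$ and $gf$, all in $\mathcal{L}$ (granting, as both you and the paper implicitly do, that identities lie in $\mathcal{L}$). The pushout of the first span is $b$, the pushout of the second is $c$, and the induced map on pushouts is exactly $g$; stability under colimits then puts $g$ in $\mathcal{L}$. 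This is precisely the paper's proof. Two smaller corrections: the left-cancellation clause in the definition of precocompleteness carries no monomorphism hypothesis on $g$, so you should not add one; and your pushout step should be phrased the same way --- the pushout $j$ of $i \in \mathcal{L}$ along $a \to c$ is the colimit in $\Arr(C)$ of the span $id_c \leftarrow id_a \to i$ --- since ``a pushout is the colimit of a span'' is a statement about objects and does not by itself exhibit the \emph{morphism} $j$ as a colimit of morphisms in $\mathcal{L}$.
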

\begin{proof}
We denote $\widehat{S}$ the smallest class of morphisms of $C$ stable under colimit and composition. We have to show that $\widehat{S}$ is precomplete.

Let $\kappa$ be an cardinal and \(x_\uvar: \kappa \to C\) a functor such that for any \(\alpha < \kappa\), \(x_\alpha \to x_{\alpha+1}\) is in $\widehat{S}$. We show by transfinite composition that for any \(\alpha \leq \kappa\), \(x_0 \to x_\alpha\) is in $\widehat{S}$.

Suppose first that \(\alpha\) is of the shape \(\beta + 1\). The morphism \(x_0 \to x_{\beta + 1}\) is equal to the composite \(x_0 \to x_\beta \to x_{\beta + 1}\) and is then in $\widehat{S}$. If \(\alpha\) is a limit ordinal, \(x_0 \to x_\alpha\) is the colimit of the sequence of morphisms \(\{x_0 \to x_\beta\}_{\beta < \alpha}\) and is then in $\widehat{S}$.

Suppose now that we have a cocartesian square
\[\begin{tikzcd}
	a & c \\
	b & d
	\arrow[from=1-1, to=1-2]
	\arrow["i"', from=1-1, to=2-1]
	\arrow["j", from=1-2, to=2-2]
	\arrow[from=2-1, to=2-2]
	\arrow["\lrcorner"{anchor=center, pos=0.125, rotate=180}, draw=none, from=2-2, to=1-1]
\end{tikzcd}\]
where $i$ is in $\widehat{S}$. The morphism $j$ is then the horizontal colimit of the diagram
\[\begin{tikzcd}
	a & a & c \\
	b & a & c
	\arrow["i"', from=1-1, to=2-1]
	\arrow[from=1-2, to=1-1]
	\arrow[from=1-2, to=1-3]
	\arrow[from=1-2, to=2-2]
	\arrow[from=1-3, to=2-3]
	\arrow["i", from=2-2, to=2-1]
	\arrow[from=2-2, to=2-3]
\end{tikzcd}\]
and is then in $\widehat{S}$.

Let $f, g$ be two composable morphisms such that \(gf\) and \(f\) are in \(\widehat{S}\). The morphism \(g\) is then the horizontal colimit of the diagram:
\[\begin{tikzcd}
	b & a & a \\
	b & b & c
	\arrow["{id_b}", from=1-1, to=2-1]
	\arrow[from=1-2, to=1-1]
	\arrow[from=1-2, to=1-3]
	\arrow["f", from=1-2, to=2-2]
	\arrow["gf", from=1-3, to=2-3]
	\arrow[from=2-2, to=2-1]
	\arrow[from=2-2, to=2-3]
\end{tikzcd}\]
and is then in $\widehat{S}$. 

Eventually, as \(\widehat{S}\) is obviously closed under colimits of Reedy cofibrant diagrams, this concludes the proof.
\end{proof}

In the previous construction, we constructed an adjonction
$$
\begin{tikzcd}
	{ i_!:\Psh{\Delta[\Theta]}} & {\Psh{\Theta}:i^*}
	\arrow[shift left=2, from=1-1, to=1-2]
	\arrow[shift left=2, from=1-2, to=1-1]
\end{tikzcd}
$$
The aim of this subsection is to demonstrate the following theorem:

\begin{theorem}
\label{theo:unit and counit are in W}
For any $a\in \Theta$ and $b\in \Delta[\Theta]$, morphisms $i_!i^*a\to a$ and $b\to i^*i_! b$ are respectively in $\overline{\W}$ and $\overline{\M}$.
\end{theorem}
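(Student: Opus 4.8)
The plan is to prove the two statements by induction on dimension, using as the main engine the fact that \emph{both} adjoints preserve colimits: $i_!$ because it is a left adjoint, and $i^*$ because, being restriction along $i$, it is computed pointwise. Hence the endofunctors $i_!i^*$ and $i^*i_!$ preserve colimits, and the counit $i_!i^*a\to a$ and the unit $b\to i^*i_!b$ are pointwise-colimit-preserving in $a$ and $b$. Since $\overline{\W}$ and $\overline{\M}$ are precocomplete (definition \ref{defi:precomplet}), they are closed under pushouts, transfinite composition, left cancellation, and colimits of Reedy cofibrant diagrams of arrows that are pointwise in them; together with the elegance of $\Theta$ and of $\Delta[\Theta]$ (proposition \ref{prop:theta is elegan reedy}) this lets me reconstruct each comparison morphism on an arbitrary object from its values on a restricted family of atomic objects, provided the gluing diagrams are Reedy cofibrant. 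Throughout I would record the density formula $i_!i^*a\cong\colim_{[c,n]\to a}[c,n]$, the colimit ranging over the comma category $i\downarrow a$, with the canonical map to $a$.

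For the counit, writing a globular sum through its wedge decomposition $[\textbf a,m]=[a_0,1]\vee\dots\vee[a_{m-1},1]$ (definition \ref{defi:les sommes glob}) exhibits $a$ as the colimit of a Reedy cofibrant diagram whose vertices are the suspensions $[a_i,1]$ and whose gluing objects are copies of $[0]$. Because $i_!i^*$ preserves this colimit and the counit at $[0]=i([0])$ is an isomorphism (the comma category $i\downarrow[0]$ has a terminal object), precocompleteness reduces the claim to suspensions $[c,1]$, with $|c|<|[c,1]|$. For such a $c$ I would analyse $i^*[c,1]$ via remark \ref{remark:morphism of theta}: a morphism $[d,n]\to[c,1]$ is a simplicial operator $[n]\to[1]$ together with a single map $d\to c$ sitting at its unique jump. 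Applying $i_!$ and comparing with the inductive definition of the spine, which gives $\Sp_{[c,1]}=[\Sp_c,1]$ (definition \ref{defi:definition of W}), I would exhibit the counit at $[c,1]$ as a composite built from the suspended spine inclusion $\Sp_{[c,1]}=[\Sp_c,1]\to[c,1]$ (an element of $\Wseg$) and the suspension of the counit at $c$. The latter lies in $\overline{\W}$ by the inductive hypothesis, and suspension carries $\overline{\W}$ into itself because it is colimit preserving and sends each generator of $\W$ to a generator of $\W$: indeed $[\Sp_a\to a,1]=(\Sp_{[a,1]}\to[a,1])\in\Wseg$ and $[\Sigma^nE^{eq}\to\Db_n,1]=(\Sigma^{n+1}E^{eq}\to\Db_{n+1})\in\Wsat$. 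Hence the counit at $[c,1]$ lies in $\overline{\W}$, which closes the induction.

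For the unit at $b=(n,c)$, so that $i^*i_!b=i^*[c,n]$, the same remark \ref{remark:morphism of theta} shows that a $\Theta$-morphism $[d,m]\to[c,n]$ consists of a simplicial operator $f\colon[m]\to[n]$ together with, for each $i$, a map $d\to c^{\times(f(i+1)-f(i))}$, whereas the $\Delta[\Theta]$-morphisms seen by $b$ only produce the single-factor, globular ones. The gap is exactly what the generators of $\Mseg$ are designed to fill: the $\Delta$-direction Segal maps $[a,\Sp_n]\to[a,n]$ account for the non-globular operators $f$ and the resulting products $c^{\times j}$, while the suspended spines $[f,1]$ with $f\in\Wseg$ account for the recursion into $c$. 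Decomposing $i^*[c,n]$ along the spine $\Sp_n\to[n]$ in the $\Delta$-direction, so that it becomes a wedge of copies of $i^*[c,1]$, and feeding in the $\Theta$-direction analysis above, I would present the unit as built by pushout, composition and Reedy cofibrant gluing from morphisms of $\Mseg$, hence as a member of $\overline{\M}$.

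The main obstacle is the explicit bookkeeping in these two representable computations: identifying $i_!i^*a$ and $i^*i_!b$ concretely enough to see that the comparison maps factor through the designated generators, and verifying that all the gluing diagrams involved (the wedge decomposition, the $\Delta$-Segal decomposition, and their suspensions) are genuinely Reedy cofibrant so that precocompleteness of $\overline{\W}$ and $\overline{\M}$ can be invoked. This rests on compatibility lemmas relating $i_!$ and $i^*$ to the suspension functor and to the wedge operation $\vee$, together with the matching definitions of the Segal parts $\Wseg$ and $\Mseg$; the saturation generators $\Wsat$ and $\Msat$ are carried along but enter only formally, since $\Msat$ mirrors $\Wsat$ under suspension. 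I also expect that the cleanest organisation runs the two inductions simultaneously, since the counit at a suspension $[c,1]$ and the unit at $(1,c)$ both reduce to the same lower-dimensional data about $c$.
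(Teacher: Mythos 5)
There is a genuine gap, and it sits at the first reduction in both halves of your argument. The wedge decomposition $[\textbf{a},n]=[a_0,1]\vee\dots\vee[a_{n-1},1]$ of definition \ref{defi:les sommes glob} is a colimit in $\zocat$, \emph{not} in $\Psh{\Theta}$: computed in $\Psh{\Theta}$, the colimit of that zigzag is a proper subobject of the representable $[\textbf{a},n]$ (the columns glued along points), and the inclusion of this subobject into $[\textbf{a},n]$ is, up to the spine inclusions of the $a_i$, exactly a Segal map --- the very kind of morphism in $\W$ that the theorem is trying to control. Since $i_!$ and $i^*$ are functors between presheaf categories, the step ``because $i_!i^*$ preserves this colimit'' is vacuous: there is no presheaf colimit to preserve, so the reduction of the counit to suspensions silently replaces the representable by (essentially) its spine, which begs the question. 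The unit half has the same flaw: ``decomposing $i^*[c,n]$ along the spine $\Sp_{[n]}\to[n]$ so that it becomes a wedge of copies of $i^*[c,1]$'' is false as a statement about presheaves on $\Delta[\Theta]$, because $i^*[c,n]$ contains elements that no such wedge sees.

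One can check that your reduction would prove too much. The counit is an isomorphism at $[0]$, at every globe, and --- contrary to your description of it as a nontrivial composite of spine inclusions --- at every suspension $[c,1]$: by remark \ref{remark:morphism of theta}, $i$ induces a bijection $\Hom_{\Delta[\Theta]}(b,[c,1])\cong\Hom_{\Theta}(i(b),[c,1])$ for every $b$, so the comma category $i\downarrow[c,1]$ has a terminal object and $i_!i^*[c,1]\to[c,1]$ is invertible. If the reduction were legitimate, the counit would then be invertible at every globular sum; it is not. For instance, the morphism $[\Db_1,1]\to[\Db_1,2]$ given by the operator $[1]\to[2]$, $0\mapsto 0$, $1\mapsto 2$, together with two \emph{distinct} maps $\Db_1\to\Db_1$ into the two columns, is not in the image of $i$ (morphisms coming from $\Delta[\Theta]$ carry the same ``diagonal'' map on every column). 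These non-diagonal morphisms are exactly what makes $i_!i^*a\to a$ fail to be invertible, and they are invisible to any column-by-column decomposition. This is why the paper never decomposes the representable: it analyses $i^*[\textbf{a},n]$ directly, organising all morphisms $i(b)\to[\textbf{a},n]$ through the category $\Theta^{\hookrightarrow}_{/\textbf{a}}$ and its height filtration, whose successive stages are pushouts of the maps $[v,d^0\cup d^n]\cup[\partial v,n]\to[v,n]$ (lemmas \ref{lemma:i etoile of W is in M 0}--\ref{lemma:i etoile of W is in M 2}); this yields $i^*(\Sp_a\to a)\in\overline{\M}$, hence $i^*\W\subset\overline{\M}$, and the theorem then follows from the short square-and-left-cancellation argument, using that the counit \emph{is} an identity on spines (spines, unlike representables, are genuine presheaf colimits of globes). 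That filtration is precisely the content your outline defers to ``explicit bookkeeping''.
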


As a corollary, we directly have:
\begin{cor}
\label{cor:changing theta}
For any $n\in \Nb\cup\{\omega\}$, the adjunction 
$$\begin{tikzcd}
	{\Lb i_!:\Psh{\Delta[\Theta]_n}_{\M_n}} & {\Psh{\Theta_{n+1}}_{\W_n}:\Rb i^*}
	\arrow[shift left=2, from=1-1, to=1-2]
	\arrow[shift left=2, from=1-2, to=1-1]
\end{tikzcd}$$
given in \eqref{eq:derived adjunction case n strict} is an adjoint equivalence. 
\end{cor}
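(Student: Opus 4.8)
The plan is to show that the derived adjunction is an adjoint equivalence by proving that its unit and counit are invertible, the two halves being governed by the two statements of theorem \ref{theo:unit and counit are in W}. Let me call a morphism an \emph{$\Fb_\M$-equivalence} (resp. \emph{$\Fb_\W$-equivalence}) if it is inverted by $\Fb_\M$ (resp. $\Fb_\W$); by the construction of $\Fb_\M,\Fb_\W$ together with lemma \ref{lemma:precomplete included into cocomplete strict case}, the classes $\overline\M$ and $\overline\W$ consist of such equivalences, and these classes are stable under colimits since $\Fb_\M,\Fb_\W$ are left adjoints. I will write $\Lb i_! = \Fb_\W i_!$, and note that $\Rb i^*$ is the restriction of $i^*$, which indeed lands in $\M$-local objects because $i_!(\M)\subseteq\W$ (construction \ref{cons:derived adj stric case}). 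The first step is to upgrade theorem \ref{theo:unit and counit are in W} from representables to arbitrary presheaves: since $i^*$ admits a right adjoint $i_*$ and $i_!$ is a left adjoint, both $i_!i^*$ and $i^*i_!$ preserve colimits, so the counit $\epsilon\colon i_!i^*\Rightarrow\id$ and the unit $\nu\colon\id\Rightarrow i^*i_!$ are objectwise colimits of their values at representables. Hence $\epsilon_d$ is an $\Fb_\W$-equivalence for every $d\in\Psh{\Theta}$ and $\nu_b$ is an $\Fb_\M$-equivalence for every $b\in\Psh{\Delta[\Theta]}$.

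For the counit, let $d$ be $\W$-local. The derived counit $\Fb_\W i_!i^*d\to d$ is the factorization of $\epsilon_d$ through its $\W$-localization; since $\epsilon_d$ is an $\Fb_\W$-equivalence with $\W$-local target, this factorization is an isomorphism. Thus the counit is invertible, so $\Rb i^*$ is fully faithful and $\Lb i_!$ is essentially surjective, every $\W$-local $d$ being isomorphic to $\Lb i_!(i^*d)$.

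For the unit, let $b$ be $\M$-local. The derived unit is
\[ b \xrightarrow{\ \nu_b\ } i^*i_!b \xrightarrow{\ i^*\lambda\ } i^*\Fb_\W i_!b, \]
where $\lambda\colon i_!b\to\Fb_\W i_!b$ is the $\W$-localization map. As $\nu_b$ is already an $\Fb_\M$-equivalence, the unit is an isomorphism as soon as $i^*\lambda$ is an $\Fb_\M$-equivalence; equivalently, as soon as $i^*$ carries $\Fb_\W$-equivalences to $\Fb_\M$-equivalences. This is the one point that is \emph{not} formal from the adjunction, and I expect it to be the main obstacle. Because $i^*$ preserves colimits and the $\Fb_\M$-equivalences are closed under colimits, two-out-of-three and retracts, the class of morphisms $w$ with $i^*w$ an $\Fb_\M$-equivalence is closed under the operations that generate the $\Fb_\W$-equivalences from $\W$; it therefore suffices to verify that $i^*(\Sp_a\to a)$ for $a\in\Theta$ and $i^*(\Sigma^n E^{eq}\to\Db_n)$ are $\Fb_\M$-equivalences. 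To carry this out I would unfold definition \ref{defi:defi of delta theta}, where $\M$ is built precisely so that the suspensions $[f,1]$ of the maps $f\in\W$ already lie in $\M$, and combine the colimit presentations of the objects $[a,n]$ with the second half of theorem \ref{theo:unit and counit are in W}, reducing each generator to a composite and colimit of $\overline\M$-maps.

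Once both unit and counit are invertible, the derived adjunction of \eqref{eq:derived adjunction case n strict} is an adjoint equivalence. The passage from the full case ($\Theta$, $\Delta[\Theta]$) treated by theorem \ref{theo:unit and counit are in W} to the statement for each $n\in\Nb\cup\{\omega\}$ is uniform: all the functors and the colimit arguments above are compatible with the filtration $\Theta=\colim_n\Theta_n$ and with the truncations, so no separate argument is needed, and the case $n=\omega$ follows by passing to the colimit. The genuine content, and the step I would spend the most effort on, is the compatibility of $i^*$ with the two localizations feeding the unit.
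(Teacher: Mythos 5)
Your formal skeleton is correct, and it follows the same strategy as the paper's own (very terse) proof: localize theorem \ref{theo:unit and counit are in W} from representables to all presheaves by colimits, using lemma \ref{lemma:precomplete included into cocomplete strict case} and the Bousfield construction to know that $\overline{\W}$ and $\overline{\M}$ are inverted by $\Fb_\W$ and $\Fb_\M$, then deduce invertibility of the derived counit and unit. Your treatment of the counit is complete. You are also right — and this is the real merit of your write-up, since the paper's two-line proof glosses over it — that the derived unit does \emph{not} follow formally: one needs $i^*$ to carry the localization map $i_!b\to \Fb_\W i_!b$ to an $\Fb_\M$-equivalence, and invertibility of unit and counit alone cannot give this (toy counterexample: $F=G=id$ on sets, $S=\emptyset$, $T=\{\emptyset\to[0]\}$; unit and counit are identities, yet the derived adjunction between sets and singletons is not an equivalence).

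The gap is in how you propose to discharge that point. What you need — that $i^*(\Sp_a\to a)$ and $i^*(\Sigma^n E^{eq}\to\Db_n)$ are $\Fb_\M$-equivalences — is precisely proposition \ref{prop:i etoile of W is in M} ($i^*\W\subset\overline{\M}$), which the paper proves \emph{before} theorem \ref{theo:unit and counit are in W} via the lemmas \ref{lemma:i etoile of W is in M 0} through \ref{lemma:i etoile of W is in M 2} (the factorization through $\Theta^{\hookrightarrow}_{/\textbf{a}}$, the height filtration of $i^*[\textbf{a},n]$, and the resulting cocartesian squares); you should cite it rather than re-derive it. Your sketched derivation, "combine the colimit presentations of the objects $[a,n]$ with the second half of theorem \ref{theo:unit and counit are in W}", is circular: in the paper the logical order is proposition $\Rightarrow$ theorem, and running it backwards through the naturality squares of $\nu$ and $\epsilon$ only reduces the claim $i^*f\in W_{\M}$ for $f\in\W$ to the claim that $i^*$ sends $\W$-equivalences to $\M$-equivalences — exactly what is being proved. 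The combinatorial analysis of the presheaf $i^*[\textbf{a},n]$ cannot be bypassed by the adjunction formalism. Two smaller points: the closure operations you invoke should include those actually used to build the localization map (pushouts, codiagonals and transfinite compositions, not just colimits, two-out-of-three and retracts); and the case $n=\omega$ is the case \emph{directly} covered by theorem \ref{theo:unit and counit are in W} (since $\Theta_\omega=\Theta$ and $\omega+1=\omega$), so it is the finite cases, not the $\omega$ case, that require the remark about compatibility with truncation.
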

\begin{proof}
This is a consequence of theorem \ref{theo:unit and counit are in W} and lemma \ref{lemma:precomplete included into cocomplete strict case}, which states that $\overline{\W_n}$ (resp. $\overline{\M_n}$) is included in the smallest class containing $\W_n$ (resp. $\M_n$) and stable under composition and colimit.
\end{proof}

\begin{definition}
We denote by 
$$[\uvar,\uvar]: \Psh{\Theta}\times \Psh{\Delta}\to \Psh{\Delta[\Theta]}_{[\emptyset,n]/}$$
the left Kan extension of the functor $\Theta\times \Delta\to \Psh{\Delta[\Theta]}$ sending $(a,n)$ onto $[a,n]$, and where $[\emptyset,n]:=\coprod_{k\leq n}\{k\}$.
For an integer $n$, we denote
$$[\uvar,n]:\Psh{\Theta}^n\to \Psh{\Theta}_{[\emptyset,n]/}$$ 
the left Kan extension of the functor 
$\Theta^n\to\Psh{\Theta}_{[\emptyset,n]/}$ sending $\textbf{a}:=\{a_1,...,a_n\}$ onto $[\textbf{a},n]$,  and where $[\emptyset,n]:= \coprod_{k\leq n}\{k\}$. Eventually, we define 
$$[\uvar,d^0\cup d^n]:\Psh{\Theta}^n\to \Psh{\Theta}_{[\emptyset,n]/}$$ 
the left Kan extension of the functor 
$\Theta^n\to\Psh{\Theta}_{[\emptyset,n]/}$ sending $\textbf{a}:=\{a_1,...,a_n\}$ onto the colimit of the span.
$$[\{a_0,...,a_{n-2}\},{n-1}]\leftarrow [\{a_1,...,a_{n-2}\},{n-2}]\to [\{a_1,...,a_{n-1}\},{n-1}]$$
\end{definition}

\begin{lemma}
\label{lemma:the functor [] preserves classes}
The image of $\overline{\W}\times \overline{\W_1}$ by the functor $[\uvar,\uvar]:\Psh{\Theta}\times \Psh{\Delta}\to \Psh{\Delta[\Theta]}$ is included in $\overline{\W}$.
\end{lemma}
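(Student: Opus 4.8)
We show that the image lies in the precocomplete class $\overline{\M}\subseteq\Arr(\Psh{\Delta[\Theta]})$. The plan is to exploit that $[\uvar,\uvar]$, being a left Kan extension, is cocontinuous in each variable separately, and to reduce to a short list of generators via the precocomplete machinery of Definition \ref{defi:precomplet}. The backbone is the following principle, used repeatedly: if $H\colon\mathcal C\to\mathcal D$ is a cocontinuous functor of presheaf categories that preserves monomorphisms and $T\subseteq\Arr(\mathcal D)$ is precocomplete, then $H^{-1}T:=\{f : Hf\in T\}$ is precocomplete — closure under transfinite composition and pushout follows from cocontinuity, left cancellation is mere functoriality, and closure under colimits of Reedy cofibrant diagrams uses that $H$ preserves monomorphisms (so it carries a Reedy cofibrant diagram of arrows to one). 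In particular $H(\overline{S})\subseteq\overline{H(S)}$. I apply this to the partial functors $[\uvar,Y]\colon\Psh\Theta\to\Psh{\Delta[\Theta]}$ and $[a,\uvar]\colon\Psh\Delta\to\Psh{\Delta[\Theta]}$; the one technical input required is that these preserve monomorphisms, which I will deduce from the elegance of the Reedy categories $\Theta$ and $\Delta[\Theta]$ (Proposition \ref{prop:theta is elegan reedy} and Proposition \ref{prop:elelangat stable by slice}).

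Given $f\in\overline{\W}$ and $g\in\overline{\W_1}$, say $f\colon X\to X'$ and $g\colon Y\to Y'$, I factor $[f,g]$ as $[X,Y]\xrightarrow{[\mathrm{id}_X,g]}[X,Y']\xrightarrow{[f,\mathrm{id}_{Y'}]}[X',Y']$, and since $\overline{\M}$ is closed under composition this reduces the statement to two one-variable claims: (A) $[f,\mathrm{id}_Y]\in\overline{\M}$ for all $f\in\overline{\W}$ and all $Y$, and (B) $[\mathrm{id}_X,g]\in\overline{\M}$ for all $g\in\overline{\W_1}$ and all $X$. In each case I reduce both arguments. For (A): for fixed $m$, the precocomplete class $[\uvar,[m]]^{-1}\overline{\M}$ contains the generators $\W$ (proved below), hence contains $\overline{\W}$; then for fixed $f\in\overline{\W}$, writing $Y$ as its canonical Reedy cofibrant colimit of representables (afforded by the elegance of $\Delta$) and using mono-preservation of $[X,\uvar]$ and $[X',\uvar]$, the map $[f,\mathrm{id}_Y]$ becomes a Reedy cofibrant colimit of the $[f,\mathrm{id}_{[m]}]$, so it lies in $\overline{\M}$. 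Claim (B) is reduced symmetrically, using elegance of $\Theta$, to $X=a\in\Theta$ and $g\in\W_1$.

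The generator case of (A) rests on the wedge decomposition $[X,[m]]\cong[X,1]\vee\dots\vee[X,1]$ ($m$ factors): both sides are cocontinuous in $X$ and agree on representables, each $\vee$ being a pushout along the endpoint inclusions $[0]\to[X,1]$. Thus $[f,[m]]$ is the $m$-fold wedge of $[f,1]$, obtained from $[f,1]$ by $m$ successive pushouts along point inclusions; as $[f,1]$ lies in $\Mseg$ for $f\in\Wseg$ and in $\Msat$ for $f\in\Wsat$ — these being precisely the $\{[f,1]\}$-generators of $\M$ — closure under pushout and composition gives $[f,[m]]\in\overline{\M}$ (the case $m=0$ is an identity). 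For (B) one has $\W_1=\{\Sp_m\to[m]\}\cup\{E^{eq}\to[0]\}$. The Segal generator is immediate, since $[a,\Sp_m]\to[a,m]$ is literally a generator of $\Mseg$. This leaves the single nontrivial base case $[a,E^{eq}]\to[0]$.

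That base case is the main obstacle, and it is where the definition of $\M$ stops handing us the answer: $\M$ contains $E^{eq}\to[0]$ only for $a=[0]$, so for a general globular sum $a$ the map must be synthesised. My plan is to reduce $a$ to a globe $\Db_k$ (cocontinuity in the $\Theta$-variable and mono-preservation of $[\uvar,E^{eq}]$), and then to prove $[\Db_k,E^{eq}]\to[0]\in\overline{\M}$ by left cancellation: since $\mathrm{id}_{[0]}\in\overline{\M}$ (apply left cancellation to the generator $E^{eq}\to[0]$), it suffices to exhibit a section $[0]\to[\Db_k,E^{eq}]$ in $\overline{\M}$, i.e.\ to show that adjoining the $\Db_k$-indexed equivalence is anodyne. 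The delicate point — and the reason this cannot be read off the generators — is that the obvious section is $[\Db_k,\iota]$ for the object inclusion $\iota\colon[0]\to E^{eq}$, which only lies in $\overline{\W_1}$ and is not a generator, so a naive appeal to (B) would be circular. I expect to break the circularity by an induction on $\dim a$: decomposing $[\Db_k,E^{eq}]$ along its $E^{eq}$-presentation, one splits the $[\Db_k,[2]]$-pieces into wedges using the Segal generators of $\Mseg$ and contracts the witnessing cells using the suspension–saturation generators $[f,1]$ with $f\in\Wsat$, these steps invoking the equivalence base case only in strictly lower dimension. Pinning down this contraction rigorously is the crux of the argument.
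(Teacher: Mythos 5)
Your overall skeleton is the same as the paper's: the paper's entire proof is the remark that $[\uvar,\uvar]$ preserves colimits and monomorphisms, so that membership only needs to be checked on generator pairs $(f,g)\in\W\times\W_1$ (and you are right to read the class $\overline{\W}$ in the statement as $\overline{\M}$, since the target is $\Psh{\Delta[\Theta]}$). The divergence is in the generator cases, where your proposal has two genuine problems. First, the wedge decomposition on which your claim (A) rests is false: in $\Psh{\Delta[\Theta]}$ the $m$-fold wedge $[X,1]\vee\dots\vee[X,1]$ is $[X,\Sp_m]$, and for $X=a$ representable and $m\geq 2$ the comparison $[a,\Sp_m]\to[a,m]$ is a proper monomorphism (the identity of the representable $[a,m]$ cannot factor through the spine, by a rank argument on the $\Delta$-component), so the two sides do not agree on representables. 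Indeed, the Segal generators $[a,\Sp_n]\to[a,n]$ of $\Mseg$ exist precisely because this comparison is not invertible. Claim (A) itself is true, but needs a different argument: for $f\colon\Sp_b\to b$ in $\W$, the composite $[\Sp_b,\Sp_m]\to[b,\Sp_m]\to[b,m]$ lies in $\overline{\M}$ (the first map is an iterated pushout of the generator $[f,1]$, the second is a Segal generator), it equals $[\Sp_b,\Sp_m]\to[\Sp_b,m]\xrightarrow{[f,[m]]}[b,m]$, and $[\Sp_b,\Sp_m]\to[\Sp_b,m]$ is in $\overline{\M}$ by your case (B)-Segal argument applied to $X=\Sp_b$; left cancellation then gives $[f,[m]]\in\overline{\M}$.

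Second, and decisively, your case (B) for the saturation generator, $[a,E^{eq}]\to[0]$ for a general globular sum $a$, is never proved: you correctly isolate it as the one place the generators of $\M$ give nothing, and correctly diagnose the circularity of the naive section argument, but what follows is a declaration of intent ("I expect to break the circularity\dots", "Pinning down this contraction rigorously is the crux"), not an argument. So the proposal is incomplete exactly at its central case. To be fair, the paper hides this same case inside the words "which is obvious", and your analysis shows that word does real work; but a blind proof has to close it. One workable route: prove the statement first with $\Sp_a$ in place of $a$, writing $[\Sp_a,E^{eq}]$ as the pieces $[[\Sp_{a_i},1],E^{eq}]$ glued along copies of $E^{eq}$, collapsing those copies by pushouts of the $\Msat$ generator $E^{eq}\to[0]$, and contracting the resulting suspension-shaped pieces by an induction on dimension using the generators $[\Sigma^nE^{eq}\to\Db_n,1]$; then transfer from $\Sp_a$ to $a$ by left cancellation along $[\Sp_a,E^{eq}]\to[a,E^{eq}]$, which lies in $\overline{\M}$ by (repaired) claim (A) applied to $Y=E^{eq}$. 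This is essentially the induction you gesture at, but until it is carried out the lemma is not established.
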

\begin{proof}
As $[\uvar,\uvar]$ preserves colimits and monomorphisms, it is enough to show that for any pair $f,g\in \W\times \W_1$, $[f,g]$ is in $\W$, which is obvious.
\end{proof}

\begin{lemma}
\label{lemma:i etoile of W is in M 0}
For any globular sum $v$, and any integer $n$,
the morphism $[v,d^0\cup d^n]\cup[\partial v,n]\to [v,n]$ appearing in the diagram
\[\begin{tikzcd}
	{[\partial v,d^0\cup d^n]} & {[v,d^0\cup d^n]} \\
	{[\partial v,n]} & {[\partial v,n]\cup[v,d^0\cup d^n]} \\
	&& {[ v,n]}
	\arrow[from=1-1, to=2-1]
	\arrow[from=1-2, to=2-2]
	\arrow[from=2-2, to=3-3]
	\arrow[curve={height=18pt}, from=2-1, to=3-3]
	\arrow[curve={height=-18pt}, from=1-2, to=3-3]
	\arrow[from=1-1, to=1-2]
	\arrow[from=2-1, to=2-2]
\end{tikzcd}\]
is in $\overline{\M}$.
\end{lemma}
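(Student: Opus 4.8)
The plan is to recognise the displayed morphism as the pushout--product (Leibniz) corner map of the two monomorphisms $\partial v\hookrightarrow v$ (in $\Psh{\Theta}$) and $d^0\cup d^n\hookrightarrow[n]$ (in $\Psh{\Delta}$) under the bifunctor $[\uvar,\uvar]$. Since $[\uvar,\uvar]$ preserves colimits in each variable and sends monomorphisms to monomorphisms, for any monomorphism $g\colon K\to L$ of $\Psh{\Delta}$ one may form the corner map
\[
c_g\colon [v,K]\cup_{[\partial v,K]}[\partial v,L]\longrightarrow[v,L],
\]
and the morphism of the statement is exactly $c_{g}$ for $g=(d^0\cup d^n\hookrightarrow[n])$. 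As $\overline{\M}$ is precocomplete (definition \ref{defi:precomplet}) it is stable under pushout, transfinite composition, left cancellation and colimits of Reedy cofibrant diagrams of arrows; the whole argument consists in assembling $c_g$ out of the generators $[a,\Sp_m]\to[a,m]$ of $\Mseg$ by means of these operations, after first reducing $d^0\cup d^n$ to a spine.

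First I would treat the spine inclusion itself, i.e. show that $c_{s_m}\in\overline{\M}$ for $s_m=(\Sp_m\hookrightarrow[m])$ and every $m$. Here the composite
\[
[v,\Sp_m]\xrightarrow{\ j\ }[v,\Sp_m]\cup_{[\partial v,\Sp_m]}[\partial v,m]\xrightarrow{\ c_{s_m}\ }[v,m]
\]
is the decorated spine $[v,\Sp_m]\to[v,m]$, which lies in $\Mseg$ since $v$ is a globular sum. The map $j$ is the pushout along $[\partial v,\Sp_m]\to[v,\Sp_m]$ of the morphism $[\partial v,\Sp_m]\to[\partial v,m]$; and this last morphism is in $\overline{\M}$ because, writing $\partial v$ as the colimit of its cells $a\in\Theta$ --- legitimate since the slice is again an elegant Reedy category by proposition \ref{prop:elelangat stable by slice} --- it is the colimit of the Reedy cofibrant diagram of arrows $a\mapsto([a,\Sp_m]\to[a,m])$, each of which belongs to $\Mseg$. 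Hence $j\in\overline{\M}$ by stability under pushout, and left cancellation applied to $c_{s_m}\circ j\in\overline{\M}$ yields $c_{s_m}\in\overline{\M}$.

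Next I would pass from the spine to $d^0\cup d^n$ using the inclusions $\Sp_n\subseteq d^0\cup d^n\subseteq[n]$, valid for $n\ge 2$ (the cases $n\le 1$ are degenerate and checked directly). On one hand $\Sp_n\to d^0\cup d^n$ is, under the identifications $d^0\cup d^n=[n-1]\cup_{[n-2]}[n-1]$ and $\Sp_n=\Sp_{n-1}\cup_{\Sp_{n-2}}\Sp_{n-1}$, a pushout of the spine inclusions $\Sp_{n-1}\to[n-1]$; since $c_{(\uvar)}$ carries such colimits of monomorphisms to the corresponding Reedy cofibrant colimits of arrows, the first step gives $c_{(\Sp_n\to d^0\cup d^n)}\in\overline{\M}$. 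On the other hand the pushout--product composition law writes $c_{s_n}$ (which is in $\overline{\M}$ by the first step) as $c_{(d^0\cup d^n\to[n])}$ precomposed with a pushout of $c_{(\Sp_n\to d^0\cup d^n)}$; this latter pushout is in $\overline{\M}$, so a final application of left cancellation gives $c_{(d^0\cup d^n\to[n])}\in\overline{\M}$, which is the claim.

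The main obstacle is not any single step but the book-keeping that makes the precocomplete closure genuinely applicable: one must check that $[\partial v,\Sp_m]\to[\partial v,m]$ really is a \emph{Reedy cofibrant} colimit of the $[a,\Sp_m]\to[a,m]$ in the arrow category --- i.e. that the relevant latching maps of arrows are monomorphisms, which rests on the elegance of $\Theta$ and on $[\uvar,\uvar]$ preserving monomorphisms --- and that the pushout--product composition and cancellation identities hold in this bifunctorial setting (these follow from $[\uvar,\uvar]$ preserving colimits in each variable). Granting these standard but careful verifications, the three reductions above close the argument.
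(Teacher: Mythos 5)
Your proposal is correct in the range where the lemma is meaningful ($n\geq 2$), and it is essentially the paper's own argument recast in pushout--product language: both proofs rest on the chain $\Sp_n\subseteq d^0\cup d^n\subseteq [n]$, on the generators $[a,\Sp_m]\to[a,m]$ of $\Mseg$, on Reedy-cofibrant colimits over the cells of $\partial v$, and on pushout stability combined with two uses of left cancellation. The only real difference is organizational: the paper first proves the plain statement that $[X,d^0\cup d^n]\to[X,n]$ lies in $\overline{\M}$ --- for $X$ a globular sum by cancellation against the decorated spine, then for an arbitrary presheaf $X$ by a Reedy-cofibrant colimit --- and assembles the corner map only once, at the very end, by a single pushout and a single cancellation; you instead carry corner maps throughout, which is exactly what forces the two extra formal verifications you flag (the Leibniz composition law and the preservation of arrow-category colimits by the corner construction). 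The paper's intermediate statement avoids both and is the marginally lighter route, but nothing of substance separates the two arguments.

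One point does need correcting: the cases $n\leq 1$ are not merely degenerate. For $n=1$ the domain of the corner map collapses to $[\partial v,1]$, so the map in question is $[\partial v,1]\to[v,1]$; taking $v=[0]$ this is $\partial[1]\to[1]$, which is not in $\overline{\M}$ (it is not inverted by the localization to $\zocat$). So no direct check can succeed there: the statement itself requires $n\geq 2$. This defect is shared by the paper, whose proof invokes a morphism $[a,\Sp_n]\to[a,d^0\cup d^n]$ that does not exist when $n=1$, and it is harmless because the lemma is only ever applied with $n\geq 2$; but your parenthetical claim of a direct verification for $n\leq 1$ should be replaced by the explicit restriction $n\geq 2$.
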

\begin{proof}
If $v$ is $\emptyset$, this morphism is the identity. Suppose now that $v$ is a globular sum $a$. Remark that the morphism $[a,\Sp_n]\to [a,d^0\cup d^n]$ is in $\overline{\M}$. By left cancellation, this implies that $[a,d^0\cup d^n]\to [a,n]$ is in $\overline{\M}$. 
Let $X$ be a presheaf on $\Theta$. As $X$ is a colimit of globular sum indexed by the Reedy cofibrant diagram $\Theta^+_{/X}\to \Psh{\Theta}$ (definition \ref{defi:reedycof}), and as $[\uvar,d^0\cup d^n]\to [\uvar,n]$ preserve cofibrations, this implies that $[X,d^0\cup d^n]\to [X,n]$ is  in $\overline{\M}$.
 In particular, $[\partial v,d^0\cup d^n]\to [\partial v,n]$ is in $\overline{\M}$, and so is $[v,d^0\cup d^n]\to [\partial v,n]\cup[v,d^0\cup d^n]$ by stability by coproduct. A last use of the stability by left cancellation then concludes the proof.
\end{proof}

\begin{definition} 
Let  $[b,m]$ be an element of $\Delta[\Theta]$. We denote $\Hom^*(i([b,m]),[\textbf{a},n])$ the subset of $\Hom(i([b,m]),[\textbf{a},n])$ that consists of morphisms that preserve extremal objects. The explicit expression of morphism in $\Theta$ given in remark \ref{remark:morphism of theta} implies the bijection:
\begin{equation}
\label{eq:hom in theta}
\Hom_{\Theta}^*(i([b,m]),[\textbf{a},n])\cong\Hom_{\Delta}([n],[m])^*\times \prod_{i<n}\Hom_{\Theta}(b,a_i)
\end{equation}
where $\Hom_{\Delta}^*([n],[m])$ is the subset of $\Hom_{\Delta}([n],[m])$ consisting of morphisms that preserve extremal objects.

Let $\textbf{a}:=\{a_0,a_1,...,a_{n-1}\}$ be a finite sequence of globular sums. We define $\Theta^{\hookrightarrow}_{/\textbf{a}}$ as the category whose objects are collections of maps $\{b\to a_i\}_{ i< n}$ such that there exists no degenerate morphism $b\to b'$ factorizing all $b\to a_i$. Morphisms are monomorphisms $b\to b'$ making all induced triangles commute. 
\end{definition}

\begin{remark}
The bijection \eqref{eq:hom in theta}  induces a bijection between the objects of $\Theta^{\hookrightarrow}_{/\textbf{a}}$ and the morphisms $[b,n]\to i^*[\textbf{a},n]$ that are the identity on objects and that can not be factored through any 	degenerate morphism $[b,n]\to [\tilde{b},n]$.  	
\end{remark}

\begin{lemma}
\label{lemma:i etoile of W is in M 1}
For any morphism $p:[b,m]\to i^*[\textbf{a},n]$ in $\Psh{\Delta[\Theta]}$ that preserves extremal objects, there exists a unique pair $(\{b'\to a_i\}_{i<n},[f,i]:[b,m]\to [b',n])$ where $\{b'\to a_i\}_{i<n}$ is an element of $\Theta^{\hookrightarrow}_{/\textbf{a}}$, $f$ is a degenerate morphism, and such that the induced triangle
\[\begin{tikzcd}
	{[b,m]} & {[b',n]} \\
	& {i^*[\textbf{a},n]}
	\arrow["{[f,i]}", from=1-1, to=1-2]
	\arrow["{p'}", from=1-2, to=2-2]
	\arrow["p"', from=1-1, to=2-2]
\end{tikzcd}\]
commutes.
\end{lemma}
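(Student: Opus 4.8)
The plan is to reduce everything to the combinatorial description of morphisms in $\Theta$ and then invoke the elegance of the Reedy structure. Since $[b,m]$ is representable, the Yoneda lemma identifies $p$ with an element of $(i^*[\textbf{a},n])([b,m]) = \Hom_\Theta(i([b,m]),[\textbf{a},n])$, and the hypothesis that $p$ preserves extremal objects places this element in $\Hom_\Theta^*(i([b,m]),[\textbf{a},n])$. I would then feed it through the bijection \eqref{eq:hom in theta}, which presents it as a pair consisting of a simplicial component (an extremal-preserving map between $[m]$ and $[n]$) together with a family $\{g_i : b\to a_i\}_{i<n}$. The decisive observation is that this family is exactly an element of the product presheaf $\prod_{i<n} a_i \in \Psh{\Theta}$ evaluated at $b$, since $(\prod_{i<n}a_i)(b) = \prod_{i<n}\Hom_\Theta(b,a_i)$.

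Next I would apply the elegance of $\Theta$. By proposition \ref{prop:theta is elegan reedy} the category $\Theta$ is an elegant Reedy category, so by definition \ref{defi:reedy} the element $\{g_i\}_{i<n}\in(\prod_{i<n}a_i)(b)$ admits a unique factorization through a non-degenerate element: there is a unique degenerate morphism $f:b\to b'$ in $\Theta_{-}$ and a unique non-degenerate family $\{g_i' : b'\to a_i\}_{i<n}$ with $g_i = g_i'\circ f$ for all $i$. Unwinding the definition of non-degeneracy in the presheaf $\prod_{i<n}a_i$, the family $\{g_i'\}$ is non-degenerate precisely when no non-identity degenerate morphism $b'\to b''$ factorizes all the $g_i'$ simultaneously; that is, $\{b'\to a_i\}_{i<n}$ is an object of $\Theta^{\hookrightarrow}_{/\textbf{a}}$. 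By the remark following its definition, this object corresponds to a morphism $p':[b',n]\to i^*[\textbf{a},n]$ that is the identity on objects. I would then assemble $[f,i]:[b,m]\to[b',n]$ from the degenerate $f$ and the simplicial component of $p$, and check that the resulting triangle commutes; this is immediate, since the composite recovers both the simplicial component (because $p'$ is the identity on objects) and the family $g_i = g_i'\circ f$.

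For uniqueness, the simplicial component of $[f,i]$ is forced: as $p'$ is the identity on objects it contributes nothing to the simplicial part of the composite, so the simplicial part of $[f,i]$ must coincide with that of $p$; and the pair $(f,\{g_i'\})$ is unique by the uniqueness clause of elegance. The main obstacle is bookkeeping rather than conceptual: one must verify that the elegance of $\Theta$, applied to the \emph{product} presheaf $\prod_{i<n}a_i$, reproduces verbatim the non-degeneracy condition defining $\Theta^{\hookrightarrow}_{/\textbf{a}}$, and one must thread the simplicial reindexing through the factorization without letting it interfere with the purely globular factorization of the family $\{g_i\}$. Once the product presheaf is correctly identified, the statement becomes a direct translation of the elegance property of $\Theta$.
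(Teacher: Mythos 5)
Your proposal is correct, and it reaches the conclusion by a genuinely different route at the key step. Both arguments begin the same way: the bijection \eqref{eq:hom in theta} forces the simplicial component of $[f,i]$ to agree with that of $p$, and reduces the lemma to showing that the globular family $\{g_i : b \to a_i\}_{i<n}$ factors uniquely through a family belonging to $\Theta^{\hookrightarrow}_{/\textbf{a}}$ via a degeneracy. The paper then proves this factorization by hand: existence follows because any sequence of non-identity degeneracies must stabilize, and uniqueness follows from proposition 3.8 of Bergner--Rezk, which completes the span of the two candidate degeneracies $b' \leftarrow b \to b''$ into a pushout square of degeneracies; the pushout $\tilde{b}$ then receives maps to all the $a_i$, forcing $b' = \tilde{b} = b''$. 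You instead observe that the family $\{g_i\}$ is literally an element of the product presheaf $\prod_{i<n} a_i$ evaluated at $b$, that the objects of $\Theta^{\hookrightarrow}_{/\textbf{a}}$ are precisely the non-degenerate elements of that presheaf, and that the required unique factorization is then verbatim the elegance property of $\Theta$ (proposition \ref{prop:theta is elegan reedy} together with definition \ref{defi:reedy}) applied to $X = \prod_{i<n} a_i$. Since the paper's definition of elegance quantifies over \emph{all} presheaves, this invocation is legitimate and not circular; it buys you both existence and uniqueness in one stroke and makes transparent that the lemma is nothing but the Eilenberg--Zilber property specialized to a product of representables. What the paper's more explicit argument buys in exchange is a reusable combinatorial template: the same stabilization-plus-absolute-pushout pattern reappears essentially verbatim in the proof of lemma \ref{lemma:product is a nice colimit}, whereas your packaging would have to be unwound again there (or that lemma rephrased through the same product-presheaf lens).
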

\begin{proof}
By adjunction and thanks to the bijection \eqref{eq:hom in theta}, $p$ corresponds to a pair $(j:[m]\to [n], \{b\to a_i\}_{i<n})$, and $i$ has to be equal to $j$.

Using once again this bijection, and the fact that degeneracies are epimorphisms, we have to show that there exists a unique degenerate morphism $g:b\to b'$ that factors the morphisms $b\to a_i$ for all $i<n$, and such that the induced family of morphisms $\{b'\to a_i\}_{i<n}$ is an element of $\Theta^{\hookrightarrow}_{/\textbf{a}}$.

As any infinite sequence of degenerate morphisms is constant at some point, the existence is immediate.

Suppose given two morphisms $b\to b'$, $b\to b''$ fulfilling the previous condition.
The proposition 3.8 of \cite{Bergner_reedy_category_and_the_theta_construction} implies that there exists a globular sum $\tilde{b}$ and two degenerate morphisms $b'\to \tilde{b}$ and $b''\to \tilde{b}$  such that the induced square
\[\begin{tikzcd}
	b & {b'} \\
	{b''} & {\tilde{b}}
	\arrow[from=1-1, to=2-1]
	\arrow[from=2-1, to=2-2]
	\arrow[from=1-1, to=1-2]
	\arrow[from=1-2, to=2-2]
\end{tikzcd}\]
is cartesian. The universal property of pushout implies that $b\to \tilde{b}$ also fulfills the previous condition. By definition of $b'$ and $b''$, this implies that they are equal to $\tilde{b}$, and this shows the uniqueness.
\end{proof}

\begin{lemma}
\label{lemma:i etoile of W is in M 0.5}
Let $\{b\to a_i\}_{ i< n}$ be an element of  $\Theta^{\hookrightarrow}_{/\textbf{a}}$  and $i:b'\to b$ a monomorphism of $\Theta$. The induced family $\{b'\to b\to a_i\}_{ i< n}$ is an object of $\Theta^{\hookrightarrow}_{/\textbf{a}}$. 
\end{lemma}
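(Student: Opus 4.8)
The plan is to argue by \emph{contraposition}, transporting a joint degeneracy of the restricted family back to a joint degeneracy of the original one along a pushout. Write $j:b'\to b$ for the given monomorphism (to avoid clashing with the index $i$ and the functor $i$), and $f_i:b\to a_i$ for the structure maps of the family $\{b\to a_i\}_{i<n}$. It suffices to show: if there is a \emph{nontrivial} degenerate morphism $g:b'\to c$ through which all the composites $f_i j:b'\to a_i$ factor, then there is already a nontrivial degenerate morphism out of $b$ through which every $f_i$ factors, contradicting the hypothesis that $\{b\to a_i\}$ lies in $\Theta^{\hookrightarrow}_{/\textbf{a}}$. So suppose $g:b'\to c$ is a nontrivial degeneracy and $h_i:c\to a_i$ are morphisms with $f_i j=h_i g$ for all $i<n$.

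The key construction is the pushout of $g$ along $j$, which in the elegant Reedy category $\Theta$ exists and has the form

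\[\begin{tikzcd}
	{b'} & c \\
	b & d
	\arrow["g", from=1-1, to=1-2]
	\arrow["j"', from=1-1, to=2-1]
	\arrow["{j'}", from=1-2, to=2-2]
	\arrow["{g'}"', from=2-1, to=2-2]
\end{tikzcd}\]
with $g'$ again a degeneracy (and $j'$ again a monomorphism). This pushout-stability is the only nonformal step, and it is where the special structure of $\Theta$ enters. The prototype is the case of $\Delta$: a degeneracy collapses the fibres of a monotone surjection, and transporting along the monomorphism $j$ forces the collapse of the entire intervals of $b$ spanned by the images of those fibres, which is again a degeneracy. The general statement for globular sums follows from the Eilenberg--Zilber (elegant Reedy) structure of $\Theta$ recorded in proposition \ref{prop:theta is elegan reedy}, after reducing $g$ to an elementary degeneracy; I expect verifying that the pushout of a degeneracy along a monomorphism is again a degeneracy to be the main obstacle.

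Once the pushout is available, the two families $f_i:b\to a_i$ and $h_i:c\to a_i$ form a compatible cocone on the span, since $f_i j=h_i g$ by assumption; its universal property then produces morphisms $k_i:d\to a_i$ with $k_i g'=f_i$ for every $i$, so that all the $f_i$ factor through $g'$. It remains to see that $g'$ is nontrivial. If $g'$ were an isomorphism, then the relation $g' j=j' g$ would give $j=(g')^{-1}j'g$, exhibiting the monomorphism $j$ as a morphism that factors through $g$ on the right; hence $g$ would itself be a monomorphism. But $g$ is a degeneracy, hence an epimorphism in $\Psh{\Theta}$ by proposition \ref{prop:theta is elegan reedy}, and in the topos $\Psh{\Theta}$ a morphism that is simultaneously a monomorphism and an epimorphism is an isomorphism, contradicting the nontriviality of $g$. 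Therefore $g'$ is a nontrivial degeneracy through which every $f_i$ factors, contradicting $\{b\to a_i\}\in\Theta^{\hookrightarrow}_{/\textbf{a}}$. This contradiction shows that no nontrivial degeneracy factors all the $f_i j$, i.e. the family $\{b'\to b\to a_i\}_{i<n}$ is an object of $\Theta^{\hookrightarrow}_{/\textbf{a}}$, as desired.
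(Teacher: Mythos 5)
Your formal skeleton (cocone, universal property of the pushout, nontriviality of $g'$ via balancedness of the topos $\Psh{\Theta}$) is fine, but the proposal has a genuine gap exactly at the step you yourself flag as ``the only nonformal step'': the existence in $\Theta$ of the pushout of a degeneracy along a monomorphism, with the pushed-out map again a degeneracy. This claim does \emph{not} follow from proposition \ref{prop:theta is elegan reedy}. Elegance gives the unique factorization of a map into a degeneracy followed by a monomorphism, and (via Bergner--Rezk) absolute pushouts of spans of \emph{degeneracies}; it says nothing about pushing a degeneracy out along a monomorphism. Note also that this pushout cannot be computed in $\zocat$ or $\Psh{\Theta}$ and then observed to land in $\Theta$: for instance, pushing out the degeneracy $\Db_1\to\Db_0$ along the monomorphism $\Db_1\to\Db_2$ selecting the source $1$-cell produces, in $\zocat$, a category with one object, a loop, and a $2$-cell from the identity to that loop --- not a globular sum (the pushout \emph{internal} to $\Theta$ is $[0]$, with a strictly weaker universal property). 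So the statement you need is a pushout internal to $\Theta$, and proving that it exists and that $g'$ is degenerate is not a formal consequence of the Reedy structure; it requires the specific combinatorics of globular sums. In effect, your key claim is a strengthening of the lemma you are trying to prove, so the argument is circular as it stands.

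For comparison, the paper proves only the weaker transport statement your contradiction needs --- if a nontrivial degeneracy factors all the composites $f_i\circ j$, then some nontrivial degeneracy out of $b$ factors all the $f_i$ --- and it does so by hand: it picks a nontrivial $k$-cell $d$ of $b'$ crushed by the degeneracy, takes a $k$-generator $d'$ of the polygraph $b$ appearing in the decomposition of the image of $d$, uses that in a polygraph a composite of nontrivial cells is nontrivial (so $d'$ is sent to an identity in each $a_i$) and that a globular sum has no nontrivial $l$-cell with equal $(k-1)$-source and $(k-1)$-target (so \emph{every} cell of $b$ that is $(k-1)$-parallel to $d'$ is sent to an identity), and then crushes all such cells to obtain the required nontrivial degeneracy $b\to\bar b$. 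This crushing construction is precisely the ``pushout along the mono'' you postulate, and filling the gap in your proof would amount to reproducing this argument; your $\Delta$-prototype intuition is correct, but transporting it to $\Theta$ is where all the work lies.
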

\begin{proof}
The lemma \ref{lemma:i etoile of W is in M 1} implies that there exists  a  unique degenerate morphism $j:b'\to \tilde{b}$ that factors all the morphism $b'\to b\to  a_i$ for $i<n$, and such the induced family of morphisms $\{\tilde{b}\to a_i\}_{i<n}$ is an element of $\Theta^{\hookrightarrow}_{/\textbf{a}}$.  We proceed by contradiction, and we then suppose that $j$ is different from the identity.

We then have, for any $i<n$, a commutative square
\[\begin{tikzcd}
	{b'} & b \\
	{\tilde{b}} & {a_i}
	\arrow["i", from=1-1, to=1-2]
	\arrow[from=1-2, to=2-2]
	\arrow["j"', from=1-1, to=2-1]
	\arrow[from=2-1, to=2-2]
\end{tikzcd}\]
As the morphism $j$ is degenerate and different of the identity, there exists an integer $k$ and a non trivial $k$-cell $d$ of $b'$ that is sent to an identity by $j$. Now, let $d'$ be a $k$-generator  of the polygraph $b$ that appears in the decomposition of $i(d)$. The commutativity of the previous square and the fact that the $\zo$-categories $a_i$ are polygraphs implies that for any $i$, the $k$-cell $d'$ is sent to an identity by the morphism $b\to a_i$.  As for any $i< n$ and any $l\geq k$,  there is no non trivial $l$-cell in $a_i$ whose $(k-1)$-source and $(k-1)$-target are the same, this implies that every $l$-cell of $b$ that is $(k-1)$-parallel with $d'$ is send to the identity by the morphism $b\to a_i$.

We denote $\bar{b}$ the globular sum obtained by crushing all $l$-cells of $b$ that are $(k-1)$-parallel with $d'$. The induced degenerate morphism $b\to \bar{b}$ factors all the morphisms $b\to a_i$ which is in contradiction with the fact that  $\{{b}\to a_i\}_{i<n}$ is an element of $\Theta^{\hookrightarrow}_{/\textbf{a}}$.
\end{proof}

\begin{definition}
We say that an element $\{v\to a_i\}_{i<n}$ in the category $\Theta^{\hookrightarrow}_{/\textbf{a}}$ is \textit{of height $0$} if $v\to a_0$ factors through $\partial a_0$ or $v\to a_{n-1}$ factors through $\partial a_{n-1}$. The \textit{height of an element $w$} is the maximal integer $m$ such that there exists a sequence 
$v_0\to v_1\to...\to v_m=w$ in $\Theta^{\hookrightarrow}_{/\textbf{a}}$ with $v_i\neq v_{i+1}$ for any $i<m$ and such that $v_0$ is of height $0$ and $v_1$ is not. As $\Theta$ is a Reedy category, all elements have finite height.
\end{definition}

\begin{lemma}
\label{lemma:i etoile of W is in M 1.5}
For any morphism $p:[b,m]\to i^*[\textbf{a},n]$ that preserves extremal objects, there exists a unique integer $k$, a unique element $\{b'\to a_i\}_{i<n}$ of height $k$, and a unique morphism $[f,i]:[b,m]\to [b',n]$ that doesn't factors through $[\partial b',n]$, and such that the induced triangle
\[\begin{tikzcd}
	{[b,m]} & {[b',n]} \\
	& {i^*[\textbf{a},n]}
	\arrow["{p'}", from=1-2, to=2-2]
	\arrow["{[f,i]}", from=1-1, to=1-2]
	\arrow[from=1-1, to=2-2]
\end{tikzcd}\]
commutes.

If $\{\tilde{b}\to a_i\}_{i<n}$ is any other object of non negative height, and $[\tilde{f},j]:[b,m]\to [\tilde{b},n]$ is a morphism that make the induced triangle
\[\begin{tikzcd}
	{[b,m]} & {[\tilde{b},n]} \\
	& {i^*[\textbf{a},n]}
	\arrow["{\tilde{p}}", from=1-2, to=2-2]
	\arrow["{[\tilde{f},j]}", from=1-1, to=1-2]
	\arrow[from=1-1, to=2-2]
\end{tikzcd}\]
commutative, then $\{\tilde{b}\to a_i\}_{i<n}$ is of height strictly superior to $k$ and $[\tilde{f},j]$ factors through $[\partial\tilde{b},n]$.
\end{lemma}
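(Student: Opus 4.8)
The plan is to reduce everything to the degeneracy factorization of lemma \ref{lemma:i etoile of W is in M 1} and to read off the height bookkeeping from the Reedy structure of $\Theta$. The first thing I would record is a \emph{factorization criterion}. Since the functor $[\uvar,n]$ preserves colimits we have $[\partial b',n]=\colim_{d\hookrightarrow b'}[d,n]$, the colimit running over the proper faces of $b'$; combining this with the description of morphisms of $\Delta[\Theta]$ as pairs consisting of a $\Theta$-component and a $\Delta$-component, one sees that a morphism $[g,i]\colon[c,m']\to[b',n]$ factors through $[\partial b',n]$ \emph{if and only if} its $\Theta$-component $g$ factors through $\partial b'$ in $\Psh{\Theta}$; the simplicial component $i$ is irrelevant. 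This single criterion controls both halves of the statement.

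For the existence of the canonical factorization I would apply lemma \ref{lemma:i etoile of W is in M 1} to $p$, obtaining the unique factorization $[b,m]\xrightarrow{[f,i]}[b',n]\xrightarrow{p'}i^*[\textbf{a},n]$ with $f$ degenerate and $\{b'\to a_i\}_{i<n}\in\Theta^{\hookrightarrow}_{/\textbf{a}}$. As $f$ is degenerate it is an epimorphism, hence it cannot factor through the proper subobject $\partial b'\hookrightarrow b'$; by the criterion above $[f,i]$ does not factor through $[\partial b',n]$. Taking $k$ to be the height of $\{b'\to a_i\}$ then produces the required data.

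The comparison statement is obtained by feeding any competitor through the same machine. Given $\{\tilde b\to a_i\}_{i<n}$ and a factorization $[b,m]\xrightarrow{[\tilde f,j]}[\tilde b,n]\to i^*[\textbf{a},n]$ of $p$, I would factor the $\Theta$-component as $\tilde f=\mu e$ with $e\colon b\to c$ degenerate and $\mu\colon c\to\tilde b$ a monomorphism. Lemma \ref{lemma:i etoile of W is in M 0.5} guarantees that the restriction $\{c\to a_i\}$ of $\{\tilde b\to a_i\}$ along $\mu$ again lies in $\Theta^{\hookrightarrow}_{/\textbf{a}}$, and since $[\tilde f,j]=[\mu,n]\circ[e,j]$ the composite $[e,j]$ still factors $p$ through a degenerate $\Theta$-component; the uniqueness clause of lemma \ref{lemma:i etoile of W is in M 1} forces $j=i$, $c=b'$ and $e=f$, so that $\tilde f=\mu f$ with $\mu\colon b'\hookrightarrow\tilde b$. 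If $\mu$ is invertible we recover the canonical factorization, which yields the uniqueness of the triple $(k,\{b'\to a_i\},[f,i])$. If instead $\{\tilde b\to a_i\}\neq\{b'\to a_i\}$, then $\mu$ is a proper monomorphism, so $\tilde f=\mu f$ factors through $\partial\tilde b$ and, by the criterion, $[\tilde f,j]$ factors through $[\partial\tilde b,n]$; to compare heights I would take a maximal admissible chain $v_0\to\cdots\to v_k=b'$ realizing the height of $b'$ and prolong it by $\mu$ to the strictly longer chain $v_0\to\cdots\to b'\xrightarrow{\mu}\tilde b$, whence the height of $\tilde b$ exceeds $k$.

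The main obstacle is precisely this last strict inequality. Prolonging the chain is harmless once one knows the result is still admissible, i.e. that it begins at a height-$0$ element and leaves height $0$ at the first step. When $b'$ has positive height this is automatic, as the prolongation keeps the same $v_0$ and $v_1$. The only delicate case is $k=0$, where one must verify that a proper monomorphism $b'\hookrightarrow\tilde b$ in $\Theta^{\hookrightarrow}_{/\textbf{a}}$ whose target again factors $p$ cannot leave $\tilde b$ at height $0$; this is where the precise meaning of $\partial a_i$ (removal of the top-dimensional cells) and the elegant Reedy structure of $\Theta$ are genuinely needed, and it is the step I expect to require the most care.
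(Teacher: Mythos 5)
Up to the height inequality, your argument is the paper's own: the equivalence between ``$[f,i]$ does not factor through $[\partial b',n]$'' and ``$f$ is degenerate'', the use of lemma \ref{lemma:i etoile of W is in M 1} for existence and uniqueness, and, for a competitor, the factorization $\tilde f=\mu e$ into a degeneracy followed by a monomorphism, lemma \ref{lemma:i etoile of W is in M 0.5} to see that the restricted family stays in $\Theta^{\hookrightarrow}_{/\textbf{a}}$, the uniqueness clause of lemma \ref{lemma:i etoile of W is in M 1} to identify $e$ with $f$, and the observation that a non-identity monomorphism forces factorization through $[\partial\tilde b,n]$. All of that is correct and matches the paper's proof.

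The genuine gap is the one you flag at the end, and it cannot be closed in the way you propose, because the assertion you want to verify is false. Take $n=1$, $\textbf{a}=\{\Db_2\}$, and let $p:[[0],1]\to i^*[\Db_2,1]$ be the morphism which, under the bijection \eqref{eq:hom in theta}, corresponds to the vertex $e_0^-$ of $\Db_2$; it preserves extremal objects. Its canonical factorization goes through $b':=\{[0]\xrightarrow{e_0^-}\Db_2\}$, which has height $0$, so $k=0$. But $p$ also factors through $[\tilde b,1]$ for $\tilde b:=\{[1]\xrightarrow{e_1^-}\Db_2\}$, via the source inclusion $[0]\to[1]$, and $\tilde b$ is again of height $0$ because $e_1^-$ lies in $\partial\Db_2$. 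Thus a proper monomorphism $b'\hookrightarrow\tilde b$ in $\Theta^{\hookrightarrow}_{/\textbf{a}}$ through which $p$ factors may perfectly well have a target of height $0$, and no amount of care with $\partial a_i$ or with the Reedy structure will show otherwise. What makes the statement true is the hypothesis you never invoke: ``of non negative height'' has to be read as of non-\emph{null} (strictly positive) height; under the literal reading the lemma is false, as the example shows. With that reading, the case $k=0$ requires no verification at all, since the hypothesis gives height of $\tilde b\geq 1>k$ outright, while for $k\geq1$ your chain prolongation, which keeps $v_0$ and $v_1$ and hence admissibility, already gives the strict inequality with no hypothesis on $\tilde b$. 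Height-zero competitors, for which the strict inequality genuinely fails, are harmless in the sequel: a monomorphism into a height-$0$ element has height-$0$ source, which yields the non-strict inequality, and this is all that the proof of lemma \ref{lemma:i etoile of W is in M 2} needs from them.
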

\begin{proof}
As $[f,i]$ doesn't factor through $[\partial b,n]$ if and only if $f$ is degenerate, the lemma \ref{lemma:i etoile of W is in M 1} implies the first assertion.
For the second one, suppose given an object $\{\tilde{b}\to a_i\}_{i<n}$ of non negative height and a morphism  $[\tilde{f},j]:[b,m]\to [\tilde{b},n]$ fulfilling the desired condition. The bijection \eqref{eq:hom in theta} directly implies that $j$ is equal to $i$, and the first assertion  implies that $\tilde{f}$ is non degenerate.

We can then factor $\tilde{f}:b\to \tilde{b}$ in a degenerate morphism $b\to \bar{b}$ followed by a monomorphism $ \bar{b}\to \tilde{b}$ which is not the identity. The lemma \ref{lemma:i etoile of W is in M 0.5} then implies that $\{\bar{b}\to \tilde{b}\to a_i\}_{i<n}$ is an element of  $\Theta^{\hookrightarrow}_{/\textbf{a}}$. The first assertion then implies that the two morphisms $[b,m]\to [b',n]$ and $[b,m]\to {[\bar{b},n]}$ are equals. As the  monomorphism $  {[{b}',n]}={[\bar{b},n]}\to [\tilde{b},n]$ is not the identity, this concludes the proof.
\end{proof}

\begin{lemma}
\label{lemma:i etoile of W is in M 2}
The morphism $i^*[\partial^0\textbf{a},n]\cup i^*[\partial^{n-1}\textbf{a},n]\to i^*[\textbf{a},n]$ is in $\overline{\M}$, where $\partial^j\textbf{a}$ corresponds to the sequence $\{a_1,..,\partial a_j,..,a_n\}$. 
\end{lemma}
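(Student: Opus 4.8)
The plan is to build $i^*[\textbf{a},n]$ from the subobject $U := i^*[\partial^0\textbf{a},n]\cup i^*[\partial^{n-1}\textbf{a},n]$ by a cell-attachment argument organized by the height function on $\Theta^{\hookrightarrow}_{/\textbf{a}}$. First I would record two bookkeeping facts drawn from the earlier lemmas. On one hand, any morphism $[b,m]\to i^*[\textbf{a},n]$ that does \emph{not} preserve extremal objects has its underlying $f\colon[m]\to[n]$ avoid the object $0$ or the object $n$, hence factors through one of the outer faces and therefore through $U$; so the cells genuinely lying outside $U$ are extremal-preserving. On the other hand, lemma \ref{lemma:i etoile of W is in M 1.5} attaches to every extremal-preserving morphism a unique minimal datum: an element $\{b'\to a_i\}_{i<n}$ of $\Theta^{\hookrightarrow}_{/\textbf{a}}$ of a well-defined height $k$ together with a degenerate $[f,i]\colon[b,m]\to[b',n]$. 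Combined with the fact that the height-$0$ elements are by definition exactly those factoring through $\partial a_0$ or $\partial a_{n-1}$, i.e. those whose cell lies in $U$, this shows that $U$ collects precisely the cells of height $0$ together with all non-extremal ones.

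Then I would set up the filtration $U = G_0\subseteq G_1\subseteq\cdots\subseteq i^*[\textbf{a},n]$, where $G_k$ is the subpresheaf generated by $G_{k-1}$ together with the cells $[b',n]\to i^*[\textbf{a},n]$ indexed by the elements of $\Theta^{\hookrightarrow}_{/\textbf{a}}$ of height $\leq k$. The key claim is that each inclusion $G_{k-1}\to G_k$ is a pushout of the coproduct, over the height-$k$ elements $w=\{b'\to a_i\}_{i<n}$, of the maps
\[ [b',d^0\cup d^n]\cup[\partial b',n]\longrightarrow[b',n], \]
which are exactly the arrows shown to lie in $\overline{\M}$ by lemma \ref{lemma:i etoile of W is in M 0}. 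To justify this I would check that the chosen boundary already maps into $G_{k-1}$: the $[b',d^0\cup d^n]$ summand factors through an outer face of $[\textbf{a},n]$, hence through $U\subseteq G_{k-1}$ as in the first paragraph, while for the $[\partial b',n]$ summand lemma \ref{lemma:i etoile of W is in M 0.5} shows that $\{\partial b'\to b'\to a_i\}_{i<n}$ is again an object of $\Theta^{\hookrightarrow}_{/\textbf{a}}$, and since $\partial b'\hookrightarrow b'$ is a proper subobject it has strictly smaller height, so its cell lies in $G_{k-1}$. Conversely, the uniqueness clause of lemma \ref{lemma:i etoile of W is in M 1.5} guarantees that the cells of $G_k$ not already in $G_{k-1}$ are exactly the extremal-preserving, non-degenerate morphisms whose minimal datum is a height-$k$ element — equivalently those not factoring through the boundary above — so the square is genuinely cocartesian and involves no double counting.

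Finally, since $\overline{\M}$ is closed under coproducts of monomorphisms, pushouts and transfinite composition, each step $G_{k-1}\to G_k$ lies in $\overline{\M}$, and because every element of $\Theta^{\hookrightarrow}_{/\textbf{a}}$ has finite height (the Reedy structure of $\Theta$) we have $\colim_k G_k = i^*[\textbf{a},n]$; the transfinite composite $U = G_0\to i^*[\textbf{a},n]$ is then in $\overline{\M}$. The hard part will be the verification of these pushout squares: one must match, via the two factorization lemmas, the abstract ``new cells of height $k$'' with the precise boundary $[b',d^0\cup d^n]\cup[\partial b',n]$ of lemma \ref{lemma:i etoile of W is in M 0}, checking both that this boundary lands in the previous stage and that nothing outside it is attached twice. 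Everything else is a formal manipulation of the closure properties of $\overline{\M}$.
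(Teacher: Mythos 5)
Your proposal is correct and follows essentially the same route as the paper: your filtration $G_k$ is exactly the paper's filtration $x_k$ by height, the attaching step is the same cocartesian square gluing $\coprod [b',n]$ along $\coprod\bigl([b',d^0\cup d^n]\cup[\partial b',n]\bigr)$ (whose maps are in $\overline{\M}$ by lemma \ref{lemma:i etoile of W is in M 0}), and the conclusion uses the same closure of $\overline{\M}$ under pushouts and transfinite composition. The justification you give for the cocartesian squares — lemma \ref{lemma:i etoile of W is in M 0.5} for the $[\partial b',n]$ part, the extremal-object observation for $[b',d^0\cup d^n]$, and the uniqueness clause of lemma \ref{lemma:i etoile of W is in M 1.5} against double counting — is precisely the verification the paper compresses into its citation of lemma \ref{lemma:i etoile of W is in M 1.5}.
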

\begin{proof}
For $k\in\Nb\cup\{\infty\}$, we define $x_k$ as the smallest sub object of $i^*[\textbf{a},n]$ such that for any element 
of height inferior or equal to $k$ of $\Theta^{\hookrightarrow}_{/\textbf{a}}$, the corresponding morphism $[b,n]\to i^*[\textbf{a},n]$ factors through $x_k$. In particular we have $x_0= i^*[\partial^0\textbf{a},n]\cup i^*[\partial^{n-1}\textbf{a},n]$, and the lemma \ref{lemma:i etoile of W is in M 1} implies that $x_{\infty} =i^*[\textbf{a},n]$.  We denote $(\Theta^{\hookrightarrow}_{/\textbf{a}})_{k}$ the set of element of $\Theta^{\hookrightarrow}_{/\textbf{a}}$ of height $k$.

Every morphism $[b,m]\to i^*[\textbf{a},n]$ that does not preserve extremal points then factors through $x_0$. 
The lemma \ref{lemma:i etoile of W is in M 1.5} implies that for any integer $k$, the canonical square 
\begin{equation}
\label{eq:lemma:i etoile of W is in M 2}
\begin{tikzcd}
	{\coprod_{(\Theta^{\hookrightarrow}_{/\textbf{a}})_{k+1}}[b,d^0\cup d^n]\cup[\partial b,n]} & {x_k} \\
	{\coprod_{(\Theta^{\hookrightarrow}_{/\textbf{a}})_{k+1}}[b,n]} & {x_{k+1}}
	\arrow[from=1-1, to=2-1]
	\arrow[from=2-1, to=2-2]
	\arrow[from=1-1, to=1-2]
	\arrow[from=1-2, to=2-2]
\end{tikzcd}
\end{equation}
is cocartesian.  The lemma \ref{lemma:i etoile of W is in M 0} and the stability under pushout of $\overline{\M}$  imply that $x_k\to x_{k+1}$ is in $\overline{\M}$.
 As $i^*[\textbf{a},n]$ is the transfinite composition of the sequence $x_0\to x_1\to...$, this implies that $x_0\to i^*[\textbf{a},n]$ is in $\overline{\M}$ which conclude the proof.
\end{proof}

\begin{lemma}
The morphism $i^*\Sp_a\to i^*a$ is in $\overline{\M}$ for any globular sum $a$.
\end{lemma}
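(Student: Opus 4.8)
The plan is to induct on the dimension $|a|$. When $a=[0]$ we have $\Sp_{[0]}=[0]=a$, so $i^*\Sp_a\to i^*a$ is the identity and lies in $\overline{\M}$; this settles the base case. For the inductive step write $a=[\textbf a,n]$ with $\textbf a=\{a_0,\dots,a_{n-1}\}$, each $|a_i|<|a|$. By definition \ref{defi:definition of W}, $\Sp_a$ is the colimit in $\Psh{\Theta}$ of the wedge $[\Sp_{a_0},1]\vee\cdots\vee[\Sp_{a_{n-1}},1]$ (glued over copies of $[0]$), while $a=[a_0,1]\vee\cdots\vee[a_{n-1},1]$ is the analogous wedge. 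The structural observation that makes this tractable is that $i^*\colon\Psh{\Theta}\to\Psh{\Delta[\Theta]}$, being restriction along a functor of small categories, admits not only the left adjoint $i_!$ but also a right adjoint $i_*$; hence $i^*$ preserves all colimits. It therefore carries both wedges to the corresponding wedges in $\Psh{\Delta[\Theta]}$, identifying $i^*\Sp_a\to i^*a$ with the wedge of the morphisms $i^*[\Sp_{a_i},1]\to i^*[a_i,1]$. Building such a wedge one factor at a time exhibits it as an iterated pushout of these morphisms along the basepoint inclusions, so by the closure of $\overline{\M}$ under pushout and composition (definition \ref{defi:precomplet}) it suffices to treat each factor. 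We are thus reduced to the single suspension case $a=[b,1]$: we must show $i^*[\Sp_b,1]\to i^*[b,1]\in\overline{\M}$, where $|b|=|a|-1$ and so the inductive hypothesis $i^*\Sp_b\to i^*b\in\overline{\M}$ is at our disposal.

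To suspend this information I would compare the suspension computed in $\Psh{\Theta}$ with the construction $X\mapsto[X,1]$ obtained by specialising the functor $[\uvar,\uvar]$ of the definition following Corollary \ref{cor:changing theta} to second variable $[1]$. Both $X\mapsto[X,1]$ and $X\mapsto i^*[X,1]$ are cocontinuous functors $\Psh{\Theta}\to\Psh{\Delta[\Theta]}$, and since $i_![c,1]=[c,1]$ on a representable $c$ the unit of $i_!\dashv i^*$ furnishes a natural comparison $\eta_X\colon[X,1]\to i^*[X,1]$. Naturality applied to $f=(\Sp_b\to b)\in\Wseg$ yields a commuting square whose left edge is $[f,1]\in\Mseg\subset\overline{\M}$, whose right edge is the desired $i^*[\Sp_b,1]\to i^*[b,1]$, and whose horizontal edges are $\eta_{\Sp_b}$ and $\eta_b$. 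If both comparison maps lie in $\overline{\M}$, then $\eta_b\circ[f,1]\in\overline{\M}$; as this equals $i^*[f,1]\circ\eta_{\Sp_b}$ and $\eta_{\Sp_b}\in\overline{\M}$, the closure of $\overline{\M}$ under left cancellation forces $i^*[\Sp_b,1]\to i^*[b,1]\in\overline{\M}$, closing the induction.

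The remaining point, that the comparison maps lie in $\overline{\M}$, is where I expect the real difficulty. Since $\eta$ is a natural transformation of cocontinuous functors it is determined on representables, and it suffices to establish $\eta_c\in\overline{\M}$ for a representable globular sum $c$ by a secondary induction on $|c|$: then $\eta_{\Sp_b}$ is a Reedy‑cofibrant colimit of such $\eta_c$ (hence in $\overline{\M}$ by the third clause of definition \ref{defi:precomplet}) and $\eta_b$ is itself of this form. For representable $c$, the description of $\Hom_{\Theta}$ in remark \ref{remark:morphism of theta} together with the bijection \eqref{eq:hom in theta} shows that $i^*[c,1]$ differs from the $\Delta[\Theta]$‑object $[c,1]$ exactly by the morphisms into $[c,1]$ that do not preserve the two extremal objects, namely those factoring through an endpoint; these are precisely the contributions governed by the boundary inclusion of lemma \ref{lemma:i etoile of W is in M 2} taken with $n=1$ (where $\partial^0\textbf a=\partial^{n-1}\textbf a=\{\partial c\}$). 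Absorbing the top‑dimensional part with that lemma and the lower‑dimensional pieces $\partial c$ with the secondary inductive hypothesis, one assembles $\eta_c\in\overline{\M}$ from the closure properties of $\overline{\M}$. The delicate bookkeeping of these extremal and boundary morphisms — the precise identification of $i^*[c,1]$ as a pushout over $[c,1]$ built from $\overline{\M}$‑maps — is, I expect, the crux of the whole argument.
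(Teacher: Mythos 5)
Your reduction in the first paragraph contains a fatal error: you identify $a=[\textbf{a},n]$ with the wedge $[a_0,1]\vee\cdots\vee[a_{n-1},1]$, and hence $i^*a$ with the wedge of the $i^*[a_i,1]$. That identification holds in $\zocat$ (this is how definition \ref{defi:les sommes glob} defines $[\textbf{a},n]$), but it is false in $\Psh{\Theta}$: the inclusion of $\zo$-categories into presheaves does not preserve these colimits, and the colimit of the wedge of representables computed in $\Psh{\Theta}$ is a spine-like object strictly smaller than the representable $a$. Concretely, for $a=[2]$ (so $a_0=a_1=[0]$) the wedge $[1]\vee[1]$ computed in presheaves is exactly $\Sp_{[2]}$, not $[2]$: the extremal-preserving morphism $[1]\to[2]$ sending $0,1$ to $0,2$ factors through neither face, so $\Hom([1],i^*[2])$ has six elements while the wedge of two copies of $i^*[1]$ over the point has only five. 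Since $i^*$ genuinely does preserve colimits (your adjoint-functor observation is correct), no formal argument can close this gap: the map from the wedge $i^*[a_0,1]\cup\cdots\cup i^*[a_{n-1},1]$ into $i^*[\textbf{a},n]$ is not an isomorphism, and showing that it lies in $\overline{\M}$ is precisely the hard content of the lemma, which your reduction to the single-suspension case $a=[b,1]$ silently discards.

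That missing step is exactly what the paper's proof supplies: it invokes lemma \ref{lemma:i etoile of W is in M 2} for general $n$ --- whose height filtration and attaching-cell analysis (lemmas \ref{lemma:i etoile of W is in M 0} through \ref{lemma:i etoile of W is in M 1.5}) account for the extremal-preserving morphisms that hit several wedge factors at once, like the map $0,1\mapsto 0,2$ above --- and then runs an induction on the number $n$ of factors to place $i^*[a_0,1]\cup\cdots\cup i^*[a_{n-1},1]\to i^*[\textbf{a},n]$ in $\overline{\M}$. After that there is nothing left to prove by induction on dimension: the factor maps $[\Sp_{a_i},1]\to[a_i,1]$ lie in $\M$ by the very definition of $\Mseg$, so your outer induction on $|a|$, and the whole apparatus of comparison maps $\eta_c$ in your last two paragraphs, is not needed --- and in any case your sketch there only deploys lemma \ref{lemma:i etoile of W is in M 2} with $n=1$, where it cannot perform the work required at general $n$. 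Any repair of your argument would have to reinstate the paper's filtration argument, so this is a genuine gap rather than an alternative route to the statement.
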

\begin{proof}
Let $[\textbf{a},n]:= a$. As $\overline{\M}$ is closed under pushouts and composition, lemma \ref{lemma:i etoile of W is in M 2} implies that the morphism 
$$i^*[\{a_0,...,a_{n-2}\},n-1]\cup i^*[\{a_1,...,a_{n-1}\},n-1]\to i^*[\textbf{a},n]$$
is in $\widehat{\M}$. 
An easy induction on $n$ shows that this is also the case for the morphism 
$$[a_0,1]\cup... \cup [a_{n-1},1]= i^*[a_0,1]\cup... \cup i^*[a_{n-1},1]\to i^*[\textbf{a},n].$$
Now remark that $i^*\Sp_{[\textbf{a},n]}$ is equivalent to 
$$[\Sp_{a_0},1]\cup... \cup [\Sp_{a_{n-1}},1].$$
As the morphisms $[\Sp_i,1]\to [a_i,1]$ are by definition in $\M$, this concludes the proof.
\end{proof}

\begin{prop}
\label{prop:i etoile of W is in M}
There is an inclusion $i^*\W\subset \overline{\M}$.
\end{prop}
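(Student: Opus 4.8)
The plan is to split the generating set as $\W=\Wseg\cup\Wsat$ and to treat the two families separately: the Segal family is already taken care of, and the saturation family will be reduced to a single comparison between the suspension computed in $\Psh{\Theta}$ and the suspension computed in $\Psh{\Delta[\Theta]}$. For the Segal generators $f\in\Wseg$ there is nothing to add, since $f=(\Sp_a\to a)$ gives $i^*f=(i^*\Sp_a\to i^*a)$, which lies in $\overline{\M}$ by the immediately preceding lemma. Thus the entire content of the proposition concerns $\Wsat=\{\Sigma^nE^{eq}\to\Db_n\}$.

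For the saturation family I would first isolate the following comparison, which is the heart of the matter: for every $X\in\Psh{\Theta}$ there is an isomorphism, natural in $X$,
$$ i^*\big([X,1]\big)\;\cong\;[X,[1]], $$
where on the left $[X,1]$ is the suspension in $\Psh{\Theta}$ and on the right $[X,[1]]$ is the image of $(X,[1])$ under $[\uvar,\uvar]\colon\Psh{\Theta}\times\Psh{\Delta}\to\Psh{\Delta[\Theta]}$. Granting this, the saturation case is immediate. The same argument applied to simplicial presheaves shows that $i^*$ identifies $E^{eq}$ computed over $\Theta$ with $E^{eq}$ computed over $\Delta[\Theta]$, so for $n=0$ the map $i^*(E^{eq}\to[0])$ is identified with the generator $E^{eq}\to[0]$ of $\Msat$. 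For $n\ge 1$, the map $\Sigma^nE^{eq}\to\Db_n$ is the suspension $[g,1]$ of $g=(\Sigma^{n-1}E^{eq}\to\Db_{n-1})$, so by naturality the comparison identifies $i^*(\Sigma^nE^{eq}\to\Db_n)$ with $[g,[1]]$, which is exactly the generator $[g,1]$ of $\Msat$ attached to $g\in\Wsat$. Hence $i^*\Wsat\subset\Msat\subset\overline{\M}$, and combined with the Segal case this yields $i^*\W\subset\overline{\M}$.

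It remains to prove the comparison lemma, and this is where the real work lies. Both functors $X\mapsto i^*[X,1]$ and $X\mapsto[X,[1]]$ are cocontinuous ($i^*$ is a restriction functor, hence has a right adjoint, and the suspension and $[\uvar,\uvar]$ are left Kan extensions), so it suffices to produce a natural isomorphism on representables $X=c\in\Theta$, after which density propagates it to all $X$. For a representable one evaluates both sides on an arbitrary $[b,m]$: on the $\Theta$ side $\big(i^*[c,1]\big)([b,m])=\Hom_{\Theta}([b,m],[c,1])$, and since the suspension $[c,1]$ has a \emph{single} hom-term, the explicit description of morphisms of $\Theta$ in remark \ref{remark:morphism of theta} shows that such a map is the data of a monotone $f\colon[m]\to[1]$ together with, at the unique place where $f$ jumps from $0$ to $1$ (if any), a single morphism $b\to c$, with no products ever occurring. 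On the $\Delta[\Theta]$ side, $[c,[1]]([b,m])=\Hom_{\Delta[\Theta]}([b,m],(1,c))$ unwinds to the very same data, once one separates, in the defining pushout of $\Delta[\Theta]$, the morphisms collapsed through the glued point (the constant $f$) from those genuinely using the term $c$ (the surjective $f$). The main obstacle is exactly this bookkeeping of hom-sets in the pushout category $\Delta[\Theta]$ and the check that the resulting bijection is natural in $c$; the crucial structural input making it work is that a suspension has only one hom-term, so that mapping into it never forces a product, which is precisely the feature that fails for a general globular sum and that lemma \ref{lemma:i etoile of W is in M 2} was needed to handle in the Segal case.
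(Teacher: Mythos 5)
Your proposal is correct and takes essentially the same route as the paper: split $\W=\Wseg\cup\Wsat$, dispose of the Segal generators by the preceding lemma on $i^*\Sp_a\to i^*a$, and handle the saturation generators by identifying $i^*\Wsat$ with $\Msat$. The only difference is one of detail: the paper asserts $i^*\Wsat=\Msat$ as a bare remark, whereas you actually justify it via the natural comparison $i^*[X,1]\cong[X,[1]]$ checked on representables, and that verification (including the hom-set bookkeeping in the pushout category $\Delta[\Theta]$ and the observation that a suspension has a single hom-term, so no products arise) is sound.
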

\begin{proof}
For Segal extensions, this is precisely the content of the last lemma. For saturation extensions, remark that $i^*\Wsat = \Msat$.
\end{proof}

\begin{proof}[Proof of theorem \ref{theo:unit and counit are in W}]
Let $a$ be a globe. We then have $i_!i^*a = a$. Suppose now that $a$ is any globular sum. We then have a commutative diagram
\[\begin{tikzcd}
	{i_!i^*\Sp_a} & {\Sp_a} \\
	{i_!i^*a} & a
	\arrow[from=1-1, to=2-1]
	\arrow[Rightarrow, no head, from=1-1, to=1-2]
	\arrow[from=1-2, to=2-2]
	\arrow[from=2-1, to=2-2]
\end{tikzcd}\]
where the upper horizontal morphism is an identity.
The proposition \ref{prop:i etoile of W is in M} and the fact that $i_!(\M)\subset \W$ implies that the vertical morphisms of the previous diagram are in $\overline{\W}$. By left cancellation, this implies that $i_!i^*a\to a$ belongs to $\overline{\W}$ for any globular sum. We proceed analogously to show that for any $b\in \Delta[\Theta]$, $b\to i^*i_! b$ is in $\overline{\M}$.
\end{proof}

\section{Gray Operations}
\subsection{Recollection on Steiner theory}

\label{section:Steiner thery} 

We present here the Steiner theory developed in \cite{Steiner_omega_categories_and_chain_complexes}.

\begin{definition}
An augmented directed complex $(K,K^*,e)$ is given by a complex of abelian groups $K$, with an augmentation $e$: $$\Zb \xleftarrow{e} K_0 \xleftarrow{\partial_0} K_1 \xleftarrow{\partial_1} K_2 \xleftarrow{\partial_2} K_3 \xleftarrow{\partial_3}. .. $$
and a graded set $K^* = (K^*_n)_{n\in\Nb}$ such that for any $n$, $K_n^*$ is a submonoid of $K_n$. A morphism of directed complexes between $(K,K^*,e)$ and $(L,L^*,e')$ is given by a morphism of augmented complexes of abelian groups $f : (K,e)\to (L,e')$ such that $f(K^*_n)\subset L^*_n$ for any $n$. We note by \wcnotation{$\CDA$}{(adc@$\CDA$} the category of augmented directed complexes. 
\end{definition}

Steiner then constructs an adjunction
\[\begin{tikzcd}
	{\lambda:\omegacat} & {\CDA:\nu}
	\arrow[""{name=0, anchor=center, inner sep=0}, shift left=2, from=1-1, to=1-2]
	\arrow[""{name=1, anchor=center, inner sep=0}, shift left=2, from=1-2, to=1-1]
	\arrow["\dashv"{anchor=center, rotate=-90}, draw=none, from=0, to=1]
\end{tikzcd}\]
The functor $\lambda$ is the simplest to define: \sym{(lambda@$\lambda:\omegacat\to \CDA$}

\begin{construction}
Let $C$ be a $\omega$-category.
We denote by $(\lambda C)_n$ the abelian group generated by the set $\{[x]_n: x\in C_n\}$ and the relations
$$[x\circ_m y]_n \sim [x]_n + [y]_n \mbox{ for $m<n$ }.$$
We define the morphism $\partial_n: (\lambda C)_{n+1}\to (\lambda C)_n$ on generators by the formula:
$$\partial_n([x]_{n+1}) := [d_n^+ x]_{n} - [d_n^- x]_{n}.$$
We can easily check that the morphism $\partial$ is a differential. We define an augmentation $e:(\lambda C)_{0}\to \Zb$ by setting $e([x]_0) = 1$ on generators. 
We denote by $(\lambda C)_n^*$ the additive submonoid generated by the elements $[x]_n$. We then set:
$$\lambda C := (\{(\lambda C)_n \}_{n\in \Nb},\{(\lambda C)^*_n \}_{n\in \Nb},e ).$$ This assignation lifts to a functor:
$$\begin{array}{ccccc}
\lambda &:& \omegacat&\to&\CDA\\
&&C&\mapsto&\lambda C.
\end{array}$$
\end{construction}
\begin{example}~
\begin{enumerate}
\item
For any integer $n$, $\lambda\Db_n$ is the augmented directed complex whose underlying chain complex is given by:
$$
\Zb\xleftarrow{e}
\Zb[e_0^-,e_0^+] \xleftarrow{\partial_0}
... \xleftarrow{\partial_{n-2}}
\Zb[e_{n-1}^-,e_{n-1}^+] \xleftarrow{\partial_{n-1}}
\Zb[e_{n}] \xleftarrow{\partial_{n}}
0\leftarrow ...$$
where for any $0<k<n$ and $\alpha\in\{-,+\}$
$$e(e_0^\alpha)=1~~~\partial_{k-1}(e_k^\alpha)= e_{k-1}^+-e_{k-1}^-~~~\partial_{n-1}(e_n)= e_{n-1}^+-e_{n-1}^-.$$
\item
The augmented directed complex $\lambda[n]$ has for underlying chain complex:
$$
\Zb\xleftarrow{e}
\Zb[v_0,v_1,...,v_n] \xleftarrow{\partial_0}
\Zb[v_{0,1},v_{1,2}...,v_{n-1,n}] \xleftarrow{\partial_{1}}
0\leftarrow ...$$
where for any $k<n$ and $\alpha\in\{-,+\}$
$$e(v_k)=e(v_n)=1~~~ \partial_{1}(v_{k,k+1})=v_{k+1}-v_k.$$
\end{enumerate}
\end{example}

 We now define the functor $\nu:\CDA\to \omegacat$. 
\begin{definition}
Let $(K,K^*,e)$ be an augmented directed complex.
A \textit{Steiner array} (or simply a \notion{array}) of dimension $n$ is the data of a finite double sequence: \sym{(nu@$\nu:\CDA\to \omegacat$}
$$\left(\begin{matrix}
x^-_0 &x^-_1&x^-_2&x^-_3 &...&x_n^-\\
x^+_0 &x^+_1&x^+_2&x^+_3 &...&x_n^+
\end{matrix}\right)$$
such that
\begin{enumerate}
\item $x^-_n=x^+_n$;
\item For any $i\leq n$ and $\alpha\in\{-,+\}$, $x_i^\alpha$ is an element of $K^*_i$;
\item For any $0<i\leq n$, $\partial_{i-1}(x_i^\alpha)= x_{i-1}^+ - x_{i-1}^-$;
\end{enumerate}
An array is said to be \wcnotion{coherent}{coherent array} if $e(x^+_0) = e(x^-_0) = 1$.
\end{definition}

\begin{definition}
Let $(K,K^*,e)$ be an augmented directed complex.
We define the globular set $\nu K$, whose $n$-cells are the coherent arrays of dimension $n$. The source and target maps are defined for $k<n$ by the formula: 

$$d^\alpha_k\begin{pmatrix}
x^-_0 &x^-_1&x^-_2&...&x^-_n\\
x^+_0 &x^+_1&x^+_2&...& x^+_n
\end{pmatrix} = \begin{pmatrix}
x^-_0 &x^-_1&x^-_2&...& x^-_{k-1}&x^\alpha_k\\
x^+_0 &x^+_1&x^+_2&...& x^+_{k-1}&x^\alpha_k\end{pmatrix}$$

There is an obvious group structure on the arrays:
$$\begin{pmatrix}
x^-_0 &x^-_1&...& x^-_n\\
x^+_0 &x^+_1&...& x^+_n
\end{pmatrix}
+
\begin{pmatrix}
y^-_0 &y^-_1&...& y^-_n\\
y^+_0 &y^+_1&...& y^+_n
\end{pmatrix}
=
\begin{pmatrix}
x^-_0+y^-_0 &x^-_1+ y^-_1&...&x^-_n+ y^-_n \\
x^+_0+y^+_0 &x^+_1+ y^+_1&...&x^+_n +y^+_n 
\end{pmatrix}
$$
\label{defi:definition of composition and units of nu k}

\begin{itemize}
\item[$-$]
For two coherent arrays $x$ and $y$ such that $d^-_k(x) =d^+_k(y) = z$, we define their $k$-composition by the following formula: 
$$x*_k y := x- z + y .$$ More explicitly:
$$\begin{pmatrix}
x^-_0 &...& x^-_n\\
x^+_0 &...& x^+_n
\end{pmatrix}
*_k
\begin{pmatrix}
y^-_0 &...& y^-_n\\
y^+_0 &...& y^+_n
\end{pmatrix}
 := 
\begin{pmatrix}
y^-_0&...&y_k^-& y_{k+1}^- + x_{k+1}^- & ...& y_{n}^- + x_{n}^-\\
x^+_0 &...&x_k^+& y_{k+1}^+ + x_{k+1}^+ & ...& y_{n}^+ + x_{n}^+ 
\end{pmatrix}
$$
\item[$-$]
For an integer $m>n$, we define the $m$-sized array $1^m_x$ as follows:
$$1^m_x :=
\begin{pmatrix}
x^-_0 &...& x^-_n& 0 &...&0\\
x^+_0 &...& x^+_n& 0 &...&0	
\end{pmatrix}$$
\end{itemize}
The globular set $\nu K$, equipped with these compositions and units is an $\omega$-category.
\end{definition}

\begin{construction}
We define the functor $\nu: \CDA \to \omegacat$ which associates to an augmented directed complex $K$, the $\omega$-category $\nu K$, and to a morphism of augmented directed complexes $f: K \to L$, the morphism of $\omega$-categories.
$$
\begin{array}{rccc}
\nu f : &\nu K &\to& \nu L\\
& \left(\begin{matrix}
x^-_0 &...&x_n^-\\
x^+_0&...&x_n^+
\end{matrix}\right) 
&\mapsto&
\left(\begin{matrix}
f_0(x^-_0) &...&f_n(x_n^-)\\
f_0(x^+_0)&...&f_n(x_n^+)
\end{matrix}\right) 
\end{array}
$$
\end{construction}

\begin{theorem}[Steiner]
\label{theo:ajdonction de steiner avec unite et counite explicite}
The functors $\lambda$ and $\nu$ form an adjoint pair 
\[\begin{tikzcd}
	{\lambda:\omegacat} & {\CDA:\nu}
	\arrow[""{name=0, anchor=center, inner sep=0}, shift left=2, from=1-1, to=1-2]
	\arrow[""{name=1, anchor=center, inner sep=0}, shift left=2, from=1-2, to=1-1]
	\arrow["\dashv"{anchor=center, rotate=-90}, draw=none, from=0, to=1]
\end{tikzcd}\]
For a $\omega$-category $C$, the unit of the adjunction is given by:
$$\begin{array}{rrcl}
~~~~~\eta :& C &\to & \nu \lambda C \\
& x\in C_n &\mapsto & 
\begin{pmatrix}
[d^-_0(x)]_0&...&[d^-_{n-1}(x)]_{n-1}&[x]_n\\
[d^+_0(x)]_0&...& [d^+_{n-1}(x)]_{n-1}&[x]_n
\end{pmatrix}
\end{array}
$$
For an augmented directed complex $K$, the counit is given by:
$$\begin{array}{rrcl}
\pi :& \lambda \nu K &\to & K~~~~~~~~~~~~~~~~ \\
& [x ]_n \in (\lambda \nu K)_n&\mapsto & x_n^+ = x_n^-
\end{array}
$$
\end{theorem}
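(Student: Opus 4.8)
The plan is to establish the adjunction directly from the explicit formulas, by checking that $\eta$ and $\pi$ are well-defined natural transformations $\eta:\mathrm{id}_{\omegacat}\Rightarrow \nu\lambda$ and $\pi:\lambda\nu\Rightarrow \mathrm{id}_{\CDA}$ satisfying the two triangle identities. Since both the unit and the counit are prescribed, this is the most economical route: once well-definedness and naturality are in hand, the triangle identities reduce to a one-line computation on generators and on cells. (Equivalently one could construct the natural bijection $\Hom_{\CDA}(\lambda C,K)\cong\Hom_{\omegacat}(C,\nu K)$ by $f\mapsto (x\mapsto (f[d^\alpha_i x]_i)_i)$ with inverse $g\mapsto ([x]_n\mapsto g(x)^+_n)$, which unwinds to the same verifications.)

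First I would check that $\eta_C$ is a well-defined morphism of $\omega$-categories. For $x\in C_n$, the array with entries $[d^\alpha_i x]_i$ for $i<n$ and $[x]_n$ on top is coherent, since $e[d^\alpha_0 x]_0=1$, and it satisfies the boundary condition because, by definition of $\partial$ in $\lambda C$ together with the globular identities $d^\epsilon_{i-1}d^\alpha_i=d^\epsilon_{i-1}$, one has $\partial_{i-1}[d^\alpha_i x]_i=[d^+_{i-1}x]_{i-1}-[d^-_{i-1}x]_{i-1}$. Compatibility with the truncation maps $d^\alpha_k$ on arrays is immediate. The substantive point is that $\eta_C$ preserves compositions and units: for $x,y$ with $d^-_k x=d^+_k y$ one compares $\eta_C(x\circ_k y)$ with $\eta_C(x)\ast_k\eta_C(y)=\eta_C(x)-d^-_k\eta_C(x)+\eta_C(y)$ entry by entry, using the axioms $d^+_i(x\circ_k y)=d^+_i x$ and $d^-_i(x\circ_k y)=d^-_i y$ for $i\le k$, and $d^\alpha_i(x\circ_k y)=d^\alpha_i x\circ_k d^\alpha_i y$ for $i>k$; in the latter range the defining relation of $\lambda C$ collapses $[d^\alpha_i x\circ_k d^\alpha_i y]_i$ to $[d^\alpha_i x]_i+[d^\alpha_i y]_i$, matching the array composition formula. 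Naturality in $C$ follows because $\eta_C$ is defined cellwise through the boundary operators, which any morphism of $\omega$-categories preserves.

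Next I would verify that $\pi_K$ is a well-defined morphism of augmented directed complexes. On a generator $[x]_n$ it returns the common top entry $x^+_n=x^-_n$; this respects the relation $[x\ast_k y]_n\sim[x]_n+[y]_n$ because the array composition formula gives $(x\ast_k y)^+_n=x^+_n+y^+_n$ for $k<n$, so $\pi_K$ descends to the quotient defining $(\lambda\nu K)_n$. It is a chain map since $\partial_n[x]_{n+1}=[d^+_n x]_n-[d^-_n x]_n$ is sent to $x^+_n-x^-_n$, which equals $\partial_n(x^\pm_{n+1})$ by the array axiom $\partial_n(x^\alpha_{n+1})=x^+_n-x^-_n$; it respects the augmentation, and it lands in the positive submonoid $K^*_n$ because each top entry lies in $K^*_n$ by the very definition of a coherent array. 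Naturality in $K$ is immediate from $(\nu f(x))^+_n=f_n(x^+_n)$.

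Finally I would check the triangle identities. The composite $\lambda C\xrightarrow{\lambda\eta_C}\lambda\nu\lambda C\xrightarrow{\pi_{\lambda C}}\lambda C$ sends a generator $[x]_n$ to the top entry of $\eta_C(x)$, namely $[x]_n$, hence is the identity; and $\nu K\xrightarrow{\eta_{\nu K}}\nu\lambda\nu K\xrightarrow{\nu\pi_K}\nu K$ sends a coherent array $x$ to the array whose $(\alpha,i)$-entry is $\pi_K([d^\alpha_i x]_i)=(d^\alpha_i x)^\pm_i=x^\alpha_i$, recovering $x$. I expect the main obstacle to be exactly the step showing that $\eta_C$ preserves all the compositions $\circ_k$: this is where the $\omega$-category axioms governing $d^\alpha_i$ of a composite must be combined with the defining relations of $\lambda C$, with the three index ranges $i<k$, $i=k$, $i>k$ handled separately; every other item reduces to unwinding the definitions.
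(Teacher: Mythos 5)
Your proposal is correct, but it is not the paper's route: the paper does not prove this statement at all, it simply cites Steiner's original article (\cite[theorem 2.11]{Steiner_omega_categories_and_chain_complexes}), whereas you give a self-contained verification. Your argument is the standard one (and essentially the one in Steiner's paper): check that $\eta_C$ lands in coherent arrays and is a morphism of $\omega$-categories, check that $\pi_K$ kills the defining relations of $\lambda\nu K$ and is a map of augmented directed complexes, then verify the two triangle identities, which indeed collapse to reading off the top entry of an array. The computations you indicate are the right ones and they do go through: the split into ranges $i\leq k$ and $i>k$ matches exactly the $\omega$-category axioms $\pi^+_i(x\circ_k y)=\pi^+_i(x)$, $\pi^-_i(x\circ_k y)=\pi^-_i(y)$ for $i\leq k$ versus $\pi^\alpha_i(x\circ_k y)=\pi^\alpha_i(x)\circ_k\pi^\alpha_i(y)$ for $i>k$, and in the latter range the relation $[u\circ_k v]_i\sim[u]_i+[v]_i$ of $\lambda C$ turns the composite into the sum appearing in the array-composition formula; preservation of units follows from $[\Ib_x]_{n+1}=0$, which holds since $\Ib_x\circ_n\Ib_x=\Ib_x$. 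What the two approaches buy is clear: the citation keeps the paper short and defers the bookkeeping to the literature, while your verification makes the statement self-contained and, usefully, makes visible exactly which axioms of definition \ref{defi:def of omega cat} and which relations in the construction of $\lambda$ are responsible for the adjunction. If you wanted to include it in place of the citation, the only points worth writing out in full are the composition-preservation computation for $\eta_C$ and the well-definedness of $\pi_K$ on the quotient; everything else is, as you say, unwinding definitions.
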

\begin{proof}
This is \cite[theorem 2.11]{Steiner_omega_categories_and_chain_complexes}.
\end{proof}

\begin{definition}
A \snotion{basis}{for augmented directed complexes} for an augmented directed complex $(K,K^*,e)$ is a graded set $B = (B_n)_{n\in\Nb}$ such that for every $n$, $B_n$ is both a basis for the monoid $K_n^*$ and for the group $K_n$.
\end{definition}

\begin{remark}
The elements of $B_n$ can be characterized as the minimal elements of $K_n^*\backslash{0}$ for the following order relation:
	$$x\leq y \mbox{ iff } y-x \in K_n^*$$
This shows that if a basis exists, it is unique.
\end{remark}
\vspace{1cm}

We fix an augmented directed complex $(K,K^*,e)$ admitting a base $B$. Any element of $K_n$ can then be written uniquely as a sum $\sum_{b\in B_n} \lambda_b b$. This leads us to define new operations:
\begin{definition}
\label{defi:support}
For an element $x := \sum_{b\in B_n} \lambda_b b$ of $K_n$, we define the \textit{positive part} and the \textit{negative part}:
$$
\begin{array}{rcl}
(x)_+ &:=& \sum_{b\in B_n, \lambda_b> 0} ~\lambda_bb\\
(x)_- &:=& \sum_{b\in B_n, \lambda_b< 0} -\lambda_bb
\end{array}
$$
We then have $x = (x)_+ - (x)_-$. An element $x$ is \textit{positive} (resp. \textit{negative}) when $x =(x)_+$ (resp. when $x =-(x)_-$).
Let $y = \sum_{b\in B_n} \mu_b b$, we set : 
$$
\begin{array}{rcl}
x\wedge y &:=& \sum_{b\in B_n} \mbox{ min}(\lambda_b, \mu_b)~ b \\
\end{array}
$$
Eventually, we set \sym{(partialna@$\partial_n^+(\uvar)$}\sym{(partialnb@$\partial_n^-(\uvar)$}
$$
\begin{array}{rcl}
\partial_n^+(\uvar) &:=& (\partial_n(\uvar))_+ : K_{n+1}\to K^*_n\\
\partial_n^-(\uvar) &:= &(\partial_n(\uvar))_- : K_{n+1}\to K^*_n
\end{array}
$$

When an element $b$ of the basis is in the support of $x$, i.e $\lambda_b\neq 0$, we say that \textit{$b$ belongs to $x$}, which is denoted by $b\in x$.
\end{definition}

\begin{example}
For any integer $n$, $\lambda\Db_n$ admits a basis, given by the graded set $B_{\lambda\Db_n}$ fulfilling:
$$(B_{\lambda\Db_n})_k:= \left\{ 
\begin{array}{ll}
\{e_k^-,e_k^+\}&\mbox{ if $k<n$}\\
\{e_n\}&\mbox{ if $k=n$}\\
\emptyset&\mbox{ if $k>n$}\\
\end{array}\right.$$ 
The augmented directed complex $\lambda[n]$ also admits a basis, given by the graded set $B_{\lambda\Db_n}$ fulfilling:
$$(B_{\lambda\Db_n})_k:= \left\{ 
\begin{array}{ll}
\{v_0,v_1,...,v_n\}&\mbox{ if $k=0$}\\
\{v_{0,1},v_{1,2}...,v_{n-1,n}\}&\mbox{ if $k=1$}\\
\emptyset&\mbox{ if k>1}\\
\end{array} \right.$$ 
\end{example}

\begin{definition}
Let $a\in K^*_n$. We set by a decreasing induction on $k\leq n$ : 
 $$ \begin{array}{rclc}
 \langle a\rangle_k^\alpha &:= & a & \mbox{if $k = n$}\\
 &:= & \partial_k^\alpha\langle a\rangle^\alpha_{k+1} & \mbox{if not}
\end{array} 
$$
The array associated to $a$ is then: 
$$\langle a\rangle := \begin{pmatrix}
\langle a\rangle^-_0 &...&\langle a\rangle^-_{n-1}&a\\
\langle a\rangle^+_0 &...&\langle a\rangle^+_{n-1}&a
\end{pmatrix}$$
The basis is said to be \wcnotion{unitary}{unitary basis} when for any $b\in B$, the array $\langle b\rangle$ is coherent.
\end{definition}

\begin{definition}
 We define the relation $\odot$ on $B$ as being the smallest transitive and reflexive relation such that for any pair of elements of the basis $a,b$, 
$$a\odot b \mbox{ if } \mbox{($|a|>0$ and $b\in\langle a\rangle_{|a|-1}^-$)}~~\mbox{or}~~\mbox{($|b|>0$ and $a\in \langle b\rangle_{|b|-1}^+$)}$$
A basis is said to be \wcsnotion{loop free}{loop free basis}{for augmented directed complexes} the relation $\odot$ is a (partial) order on $B$.
\end{definition}

\begin{remark}
In \cite{Ara_Maltsiniotis_joint_et_tranche}, this notion is called \textit{strongly loop free}.
\end{remark}

\begin{example}
For any integer $n$, $\lambda\Db_n$ and $\lambda[n]$ admit a loop free and unitary basis.
\end{example}

\begin{definition}
 We now define the subcategory \wcnotation{$\CDAB$}{(adcb@$\CDAB$} of $\CDA$ composed of augmented directed complexes which admit a unitary and loop free basis. 
 \end{definition}

We will now describe the analog of the notion of basis for $\omega$-categories. 

\begin{definition}
A $\omega$-category $C$ is \wcnotion{generated by composition}{generated by composition} by a set $E\subset C$ when any cell can be written as a composition of elements of $E$ and iterated units of elements of $E$. This set is a \snotion{basis}{for $\io$-categories} if $\{[e]_{d(e)}\}_{e\in E}$ is a basis of the augmented directed complex $\lambda C$. 
\end{definition}

\begin{prop}
An $\omega$-category $C$ that admits a basis is an $\zo$-category.
\end{prop}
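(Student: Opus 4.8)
The plan is to show that an $\omega$-category $C$ equipped with a basis has no non-identity equivalences, by exploiting the dimension-counting invariant that the basis (via Steiner theory) provides. The key tool is the remark that a basis element $b \in B_n$ is characterized as a minimal nonzero element of $K_n^*$ for the order $x \leq y \iff y - x \in K_n^*$, together with the observation stated in the polygraph discussion: the number of top-dimensional generators appearing in an expression for a cell is an invariant of the cell. I would first extract from the existence of a basis $B = (B_n)$ the following numerical invariant: for an $n$-cell $x$ of $C$, the element $[x]_n \in (\lambda C)_n^*$ expands uniquely as $\sum_{b \in B_n} \lambda_b\, b$ with $\lambda_b \in \mathbb{N}$, and I define $N(x) := \sum_{b} \lambda_b$, the total multiplicity of $n$-dimensional basis elements in $x$. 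Because the relations defining $(\lambda C)_n$ are exactly $[x \circ_m y]_n \sim [x]_n + [y]_n$ for $m < n$, this invariant is \emph{additive} under all compositions $\circ_m$ with $m < n$, and an $n$-cell is a unit (i.e.\ lies in the image of $\mathbb{I}:C_{n-1}\to C_n$) precisely when $N(x) = 0$, since units contribute no $n$-dimensional generator.

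\emph{Next I would} translate the equivalence condition into this invariant. Suppose $f : s \to t$ is an $n$-equivalence, so there exists $g : t \to s$ with $f \circ_{n-1} g = \mathbb{I}_t$ and $g \circ_{n-1} f = \mathbb{I}_s$. Applying $N$ to the first identity and using additivity under $\circ_{n-1}$ gives
\[
N(f) + N(g) = N(\mathbb{I}_t) = 0.
\]
Since $N$ takes values in $\mathbb{N}$, both $N(f) = 0$ and $N(g) = 0$. Hence $[f]_n = 0$ in $(\lambda C)_n^*$; I must then upgrade this to the statement that $f$ itself is a unit, not merely that its class vanishes. This is where the \emph{unitary, loop-free} hypotheses enter through the structure of $\lambda C$ as an object of $\CDAB$: the faithfulness of the unit $\eta : C \to \nu\lambda C$ on cells with a basis, combined with the fact that $N(f)=0$ forces the associated array $\langle [f]_n \rangle$ to coincide with the array of an iterated unit, lets one recover $f = \mathbb{I}_{\pi^-_{n-1}(f)}$.

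\emph{The main obstacle} I anticipate is exactly this last step: passing from the vanishing of the top-dimensional multiplicity $N(f) = 0$ to the genuine equality of cells $f = \mathbb{I}_{s'}$ in $C$. The additive invariant lives in the abelianization $\lambda C$, and a priori two distinct cells could have the same image; one needs that $C$ admitting a \emph{basis} (in the sense that $\{[e]_{d(e)}\}_{e \in E}$ is a basis of $\lambda C$) makes $C$ sufficiently rigid—concretely, that the cells of $C$ are freely generated by $E$ under composition, so that a cell with no top-dimensional generator in its (essentially unique) expression must be an iterated unit of a lower cell. I would handle this by arguing that the comparison $\eta$ realizes $C$ as the $\omega$-subcategory of $\nu\lambda C$ spanned by the cells in the image, and then invoking that $\nu$ of a complex with unitary loop-free basis has the property that an $n$-cell whose $n$-th entry is a unit array is itself a unit; this is precisely the rigidity encoded by the loop-free order $\odot$, which prevents any nontrivial cell from being cancelled by composition. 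Granting that structural fact about $\CDAB$, the conclusion $N(f)=0 \Rightarrow f$ trivial closes the argument, and $C$ is therefore an $\zo$-category.
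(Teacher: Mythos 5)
Your opening moves coincide with the paper's proof: from $f \circ_{n-1} g = \Ib_t$ you get $[f]_n + [g]_n = [\Ib_t]_n = 0$ in $(\lambda C)_n$, and since both classes lie in the submonoid $(\lambda C)_n^*$, which is free on the basis, positivity forces $[f]_n = 0$. The genuine problem is your resolution of what you correctly flag as the main obstacle. You propose to pass from $[f]_n = 0$ to triviality of $f$ by realizing $C$ inside $\nu\lambda C$ via the unit $\eta$ and invoking rigidity of complexes with \emph{unitary, loop-free} bases. But the proposition assumes only that $C$ admits a basis in the sense of the paper's definition: $C$ is generated by composition by a set $E$, and $\{[e]_{d(e)}\}_{e\in E}$ is a basis of $\lambda C$. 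Nothing guarantees that this basis is unitary or loop-free, so $\lambda C$ need not lie in $\CDAB$; moreover, the faithfulness of $\eta$ (indeed, that $\eta$ is an isomorphism) is available only through Steiner's theorem \ref{theorem:steiner}, whose hypotheses (atomic and loop-free basis) are not granted here, and whose invocation would in any case be importing exactly the kind of rigidity you are trying to prove. As written, the last step therefore fails.

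The gap is closed far more cheaply, using the other half of the definition of a basis, namely the generation clause, which your argument never exploits. Suppose $f$ is non-trivial. Write $f$ as a composite of elements of $E$ and iterated units of elements of $E$. Since a composite of iterated units is again an iterated unit, a non-trivial $n$-cell must involve at least one $n$-dimensional generator $e \in E$ in such an expression. The defining relations $[x \circ_m y]_n = [x]_n + [y]_n$ of $\lambda C$ then give $[f]_n = \sum_i [e_i]_n$, a sum of basis elements of $(\lambda C)_n$ with at least one term, and such a sum cannot vanish in a free abelian group. This contradicts $[f]_n = 0$, with no appeal to $\nu$, to faithfulness of $\eta$, or to loop-freeness. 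Note also that your worry that "two distinct cells could have the same image under $\lambda$" is a red herring: one never needs $\lambda$ to separate cells, only to detect non-triviality, and generation by composition is precisely what makes top-dimensional support a faithful witness of that.
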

\begin{proof}
Let $C$ be an $\omega$-category that admits a basis $E$. Suppose that there exists a non trivial $n$-cell $\alpha$ that admits an inverse $\beta$. We then have $[\alpha]_n+ [\beta]_n=[\alpha \circ_{n-1} \beta]_n =0$. As $\lambda C$ is free, we have $[\alpha]_n=0$. This implies the equality $[e]_n=0$ for any element $e\in E$ of dimension $n$ that appears in a decomposition of $\alpha$. This is obviously in contradiction with the fact that $\{[e]_{d(e)}\}_{e\in E}$ is a basis of the augmented directed complex $\lambda C$. 
\end{proof}

\begin{definition}
\label{defi:loop free and atomic}
A basis $E$ of an $\zo$-category is : 
\begin{enumerate}
\item \wcsnotion{Loop free}{loop free basis}{for $\zo$-categories} when $\{[e]_{d(e)}\}_{e\in E}$ is.
\item \wcnotion{Atomic}{atomic basis} when $[d_n^+ e]_n \wedge [d_n^- e]_n = 0$ for any $e\in E$ and any natural number $n$ strictly smaller than the dimension of $e$. 
\end{enumerate}
\end{definition}

\begin{prop}
 If a loop free basis $E$ is atomic then $\{[e]\}_{e\in E}$ is unitary.
 \end{prop}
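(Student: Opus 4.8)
The plan is to identify, for each $e\in E$ of dimension $n:=d(e)$, the array $\langle [e]_n\rangle$ with the image of the cell $e$ under the Steiner unit $\eta$ of theorem~\ref{theo:ajdonction de steiner avec unite et counite explicite}. Since every cell of $\nu\lambda C$ is by construction a \emph{coherent} array, once this identification is established the coherence of $\langle [e]_n\rangle$, and hence the unitarity of the basis, is immediate. Concretely, I would prove that for all $k\le n$ and all $\alpha\in\{-,+\}$,
\[
\langle [e]_n\rangle_k^\alpha = [d_k^\alpha(e)]_k,
\]
by decreasing induction on $k$. The base case $k=n$ is the definitional identity $\langle [e]_n\rangle_n^\alpha=[e]_n=[d_n^\alpha(e)]_n$, since the $n$-source and $n$-target of an $n$-cell are the cell itself.

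For the inductive step, assume $\langle [e]_n\rangle_{k+1}^\alpha=[d_{k+1}^\alpha(e)]_{k+1}$. By definition $\langle [e]_n\rangle_k^\alpha=\partial_k^\alpha(\langle [e]_n\rangle_{k+1}^\alpha)$, so I must compute the positive and negative parts of $\partial_k([d_{k+1}^\alpha(e)]_{k+1})$. The differential formula in the construction of $\lambda$, together with the globular identities $d_k^\beta d_{k+1}^\alpha=d_k^\beta$, yields
\[
\partial_k([d_{k+1}^\alpha(e)]_{k+1}) = [d_k^+ e]_k - [d_k^- e]_k .
\]
Both terms lie in the positive cone $(\lambda C)_k^*$, being images of the cells $d_k^+ e$ and $d_k^- e$, and therefore have non-negative coordinates in the basis $\{[e']_k\}_{e'\in E}$. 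This is exactly where atomicity enters: the hypothesis $[d_k^+ e]_k\wedge [d_k^- e]_k=0$ (applicable since $k<n$) says precisely that these two positive elements have disjoint supports, so in the difference every basis element carries a coefficient of a single sign. Hence the positive part of the difference is exactly $[d_k^+ e]_k$ and the negative part is exactly $[d_k^- e]_k$, giving $\partial_k^\alpha([d_{k+1}^\alpha(e)]_{k+1})=[d_k^\alpha(e)]_k$ and closing the induction.

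Specializing to $k=0$ gives $\langle [e]_n\rangle_0^\alpha=[d_0^\alpha(e)]_0$; as $d_0^\alpha(e)$ is an object, $[d_0^\alpha(e)]_0$ is a single basis generator, on which the augmentation takes the value $1$. Thus $e(\langle [e]_n\rangle_0^+)=e(\langle [e]_n\rangle_0^-)=1$, so each $\langle [e]\rangle$ is coherent and the basis is unitary. I expect the main obstacle to be precisely the sign bookkeeping in the inductive step: one must check that the geometric decomposition of $\partial_k$ into target minus source survives passage to positive and negative parts, which holds exactly because atomicity forces disjoint supports. The loop-free hypothesis does not seem to be needed for this computation beyond ensuring that $\{[e]_{d(e)}\}_{e\in E}$ is a genuine basis, so that the operations $\partial_k^\alpha$ and $\wedge$ are defined.
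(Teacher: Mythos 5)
Your proof is correct. For comparison: the paper does not prove this proposition itself — its ``proof'' is a citation of Steiner's proposition 4.6 — so your argument amounts to a self-contained reproof of the cited result. The substance is the descending induction showing $\langle [e]_n\rangle_k^\alpha = [d_k^\alpha(e)]_k$, i.e.\ that the atom $\langle [e]_n\rangle$ is exactly the coherent array $\eta(e)$ of theorem \ref{theo:ajdonction de steiner avec unite et counite explicite}; the inductive step is sound because $\partial_k([d_{k+1}^\alpha(e)]_{k+1}) = [d_k^+(e)]_k - [d_k^-(e)]_k$ by the definition of $\lambda$ and the globular identities, both terms lie in $(\lambda C)_k^*$, and atomicity says their supports are disjoint, so no cancellation occurs and the positive and negative parts of this difference are exactly these two terms. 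This is precisely where atomicity is needed: without it, $\partial_k^+([d_{k+1}^\alpha(e)]_{k+1})$ could be strictly smaller than $[d_k^+(e)]_k$. The concluding coherence follows either from your identification with the cell $\eta(e)$ of $\nu\lambda C$, or directly from the fact that the augmentation takes the value $1$ on $[x]_0$ for every object $x$. One small correction to your final remark: loop-freeness is not needed even to make $\partial_k^\alpha$ and $\wedge$ well defined, since the paper's notion of a basis of an $\zo$-category already requires $\{[e]_{d(e)}\}_{e\in E}$ to be a basis of $\lambda C$; so the implication ``atomic $\Rightarrow$ unitary'' holds for any basis, loop-free or not. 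What the paper's citation buys is brevity; what your proof buys is a self-contained argument that makes visible exactly where the atomicity hypothesis enters.
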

\begin{proof}
 This is \cite[proposition 4.6]{Steiner_omega_categories_and_chain_complexes}.
 \end{proof}

\begin{example}
For any integer $n$, $\Db_n$ and $[n]$ admit a loop free and atomic basis.
More generally, \cite[proposition 4.13]{Ara_Maltsiniotis_joint_et_tranche} states that 
any globular sum admits a loop free and atomic basis. 
\end{example}

\begin{definition}
Proposition $1.23$ of \cite{Ara_a_categorical_characterization_of_strong_Steiner_omega_categories} states that if an $\zo$-category admits a loop-free and atomic basis, it is unique.
We then define the category \wcnotation{$\zocatB$}{((a30@$\zocatB$} as the full subcategory of $\omegacat$ composed of $\zo$-categories admitting an atomic and loop-free basis.
\end{definition}

 \begin{theorem}[Steiner]
 \label{theorem:steiner}
 Once restricted to $\zocat_B$ and $\CDAB$, the adjunction 
\[\begin{tikzcd}
	{\lambda:\omegacat} & {\CDA:\nu}
	\arrow[""{name=0, anchor=center, inner sep=0}, shift left=2, from=1-1, to=1-2]
	\arrow[""{name=1, anchor=center, inner sep=0}, shift left=2, from=1-2, to=1-1]
	\arrow["\dashv"{anchor=center, rotate=-90}, draw=none, from=0, to=1]
\end{tikzcd}\]
becomes an adjoint equivalence, i.e. :
$$ \lambda_{|\zocatB } \circ \nu_{|\CDAB} \cong id_{|\CDAB}~~~~~~~ id_{|\zocatB }\cong \nu_{|\CDAB} \circ \lambda_{|\zocatB }$$
\end{theorem}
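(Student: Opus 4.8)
The plan is to verify directly that, once the functors are restricted, the unit $\eta$ and counit $\pi$ of the adjunction of Theorem \ref{theo:ajdonction de steiner avec unite et counite explicite} become isomorphisms; an adjunction whose unit and counit are both invertible is an adjoint equivalence. Before this can even be stated one must check that the two functors restrict, i.e. that $\lambda$ sends $\zocatB$ into $\CDAB$ and that $\nu$ sends $\CDAB$ into $\zocatB$. The first inclusion is immediate: if $E$ is an atomic loop-free basis of $C$ (definition \ref{defi:loop free and atomic}), then $\{[e]_{d(e)}\}_{e\in E}$ is by definition a loop-free basis of $\lambda C$, and it is unitary by the proposition asserting that an atomic loop-free basis is unitary; hence $\lambda C\in\CDAB$.

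The inclusion $\nu(\CDAB)\subseteq\zocatB$ is the heart of the matter. Given $K\in\CDAB$ with unitary loop-free basis $B$, the natural candidate basis for $\nu K$ is the set of associated arrays $E_K:=\{\langle b\rangle : b\in B\}$; each $\langle b\rangle$ is a genuine cell of $\nu K$ precisely because the basis is unitary, so that $\langle b\rangle$ is coherent. I would then prove, by induction along the order $\odot$ on $B$, that every coherent array of $\nu K$ is a composite of elements of $E_K$ and of iterated units of such, so that $E_K$ generates $\nu K$ by composition; loop-freeness is exactly what allows one to split off a $\odot$-minimal atom at each stage, while unitarity guarantees that the boundaries match. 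One then checks that $\{[\langle b\rangle]_{|b|}\}_{b\in B}$ is a basis of $\lambda\nu K$ and that this basis is loop-free and atomic, using that passing to $\lambda$ turns the array $\langle b\rangle$ into the generator $b$ and that $\odot$ transports to the loop-free order of $K$. This decomposition statement is the main obstacle, and is precisely the content of Steiner's structural theorem for strongly loop-free complexes.

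Granting this, the counit is easy. By Theorem \ref{theo:ajdonction de steiner avec unite et counite explicite} the map $\pi_K:\lambda\nu K\to K$ sends $[x]_n$ to the common top entry $x_n^+=x_n^-$ of the array $x$; evaluated on the basis $E_K$ it carries $[\langle b\rangle]_{|b|}$ to $b$. Thus $\pi_K$ sends the basis of $\lambda\nu K$ bijectively onto the basis $B$ of $K$. Since the positive submonoids and the augmentation of both complexes are determined by their bases, a morphism of augmented directed complexes inducing a bijection of bases is invertible in $\CDA$; hence $\pi_K$ is an isomorphism for every $K\in\CDAB$.

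Finally, for the unit, let $C\in\zocatB$, so that $\lambda C\in\CDAB$ by the first paragraph. I would argue that $\eta_C:C\to\nu\lambda C$ is bijective on each set of $n$-cells. Surjectivity follows from the decomposition of $\nu\lambda C$ into the atoms $\langle[e]_{d(e)}\rangle=\eta_C(e)$ obtained in the second paragraph: every coherent array is a composite of these, and since $\eta_C$ is a morphism of $\omega$-categories such a composite is the image under $\eta_C$ of the corresponding composite in $C$. Injectivity follows from loop-freeness of the basis of $C$, which ensures that a cell is recovered from its boundary data together with the class $[x]_n$, i.e. from the array $\eta_C(x)$. Alternatively, one may invoke the triangle identity $\pi_{\lambda C}\circ\lambda(\eta_C)=\mathrm{id}$ together with the isomorphy of $\pi_{\lambda C}$ to see that $\lambda(\eta_C)$ is invertible, and then conclude using that $\lambda$ is conservative on $\zocatB$. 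Either route yields that $\eta_C$ is an isomorphism, completing the proof that the restricted adjunction is an adjoint equivalence.
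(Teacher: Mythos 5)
The paper itself gives no argument for this statement: its ``proof'' is the single line ``This is \cite[theorem 5.11]{Steiner_omega_categories_and_chain_complexes}''. So the honest comparison is between your outline and Steiner's own proof, which the paper merely imports. Your skeleton is the right one, and it is indeed the skeleton of Steiner's argument: check that $\lambda$ carries $\zocatB$ into $\CDAB$ (your first paragraph is correct, using that an atomic loop-free basis is unitary), check that $\nu$ carries $\CDAB$ into $\zocatB$ via the atoms $\langle b\rangle$, and then show the explicit unit $\eta$ and counit $\pi$ of theorem \ref{theo:ajdonction de steiner avec unite et counite explicite} are invertible on these subcategories. But the load-bearing step --- that for $K\in\CDAB$ every cell of $\nu K$ is a composite of the atoms $\langle b\rangle$ and that these atoms form an atomic loop-free basis --- is not proved in your text; you say you ``would prove it by induction along $\odot$'' and then concede it is ``precisely the content of Steiner's structural theorem.'' That puts your proposal on exactly the same logical footing as the paper's citation: an expansion of the reference, not an independent proof. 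That is acceptable parity, but it should not be mistaken for more.

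Two of your subsidiary arguments would also need repair in a self-contained treatment. First, injectivity of $\eta_C$ does not simply ``follow from loop-freeness'': the claim that a cell of $C$ is recovered from its array is one half of what Steiner proves, by an induction on dimension in which injectivity and surjectivity are established together; relatedly, your surjectivity argument needs the sources and targets of the atoms being composed to match already in $C$ (not merely in $\nu\lambda C$), which again intertwines the two halves. Second, the alternative ending via ``$\lambda$ is conservative on $\zocatB$'' is circular: nothing available before the theorem gives that conservativity, and it is essentially equivalent to the invertibility of the unit you are trying to establish (once the theorem is known, $f\cong\nu\lambda f$ makes it trivial). Your counit argument, by contrast, is fine as stated --- granted the basis result, $\pi_K$ restricts to a bijection of bases, and a morphism in $\CDA$ that is a bijection on bases is invertible --- so the only genuine gaps are the deferred decomposition theorem and the unit.
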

\begin{proof}
See \cite[theorem 5.11]{Steiner_omega_categories_and_chain_complexes}.
\end{proof}

\begin{remark}
If $K$ is an augmented directed complex admitting a unitary and loop-free basis $B$, then the $\zo$-category $\nu K$ admits an atomic and loop-free basis given by the set $\langle B\rangle := \{\langle b\rangle,b\in B\}$. Conversely if an $\zo$-category $C$ admits an atomic and loop-free basis $E$, then the augmented directed complex $\lambda C$ admits a unitary and loop-free basis given by the family of sets $[E_n] := \{[e]_{d(e)}, e\in E_n\}$. 
The isomorphisms
$$\lambda \nu K\cong K \mbox{~~~ and ~~~} C\cong \nu\lambda C$$
induce isomorphisms:
$$[\langle B\rangle ]\cong B \mbox{~~~ and ~~~} E \cong \langle [E]\rangle.$$
\end{remark}

\begin{definition}
Let $f:M\to N$ be a morphism between two augmented directed complexes admitting unitary and loop-free bases $B_M$ and $B_N$. The morphism $f$ is \wcnotion{quasi-rigid}{quasi-rigid morphism} if for any $n$, and any $b\in (B_M)_n$,
$$f_n(b)\neq 0 ~\Rightarrow ~ f_n(b)\in B_N\mbox{ and }\nu(f)\langle b\rangle = \langle f_n(b)\rangle.$$
\end{definition}

\begin{theorem}
\label{theo:Kan condition}
Suppose given a commutative square in $\CDAB$
\[\begin{tikzcd}
	K & {M_1} \\
	{M_0} & M
	\arrow["{k^0}", from=1-1, to=1-2]
	\arrow["{l^1}", from=1-2, to=2-2]
	\arrow["{k^0}"', from=1-1, to=2-1]
	\arrow["{l^0}"', from=2-1, to=2-2]
\end{tikzcd}\]
and such that all morphisms are quasi-rigid. Let $B_K,~B_{M_0},~B_{M_1},~B_{M}$ be the bases of $K,~M_0,~M_1,~ M$.

Then, this square is cocartesian if and only if for any $n$, the induced diagram of sets
\[\begin{tikzcd}
	{(B_{K})_n\cup\{0\}} & {(B_{M_1})_n\cup\{0\}} \\
	{(B_{M_0})_n\cup\{0\}} & {(B_{M})_n\cup\{0\}}
	\arrow["{k^0_n}", from=1-1, to=1-2]
	\arrow["{l^1_n}", from=1-2, to=2-2]
	\arrow["{k^0_n}"', from=1-1, to=2-1]
	\arrow["{l^0_n}"', from=2-1, to=2-2]
\end{tikzcd}\]
is cocartesian. Furthermore, the induced square in $\zocat$
\[\begin{tikzcd}
	{\nu K} & {\nu M_1} \\
	{\nu M_0} & {\nu M}
	\arrow["{\nu k^0}", from=1-1, to=1-2]
	\arrow["{\nu l^1}", from=1-2, to=2-2]
	\arrow["{\nu k^0}"', from=1-1, to=2-1]
	\arrow["{\nu l^0}"', from=2-1, to=2-2]
\end{tikzcd}\]
is cocartesian.
\end{theorem}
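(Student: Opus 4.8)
The plan is to reduce the whole square, degree by degree, to free abelian groups and free commutative monoids on pointed sets; to read off the equivalence of the two cocartesianness conditions from the fact that these free functors preserve pushouts; and finally to transport the resulting $\CDA$-pushout into $\zocat$ through the Steiner equivalence.

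First I would exploit that $K,M_0,M_1,M$ all lie in $\CDAB$. In each degree $n$ the group $(-)_n$ is then free abelian on the basis, $(-)_n\cong \Zb[(B_{(-)})_n]$, and the positive submonoid $(-)^*_n$ is free commutative on the same basis, $(-)^*_n\cong \Nb[(B_{(-)})_n]$, where $\Nb[S]$ denotes the free commutative monoid on $S$. Since all four maps are quasi-rigid, each carries a basis element to a basis element or to $0$; hence, writing $(B_{(-)})_n\cup\{0\}$ for the basis with base point $0$, every map in degree $n$ is the image $\Zb[\varphi_n]$ (on groups) and $\Nb[\varphi_n]$ (on positive monoids) of the evident pointed map $\varphi_n$ of bases. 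Thus the whole square, degreewise and separately on groups and on positive cones, is obtained by applying the free functors $\Zb[-]\colon \mathrm{Set}_*\to\mathrm{Ab}$ and $\Nb[-]\colon\mathrm{Set}_*\to\mathrm{CMon}$ to the pointed-set square of bases.

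Next I would recall that a pushout in $\CDA$ is computed degreewise on the underlying augmented chain complexes (colimits of chain complexes being degreewise, with the augmentation induced), the positive cone $P^*_n$ being the submonoid of $P_n$ generated by the images of the two positive cones. Both $\Zb[-]$ and $\Nb[-]$ are left adjoints, hence preserve pushouts. If the pointed-set square of bases is cocartesian in each degree, then $\Zb[-]$ yields $M_n\cong (M_0)_n\amalg_{K_n}(M_1)_n$ and $\Nb[-]$ yields that $M^*_n$ is the generated submonoid, the differential and augmentation being forced by those of $M_0,M_1$; this is exactly the $\CDA$-pushout, so the square is cocartesian. Conversely, if the $\CDA$-square is cocartesian, let $Q_n$ be the pointed-set pushout of the bases; since $\Zb[-]$ preserves pushouts, the group pushout is $\Zb[Q_n]$, so $M_n\cong\Zb[Q_n]$, and because $\Nb[Q_n]\hookrightarrow\Zb[Q_n]$ is injective the generated submonoid $M^*_n$ is $\Nb[Q_n]$, free commutative on $Q_n\setminus\{0\}$. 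As the basis of $M_n$ is unique (it is the set of minimal nonzero elements of $M^*_n$), we get $(B_M)_n=Q_n\setminus\{0\}$, that is, the pointed-set square is cocartesian.

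To obtain the last assertion, assume the equivalent conditions hold. Since $M\in\CDAB$, the $\CDA$-pushout $M$ already lies in $\CDAB$ and therefore is also the pushout there, so by the equivalence $\nu\colon\CDAB\simeq\zocatB$ of Theorem \ref{theorem:steiner} the square $\nu K\to\nu M_i\to\nu M$ is cocartesian in $\zocatB$. It remains to promote this to $\zocat$. Form the pushout $P:=\nu M_0\amalg_{\nu K}\nu M_1$ in $\zocat$, with comparison map $c\colon P\to\nu M$. As $\lambda$ is a left adjoint and $\lambda\nu\cong\mathrm{id}$ on $\CDAB$, applying $\lambda$ gives $\lambda P\cong M_0\amalg_K M_1\cong M\cong\lambda\nu M$, and $\lambda c$ is this isomorphism. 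The decisive point is then that $P$ itself lies in $\zocatB$: its generators should be the pointed-set pushout of the atomic loop-free bases of $\nu M_0$ and $\nu M_1$ glued over that of $\nu K$, and quasi-rigidity guarantees that this pushout of bases is again atomic and loop-free and computes the pushout of the underlying polygraphs. Granting this, $c$ is a morphism in $\zocatB$ with $\lambda c$ invertible, and since $\lambda|_{\zocatB}$ is fully faithful, $c$ is an isomorphism; hence the square is cocartesian in $\zocat$.

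The reductions via the free functors and the uniqueness of bases are routine. The main obstacle is the claim that the $\zocat$-pushout $P$ again carries an atomic loop-free basis equal to the pointed-set pushout of the given bases — equivalently, that a morphism out of $\nu M$ is precisely a compatible pair of morphisms out of $\nu M_0$ and $\nu M_1$. This is where the quasi-rigidity hypotheses are indispensable, since they ensure generators are sent to generators or collapse cleanly to identities; I would establish it by the direct combinatorial verification that each coherent array of $M$ decomposes uniquely according to the two sides of the pushout, and then check the universal property of $\nu M$ directly on generators, using that an object of $\zocatB$ is freely generated by its basis.
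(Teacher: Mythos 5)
Your first equivalence (cocartesian in $\CDA$ $\Leftrightarrow$ degreewise cocartesian on pointed bases) is handled correctly: since quasi-rigid maps are induced by pointed maps of bases, and $\Zb[-]$, $\Nb[-]$ are left adjoints reflecting isomorphisms, the comparison maps identify the degreewise pushouts with $M_n$ and $M_n^*$, and uniqueness of bases (minimal elements of the positive cone) closes the loop. Note that the paper itself does not argue this way --- its proof is a citation to theorems 3.1.2 and 3.2.7 of the reference on Kan conditions --- so a self-contained argument here is welcome.

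The problem is the ``furthermore'' statement, which is where the entire difficulty of the theorem lives, and your proposal does not prove it. You correctly observe that the Steiner equivalence $\nu\colon\CDAB\simeq\zocatB$ only yields cocartesianness in $\zocatB$, and that $\zocatB$ is a full subcategory of $\zocat$ not closed under colimits, so this does not transfer. Your fix is to claim that the pushout $P:=\nu M_0\amalg_{\nu K}\nu M_1$ computed in $\zocat$ again lies in $\zocatB$, with atomic loop-free basis the pointed-set pushout of the bases; but this claim is exactly equivalent to the statement being proved (it amounts to saying that $\nu M$ satisfies the pushout universal property against arbitrary objects of $\zocat$), so at the decisive step the proposal restates the theorem rather than reducing it. The sentence ``I would establish it by the direct combinatorial verification that each coherent array of $M$ decomposes uniquely according to the two sides of the pushout'' is precisely the content of the cited theorems, and it is not routine: it requires Steiner's freeness theorem (that objects of $\zocatB$ are polygraphs generated by their atoms, a nontrivial result you invoke implicitly), an induction on polygraphic skeleta using that quasi-rigid maps preserve skeleta and atoms, a treatment of the degenerate cases where basis elements are sent to $0$, and an essential use of loop-freeness to make the induction well-founded. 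Pushouts of $\omega$-categories along maps of polygraphs do not in general produce polygraphs with predictable generators, so this step cannot be waved through; as written, the proof has a genuine gap at its central point.
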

\begin{proof}
This is a combination of theorems 3.1.2 and 3.2.7 of \cite{Loubaton_condition_de_kan}.
\end{proof}

\subsection{$2$-Polygraphs and presheaves on $\Theta_2$}

The objective of this section is to prove the following theorem
\begin{theorem}
\label{theo: case of 1 and 2 category}
Let $k\leq 1$ be an integer, and let $C$ and $D$ be two $(0,2)$-categories admitting loop-free and atomic bases (definition \ref{defi:loop free and atomic}). Suppose there is a cocartesian square in $\zocat$ of shape:
\[\begin{tikzcd}
	{\partial [[k],1]} & C \\
	{[[k],1]} & D
	\arrow["x"', from=2-1, to=2-2]
	\arrow["{\partial x}", from=1-1, to=1-2]
	\arrow["j", from=1-2, to=2-2]
	\arrow["\lrcorner"{anchor=center, pos=0.125, rotate=180}, draw=none, from=2-2, to=1-1]
	\arrow[from=1-1, to=2-1]
\end{tikzcd}\]
Then, viewed as a morphism of $\Psh{\Theta_2}$, the morphism $j:C\cup x\to D$ is in $\overline{\W_2}$ which is the smallest precomplete class of morphism (definition \ref{defi:precomplet}) containing $\W_2$ ( definition \ref{defi:definition of W}).
\end{theorem}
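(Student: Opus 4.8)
The plan is to recognize $j$ as the unit of the $\W_2$-localization of the presheaf $C\cup x$ and then to realise this unit by an explicit cellular construction that stays inside $\overline{\W_2}$. Since $\Fb_{\W_2}\colon\Psh{\Theta_2}\to\zocat$ preserves colimits, and since the four corners $\partial[[k],1]$, $[[k],1]=\Db_{k+1}$ (using $k\le 1$), $C$ and $D$ are all $\zo$-categories, hence $\W_2$-local, the presheaf pushout $C\cup x:=C\sqcup_{\partial[[k],1]}[[k],1]$ satisfies $\Fb_{\W_2}(C\cup x)\cong D$, so that $j$ is exactly the localization unit $C\cup x\to\Fb_{\W_2}(C\cup x)$. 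A priori this unit lies only in the class $\widehat{\W_2}$ of maps built from $\W_2$ by composition and \emph{arbitrary} colimits (the construction of $\Fb_{\W_2}$ together with lemma \ref{lemma:precomplete included into cocomplete strict case}); the whole difficulty is to produce it using only the operations allowed in the smaller precocomplete class $\overline{\W_2}$ of definition \ref{defi:precomplet}, namely pushouts, transfinite compositions, left cancellation, and colimits of Reedy cofibrant diagrams (coproducts of monomorphisms being an instance of the latter).

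Next I would use Steiner theory to pin down the target. As $C$ and $D$ carry loop-free atomic bases (definition \ref{defi:loop free and atomic}), theorem \ref{theorem:steiner} identifies them with augmented directed complexes admitting unitary loop-free bases, and analysing the pushout, with the help of the recognition criterion \ref{theo:Kan condition}, shows that $D$ is freely generated by the basis $B_C$ together with a single new generator $x$ of dimension $k+1\le 2$. Consequently every cell of $D$ not already in $C$ is an iterated composite built from $x$ and the cells of $C$: for $k=0$ these are the composite $1$-cells $v\circ_0 x\circ_0 w$ and the $2$-cells of $C$ whiskered by them, while for $k=1$ they are $x$ itself together with its whiskerings $h\circ_0 x$, $x\circ_0 h'$ and its vertical and horizontal composites with the $2$-cells of $C$. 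Loop-freeness guarantees that these composites are pairwise distinct and that the partial order they generate is well-founded, so that they can be stratified by a complexity (equivalently a height, in the spirit of the filtration used in lemma \ref{lemma:i etoile of W is in M 2}).

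With this description in hand I would build a transfinite filtration $C\cup x=Z_0\to Z_1\to\cdots\to\colim_i Z_i$ in $\Psh{\Theta_2}$ in which the step $Z_i\to Z_{i+1}$ attaches all the new cells of complexity $i$. Each such step is, by construction, a pushout of a coproduct of Segal maps $\Sp_a\to a$ with $a$ a globular sum of dimension $\le 2$, so that $\Sp_a\to a$ lies in $\W_2$, the attaching map being the spine of $a$, which is already present in $Z_i$ precisely because the cell being filled is a composite of cells of strictly lower complexity; the criterion \ref{theo:Kan condition} certifies that the relevant Segal squares are cocartesian and that the filled cell is the intended composite in $D$. As coproducts of monomorphisms are Reedy cofibrant colimits, each $Z_i\to Z_{i+1}$ lies in $\overline{\W_2}$; a final check that $\colim_i Z_i$ is $\W_2$-local (all Segal, and vacuously all saturation, lifting problems having been solved) identifies it with the reflection $D$, and the transfinite composite $j=(C\cup x\to D)$ therefore lies in $\overline{\W_2}$.

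The main obstacle is the verification that this purely monomorphic, Segal-driven construction actually reaches $D$ on the nose, that is, that no relation among composites in the strict $(0,2)$-category $D$ forces an identification not already produced by an iterated Segal filling. Such an identification would have to be imposed by a coequalizer, a colimit that is \emph{not} Reedy cofibrant and would take us out of $\overline{\W_2}$; ruling it out is exactly where the hypotheses are spent. The loop-free and atomic conditions ensure that the new composites admit unique normal forms, so that the associativity and interchange relations of $D$ are all consequences of iterated Segal conditions (this is the $2$-polygraph phenomenon alluded to in the section title), while the restriction to dimension $\le 2$ with $k\le 1$ keeps the combinatorics of whiskering and interchange finite and confluent. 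I expect the case $k=1$, where the attaching map $\partial x$ may send the two $1$-cells of $\partial\Db_2$ to genuine composites of $C$ and where horizontal and vertical composites interact through the interchange law, to demand the most delicate bookkeeping.
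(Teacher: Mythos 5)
Your opening reduction is correct: since $C$, $D$, $\partial[[k],1]$ and $[[k],1]$ are all $\W_2$-local and $\Fb_{\W_2}$ preserves colimits, $j$ is the localization unit of $C\cup x$, and the whole problem is to realize this unit inside $\overline{\W_2}$ rather than in the larger class generated by composition and arbitrary colimits. But the mechanism you propose for doing so — a transfinite filtration whose steps are pushouts of coproducts of Segal maps $\Sp_a\to a$ attached along spines — fails, and it fails exactly at the point you flag as "the main obstacle" and then dismiss by appealing to loop-freeness and atomicity. A pushout along $\Sp_a\to a$ in $\Psh{\Theta_2}$ freely adjoins \emph{every} cell of $a$ not lying in the spine, i.e.\ all partial composites, not only the top one. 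Overlapping attachments therefore duplicate cells. Concretely, take $k=0$, let $C$ be freely generated by $1$-cells $v:c\to a$, $w:b\to d$, $w':b\to d'$, and attach $x:a\to b$. Filling the composites $w\circ_0 x\circ_0 v$ and $w'\circ_0 x\circ_0 v$ by pushouts along $\Sp_{[3]}\to[3]$ produces two \emph{distinct} cells both playing the role of $x\circ_0 v$; the resulting presheaf maps non-injectively to $D$ and is not $\W_2$-local (the Segal filler of the spine $(v,x)$ is no longer unique). Repairing this requires identifying parallel cells, and no operation permitted by definition \ref{defi:precomplet} can do that: a precocomplete class consists of monomorphisms by definition, so $\overline{\W_2}$ contains no quotient maps. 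Thus your filtration neither stays within the allowed operations nor converges to $D$. (Your appeal to theorem \ref{theo:Kan condition} to "certify" these pushouts is also misplaced: that criterion concerns cocartesian squares of quasi-rigid morphisms of augmented directed complexes, not presheaf pushouts along Segal maps.)

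What a correct execution requires — and what the paper actually does — is twofold. First, the attaching map for a globular sum $a$ must be the inclusion of the full subobject of cells already present (the paper's $\Lambda^{\Gamma_0}a\subset a$ and $\Lambda^{\Gamma_1}a\subset a$), and one must prove these inclusions lie in $\overline{\W_2}$; this is done in lemmas \ref{lemma: lambdA gamma} and \ref{lemma: lambdA gamma2} via explicit wedge decompositions reducing to $\overline{\W_1}$. Second, since overlapping globular sums must be glued along their common parts, the construction cannot be a transfinite composition of pushouts at all: it is organized as colimits of Reedy cofibrant diagrams indexed by the categories $\Gamma_0$ and $\Gamma_1$ of globular sums over $D$, and the identification of these colimits with a subobject of $D$, respectively with $D$ itself, is the content of the injectivity lemmas \ref{lem:injectif 1} and \ref{lem:injectif 2}. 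Those lemmas rest on the unique decomposition statement (lemma \ref{lemma:decompasiition}) and its functoriality (lemmas \ref{lemma:unicity of fact 1}, \ref{lemma:unicity of fact 2}, \ref{lemma:simplification gamma,0}); this is precisely where your phrase "the loop-free and atomic conditions ensure unique normal forms" has to become mathematics, and it is the heart of the theorem, not a routine check. Finally, your uniform treatment of $k=0$ and $k=1$ is too optimistic: attaching a $1$-cell also alters the $2$-dimensional structure of $D$ (all whiskerings through $x$), and the paper must handle this case separately, truncating to dimension $1$ (proposition \ref{prop: case of 1 category}), re-attaching the $2$-generators by repeated use of proposition \ref{prop: case of 2 category}, and concluding by left cancellation. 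As written, your proposal is an outline whose central step is both unproven and, in the specific form you give it, false.
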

Informally, this theorem shows that the square appearing in the previous statement is homotopically cocartesian. This result is therefore a special case of the similar but much more general theorem proved by Campion in \cite{campion2023infty}.

\vspace{1cm}

We  fix  a $(0,2)$-category $D$  admitting a loop free and atomic basis until the end of this section.

\begin{definition}
Let $v$ be a 2-cell of $ D$. The \textit{2-support of $v$}, denoted $B_2^v$, is the support of $[v]_2$ (definition \ref{defi:support}). 
The \textit{1-support of $ v$}, denoted $B_1^v$, is the union of the support of $[\pi_1^+v]_1$ with   $(\partial^-_1B_2^v)\cup B_2^v$.

For $i=1,2$, we define the relation $ <^v_i$ as the smallest transitive relation on $ B_i^v$ such that $ c<_i d$ whenever 
\[ \langle c \rangle^-_i  \wedge  \langle d \rangle^+_i  \neq 0. \]
\end{definition}

\begin{remark}
Remark that the two inclusions $(B_0^v,<^v_0)\to (B,\odot)$ and $(B_1^v,<^v_1)\to (B,\odot)$ are strictly increasing. As a consequence, $<^v_0$ and $<^v_1$ are (partial) orders. 
\end{remark}

\begin{remark}
The theorem \ref{theorem:steiner} implies that $B_1^v$ is also equal to the union of the support of $[\pi_1^-v]_1$ with $(\partial^+_1B_2^v)\cup B_2^v$.
\end{remark}

\begin{lemma}
\label{lemma:other characterization of <0}
Let $v$ be a 2-cell of $D$, and $b,b'$ be two elements of $B_1^v$. The assertion $b<^v_0b'$ holds if and only if there exists a well-defined $0$-composite 
$$b*_0....*_0 b'.$$
\end{lemma}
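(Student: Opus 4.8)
The plan is to understand the statement as a translation between two descriptions of the order $<^v_0$ on $B^v_1$: one defined by the wedge condition $\langle c\rangle^-_0 \wedge \langle d\rangle^+_0 \neq 0$ (through transitive closure), and one phrased in terms of the existence of a $0$-composite inside the $\zo$-category $D$. Since $<^v_0$ is defined as the smallest transitive relation generated by the wedge condition, the proof naturally splits into a forward and a backward implication, and I would first reduce each implication to a statement about \emph{consecutive} elements, i.e. pairs $b, b'$ with $b <^v_0 b'$ arising from a single generating step, and then propagate along transitive chains.

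For the forward direction, I would assume $b <^v_0 b'$ and take a chain $b = c_0, c_1, \dots, c_m = b'$ with each $\langle c_i\rangle^-_0 \wedge \langle c_{i+1}\rangle^+_0 \neq 0$. The key observation is that $\langle c_i\rangle^-_0$ and $\langle c_{i+1}\rangle^+_0$ are $0$-dimensional elements of the basis (objects), and a nonzero wedge means they share a common object, i.e. the $0$-target of $c_i$ equals the $0$-source of $c_{i+1}$. This is exactly the matching condition making the $0$-composite $c_i *_0 c_{i+1}$ well-defined. I would then invoke the loop-free and atomic hypotheses (via Steiner theory, Theorem \ref{theorem:steiner}) to guarantee that these individual composites assemble into a single well-defined composite $b *_0 \cdots *_0 b'$; the atomicity ensures the intermediate sources and targets genuinely line up in the $\zo$-category and that composability is not obstructed.

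For the backward direction, I would assume a well-defined $0$-composite $b *_0 \cdots *_0 b'$ exists and read off, at each adjacent pair, that the $0$-target of one factor equals the $0$-source of the next. Translating this composability condition back through $\lambda$ and the array description of cells in $\nu(\lambda D)$ shows that the corresponding $0$-dimensional boundary elements $\langle c\rangle^-_0$ and $\langle c'\rangle^+_0$ coincide on a common object, hence $\langle c\rangle^-_0 \wedge \langle c'\rangle^+_0 \neq 0$, which is precisely one generating step of $<^v_0$. Transitivity of $<^v_0$ then upgrades the chain to $b <^v_0 b'$.

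The main obstacle, as I see it, is making the notion of a ``well-defined $0$-composite'' fully precise and showing it is equivalent to the purely combinatorial chain condition, rather than just implied in one direction by it. Concretely, one must rule out that composability along a chain could fail even though each consecutive pair matches — this is where loop-freeness is essential, since it prevents cyclic source/target configurations that would make a globally well-defined composite impossible despite local matching. I expect the cleanest route is to work entirely inside the augmented directed complex $\lambda D$ using the wedge and boundary operators $\partial_0^\pm$ from Definition \ref{defi:support}, translating the $\zo$-categorical composability statement into the chain-complex language via the equivalence of Theorem \ref{theorem:steiner}, so that ``well-defined composite'' becomes a statement about the $0$-dimensional basis elements and the order $\odot$ restricting to $<^v_0$.
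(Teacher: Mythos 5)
Your outline is essentially the argument the paper has in mind — the paper's own proof is recorded as ``Straightforward'', precisely because the generating pairs of the transitive relation $<^v_0$ are, by definition, exactly the $0$-composable adjacent pairs, and a transitive chain $b=c_0<_0 c_1<_0\dots<_0 c_m=b'$ corresponds term by term to the factors of a composite $b *_0 \dots *_0 b'$. Both of your directions are organized around this translation, and your backward direction is fine.

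The genuine problem is your identification of the ``main obstacle''. You claim that pairwise composability along a chain might fail to assemble into a globally well-defined composite, and that loop-freeness is essential to rule this out. That is false: in any $\omega$-category, if $\pi_0^-(c_i)=\pi_0^+(c_{i+1})$ for every adjacent pair, the iterated $0$-composite is automatically well-defined, by induction using the axiom $\pi_0^-(x\circ_0 y)=\pi_0^-(y)$; cyclic configurations (an endomorphism composed with itself, say) are no obstruction whatsoever to forming composites. Loop-freeness plays no role in either direction of this lemma, and presenting it as the crux would send you down a dead end when writing the details. What the hypotheses on the basis actually supply is much more modest: unitarity guarantees that $\langle c\rangle_0^-$ and $\langle d\rangle_0^+$ are single $0$-dimensional basis elements (they are positive of augmentation $1$), so that ``$\langle c\rangle_0^-\wedge\langle d\rangle_0^+\neq 0$'' literally means ``equal objects''; and theorem \ref{theorem:steiner} identifies basis elements with genuine cells $\langle c\rangle$ of $D$, so that composability in $D$ is read off the arrays. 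Finally, a small orientation slip: with the paper's conventions ($x *_0 y$ requires $\pi_0^-(x)=\pi_0^+(y)$), the condition $\langle c_i\rangle_0^-\wedge\langle c_{i+1}\rangle_0^+\neq 0$ says the $0$-\emph{source} of $c_i$ equals the $0$-\emph{target} of $c_{i+1}$, the reverse of what you wrote; here the two reversals cancel, but the direction of $<^v_0$ matters in the lemmas that cite this one, so the convention should be kept straight.
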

\begin{proof}
Straightforward.
\end{proof}

\begin{definition}
Given a finite set $E$ endowed with a strict order $<$, \textit{an ordering of $E$} is a bijective sequence $(x_i)_{i\leq n}$ of elements of $E$ such that for every $i<j$, $\neg (x_j<x_i)$. 
\end{definition}

\begin{theorem}
\label{theo:decomposition de condition de Kan}
Let $v$ be a $2$-cell of $D$, and $(w_i)_{i\leq n}$ an ordering of $B_2^v$.
There exists a decomposition of $v$ as 
$$v:=v_0*_1...*_1 v_{n}$$
such that for every $i<n$, $v_i$ is a $0$-composition of an element of $w_i$ with several $1$-generators of $D$.  Moreover, any element $z$ in $B_1^v$ of dimension $1$ appears in a decomposition of the source or a target of  $v_i$ for an integer $i\leq n$.

Moreover, for any decomposition of $v$ as 
$$v:=v_0'*_1...*_1 v'_{n}$$
such that $v_i'$ is a $0$-composition of a unique element $w_i'$ of $B_2^v$ with several $1$-generators of $D$, then the sequence $\{w_i\}_{i\leq n}$ is an ordering of $B_2^v$.
\end{theorem}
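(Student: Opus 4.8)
The plan is to transport the entire statement into the augmented directed complex $\lambda D$ and to argue with Steiner arrays. By Theorem~\ref{theorem:steiner} the equivalence $D\cong \nu\lambda D$ identifies the $2$-cell $v$ with its associated coherent array $\langle v\rangle$, whose top entry is $[v]_2=\sum_{w\in B_2^v}w$, the coefficients being $1$ by loop-freeness and atomicity. Under this identification a $0$-composite of a single atom $\langle w\rangle$ with several $1$-generators is exactly an array whose $2$-entry is the basis element $w$ and whose source and target $1$-chains $\pi_1^-$ and $\pi_1^+$ agree outside of $\partial_1^-w$ and $\partial_1^+w$; concretely $\pi_1^+$ is obtained from $\pi_1^-$ by replacing $\partial_1^- w$ with $\partial_1^+ w$. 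A $*_1$-composite $v_0*_1\dots*_1 v_n$ of such whiskered atoms then has $2$-entry $\sum_i[v_i]_2$ and produces a telescope of intermediate $1$-cells $\pi_1^+v=\pi_1^+v_0,\ \pi_1^-v_0=\pi_1^+v_1,\ \dots,\ \pi_1^-v_n=\pi_1^-v$, each obtained from the previous by exchanging a $\partial_1^+$ with the corresponding $\partial_1^-$. This reduces the theorem to a combinatorial statement about $<^v_2$ and the operators $\partial_1^\pm$ and $\wedge$.

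I would prove existence by induction on $|B_2^v|=n+1$, peeling off the last element $w_n$ of the ordering (a maximal element for $<^v_2$). In the base case of a single atom, the atomicity condition $[d_1^+ w_0]_1\wedge[d_1^- w_0]_1=0$ forces $\langle v\rangle$ to be exactly a whiskering of $\langle w_0\rangle$. For the inductive step I set $[v_n]_2:=w_n$ and $[v']_2:=\sum_{i<n}w_i$, and must produce the intermediate $1$-cell $z:=\pi_1^-v'=\pi_1^+v_n$. Here maximality of $w_n$ is used: by Lemma~\ref{lemma:other characterization of <0} the $1$-generators of $\partial_1^-w_n$ occur as a composable sub-chain of $\pi_1^-v$, flanked by $1$-generators forming legitimate whiskers, so that replacing $\partial_1^- w_n$ by $\partial_1^+w_n$ inside $\pi_1^-v$ yields a coherent $1$-cell $z$. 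One then checks that both $v_n$ (the whiskered atom $\langle w_n\rangle$ with source $\pi_1^-v$) and $v'$ (with $2$-entry $\sum_{i<n}w_i$ and target $z$) are coherent arrays and that $v=v'*_1v_n$. Applying the induction hypothesis to $v'$ with the truncated ordering $(w_i)_{i<n}$ finishes the construction.

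The \emph{moreover} clause follows by reading off the telescope: the successive sources and targets of the $v_i$ range over all the chains $\partial_1^\pm w_i$ for $w_i\in B_2^v$ together with the support of $[\pi_1^\pm v]_1$, and these are precisely the dimension-$1$ elements collected in $B_1^v$, so every such element appears in a source or target of some $v_i$. For the converse, let $v=v_0'*_1\dots*_1 v_n'$ be any decomposition into whiskered single atoms $w_i'$. Comparing top entries gives $\sum_i w_i'=[v]_2=\sum_{w\in B_2^v}w$, so $\{w_i'\}$ enumerates $B_2^v$. If the listing failed to be an ordering, there would be indices $i<j$ with $w_j'<^v_2 w_i'$, i.e. the source $1$-boundary of $w_j'$ would meet the target $1$-boundary of $w_i'$; since $v_i'$ lies strictly above $v_j'$ in the chain, this overlap is incompatible with the monotone telescope of intermediate $1$-cells forced by $*_1$-composability, and reading it back through the basis produces a $\odot$-loop, contradicting loop-freeness. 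Hence $\{w_i'\}$ is an ordering.

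The main obstacle is the inductive peeling step, and specifically the construction and coherence of the intermediate $1$-cell $z$: one must guarantee that truncating the sum to $\sum_{i<n}w_i$ does not produce cancellations in $\partial_1^\pm$, so that $v'$ is an honest coherent array and is genuinely $*_1$-composable with $v_n$. This is exactly where maximality of $w_n$ together with the loop-free and atomic hypotheses are indispensable; once the split is justified, the remaining verifications are routine bookkeeping with Steiner arrays.
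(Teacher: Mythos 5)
Your strategy (transport everything to $\lambda D$, peel off the maximal atom, induct) is a reasonable outline, and it is essentially the route of the external result that the paper itself invokes: note that the paper does \emph{not} prove the first assertion internally, but cites \cite[theorem 2.47]{Loubaton_condition_de_kan}, so a self-contained proof must genuinely carry out the combinatorics you sketch. As written, your proposal does not. For the existence part, the peeling step has two unproved cores. (i) Positivity of the intermediate chain $z=[\pi_1^-v]_1-\partial_1^-w_n+\partial_1^+w_n$ requires $\partial_1^-w_n\leq[\pi_1^-v]_1$; the actual argument is that any basis element of $\partial_1^-w_n$ killed in $[\pi_1^-v]_1$ would have to lie in $\partial_1^+w_i$ for some $i\neq n$, giving $w_n<w_i$ and contradicting maximality (plus atomicity for $i=n$). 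You instead invoke Lemma \ref{lemma:other characterization of <0}, which concerns $<_0$ and $0$-composability and does not yield this. (ii) More seriously, the statement to be proved is structural: $v_n$ must be a $0$-composite of the atom $w_n$ with $1$-generators, so $\partial_1^-w_n$ must occur as a \emph{contiguous} subpath of the path $\pi_1^-v$ (likewise in your single-atom base case). Chain-level containment does not imply contiguity in general; it holds here only because loop-freeness forbids directed cycles among $1$-generators, and that deduction is exactly the nontrivial content you defer as ``routine bookkeeping.'' Acknowledging the obstacle and asserting that maximality, loop-freeness and atomicity overcome it is not a proof of it.

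The converse also has a gap. First, $<^v$ is a transitive closure, so a violation $w_j'<^vw_i'$ with $i<j$ must first be reduced to a \emph{direct} overlap (one can do this by locating a consecutive descent in the chain, but it has to be said). Second, your punchline — that the overlap ``produces a $\odot$-loop, contradicting loop-freeness'' — does not work as stated: from $b\in\partial_1^-w_j'\wedge\partial_1^+w_i'$ one only gets $w_j'\odot b\odot w_i'$, a chain in one direction, not a cycle. The paper's argument is different and is what actually closes this case: one proves (by induction on the factors, this is where \cite[lemma 2.43]{Loubaton_condition_de_kan} enters) the survival inequalities $\partial_1^+([v_0']_2)\leq\partial_1^+([v]_2)$ and $\partial_1^-([v_n']_2)\leq\partial_1^-([v]_2)$, so the overlap forces $\partial_1^+([v]_2)\wedge\partial_1^-([v]_2)\neq0$, contradicting the fact that these are by definition the positive and negative parts of $\partial([v]_2)$ — no appeal to $\odot$ is needed. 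Those survival inequalities are themselves not formal (they fail for general directed complexes and need loop-freeness); they are what your phrase ``monotone telescope'' is standing in for. Until the contiguity fact and these inequalities are actually established, the proposal remains a plausible outline rather than a proof.
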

\begin{proof}
The first assertion is a consequence of \cite[theorem 2.47]{Loubaton_condition_de_kan}. 

To show the second assertion, suppose given such a decomposition. We will proceed by contradiction and then suppose that there exist $i<j$ such that $w'_j<w'_i$. We can suppose without loss of generality that $i=0$ and $j=n$. 

By a direct induction on $n$ using \cite[lemma 2.43]{Loubaton_condition_de_kan}, we have
$$\partial_1^+([v'_0]_2) \leq \partial_1^+([v_0'*_1...*_1 v'_{n}]_2)=\partial_1^+([v]_2)$$
$$\partial_1^-([v'_n]_2) \leq \partial_1^-([v_0'*_1...*_1 v'_{n}]_2)=\partial_1^-([v]_2)$$
Moreover, the inequality $w'_n<w'_0$ implies
$$\partial_1^+([v'_0]_2)\wedge \partial_1^-([v'_n]_2)\neq 0$$ 
and then
$$\partial_1^+([v]_2)\wedge \partial_1^-([v]_2)\neq 0$$
which is absurd as $\partial_1^+([v]_2)$ and $\partial_1^-([v]_2)$ are respectively defined as the positive part and the negative part of $\partial([v]_2)$.
\end{proof}

\begin{lemma}
\label{lem:functorialite de <}
Let $ D$ be a $ (0,2)$-category and $ f:C\to D$ be a morphism. 
Let $ v$ be a $2$-cell of $C$ and $ b,b'$ two elements in the $1$-support of $ v$.
\begin{enumerate}
\item $ b<^v_0b'$ implies that for all $ c\in B_1^{f(b)}$ and $ c'\in B_1^{f(b')}$, $ c<^{f(v)}_0c'$.
\item $ b<^v_1b'$ implies that for all $ c\in B_2^{f(b)}$ and $ c'\in B_2^{f(b')}$, $ \neg (c'<^{f(v)}_1 c)$.
\end{enumerate}
\end{lemma}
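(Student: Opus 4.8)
The plan is to treat the two assertions separately, reducing each to a combinatorial characterization that is already available: lemma \ref{lemma:other characterization of <0} for the $0$-dimensional order and theorem \ref{theo:decomposition de condition de Kan} for the $1$-dimensional one. The only property of $f$ that I use is that, being a morphism of $\omega$-categories, it preserves $0$- and $1$-compositions and units; hence it carries a well-defined composite in $C$ to a well-defined composite in $D$. Note that $f$ need not be quasi-rigid, so a single $2$-generator of $C$ may be collapsed to an identity or expanded into several $2$-cells of $D$, which is exactly why the second assertion is phrased as a non-reversal ($\neg$) rather than strict preservation.

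For assertion $(1)$, suppose $b <^v_0 b'$. By lemma \ref{lemma:other characterization of <0} there is a well-defined $0$-composite $b *_0 z_1 *_0 \cdots *_0 z_{k-1} *_0 b'$. Applying $f$ and using preservation of $0$-composition, $f(b) *_0 f(z_1) *_0 \cdots *_0 f(b')$ is a well-defined $0$-composite in $D$, all of whose cells occur in $f(v)$; in particular $B_1^{f(b)}$ and $B_1^{f(b')}$ lie in $B_1^{f(v)}$. Now fix $c\in B_1^{f(b)}$ and $c'\in B_1^{f(b')}$. Since the $1$-cells in the $1$-support of a single cell form directed $0$-paths from its $0$-source to its $0$-target, there is a $0$-composite inside $f(b)$ from $c$ to the $0$-target of $f(b)$, and one inside $f(b')$ from the $0$-source of $f(b')$ to $c'$. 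Splicing these with the intermediate factors $f(z_i)$ yields a well-defined $0$-composite $c *_0 \cdots *_0 c'$ (a sub-composite of the globally defined composite above). Reading lemma \ref{lemma:other characterization of <0} in the other direction then gives $c <^{f(v)}_0 c'$.

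For assertion $(2)$, observe first that $b <^v_1 b'$ forces $b,b'\in B_2^v$ and that in any ordering of $B_2^v$ the element $b$ precedes $b'$ (otherwise the defining property of an ordering would fail on the pair $b <^v_1 b'$). Choose such an ordering $(w_i)_{i\le n}$, write $b=w_p$, $b'=w_q$ with $p<q$, and apply the first part of theorem \ref{theo:decomposition de condition de Kan} to obtain $v = v_0 *_1 \cdots *_1 v_n$ with each $v_i$ a $0$-composite of $w_i$ with $1$-generators. Applying $f$ gives a $1$-composite $f(v) = f(v_0) *_1 \cdots *_1 f(v_n)$ in $D$, where $f(v_i)$ is a $0$-composite of $f(w_i)$ with $1$-generators and $B_2^{f(v_i)} = B_2^{f(w_i)}$. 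Now decompose each genuine $2$-cell $f(v_i)$ again by theorem \ref{theo:decomposition de condition de Kan} into a $1$-composite of blocks $u_{i,\ell}$, each a $0$-composite of a single element of $B_2^{f(v_i)}$ with $1$-generators, and concatenate all blocks in increasing order of $i$. This exhibits $f(v)$ as a single $1$-composite whose blocks are $0$-composites of one $2$-generator each, so the second part of theorem \ref{theo:decomposition de condition de Kan}, applied to $f(v)$, asserts that the resulting sequence of $2$-generators is an ordering of $B_2^{f(v)}$. In this ordering every $2$-generator arising from $f(v_p)$ precedes every one arising from $f(v_q)$, because $p<q$ and the blocks are concatenated in order. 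Since $c \in B_2^{f(b)} = B_2^{f(w_p)} \subseteq B_2^{f(v_p)}$ and $c' \in B_2^{f(b')} \subseteq B_2^{f(v_q)}$, the element $c$ occurs before $c'$, and the defining property of an ordering yields exactly $\neg(c' <^{f(v)}_1 c)$.

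The main obstacle lies in the bookkeeping of assertion $(2)$: one must verify that concatenating the per-block decompositions of the $f(v_i)$ genuinely produces a decomposition of $f(v)$ of the precise shape required by the uniqueness (second) part of theorem \ref{theo:decomposition de condition de Kan} — that the $1$-composites line up associatively, that each block is a $0$-composite of exactly one $2$-generator, and that blocks with $f(w_i)$ an identity may simply be omitted without affecting the ordering. In assertion $(1)$ the only delicate point is the routing of $c$ and $c'$ into a single $0$-composite through the factors $f(z_i)$, which relies on the directed-path description of the $1$-support together with the absence of cancellation among supports along a $0$-composite in a loop-free atomic basis.
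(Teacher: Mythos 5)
Your proof is correct and follows essentially the same route as the paper's: assertion (1) via lemma \ref{lemma:other characterization of <0} and splicing of $0$-composites, and assertion (2) by decomposing $v$ with theorem \ref{theo:decomposition de condition de Kan}, pushing the decomposition through $f$, re-decomposing each block $f(v_i)$, and invoking the uniqueness (second) part of that theorem to conclude $\neg(c' <^{f(v)}_1 c)$. Your extra care about omitting blocks where $f(w_i)$ is an identity is a detail the paper leaves implicit, not a different argument.
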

\begin{proof}
Suppose first that $ b<^v_0b'$.
According to lemma \ref{lemma:other characterization of <0}, we have a well defined $0$-composite 
$$b*_0...*_0b'$$
and so a well defined $0$-composite
$$f(b)*_0...*_0f(b')$$
 Let $ c\in B_1^{f(b)}$ and $ c'\in B_1^{f(b')}$.
Applying the decomposition given in theorem \ref{theo:decomposition de condition de Kan} to $f(b)$ and $f(b')$, we get a well-defined composite 
$$w*_0...*_0w'.$$
where $ w$ (resp. $w'$) is a $0$-composite of $c$ (resp. $c'$) with other generators. This then implies $c<^{f(v)}_0c'$. 

We now deal with the second case. Let $c\in B_2^{f(b)}$ and $c'\in B_2^{f(b')}$.
According to theorem  \ref{theo:decomposition de condition de Kan} there exists a decomposition of $v$ of shape 
\[ v:=v_0*v_1*_1....*_1v_n \]
where for all $i\leq n$, $v_0$ is a $0$-composite of a unique $2$-generator with $1$-generators. Moreover, the unique $i$ (resp. the unique $j$) such that $b$ belongs to $v_i$ (resp. such that $b'$ belongs to $v_j$) verifies $i<j$.

Applying the morphism $f$ and decomposing each $f(v_i)$ the same way, we get a decomposition
\[ f(v):=u_0*u_1*_1....*_1u_m \]
where for all $i\leq m$, $u_0$ is a $0$-composite of a $2$-generator with $1$-generators, and such that the unique $i$ (resp. the unique $j$) such that $c$ belongs to $u_i$ (resp. such that $c'$ belongs to $w_j$) verifies $i<j$. The second assertion of theorem \ref{theo:decomposition de condition de Kan} then implies that $ \neg (c'<^{f(v)}_1 c)$.
\end{proof}

\begin{lemma}
\label{lemma:2 incompatiblite1}
Let $v$ be a 2-cell, and $b,b'$ two different elements of the 2-support of $v$. Then $\neg (b<^v_1 b')\wedge \neg (b'<^v_1 b)$ implies that $(b<^v_0 b')\vee (b'<^v_0 b)$ holds.
\end{lemma}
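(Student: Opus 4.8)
The plan is to reduce the statement to a single vertical slice of a pasting decomposition of $v$ and then read off the horizontal comparison from the geometry of one $1$-cell. First I would use that, by the remark following the definition of the relations $<_i^v$, the relation $<_1^v$ restricts to a \emph{strict} partial order on the $2$-support $B_2^v$. Since $b$ and $b'$ are assumed incomparable for $<_1^v$, I can choose an ordering $(w_i)_{i\le n}$ of $B_2^v$ (a linear extension of $<_1^v$) in which $b$ and $b'$ occupy \emph{consecutive} positions; say $b=w_i$ and $b'=w_{i+1}$, the opposite case being symmetric. Applying Theorem \ref{theo:decomposition de condition de Kan} to this ordering produces a decomposition $v=v_0*_1\dots*_1 v_n$ in which each $v_j$ is a $0$-composite of the single $2$-generator $w_j$ with $1$-generators.

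Because the composite $v_i*_1 v_{i+1}$ is defined, the $1$-cell $P:=\pi_1^-(v_i)=\pi_1^+(v_{i+1})$ is well defined, and it is the crucial object: both relevant boundary chains will be read on this single $P$. Indeed, since $v_i$ is a horizontal composite of $b$ with $1$-generators, its $1$-source is the $0$-composite of the corresponding $1$-sources, so $\langle b\rangle_1^-=[\pi_1^- b]_1$ is a \emph{contiguous} sub-chain of $[P]_1=[\pi_1^- v_i]_1$; symmetrically, $\langle b'\rangle_1^+=[\pi_1^+ b']_1$ is a contiguous sub-chain of $[P]_1=[\pi_1^+ v_{i+1}]_1$. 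Here I use that the array entries $\langle b\rangle_1^-$ and $\langle b'\rangle_1^+$ are literally $[\pi_1^-b]_1$ and $[\pi_1^+b']_1$ (the unit of the Steiner adjunction of Theorem \ref{theo:ajdonction de steiner avec unite et counite explicite}).

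The conclusion then comes from a dichotomy on whether these two sub-chains meet. If $\langle b\rangle_1^-\wedge\langle b'\rangle_1^+\neq 0$, then by the very definition of $<_1^v$ we would have $b<_1^v b'$, contradicting the hypothesis; hence $\langle b\rangle_1^-\wedge\langle b'\rangle_1^+=0$. Thus we have two edge-disjoint contiguous sub-chains of the single path $P$, so one lies entirely before the other along $P$. I would then extract the intermediate stretch of $P$ as a composable string of $1$-cells joining $\pi_0^+(b)$ to $\pi_0^-(b')$ (or vice versa), assembling a well-defined horizontal composite of the form $b*_0\dots*_0 b'$ or $b'*_0\dots*_0 b$. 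By Lemma \ref{lemma:other characterization of <0} this gives $b<_0^v b'$ or $b'<_0^v b$, which is exactly the desired disjunction.

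The main obstacle is the third step: justifying that edge-disjointness of the two contiguous blocks genuinely forces a left/right separation and, more importantly, that the gap between them assembles into a well-defined $0$-composite to which Lemma \ref{lemma:other characterization of <0} applies. This rests on the fact that $P$, being the $1$-source of a $2$-cell, traverses a linearly ordered sequence of $0$-cells with a single $1$-generator between consecutive ones; two edge-disjoint contiguous blocks are then necessarily separated, and the interval between them is exactly a composable string of whiskering $1$-cells. I would also take care that the choice of an ordering realizing $b$ and $b'$ as consecutive slices is what forces \emph{both} $\pi_1^- b$ and $\pi_1^+ b'$ to be read on the same $P$; this adjacency is the only place where the hypothesis $\neg(b<_1^v b')\wedge\neg(b'<_1^v b)$ is actually used.
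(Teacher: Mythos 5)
Your proposal is correct and follows essentially the same route as the paper's own proof: choose a linear extension of $<_1^v$ in which the two incomparable generators are adjacent, apply Theorem \ref{theo:decomposition de condition de Kan} to get consecutive slices $v_i, v_{i+1}$, use the hypothesis to see that $\pi_1^- b$ and $\pi_1^+ b'$ have disjoint supports in the common boundary $\pi_1^- v_i = \pi_1^+ v_{i+1}$, and read off a well-defined $0$-composite linking $b$ and $b'$. Your version is in fact slightly more explicit than the paper's at the disjointness and separation steps, but the argument is the same.
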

\begin{proof}
We suppose that $\neg (b<^v_1 x)\wedge \neg (x<^v_1 b)$. We can then find an ordering with respect to $<^v_i$ of $B_2^v$ such that $b$ and $b'$ are one after the other. According to theorem \ref{theo:decomposition de condition de Kan}, we have a decomposition of $v$ of shape
 $...*_1 v_i*_1 v_{i+1}*_1....$ such that $v_i$ can be written as a $0$-composite of $b$ and $1$-generators and $v_{i+1}$ can be written in a $0$-composite of $b'$ and $1$-generators. We then have 
 \[ v_{i}:= ...*_0b*_0...~~~~ v_{i+1}:= ...*_0b'*_0... \]
 and then an equality between the following $1$-cells
 \[ ...*_0\pi^-_1b*_0...=\pi^-_1v_i=\pi^+_1v_{i+1}= ...*_0\pi^+_1b'*_0... \]
 As $\pi^-_1b\wedge \pi^+_1b'=0$, this implies that $\pi^-_1v_i=\pi^+_1v_{i+1}$ can be written as 
 \[ ... *_0 \pi^-_1b *_0 ... *_0 \pi^+_1b' *_0... \mbox{~~~or as~~~}... *_0 \pi^+_1b'*_0 ... *_0 \pi^-_1 b *_0...  \]
The cell $v_i*_1v_{i+1}$ can then be written as 
 \[ ... *_0 b *_0 ... *_0 b' *_0... \mbox{~~~or as~~~}... *_0 b'*_0 ... *_0 b *_0...  \]
This implies that $(b<_0 x)\vee (x<_0 b)$ holds.
\end{proof}

\begin{lemma}
\label{lemma:2 incompatiblite2}
Let $v$ be a $2$-cell, and $b,b'$ two elements of the $2$-support of $v$. Then $b<^v_0 b'$ implies that for all $\alpha\in \{-,+\}$, for all $c$ in $\langle b\rangle^\alpha_1$, $c<^v_0 b'$ holds.
\end{lemma}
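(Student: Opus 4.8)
The plan is to reduce everything to the combinatorial characterization of $<^v_0$ furnished by Lemma \ref{lemma:other characterization of <0}, namely that for $x,y\in B_1^v$ one has $x<^v_0 y$ precisely when the $0$-composite $x*_0\cdots *_0 y$ is well defined. First I would record that the statement is even meaningful: since $b\in B_2^v$ and $c\in\langle b\rangle^\alpha_1=\partial_1^\alpha b$, the two descriptions of $B_1^v$ (the one in the definition, which contributes $\partial_1^- B_2^v$, and the one in the remark following it, which contributes $\partial_1^+ B_2^v$) together give $c\in B_1^v$, while $b'\in B_2^v\subseteq B_1^v$; so ``$c<^v_0 b'$'' is a statement about the order $<^v_0$ on $B_1^v$.

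Next I would unwind the hypothesis $b<^v_0 b'$: by Lemma \ref{lemma:other characterization of <0} it provides a well-defined $0$-composite $b*_0 R$, where $R=y_1*_0\cdots *_0 b'$ collects the intermediate terms and the well-definedness is exactly the equality of $0$-boundaries $\pi_0^-(b)=\pi_0^+(R)$. On the other hand, because $D$ admits a loop-free atomic (hence unitary) basis, the $1$-cell $\pi_1^\alpha(b)$ whose underlying chain is $\langle b\rangle^\alpha_1$ is a genuine cell of $D$, and since the only composition available in dimension $1$ is $*_0$ it is an honest directed path, i.e. a $0$-composite $c_1*_0\cdots *_0 c_k$ of the $1$-generators in its support. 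As $c$ lies in that support, $c=c_j$ for some $j$, and the tail $c_j*_0\cdots *_0 c_k$ is a well-defined sub-composite whose $0$-source equals $\pi_0^-(b)$.

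The conclusion then follows by grafting: since $\pi_0^-(c_j*_0\cdots*_0 c_k)=\pi_0^-(b)=\pi_0^+(R)$, the expression $c_j*_0 c_{j+1}*_0\cdots *_0 c_k *_0 R$ is a well-defined $0$-composite, and it is of the form $c*_0\cdots *_0 b'$. By Lemma \ref{lemma:other characterization of <0} this yields $c<^v_0 b'$. The argument is insensitive to $\alpha$, since $\partial_1^- b$ and $\partial_1^+ b$ are both directed paths from $\pi_0^-(b)$ to $\pi_0^+(b)$, so it settles both cases at once.

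I expect the only real subtlety --- and the step I would write out most carefully --- to be the claim that $\langle b\rangle^\alpha_1$ is realized by a genuinely composable directed path of $1$-generators containing $c$, together with the bookkeeping that the chosen tail of this path attaches to $R$ at precisely the $0$-cell $\pi_0^-(b)$. This is where loop-freeness and atomicity enter (through unitarity of the basis and the absence of cycles), guaranteeing both that the support of $\langle b\rangle^\alpha_1$ is linearly ordered and that the $0$-boundaries match as required; the grafting and the appeal to Lemma \ref{lemma:other characterization of <0} are then routine.
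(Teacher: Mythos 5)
Your proof is correct and is essentially the paper's own argument: both reduce to the characterization of $<^v_0$ in lemma \ref{lemma:other characterization of <0}, substitute the $1$-cell $\pi_1^\alpha b$ (a $0$-composite of $1$-generators containing $c$) for $b$ in the composite $b*_0 b_1*_0\cdots*_0 b'$ witnessing $b<^v_0 b'$, and read off a composite of the form $c*_0\cdots*_0 b'$. The only difference is that you spell out the tail-extraction and boundary bookkeeping that the paper leaves implicit in its final sentence.
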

\begin{proof}
By lemma \ref{lemma:other characterization of <0},
there exists a sequence $(b_i)_{i\leq n}$ such that $b_0=b$, $b_n=b'$ and for all $i<n$, $b_i$ and $b_{i+1}$ are $0$-composable. The sequence 
$$b*_0b_1*_0... *_0b_{n-1}*_0b'$$ is well defined, and then so is the sequence
$$\pi_1^\alpha b*_0b_1*_0... *_0b_{n-1}*_0b'.$$
As $\pi_1^\alpha b$ is a $0$-composite of $c$ with other elements of $B_1^v$, this concludes the proof.
\end{proof}

%

\begin{lemma}
\label{lemma:decompasiition}
 Let $r,u$ be two $2$-cells of $D$ such that $B_1^u\subset B_1^r$. Let $x$ in $B_2^r$.
Then there exists a unique decomposition of $u$ of shape
$$u= v*_{1}w*_{1}t$$ such that
\begin{enumerate}
\item for any element $b$ in $B_2^v$, $b<^r_1x$;
\item for any element $b$ in $B_2^t$, $x  <^r_1 b$;
\item for any element $b$ in $B_2^w$, $\neg (b<^r_1x) \vee  \neg (x<^r_1b)$
\end{enumerate}
If for any element of $b$ in $B_2^u$ different from $x$, $\neg (b<^r_1x) \vee  \neg (x<^r_1b)$, then  there exists a unique decomposition of $u$ of shape
$$u= v*_{0}w*_{0}t$$ such that
\begin{enumerate}
\item for any element $b$ in $B_1^v$, $b<^r_0x$;
\item for any element $b$ in $B_1^t$, $x  <^r_0 b$;
\item $w$ is either $x$ or a cell of lower dimension.
\end{enumerate}
\end{lemma}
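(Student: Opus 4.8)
The plan is to treat the two factorizations in parallel: the first is governed by the vertical order $<^r_1$ together with the composition $*_1$, and the second by the horizontal order $<^r_0$ together with $*_0$. Throughout I use that $B_1^u\subset B_1^r$ forces the inclusion of $2$-supports $B_2^u\subset B_2^r$, so that the orders $<^r_i$ restrict to $B_2^u$; and since each $<^u_i$ is computed as a transitive closure inside the smaller set $B_2^u$ while $<^r_i$ uses the larger $B_2^r$, every $<^u_1$-relation between elements of $B_2^u$ is also an $<^r_1$-relation.

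For the existence in the first part I would partition $B_2^u$ relative to $x$ into $S_v:=\{b\in B_2^u: b<^r_1 x\}$, $S_t:=\{b\in B_2^u: x<^r_1 b\}$, and $S_w$ the elements incomparable to $x$ (this being the reading of condition (3) forced by the antisymmetry of $<^r_1$). Transitivity and antisymmetry of $<^r_1$ make $S_v$ down-closed and $S_t$ up-closed, and rule out any $<^r_1$-relation — hence any $<^u_1$-relation — running from a later block to an earlier one; so there is an ordering of $(B_2^u,<^u_1)$ that lists $S_v$, then $S_w$, then $S_t$. Applying Theorem~\ref{theo:decomposition de condition de Kan} to this ordering and regrouping the resulting factors into three contiguous blocks produces $u=v*_1 w*_1 t$ with $B_2^v=S_v$, $B_2^w=S_w$, $B_2^t=S_t$, which satisfies (1)--(3). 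For uniqueness, conditions (1)--(3) force $B_2^v\subseteq S_v$, $B_2^w\subseteq S_w$, $B_2^t\subseteq S_t$; since $\lambda$ sends $*_1$ to addition in degree $2$, one has $[u]_2=[v]_2+[w]_2+[t]_2$ with nonnegative coefficients, so the three supports cover $B_2^u=S_v\sqcup S_w\sqcup S_t$, whence the inclusions are equalities and the supports are disjoint. Disjointness of supports then determines each of $[v]_2,[w]_2,[t]_2$, and as the whole array $\langle v\rangle$ is computed from $[v]_2$ by the boundary maps, Steiner's equivalence (Theorem~\ref{theorem:steiner}) identifies each factor with its array and pins down $v,w,t$.

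For the second factorization the hypothesis says every $b\in B_2^u$ distinct from $x$ is $<^r_1$-incomparable to $x$; Lemma~\ref{lemma:2 incompatiblite1} then makes each such $b$ comparable to $x$ for $<^r_0$, and Lemma~\ref{lemma:2 incompatiblite2} propagates this comparability to the $1$-dimensional boundaries. Consequently $B_1^u$ splits into $L:=\{b: b<^r_0 x\}$, $R:=\{b: x<^r_0 b\}$ and a middle part $M$ consisting of $x$ together with its boundary (when $x\in B_2^u$) or of a purely $1$-dimensional cell (otherwise). I would then build $u=v*_0 w*_0 t$ by cutting $u$ at the objects $\pi_0^-(x)$ and $\pi_0^+(x)$, mirroring the first part with $<^r_0,*_0$ in place of $<^r_1,*_1$. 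Uniqueness is obtained in the same way: $\lambda$ sends $*_0$ to addition in every degree $\geq 1$, so the additivity of $[\cdot]$ combined with the disjointness of $L,M,R$ and Theorem~\ref{theorem:steiner} determines $v,w,t$, and the analysis of $M$ forces $w$ to be $x$ itself or a lower-dimensional cell.

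The step I expect to be the main obstacle is the \emph{existence} half of the second factorization: Theorem~\ref{theo:decomposition de condition de Kan} yields $*_1$-composites, not $*_0$-composites, so it cannot be invoked verbatim. The crux is to show that $x$ genuinely separates $u$ horizontally, i.e. that $\pi_1^-(x)$ occurs as a contiguous segment of the source path $\pi_1^-(u)$; this is exactly where loop-freeness of the basis is indispensable, and it is what lets the interchange law regroup the vertical decomposition of $u$ into the horizontal composite $v*_0 x*_0 t$. Once this separation is in place, the remaining verifications reduce to the same support-additivity bookkeeping as in the first part.
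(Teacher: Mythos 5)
Your first factorization is essentially sound: partitioning $B_2^u$ into $S_v,S_w,S_t$ by $<^r_1$-comparability with $x$, checking (via $<^u_1\subseteq <^r_1$ and transitivity/irreflexivity) that listing $S_v$, then $S_w$, then $S_t$ gives a valid ordering, and invoking Theorem~\ref{theo:decomposition de condition de Kan} followed by regrouping does produce $u=v*_1w*_1t$; the uniqueness argument by support additivity also works, although the phrase ``the whole array $\langle v\rangle$ is computed from $[v]_2$ by the boundary maps'' is not correct as stated --- the lower entries of the array of $v$ are not $\partial^{\pm}$ of $[v]_2$ (that would only hold for an atom), they are forced by the composition equations $v_1^+=u_1^+$, $v_1^-=u_1^+-\partial[v]_2$, $w_1^+=v_1^-$, and so on. This is the fixable part. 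Note that the paper treats this first case not by the ordering theorem but by writing down the three candidate Steiner arrays directly and verifying the array axioms; both routes are legitimate here.

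The genuine gap is the one you flagged yourself: the existence half of the $*_0$-decomposition. Saying ``I would then build $u=v*_0w*_0t$ by cutting $u$ at the objects $\pi_0^-(x)$ and $\pi_0^+(x)$'' presupposes exactly what has to be proved, namely that such a horizontal cut exists --- that the source and target $1$-paths of $u$ actually pass through $\pi_0^-(x)$ and $\pi_0^+(x)$ in a way compatible with the partition $L,M,R$. Theorem~\ref{theo:decomposition de condition de Kan} gives no $*_0$-composites, and neither Lemma~\ref{lemma:2 incompatiblite1} nor Lemma~\ref{lemma:2 incompatiblite2} by itself produces the cut; they only constrain the order. In the paper this step is the bulk of the proof: one defines $v_j^\alpha$, $w_j^\alpha$, $t_j^\alpha$ by splitting each $[u]_j^\alpha$ according to $<^r_0$-comparability with $x$, sets the degree-$0$ entries by $v_0^+:=u_0^+$, $v_0^-:=u_0^+-\partial(v_1^-)$, $w_0^-:=v_0^--\partial(w_1^-)$, etc., and then \emph{proves} these are coherent arrays. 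The crucial computations are $\partial^+v^\alpha_{1}\wedge\partial^-w^\alpha_{1}=0$, $\partial^+w^\alpha_{1}\wedge\partial^-t^\alpha_{1}=0$, $\partial^+v^\alpha_{1}\wedge\partial^-t^\alpha_{1}=0$ (using that a violation would create a forbidden $<^r_0$-relation), from which the positivity $v_0^-\geq \partial^+(w_1^-)$ and $w_0^-\geq 0$ follows; this positivity \emph{is} the statement that $x$ separates $u$ horizontally. Until you supply this (or an equivalent interchange-law argument), the second assertion of the lemma is unproven.
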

\begin{proof}
We will construct these two decompositions at the same time. To this extend, we will use the Steiner theory recalled in section \ref{section:Steiner thery}.

Let $i$ be either $1$ or $0$. If $i=0$, we then suppose furthermore that for any element of $b$ in $B_2^u$ different from $x$, $\neg (b<^r_1x) \vee  \neg (x<^r_1b)$.
We denote by
$$\left(
\begin{array}{rcl}
u_0^-& u_1^-& u_2^-\\
u_0^+& u_1^+& u_2^+
\end{array}\right)$$
the array corresponding to the cell $u$.  
For any $i<j\leq 2$ and $\alpha\in\{-,+1\}$, we denote
$$
v_j^{\alpha}:=\sum \{b\in [u]_j^{\alpha}, ~b<_i x\} ~~~~~~
t_j^{\alpha}:=\sum \{b\in [u]_j^{\alpha},~ b>_i x\}  $$
$$
w_j^{\alpha}:=\sum \{b\in [u]_j^{\alpha}, ~\neg (b<_j x) \wedge \neg (b<_jx)\} 
$$
and 
$$
\begin{array}{lll}
v_i^{+}:= u_i^+& w_i^{+}:= v_i^{-} & t_i^{+}:= w_i^{-} \\
v_i^{-}:=  u_i^+- \partial (v_{i+1}^{-}) &
w_i^{-}:= v_i^{-} - \partial (w_{i+1}^{-}) &
t_i^{-}:= u_i^- 
\end{array}
$$
and for any $j<i$ and $\alpha\in\{-,+1\}$
$$
\begin{array}{lll}
v_j^{\alpha}:= u_j^{\alpha} & w_j^{\alpha}:= u_j^{\alpha} & t_j^{\alpha}:= u_j^{\alpha}\\
\end{array}
$$

By construction, 
we then have for any $i\leq j\leq 2$
$$u_j^{\alpha}=v_j^{\alpha}+w_j^{\alpha}+t_j^{\alpha}.$$
and 
$$\partial(v_{i+1}^-)= v_i^+-v_i^-~~~~
\partial(w_{i+1}^-)= w_i^+-w_i^- ~~~~
\partial(t_{i+1}^-)= t_i^+-t_i^-$$
and 
$$\partial(u_{i}^\alpha)= \partial(v_{i}^\alpha)=\partial(w_{i}^\alpha)=\partial(t_{i}^\alpha)$$
It then remains to show that  for any $i+1<j\leq 2$
\begin{equation}
\label{eq:first eq to show}
\partial v_j^\alpha = v_{j-1}^+-v_{j-1}^-~~~~~
\partial w_j^\alpha = w_{j-1}^+-w_{j-1}^-~~~~~
\partial t_j^\alpha = t_{j-1}^+-t_{j-1}^-
\end{equation}
and
\begin{equation}
\label{eq:seond eq to show}
v_i^-\geq 0~~~~ w_i^-\geq 0
\end{equation}
Indeed, if the assertions \eqref{eq:first eq to show} and \eqref{eq:seond eq to show} are fulfilled, this implies that the sequences $\{v_j^\beta\}$, $\{w_j^\beta\}$ and $\{t_j^\beta\}$  are arrays and then correspond respectively to the unique cells $v,w$ and $t$  fulfilling the desired condition. 

We first deal with the assertion \eqref{eq:first eq to show}.
Suppose first that there exists an integer $j$ such that $i+1<j\leq 2$. This implies that $i=0$. The lemma \ref{lemma:2 incompatiblite1} then implies that $w^\alpha_2=\lambda x$ with $\lambda\in \{0,1\}$.
By assumption, we have 
$$\partial (u^\beta_2) = u^+_1-u^-_1$$
and then 
$$\partial (v^\beta_2)+ \partial (w^\beta_2)  + \partial (t^\beta_2) = v^+_1-v^-_1+ w^+_1-w^-_1+ t^+_1-t^-_1$$
The lemma \ref{lemma:2 incompatiblite2} implies that  any element of the base belonging to $\partial (v^\beta_2)$ (resp. to $\partial (t^\beta_2)$) is $0$-inferior to $x$ (resp. $0$-superior to $x$). Moreover,  for any $b\in \partial (w^\beta_2)=\lambda \partial x$, we have $ \neg (b<^r_1x) \vee  \neg (x<^r_1b)$.

 The previous equality then implies
$$\partial (v^\beta_2) = v^+_1-v^-_1 ~~~~~~
\partial (w^\beta_2) =  w^+_1-w^-_1~~~~~~
 \partial (t^\beta_2) =  t^+_1-t^-_1$$

We now deal with the assertion \eqref{eq:first eq to show}.
We claim that we have
$$\partial^+ v^\alpha_{i+1}\wedge \partial^- w^\alpha_{i+1}=0~~~~~~~~\partial^+ w^\alpha_{i+1}\wedge \partial^- t^\alpha_{i+1}=0~~~~~~~~
\partial^+ v^\alpha_{i+1}\wedge \partial^- t^\alpha_{i+1}=0
$$
Indeed, suppose that $\partial^+ v^\alpha_{i+1}\wedge \partial^- w^\alpha_{i+1}\neq 0$. This implies that there exists an element of the base $b\in w^\alpha_{i+1}$ and $c\in v^\alpha_{i+1}$ such that $b<_i c$. As we have by definition $c<_i x$, this directly implies that $b<_i x$ which is absurd. We show similarly the two other equalities.
This implies that 
$$
\begin{array}{rcl}
u_i^{+} &\geq& \partial(u_{i+1}^-)\\
 &=& \partial^+(v_{i+1}^{-}+w_{i+1}^{-}+t_{i+1}^{-})\\
&=&\partial^+(v_{i+1}^{-}) + (\partial^+(w_{i+1}^{-}) - \partial^-(v_{i+1}^{-}))_+ +  (\partial^+(t_{i+1}^{-}) - \partial^-(w_{i+1}^{-}) -  \partial^-(v_{i+1}^{-}))_+
\end{array}$$
As a consequence, we have 
$$
\begin{array}{rcl}
v^-_{i} &=& u_i^+- \partial (v_{i+1}^{-})\\
&=& u_i^+- \partial^+ (v_{i+1}^{-})+ \partial^- (v_{i+1}^{-})\\
&\geq&  (\partial^+(w_{i+1}^{-}) - \partial^-(v_{i+1}^{-}))_+ +  (\partial^+(t_{i+1}^{-}) - \partial^-(w_{i+1}^{-}) -  \partial^-(v_{i+1}^{-}))_+ + \partial^- (v_{i+1}^{-})\\
&\geq&  (\partial^+(w_{i+1}^{-}) - \partial^-(v_{i+1}^{-}))_+ + \partial^- (v_{i+1}^{-})\\
&\geq & \partial^+(w_{i+1}^{-})\\
\end{array}
$$
and
$$w^-_{i} =  v_i^{-} - \partial (w_{i+1}^{-})= v_i^{-} - \partial^+ (w_{i+1}^{-})+ \partial^- (w_{i+1}^{-})\geq 0$$
The two assertions \eqref{eq:first eq to show} and \eqref{eq:seond eq to show} are then fulfilled, which concludes the proof.
\end{proof}

\begin{lemma}
\label{lemma:unicity of fact 1}
Let $C$ be a  $(0,2)$-category with a atomic and loop free basis. Let $x$ be a element of the base of $C$, and $y$ an element of the base of $D$. Let $f:C\to D$ be a morphism such that $\lambda f x=y$. Let $u$ be an $2$-cell of $C$. We denote by $u=:u_0*_0u_1*_0u_2$ and $f(u)=:v_0*_1v_1*_1v_2$ the decomposition given by the lemma \ref{lemma:decompasiition}. Then
$$f(u_0)=v_0~~~~ f(u_1)=v_1~~~~ f(u_2)=v_2$$
\end{lemma}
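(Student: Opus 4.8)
The plan is to transport the decomposition of $u$ forward along $f$ and to recognize its image as the decomposition of $f(u)$ supplied by lemma \ref{lemma:decompasiition}, the identification $f(u_i)=v_i$ then being forced by the uniqueness clause of that lemma. Since $f$ is a morphism of $\zo$-categories it commutes with the compositions, so applying $f$ to $u=u_0*_0u_1*_0u_2$ gives $f(u)=f(u_0)*_0f(u_1)*_0f(u_2)$. It therefore suffices to check that the triple $(f(u_0),f(u_1),f(u_2))$ satisfies the three defining conditions of the decomposition of $f(u)$ relative to $y$; uniqueness in lemma \ref{lemma:decompasiition} then yields the claim termwise.

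The middle term is controlled by the hypothesis $\lambda f x=y$. By construction $u_1$ is either $x$ or a cell of strictly smaller dimension. In the first case $f(u_1)=f(x)$ has $2$-support exactly $\{y\}$: indeed $[f(x)]_2=\lambda f x=y$ is a basis element, and since $D$ is loop-free and atomic the cell with this $2$-boundary data is the atom $\langle y\rangle$ (compare the remark following theorem \ref{theorem:steiner}). In the second case $f(u_1)$ is again of lower dimension. Either way $f(u_1)$ is an admissible middle cell, matching $v_1$.

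For the two outer terms I would invoke the functoriality of the orders established in lemma \ref{lem:functorialite de <}. The conditions defining $u_0$ and $u_2$ are phrased through $<^u_0$ relative to $x$, namely $b<^u_0 x$ for $b\in B_1^{u_0}$ and $x<^u_0 b$ for $b\in B_1^{u_2}$. Part (1) of lemma \ref{lem:functorialite de <} sends $b<^u_0 x$ to $c<^{f(u)}_0 y$ for every basis element $c$ of the relevant support of $f(u_0)$, taking the target element in $B_1^{f(x)}\ni y$, and symmetrically for $u_2$. This places the supports of $f(u_0)$ and $f(u_2)$ strictly below and above $y$, as required.

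The step I expect to be the main obstacle is the verification that the decomposition of $f(u)$ relative to $y$ is \emph{well defined} in the first place, i.e. that the hypothesis of lemma \ref{lemma:decompasiition} holds: every $2$-generator $c\neq y$ of $f(u)$ must be $<^{f(u)}_1$-incomparable with $y$. Such a $c$ arises from $f(b)$ for some $2$-generator $b\neq x$ of $u$, which by hypothesis is $<^u_1$-incomparable with $x$. Were $b<^u_1 x$ or $x<^u_1 b$ to hold, part (2) of lemma \ref{lem:functorialite de <} would directly rule out the corresponding $<^{f(u)}_1$-relation; since neither holds, lemma \ref{lemma:2 incompatiblite1} forces a $<^u_0$-comparison between $b$ and $x$, and part (1) of lemma \ref{lem:functorialite de <} then yields a $<^{f(u)}_0$-comparison between $c$ and $y$, which is incompatible with either $<^{f(u)}_1$-relation. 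Once this incomparability is secured the target decomposition exists, the pushed-forward triple satisfies all of its conditions, and uniqueness concludes the proof.
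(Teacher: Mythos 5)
Your overall strategy is exactly the one the paper intends: the paper's own proof is a one-line appeal to uniqueness (its citation of lemma \ref{lemma:unicity of fact 1} inside its own proof is an evident typo for lemma \ref{lemma:decompasiition}), i.e.\ push the decomposition of $u$ forward along $f$, check that the resulting triple satisfies the defining conditions of the decomposition of $f(u)$ relative to $y$, and conclude termwise by uniqueness. Your treatment of the outer terms via part (1) of lemma \ref{lem:functorialite de <}, and especially your verification that the decomposition of $f(u)$ relative to $y$ exists at all (combining lemma \ref{lemma:2 incompatiblite1}, part (1) of lemma \ref{lem:functorialite de <}, and the fact that $<_0$-comparability excludes $<_1$-comparability in a loop-free basis) are sound, and they record points the paper passes over in silence; your uniform reading of both decompositions as the $*_0$-decomposition is also the only coherent reading of the statement.

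There is, however, a genuine error in your middle-term step. You claim that since $[f(x)]_2=\lambda f x=y$ is a basis element and $D$ is loop-free and atomic, the cell $f(x)$ must be the atom $\langle y\rangle$. This inference is false: $\lambda$ kills units, so any whiskering of $\langle y\rangle$ by identities, for instance $\langle y\rangle\circ_0 \Ib_a$ for a $1$-cell $a$ of $D$, also has class $y$ in degree $2$. (The remark following theorem \ref{theorem:steiner} says that the basis of $\nu K$ consists of atoms; it does not say that every cell whose $\lambda$-class is a basis element is an atom.) This is not a repairable detail under the literal reading of the hypothesis: take $C=\Db_2$, $u=x=e_2$, let $D=[1]\vee[[1],1]$ with $1$-generator $a$ and $2$-generator $y$, and let $f$ classify the whiskered cell $\langle y\rangle\circ_0\Ib_a$. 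Then $\lambda f x = y$, the decomposition of $u$ has middle term $e_2$ and trivial outer terms, but the (unique) decomposition of $f(u)$ splits off the whisker, having middle term $\langle y\rangle$ and an outer term $\Ib_a$; hence $f(u_1)=\langle y\rangle\circ_0\Ib_a\neq v_1$ and the conclusion itself fails. So for the lemma to be true the hypothesis must be read at the cell level, namely $f(x)=y$ as cells (the atom of $x$ is sent to the atom of $y$); under that reading the admissibility of the middle term is immediate from the hypothesis, the incorrect loop-free/atomic argument becomes unnecessary, and the rest of your proof goes through.
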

\begin{proof}
This is a direct consequence of lemma \ref{lemma:unicity of fact 1}.
\end{proof}

\begin{lemma}
\label{lemma:unicity of fact 2}
Let $C$ be a  $(0,2)$-category with a atomic and loop free basis. Let $x$ be a element of the base of $C$, and $y$ an element of the base of $D$. Let $f:C\to D$ be a morphism such that $y$ belongs to $\lambda f x$. Let $u$ be an $2$-cell of $C$. We denote by $u=:u_0*_1u_1*_1u_2$  and $f(u)=:v_0*_1v_1*_1v_2$ the decompositions given by lemma \ref{lemma:decompasiition}. For any $i\leq 2$,  we denote by $f(u_i)=:u_{i0}*_1u_{i1}*_1u_{i2}$ the decomposition given by lemma \textit{op cit}. Then
$$v_0=u_{00}~~~~v_1 = u_{01}*_1u_{02}*_1u_{10}*_1u_{11}*_1u_{12}*_1 u_{20}*_1u_{21}~~~~ v_2=u_{22}$$
\end{lemma}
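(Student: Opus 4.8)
The plan is to reduce the statement to the uniqueness clause of lemma \ref{lemma:decompasiition}. Since $f$ is a morphism of $\omega$-categories it preserves $*_1$, so from $u = u_0 *_1 u_1 *_1 u_2$ one gets $f(u) = f(u_0) *_1 f(u_1) *_1 f(u_2)$, and refining each factor by its own decomposition with respect to $y$ produces the nine-fold composite
$$f(u) = u_{00}*_1 u_{01}*_1 u_{02}*_1 u_{10}*_1 u_{11}*_1 u_{12}*_1 u_{20}*_1 u_{21}*_1 u_{22}.$$
By the uniqueness of the first decomposition in lemma \ref{lemma:decompasiition}, it then suffices to verify that the grouping $(u_{00} \mid u_{01}*_1\cdots *_1 u_{21}\mid u_{22})$ meets the three defining conditions: every $2$-cell of the first block is $<^{f(u)}_1 y$, every $2$-cell of the last block is $>^{f(u)}_1 y$, and every $2$-cell of the seven middle factors is $<_1$-incomparable with $y$.

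The first step is to show that the four off-diagonal factors $u_{02}$, $u_{20}$, $u_{10}$, $u_{12}$ carry no non-trivial $2$-cell. For $u_{02}$ this is immediate from functoriality of the orders: each $2$-basis element $b$ of $u_0$ satisfies $b <^u_1 x$, so lemma \ref{lem:functorialite de <}(2) applied with $y\in B_2^{f(x)}$ yields $\neg(y <^{f(u)}_1 c)$ for every $c\in B_2^{f(u_0)}$; hence $f(u_0)$ has no $2$-cell strictly above $y$ and $u_{02}$ is an identity. The factor $u_{20}$ is handled symmetrically, using $x <^u_1 b$ for $b\in B_2^{u_2}$. Granting also the vanishing of $u_{10}$ and $u_{12}$, the first block of the composite reduces to $u_{00}$ and contains exactly the $2$-cells below $y$, the last reduces to $u_{22}$ and contains those above $y$, and the remaining factors — the identities $u_{02},u_{10},u_{12},u_{20}$ being retained only so the middle composite is well-formed — have all their $2$-cells incomparable with $y$. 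Uniqueness in lemma \ref{lemma:decompasiition} then forces $v_0=u_{00}$, $v_2=u_{22}$ and $v_1=u_{01}*_1u_{02}*_1u_{10}*_1u_{11}*_1u_{12}*_1u_{20}*_1u_{21}$.

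The main obstacle is precisely the vanishing of $u_{10}$ and $u_{12}$, that is, controlling the middle factor $f(u_1)$. The block $u_1$ consists of $x$ together with the $2$-basis elements incomparable with $x$, and I would treat these two kinds of cells separately. For a basis cell $b\neq x$ incomparable with $x$, lemma \ref{lemma:2 incompatiblite1} gives $b<^u_0 x$ or $x<^u_0 b$, and lemma \ref{lem:functorialite de <}(1) transports this into a $<^{f(u)}_0$-comparison between each $c\in B_2^{f(b)}$ and $y$; one then deduces $<_1$-incomparability of $c$ and $y$ from the fact that two distinct basis cells cannot be simultaneously $<_0$- and $<_1$-comparable (an incompatibility extracted from the characterization of $<_0$ in lemma \ref{lemma:other characterization of <0} together with lemmas \ref{lemma:2 incompatiblite1} and \ref{lemma:2 incompatiblite2}). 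The genuinely delicate point is the image $f(x)$ of $x$ itself: although $y$ is only one of possibly several elements of $\lambda f x$, I must show that no other element of $B_2^{f(x)}$ is $<_1$-comparable with $y$ inside $f(u)$, i.e.\ that the $2$-support of $f(x)$ is a $<_1$-antichain lying at the level of $y$. This is the step I expect to be hardest and the one most dependent on the situation at hand; it is the analogue, for the non-quasi-rigid case, of the identification furnished by the companion lemma \ref{lemma:unicity of fact 1} in the quasi-rigid case $\lambda f x = y$, and I would reduce to it by feeding in the local $*_1$-decomposition of $f(x)$.
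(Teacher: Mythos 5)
Your reduction to the uniqueness clause of lemma \ref{lemma:decompasiition} is the right skeleton, and your treatment of the outer blocks is correct: applying lemma \ref{lem:functorialite de <}(2) to $b<^u_1x$ for $b\in B_2^{u_0}$ (resp.\ to $x<^u_1 b$ for $b\in B_2^{u_2}$) shows that no $2$-cell $c$ of $f(u_0)$ satisfies $y<^{f(u)}_1c$ (resp.\ no $2$-cell $c'$ of $f(u_2)$ satisfies $c'<^{f(u)}_1y$), so $u_{02}$ and $u_{20}$ are identities. This is already more than the paper offers, since its proof of this lemma is a one-line deferral to lemma \ref{lemma:unicity of fact 1} (whose hypothesis is $\lambda f x=y$ on the nose, and whose own proof is a circular self-reference). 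Still, two gaps remain in your plan. The smaller one: the ``incompatibility'' you invoke for the cells coming from $b\neq x$ --- that two basis $2$-cells cannot be simultaneously $<_0$-comparable and $<_1$-comparable --- is \emph{not} a consequence of lemmas \ref{lemma:other characterization of <0}, \ref{lemma:2 incompatiblite1} and \ref{lemma:2 incompatiblite2}: those give only the converse implication ($<_1$-incomparability implies $<_0$-comparability) and a closure property of $<_0$. The exclusivity you need is believable, but it requires a separate argument from loop-freeness of the basis (a violation produces a cycle for the order $\odot$), which neither you nor the paper supplies.

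The second gap is fatal to the statement itself: the step you defer --- that no element of $B_2^{f(x)}$ besides $y$ is $<_1$-comparable with $y$ --- is not merely hard, it is false under the stated hypotheses, and the lemma fails with it. Take the ambient $D:=[[2],1]$, with $2$-generators $\alpha\colon g_0\Rightarrow g_1$ and $\beta\colon g_1\Rightarrow g_2$ (this $D$ fits the standing assumptions of the section: it is obtained from its subcategory generated by $g_0,g_1,g_2,\beta$ by freely attaching $\alpha$). For the lemma's data take $C:=\Db_2$ with generator $x$, let $f$ send $x$ to $\beta*_1\alpha$, and put $y:=\alpha$, $u:=x$. Then $y$ belongs to $\lambda f x$, and $u_0,u_2$ are identities, so $u_{00}=\Ib_{g_2}$; but $\langle\beta\rangle^-_1\wedge\langle y\rangle^+_1=[g_1]\neq 0$, i.e.\ $\beta<^{f(u)}_1y$, so the decomposition of $f(u)=\beta*_1\alpha$ has $v_0=\beta\neq\Ib_{g_2}=u_{00}$ (and likewise $v_1=\alpha$ differs from the claimed middle composite, which equals $\beta*_1\alpha$). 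Hence no completion of your argument --- or any argument --- can succeed as stated: the lemma needs the additional hypothesis that every element of $B_2^{f(x)}$ is $<_1$-incomparable with $y$, i.e.\ that $f(x)$ is $0$-comparable with $y$ in the paper's sense. That hypothesis does hold in the paper's sole application of this lemma (there the relevant cells factor through $\Gamma_0$, whose objects are $0$-comparable with the attached cell by definition), and with it your remaining step becomes exactly the point to prove. So your instinct that all the content is concentrated in this last step was correct; the resolution is that, as the statement stands, it must be added as a hypothesis rather than proved.
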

\begin{proof}
This is a direct consequence of lemma \ref{lemma:unicity of fact 1}.
\end{proof}

\begin{notation}
Let $a$ be a globular sum of dimension lower or equal to $2$. We denote by $\triangledown$ the unique algebraic morphism $\Db_2\xrightarrow{} a$. The  $2$-cell $\triangledown$ is called the \textit{composite cell} of $a$.
\end{notation}

\begin{remark}
\label{rem:composite cell an algebraic morphism}
If $i:a\to a'$ is an algebraic morphism, and $f:a'\to C$ any morphism, the composite cell of $f:a'\to C$ is the same as the composite cell of $fi:a\to C$.
\end{remark}

\begin{definition}
Let $b$ be an element of the base of $D$.
A $2$-cell $v$ of $D$ is \textit{$0$-comparable} with $b$ if $b\in B_2^v$ and if for any $b'\in B_2^v$, the assertion 
$\neg (b<_1^v b') \wedge \neg (b'<_1^v b)$
holds.
\end{definition} 
\begin{lemma}
\label{lemma:simplification gamma,0}
Let $a$ be a globular sum of dimension lower or equal to $2$. Let $x$ be a $2$-cell of $D$.
Let $f:a\to D$ be a morphism such that $f(\triangledown)$ is $0$-comparable with $x$.
Then there exists a commutative triangle 
\[\begin{tikzcd}
	& {a'\vee [[1],1]\vee a''} \\
	a & D
	\arrow["f"', from=2-1, to=2-2]
	\arrow["i", from=2-1, to=1-2]
	\arrow["{f'\vee x\vee f''}", from=1-2, to=2-2]
\end{tikzcd}\]
Moreover, this factorization is functorial in $D$.
\end{lemma}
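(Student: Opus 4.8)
The plan is to transport the $0$-decomposition of the composite cell $f(\triangledown)$ from $D$ back to the globular sum $a$, so as to isolate the generator $x$ inside a single $2$-globe sitting horizontally in the middle.

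First I would record the decomposition in $D$. Since $f(\triangledown)$ is $0$-comparable with $x$, the hypothesis of the second part of Lemma \ref{lemma:decompasiition} is met with $u=f(\triangledown)$, and it produces a \emph{unique} decomposition
\[ f(\triangledown) = v *_0 x *_0 t \]
in which every element $b$ of $B_1^v$ satisfies $b<_0 x$, every element $b$ of $B_1^t$ satisfies $x<_0 b$, and the middle cell is exactly the $2$-generator $x$ (its dimension ruling out the lower-dimensional alternative allowed by the lemma). I would then set $s:=\pi_0^- x$ and $u:=\pi_0^+ x$, the two $0$-cells at which the composite is cut.

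Next I would reflect this cut to the source. Using that $a$ is a horizontal ($0$-)composite of its columns $[a_i,1]$, that $f$ preserves the order of objects, and that $f$ transports the relations $<_0$ and $<_1$ (Lemma \ref{lem:functorialite de <}), together with the fact that $0$-comparability forbids any $<_1$-relation involving $x$ — so that no $2$-cell of $a$ is forced to lie above or below $x$ — I would argue that the cut determined by $s,u$ is purely horizontal. Proceeding by induction on the number of columns of $a$ (splitting off the first column and recursing according to whether $x$ belongs to the $2$-support of its image or to that of the remaining columns), I would produce globular sums $a'$ and $a''$ of dimension at most $2$ together with morphisms $f'\colon a'\to D$ and $f''\colon a''\to D$ such that $f'(\triangledown_{a'})=v$ and $f''(\triangledown_{a''})=t$; the single-column case is where the cut genuinely \emph{refines} a column, which is why $a'$ and $a''$ need not be subobjects of $a$. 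Setting the middle component $[[1],1]=\Db_2\to D$ equal to $x$, the gluing $a'\vee[[1],1]\vee a''$ has composite cell $\triangledown_{a'}*_0 e_2 *_0 \triangledown_{a''}$, which $f'\vee x\vee f''$ sends to $v*_0 x*_0 t=f(\triangledown)$.

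Finally I would define $i\colon a\to a'\vee[[1],1]\vee a''$ as the canonical (algebraic) comparison delivered by the induction and check that $(f'\vee x\vee f'')\circ i=f$. As $a$ is a polygraph it suffices to compare the two morphisms on generators; the uniqueness statements of Lemmas \ref{lemma:unicity of fact 1} and \ref{lemma:unicity of fact 2} identify the image of the middle $2$-generator with $x$ and fix the agreement on the left- and right-hand pieces, while Remark \ref{rem:composite cell an algebraic morphism} guarantees that the composite cells match. Functoriality in $D$ is then immediate: the decomposition $v*_0 x*_0 t$ is unique, and $a',a'',f',f''$ and $i$ are built canonically from $(f,x)$, so for any $g\colon D\to D'$ the data attached to $g\circ f$ is obtained simply by post-composing with $g$. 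The main obstacle is the reflection step: because $s$ and $u$ need not be images of objects of $a$, one cannot merely cut $a$ at an object and must instead construct $a'$ and $a''$ so that $i$ exists — it is precisely here that $0$-comparability, by excluding vertical stacking with $x$, is indispensable.
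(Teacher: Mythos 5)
Your argument is sound and rests on the same pillars as the paper's proof --- the unique decomposition of Lemma \ref{lemma:decompasiition}, the compatibility statements of Lemmas \ref{lemma:unicity of fact 1} and \ref{lemma:unicity of fact 2}, and Lemma \ref{lem:functorialite de <} --- but it assembles them differently. You decompose the composite cell $f(\triangledown)=v*_0x*_0t$ once, globally, and then reflect the cut into $a$ by induction on columns; the paper never uses the global decomposition at all: it applies Lemma \ref{lemma:decompasiition} directly to $fj$ for each generator $j:[[k],1]\to\Sp_a$ (three cases: $j=d$, $j$ in the same column as $d$, $j$ in another column) and obtains both the object $a'\vee[[1],1]\vee a''$ and the map $i$ simultaneously, as a colimit of these local factorizations over the spine. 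The paper's route buys uniformity: compatibility of the pieces is exactly the uniqueness clause of Lemma \ref{lemma:decompasiition}, and $i$ requires no separate construction. Your route makes the cut explicit from the start, but note that your single-column case --- the only place where $i$ is genuinely built --- is the paper's generator-by-generator step in disguise: to split the column containing $d$ you must decompose the image of each of its $2$-generators and intermediate $1$-cells and invoke uniqueness for the compatibility of these splittings, so the per-generator work you defer to ``the induction'' cannot actually be avoided. Two smaller corrections: the middle piece equals $x$ not for dimension reasons but because conditions (1) and (2) of Lemma \ref{lemma:decompasiition} forbid $x$ from belonging to $B_2^v$ or $B_2^t$ (loop-freeness makes $<^r_0$ irreflexive); and functoriality is not quite ``immediate'' from uniqueness --- one needs Lemma \ref{lem:functorialite de <} (or Lemma \ref{lemma:unicity of fact 1}) to know that post-composition sends the canonical decomposition to a decomposition still satisfying the defining order conditions, which is exactly what the paper cites at that point.
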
 
\begin{proof}
Let $d$ be the (necessarily unique) element of the basis of $a$ such that $x\in [f(d)]_2$. Let $k\leq1$ and $j:[[k],1]\to \Sp_a$ be an element of the basis, i.e., a globular morphism.

If $j=d$, we consider the diagram
\[\begin{tikzcd}
	& {[[1],1]\vee[[1],1]\vee[[1],1]} \\
	{[[1],1]} & D
	\arrow["fj"', from=2-1, to=2-2]
	\arrow[from=2-1, to=1-2]
	\arrow["{f'\vee x\vee f''}", from=1-2, to=2-2]
\end{tikzcd}\]
and if $j$ is different of $d$ by share the same $0$-source and $0$-target, we consider the diagram
\[\begin{tikzcd}
	& {[[k],1]\vee[1]\vee[[k],1]} \\
	{[[k],1]} & D
	\arrow["fj"', from=2-1, to=2-2]
	\arrow[from=2-1, to=1-2]
	\arrow[from=1-2, to=2-2]
\end{tikzcd}\]
where these two decompositions are induced by lemma \ref{lemma:decompasiition}. If the $0$-source and $0$-target of $j$ are different of the one of $d$, we consider the diagram
\[\begin{tikzcd}
	& {[[k],1]} \\
	{[[k],1]} & D
	\arrow["fj"', from=2-1, to=2-2]
	\arrow[from=2-1, to=1-2]
	\arrow["fj", from=1-2, to=2-2]
\end{tikzcd}\] 
Taking the colimit over all such $j:[[k],1]\to a$, this induces a factorization
\[\begin{tikzcd}
	& {a'\vee [[1],1]\vee a''} \\
	a & D
	\arrow["f"', from=2-1, to=2-2]
	\arrow["i", from=2-1, to=1-2]
	\arrow["{f'\vee x\vee f''}", from=1-2, to=2-2]
\end{tikzcd}\]
fulfilling the desired property. Eventually, the functoriality of this factorization is a consequence of the unicity of the decomposition given in lemma \ref{lemma:decompasiition} and of lemma \ref{lem:functorialite de <}.
\end{proof}
\vspace{1cm}

Until the end of this section, we fix an other $(0,2)$-category  $C$ admitting a loop-free and atomic basis, and fitting in a cocartesian square of $\zocat$ of shape:
\[\begin{tikzcd}
	{\partial[[1],1]} & C \\
	{[[1],1]} & D
	\arrow["f", from=1-2, to=2-2]
	\arrow["x"', from=2-1, to=2-2]
	\arrow[from=1-1, to=2-1]
	\arrow["{\partial x}", from=1-1, to=1-2]
	\arrow["\lrcorner"{anchor=center, pos=0.125, rotate=180}, draw=none, from=2-2, to=1-1]
\end{tikzcd}\]

\begin{construction}
We define $\Gamma_0$ as the full subcategory of $(\Theta_2)_{/D}$ whose objects are morphisms $f:a\to D$ such that either $f$ factors through $C$, or the following conditions are fulfilled:
\begin{enumerate}
\item $f(\triangledown)$ is $0$-comparable with $x$.
\item $\Sp_a\to a\to D$ factors through the $\Theta$-set $C\cup x$.
\end{enumerate}
We define $\Gamma_1$ as the full subcategory of $(\Theta_2)_{/D}$ whose objects are morphisms $v:a\to D$ such that $\Sp_a\to a\to D$ factors through the $\Theta$-set $C\cup \colim_{\Gamma_0}a$.
\end{construction}

\begin{lemma}
\label{lem:injectif 1}
The canonical morphism of $\Theta$-sets $\iota:\colim_{\Gamma_0}a\to D$ is injective. Its image corresponds to morphisms $f:a\to D$ such that either $f$ factors through $C$, or the $2$-cell $f(\triangledown)$ is $0$-comparable with $x$.
\end{lemma}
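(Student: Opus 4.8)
The plan is to identify the image of $\iota$ with the sub-presheaf $S\subseteq D$ consisting of those cells $f\colon a\to D$ that either factor through $C$ or are such that the composite cell $f(\triangledown)$ is $0$-comparable with $x$, and then to upgrade the resulting surjection onto $S$ to an isomorphism. First I would check that $S$ is indeed a sub-presheaf of $D$, i.e.\ that it is stable under precomposition with an arbitrary $g\colon a'\to a$ in $\Theta_2$. Factoring $g$ as an algebraic morphism followed by a globular one (proposition \ref{prop:algebraic ortho to globular}) and using that an algebraic morphism leaves the composite cell unchanged (remark \ref{rem:composite cell an algebraic morphism}), the only thing left to control is the behaviour of the comparison relation $<_1$ along $f$, which is exactly the content of lemma \ref{lem:functorialite de <}. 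The dichotomy to establish is that restricting a cell of $S$ either removes $x$ from the $2$-support, in which case the restriction factors through $C$, or keeps $x$ in the support while preserving the fact that $x$ is $<_1$-incomparable with every other element of the support, i.e.\ preserves $0$-comparability. Once $S$ is a sub-presheaf, the inclusion $\mathrm{image}\subseteq S$ is immediate: the objects of $\Gamma_0$ lie in $S$ by their very definition, and the image of $\iota$ is the union of those cells together with all their faces.

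For the reverse inclusion $S\subseteq\mathrm{image}$, given a cell $h\colon b\to D$ of $S$ I would produce an object of $\Gamma_0$ through which it factors. If $h$ factors through $C$ it is itself an object of $\Gamma_0$; otherwise $h(\triangledown)$ is $0$-comparable with $x$, and the simplification lemma \ref{lemma:simplification gamma,0} factors $h$ as $b\to b'\vee[[1],1]\vee b''\xrightarrow{h'\vee x\vee h''}D$. Since $x$ occurs in a single horizontal layer, the outer pieces $h',h''$ no longer meet $x$ and hence factor through $C$, so the spine of $b'\vee[[1],1]\vee b''$ maps into $C\cup x$; thus $h'\vee x\vee h''$ satisfies conditions $(1)$ and $(2)$ and is an object of $\Gamma_0$ through which $h$ factors. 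This yields $S\subseteq\mathrm{image}$, and together with the previous paragraph shows that the image of $\iota$ is exactly $S$.

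It remains to prove that $\iota$ is injective, equivalently that the canonical surjection $\colim_{\Gamma_0}a\to S$ is an isomorphism. Writing $S=\colim_{(\Theta_2)_{/S}}a$ by density, it suffices to show that the inclusion $\Gamma_0\hookrightarrow(\Theta_2)_{/S}$ is final, i.e.\ that for every cell $s$ of $S$ the category of factorisations of $s$ through an object of $\Gamma_0$ is nonempty and connected. Nonemptiness is precisely the surjectivity just established. For connectedness I would exhibit a canonical factorisation and connect an arbitrary one to it: the cells of $S$ factoring through $C$ are themselves objects of $\Gamma_0$, so the relevant comma category has a terminal object and is trivially connected, while for a cell meeting $x$ the canonical choice is the one furnished by the simplification lemma \ref{lemma:simplification gamma,0}, whose functoriality in $D$ and whose underlying uniqueness statement (the unique decomposition of lemma \ref{lemma:decompasiition}, refined by lemmas \ref{lemma:unicity of fact 1} and \ref{lemma:unicity of fact 2}) guarantee that any two factorisations are joined by a zigzag passing through it.

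The main obstacle is precisely this last connectedness step. The difficulty is that $\Gamma_0$ is not a sieve in $(\Theta_2)_{/D}$, because condition $(2)$ on the spine is not visibly stable under precomposition, so one cannot identify the colimit with the sub-presheaf $S$ by a purely formal argument. The whole weight of the proof is therefore carried by the uniqueness and functoriality of the decompositions of lemmas \ref{lemma:decompasiition} and \ref{lemma:simplification gamma,0}, which pin down the canonical object of $\Gamma_0$ through which every factorisation of a given cell of $S$ must pass.
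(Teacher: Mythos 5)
Your proposal is correct and follows essentially the same route as the paper: surjectivity onto the claimed image is obtained from the simplification lemma \ref{lemma:simplification gamma,0}, and injectivity from the functoriality and uniqueness of that factorization (lemmas \ref{lemma:decompasiition}, \ref{lemma:unicity of fact 1}, \ref{lemma:unicity of fact 2}), which makes the canonical factorization weakly initial among all factorizations through $\Gamma_0$ --- your connectedness/finality step is exactly the paper's commutative square argument. The only substantive addition is your explicit check that the claimed image is stable under restriction along morphisms of $\Theta_2$ (the inclusion of the image into the described set of cells), a point the paper's proof leaves implicit.
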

\begin{proof}
First, remark that the morphism $C\cup\{x\}\to \colim_{\Gamma_0}a$ is injective. To complete the characterization of the image of $\iota$, let $f:a\to D$ be a morphism such that $f(\triangledown)$ is $0$-comparable with $x$.

Consider now the factorization $a\xrightarrow{i} a'\xrightarrow{g} D$ of $f$ given by lemma \ref{lemma:simplification gamma,0}. Every element of $\Sp a'$ is sent to either an element of $C$ or to $x$. This implies that $g$ belongs to $\Gamma_0$, which concludes the characterization of the image of $\iota$.

Now, for the injectivity, suppose that there exists another element $h:b\to D$ of $\Gamma_0$ and a decomposition $a\xrightarrow{j} b\xrightarrow{h} D$ of $f:a\to D$. Up to further factorization, we can suppose that $j$ is algebraic and, according to lemma \ref{lemma:simplification gamma,0}, that $j(\triangledown)$ is $0$-comparable with the (necessarily unique) element of the basis $c$ of $b$ such that $g(c)=x$. 

Using once again the factorization lemma \ref{lemma:simplification gamma,0} on the morphism $j$ and the object $c$, and using the functoriality of this factorization, we get a commutative diagram
\[\begin{tikzcd}
	a & b \\
	{a'} & D
	\arrow["i"', from=1-1, to=2-1]
	\arrow["g"', from=2-1, to=2-2]
	\arrow["h", from=1-2, to=2-2]
	\arrow["j", from=1-1, to=1-2]
	\arrow[from=2-1, to=1-2]
\end{tikzcd}\]
 completing the proof of injectivity.
\end{proof}

\begin{lemma}
\label{lem:injectif 2}
The canonical morphism of $\Theta$-sets $\iota:\colim_{\Gamma_1}a\to D$ is an equivalence.
\end{lemma}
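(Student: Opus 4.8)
The plan is to show that $\iota$ is both injective and surjective on cells: a morphism of presheaves on $\Theta_2$ that is pointwise bijective is an isomorphism, so this suffices. Recall the tautological identification $\colim_{(\Theta_2)_{/D}}a\cong D$; since $\Gamma_1$ is a full subcategory of $(\Theta_2)_{/D}$, the map $\iota$ is the comparison induced by this inclusion. Thus surjectivity amounts to showing that every cell $p:b\to D$ with $b\in\Theta_2$ factors through an object of $\Gamma_1$, while injectivity amounts to showing that any two such factorizations are connected in $\Gamma_1$.

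For surjectivity, I would argue spine piece by spine piece. Given $p:b\to D$, the composite $\Sp_b\to b\xrightarrow{p}D$ is a union of suspension pieces $[s_i,1]\to D$, and it is enough to factor each of these through $C\cup\colim_{\Gamma_0}a$ compatibly, so as to produce an algebraic morphism $b\to a$ with $a\to D$ in $\Gamma_1$ and $p$ factoring through it. For a piece whose composite cell $v$ avoids $x$, every generator of $v$ comes from $C$ (because $D$ is obtained from $C$ by freely adjoining the single $2$-cell $x$, so every basis element of $D$ distinct from $x$ lies in the image of $f$), hence $v$ factors through $C$. For a piece whose composite cell $v$ contains $x$, apply lemma \ref{lemma:decompasiition} relative to $x$ to write $v=v'*_1 w*_1 t$, where $B_2^{v'}=\{b'<^v_1 x\}$ and $B_2^{t}=\{x<^v_1 b'\}$ both avoid $x$ — hence factor through $C$ as above — while $w$ is $0$-comparable with $x$, so by the characterization of the image of $\iota:\colim_{\Gamma_0}a\to D$ given in lemma \ref{lem:injectif 1} the corresponding piece lies in $\colim_{\Gamma_0}a$. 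Splitting $[s_i,1]$ accordingly along $\triangledown$ yields the desired algebraic refinement of $b$.

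For injectivity, I would mimic the proof of lemma \ref{lem:injectif 1}. It suffices to produce, for each cell $q:b\to D$, a canonical minimal object of $\Gamma_1$ through which $q$ factors, and to show any competing factorization refines to it. The uniqueness of the $*_1$- and $*_0$-decompositions (lemmas \ref{lemma:decompasiition}, \ref{lemma:unicity of fact 1} and \ref{lemma:unicity of fact 2}) pins down the factors $v',w,t$ intrinsically, and the common-refinement argument for degenerate morphisms (proposition $3.8$ of \cite{Bergner_reedy_category_and_the_theta_construction}, used exactly as in lemma \ref{lem:injectif 1}) shows that two such factorizations admit a common refinement in $\Gamma_1$. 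Hence any two $\Gamma_1$-representatives of $q$ are connected, so $\iota$ is injective, and together with surjectivity this gives that $\iota$ is an equivalence.

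The main obstacle is the surjectivity step: one must choose the decompositions $v=v'*_1 w*_1 t$ coherently across all spine pieces of $b$, and check that the assembled map $b\to a$ is a genuine algebraic morphism of globular sums whose target satisfies the spine condition defining $\Gamma_1$ — in particular that the middle factors land in $\colim_{\Gamma_0}a$ and the outer factors land in $C$ simultaneously. The functoriality of the factorization in lemma \ref{lemma:simplification gamma,0} and the uniqueness in lemma \ref{lemma:decompasiition} are precisely what make this gluing well defined.
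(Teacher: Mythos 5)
Your proposal is correct and follows essentially the same route as the paper: surjectivity by refining each spine piece via the decomposition of lemma \ref{lemma:decompasiition} (outer factors avoiding $x$ land in $C$, the middle $0$-comparable factor lands in $\colim_{\Gamma_0}a$ by lemma \ref{lem:injectif 1}), then gluing into an algebraic refinement lying in $\Gamma_1$; injectivity by connecting any competing factorization to the canonical one through a common refinement controlled by the uniqueness lemmas \ref{lemma:unicity of fact 1} and \ref{lemma:unicity of fact 2}. One small correction: the common-refinement step does not rest on proposition 3.8 of \cite{Bergner_reedy_category_and_the_theta_construction} (which the paper uses elsewhere, e.g. in lemma \ref{lemma:i etoile of W is in M 1}), but is carried out by applying lemma \ref{lemma:decompasiition} a second time to each factor (producing the $[[9],1]$ refinements) with commutativity supplied by lemma \ref{lemma:unicity of fact 2} --- exactly the tools you otherwise cite.
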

\begin{proof}
First, remark that the morphism $C\cup  \colim_{\Gamma_0}a \to \colim_{\Gamma_1}a$ is injective. To complete the surjectivity of $\iota$, let $f:a\to D$ be a morphism that doesn't factor through $C$. In particular, this implies that $x$ belongs to $[f(\triangledown)]_2$. We denote by $c$ as the (necessary) unique element of the base of $a$ such that $x\in [f(c)]_2$.

Let $k\leq 1$ and $j:[[k],1]\to \Sp_a$ be an element of the basis. If $j$ is $c$, we consider the following diagram
$$[[1],1]\to [[3],1]\to D$$
induced by the decomposition of lemma \ref{lemma:decompasiition}. Moreover, lemma \ref{lem:injectif 1} implies that $l$ belongs to $\Gamma_1$.
If $j$ is different from $c$, we consider the diagram
$$[[k],1]\to [[k],1]\to D$$
Moreover, $fj$ factors through $C$ and then belongs to $\Gamma_1$. Taking the colimit over all such $j$, this induces a diagram
$$a\xrightarrow{i} a' \xrightarrow{g} D$$
whose composite is $f$ and such that $g$ is in $\Gamma_1$. This concludes the proof of the surjectivity of $\iota$.

To prove the injectivity, suppose now that there exists another element $h:b\to D$ and a decomposition $a\xrightarrow{j} b\xrightarrow{h} D$ of $f:a\to D$ with $h$ in $\Gamma_1$. Let $k\leq 1$ and $j:[[k],1]\to \Sp_a$ be an element of the basis. If $j$ is $c$, we consider the diagram
\[\begin{tikzcd}
	{[[1],1]} & {[[3],1]} & {a'} \\
	{[[3],1]} & {[[9],1]} \\
	b && D
	\arrow[from=2-1, to=2-2]
	\arrow[from=1-1, to=2-1]
	\arrow["g", from=1-3, to=3-3]
	\arrow["h"', from=3-1, to=3-3]
	\arrow["t", from=2-2, to=3-3]
	\arrow[from=2-1, to=3-1]
	\arrow[from=1-2, to=1-3]
	\arrow["{[\sigma,1]}", from=1-2, to=2-2]
	\arrow[from=1-1, to=1-2]
\end{tikzcd}\]
where the left vertical morphisms are induced by the decomposition of  lemma \ref{lemma:decompasiition}, the morphism $t$ obtained in applying for each $2$-cell the decomposition of lemma \textit{op cit}, and the morphism $\sigma$ send $0$ on $0$, $1$ on $1$, $2$ on $8$ and $3$ on $9$. The commutativity of this diagram is a consequence of  lemma \ref{lemma:unicity of fact 2}. 

If $j$ is different from $c$, we consider the diagram
\[\begin{tikzcd}
	{[[k],1]} & {[[k],1]} & {a'} \\
	{[[k],1]} & {[[k],1]} \\
	b && D
	\arrow[Rightarrow, no head, from=2-1, to=2-2]
	\arrow[Rightarrow, no head, from=1-1, to=2-1]
	\arrow[from=1-3, to=3-3]
	\arrow["h"', from=3-1, to=3-3]
	\arrow[from=2-2, to=3-3]
	\arrow[from=2-1, to=3-1]
	\arrow[from=1-2, to=1-3]
	\arrow[Rightarrow, no head, from=1-1, to=1-2]
	\arrow[Rightarrow, no head, from=1-2, to=2-2]
\end{tikzcd}\]
Taking the colimit over all such $j$, this induces a diagram
\[\begin{tikzcd}
	a & {a'} & {a'} \\
	{a'} & {a''} \\
	b && D
	\arrow[from=2-1, to=2-2]
	\arrow["i"', from=1-1, to=2-1]
	\arrow["g", from=1-3, to=3-3]
	\arrow["h"', from=3-1, to=3-3]
	\arrow[from=2-2, to=3-3]
	\arrow[from=2-1, to=3-1]
	\arrow[Rightarrow, no head, from=1-2, to=1-3]
	\arrow["i", from=1-1, to=1-2]
	\arrow[from=1-2, to=2-2]
\end{tikzcd}\]
where $a''\to D$ is in $\Gamma_1$,
which concludes the proof of injectivity.	
\end{proof}

\begin{lemma}
\label{lemma: lambdA gamma}
Let $f: a\to D$ be a morphism of $\Gamma_0$. We denote by $\Lambda^{\Gamma_0} a$  the subobject of $a$ composed of all $i\in {\Theta_2}_{/a}$ such that $fi$ factors through the  $\Theta_2$-set $C\cup x$. Then the morphism $\Lambda^{\Gamma_0} a\to a$ is in $\overline{\W_2}$.
\end{lemma}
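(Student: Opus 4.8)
The plan is to reduce by left cancellation to the inclusion $\Sp_a\to\Lambda^{\Gamma_0}a$, and then to build this inclusion as a transfinite composite of pushouts of the Segal extensions $\Sp_b\to b$, which lie in $\W_2\subseteq\overline{\W_2}$. First I would dispose of the case where $f$ factors through $C$: then $fi$ factors through $C\subseteq C\cup x$ for every $i$, so $\Lambda^{\Gamma_0}a=a$ and the map is the identity, which lies in $\overline{\W_2}$. Otherwise the two conditions defining $\Gamma_0$ hold, and the second one says exactly that $\Sp_a\to a\to D$ factors through $C\cup x$, i.e. $\Sp_a\subseteq\Lambda^{\Gamma_0}a$. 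As the Segal extension $\Sp_a\to a$ belongs to $\W_2\subseteq\overline{\W_2}$ and $\overline{\W_2}$ is closed under left cancellation, it suffices to show $\Sp_a\to\Lambda^{\Gamma_0}a\in\overline{\W_2}$.

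Next I would describe $\Lambda^{\Gamma_0}a$ explicitly. All objects and $1$-cells of $a$ map into $C$ (the objects of $D$ and the boundary $1$-cells of $x$ come from $C$, and $C$ is closed under composition), so the constraint only concerns $2$-cells: a $2$-cell $\delta$ of $a$ lands in $C\cup x$ iff $f(\delta)\in C$ or $f(\delta)=x$. Hence $\Lambda^{\Gamma_0}a$ is the union of $\Sp_a$ with the set of composite $2$-cells $\delta$ of $a$ whose image $f(\delta)$ factors through $C\cup x$; it is a subobject since the faces of such a $\delta$ are lower-dimensional and already lie there.

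I would then order these composites by the number of generators in their support and add them one at a time, producing $\Sp_a=Z_0\subseteq Z_1\subseteq\dots\subseteq Z_N=\Lambda^{\Gamma_0}a$. For such a $\delta$ of size $s$, let $b_\delta\hookrightarrow a$ be the sub-globular-sum generated by the generators occurring in $\delta$, so that $\delta$ is its composite cell. Its proper sub-composites and its spine have size $<s$, hence already lie in $Z_k$, so $Z_k\cap b_\delta=b_\delta\setminus\{\delta\}$ and $Z_k\to Z_{k+1}$ is the pushout of $b_\delta\setminus\{\delta\}\to b_\delta$ along $b_\delta\setminus\{\delta\}\to Z_k$. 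Finally $b_\delta\setminus\{\delta\}\to b_\delta$ is in $\overline{\W_2}$: the map $\Sp_{b_\delta}\to b_\delta\setminus\{\delta\}$ is a composite of pushouts of the maps already treated for configurations of size $<s$, and left-cancelling it against the Segal extension $\Sp_{b_\delta}\to b_\delta\in\W_2$ gives the claim (the base case $s=2$ being exactly $b_\delta\setminus\{\delta\}=\Sp_{b_\delta}$). Closure of $\overline{\W_2}$ under pushout and transfinite composition then yields $\Sp_a\to\Lambda^{\Gamma_0}a\in\overline{\W_2}$, and a last left cancellation against $\Sp_a\to a$ finishes the proof.

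The main obstacle I expect is the combinatorial control in the middle two steps: showing that the $2$-cells of $\Lambda^{\Gamma_0}a$ are exactly those of $\Sp_a$ together with the composites mapping into $C\cup x$, and that adding them in order of size produces the asserted pushouts — in particular that no composite combining $x$ nontrivially with the cells of $C$ is ever created, and that the preimage of $x$ behaves like a single isolated generator. This is precisely where the canonical $*_0$/$*_1$ decomposition of lemma \ref{lemma:decompasiition}, the uniqueness statements of lemmas \ref{lemma:unicity of fact 1} and \ref{lemma:unicity of fact 2}, the normalisation provided by lemma \ref{lemma:simplification gamma,0}, and the functoriality of the orders $<_i$ (lemma \ref{lem:functorialite de <}) are needed.
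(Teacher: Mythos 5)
Your opening reduction is sound: condition (2) in the definition of $\Gamma_0$ gives $\Sp_a\subseteq\Lambda^{\Gamma_0}a$, the Segal extension $\Sp_a\to a$ lies in $\W_2$, and left cancellation for $\overline{\W_2}$ then reduces the lemma to showing $\Sp_a\to\Lambda^{\Gamma_0}a\in\overline{\W_2}$. This step is correct (and is not how the paper proceeds). But the core of your argument --- the description of $\Lambda^{\Gamma_0}a$ and the filtration --- has genuine gaps. First, $\Lambda^{\Gamma_0}a$ is not ``$\Sp_a$ together with the composite $2$-cells landing in $C\cup x$'': since freely adjoining the $2$-generator $x$ creates no new cells in dimension $\leq 1$, every cell $\theta\to a$ with $\theta\in\Theta_2$ of dimension $\leq 1$ lands in $C$, so $\Lambda^{\Gamma_0}a$ contains, e.g., all $0$-composites of $1$-cells across wedge factors of $a$, none of which lie in $\Sp_a$; it also contains cells indexed by non-globular shapes (e.g.\ a cell $\Db_2\vee\Db_2\to a$ hitting two $0$-composable $2$-cells each landing in $C$). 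In particular your claim that the faces of an attached $\delta$ ``already lie there'' is false: the $1$-source of a horizontal composite is a composite $1$-cell, absent from $\Sp_a$. Second, and as a consequence, the asserted pushout squares fail at the very first genuine attachment: for $\delta=\delta_1\ast_0\delta_2$ of size $2$ one has $Z_k\cap b_\delta=\Sp_{b_\delta}$, which is strictly smaller than $b_\delta\setminus\{\delta\}$ (the latter also contains the whiskerings $\delta_1\ast_0 s(\delta_2)$ and the cells $(\delta_1,s(\delta_2)):\Db_2\vee\Db_1\to b_\delta$). So your base case ``$b_\delta\setminus\{\delta\}=\Sp_{b_\delta}$ when $s=2$'' holds for vertical composites but not for horizontal ones, and $Z_k\to Z_{k+1}$ is not a pushout along $b_\delta\setminus\{\delta\}\to b_\delta$.

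More fundamentally, everything you postpone to ``combinatorial control'' is the actual content of the lemma, and the paper supplies it by a different, structural route: since $f\in\Gamma_0$ does not factor through $C$, there is a (necessarily unique) generator $b$ of $a$ with $f(b)=x$, which yields a decomposition $a\cong a'\vee[[k]\vee[1]\vee[k'],1]\vee a''$ in which $b$ is the middle suspended edge and the $[[k],1]$, $[[k'],1]$ parts map into $C$; the paper then identifies $\Lambda^{\Gamma_0}a$ with $a'\vee[[k]\coprod_{[0]}[1]\coprod_{[0]}[k'],1]\vee a''$ and concludes because the functor $a'\vee[\uvar,1]\vee a'':\Psh{\Delta}\to\Psh{\Theta}$ carries $\overline{\W_1}$ into $\overline{\W_2}$, while $[k]\coprod_{[0]}[1]\coprod_{[0]}[k']\to[k+1+k']$ lies in $\overline{\W_1}$. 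In other words, the whole problem is reduced in one step, via the suspension, to a one-dimensional Segal-type inclusion; determining exactly which cells of $a$ land in $C\cup x$ --- the step you defer --- is precisely what this decomposition encodes. Your plan could plausibly be repaired (attach entire sub-globular-sums $b'\subseteq a$ whose restriction of $f$ lands in $C\cup x$, along attaching maps identified recursively and shown to be in $\overline{\W_2}$ by an induction of the kind you sketch), but as written the filtration neither exhausts $\Lambda^{\Gamma_0}a$ nor consists of the pushouts you claim, so the proof does not go through.
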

\begin{proof}
If $f$ factors through $C$, then $\Lambda^{\Gamma_0}a$ is equal to $a$. Suppose then that there exists a (necessarily unique) element of the base $b$ such that $f(b)=x$. 

There exists a unique decomposition of $a$ as
$$a\cong a' \vee [[k]\vee[1]\vee[k'],1]\vee a''$$
where the cell $[[1],1]\to a$ is $b$ and where
$$[[k],1]\to a\to D~~~~\mbox{and}~~~~[[k'],1]\to a\to D$$
factors through $C$.

We then have
$$\Lambda^{\Gamma_0} a\cong a'\vee[[k]\coprod_{[0]} [1]\coprod_{[0]}[k'],1]\vee a''$$
As the functor $a'\vee[\uvar,1]\vee a:\Psh{\Delta}\to \Psh{\Theta}$ sends $\overline{\W_1}$ to $\overline{\W_2}$, and as $$[k]\coprod_{[0]} b\coprod_{[0]}[k']\to [k+1+k']$$ is in $\overline{\W_1}$, this concludes the proof.
\end{proof}

\begin{lemma}
\label{lemma: lambdA gamma2}
Let $f: a\to D$ be a morphism of $\Gamma_1$. We denote by $\Lambda^{\Gamma_1} a$  the subobject of $a$ composed of all $i\in \Theta_{/a}$ such that $fi$ factors through $\colim_{\Gamma_0}a$. Then the morphism $\Lambda^{\Gamma_1} a\to a$ is in $\overline{\W_2}$.
\end{lemma}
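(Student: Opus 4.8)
The plan is to mirror the proof of lemma \ref{lemma: lambdA gamma}, distinguishing whether or not $f$ already lands in $\colim_{\Gamma_0}a$, and in the remaining case exhibiting the inclusion $\Lambda^{\Gamma_1}a\to a$ as the image of a Segal map under a suspension-type functor that carries $\overline{\W_1}$ into $\overline{\W_2}$. First I would dispose of the trivial case: if $f$ factors through the subobject $\colim_{\Gamma_0}a\hookrightarrow D$ of lemma \ref{lem:injectif 1}, then every $i\in\Theta_{/a}$ satisfies the defining condition of $\Lambda^{\Gamma_1}a$, so $\Lambda^{\Gamma_1}a=a$ and the inclusion is the identity, which lies in $\overline{\W_2}$.

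Assume henceforth that $f$ does not factor through $\colim_{\Gamma_0}a$. By lemma \ref{lem:injectif 1} this means that $f$ does not factor through $C$ and that $f(\triangledown)$ is not $0$-comparable with $x$. Since $f\in\Gamma_1$, the spine $\Sp_a$ nevertheless lands in $C\cup\colim_{\Gamma_0}a$; hence there is a unique base $2$-cell $c$ of $a$ with $x\in[f(c)]_2$, its image $f(c)$ is $0$-comparable with $x$, and every other generating $2$-cell of $a$ is sent into $C$. Applying lemma \ref{lemma:decompasiition} to $f(\triangledown)$ and $x$ yields a decomposition $f(\triangledown)=v*_1 w*_1 t$ in which $w$ is $0$-comparable with $x$ and the $2$-generators of $v$ (resp. $t$) are all $<_1 x$ (resp. $>_1 x$), hence lie in $C$. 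The uniqueness and naturality of this decomposition (lemmas \ref{lem:functorialite de <}, \ref{lemma:unicity of fact 1} and \ref{lemma:unicity of fact 2}) then let me lift it to the source, giving, exactly as in lemma \ref{lemma: lambdA gamma}, a decomposition $a\cong a'\vee[[k]\vee[1]\vee[k'],1]\vee a''$ in which the middle copy $[[1],1]$ carries $c$ and the summands $[[k],1]$ and $[[k'],1]$ collect the $<_1 x$- and $>_1 x$-neighbours of $c$, both mapping into $C$.

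Next I would identify $\Lambda^{\Gamma_1}a$, which is the step where the argument genuinely departs from lemma \ref{lemma: lambdA gamma}: the target here is the larger subpresheaf $\colim_{\Gamma_0}a$, which by lemma \ref{lem:injectif 1} contains \emph{every} $2$-cell that is $0$-comparable with $x$, not merely $x$ itself. Consequently the whole $0$-comparable block through $c$ is retained, and a cell of $a$ fails the defining condition of $\Lambda^{\Gamma_1}a$ precisely when its image $*_1$-composes $c$ with one of its $<_1$- or $>_1$-neighbours; severing exactly the two $*_1$-junctions bounding the block then gives $\Lambda^{\Gamma_1}a\cong a'\vee[[k]\coprod_{[0]}[1]\coprod_{[0]}[k'],1]\vee a''$. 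Thus the inclusion $\Lambda^{\Gamma_1}a\to a$ is the image, under the colimit-preserving functor $a'\vee[\uvar,1]\vee a'':\Psh{\Delta}\to\Psh{\Theta}$, of the Segal map $[k]\coprod_{[0]}[1]\coprod_{[0]}[k']\to[k+1+k']$. Since this map is in $\overline{\W_1}$ and, as recorded in lemma \ref{lemma: lambdA gamma}, the functor $a'\vee[\uvar,1]\vee a''$ sends $\overline{\W_1}$ into $\overline{\W_2}$, the inclusion lies in $\overline{\W_2}$.

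I expect the main obstacle to be this last identification of $\Lambda^{\Gamma_1}a$: one must verify that cutting only the two $*_1$-junctions already removes every cell whose image leaves $\colim_{\Gamma_0}a$, and that the resulting subpresheaf is genuinely the suspension of a Segal inclusion rather than something strictly larger. This is exactly where the uniqueness of the Steiner-theoretic decomposition (lemmas \ref{lemma:decompasiition}, \ref{lemma:unicity of fact 1} and \ref{lemma:unicity of fact 2}) and the functoriality of $<_1$ (lemma \ref{lem:functorialite de <}) are needed, so that the broken object and its inclusion into $a$ are well defined and natural in $D$.
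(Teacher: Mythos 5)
You have the right frame (decompose the block of $c$ and push a map of $\overline{\W_1}$ through the functor $a'\vee[\uvar,1]\vee a''$), but the combinatorial identification of $\Lambda^{\Gamma_1}a$ --- the step you yourself flagged as the crux --- is wrong. The phenomenon that distinguishes $\Gamma_1$ from $\Gamma_0$ is that a generator $d$ of $a$ adjacent to $c$ may be sent into $C$ and still compose with $f(c)$ to a $2$-cell that is $0$-comparable with $x$; by lemma \ref{lem:injectif 1} such composites lie in the image of $\colim_{\Gamma_0}a$, so they must be retained in $\Lambda^{\Gamma_1}a$. Your decomposition $a\cong a'\vee[[k]\vee[1]\vee[k'],1]\vee a''$, in which every neighbour of $c$ is declared a $<_1 x$- or $>_1 x$-neighbour, leaves no room for such incomparable neighbours (they cannot sit in $a'$ or $a''$, which are wedge-attached only at objects), and your formula $\Lambda^{\Gamma_1}a\cong a'\vee[[k]\coprod_{[0]}[1]\coprod_{[0]}[k'],1]\vee a''$, being glued at points, discards every $*_1$-composite of $c$ with a neighbour. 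That is the computation of $\Lambda^{\Gamma_0}a$ from lemma \ref{lemma: lambdA gamma}, not of $\Lambda^{\Gamma_1}a$; the two differ exactly when incomparable neighbours exist, and this difference is the reason proposition \ref{prop: case of 2 category} needs the two stages $\Gamma_0$, $\Gamma_1$ at all. (The alternative reading of your severing, at the outer junctions of the $0$-comparable block, is also wrong: a cell composing a generator just outside that block with one just inside lands in $C$ and must be kept, yet a point-gluing removes it.)

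The paper's proof instead uses a five-fold decomposition $a\cong a'\vee[[n]\vee[k]\vee[1]\vee[k']\vee[n'],1]\vee a''$, where $[k]$ and $[k']$ are the \emph{maximal} ranges around $c$ landing in $C$ whose composite with $c$ maps to a cell $0$-comparable with $x$, and $[n]$, $[n']$ are the remaining generators of the block. The retained cells are then those contained in $[n]\vee[k]$, in $[k]\vee[1]\vee[k']$, or in $[k']\vee[n']$, giving
$$\Lambda^{\Gamma_1}a\cong a'\vee[[n+k]\coprod_{[k]}[k+1+k']\coprod_{[k']}[k'+n'],1]\vee a'',$$
an amalgamation along the \emph{intervals} $[k]$ and $[k']$ rather than along points; the map $[n+k]\coprod_{[k]}[k+1+k']\coprod_{[k']}[k'+n']\to[n+k+1+k'+n']$ is in $\overline{\W_1}$, after which your final step applies verbatim. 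Note also that defining $k,k'$ by maximality inside $a$ sidesteps your proposed ``lifting'' of the decomposition $f(\triangledown)=v*_1w*_1t$ of lemma \ref{lemma:decompasiition}, which need not lift at all: a single generator of $a$ can have image whose $2$-support meets both $v$ and $w$, so the decomposition in $D$ need not align with generator boundaries in $a$.
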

\begin{proof}
If $f$ factors through $C$, then $\Lambda^{\Gamma_1}a$ is equal to $a$. Suppose then that there exists a (necessarily unique) element of the base $b$ such that $x$ belongs to $[f(b)]_2$.

There exists a unique decomposition of $a$ as
$$a\cong a' \vee [[n]\vee[k]\vee[1]\vee[k']\vee[n'],1]\vee a'' $$
where the cell $[[1],1]\to a$ is $b$, and where $k$ and $k'$ are the maximal integers such that the image by the composite cell of 
$$[[k]\vee[1]\vee[k'],1]\to a$$
is $0$-comparable with $x$, and such that
$$[[k],1]\coprod [[k'],1]\to a\to D$$
factors through $C$.

We then have
$$\Lambda^{\Gamma_0} a\cong a'\vee[[n+k]\coprod_{[k]} [k+1+k']\coprod_{[k']}[k'+n'],1]\vee a''$$
As the functor $a'\vee[\uvar,1]\vee a:\Psh{\Delta}\to \Psh{\Theta}$ sends $\overline{\W_1}$ to $\overline{\W_2}$, and as 
$$[n+k]\coprod_{[k]} [k+1+k']\coprod_{[k']}[k'+n']\to [n+k +1+k'+n']$$ is in $\overline{\W_1}$, this concludes the proof.
\end{proof}

\begin{prop}
\label{prop: case of 2 category}
Let $C$ and $D$ be two $(0,2)$-categories admitting loop-free and atomic bases, fitting in a  cocartesian square of shape:
\[\begin{tikzcd}
	{\partial[[1],1]} & C \\
	{[[1],1]} & D
	\arrow["f", from=1-2, to=2-2]
	\arrow["x"', from=2-1, to=2-2]
	\arrow["{\partial x}", from=1-1, to=1-2]
	\arrow["\lrcorner"{anchor=center, pos=0.125, rotate=180}, draw=none, from=2-2, to=1-1]
	\arrow[from=1-1, to=2-1]
\end{tikzcd}\]
Then, viewed as a morphism of $\Psh{\Theta_2}$, the morphism $j:C\cup x\to D$ is in $\overline{\W_2}$.
\end{prop}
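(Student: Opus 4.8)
The plan is to factor $j$ through the intermediate presheaf $\colim_{\Gamma_0}a$ and to realise each of the two resulting maps as the colimit of a Reedy cofibrant diagram of arrows that is pointwise in $\overline{\W_2}$, so that membership in $\overline{\W_2}$ follows from the defining closure property of a precocomplete class (definition \ref{defi:precomplet}). First I would record, using lemma \ref{lem:injectif 1}, that the canonical map $\colim_{\Gamma_0}a\to D$ is a monomorphism onto the subpresheaf spanned by the cells that either factor through $C$ or whose composite cell is $0$-comparable with $x$, and, using lemma \ref{lem:injectif 2}, that $\colim_{\Gamma_1}a\to D$ is an isomorphism. These assemble into a factorisation
\[ C\cup x \xrightarrow{\ j_0\ } \colim_{\Gamma_0}a \xrightarrow{\ j_1\ } \colim_{\Gamma_1}a \cong D \]
of $j$ in $\Psh{\Theta_2}$, reducing the statement to showing that $j_0$ and $j_1$ both lie in $\overline{\W_2}$.

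For $j_0$ I would consider the diagram on $\Gamma_0$ sending an object $f\colon a\to D$ to the arrow $\Lambda^{\Gamma_0}a\to a$ of lemma \ref{lemma: lambdA gamma}. Since $\Gamma_0$ is a full subcategory of the elegant Reedy category $(\Theta_2)_{/D}$ (proposition \ref{prop:elelangat stable by slice}) and $\Lambda^{\Gamma_0}a$ is a subobject of $a$ compatible with the morphisms of $\Gamma_0$, this diagram of arrows is Reedy cofibrant, and it is pointwise in $\overline{\W_2}$ by lemma \ref{lemma: lambdA gamma}. I would then argue that its colimit computes $j_0$: the colimit of the targets is $\colim_{\Gamma_0}a$ by definition, while the colimit of the sources is $C\cup x$, because a cell of $D$ lies in some $\Lambda^{\Gamma_0}a$ precisely when it factors through $C\cup x$, and the objects of $\Gamma_0$ with $\Lambda^{\Gamma_0}a=a$ (those factoring through $C\cup x$, among them $x$ itself) assemble exactly this subpresheaf. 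The closure of $\overline{\W_2}$ under colimits of Reedy cofibrant diagrams then yields $j_0\in\overline{\W_2}$.

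The treatment of $j_1$ would be identical in structure, now indexed by $\Gamma_1$: the diagram $f\colon a\to D\mapsto(\Lambda^{\Gamma_1}a\to a)$ is Reedy cofibrant and pointwise in $\overline{\W_2}$ by lemma \ref{lemma: lambdA gamma2}, the colimit of its sources is $\colim_{\Gamma_0}a$ and that of its targets is $\colim_{\Gamma_1}a\cong D$, whence $j_1\in\overline{\W_2}$. Since a precocomplete class is closed under composition, a binary composite being a special case of a transfinite one, the composite $j=j_1\circ j_0$ then lies in $\overline{\W_2}$.

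I expect the main obstacle to be the two colimit identifications: verifying that the arrow-valued diagrams over $\Gamma_0$ and $\Gamma_1$ are genuinely Reedy cofibrant and that their source and target colimits are the claimed objects. Reedy cofibrancy should reduce, exactly as in the proof of lemma \ref{lemma:i etoile of W is in M 0}, to checking that the latching maps are monomorphisms, which follows from the elegance of $(\Theta_2)_{/D}$ together with the stability of $\Lambda^{\Gamma}a\subset a$ under restriction; the source identifications are precisely the bookkeeping encoded in lemmas \ref{lem:injectif 1} and \ref{lem:injectif 2}.
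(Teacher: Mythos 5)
Your proposal is correct and follows essentially the same route as the paper: both factor $j$ through $\colim_{\Gamma_0}a$, realise each leg as the colimit of the Reedy cofibrant arrow-diagrams $\Lambda^{\Gamma_0}a\to a$ and $\Lambda^{\Gamma_1}a\to a$ (pointwise in $\overline{\W_2}$ by lemmas \ref{lemma: lambdA gamma} and \ref{lemma: lambdA gamma2}), identify the source and target colimits via lemmas \ref{lem:injectif 1} and \ref{lem:injectif 2}, and conclude by closure of $\overline{\W_2}$ under composition. The only difference is presentational: you spell out the colimit identifications and the Reedy cofibrancy check slightly more explicitly than the paper does.
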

\begin{proof}
The category $\Gamma_0$ inherits from $\Theta_{/D}$ a structure of Reedy elegant category. Moreover, the two functors 
$$\begin{array}{ccccccc}
\Gamma_0&\to &\Psh{\Delta} &~~~~~&\Gamma_0&\to &\Psh{\Delta}\\
a\to D&\mapsto &\Lambda^{\Gamma_0}a &~~~~~& a\to D &\mapsto & a
\end{array}$$
are Reedy cofibrant (definition \ref{defi:reedycof}). The morphism 
$$C\cup x\cong \colim_{\Gamma_0}\Lambda^{\Gamma_0}a\to \colim_{\Gamma_0}a$$
is then in $\overline{\W_2}$. We proceed similarly to demonstrate that the morphism
$$ \colim_{\Gamma_0}a \cong \colim_{\Gamma_1}\Lambda^{\Gamma_1} a\to  \colim_{\Gamma_1}a\cong D$$
is in $\overline{\W_2}$, and where the second isomorphism is induced by lemma \ref{lem:injectif 2}. By stability by composition of $\overline{\W_2}$, this concludes the proof.
\end{proof}

\begin{prop}
\label{prop: case of 1 category}
Let $C$ and $D$ be two $(0,1)$-categories admitting loop-free and atomic bases,  fitting in a  cocartesian square of shape:
\[\begin{tikzcd}
	{\partial[1]} & C \\
	{[1]} & D
	\arrow["f", from=1-2, to=2-2]
	\arrow["x"', from=2-1, to=2-2]
	\arrow["{\partial x}", from=1-1, to=1-2]
	\arrow["\lrcorner"{anchor=center, pos=0.125, rotate=180}, draw=none, from=2-2, to=1-1]
	\arrow[from=1-1, to=2-1]
\end{tikzcd}\]
Then, viewed as a morphism of $\Psh{\Delta}$, the morphism $j:C\cup x\to D$ is in $\overline{\W_1}$.
\end{prop}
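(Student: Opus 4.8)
The plan is to transcribe the proof of proposition \ref{prop: case of 2 category} to this lower-dimensional setting, where it simplifies considerably: since $x$ is now a $1$-cell and $(0,1)$-categories carry only the $0$-composition, a single category suffices in place of the two stages $\Gamma_0$ and $\Gamma_1$. Concretely, I would work in $\Psh\Delta$ (recall $\Theta_1=\Delta$) and define $\Gamma$ as the full subcategory of $\Delta_{/D}$ whose objects are the morphisms $f\colon a\to D$ such that either $f$ factors through $C$, or the composite $\Sp_a\to a\to D$ factors through the $\Delta$-set $C\cup x$ while $x$ belongs to $f(\triangledown)$. Because $D$ admits a loop-free basis, the generator $x$ can occur at most once in any composable string of generators; this is the $1$-dimensional substitute for lemmas \ref{lemma:2 incompatiblite1} and \ref{lemma:decompasiition}, and it is the crux of the whole argument.

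First I would establish the colimit presentation, fusing the analogues of lemmas \ref{lem:injectif 1} and \ref{lem:injectif 2} into one statement: the canonical map $\iota\colon\colim_\Gamma a\to D$ is an isomorphism in $\Psh\Delta$. Surjectivity holds because every simplex $[n]\to D$ spine-refines into a string of generators, each lying in $C$ or equal to $x$, and hence factors through an object of $\Gamma$; injectivity follows from the uniqueness of this refinement, which is precisely where loop-freeness (at most one occurrence of $x$) and atomicity are used. The same bookkeeping yields $\colim_\Gamma\Lambda^\Gamma a\cong C\cup x$, where $\Lambda^\Gamma a\hookrightarrow a$ denotes the largest subobject all of whose faces factor through $C\cup x$.

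Next I would compute the local maps $\Lambda^\Gamma a\to a$, the analogue of lemma \ref{lemma: lambdA gamma}. If $f$ factors through $C$ then $\Lambda^\Gamma a=a$. Otherwise, writing $n=k+1+m$ so that $a=[n]$ has its $(k,k+1)$-edge equal to $x$ and its initial $[k]$- and terminal $[m]$-parts landing in $C$, a face of $[n]$ factors through $C\cup x$ exactly when no edge other than $(k,k+1)$ crosses the $x$-step; hence $\Lambda^\Gamma a$ is the pushout $[k]\amalg_{[0]}[1]\amalg_{[0]}[m]$ formed in $\Psh\Delta$. The resulting map to $[n]$ is a generalized Segal inclusion: it fits into $\Sp_{[n]}\to [k]\amalg_{[0]}[1]\amalg_{[0]}[m]\to[n]$, where the composite is the spine map $\Sp_{[n]}\to[n]\in\W_1$ and the first map is a composite of pushouts of the spine maps $\Sp_{[k]}\to[k]$ and $\Sp_{[m]}\to[m]$, both in $\overline{\W_1}$. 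Closure of $\overline{\W_1}$ under left cancellation (definition \ref{defi:precomplet}) then places $\Lambda^\Gamma a\to a$ in $\overline{\W_1}$.

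Finally I would assemble these pieces exactly as in proposition \ref{prop: case of 2 category}. The slice $\Delta_{/D}$ is an elegant Reedy category by proposition \ref{prop:elelangat stable by slice}, and $\Gamma$ inherits such a structure; the two functors $\Gamma\to\Arr(\Psh\Delta)$ sending $a\to D$ to $\Lambda^\Gamma a$ and to $a$ are Reedy cofibrant (definition \ref{defi:reedycof}) and, by the previous step, pointwise in $\overline{\W_1}$. By the defining closure property of precocomplete classes under colimits of Reedy cofibrant diagrams (definition \ref{defi:precomplet}), the induced map on colimits $C\cup x=\colim_\Gamma\Lambda^\Gamma a\to\colim_\Gamma a\cong D$ lies in $\overline{\W_1}$; since this is exactly $j$, the proof is complete. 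The main obstacle is the colimit presentation of the first step: verifying that $\iota$ is an isomorphism, i.e. that the spine-refinement of each simplex through $\Gamma$ exists and is unique, is the only place where genuine content beyond formal manipulation of $\overline{\W_1}$ enters, and it rests squarely on the loop-free and atomic hypotheses on the bases.
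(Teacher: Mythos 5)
Your overall architecture is exactly the paper's: the paper's proof introduces precisely your category (there called $\Upsilon$, the full subcategory of $\Delta_{/D}$ of simplices whose spine factors through $C\cup x$), the same subobjects $\Lambda^{\Upsilon}[n]$, and concludes by the same Reedy-cofibrant-colimit assembly, deferring the local computation to the analogue of lemma \ref{lemma: lambdA gamma}. The gap is in that local computation. You claim that a face of $[n]$ factors through $C\cup x$ \emph{exactly} when no edge other than $(k,k+1)$ crosses the $x$-step, and deduce $\Lambda^{\Gamma}[n]\cong[k]\amalg_{[0]}[1]\amalg_{[0]}[m]$. This is false whenever $f$ has identity spine edges adjacent to the $x$-edge, because the subobject $C\cup x$ of $\Psh{\Delta}$ contains the whole image of $x:[1]\to D$, in particular all degeneracies of $x$, so a crossing face can land there. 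Concretely, take $f:[3]\to D$ with spine $(c,x,\mathrm{id})$, $c$ a non-identity cell of $C$: this $f$ lies in $\Gamma$, and the $2$-face spanned by $\{1,2,3\}$ is sent to $x\circ s^1$, which factors through $x:[1]\to D$; hence this face belongs to $\Lambda^{\Gamma}[3]$, while your formula predicts $\Lambda^{\Gamma}[3]=\Sp_{[3]}$. Note that such degenerate simplices cannot be excluded from $\Gamma$: they are needed for the identifications $\colim_{\Gamma}a\cong D$ and $\colim_{\Gamma}\Lambda^{\Gamma}a\cong C\cup x$, and they are also why your injectivity argument should be phrased as connectedness of the category of factorizations (a zigzag to a common refinement) rather than literal ``uniqueness of the refinement''.

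The gap is local and repairable by your own method. In general, let $\{k^-,\dots,k^+\}$ be the largest interval of vertices containing $\{k,k+1\}$ all of whose edges other than $(k,k+1)$ are sent by $f$ to identities; then $\Lambda^{\Gamma}[n]$ is the union of the three faces of $[n]$ spanned by $\{0,\dots,k\}$, by $\{k^-,\dots,k^+\}$ and by $\{k+1,\dots,n\}$. This subobject still contains $\Sp_{[n]}$, and the inclusion $\Sp_{[n]}\to\Lambda^{\Gamma}[n]$ is a finite composite of pushouts of maps which are themselves obtained from spine inclusions by pushout and left cancellation, hence lies in $\overline{\W_1}$; left cancellation against $\Sp_{[n]}\to[n]\in\W_1$ then places $\Lambda^{\Gamma}[n]\to[n]$ in $\overline{\W_1}$, exactly as you argue. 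With this correction your three steps go through and the proof coincides with the paper's.
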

\begin{proof}
We denote by $\Upsilon$  the full subcategory of $\Delta_{/D}$ whose objects are morphisms $f:[n]\to D$ such that $\Sp_{[n]}\to [n]\to D$ factors through the $\Theta$-set $C\cup x$. 

Given $f:[n]\to D$ in $\Upsilon$, we denote by $\Lambda^{\Upsilon}[n]$ the subobject of $[n]$ composed of all $i\in \Delta_{/[n]}$ such that $fi$ factors through $C\cup x$. We can proceed as in lemma \ref{lemma: lambdA gamma} to show that the canonical morphism $\Lambda^{\Upsilon}[n]\to [n]$ is in $\overline{\W_1}$.

Now, remark that the category $\Upsilon$ inherits from $\Delta_{/D}$ a structure of Reedy elegant category. The two functors 
$$\begin{array}{ccccccc}
\Upsilon&\to &\Psh{\Delta} &~~~~~&\Upsilon&\to &\Psh{\Delta}\\
{[n]}\to D&\mapsto &\Lambda^{\Upsilon}[n] &~~~~~& [n]\to D &\mapsto & [n]
\end{array}$$
are Reedy cofibrant (definition \ref{defi:reedycof}). As the colimit of the first one is $C\cup x$ and the colimit of the second one is $D$, this concludes the proof.
\end{proof}

\begin{proof}[Proof of theorem \ref{theo: case of 1 and 2 category}]
If $n=0$, this is straightforward, and if $n=2$, it follows from proposition \ref{prop: case of 2 category}.

It then remains to prove the case $n=1$. 
Let $S$ be the set of generators of $C$ of dimension $2$. A repeated application of proposition \ref{prop: case of 2 category} and the stability by pushout  and transfinite composition of $\overline{\W_2}$ implies that the two vertical morphisms of the following square are in $\overline{\W_2}$:
\[\begin{tikzcd}
	{\tau_1C\cup x\cup_{y\in S} y} & {\tau_1D \cup_{y\in S} y} \\
	{C\cup x} & D
	\arrow[from=1-2, to=2-2]
	\arrow[from=1-1, to=2-1]
	\arrow[from=2-1, to=2-2]
	\arrow[from=1-1, to=1-2]
\end{tikzcd}\]
Moreover, the proposition \ref{prop: case of 1 category} implies that the canonical morphism 
$$\tau_1 C\cup x\to \tau_1 D$$ is in $\overline{\W_2}$, and so is the top horizontal morphism of the previous square. By stability of left cancellation of $\overline{\W_2}$, this concludes the proof.
\end{proof}

\subsection{Gray operations on augmented directed complexes}
We follow Steiner (\cite{Steiner_omega_categories_and_chain_complexes}) and Ara-Maltsiniotis (\cite{Ara_Maltsiniotis_joint_et_tranche}) for the definitions and first properties of Gray operations on augmented directed complexes.

\begin{definition}
Let $(K,K^*,e)$ and $(L,L^*,f)$ be two augmented directed complexes. We define the \snotionsym{Gray tensor product}{((d00@$\otimes$}{for augmented directed complexes} of $(K,K^*,e)$ and $(L,L^*,f)$ as the augmented directed complex
$$(K,K^*,e)\otimes (L,L^*,f):= (K\otimes L,(K\otimes L)^*,e\otimes f)$$
where 
\begin{enumerate}
\item[$-$] $K\otimes L$ is the chain complex whose value on $n$ is:
$$(K\otimes L)_n:= \oplus_{k+l=n}K_k\otimes L_l$$
and the differential is the unique graded group morphism fulfilling: 
$$\partial (x\otimes y):= \partial x\otimes y + (-1)^{|x|}x\otimes \partial y$$
where we set the convention $\partial x:=0$ if $|x|=0$.
\item[$-$] $(K\otimes L)^*$ is given on all integer $n$ by :
$$(K\otimes L)^*_n:= \oplus_{k+l=n}K_k^*\otimes L_l^*.$$
\item[$-$] $e\otimes f:K_0\otimes L_0\to \Zb$ is the unique morphism fulfilling 
$$(e\otimes f)(x\otimes y)= e(x)f(y).$$
\end{enumerate}
\end{definition}
The Gray tensor product induces a monoidal structure on $\CDA$. Its unit is given by $\lambda \Db_0$. Furthermore, Steiner shows that if $K$ and $L$ admit loop free and unitary bases, so does $K\otimes L$. The basis of $K\otimes L$ is given by the set of elements of shape $b\otimes b'$ where $b$ and $b'$ are respectively elements of the bases of $K$ and $L$.
 The monoidal structure then restricts to a monoidal structure on $\CDAB$.

\begin{notation}
 To simplify notation, the augmented directed complex $\lambda[1]$ will simply be denoted by $[1]$. 
\end{notation}
\begin{definition}
The induced functor 
$$\uvar\otimes [1]:\CDA\to \CDA$$
is called the \snotionsym{Gray cylinder}{((d30@$\uvar\otimes[1]$}{for augmented directed complexes}. 
For $(K,K^*,e)$ an augmented directed complex, we then have
$$(K,K^*,e)\otimes [1]:=(K\otimes [1] ,(K\otimes [1])^*,e)$$
where
\begin{enumerate}
\item[$-$] $K\otimes [1]$ is the chain complex whose value on $n$ is:
$$(K\otimes [1])_n:=\left\{
\begin{array}{ll}
\{x\otimes \{\epsilon\},x\in K_0,\epsilon=0,1\}&\mbox{if $n=0$}\\
\{x\otimes \{\epsilon\},x\in K_n,\epsilon=0,1\}\oplus \{x\otimes[1],x\in K_{n-1}\} &\mbox{if $n>0$}
\end{array}\right.$$
and the differential is the unique graded group morphism fulfilling: 
$$\partial (x\otimes [1]):= \partial x\otimes [1] + (-1)^{|x|}(x\otimes \{1\}-x\otimes \{0\} )~~~~~\partial (x\otimes\{\epsilon\}) = (\partial x)\otimes\{\epsilon\}$$
for $\epsilon\in\{0,1\}$, and
where we set the convention $\partial x:=0$ if $|x|=0$.
\item[$-$] $(K\otimes [1])^*$ is given on all integer $n$ by :
$$(K\otimes [1])^*_n:=\left\{
\begin{array}{ll}
\{x\otimes \{\epsilon\},x\in K^*_0,\epsilon=0,1\}&\mbox{if $n=0$}\\
\{x\otimes\{ \epsilon\},x\in K^*_n,\epsilon=0,1\}\oplus \{x\otimes[1],x\in K^*_{n-1}\} &\mbox{if $n>0$}
\end{array}\right.$$
\item[$-$] $e:(K\otimes [1])_0\to \Zb$ is the unique morphism fulfilling 
$$e(x\otimes \{0\})=e(x\otimes \{1\})= e(x).$$
\end{enumerate}
\end{definition}

\begin{prop}
\label{prop:non trivial automorphisme 0}
Let $A$ be an augmented directed complex admitting no non-trivial automorphisms. Then the augmented directed complexe $A\otimes [1]$ has no non-trivial automorphisms.
\end{prop}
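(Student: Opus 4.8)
The plan is to work from the explicit description of $K:=A\otimes[1]$ recorded above, namely $K_n=(A_n\otimes\{0\})\oplus(A_n\otimes\{1\})\oplus(A_{n-1}\otimes[1])$ together with $\partial(x\otimes\{\epsilon\})=(\partial x)\otimes\{\epsilon\}$ and $\partial(x\otimes[1])=(\partial x)\otimes[1]+(-1)^{|x|}(x\otimes\{1\}-x\otimes\{0\})$. I would first isolate two structural features. The graded subgroup $A\otimes\partial[1]:=(A\otimes\{0\})\oplus(A\otimes\{1\})$ is closed under $\partial$ and under the positive submonoids, so it is a sub\nobreakdash-augmented\nobreakdash-directed complex, canonically the coproduct of the two ends $\iota_0=(-\otimes\{0\})$ and $\iota_1=(-\otimes\{1\})$, each isomorphic to $A$; and the quotient $K/(A\otimes\partial[1])$ is the shift of $A$ carried by the $(-\otimes[1])$\nobreakdash-summand, with induced differential $(\partial x)\otimes[1]$. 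The statement will then follow from two claims: \textbf{(i)} every automorphism $\phi$ of $K$ preserves the decomposition, i.e. it preserves the sub-complex $A\otimes\partial[1]$, the end-labelling, and the interval weight; and \textbf{(ii)} granting (i), $\phi$ is the identity.

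Granting (i), step (ii) is short. Preserving the two ends, $\phi$ restricts on each to an automorphism of $A$, which is the identity by hypothesis; so $\phi$ is the identity on $A\otimes\partial[1]$. One then proceeds by induction on degree on the $(-\otimes[1])$\nobreakdash-part: if $\phi$ fixes $A\otimes\partial[1]$ and fixes $(-\otimes[1])$ in degrees $<n$, then for $x\in A_{n-1}$ the computation $\partial\phi(x\otimes[1])=\phi\big(\partial(x\otimes[1])\big)=(\partial x)\otimes[1]+(-1)^{|x|}(x\otimes\{1\}-x\otimes\{0\})=\partial(x\otimes[1])$ shows that $\phi(x\otimes[1])-x\otimes[1]$ is a cycle; since $\phi$ carries the positive element $x\otimes[1]$ to a positive element whose interval weight equals that of $x\otimes[1]$ by (i), and whose boundary is prescribed, this forces $\phi(x\otimes[1])=x\otimes[1]$.

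The main obstacle is therefore claim (i): an automorphism cannot mix the two \emph{end} summands with the \emph{interval} summand, nor swap the two ends. I would establish this by exploiting directedness. Working with a loop-free unitary basis $B$ of $A$, the complex $K$ acquires the loop-free unitary basis $\{\,b\otimes\{0\},\,b\otimes\{1\},\,b\otimes[1]\;:\;b\in B\,\}$, and $\phi$, being an isomorphism of the free positive monoids $K_n^{*}$, permutes this basis. The interval factor endows $K$ with a canonical $\{0,1\}$-valued ``level'': the interval generators $b\otimes[1]$ are exactly the ones whose boundary joins an end generator of level $1$ to one of level $0$ with opposite signs, and the differential orients every such cell from the $0$-copy to the $1$-copy. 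Because $\phi$ respects $\partial$ and positivity, it must respect this orientation; hence it sends level-$0$ generators to level-$0$, level-$1$ to level-$1$, and interval generators to interval generators. This is precisely the preservation of $A\otimes\partial[1]$ together with the end-labelling, which is (i). The delicate point to write out is the sign bookkeeping showing that the level is genuinely $\phi$-invariant in all degrees (equivalently, that no interval generator can be exchanged with an endpoint generator), and this is where directedness, as opposed to mere chain-level data, is indispensable.

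Finally, a remark on organisation: the cleanest route may be to transport the problem through Steiner's equivalence (theorem \ref{theorem:steiner}), under which $K$ corresponds to the Gray cylinder $(\nu A)\otimes[1]$ of the $\zo$-category $\nu A$. There claim (i) becomes the assertion that an automorphism of a Gray cylinder preserves its two end-inclusions and the interval direction, which is the intuitive content of the argument; the rigidity hypothesis on $A$ and the non-invertibility of the interval-flip (encoded by the sign $(-1)^{|x|}$ above) are exactly what rule out the only candidate nontrivial symmetries.
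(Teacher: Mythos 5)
Your overall architecture (split $A\otimes[1]$ into the two end copies of $A$ and the interval part, show that any automorphism $\phi$ preserves this splitting, then conclude from the rigidity of $A$ plus an induction on degree) is exactly the paper's, and your step (ii) is essentially the paper's closing induction. The problem is step (i): you assert it, but you do not prove it, and it is the entire mathematical content of the proposition. Your proposed characterization of the interval generators --- ``the ones whose boundary joins an end generator of level $1$ to one of level $0$'' --- is circular: the ``level'' is defined by the very decomposition whose $\phi$-invariance is to be established. What is needed is a characterization of the two ends and of the interval generators that is \emph{intrinsic}, i.e.\ expressed only in terms of $\partial$, the positivity submonoids and the basis; you explicitly defer this (``the delicate point to write out is the sign bookkeeping''), but that deferred point is the whole proof.

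Moreover, the heuristic you offer in its place (``the differential orients every interval cell from the $0$-copy to the $1$-copy, and $\phi$ respects $\partial$ and positivity, hence the orientation'') is genuinely insufficient, because the low-dimensional directed data does not pin down the splitting. Take $A=\lambda[1]$, with $0$-basis $\{v_0,v_1\}$, $1$-basis $\{v_{01}\}$, $\partial v_{01}=v_1-v_0$. In degrees $\leq 1$, $A\otimes[1]$ is the directed square, and the diagonal reflection --- fixing $v_0\otimes\{0\}$ and $v_1\otimes\{1\}$, swapping $v_1\otimes\{0\}\leftrightarrow v_0\otimes\{1\}$, $v_{01}\otimes\{0\}\leftrightarrow v_0\otimes[1]$ and $v_{01}\otimes\{1\}\leftrightarrow v_1\otimes[1]$ --- commutes with $\partial$ in degrees $\leq 1$, preserves positivity and the augmentation, and sends every generator ``forward''; yet it exchanges end generators with interval generators. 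It is ruled out only by the $2$-dimensional generator: it sends $\partial(v_{01}\otimes[1])$ to $-\partial(v_{01}\otimes[1])$, and no positive basis element has that boundary. So any correct proof of (i) must engage the higher-dimensional generators. This is precisely what the paper does: it characterizes the degree-$0$ partition into the two ends by conditions involving degree-$1$ \emph{and} degree-$2$ basis elements (the key computation being that $\partial^+(u\otimes[1])$ always contains a $0$-end generator, so certain fillers cannot exist), and then characterizes interval generators in all degrees by an induction showing that a basis element $e$ is of interval type if and only if its iterated $0$-source lies in $A_0^*\otimes\{0\}$ and its iterated $0$-target in $A_0^*\otimes\{1\}$. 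Your proposal contains neither of these arguments, and your closing suggestion to transport the problem through Steiner's equivalence does not help: on the other side it is verbatim the same rigidity statement, not an easier one.
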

\begin{proof}
Let $\phi:A \otimes[1]\to A\otimes [1]$ be an automorphism. The morphism $\phi$ then induces a bijection on the elements of the basis of $A\otimes [1]$.

Let $(E,F)$ be a partition of the set $(B_{A\otimes[1]})_0$ such that
\begin{enumerate}
\item there exists no element of $(B_{A\otimes[1]})_1$ whose source is in is $F$ and target in $E$.
\item for any $x,y\in E$ and $v\in (B_{A\otimes[1]})_1$ such that $\partial v=y-x$, there exist an element $w\in (B_{A\otimes[1]})_1$  such that $\partial^- w=y$ and an element $\alpha\in (B_{A\otimes[1]})_2$ with $\partial^+\alpha=w+v$.
\item for any $x,y\in F$ and $v\in (B_{A\otimes[1]})_1$ such that $\partial v=y-x$, there exist an element $w\in (B_{A\otimes[1]})_1$  such that $\partial^+ w=x$ and an element $\alpha\in (B_{A\otimes[1]})_2$ with $\partial^-\alpha=w+v$.
\end{enumerate}
Suppose now that there exists an object $a$ of $(B_A)_0$ such that $a\otimes \{1\}$ in $E$. As we have $\partial (a\otimes[1])=a\otimes\{1\}-a\otimes\{0\}$, $a\otimes\{0\}$ is in $E$. There exist then an element $w \in (B_{A\otimes[1]})_1$ and an element $\alpha \in  (B_{A\otimes[1]})_2$ with $\partial^-w = a\otimes\{1\}$ and  $\partial^+ \alpha = a\otimes [1]+w$. However, by construction of $A\otimes[1]$, there exist no such element $\alpha$. This implies that any element of $E$ is of shape $a\otimes\{0\}$ and we can show similarly that every element of $F$ is of shape $a\otimes\{1\}$. 

Conversely, we claim that the partition $((B_{A\otimes\{0\}})_0, (B_{A\otimes\{1\}})_0)$ fulfills these conditions.  The first one is obvious. For the second, there exist $a\in (B_A)_0$ and $u\in (B_A)_0$ such that $y=a\otimes\{0\}$ and $v:= u\otimes\{0\}$ and we then choose $w:=a\otimes[1]$ and $\alpha:= u\otimes [1]$. We proceed similarly for the last condition. 

The partition $((B_{A\otimes\{0\}})_0, (B_{A\otimes\{1\}})_0)$ is then the unique one fulfilling the previous three condition. As $\phi$ preserves such partition, this implies that $\phi(B_{A\otimes\{0\}})= B_{A\otimes\{0\}}$ and $\phi(B_{A\otimes\{1\}})= B_{A\otimes\{1\}}$.

Now, remark that for any element $e\in (A\otimes[1])^*_{n+1}$, there exists $x\in A^*_n$ such that $x\otimes[1] \leq e$ if and only if there exists $y\in A^*_{n-1}$ such that $y\otimes[1] \leq \partial^+e$. By a direct induction, this implies that there exists $x\in A^*_n$ such that $x\otimes[1]\leq  e$ if and only if $\partial^-_0e$ is in $A_0^*\otimes\{0\}$ and $\partial^+_0e$ is in $A_0^*\otimes\{1\}$.

Combined with the previous observation, this implies that for any element $x$ of the basis of $A_{n}$, $\phi(x\otimes\{\epsilon\})$ is of shape $x'\otimes\{\epsilon\}$ with $\epsilon\in\{0,1\}$.
The automorphism $\phi$ then induces by restriction  automorphisms $\phi_{|A\otimes\{0\}}:A\otimes\{0\}\to A\otimes\{0\}$ and $\phi_{|A\otimes\{1\}}:A\otimes\{1\}\to A\otimes\{1\}$, and the hypothesis implies that they are the identity.

We now show by induction on $n$ that $\phi_n:(A\otimes[1])_n\to (A\otimes[1])_n$ is the identity. Suppose the result true at the stage $n$. For any element $x$ of the basis of $A_{n}$, we then have 
$$\partial \phi(x\otimes[1]) = \phi(\partial (x\otimes[1])) = \partial (x\otimes[1]).$$
By the definition of the derivative of $A\otimes[1]$, and as $\phi$ preserves the basis, this forces the equality $\phi(x\otimes[1])=x\otimes[1]$. As we already know that for any element $x$ of the basis of $A_{n+1}$ we have $\phi(x\otimes\{\epsilon\})=x\otimes\{\epsilon\}$ for any $\epsilon\in\{0,1\}$, this concludes the induction.

We then have $\phi=id$ and $A\otimes[1]$ has no non trivial automorphisms.
\end{proof}

\begin{definition}
We define the \snotionsym{Gray cone}{((d40@$\uvar\star 1$}{for augmented directed complexes}
$$\begin{array}{ccc}
\CDA &\to&\CDA\\
K&\mapsto &K\star 1 
\end{array}
$$
where $K\star 1$ is defined as the following pushout: 
\begin{equation}
\label{eq:defin of cstar costar CDA}
\begin{tikzcd}
	{K\otimes\{1\}} & {K\otimes [1]} \\
	1 & {K\star 1}
	\arrow[from=1-1, to=2-1]
	\arrow[from=1-1, to=1-2]
	\arrow[from=2-1, to=2-2]
	\arrow[from=1-2, to=2-2]
	\arrow["\lrcorner"{anchor=center, pos=0.125, rotate=180}, draw=none, from=2-2, to=1-1]
\end{tikzcd}
\end{equation}

According to \cite[corollary 6.21]{Ara_Maltsiniotis_joint_et_tranche}, if $K$ admits a loop free and unitary basis, this is also the case for $K\star 1$. The {Gray cone}  then induces a functor:
$$\begin{array}{ccc}
\CDAB&\to&\CDAB\\
K&\mapsto &K\star 1 \\
\end{array}
$$
\end{definition}

\begin{remark}
 Unfolding the definition, we have
$$(K,K',e)\star 1:=(K\star 1, (K\star 1)^*,e)$$
where
\begin{enumerate}
\item[$-$] $K\star 1$  is the chain complex whose value on $n$ is:
$$(K\star 1)_n:=\left\{
\begin{array}{ll}
\Zb[\emptyset\star 1]\oplus \{x\star \emptyset,x\in K_0\}&\mbox{if $n=0$}\\
\{\emptyset\star x,x\in K_n\}\oplus \{x\star 1,x\in K_{n-1}\} &\mbox{if $n>0$}
\end{array}\right.$$
and the differentials are the unique graded group morphisms fulfilling: 
$$\begin{array}{rr}
\partial (x\star 1)= \partial x\star 1 + (-1)^{|x|} x\star \emptyset&\partial( x \star \emptyset )=\partial x\star \emptyset \\
\end{array}$$
where we set the convention $\partial x:=0$ if $|x|=0$.
\item[$-$] The graded monoids $(K\star 1)^*$ is given on any integer $n$ by :
$$(K\star 1)^*:=\left\{
\begin{array}{ll}
\Nb[\emptyset\star 1]\oplus \{x\star \emptyset,x\in K^*_0\}&\mbox{if $n=0$}\\
\{\emptyset\star x,x\in K^*_n\}\oplus \{x\star 1,x\in K^*_{n-1}\} &\mbox{if $n>0$}
\end{array}\right.$$

\item[$-$] The augmentation $e:(K\star 1)_0\to \Zb$ is the unique ones fulfilling 
$$
\begin{array}{cc}
e( \emptyset \star 1) =1 & e(x\star \emptyset)=e(x)\\
\end{array}$$
\end{enumerate}
The basis of $K\star 1$ is given by the reunion of $\emptyset\star 1$ and of the set of elements of shape $b\star 1$  where $b$ is an element of the basis of $K$. 
\end{remark}

\begin{prop}
\label{prop:non trivial automorphisme 1}
Let $A$ be an augmented directed complex admitting no non-trivial automorphisms. Then the augmented directed complexe $A\star 1$  has no non-trivial automorphisms.
\end{prop}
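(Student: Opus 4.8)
The plan is to transpose the argument of Proposition \ref{prop:non trivial automorphisme 0} to the cone, taking advantage of the fact that the cone apex is a canonically distinguished cell, which makes the bookkeeping lighter than in the cylinder case. Let $\phi : A \star 1 \to A \star 1$ be an automorphism. Since $\phi$ is an isomorphism of augmented directed complexes carrying loop-free unitary bases, it induces a bijection of the basis of $A \star 1$; by the explicit description of $A\star 1$ recalled above this basis splits into the apex $\emptyset \star 1$, the base cells (the image of the basis of $A$ under the sub-complex $\emptyset \star A \hookrightarrow A\star 1$), and the cone cells $b \star 1$ for $b$ ranging over the basis of $A$. Moreover $\phi$ commutes with $\partial$, with $\partial_n^\pm$, and with the array operators $\langle\cdot\rangle_k^\alpha$.

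First I would show that $\phi$ fixes the apex. The key point is the intrinsic characterization: $\emptyset \star 1$ is the unique $0$-dimensional basis element that never occurs as the source of a basis $1$-cell. Indeed $\partial(x\star 1)=\emptyset\star 1 - x\star\emptyset$ for a $0$-cell $x$, so every base $0$-cell $x\star\emptyset$ is the source of its cone cell $x\star 1$, whereas the apex occurs only as a target; base $1$-cells $\emptyset\star y$ never involve the apex at all. As this property is preserved by $\phi$, we get $\phi(\emptyset\star 1)=\emptyset\star 1$. Next I would separate cone cells from base cells by the invariant condition that $e$ is a cone cell if and only if the apex belongs to the iterated positive face $\langle e\rangle_0^+$: for a cone cell the terminal $0$-corner is the apex, while for a base cell it lies in the sub-complex $\emptyset\star A$, which is closed under $\partial$ and contains no apex. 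Since $\phi$ fixes the apex and preserves $\langle\cdot\rangle_0^+$, it preserves this dichotomy, hence restricts to a chain automorphism of $\emptyset\star A\cong A$. By hypothesis this restriction is the identity, so $\phi$ fixes every base cell.

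It then remains to fix the cone cells, which I would do by induction on dimension. For a cone cell $x\star 1$ of dimension $n$ the differential $\partial(x\star 1)$ is a combination of the apex, of cone cells $\partial x\star 1$ of dimension $n-1$, and of the base cell $(-1)^{|x|}\,x\star\emptyset$. Assuming $\phi$ fixes every basis element of dimension $<n$ (all base cells by the previous step, and all lower cone cells by induction), every summand is fixed, so $\partial\phi(x\star 1)=\partial(x\star 1)$. By the dichotomy above $\phi(x\star 1)$ is again a cone cell $x'\star 1$, and comparing the base-cell summand $(-1)^{|x'|}x'\star\emptyset$ of its boundary with that of $x\star 1$ forces $x'=x$, since distinct basis elements of $A$ give distinct base cells. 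Hence $\phi(x\star 1)=x\star 1$, and together with the apex and the base cells this yields $\phi=\mathrm{id}$.

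I expect the main obstacle to lie in the two middle identifications, namely giving intrinsic, automorphism-invariant descriptions of the apex and of the class of cone cells purely in terms of $\partial$, $\partial_n^\pm$ and the loop-free order, rather than through the non-canonical coordinates $\emptyset\star b$ and $b\star 1$. In particular one must check carefully that no base $0$-cell can fail to be a source and that $\langle b\star 1\rangle_0^+$ really reaches the apex, both of which follow from the explicit cone differential but deserve a short verification. Once these invariants are established, the inductive rigidity step is formally identical to the corresponding step of Proposition \ref{prop:non trivial automorphisme 0}.
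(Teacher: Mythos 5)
Your proposal is correct and takes essentially the same route as the paper's proof: fix the apex as the unique $0$-dimensional basis element that is never $\partial_0^-$ of a basis $1$-cell, separate cone cells from base cells by whether the iterated positive boundary reaches the apex, restrict $\phi$ to the base copy of $A$ to invoke the hypothesis, and conclude by induction on dimension using $\partial\phi(x\star 1)=\partial(x\star 1)$ to force $\phi(x\star 1)=x\star 1$. The only differences are cosmetic (you state the cone/base dichotomy via $\langle e\rangle_0^+$ on basis elements, while the paper states it as $\partial_0^+e\in\Zb[\emptyset\star 1]$ for positive elements, both resting on the same unproved-but-routine inductive computation).
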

\begin{proof}
Let $\phi:A\star 1\to A\star 1$ be an automorphism. The morphism $\phi$ then induces a bijection on the elements of the basis of $A\star 1$.

 As the element $\emptyset\star 1\in (A\star 1)_0$ is the only element of the basis such that for all $v\in (A\star 1)_1$  $\partial_0^-(v)\neq \emptyset\star 1$, it is preserved by $\phi$. As a consequence, for any element $x$ of the basis of $A_0$, $\phi(x\star \emptyset)$ is of shape $x'\star \emptyset$. The morphism $\phi$ then preserves $(A\star \emptyset)_0$.

Now, remark that for any element $e\in (A\star 1)^*_{n+1}$, there exists $x\in A^*_n$ such that $x\star 1\leq e$ if and only if there exists $y\in A^*_{n-1}$ such that $y\star 1\leq \partial^+e$. By a direct induction, this implies that there exists $x\in (A\star 1)^*_n$ such that $x\star 1\leq e$ if and only if $\partial^+_0e\in \Zb[\emptyset\star 1]$.

Combined with the previous observation, this implies that for any element $x$ of the basis of $A_{n}$, $\phi(x\star \emptyset)$ is of shape $x'\star \emptyset$.
The automorphism $\phi$ then induces by restriction an automorphism $\phi_{|A\star\emptyset}:A\to A$, and the hypothesis implies that it is the identity.

We now show by induction on $n$ that $\phi_n:(A\star 1)_n\to (A\star 1)_n$ is the identity. Suppose the result true at the stage $n$. For any element $x$ of the basis of $A_{n}$, we then have 
$$\partial \phi(x\star 1) = \phi(\partial (x\star 1)) = \partial (x\star 1).$$
By the definition of the derivative of $A\star 1$, and as $\phi$ preserves the basis, this forces the equality $\phi(x\star 1)=x\star 1$. As we already know that for any element $x$ of the basis of $A_{n+1}$ we have $\phi(x\star \emptyset)=x\star \emptyset$, this concludes the induction.

We then have $\phi=id$ and $A\star 1$ has no non trivial automorphisms.

\end{proof}

\begin{definition}
We define the \snotionsym{suspension}{((d60@$[\uvar,1]$}{for augmented directed complexes} as the functor 
$$[\uvar,1]:\CDA\to \CDA$$
where $[K,1]$ is defined as the following pushout:
\begin{equation}
\label{eq:def of suspension cda}
\begin{tikzcd}
	{K\otimes \{0,1\}} & {K\otimes [1]} \\
	{1\coprod 1} & {[K,1]}
	\arrow[from=1-1, to=2-1]
	\arrow[from=2-1, to=2-2]
	\arrow[from=1-1, to=1-2]
	\arrow[from=1-2, to=2-2]
	\arrow["\lrcorner"{anchor=center, pos=0.125, rotate=180}, draw=none, from=2-2, to=1-1]
\end{tikzcd}
\end{equation}
We leave to the reader to check that $[K,1]$ admits a loop free and unitary basis when this is the case for $K$. This functor then induces a functor:
$$[\uvar,1]:\CDAB\to \CDAB$$
\end{definition}

\begin{remark}
 Unfolding the definition, we have
$$[(K,K',e),1]:=([K,1] ,([K,1])^*,e)$$
where
\begin{enumerate}
\item[$-$] $[K,1]$ is the chain complex whose value on $n$ is:
$$[K,1]:=\left\{
\begin{array}{ll}
 \Zb[\{0\},\{1\}]&\mbox{if $n=0$}\\
 \{[x,1],x\in K_{n-1}\} &\mbox{if $n>0$}
\end{array}\right.$$
and the differential is the unique graded group morphism fulfilling: 
$$\partial([x,1]):= \left\{
 \begin{array}{lll} 
 \{1\}-\{0\}&\mbox{if $|x|=0$}\\ 
 ~[\partial x,1]&\mbox{if $|x|>0$}
 \end{array}\right.
$$
\item[$-$] $([K,1])^*$ is given on all integer $n$ by:
$$([K,1])^*_n:=\left\{
\begin{array}{ll}
\Nb[0,1]&\mbox{if $n=0$}\\
 \{[x,1],x\in K^*_{n-1}\} &\mbox{if $n>0$}
\end{array}\right.$$
\item[$-$] $e:([K,1])_0\to \Zb$ is the unique morphism	 fulfilling 
$$e( 0)=e( 1)= e(x).$$
\end{enumerate}
The basis of $[K,1]$ is given by the reunion of $\{0\}$, $\{1\}$ and of the set of elements of shape $[b,1]$  where $b$ is an element of the basis of $K$. 
\end{remark}

\begin{prop}
\label{prop:non trivial automorphisme 2}
Let $A$ be a non null augmented directed complex admitting no non-trivial automorphisms. Then the augmented directed complex $[A,1]$ has no non-trivial automorphisms.
\end{prop}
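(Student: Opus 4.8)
The plan is to follow closely the proofs of Propositions \ref{prop:non trivial automorphisme 0} and \ref{prop:non trivial automorphisme 1}. An automorphism $\phi$ of $[A,1]$ necessarily permutes the basis and respects the grading, so the two tasks are: first, to pin down the behaviour of $\phi$ on the two distinguished $0$-cells $\{0\}$ and $\{1\}$; second, to recover from $\phi$ an automorphism of $A$, which the hypothesis forces to be trivial.

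First I would analyse the bottom degree. Since the basis of $[A,1]$ is $\{0\}\sqcup\{1\}\sqcup\{[b,1]:b\in B_A\}$ and every $[b,1]$ has dimension $|b|+1\geq 1$, the only $0$-dimensional basis elements are $\{0\}$ and $\{1\}$; hence $\phi$ restricts to a permutation of $\{\{0\},\{1\}\}$. To exclude the transposition I use that $A$ is non-null: its loop-free unitary basis is non-empty, and coherence of the arrays $\langle b\rangle$ (the unitary hypothesis) forces $e(\langle b\rangle^-_0)=1\neq 0$, so $A_0\neq 0$ and there is some $b_0\in(B_A)_0$. The corresponding edge satisfies $\partial[b_0,1]=\{1\}-\{0\}$, while $\phi([b_0,1])=[b_0',1]$ for some $b_0'\in(B_A)_0$ with the same boundary; comparing $\partial\phi=\phi\partial$ yields $\phi(\{1\})-\phi(\{0\})=\{1\}-\{0\}$, which is incompatible with the transposition. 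Thus $\phi(\{0\})=\{0\}$ and $\phi(\{1\})=\{1\}$.

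Next I would extract an endomorphism of $A$. For $b\in B_A$ the element $\phi([b,1])$ is a basis element of dimension $\geq 1$, hence of the form $[\Psi(b),1]$; this defines a dimension-preserving bijection $\Psi$ of $B_A$, and extending linearly together with additivity of $[\uvar,1]$ gives $\phi([x,1])=[\Psi(x),1]$ for every $x$. I would then verify that $\Psi$ is a morphism of $\CDA$: in degrees $\geq 1$ the identity $\partial[x,1]=[\partial x,1]$ together with injectivity of $[\uvar,1]$ turns $\partial\phi=\phi\partial$ into $\partial\Psi=\Psi\partial$; in degree $0$ the boundary formula $\partial[x,1]=e(x)(\{1\}-\{0\})$ combined with the fixing of the endpoints gives $e(\Psi x)=e(x)$; and preservation of $([A,1])^*$ by $\phi$ yields $\Psi(A^*)\subseteq A^*$. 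Being bijective on the basis, $\Psi$ is an automorphism of $A$.

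Finally, the hypothesis forces $\Psi=\mathrm{id}_A$, so $\phi([b,1])=[b,1]$ for all $b\in B_A$; combined with the first step, $\phi$ fixes every basis element and is therefore the identity. The main obstacle is the very first step, namely excluding the transposition of $\{0\}$ and $\{1\}$: this is the only genuinely delicate point and is precisely where non-nullity is needed, since for $A=0$ the complex $[A,1]$ is two disconnected points and the endpoint swap is a non-trivial automorphism, so the statement genuinely fails without that hypothesis.
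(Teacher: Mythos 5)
Your proof is correct and follows essentially the same route as the paper's: fix the two $0$-dimensional basis elements $\{0\}$ and $\{1\}$, then observe that $\phi$ in positive degrees is the suspension of an automorphism of $A$, which the hypothesis forces to be the identity. The only cosmetic difference is in excluding the endpoint swap --- the paper characterizes $\{1\}$ as the unique degree-$0$ basis element that is not the source $\partial_0^-(v)$ of any $v\in[A,1]_1$, whereas you compare $\partial\phi([b_0,1])$ with $\phi(\partial[b_0,1])$ for some $b_0\in(B_A)_0$; both arguments hinge on $(B_A)_0\neq\emptyset$, which you justify more explicitly (from non-nullity plus unitarity) than the paper does.
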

\begin{proof}
Let $\phi:[A,1]\to [A,1]$ be an automorphism. As the element $\{1\}\in ([A,1])_0$ is the only element of the basis such that for all $v\in [A,1]_1$  $\partial_0^-(v)\neq \{1\}$, it is preserved by $\phi$. As a consequence, $\phi$ also preserves $\{0\}$. The induced morphism $\phi_0:[A,1]_0\to [A,1]_0$ is then the identity. 

Now, remark that $(\phi_{n+1})_{n\in \Nb}:A\to A$ is an automorphism and is then the identity. This implies that for all $n>0$, $\phi_n:[A,1]_n\to [A,1]_n$ is then identity, which concludes the proof.
\end{proof}

\begin{definition}
We define the \textit{wedges} as the functors
$$[\uvar,1]\vee[1]:\CDA\to \CDA~~~~~~ [1]\vee[\uvar,1]:\CDA\to \CDA$$
where $[K,1]\vee [1]$ and $[1]\vee[K,1]$ are defined as the following pushouts:
\[\begin{tikzcd}
	{\lambda [0]} & {[1]} && {\lambda [0]} & {[K,1]} \\
	{[K,1]} & { [K,1]\vee[1]} && {[1]} & {[1]\vee[K,1]}
	\arrow["{\{1\}}"', from=1-1, to=2-1]
	\arrow[from=2-1, to=2-2]
	\arrow["{\{0\}}", from=1-1, to=1-2]
	\arrow[from=1-2, to=2-2]
	\arrow["\lrcorner"{anchor=center, pos=0.125, rotate=180}, draw=none, from=2-2, to=1-1]
	\arrow["{\{0\}}", from=1-4, to=1-5]
	\arrow["{\{1\}}"', from=1-4, to=2-4]
	\arrow[from=1-5, to=2-5]
	\arrow[from=2-4, to=2-5]
	\arrow["\lrcorner"{anchor=center, pos=0.125, rotate=180}, draw=none, from=2-5, to=1-4]
\end{tikzcd}\]
Once again, we can easily check that $[K,1]\vee[1]$ and $[1]\vee[K,1]$ have a loop free and unitary basis when this is the case for $K$. These functors then induce functors
$$[\uvar,1]\vee[1]:\CDAB\to \CDAB~~~~~~ [1]\vee[\uvar,1]:\CDAB\to \CDAB$$
\end{definition}

\vspace{1cm}

 Unfolding the definition, we have
$$[(K,K',e),1]\vee [1]:=([K,1]\vee [1] ,([K,1]\vee [1])^*,e)$$ $$
[1]\vee(K,K',e),1]:=([1]\vee[K,1] ,([1]\vee[K,1])^*,e)$$
where
\begin{enumerate}
\item[$-$] $[K,1]\vee [1]$ and $[1]\vee[K,1]$ are the chain complexes whose value on $n$ are:
$$[K,1]\vee[1]:=\left\{
\begin{array}{ll}
\Zb[\{0\},\{1\},\{2\}]&\mbox{if $n=0$}\\
 \{[x,1],x\in K_{0}\}\oplus \Zb[e_1] &\mbox{if $n=1$}\\
 \{[x,1],x\in K_{n-1}\} &\mbox{if $n>1$}
\end{array}\right.$$
$$[1]\vee[K,1]:=\left\{
\begin{array}{ll}
\Zb[\{0\},\{1\},\{2\}]&\mbox{if $n=0$}\\
\Zb[e_1] \oplus \{[x,1],x\in K_{0}\} &\mbox{if $n=1$}\\
 \{[x,1],x\in K_{n-1}\} &\mbox{if $n>1$}
\end{array}\right.$$
and the differentials are the unique graded group morphism fulfilling: 
$$\partial_{[K,1]\vee[1]} (e_1):= \{2\}-\{1\}
~~~
\partial_{[K,1]\vee[1]} ([x,1]):=
\left\{
\begin{array}{ll}
 \{1\}-\{0\}&\mbox{if $|x|=0$}\\
 ~[\partial x,1]&\mbox{if $|x|>0$}\\
\end{array}\right.
$$
$$
\partial_{[1]\vee[K,1]} (e_1):= \{1\}-\{0\}
~~~
\partial_{[1]\vee[K,1]} ([x,1]):=
\left\{
\begin{array}{ll}
 \{2\}-\{1\}&\mbox{if $|x|=0$}\\
~ [\partial x,1]&\mbox{if $|x|>0$}\\
\end{array}\right.
$$
\item[$-$] $([K,1]\vee [1])^*$ and $([1]\vee[K,1])^*$ are given on all integer $n$ by:
$$([K,1]\vee[1])^*:=\left\{
\begin{array}{ll}
\{\{0\},\{1\},\{2\}\}&\mbox{if $n=0$}\\
 \{[x,1],x\in K_0^*\}\oplus \Nb[e_1] &\mbox{if $n=1$}\\
 \{[x,1],x\in K_{n-1}\} &\mbox{if $n>1$}
\end{array}\right.$$
$$([1]\vee[K,1])^*:=\left\{
\begin{array}{ll}
\{\{0\},\{1\},\{2\}\}&\mbox{if $n=0$}\\
\Nb[e_1]\oplus\cup \{[x,1],x\in K^*_{0}\} &\mbox{if $n=1$}\\
 \{[x,1],x\in K^*_{n-1}\} &\mbox{if $n>1$}
\end{array}\right.$$
\item[$-$] The augmentations $e$ are the unique morphism fulfilling 
$$e( \{0\})=e(\{ 1\})= e(\{2\})=1.$$
\end{enumerate}

\begin{definition}
There are two canonical morphisms 
$$\triangledown:\Sigma K\to \Sigma K \vee [1]
~~~~~~~ \triangledown:\Sigma K\to [1]\vee \Sigma K $$
that are the unique ones fulfilling
$$\triangledown(\{0\}):= \{0\}~~~\triangledown(\{1\}):= \{2\}~~~
\triangledown([x,1]):=\left\{ 
\begin{array}{ll}
~[x,1]+e_1&\mbox{if $|x|=0$}\\
~[x,1]&\mbox{if $|x|>0$}\\
\end{array}\right.$$
When we write $ \Sigma K\to \Sigma K \vee [1]$ and $\Sigma K\to [1]\vee \Sigma K$ and nothing more is specified, it will always mean that we considered the morphisms $\triangledown$.
\end{definition}

\begin{prop}
 \label{prop:appendice formula for otimes cda}
 Let $K$ be an augmented directed complex. 
 There is a natural transformation between the colimit of the following diagram
$$
\begin{tikzcd}
	{[1]\vee [K,1]} & {[K\otimes\{0\},1]} & {[K\otimes [1],1]} & {[K\otimes\{1\},1]} & {[K,1]\vee [1]}
	\arrow[from=1-2, to=1-1]
	\arrow[from=1-2, to=1-3]
	\arrow[from=1-4, to=1-3]
	\arrow[from=1-4, to=1-5]
\end{tikzcd}$$
and $[K,1]\otimes [1]$.
\end{prop}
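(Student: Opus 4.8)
The plan is to prove that the stated colimit is isomorphic to $[K,1]\otimes[1]$ by a direct computation with loop-free unitary bases. Write $M:=[K\otimes[1],1]$, $L:=[1]\vee[K,1]$, $R:=[K,1]\vee[1]$ for the three outer terms and $C_0:=[K\otimes\{0\},1]$, $C_1:=[K\otimes\{1\},1]$ for the two inner ones; all five lie in $\CDAB$ since the Gray cylinder, the suspension and the wedges preserve loop-free unitary bases. Colimits in $\CDA$ are created by the forgetful functor to augmented complexes, the positivity submonoid of the colimit being the submonoid generated by the images of the positivity submonoids, so the colimit is the quotient of $L\oplus M\oplus R$ by the relations $C_0\ni c\mapsto(\triangledown c,-c,0)$ and $C_1\ni c\mapsto(0,-c,\triangledown c)$. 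First I would record, degree by degree and from the explicit formulas above, the bases of the five objects together with the two inclusions $C_\epsilon\hookrightarrow M$ (sending $[b\otimes\{\epsilon\},1]$ to itself) and the two whiskerings $\triangledown$ (sending $[b\otimes\{0\},1]$ to $[b,1]+e_1$ when $|b|=0$ and to $[b,1]$ otherwise, and symmetrically for $C_1\to R$).

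Carrying out the identifications, I expect the colimit to have a basis consisting of the four objects $\{0\}$, $\{1\}_M=\{2\}_L=\{2\}_R$, $\{1\}_L$, $\{1\}_R$ in degree zero; the edges $e_1^L,e_1^R$ together with $[b,1]_L,[b,1]_R$ for $b\in B_K$; and the cells $[b\otimes[1],1]$. The crucial point, and the one computation that genuinely has to be done by hand, is that the $\triangledown$-relations only express the middle generators $[b\otimes\{\epsilon\},1]$ as sums of retained generators (for instance $[b\otimes\{0\},1]=[b,1]_L+e_1^L$ when $|b|=0$, and $[b\otimes\{0\},1]=[b,1]_L$ otherwise), so the submonoid generated by the images is in fact free on the basis just listed. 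I would then define $\Phi$ from the colimit to $[K,1]\otimes[1]$ on this basis by
$$[b,1]_L\mapsto[b,1]\otimes\{1\},\quad [b,1]_R\mapsto[b,1]\otimes\{0\},\quad e_1^L\mapsto\{0\}\otimes[1],\quad e_1^R\mapsto\{1\}\otimes[1],\quad [b\otimes[1],1]\mapsto[b,1]\otimes[1],$$
together with the evident bijection on objects, and verify that $\Phi$ is a chain map. Since $\Phi$ is then a bijection of bases, it automatically respects the positivity submonoids and the augmentations, hence is an isomorphism in $\CDA$; naturality in $K$ follows because every construction in the diagram, and the formula for $\Phi$, is natural in $K$.

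The main obstacle is bookkeeping rather than conceptual. One must track the crossing by which the left wedge $L$ supplies the face $[K,1]\otimes\{1\}$ and the edge $\{0\}\otimes[1]$ while the right wedge $R$ supplies $[K,1]\otimes\{0\}$ and $\{1\}\otimes[1]$, and then check that $\Phi$ commutes with $\partial$ on each of the four families of generators. The only delicate verifications are the $0$-dimensional generators of $K$, where the extra $e_1$ coming from $\triangledown$ must cancel in the computation of $\partial$, and the top cells $[b\otimes[1],1]$, where the Koszul sign $(-1)^{|b|}$ in $\partial(b\otimes[1])$ must be reconciled with the sign $(-1)^{|[b,1]|}=(-1)^{|b|+1}$ occurring in $\partial([b,1]\otimes[1])$; a short calculation shows the two agree once $[b\otimes\{\epsilon\},1]$ is rewritten as $[b,1]_L$ respectively $[b,1]_R$, which completes the proof.
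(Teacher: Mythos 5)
Your core computation is correct, but your route is genuinely different from the paper's, and the two do not prove quite the same thing. The paper's statement asks only for a \emph{natural transformation}, and its proof consists of writing down the three legs of a cone under the diagram --- the two wedge maps and the morphism out of $[K\otimes[1],1]$ --- with the compatibility left to the reader; no invertibility is claimed at the level of augmented directed complexes, that being deferred to proposition \ref{prop:appendice formula for otimes}, where it is verified on globes using the explicit description of $\Db_n\otimes[1]$. You instead compute the colimit outright and show the comparison is an isomorphism in $\CDA$. Your bookkeeping is right: the $\triangledown$-relations do eliminate the generators $[b\otimes\{\epsilon\},1]$, the crossing is the correct one (the left wedge covers $[K,1]\otimes\{1\}$ and $\{0\}\otimes[1]$, the right wedge covers $[K,1]\otimes\{0\}$ and $\{1\}\otimes[1]$), and the sign check on $[b\otimes[1],1]$ goes through. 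This buys something real: an isomorphism already in $\CDA$, and a treatment of the middle term that is more precise than the paper's formula $f([x\otimes y,1]):=[x,1]\otimes y$, which, taken literally on the vertex generators $y\in\{\{0\},\{1\}\}$, does not commute with the differentials --- compatibility with the whiskerings forces exactly the crossed values implicit in your $\Phi$.

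There is, however, a genuine gap in scope. The proposition is stated for an \emph{arbitrary} augmented directed complex $K$, while your first step places all five terms of the diagram in $\CDAB$, i.e.\ assumes that $K$ admits a loop-free unitary basis; a general object of $\CDA$ admits no basis at all (its underlying groups may even have torsion), and for such $K$ your $\Phi$ --- defined generator by generator on a basis of the colimit --- does not exist, nor does your argument give naturality over all of $\CDA$. (Even over $\CDAB$ naturality needs one more word, since morphisms of $\CDAB$ need not send basis elements to basis elements.) The repair is cheap and should be made explicit: each of your assignments is additive in the generator, e.g.\ $[x,1]\mapsto[x,1]\otimes\{1\}$ on the left wedge, $[x,1]\mapsto[x,1]\otimes\{0\}$ on the right wedge, $e_1\mapsto\{0\}\otimes[1]$ resp.\ $\{1\}\otimes[1]$, and $[x\otimes[1],1]\mapsto[x,1]\otimes[1]$, so these formulas define basis-free morphisms for every $K$ and are visibly natural in $K$; checking that they agree on the images of $[K\otimes\{\epsilon\},1]$ yields the natural transformation of the statement in full generality --- which is precisely the paper's proof --- and your basis computation then upgrades it to an isomorphism for all $K$ in $\CDAB$, which is the only case the paper uses afterwards.
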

\begin{proof}
The cone is induced by morphisms
$$
\begin{array}{rl}
&[1]\vee [K,1]\to [K,1]\otimes [1]\\
(\mbox{resp}.&[ K,1]\vee[1]\to [ K,1] \otimes [1])
\end{array}
$$ sending an element $x$ in the basis of $[1]$ to $\{0\}\otimes x$ (resp. $\{1\}\otimes x$), an element $y$ in the basis of $[ K,1]$ to $y\otimes\{1\}$ (resp. $y\otimes\{0\}$), 
and by the morphism 
$$f:[K\otimes [1],1]\to [K,1]\otimes [1]$$
defined by the formula 
$$f([x\otimes y,1]):= [ x,1]\otimes y$$ 
for $x$ in the basis of $K$ and $y$ in the basis of $[1]$.
We leave it to the reader to check the compatibilities of this three morphisms.
\end{proof}

\subsection{Gray operations on $\zo$-categories}
\label{section:definition of Gray operations}
We follow Ara-Maltsiniotis \cite{Ara_Maltsiniotis_joint_et_tranche} for the definitions and first properties of Gray operations on $\zo$-categories. Originally, these authors work with $\omega$-categories, and not with $\zo$-categories. However, this modification does not affect proof, and we then allow ourselves to use their results in our framework.

\begin{theorem}[Steiner, Ara-Maltsiniotis]
\label{theo:otimes in zocat}
There is a unique colimit preserving monoidal structure on $\zocat$,
up to a unique monoidal isomorphism, making the functor
$\nu_{|\CDAB}:\CDAB\to \zocat$
a monoidal functor, when $\CDAB$ is endowed with the monoidal structure given by the Gray tensor product.
\end{theorem}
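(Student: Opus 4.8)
The plan is to transport the Gray monoidal structure across Steiner's equivalence and then extend it to all of $\zocat$ by colimit-density, the guiding point being that $\zocat$ is freely generated under colimits by globular sums, all of which lie in $\zocatB$.

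First I would use the adjoint equivalence $\lambda_{|\zocatB}\dashv\nu_{|\CDAB}$ of theorem \ref{theorem:steiner} to transport the Gray tensor product from $\CDAB$ to a monoidal structure $\boxtimes$ on $\zocatB$, setting $C\boxtimes D:=\nu(\lambda C\otimes\lambda D)$, with associator, unitors and coherence cells obtained by conjugating those of $\CDAB$ (themselves inherited from the honest monoidal category $\CDA$) through the equivalence. Since $\otimes$ preserves $\CDAB$, this is well defined; its unit is $\nu\lambda\Db_0=\Db_0=[0]$, and by construction $\nu_{|\CDAB}$ is a strong monoidal equivalence. This disposes of the requirement that $\nu_{|\CDAB}$ be monoidal and reduces the theorem to extending $(\zocatB,\boxtimes)$ to a colimit-preserving monoidal structure on $\zocat$, uniquely.

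For existence I would define $\uvar\otimes\uvar:\zocat\times\zocat\to\zocat$ as the bifunctor that is cocontinuous in each variable and restricts to $\boxtimes$ on globular sums. Concretely, every $\zo$-category $C$ is the colimit of the canonical diagram $\Theta_{/C}\to\zocat$ of globular sums mapping into it (the cellularity of $\Theta$ underlying the equivalence $\zocat\cong\Psh{\Theta}_{\W}$); writing $C=\colim_i a_i$ and $D=\colim_j b_j$ this way, set $C\otimes D:=\colim_{i,j}(a_i\boxtimes b_j)$. Because $\zocat$ is presentable and $\Theta\subseteq\zocatB$ is a small dense generator, this extension exists, is cocontinuous in each variable, and agrees with $\boxtimes$ on $\zocatB$. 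The associator and unitors are then produced as the cocontinuous-in-each-variable extensions of the corresponding cells of $\boxtimes$.

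The coherence axioms and the uniqueness are handled by a single density principle: a natural transformation — or an equality of natural transformations — between functors $\zocat^{\times k}\to\zocat$ that are cocontinuous in each variable is determined by its restriction to $\Theta^{\times k}$, and conversely extends from there. Applying this to the pentagon and triangle diagrams, which commute on $\Theta^{\times k}$ because they commute in $\CDAB$, shows they commute everywhere; applying it to any second colimit-preserving monoidal structure making $\nu_{|\CDAB}$ monoidal shows that such a structure necessarily satisfies $a\otimes'b\cong\nu(\lambda a\otimes\lambda b)=a\boxtimes b$ on globular sums, hence is comparable to the one just built by a unique monoidal natural isomorphism. The main obstacle is precisely the careful verification that this density principle applies — that $\Theta$ is genuinely colimit-generating in $\zocat$, and that $\boxtimes$ restricted to $\Theta$ lands in $\zocatB\subseteq\zocat$ so that the one-variable-at-a-time Kan extensions assemble into a single bifunctor whose coherence cells are forced. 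Once this bookkeeping is in place, the deep content is carried entirely by Steiner's equivalence and the closure of $\CDAB$ under $\otimes$, both already available.
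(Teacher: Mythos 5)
Your transport of the Gray structure to $\zocatB$ and your uniqueness argument are sound: since $\iota:\zocat\to\Psh{\Theta}$ is fully faithful, every $\zo$-category is the colimit of its canonical diagram of globular sums, so functors that are cocontinuous in each variable, and natural transformations between them, are indeed determined by their restrictions to $\Theta^{\times k}$. The gap is in the existence step, at the sentence ``Because $\zocat$ is presentable and $\Theta\subseteq\zocatB$ is a small dense generator, this extension exists, is cocontinuous in each variable''. Density plus presentability does not make a left Kan extension along $\Theta\hookrightarrow\zocat$ cocontinuous, because $\zocat$ is not a free cocompletion of $\Theta$ but the reflective localization $\Psh{\Theta}_{\W}$ of theorem \ref{theo:theta and ocat}. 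Concretely, the pointwise Kan extension of a functor $F:\Theta\to\zocat$ along $\Theta\hookrightarrow\zocat$ is $\hat{F}\circ\iota$, where $\hat{F}:\Psh{\Theta}\to\zocat$ is the cocontinuous presheaf-level extension and $\iota$ is the right adjoint inclusion, which does not preserve colimits; the composite is cocontinuous precisely when $\hat{F}$ factors through the localization $\Fb$, hence in particular $\hat{F}$ must invert the morphisms of $\W$ in each variable. For your $F:(a,b)\mapsto\nu(\lambda a\otimes\lambda b)$ this is a genuine mathematical condition: the Steiner tensor product of globular sums must be compatible with the spine decompositions $\Sp_a\to a$ and with the maps $\Sigma^nE^{eq}\to\Db_n$. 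For a generic $F$ (a constant functor, say) the Kan extension is simply not cocontinuous, so some such verification is unavoidable.

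Moreover, this verification is not ``carried entirely by Steiner's equivalence and the closure of $\CDAB$ under $\otimes$''. The functor $\lambda$ preserves colimits, so $\lambda(\Sp_a)\otimes\lambda b$ is indeed the colimit of the complexes $\lambda G\otimes\lambda b$ over the globes $G$ of the spine; but $\nu$ is a right adjoint and does not preserve colimits, so this does not yet give that $\colim_G\,\nu(\lambda G\otimes\lambda b)\to\nu(\lambda a\otimes\lambda b)$ is invertible in $\zocat$. Establishing exactly this kind of statement is what the quasi-rigidity machinery of theorem \ref{theo:Kan condition} is for, and it is the computational heart of \cite[theorem A.15]{Ara_Maltsiniotis_joint_et_tranche} --- which is what the paper itself cites as the proof of the present theorem, rather than reproving it. The analogous compatibility statement in the presheaf setting is the paper's theorem \ref{theo:otimes presserves W}, whose proof occupies an entire section; that is a fair measure of the size of the step you set aside as bookkeeping.
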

\begin{proof}
This is \cite[theorem A.15]{Ara_Maltsiniotis_joint_et_tranche}.
\end{proof}

\begin{definition}
The monoidal product on $\zocat$ induced by the previous theorem is called the \snotionsym{Gray tensor product}{((d00@$\otimes$}{for $\zo$-categories} and is denoted by $\otimes$. It's unit is $ \Db_0$. If $C$ and $D$ are $\zo$-categories with an atomic and loop free basis, we have by construction
$$C\otimes D := \nu(\lambda C\otimes \lambda D).$$
\end{definition}

\begin{prop}
\label{prop:otimes and duality}
There are equivalences
$$(C\otimes D)^{op}\cong D^{op}\otimes C^{op}~~~~~~ (C\otimes D)^\circ\cong C^{\circ}\otimes D^{\circ}~~~~~~(C\otimes D)^{co}\cong D^{co}\otimes C^{co}$$
natural in $C,D:\zocat$.
\end{prop}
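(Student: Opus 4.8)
The plan is to transport everything through Steiner theory and then extend by cocontinuity. Since the Gray tensor product is colimit preserving in each variable (Theorem \ref{theo:otimes in zocat}) and each duality $(\uvar)^S$ is an equivalence, hence cocontinuous, all six functors $\zocat\times\zocat\to\zocat$ occurring in the statement preserve colimits in each variable. Every $\zo$-category is a canonical colimit of globular sums, and globular sums admit loop-free and atomic bases; consequently it is enough to construct the three isomorphisms, naturally, on pairs $(C,D)$ of objects admitting atomic and loop-free bases. For such $C,D$ one has $C\otimes D=\nu(\lambda C\otimes\lambda D)$ by construction, so the whole problem reduces to a statement in $\CDAB$.

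First I would record the analogue of the dualities on augmented directed complexes. For $S\subseteq\Nb^*$ define $(\uvar)^S:\CDA\to\CDA$ by keeping the groups $K_n$, the submonoids $K^*_n$ and the augmentation unchanged, and replacing $\partial_{n-1}$ by $-\partial_{n-1}$ exactly when $n\in S$. This is again a differential, since $\partial_{n-1}\partial_n=0$, and it preserves loop-free unitary bases. A direct inspection of coherent arrays shows $\nu(K^S)\cong(\nu K)^S$ and $\lambda(C^S)\cong(\lambda C)^S$, naturally; in other words this operation is the image under Steiner's equivalence (Theorem \ref{theorem:steiner}) of the duality of Definition \ref{defi:dualities strict case}. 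Writing $\sigma^S_n=-1$ when $n\in S$ and $\sigma^S_n=+1$ otherwise, the odd duality has $\sigma^{op}_n=(-1)^n$ and the even duality has $\sigma^{co}_n=(-1)^{n+1}$ for $n\geq 1$ (with $\sigma^{co}_0=+1$).

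The heart of the argument is then a single chain-level computation. For $K,L\in\CDAB$ I claim that the plain swap $x\otimes y\mapsto y\otimes x$ defines an isomorphism $(K\otimes L)^{op}\cong L^{op}\otimes K^{op}$ of augmented directed complexes. It visibly respects the positive submonoids and the augmentation, so only chain-map compatibility must be checked. Writing $p=|x|$ and $q=|y|$, the twisted differential on the left sends $x\otimes y$ to $(-1)^{p+q}\partial x\otimes y+(-1)^{q}x\otimes\partial y$, while the differential on $L^{op}\otimes K^{op}$ sends $y\otimes x$ to $(-1)^{q}\partial y\otimes x+(-1)^{p+q}y\otimes\partial x$; applying the swap to the former yields exactly the latter. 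The point is that the sign $(-1)^{p+q}$ produced by the odd duality absorbs the Koszul sign $(-1)^{p}$ of the Gray differential, so no extra sign on the swap is needed. The same computation with $\sigma^{co}$ in place of $\sigma^{op}$ gives $(K\otimes L)^{co}\cong L^{co}\otimes K^{co}$; here the terms $\partial x\otimes y$ and $x\otimes\partial y$ are absent when $p=0$, respectively $q=0$, so the required identities only have to be checked for $p\geq 1$, respectively $q\geq 1$, where they again hold.

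Applying $\nu$ and using that it is monoidal (Theorem \ref{theo:otimes in zocat}) together with $\nu(K^S)\cong(\nu K)^S$ gives, for $C,D$ with atomic loop-free bases,
\[
(C\otimes D)^{op}\cong D^{op}\otimes C^{op}\qquad\text{and}\qquad (C\otimes D)^{co}\cong D^{co}\otimes C^{co},
\]
naturally. The full-duality formula is then obtained formally, with no further computation, by composing: using $(\uvar)^{\circ}\cong((\uvar)^{op})^{co}$ one has $(C\otimes D)^{\circ}\cong((C\otimes D)^{op})^{co}\cong(D^{op}\otimes C^{op})^{co}\cong(C^{op})^{co}\otimes(D^{op})^{co}\cong C^{\circ}\otimes D^{\circ}$, the two transpositions cancelling so that no swap survives. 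Finally, each isomorphism constructed on $\CDAB$ is natural in both arguments, hence yields a natural isomorphism on the full subcategory of $\zo$-categories with atomic loop-free bases, which contains $\Theta\times\Theta$; since both sides of each formula preserve colimits in each variable, these natural isomorphisms extend uniquely to all of $\zocat\times\zocat$. The only genuinely delicate step is the sign bookkeeping in the chain-level computation, together with the verification that the sign-twisted complex $K^S$ really corresponds under $\nu$ to the duality $(\nu K)^S$; everything else is formal.
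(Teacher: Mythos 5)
Your proof is correct, but it takes a genuinely different route from the paper: the paper disposes of this proposition in one line, by citing \cite[proposition A.20]{Ara_Maltsiniotis_joint_et_tranche}, whereas you reconstruct the argument contained in that reference. Your reduction to objects with atomic loop-free bases, via cocontinuity in each variable and density of $\Theta$ in $\zocat$, is the same mechanism the paper uses for its other Gray-operation statements (e.g.\ propositions \ref{prop:comparaison betwen otimes and suspension} and \ref{prop:appendice formula for otimes}), and the chain-level core checks out: with $\sigma^{op}_n=(-1)^n$ and $\sigma^{co}_n=(-1)^{n+1}$ the unsigned swap $x\otimes y\mapsto y\otimes x$ does intertwine the twisted differentials (the duality sign absorbs the Koszul sign), the positive submonoids and augmentations are visibly preserved, and the $(\uvar)^{\circ}$ case follows formally from $(\uvar)^{\circ}\cong((\uvar)^{op})^{co}$ with the two swaps cancelling. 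What your approach buys is self-containedness; what it costs is having to set up the dualities on $\CDA$ and their compatibility with $\lambda$ and $\nu$, which the citation gets for free.

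One claim you present as routine deserves more care: that the sign-twisted complex $K^S$ again admits a loop-free unitary basis. Unitality does survive inspection (the mixed iterated boundaries of a coherent array are again coherent), but loop-freeness does not: the twisted relation $\odot^S$ mixes $\partial^+$ and $\partial^-$ according to the parities prescribed by $S$, and antisymmetry of $\odot$ does not transparently imply antisymmetry of $\odot^S$. Fortunately your argument can be routed around this point. The compatibility $\nu(K^S)\cong(\nu K)^S$ holds for \emph{every} augmented directed complex, not only those in $\CDAB$: the bijection sends an array $(x^{\alpha}_k)$ to the array whose $k$-th column has its two entries exchanged precisely when $k+1\in S$, and one checks directly that this commutes with sources, targets, compositions and units. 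Granting this, for globular sums $C,D$ you can write $(C\otimes D)^{op}\cong\nu\bigl((\lambda C\otimes\lambda D)^{op}\bigr)\cong\nu\bigl(\lambda(D^{op})\otimes\lambda(C^{op})\bigr)\cong D^{op}\otimes C^{op}$, using only that $C^{op}$ and $D^{op}$ are again globular sums, hence in $\zocatB$ (by induction, $[\textbf{a},n]^{op}\cong[\{a^{co}_{n-1},\dots,a^{co}_0\},n]$); if you want a basis on $(\lambda C\otimes\lambda D)^{op}$ it is then transported along the swap isomorphism rather than verified directly. With that adjustment the proof is complete.
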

\begin{proof}
This is \cite[proposition A.20]{Ara_Maltsiniotis_joint_et_tranche}.
\end{proof}

\begin{definition}
\label{defi:of gray cylinder for strict}
The functor
$$\uvar\otimes[1]:\zocat\to \zocat$$
is called the \snotionsym{Gray cylinder}{((d30@$\uvar\otimes[1]$}{for $\zo$-categories}.
\end{definition}

\begin{prop}
\label{prop:comparaison betwen otimes and suspension}
Let $C$ be an $\io$-category.
The following canonical square 
\[\begin{tikzcd}
	{C\otimes\{0,1\}} & {C\otimes[1]} \\
	{1\coprod 1} & {[C,1]}
	\arrow[from=1-1, to=2-1]
	\arrow[from=1-1, to=1-2]
	\arrow[from=2-1, to=2-2]
	\arrow[from=1-2, to=2-2]
	\arrow["\lrcorner"{anchor=center, pos=0.125, rotate=180}, draw=none, from=2-2, to=1-1]
\end{tikzcd}\]
is cocartesian
\end{prop}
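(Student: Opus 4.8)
The plan is to prove the statement for an arbitrary $\io$-category $C$ by reducing it, via colimit-preservation, to the case of a globular sum, and there to a strict computation through Steiner theory. The starting observation is that each of the four functors occurring in the square preserves colimits: $\uvar\otimes\{0,1\}\cong\uvar\coprod\uvar$ because $\uvar\otimes\{\epsilon\}\cong\uvar$; the Gray cylinder $\uvar\otimes[1]$ is colimit preserving by theorem \ref{theo:otimes in zocat}; the suspension $[\uvar,1]$ is colimit preserving by its description as a left Kan extension (definition \ref{defi:suspension zocat}); and $1\coprod 1$ is constant. Writing $P(C)$ for the pushout of $1\coprod 1\leftarrow C\otimes\{0,1\}\to C\otimes[1]$, the canonical comparison $\kappa_C\colon P(C)\to[C,1]$ is therefore a natural transformation between two colimit-preserving endofunctors of the $\infty$-category of $\io$-categories, presented by $\Psh{\Theta}$ localised at $\W$. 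Since this $\infty$-category is generated under colimits by the globular sums (theorem \ref{theo:theta and ocat}), and a natural transformation of colimit-preserving functors is an equivalence as soon as it is one on a generating family, it suffices to show that $\kappa_a$ is an equivalence for every globular sum $a\in\Theta$.

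For a globular sum $a$, all four corners are strict $\zo$-categories with atomic and loop-free bases, so the strict pushout can be read off in $\CDAB$. The two legs $\lambda a\otimes\{0,1\}\to\lambda a\otimes[1]$ and $\lambda a\otimes\{0,1\}\to 1\coprod 1$ are quasi-rigid, the former exhibiting $\lambda a\otimes\{0,1\}$ as a sub-complex on a sub-basis and the latter collapsing the two end copies of $a$ onto points. By the Kan condition (theorem \ref{theo:Kan condition}), $\nu$ sends the defining pushout \eqref{eq:def of suspension cda} to a cocartesian square of $\zocat$ whose vertex is $\nu[\lambda a,1]\cong[a,1]$. This identifies the strict pushout of the corners with the suspension $[a,1]$.

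It then remains to upgrade this strict cocartesian square to a homotopy-cocartesian one, that is, to verify that $\kappa_a$ lies in $\overline{\W}$. Here the leg $a\otimes\{0,1\}\to a\otimes[1]$ is a monomorphism of $\Theta$-presheaves, namely the inclusion of the two ends of the Gray cylinder, hence a cofibration; by left properness of the localisation the strict presheaf pushout therefore computes the homotopy pushout. For a globular sum this pushout is the collapse of each end of $a\otimes[1]$ onto a point, which is already a $\W$-local presheaf coinciding with the strict pushout $[a,1]$ computed above, so $\kappa_a$ is an equivalence. The main obstacle is precisely this last point: one must check that collapsing the cylinder ends introduces no new composites that would have to be freely adjoined, so that the presheaf pushout is genuinely local rather than merely mapping to $[a,1]$ by a non-trivial element of $\overline{\W}$. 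On generators this is verified by the explicit description of $a\otimes[1]$, and in the presence of non-trivial cells it is the globular-sum instance of the homotopy-cocartesianness established, in dimension two, by theorem \ref{theo: case of 1 and 2 category}.
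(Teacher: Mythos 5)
Your first two paragraphs are, in substance, exactly the paper's proof: all four functors in the square preserve colimits, so the comparison reduces to the case of a globular sum $a$; there all corners admit atomic and loop-free bases, the morphisms of the image under $\lambda$ of the square are quasi-rigid, and theorem \ref{theo:Kan condition} yields that applying $\nu$ to the pushout \eqref{eq:def of suspension cda} gives a cocartesian square in $\zocat$. At that point the proof is finished: despite the ``$\io$-category'' in the statement (a slip for $\zo$-category --- this whole section works in $\zocat$, and the suspension and Gray cylinder here are the strict ones), the proposition asserts a pushout in $\zocat$, which is precisely what theorem \ref{theo:Kan condition} delivers. Two small points you should tighten but which are routine: the theorem requires all four morphisms of the square to be quasi-rigid, not just the two legs you mention, and one must also note that the basis-level squares of sets are cocartesian.

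Your third paragraph, by contrast, addresses a question the proposition does not ask, and its internal justifications would not stand if they were needed. No model structure, and in particular no left properness, is established for $\Psh{\Theta}_{\W}$ anywhere in the paper, so ``by left properness the strict presheaf pushout computes the homotopy pushout'' has no support here. Worse, the presheaf-level pushout of $1\coprod 1\leftarrow a\otimes\{0,1\}\to a\otimes[1]$ is in general \emph{not} $\W$-local: already for $a=\Db_1$, the sections $0\otimes[1]$ and $(e\otimes\{1\})\circ_0(0\otimes[1])$ of $\Db_1\otimes[1]$ remain distinct in the objectwise pushout (the identifications are generated only by the images of $\Db_1\otimes\{0,1\}$), yet both map to the same $1$-cell of $[[1],1]$; so the presheaf pushout differs from its reflection $[a,1]$ and cannot be local. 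Finally, theorem \ref{theo: case of 1 and 2 category} cannot repair this in general: it concerns only $(0,2)$-categories and pushouts along $\partial[[k],1]\to[[k],1]$, and the statement that the comparison from the presheaf pushout to $[a,1]$ lies in $\overline{\W}$ in all dimensions is a genuinely harder assertion (the paper proves only a simplicial, dimension-$\le 2$ analogue in proposition \ref{prop:otimes and suspension in presheaves}). Delete the third paragraph and your proof is correct, and it coincides with the paper's.
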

\begin{proof}
As all these functors commute with colimits, it is sufficient to demonstrate this assertion when $C$ is a globular sum, and \textit{a fortiori} when $C$ admits a loop free and atomic basis. In this case, remark that all the morphisms appearing in canonical cartesian square
\[\begin{tikzcd}
	{\lambda C\otimes\{0,1\}} & {\lambda C\otimes[1]} \\
	{1\coprod 1} & {[\lambda C,1]}
	\arrow[from=1-1, to=2-1]
	\arrow[from=1-1, to=1-2]
	\arrow[from=2-1, to=2-2]
	\arrow[from=1-2, to=2-2]
	\arrow["\lrcorner"{anchor=center, pos=0.125, rotate=180}, draw=none, from=2-2, to=1-1]
\end{tikzcd}\]
 are quasi-rigid. 
The results then follow from an application of theorem \ref{theo:Kan condition}.
\end{proof}

\begin{remark}
\label{defi:explicit Dbn otiomes [1]}
Applying the duality $(\uvar)^{op}$ to the computation achieved in appendix B.1 of \cite{Ara_Maltsiniotis_joint_et_tranche}, we can give an explicit expression of $\Db_n\otimes [ 1]$. As a polygraph, the generating arrows of $\Db_n\otimes [1]$ are:
$$ e^\epsilon_k\otimes\{0\}~~~~~e^\epsilon_k\otimes\{1\}~~~~~e^\epsilon_k\otimes[1]$$
 \[ a^-_0 \otimes e^\epsilon_k \qquad a^+_0 \otimes e^\epsilon_k \qquad a \otimes e^\epsilon_k \]
 where $\epsilon$ is either $+$ or $-$, $k \leqslant n$ and $e^+_n = e^-_n$. Their source and target are given as follows:
 \[ \pi^-( e^\epsilon_k \otimes\{0\}) = e^-_{k-1} \otimes\{0\} \qquad\qquad\qquad \pi^+(e^\epsilon_k \otimes\{0\}) = e^+_{k-1}\otimes\{0\} \]
 \[ \pi^-(e^\epsilon_k \otimes\{1\} ) = e^-_{k-1} \otimes\{1\}\qquad\qquad\qquad \pi^+(e^\epsilon_k\otimes\{1\} ) = e^+_{k-1}\otimes\{1\} \]
$$\pi^{-}(e^\epsilon_{2k}\otimes[1]) =...\circ_2(e^+_0\otimes[1])\circ_0(e^\epsilon_{2k}\otimes\{0\})\circ_1 (e^-_1\otimes[1])\circ_3... \circ_{2k-1}(e_{2k-1}^-\otimes[1])$$
$$\pi^{+}(e^\epsilon_{2k}\otimes[1]) = (e_{2k-1}^+\otimes[1])\circ_{2k-1}...\circ_3(e^+_1\otimes[1])\circ_1(e^\epsilon_{2k}\otimes\{1\})\circ_0 (e^-_0\otimes[1])\circ_2...$$
$$\pi^{-}(e^\epsilon_{2k+1}\otimes[1]) = ...\circ_3(e^+_1\otimes[1])\circ_1(e^\epsilon_{2k+1}\otimes\{1\})\circ_0 (e^-_0\otimes[1])\circ_2...\circ_{2k}(e_{2k}^-\otimes[1])$$
$$\pi^{+}(e^\epsilon_{2k+1}\otimes[1]) = (e_{2k}^+\otimes[1])\circ_{2k}...\circ_2(e^+_0\otimes[1])\circ_0(e^\epsilon_{2k+1}\otimes\{0\})\circ_1 (e^-_1\otimes[1])\circ_3...$$
 We did not put parenthesis in the expression above, to keep them shorter, the default convention is to do the composition $\circ_i$ in order of increasing values of $i$.
\end{remark}
 
\begin{example}
The $\zo$-category $\Db_1\otimes[1]$ is the polygraph: 
\[\begin{tikzcd}
	00 & 01 \\
	10 & 11
	\arrow[from=1-1, to=2-1]
	\arrow[from=2-1, to=2-2]
	\arrow[from=1-1, to=1-2]
	\arrow[from=1-2, to=2-2]
	\arrow[shorten <=4pt, shorten >=4pt, Rightarrow, from=1-2, to=2-1]
\end{tikzcd}\]
The $\zo$-category $\Db_2\otimes[1]$ is the polygraph: 
\[\begin{tikzcd}
	00 & 01 & 00 & 01 \\
	10 & 11 & 10 & 11
	\arrow[from=1-1, to=1-2]
	\arrow[""{name=0, anchor=center, inner sep=0}, from=1-1, to=2-1]
	\arrow[from=2-1, to=2-2]
	\arrow[""{name=1, anchor=center, inner sep=0}, from=1-2, to=2-2]
	\arrow[shorten <=4pt, shorten >=4pt, Rightarrow, from=1-2, to=2-1]
	\arrow[""{name=2, anchor=center, inner sep=0}, from=1-3, to=2-3]
	\arrow[from=1-3, to=1-4]
	\arrow[""{name=3, anchor=center, inner sep=0}, from=1-4, to=2-4]
	\arrow[shorten <=4pt, shorten >=4pt, Rightarrow, from=1-4, to=2-3]
	\arrow[""{name=4, anchor=center, inner sep=0}, curve={height=30pt}, from=1-1, to=2-1]
	\arrow[from=2-3, to=2-4]
	\arrow[""{name=5, anchor=center, inner sep=0}, curve={height=-30pt}, from=1-4, to=2-4]
	\arrow["{ }"', shorten <=6pt, shorten >=6pt, Rightarrow, from=0, to=4]
	\arrow["{ }"', shorten <=6pt, shorten >=6pt, Rightarrow, from=5, to=3]
	\arrow[shift left=0.7, shorten <=6pt, shorten >=8pt, no head, from=1, to=2]
	\arrow[shift right=0.7, shorten <=6pt, shorten >=8pt, no head, from=1, to=2]
	\arrow[shorten <=6pt, shorten >=6pt, from=1, to=2]
\end{tikzcd}\]
\end{example}

\begin{construction}
\label{cons:Gray cone for omega cat}
 We define the \snotionsym{Gray cone}{((d40@$\uvar\star 1$}{for $\zo$-categories} and  the \snotion{Gray $\circ$-cone}{for $\zo$-categories}\index[notation]{((d50@$1\overset{co}{\star}\_$!\textit{for $\zo$-categories}}:
$$\begin{array}{ccccccc}
\zocat &\to&\zocat_{\cdot}&&\zocat &\to&\zocat_{\cdot}\\
C&\mapsto &C\star 1 & &C &\mapsto &1\costar C
\end{array}
$$
where $C\star 1$, $1\costar C$ and $1\star C$ are defined as the following pushouts: 
\[\begin{tikzcd}
	{C\otimes\{1\}} & {C\otimes [1]} & {C\otimes\{0\}} & {C\otimes [1]} \\
	1 & {C\star 1} & 1 & {1\costar C}
	\arrow[from=1-1, to=1-2]
	\arrow[from=1-1, to=2-1]
	\arrow[from=1-2, to=2-2]
	\arrow[from=1-3, to=1-4]
	\arrow[from=1-3, to=2-3]
	\arrow[from=1-4, to=2-4]
	\arrow[from=2-1, to=2-2]
	\arrow["\lrcorner"{anchor=center, pos=0.125, rotate=180}, draw=none, from=2-2, to=1-1]
	\arrow[from=2-3, to=2-4]
	\arrow["\lrcorner"{anchor=center, pos=0.125, rotate=180}, draw=none, from=2-4, to=1-3]
\end{tikzcd}\]
\end{construction}

\begin{prop}
\label{prop:star and duality}
There is an equivalence
$$(C\star 1)^{\circ}\cong 1\costar C^{\circ} $$
natural in $C:\zocat$.
\end{prop}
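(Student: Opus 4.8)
The plan is to obtain the equivalence by applying the full duality $(\uvar)^\circ$ directly to the pushout square defining the Gray cone $C\star 1$, and then recognizing the resulting square as the one defining $1\costar C^\circ$. Two ingredients make this work: first, $(\uvar)^\circ$ is an involutive automorphism of $\zocat$, so in particular it preserves all colimits and hence pushouts; second, the duality formula $(C\otimes D)^\circ\cong C^\circ\otimes D^\circ$ furnished by Proposition \ref{prop:otimes and duality} — crucially, the full duality does \emph{not} reverse the order of the tensor factors, unlike the $op$- and $co$-dualities.

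First I would apply $(\uvar)^\circ$ to the span
$$1\longleftarrow C\otimes\{1\}\longrightarrow C\otimes[1]$$
whose pushout is $C\star 1$. Since $(\uvar)^\circ$ preserves pushouts, $(C\star 1)^\circ$ is the pushout of the dualized span. Here $1\cong \Db_0$ is self-dual, and Proposition \ref{prop:otimes and duality} gives $(C\otimes[1])^\circ\cong C^\circ\otimes[1]^\circ$ and $(C\otimes\{1\})^\circ\cong C^\circ\otimes\{1\}^\circ$, naturally in $C$.

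The main point — and the one requiring care — is the identification of $[1]^\circ$ together with the image of the endpoint inclusion. The full duality reverses the orientation of every cell of positive dimension, so the unique nontrivial $1$-cell of $[1]\colon 0\to 1$ becomes a $1$-cell $1\to 0$ in $[1]^\circ$; thus $[1]^\circ$ is isomorphic to $[1]$ via the unique relabeling isomorphism $\varphi\colon[1]^\circ\xrightarrow{\ \sim\ }[1]$ exchanging the two objects $0\leftrightarrow 1$. Under $\varphi$, the dual $(i_0^+)^\circ$ of the target inclusion $i_0^+\colon\{1\}\hookrightarrow[1]$ becomes the source inclusion $i_0^-\colon\{0\}\hookrightarrow[1]$, since the object $1$ is carried to $0$. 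Consequently the dualized span is isomorphic to
$$1\longleftarrow C^\circ\otimes\{0\}\longrightarrow C^\circ\otimes[1],$$
which is precisely the span whose pushout defines $1\costar C^\circ$.

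Taking pushouts and invoking their uniqueness then yields $(C\star 1)^\circ\cong 1\costar C^\circ$. Naturality in $C$ is automatic, since every functor and isomorphism used — the duality, the Gray tensor product, the duality formula of Proposition \ref{prop:otimes and duality}, and the formation of pushouts — is natural in $C$. I expect the only genuine subtlety to be the bookkeeping of the endpoint swap under $\varphi$, which is exactly what dictates that the Gray cone dualizes to the Gray $\circ$-cone (built from $\{0\}$) rather than back to a Gray cone (built from $\{1\}$).
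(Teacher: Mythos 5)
Your proposal is correct and is exactly the argument the paper intends: its proof reads ``This directly follows from the definition of these operations and from proposition \ref{prop:otimes and duality}'', i.e.\ apply $(\uvar)^\circ$ to the defining pushout and use $(C\otimes D)^\circ\cong C^\circ\otimes D^\circ$ together with the endpoint swap in $[1]^\circ$. You have simply made explicit the bookkeeping (preservation of pushouts by the involutive duality, and $\{1\}\mapsto\{0\}$ under the relabeling of $[1]^\circ$) that the paper leaves implicit.
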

\begin{proof}
This directly follows from the definition of these operations and from proposition \ref{prop:otimes and duality}. 
\end{proof}

\begin{example}
The $\zo$-categories $\Db_1\star 1$ and $1\costar \Db_1$ correspond respectively to the polygraphs: 
\[\begin{tikzcd}
	0 &&&& 0 \\
	1 & \star && \star & 1
	\arrow[from=1-1, to=2-1]
	\arrow[from=2-1, to=2-2]
	\arrow[""{name=0, anchor=center, inner sep=0}, from=1-1, to=2-2]
	\arrow[""{name=1, anchor=center, inner sep=0}, from=1-5, to=2-5]
	\arrow[from=2-4, to=1-5]
	\arrow[""{name=2, anchor=center, inner sep=0}, from=2-4, to=2-5]
	\arrow[shorten <=2pt, Rightarrow, from=0, to=2-1]
	\arrow[shift right=2, shorten <=4pt, shorten >=4pt, Rightarrow, from=1, to=2]
\end{tikzcd}\]
The $\zo$-categories $\Db_2\star 1$ and $1\costar \Db_2$ correspond respectively to the polygraphs: 
\[\begin{tikzcd}
	0 & {~} & 0 &&& 0 & {~} & 0 \\
	1 & \star & 1 & \star & \star & 1 & \star & 1
	\arrow[""{name=0, anchor=center, inner sep=0}, from=1-1, to=2-1]
	\arrow[from=2-1, to=2-2]
	\arrow[""{name=1, anchor=center, inner sep=0}, from=1-3, to=2-3]
	\arrow[""{name=2, anchor=center, inner sep=0}, curve={height=30pt}, from=1-1, to=2-1]
	\arrow[from=2-3, to=2-4]
	\arrow[""{name=3, anchor=center, inner sep=0}, from=1-1, to=2-2]
	\arrow[""{name=4, anchor=center, inner sep=0}, draw=none, from=1-2, to=2-2]
	\arrow[""{name=5, anchor=center, inner sep=0}, from=1-3, to=2-4]
	\arrow[from=1-6, to=2-5]
	\arrow[""{name=6, anchor=center, inner sep=0}, from=1-6, to=2-6]
	\arrow[""{name=7, anchor=center, inner sep=0}, from=2-5, to=2-6]
	\arrow[from=1-8, to=2-7]
	\arrow[""{name=8, anchor=center, inner sep=0}, from=1-8, to=2-8]
	\arrow[""{name=9, anchor=center, inner sep=0}, from=2-8, to=2-7]
	\arrow[""{name=10, anchor=center, inner sep=0}, curve={height=-30pt}, from=1-8, to=2-8]
	\arrow[""{name=11, anchor=center, inner sep=0}, draw=none, from=1-7, to=2-7]
	\arrow["{ }"', shorten <=6pt, shorten >=6pt, Rightarrow, from=0, to=2]
	\arrow[shorten <=2pt, shorten >=2pt, Rightarrow, from=3, to=2-1]
	\arrow[shift left=0.7, shorten <=6pt, shorten >=8pt, no head, from=4, to=1]
	\arrow[shift right=0.7, shorten <=6pt, shorten >=8pt, no head, from=4, to=1]
	\arrow[shorten <=6pt, shorten >=6pt, from=4, to=1]
	\arrow[shorten <=2pt, Rightarrow, from=5, to=2-3]
	\arrow[shorten <=6pt, shorten >=6pt, Rightarrow, from=10, to=8]
	\arrow[shift right=2, shorten <=4pt, shorten >=4pt, Rightarrow, from=8, to=9]
	\arrow[shift right=2, shorten <=4pt, shorten >=4pt, Rightarrow, from=6, to=7]
	\arrow[shift right=0.7, shorten <=6pt, shorten >=8pt, no head, from=6, to=11]
	\arrow[shorten <=6pt, shorten >=6pt, from=6, to=11]
	\arrow[shift left=0.7, shorten <=6pt, shorten >=8pt, no head, from=6, to=11]
\end{tikzcd}\]
\end{example}

\begin{prop}
Let $C$ be an $\zo$-category with an unitary and loop free basis. The canonical comparaison
$$ (\lambda C)\star 1\to \lambda (C\star 1) $$
is an equivalence.

Let $K$ be an augmented directed complex with a loop free and unitary basis. The canonical comparaisons 
$$(\nu K)\star 1\to \nu(K\star 1)$$
is an equivalence.
\end{prop}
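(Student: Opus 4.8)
My plan is to exploit the equivalence $\lambda_{|\zocatB}\dashv\nu_{|\CDAB}$ of Theorem~\ref{theorem:steiner} together with Steiner's cocartesianness criterion, Theorem~\ref{theo:Kan condition}. The two assertions are formally interchangeable through this equivalence, so I would concentrate on the statement about $\nu$, which carries the real content: $\nu$ is only a right adjoint and a priori need not preserve the pushout \eqref{eq:defin of cstar costar CDA} defining $K\star 1$, whereas for $\lambda$ the difficulty is only bookkeeping.

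To prove that $(\nu K)\star 1\to\nu(K\star 1)$ is an isomorphism I would apply Theorem~\ref{theo:Kan condition} to the cocartesian square \eqref{eq:defin of cstar costar CDA} defining $K\star 1$. All four of its objects lie in $\CDAB$: this is clear for $1$, $K\otimes\{1\}$ and $K\otimes[1]$, and for $K\star 1$ it is corollary~6.21 of \cite{Ara_Maltsiniotis_joint_et_tranche}. The decisive step is to check that the four structural maps are quasi-rigid. The inclusion $K\otimes\{1\}\hookrightarrow K\otimes[1]$ and the collapse $K\otimes\{1\}\to 1$ send each basis element either to a basis element or to $0$, compatibly with the associated arrays, hence are quasi-rigid; the two coprojections $1\to K\star 1$ and $K\otimes[1]\to K\star 1$ I would treat using the explicit chain complex and basis of $K\star 1$ recorded after the definition of the Gray cone, reading off that $\emptyset\star 1$ is the image of the point, that $x\otimes\{0\}\mapsto\emptyset\star x$ and $x\otimes[1]\mapsto x\star 1$, and that $x\otimes\{1\}$ maps to $\emptyset\star 1$ or to $0$ according to the dimension of $x$. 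Granting quasi-rigidity, Theorem~\ref{theo:Kan condition} yields that the image of \eqref{eq:defin of cstar costar CDA} under $\nu$ is again cocartesian in $\zocat$. It then remains to identify the corners: $\nu(1)=1$, and using $\lambda\nu K\cong K$ together with the defining formula $\nu K\otimes D\cong\nu(\lambda\nu K\otimes\lambda D)$ of the Gray tensor product on $\zocat$, I obtain $\nu(K\otimes\{1\})\cong\nu K\otimes\{1\}$ and $\nu(K\otimes[1])\cong\nu K\otimes[1]$. The transported cocartesian square is then exactly the one defining $(\nu K)\star 1$, giving the desired isomorphism; as a byproduct this shows that the Gray cone carries $\zocatB$ into $\zocatB$.

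Finally I would deduce the statement about $\lambda$. For $C$ with an atomic loop-free basis we have $C\cong\nu\lambda C$ with $\lambda C\in\CDAB$, so the already proven $\nu$-statement gives $C\star 1\cong(\nu\lambda C)\star 1\cong\nu(\lambda C\star 1)$; applying $\lambda$ and using that $\lambda C\star 1\in\CDAB$ (corollary~6.21 of \cite{Ara_Maltsiniotis_joint_et_tranche}), so that the counit $\lambda\nu(\lambda C\star 1)\to\lambda C\star 1$ is an isomorphism, yields $\lambda(C\star 1)\cong\lambda C\star 1$. Alternatively, the same conclusion follows directly from the fact that $\lambda$ is a colimit-preserving left adjoint whose restriction to $\zocatB$ is strong monoidal, being a quasi-inverse of the strong monoidal equivalence $\nu_{|\CDAB}$ of Theorems~\ref{theorem:steiner} and~\ref{theo:otimes in zocat}. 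The main obstacle throughout is the quasi-rigidity verification for the two coprojections into $K\star 1$, which, unlike the inclusion and the collapse, are neither cofaces nor codegeneracies; here one must check the identity $\nu(f)\langle b\rangle=\langle f(b)\rangle$ on each basis element by hand from the explicit description of $K\star 1$.
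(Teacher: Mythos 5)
Your proposal is correct and takes essentially the same route as the paper: the paper's proof likewise checks that all morphisms in the pushout square \eqref{eq:defin of cstar costar CDA} are quasi-rigid and applies theorem \ref{theo:Kan condition} for the $\nu$-statement, and handles the $\lambda$-statement by colimit preservation of $\lambda$ (your stated alternative). The only cosmetic difference is that your primary argument for the $\lambda$-statement deduces it from the $\nu$-statement via $\lambda\nu\cong\mathrm{id}$ on $\CDAB$, which the paper bypasses.
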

\begin{proof}
The first assertion directly follows from the fact $\lambda$ commutes with colimits. For the second one,
we can easily check that all the morphisms appearing in the squares \eqref{eq:defin of cstar costar CDA} are quasi-rigid.
The results then follow from an application of theorem \ref{theo:Kan condition}.
\end{proof}

The following propositions express the link between the Gray operations and the suspension. They will play a fundamental role in the rest of this work.
\begin{prop}
 \label{prop:appendice formula for otimes} 
 Let $C$ be an $\zo$-category.
There is a natural identification between $[ C,1]\otimes [1]$ and the colimit of the following diagram
$$
\begin{tikzcd}
	{[1]\vee [ C,1]} & {[C\otimes\{0\},1]} & {[C\otimes [1],1]} & {[C\otimes\{1\},1]} & {[C,1]\vee[1]}
	\arrow[from=1-2, to=1-1]
	\arrow[from=1-2, to=1-3]
	\arrow[from=1-4, to=1-3]
	\arrow[from=1-4, to=1-5]
\end{tikzcd}$$
\end{prop}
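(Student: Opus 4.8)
The plan is to reduce the statement to the level of augmented directed complexes, where the analogous identity is already available as Proposition~\ref{prop:appendice formula for otimes cda}, and then to transport it along Steiner's equivalence by means of the gluing criterion of Theorem~\ref{theo:Kan condition}.

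First I would record that both functors $C\mapsto[C,1]\otimes[1]$ and $C\mapsto\operatorname{colim}$ of the displayed diagram preserve colimits. Indeed the suspension $[\uvar,1]$, the wedges $[1]\vee\uvar$ and $\uvar\vee[1]$, and the Gray cylinder $\uvar\otimes[1]$ are all colimit-preserving (the last by Theorem~\ref{theo:otimes in zocat}), and a finite connected colimit of colimit-preserving functors is again colimit-preserving. Since $\Theta$ is dense in $\zocat$, every $\zo$-category is canonically the colimit $\colim_{\Theta_{/C}}a$ of the globular sums lying over it, and a natural isomorphism between two colimit-preserving functors is determined by, and may be produced from, a natural isomorphism on globular sums. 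As every globular sum admits a loop-free and atomic basis, it therefore suffices to construct the identification, naturally, for $C$ in $\zocatB$.

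For such a $C$ I would invoke Steiner's equivalence $\lambda\dashv\nu$ (Theorem~\ref{theorem:steiner}), under which $C\cong\nu\lambda C$ with $\lambda C\in\CDAB$. Because $\nu$ is monoidal (Theorem~\ref{theo:otimes in zocat}) and the suspension pushout \eqref{eq:def of suspension cda} is built from quasi-rigid morphisms, Theorem~\ref{theo:Kan condition} together with Proposition~\ref{prop:comparaison betwen otimes and suspension} yields $[C,1]\cong\nu[\lambda C,1]$, whence $[C,1]\otimes[1]\cong\nu([\lambda C,1]\otimes[1])$, and similarly $\nu[\lambda C\otimes[1],1]\cong[C\otimes[1],1]$, $\nu([1]\vee[\lambda C,1])\cong[1]\vee[C,1]$, and $\nu([\lambda C,1]\vee[1])\cong[C,1]\vee[1]$. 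Proposition~\ref{prop:appendice formula for otimes cda}, applied to $K=\lambda C$, expresses $[\lambda C,1]\otimes[1]$ as the colimit in $\CDAB$ of the corresponding zig-zag. I would then decompose this colimit into the two successive pushouts gluing $[1]\vee[\lambda C,1]$ and $[\lambda C,1]\vee[1]$ onto $[\lambda C\otimes[1],1]$ along $[\lambda C\otimes\{0\},1]$ and $[\lambda C\otimes\{1\},1]$ respectively. Every arrow in the zig-zag is quasi-rigid (the suspensions of the inclusions $\lambda C\otimes\{\epsilon\}\hookrightarrow\lambda C\otimes[1]$ and the whiskerings $\triangledown$), so Theorem~\ref{theo:Kan condition} shows that each gluing square is cocartesian on bases and is preserved by $\nu$. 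Applying $\nu$ to the assembled colimit thus produces exactly the displayed colimit in $\zocat$, naturally in $\lambda C$, and hence in $C$; the density argument of the first step then propagates the identification to every $\zo$-category.

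The step I expect to be the main obstacle is the middle one: verifying that each morphism of the augmented-directed-complex diagram is quasi-rigid and that the base-level squares produced by Theorem~\ref{theo:Kan condition} are genuinely cocartesian. Since Theorem~\ref{theo:Kan condition} is phrased for a single square, the zig-zag colimit must first be rewritten as a composite of pushouts, and one must match the gluing loci—the images of $[\lambda C\otimes\{\epsilon\},1]$—on the nose so that the combinatorial cocartesian condition on the bases holds at every dimension.
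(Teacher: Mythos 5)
Your first reduction (both functors preserve colimits, $\Theta$ is dense in $\zocat$, globular sums lie in $\zocatB$) matches the paper's opening move, and using Proposition \ref{prop:appendice formula for otimes cda} to produce the comparison is also what the paper does. The gap is in your middle step, and it sits exactly where you predicted: the morphisms of the zig-zag are \emph{not} all quasi-rigid, so Theorem \ref{theo:Kan condition} cannot be applied to the gluing squares. The two legs landing in the wedges are the whiskerings $\triangledown:[\lambda C\otimes\{\epsilon\},1]\to[1]\vee[\lambda C,1]$ (resp. into $[\lambda C,1]\vee[1]$), and by definition $\triangledown([x,1])=[x,1]+e_1$ whenever $|x|=0$: a degree-one basis element is sent to a \emph{sum} of two basis elements, which violates the condition ``$f_n(b)\neq 0\Rightarrow f_n(b)\in B_N$'' in the definition of a quasi-rigid morphism. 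For the same reason the gluing data does not even induce a map of pointed sets on bases in degree one, so the combinatorial cocartesianness criterion of Theorem \ref{theo:Kan condition} is unavailable for these squares, and your mechanism for showing that $\nu$ preserves the two pushouts collapses. A secondary point: Proposition \ref{prop:appendice formula for otimes cda} as stated only provides a natural transformation, not an isomorphism, so you are also relying on an unproved strengthening of it (that part, at least, can be fixed by a direct degree-wise computation of the colimit of augmented directed complexes).

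The paper circumvents precisely this obstruction by never asking $\nu$ to preserve the pushouts. It uses the ADC-level cone of Proposition \ref{prop:appendice formula for otimes cda} only to \emph{construct} the comparison morphism from the colimit of the zig-zag to $[C,1]\otimes[1]$ when $C$ is a globular sum, and then proves invertibility by evaluating on globes, where $\Db_n\otimes[1]$ is described explicitly as a polygraph in \ref{defi:explicit Dbn otiomes [1]}; colimit-preservation of both sides then propagates the isomorphism from globes to globular sums and from globular sums to all $\zo$-categories. To repair your argument you would either have to carry out this globe-by-globe verification as the paper does, or prove by hand, without Theorem \ref{theo:Kan condition}, that $\nu$ preserves these particular pushouts along non-quasi-rigid maps.
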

\begin{proof}
As all these functors preserve colimits, it is sufficient to construct the comparison when $C$ is a globular sum, and to show that it is an equivalence when $C$ is a globe. 
As globular sums have atomic and loop free bases, the comparison is induced by proposition \ref{prop:appendice formula for otimes cda}. Using the explicit description of the $\zo$-category $\Db_n\otimes[1]$ given in definition \ref{defi:explicit Dbn otiomes [1]}, it is straightforward to see that it induces an equivalence on globes.
\end{proof}

\begin{prop}
 \label{prop:appendice formula for star} 
There is a natural identification between $1\costar [C,1]$ and the colimit of the following diagram
\[\begin{tikzcd}
	{[1]\vee [C,1]} & {[C,1]} & {[C\star 1,1]}
	\arrow[from=1-2, to=1-3]
	\arrow[from=1-2, to=1-1]
\end{tikzcd}\]
There is a natural identification between $[C,1]\star 1$ and the colimit of the following diagram
\[\begin{tikzcd}
	{[1\costar C,1]} & {[C,1]} & {[C,1]\vee[1]}
	\arrow[from=1-2, to=1-3]
	\arrow[from=1-2, to=1-1]
\end{tikzcd}\]
\end{prop}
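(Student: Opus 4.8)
The plan is to follow the proof of Proposition~\ref{prop:appendice formula for otimes} closely. I treat the identification for $1\costar[C,1]$ in detail; the one for $[C,1]\star 1$ then follows either by the symmetric argument or formally, by applying the full duality $(\uvar)^\circ$ and invoking Propositions~\ref{prop:star and duality} and~\ref{prop:otimes and duality}. Since all the functors involved preserve the colimits at play, it suffices to construct a comparison morphism that is natural in $C$ on globular sums, and then to verify that it is an isomorphism when $C$ is a globe.

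First I would unfold Construction~\ref{cons:Gray cone for omega cat}, writing $1\costar[C,1]$ as the pushout of $1\leftarrow [C,1]\otimes\{0\}\to[C,1]\otimes[1]$, and substitute the expression for $[C,1]\otimes[1]$ supplied by Proposition~\ref{prop:appendice formula for otimes}. Because colimits commute with one another, this presents $1\costar[C,1]$ as the colimit of the five-term cylinder diagram after amalgamating the terminal object along the face $[C,1]\otimes\{0\}$. The substance of the argument is to simplify this colimit. Collapsing that face leaves the piece $[1]\vee[C,1]$ essentially untouched (only its initial object is glued to the cone point), crushes the bottom-gluing suspension $[C\otimes\{0\},1]$, and, after reorganizing the pieces $[C\otimes[1],1]$, $[C\otimes\{1\},1]$ and $[C,1]\vee[1]$, produces a copy of $[C\star 1,1]$. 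This last identification uses that the suspension $[\uvar,1]$ preserves the relevant (connected, bipointed) colimits, that $[[0],1]=[1]$, and the defining pushout $C\star 1=1\amalg_{C\otimes\{1\}}(C\otimes[1])$; the orientation reversal built into the suspension is precisely what converts the $\{0\}$-face of the outer cylinder into the Gray \emph{cone} of $C$, rather than its $\circ$-cone. What remains is exactly the three-term diagram $[1]\vee[C,1]\xleftarrow{\triangledown}[C,1]\to[C\star 1,1]$, which yields the desired natural comparison.

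A more mechanical way to produce and control this comparison is to work one level down, in augmented directed complexes. There the Gray cone is again a pushout (equation~\eqref{eq:defin of cstar costar CDA}), and the analogue of the comparison is obtained by taking the pushout of the diagram of Proposition~\ref{prop:appendice formula for otimes cda} against the unit. Since every morphism in sight is quasi-rigid, Theorem~\ref{theo:Kan condition} computes this pushout combinatorially on bases, and applying $\nu_{|\CDAB}$ transports it to $\zocat$. For $C$ a globular sum, hence admitting an atomic and loop-free basis, this both defines the comparison and shows that the relevant squares are cocartesian.

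Finally I would check that the comparison is an isomorphism on globes. For $C=\Db_n$ one has $[\Db_n,1]=\Db_{n+1}$, and using the explicit polygraphic presentation of $\Db_m\otimes[1]$ recalled in Remark~\ref{defi:explicit Dbn otiomes [1]} together with the explicit presentations of $\Db_n\star 1$ and $1\costar\Db_n$, this is a direct verification on generating cells. The base case $C=[0]$, where both sides reduce to the globular sum with underlying graph $0\to 1\to 2$ carrying a $2$-cell from the composite $0\to 2$ to a parallel arrow, already exhibits the pattern. I expect the main obstacle to lie in the second paragraph: one must match the whiskerings $\triangledown$ and the face inclusions carefully enough that the collapse of the five-term cylinder diagram reorganizes into the claimed three-term diagram with the correct orientation. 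This is exactly the bookkeeping that the augmented-directed-complex route renders automatic, so that is the approach I would ultimately rely on.
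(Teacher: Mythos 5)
Your plan is the same as the paper's: the paper proves this proposition by unfolding the definition of the Gray $\circ$-cone as a pushout of the Gray cylinder, substituting the five-term formula of Proposition \ref{prop:appendice formula for otimes}, and obtaining the second identity from the first by the duality of Proposition \ref{prop:star and duality}. (Your auxiliary routes --- reduction to globes and the CDA-level construction --- are not needed here: unlike Proposition \ref{prop:appendice formula for otimes} itself, this statement is a purely formal consequence of it, valid for all $C$ at once.) However, the bookkeeping in your second paragraph, which is where the actual content lies, is incorrect. Under the identification built in Proposition \ref{prop:appendice formula for otimes cda}, the face $[C,1]\otimes\{0\}$ is the wedge summand $[C,1]$ of the \emph{right-hand} piece $[C,1]\vee[1]$ (and $[C,1]\otimes\{1\}$ is the wedge summand of the left-hand piece $[1]\vee[C,1]$), whereas the piece $[C\otimes\{0\},1]$ is glued by $\triangledown$ onto \emph{composites} lying in the left-hand piece. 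You can see this twist already for $C=[0]$: in $[1]\otimes[1]$ the $2$-cell runs from the path through $01$, which contains the face $[1]\otimes\{1\}$, to the path through $10$, which contains the face $[1]\otimes\{0\}$.

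Consequently, collapsing $[C,1]\otimes\{0\}$ does \emph{not} crush $[C\otimes\{0\},1]$; that piece survives untouched and is precisely the middle term $[C,1]$ of the three-term diagram you are aiming for. What the collapse does is crush the $[C,1]$-summand of $[C,1]\vee[1]$, turning that piece into $1\vee[1]\cong[1]$; the right-hand part of the diagram, $[C\otimes[1],1]\leftarrow[C\otimes\{1\},1]\to[1]$, then amalgamates to $[C\star 1,1]$, because the suspension preserves pushouts of bipointed diagrams and $C\star 1=1\amalg_{C\otimes\{1\}}(C\otimes[1])$ --- this is the correct form of the ``orientation twist'' you allude to. As written, your steps are also mutually inconsistent: if $[C\otimes\{0\},1]$ were crushed, the middle term of the remaining diagram would be a point rather than $[C,1]$, and since $[C\otimes\{0\},1]$ maps into $[1]\vee[C,1]$ by $\triangledown$, crushing its image could not leave that piece ``essentially untouched''. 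Once the face is located correctly, the rest of your argument (including the duality step for $[C,1]\star 1$) goes through and agrees with the paper's proof.
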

\begin{proof}
This directly follows from the definition of these operations, from proposition \ref{prop:appendice formula for otimes}  and from proposition \ref{prop:star and duality}.
\end{proof}

\begin{prop}
\label{prop:cartesian squares}
Let $C$ be an $\zo$-category with an atomic and loop free basis. The two following canonical squares are cartesian:
\[\begin{tikzcd}
	1 & {1\costar C} & 1 & {C\star 1} \\
	{\{0\}} & {[C,1]} & {\{1\}} & {[C,1]}
	\arrow[from=1-1, to=1-2]
	\arrow[from=2-1, to=2-2]
	\arrow[from=1-1, to=2-1]
	\arrow[from=1-2, to=2-2]
	\arrow[from=1-3, to=1-4]
	\arrow[from=2-3, to=2-4]
	\arrow[from=1-3, to=2-3]
	\arrow[from=1-4, to=2-4]
\end{tikzcd}\]
The five squares appearing in the following canonical diagram are both cartesian and cocartesian:
\[\begin{tikzcd}
	& {C\otimes\{0\}} & 1 \\
	{C\otimes\{1\}} & {C\otimes[1]} & {C\star 1} \\
	1 & {1\costar C} & {[C,1]}
	\arrow[from=2-3, to=3-3]
	\arrow[from=3-2, to=3-3]
	\arrow[from=2-2, to=3-2]
	\arrow[from=2-2, to=2-3]
	\arrow[from=1-2, to=1-3]
	\arrow[from=1-3, to=2-3]
	\arrow[from=1-2, to=2-2]
	\arrow[from=2-1, to=2-2]
	\arrow[from=3-1, to=3-2]
	\arrow[from=2-1, to=3-1]
\end{tikzcd}\]
\end{prop}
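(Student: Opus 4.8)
The plan is to transport the entire diagram into augmented directed complexes via Steiner's equivalence and then read off both universal properties from the explicit bases. Since $C$ is assumed to admit an atomic and loop-free basis, the preservation statements recorded after the definitions of the Gray cylinder, the Gray cone, the Gray $\circ$-cone and the suspension show that $C\otimes[1]$, $C\star 1$, $1\costar C$ and $[C,1]$ all lie in $\zocatB$. Applying $\lambda$ and using theorem \ref{theorem:steiner}, I may therefore replace every object of the diagram by its augmented directed complex and work in $\CDAB$, where I have at my disposal the explicit bases: that of $C\otimes[1]$ consists of the $b\otimes\{0\}$, $b\otimes\{1\}$, $b\otimes[1]$; that of $C\star 1$ of the apex together with a base copy of the basis of $C$ and the cone cells $b\star 1$; that of $1\costar C$ dually; and that of $[C,1]$ of the two objects together with the $[b,1]$, for $b$ ranging over the basis of $C$.

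For the cocartesian assertions I would argue as follows. The top-right and bottom-left squares are, up to relabelling the collapsed endpoint, exactly the defining pushouts of the two cones, hence cocartesian by construction; alternatively they are instances of proposition \ref{prop:comparaison betwen otimes and suspension}. The central square, whose content is the identification of $[C,1]$ with the pushout of $C\star 1\leftarrow C\otimes[1]\rightarrow 1\costar C$, I would treat with the Kan condition of theorem \ref{theo:Kan condition}: every structure map is quasi-rigid (each sends a basis element either to a basis element of the same name or, on a collapsed endpoint, to an identity), so it suffices to check that the induced squares of pointed basis sets are cocartesian in each degree. This is immediate from the explicit bases, since the basis of $[C,1]$ is obtained from that of $C\otimes[1]$ by collapsing $C\otimes\{0\}$ and $C\otimes\{1\}$ onto the two objects, and collapsing both ends is precisely the pushout of the two one-ended collapses. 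Theorem \ref{theo:Kan condition} then upgrades these set-level pushouts to a cocartesian square in $\zocat$, and the two remaining rectangles follow from the three basic squares by the pasting law for pushouts.

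For the cartesian assertions I would use that $\zocat$ is closed under limits in $\omegacat$ and that the forgetful functor to globular sets preserves limits, so that every pullback in the statement is computed levelwise on cells. The two initial squares are then fibre computations: the cells of $1\costar C$ lying over the object $0$ of $[C,1]$ (respectively of $C\star 1$ over the object $1$) are exactly the identities on the apex, so the fibre is the terminal $\zo$-category $1$. For the five squares I would observe that the pointed basis squares produced above, being pushouts of pointed sets in which the overlap of the two images is exactly the shared sub-basis, are simultaneously pullbacks of pointed sets; translating this back through the array description of cells in definition \ref{defi:definition of composition and units of nu k} should show that a compatible pair of cells in the two outer objects has a unique common preimage, which is the required cartesian property.

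The step I expect to be the genuine obstacle is the cartesian direction of the central square $C\otimes[1]\to C\star 1$, $C\otimes[1]\to 1\costar C$, $C\star 1\to[C,1]$, $1\costar C\to[C,1]$, because there the maps are collapses rather than inclusions, so the naive union/intersection heuristic does not apply. The delicate point is that cells are not basis elements but coherent arrays, i.e. iterated composites of generators, and one must rule out that a pair $(x,y)$ of cells of $C\star 1$ and $1\costar C$ agreeing in $[C,1]$ could fail to lift to $C\otimes[1]$, or lift non-uniquely. This is exactly where atomicity and loop-freeness enter: they guarantee that a cell is determined by its support and that the orders and decompositions of theorem \ref{theo:decomposition de condition de Kan} track the generators faithfully, so that the set-level pullback of bases really does control the pullback of actual cells. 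I would make this precise by an induction on dimension using the differential formulas for $C\otimes[1]$, $C\star 1$ and $1\costar C$.
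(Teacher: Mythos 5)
Your overall route is the same as the paper's: transport the diagram to augmented directed complexes via Steiner's equivalence and check everything degreewise on bases. The cocartesian half of your argument is correct (the two defining pushouts, pasting, and theorem \ref{theo:Kan condition} for the central square all work, since the collapse maps are quasi-rigid and the pointed basis squares are indeed \emph{pushouts} of pointed sets), and your fibre computation for the two small squares, though terse, is a valid and slightly more direct alternative to the paper's uniform treatment of all squares.

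The genuine gap is in the cartesian direction of the five squares, exactly at the step you flag as the obstacle, and it starts one sentence earlier than you think: the claim that the pointed basis squares ``are simultaneously pullbacks of pointed sets'' is false. Take $C=\Db_1$ and look in degree $1$ of the central square. The element $e_1\star\emptyset\in(B_{\lambda C\star 1})_1$ and the copy of $e_1$ in $(B_{1\costar\lambda C})_1$ are both sent to $0$ in $(B_{[\lambda C,1]})_1$, because each of the two collapses $C\star 1\to[C,1]$ and $1\costar C\to[C,1]$ kills the surviving copy of $C$ in its source. Hence the pair $(e_1\star\emptyset,\,e_1)$ lies in the pointed-set pullback; yet the only basis element of $\lambda C\otimes[1]$ mapping to $e_1\star\emptyset$ is $e_1\otimes\{0\}$, and it maps to $0$, not to the copy of $e_1$, in $1\costar\lambda C$. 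So no support or atomicity argument can produce the basis-level lift: the statement you would be proving is untrue at that level, and this is precisely why the ``collapse'' squares are delicate. What is true, and what the proof needs, is that the square of abelian groups (and submonoids) $(\lambda C\otimes[1])_n$, $(\lambda C\star 1)_n$, $(1\costar\lambda C)_n$, $([\lambda C,1])_n$ is cartesian: the problematic pair does lift, but to the \emph{sum} $e_1\otimes\{0\}+e_1\otimes\{1\}$, which is not a basis element. Once cartesianness is established in $\CDA$ by this degreewise computation, there is no need to return to coherent arrays by hand or to run an induction on dimension: one applies $\nu$, which preserves pullbacks because it is a right adjoint, and which carries each corner back to the original $\zo$-category because $\nu$ commutes with the Gray operations on complexes with unitary loop-free bases and $\nu\lambda C\cong C$. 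This is how the paper concludes; as written, your proposal rests the central cartesian claim on a false set-level statement together with an unexecuted induction, so it does not yet constitute a proof.
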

\begin{proof}
The five squares are cocartesian by construction. 
Since the proofs of the cartesianess of all squares are identical, we will only show the proof for the square
\[\begin{tikzcd}
	{C\otimes[1]} & {C\star 1} \\
	{1\costar C} & {[C,1]}
	\arrow[from=1-2, to=2-2]
	\arrow[from=2-1, to=2-2]
	\arrow[from=1-1, to=2-1]
	\arrow[from=1-1, to=1-2]
\end{tikzcd}\]
To this extend, remark that for any integer $n$, the  following square is cartesian. 
\[\begin{tikzcd}
	{(B_{\lambda C\otimes[1]})_n\cup \{0\}} & {(B_{\lambda C\star 1})_n\cup \{0\}} \\
	{(B_{1\costar \lambda C})_n\cup \{0\}} & {(B_{[\lambda C,1]})_n\cup \{0\}}
	\arrow[from=1-1, to=1-2]
	\arrow[from=1-1, to=2-1]
	\arrow[from=1-2, to=2-2]
	\arrow[from=2-1, to=2-2]
\end{tikzcd}\]
This then implies that the following square in the category $\CDA$ is cartesian. 
\[\begin{tikzcd}
	{\lambda C\otimes[1]} & {\lambda C\star 1} \\
	{1\costar \lambda C} & {[\lambda C,1]}
	\arrow[from=1-1, to=1-2]
	\arrow[from=1-1, to=2-1]
	\arrow[from=1-2, to=2-2]
	\arrow[from=2-1, to=2-2]
\end{tikzcd}\]
As $\nu$ is a right adjoint, it preserves limits,  and as it commutes with Gray operation, this concludes the proof.
\end{proof}

\begin{lemma}
\label{lemma: pullback and sum}
Let $a$, $b$, $c$ and $d$ be four globular sums.
Suppose given a cartesian square:
\[\begin{tikzcd}
	a & b \\
	c & d
	\arrow[from=1-1, to=2-1]
	\arrow[from=2-1, to=2-2]
	\arrow[from=1-1, to=1-2]
	\arrow[from=1-2, to=2-2]
	\arrow["\lrcorner"{anchor=center, pos=0.125}, draw=none, from=1-1, to=2-2]
\end{tikzcd}\]
where the two horizontal morphisms are globular.
The two following squares are cartesian 
\[\begin{tikzcd}
	{b\coprod_aa\star 1} & {b\star 1} & {1\costar a\coprod_a b} & {1\costar b} \\
	{\Sigma c} & {\Sigma d} & {\Sigma c} & {\Sigma d}
	\arrow[from=1-1, to=2-1]
	\arrow[from=2-1, to=2-2]
	\arrow[from=1-1, to=1-2]
	\arrow[from=1-2, to=2-2]
	\arrow["\lrcorner"{anchor=center, pos=0.125}, draw=none, from=1-1, to=2-2]
	\arrow[from=1-3, to=2-3]
	\arrow[from=1-4, to=2-4]
	\arrow[from=2-3, to=2-4]
	\arrow[from=1-3, to=1-4]
	\arrow["\lrcorner"{anchor=center, pos=0.125}, draw=none, from=1-3, to=2-4]
\end{tikzcd}\]
\end{lemma}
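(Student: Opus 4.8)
The plan is to prove the first square cartesian and to deduce the second from it by the full duality $(\uvar)^\circ$, using Proposition~\ref{prop:star and duality} together with the identification $[C,1]^\circ\cong[C^\circ,1]$ and the fact that $(\uvar)^\circ$ is a limit-preserving involution of $\zocat$ which preserves globular morphisms and the cartesianness of the hypothesis square (applied to $a^\circ,b^\circ,c^\circ,d^\circ$). For the first square, the key move is to factor the cospan defining the pullback. The right-hand map $b\star 1\to\Sigma d$ factors as $b\star 1\xrightarrow{\rho_b}[b,1]\xrightarrow{\Sigma(b\to d)}\Sigma d$, where $\rho_C\colon C\star 1\to[C,1]$ is the canonical comparison appearing in Proposition~\ref{prop:cartesian squares} (applied at $C=b$). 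By the pasting law for pullbacks, $b\star 1\times_{\Sigma d}\Sigma c\cong b\star 1\times_{[b,1]}\big([b,1]\times_{\Sigma d}\Sigma c\big)$, so it suffices to establish two facts: (A) $[b,1]\times_{\Sigma d}\Sigma c\cong\Sigma a=[a,1]$, with projection to $[b,1]$ equal to $\Sigma(a\to b)$; and (B) $b\star 1\times_{[b,1]}[a,1]\cong b\coprod_a a\star 1$.

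For (A) I would argue that the suspension preserves this pullback. Since $\zocat$ is closed under limits in $\omegacat$ (an equivalence in a limit projects to equivalences, hence to identities, in each factor), pullbacks in $\zocat$ are computed on underlying globular sets. On globular sets the suspension is a levelwise shift: $[b,1]_0=\{0,1\}$ and $[b,1]_n=b_{n-1}\sqcup\{\Ib_0,\Ib_1\}$ for $n\ge 1$, and the maps $\Sigma(b\to d)$, $\Sigma(c\to d)$ send units to units and cells to cells. Hence the levelwise pullback splits, in each degree $n\ge 1$, as $(b_{n-1}\times_{d_{n-1}}c_{n-1})\sqcup\{\Ib_0,\Ib_1\}=a_{n-1}\sqcup\{\Ib_0,\Ib_1\}=[a,1]_n$, using the hypothesis $a=b\times_d c$; in degree $0$ it is $\{0,1\}$. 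This identification is compatible with sources, targets and units, giving $[a,1]\cong[b,1]\times_{\Sigma d}\Sigma c$, and the projection to $[b,1]$ is visibly $\Sigma$ of the projection $a=b\times_d c\to b$, i.e. $\Sigma(a\to b)$.

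For (B), which I expect to be the technical heart, I would pass through Steiner theory. All four objects $b\star 1$, $[b,1]$, $[a,1]$ and $b\coprod_a a\star 1$ admit loop-free atomic bases: this is standard for the cone and the suspension of a globular sum, and for the pushout it follows from Theorem~\ref{theo:Kan condition}, since $a\hookrightarrow b$ (globular) and the base inclusion $a\to a\star 1$ are quasi-rigid, so the pushout is computed on bases and $B_{b\coprod_a a\star 1}=B_b\sqcup\{\star\}\sqcup\{e\star 1:e\in B_a\}$. Both maps of the cospan, $\rho_b$ (which collapses the base of $b\star 1$ to the object $0$, sends the cone point to $1$, and sends $e\star 1$ to $[e,1]$) and $\Sigma(a\hookrightarrow b)$, are quasi-rigid. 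I would then compute, degree by degree, the pullback of pointed sets $(B_\bullet)_n\cup\{0\}$: in degree $0$ one gets the objects of $b$ (over $0$), the cone point (over $1$) and the basepoint; in degree $n>0$ one gets the $n$-dimensional base cells of $b$ (over the basepoint, since $\rho_b$ kills them), together with exactly those $f\star 1$ with $f$ in the image of $a$ (paired with $[\alpha^{-1}f,1]$), the cells $f\star 1$ with $f\notin\operatorname{im}(a)$ failing to lift because $[f,1]$ is not in the image of $[a,1]$. This matches $B_{b\coprod_a a\star 1}$ exactly. Since the maps are quasi-rigid and the squares of (pointed) bases are cartesian in every degree, the induced square of augmented directed complexes is cartesian, as in the proof of Proposition~\ref{prop:cartesian squares}; applying $\nu$, which preserves limits, yields (B).

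Combining (A) and (B) through the pasting law gives $b\coprod_a a\star 1\cong b\star 1\times_{\Sigma d}\Sigma c$, i.e. the first square is cartesian; the second follows by the duality argument of the first paragraph. The main obstacle is the bookkeeping in (B): correctly identifying the image of each basis element under $\rho_b$ and $\Sigma(a\hookrightarrow b)$, and verifying that the globularity of $a\hookrightarrow b$ is exactly what guarantees that the surviving cone cells are indexed by $B_a$ and that no spurious elements appear in the pullback.
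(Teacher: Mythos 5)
Your proof is correct, and it rests on the same pillars as the paper's own argument: the duality $(\uvar)^\circ$ together with proposition \ref{prop:star and duality} to deduce the second square from the first, quasi-rigidity of globular morphisms, theorem \ref{theo:Kan condition} to identify $b\coprod_a a\star 1$ with $\nu$ of the pushout of augmented directed complexes $\lambda b\coprod_{\lambda a}\lambda a\star 1$, and the fact that $\nu$, being a right adjoint, preserves limits. Where you genuinely differ is in the organization of the cartesianness computation. The paper proves in one shot, by a direct degree-wise calculation, that the square of augmented directed complexes with corners $\lambda b\coprod_{\lambda a}\lambda a\star 1$, $\lambda b\star 1$, $\Sigma\lambda c$, $\Sigma\lambda d$ is cartesian, and then applies $\nu$; it never decomposes the square. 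You instead factor $b\star 1\to \Sigma d$ through $\rho_b\colon b\star1\to[b,1]$ and invoke the pasting law, so that the hypothesis $a=b\times_d c$ enters only in your step (A), which you settle elementarily on underlying globular sets (using that $\zocat$ is closed under limits in $\omegacat$ and that these are created by globular sets), while all Steiner-theoretic bookkeeping is confined to step (B), a statement about the single cospan $b\star1\to[b,1]\leftarrow[a,1]$ that never mentions $c$ or $d$. This buys a cleaner separation of concerns, and (B) is a reusable fact about cones over globular monomorphisms; the price is an argument longer than the paper's single computation. One point deserves care in (B): the passage from ``the squares of pointed bases are cartesian in every degree'' to ``the square of augmented directed complexes is cartesian'' is not a formal consequence of quasi-rigidity --- theorem \ref{theo:Kan condition} concerns cocartesian squares only, and the implication fails for general quasi-rigid maps, since a map can kill a difference of two basis elements without killing either one. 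It does hold here, because $\lambda\rho_b$ kills each positive-dimensional base cell individually and is injective on the cone cells, while the suspension of the inclusion $a\to b$ is injective on bases; your degree-by-degree enumeration is exactly the verification needed, and this is the same kind of direct check that the paper's proof (and proposition \ref{prop:cartesian squares}) relies on without further comment.
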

\begin{proof}
We show only the cartesianess of the first square, as the cartesianess of the second one follows by applying the duality $(\uvar)^\circ$. A direct computation shows that for any integer $n$, the following square is cartesian
\[\begin{tikzcd}
	{\lambda b\coprod_{\lambda a}\lambda a\star 1} & {\lambda b\star 1} \\
	{\Sigma\lambda c} & {\Sigma \lambda d}
	\arrow[from=1-1, to=2-1]
	\arrow[from=2-1, to=2-2]
	\arrow[from=1-1, to=1-2]
	\arrow[from=1-2, to=2-2]
	\arrow["\lrcorner"{anchor=center, pos=0.125}, draw=none, from=1-1, to=2-2]
\end{tikzcd}\]
To conclude, one has to show that the canonical morphism
$$ \nu(\lambda b)\coprod_{\nu(\lambda a)}\nu(\lambda a\star 1)\to \nu (\lambda b\coprod_{\lambda a}\lambda a\star 1) $$
is an equivalence. 	
As $a\to b$ is globular, all the morphisms of the following cocartesian square are quasi-rigid. 
\[\begin{tikzcd}
	{\lambda a} & {\lambda b} \\
	{\lambda a\star 1 } & {\lambda b\coprod_{\lambda b}\lambda a\star 1 }
	\arrow[from=1-1, to=2-1]
	\arrow[from=1-1, to=1-2]
	\arrow[from=1-2, to=2-2]
	\arrow[from=2-1, to=2-2]
\end{tikzcd}\]
The results then follow from an application of theorem \ref{theo:Kan condition}.
\end{proof}

\vspace{1cm}

We are now willing to show the following  theorem: 
\begin{theorem}
\label{theo:appendince unicity of operation}
Let $F$ be an endofunctor of $\zocat$ such that the induced functor $\zocat\to \zocat_{F(\emptyset)/}$ is colimit preserving and $\psi$ an invertible natural transformation between $F(\Db_n)$ and $G(\Db_n)$ where $G$ is either the Gray cylinder, the Gray cone, the Gray $\circ$-cone or an iterated suspension.

Then, the natural transformation $\psi$ can be uniquely extended to an natural transformation between $F$ and $G$.  Moreover, this natural transformation is unique.
\end{theorem}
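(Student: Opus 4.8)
The plan is to reduce the statement to a purely combinatorial one about the category $\Theta$, construct the transformation on globular sums by a spine decomposition into globes, and then spend the real effort on checking naturality against the non-globular morphisms. First I would record that all four candidate functors $G$ preserve colimits when viewed as functors $\zocat\to\zocat_{G(\emptyset)/}$: this is immediate from the definitions of the Gray tensor product, the Gray (co)cone and the suspension, which are built from colimit-preserving operations. Together with the presentation $\zocat\simeq\Psh{\Theta}_{\W}$ (theorem \ref{theo:theta and ocat} and corollary \ref{cor:changing theta}) this means that both $F$ and $G$ are colimit-preserving functors whose value and behaviour is entirely determined by their restrictions to $\Theta$, and likewise that natural transformations $F\Rightarrow G$ correspond bijectively to natural transformations $F_{|\Theta}\Rightarrow G_{|\Theta}$. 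The value on $\emptyset$ is handled separately: $G(\emptyset)$ is computed directly ($\emptyset$, $1$, $\{0\}\amalg\{1\}$, etc.) and matched with $F(\emptyset)$ using $\psi$ on $\Db_0$ and the map $\partial\Db_0=\emptyset\to\Db_0$, fixing the common slice. Thus both existence and uniqueness reduce to exhibiting a unique natural isomorphism $F_{|\Theta}\cong G_{|\Theta}$ extending $\psi$.

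Next I would construct the components on $\Theta$ by induction on dimension using the spine. Since $\Sp_a\to a$ lies in $\W$ (definition \ref{defi:definition of W}), in $\zocat$ one has $a\cong\colim$ of the spine diagram; iterating through the suspensions and wedges occurring in $\Sp_{[\mathbf a,n]}$, every globular sum becomes the colimit in $\zocat$ of a diagram valued in the full subcategory of globes, whose structure maps are suspended face maps $i_k^{\epsilon}$ (hence genuine globe morphisms). As $F$ and $G$ both preserve this colimit, and $\psi$ is a natural isomorphism on globes, I set $\Psi_a:=\colim\psi$; each $\Psi_a$ is an isomorphism because $\psi$ is. Uniqueness of the extension is then automatic: the coprojections $F(\Db_k)\to F(a)$ of the spine colimit are jointly epimorphic, so any natural transformation restricting to $\psi$ on globes is forced on $F(a)$ to coincide with $\Psi_a$.

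The genuine difficulty, and the step I expect to be the main obstacle, is proving that the family $\{\Psi_a\}_{a\in\Theta}$ is natural with respect to \emph{all} morphisms of $\Theta$. Naturality against globular morphisms is formal, since these respect the spine and hence the chosen colimit presentation. By the factorization of proposition \ref{prop:algebraic ortho to globular} it therefore suffices to treat algebraic morphisms, and these are exactly the ones that encode composition; here $F$ is a priori completely uncontrolled, so naturality cannot follow from colimit-preservation alone. This is precisely where the interaction formulas are indispensable: propositions \ref{prop:appendice formula for otimes} and \ref{prop:appendice formula for star} present $G$ applied to a suspension $[C,1]$ as an explicit colimit whose pieces are suspensions of strictly lower complexity ($C\otimes[1]$, $C\otimes\{\epsilon\}$, $C\star 1$, $1\costar C$), while proposition \ref{prop:comparaison betwen otimes and suspension} realises $[C,1]$ itself as a pushout. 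I would run an induction on dimension in which these formulas, transported to the $F$-side via colimit-preservation, let me rewrite the algebraic generators in terms of the whiskerings $\triangledown$ and lower-dimensional data, so that naturality against an algebraic morphism follows from the inductive hypothesis together with the compatibility of $\triangledown$ with the formulas.

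Finally I would confirm invertibility and give an independent check on uniqueness through rigidity. Each $\Psi_a$ is an isomorphism by construction, and propositions \ref{prop:non trivial automorphisme 0}, \ref{prop:non trivial automorphisme 1} and \ref{prop:non trivial automorphisme 2}, transported along the Steiner equivalence of theorem \ref{theorem:steiner}, show that $G(a)$ admits no non-trivial automorphisms for every globular sum $a$ (the base case being the rigidity of globes). Consequently any two extensions $\Psi,\Psi'$ satisfy $\Psi'_a\circ\Psi_a^{-1}\in\operatorname{Aut}(G(a))=\{\mathrm{id}\}$, giving $\Psi=\Psi'$ and re-proving uniqueness. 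Assembling the reduction to $\Theta$, the spine construction, the inductive naturality argument, and this rigidity yields the unique natural isomorphism $F\cong G$ extending $\psi$.
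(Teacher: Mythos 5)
Your overall architecture runs parallel to the paper's: extend $\psi$ from globes to globular sums by writing each globular sum as a colimit of globes (the paper does this too, using lemma \ref{lemma:non trivial automorphisme 4} to make the extension canonical), note that naturality against globular morphisms is formal, reduce the remaining work to algebraic morphisms via proposition \ref{prop:algebraic ortho to globular}, and extend to all of $\zocat$ by colimits at the end. The gap sits exactly at the step you yourself flag as the crux: naturality against the algebraic generators, namely the units $\Ib_n:\Db_{n+1}\to\Db_n$ and the compositions $\triangledown_{k,n}:\Db_n\to\Db_n\coprod_k\Db_n$. Your plan is to use propositions \ref{prop:appendice formula for otimes}, \ref{prop:appendice formula for star} and \ref{prop:comparaison betwen otimes and suspension}, ``transported to the $F$-side via colimit-preservation''. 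But those propositions decompose the \emph{values} $G([C,1])$ as colimits; they are not colimit decompositions of the argument $[C,1]$ inside $\zocat$, so the hypothesis that $F$ preserves colimits gives no means of transporting them: there is no a priori relation between $F([C,1])$ and $F(C)$ or $F(C\otimes[1])$, since $F$ is otherwise arbitrary. If instead you transport the decomposition of $G(a)$ across your isomorphism $\Psi_a$, then checking that $\Psi\circ F(\triangledown)\circ\Psi^{-1}$ respects that decomposition piece by piece is precisely the naturality square you are trying to prove, so the argument becomes circular. Note also that your mechanism is silent when $G$ is an iterated suspension, a case the theorem covers but for which the cited formulas say nothing.

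What is actually needed, and what the paper supplies, is rigidity at the level of \emph{morphisms} rather than objects: lemmas \ref{lemma:unit forced} and \ref{lemma:comp forced} show, by applying $\lambda$ and computing directly in augmented directed complexes, that any morphism $G(\Db_{n+1})\to G(\Db_n)$ (resp.\ $G(\Db_n)\to G(\Db_n\coprod_k\Db_n)$) agreeing with the expected maps after precomposition with $G(i_n^\pm)$ must equal $G(\Ib_n)$ (resp.\ $G(\triangledown_{n,k})$). Since $\psi_{\mathrm{cod}}\circ F(u)\circ\psi_{\mathrm{dom}}^{-1}$ satisfies these boundary constraints by the already-established globular naturality, the naturality squares for units and compositions commute automatically, and lemma \ref{lemma:sub categgory of Theta} then closes the argument. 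The object-level rigidity you invoke at the end (propositions \ref{prop:non trivial automorphisme 0}--\ref{prop:non trivial automorphisme 2}) cannot substitute for this: knowing $\mathrm{Aut}(G(a))=\{\mathrm{id}\}$ constrains isomorphisms $F(a)\to G(a)$, but units and compositions are not isomorphisms, so nothing there forces their naturality squares to commute. The rest of your proposal (reduction to $\Theta$, the spine colimit construction, uniqueness via jointly epimorphic coprojections) is sound and matches the paper, but without an argument doing the job of lemmas \ref{lemma:unit forced} and \ref{lemma:comp forced} the proof is incomplete.
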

The previous theorem implies that the equations given in propositions \ref{prop:appendice formula for otimes} and \ref{prop:appendice formula for star} characterize respectively the Gray cylinder, the Gray cone and the Gray $\circ$-cone.

\begin{lemma}
\label{lemma:sub categgory of Theta}
A sub category $\Theta'$ of $\Theta$, stable by colimit is equal to $\Theta$ iff
\begin{enumerate}
\item for any integer $n$ and $\alpha\in\{-,+1\}$, $i_n^{\alpha}:\Db_n\to \Db_{n+1}$ belongs to $\Theta'$.
\item For any integer $n$, the unit $\Ib_n:\Db_{n+1}\to \Db_n$ belongs to $\Theta'$.
\item For any pair of integers $k<n$, the composition $\triangledown_{k,n}:\Db_n\to \Db_n\coprod_{k}\Db_n$ belongs to $\Theta'$.
\end{enumerate}
\end{lemma}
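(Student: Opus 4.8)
The plan is to prove the nontrivial ``if'' direction: assuming conditions (1)--(3), every object and every morphism of $\Theta$ lies in $\Theta'$ (the ``only if'' direction being immediate since $\Theta'=\Theta$). Since the three displayed morphisms have all globes as their sources and targets, conditions (1)--(3) already force every globe $\Db_n$ to be an object of $\Theta'$; because a subcategory is closed under composition, $\Theta'$ also contains every composite of the maps $i_n^\alpha$, $\Ib_n$ and $\triangledown_{k,n}$, in particular every globular, degeneracy and comultiplication morphism \emph{between globes}.

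The first step is to reduce to globes at the level of objects. By the definition of globular sums (definition \ref{defi:les sommes glob}) and the fact that the suspension $[\uvar,1]$ preserves colimits (definition \ref{defi:suspension zocat}), every globular sum is a colimit of globes in which the transition maps and coprojections are cofaces, point-inclusions $i_0^\pm$ and wedge inclusions. I will check by induction on dimension that all these structural maps belong to $\Theta'$: for instance a point-inclusion $i_0^\pm\colon\Db_0\to[c,1]$ factors, for any presentation $[c,1]=\operatorname{colim}_i\Db_{m_i+1}$ obtained from $c=\operatorname{colim}_i\Db_{m_i}$, as a coface $\Db_0\to\Db_{m_i+1}$ followed by a coprojection, both in $\Theta'$ by condition (1) and the inductive hypothesis. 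Consequently each globular sum is the colimit of a diagram of globes whose coprojections lie in $\Theta'$, and by stability under colimit every globular sum is an object of $\Theta'$.

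The heart of the argument is then a single application of colimit stability. Fix a morphism $f\colon a\to b$ and write $a=\operatorname{colim}_i\Db_{m_i}$ with coprojections in $\Theta'$ as above; it suffices to prove that each leg $\Db_{m_i}\to a\xrightarrow{f}b$, i.e.\ each cell of $b$, lies in $\Theta'$, for then $f$ is the induced map on the colimit and is in $\Theta'$. Here I use that $b$ is a polygraph: every cell of $b$ is a composite of iterated units of generators. Each generator is the inclusion of a globe $\Db_q\hookrightarrow b$, which is globular and is treated, by induction on the dimension of $b$, exactly like the structural maps above (reducing via suspension-preserves-colimits and condition (1) to cofaces); each iterated unit is obtained by applying $\Ib$, hence lies in $\Theta'$ by condition (2); and each composition step is obtained by precomposing with a comultiplication $\triangledown_{k,n}$, hence lies in $\Theta'$ by condition (3). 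Closure under composition then gives every cell of $b$, and therefore $f$, in $\Theta'$.

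For cleaner bookkeeping one may instead invoke proposition \ref{prop:algebraic ortho to globular} to factor $f=g\circ p$ with $p$ algebraic and $g$ globular, and prove separately that all globular morphisms lie in $\Theta'$ (induction on dimension, using condition (1) and suspension) and that all algebraic morphisms lie in $\Theta'$ (they are assembled from the units $\Ib_n$ and comultiplications $\triangledown_{k,n}$ of conditions (2)--(3) under colimit). I expect the main obstacle, in either route, to be the inductive control of suspensions of morphisms: one must ensure that $[h,1]\in\Theta'$ whenever $h\in\Theta'$, which is exactly where the compatibility of $[\uvar,1]$ with colimits, together with the identity $[i_k^\alpha,1]=i_{k+1}^\alpha$ supplied by condition (1), is essential. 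Once this suspension-stability of $\Theta'$ is secured, gluing the suspended pieces along $0$-cells via the comultiplications produces every globular and every algebraic morphism, and the factorization of proposition \ref{prop:algebraic ortho to globular} concludes the proof.
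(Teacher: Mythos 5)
Your proof is correct in substance, and it is worth separating its two strands. The paper's own proof is the three-line argument you relegate to your final paragraph as the "cleaner bookkeeping" option: it asserts that globular morphisms are compositions of pushouts along the cofaces $i_n^\alpha$, that algebraic morphisms are compositions of colimits of morphisms of shape $\triangledown_{k,n}$ and $\Ib_n$, and concludes by the algebraic-globular factorization of proposition \ref{prop:algebraic ortho to globular}. Your primary route is genuinely different in organization: you present the source $a$ as a colimit of globes in $\Theta'$, reduce $f\colon a\to b$ to its legs $\Db_{m_i}\to b$, and then run a structural induction on the polygraphic expression of each such cell (generators are globular inclusions handled by suspension, units come from $\Ib$, binary compositions from $\triangledown_{k,n}$ together with the induced map out of the pushout $\Db_m\coprod_{\Db_k}\Db_m$). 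This is an inline, hands-on re-derivation of the same two generation statements that the paper simply cites; it is more self-contained but longer, and it makes visible exactly where each hypothesis (1)--(3) enters at the level of individual cells.

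One point needs repair. Your formulation of the key technical step --- "one must ensure that $[h,1]\in\Theta'$ whenever $h\in\Theta'$" --- is circular as stated: for an arbitrary subcategory $\Theta'$ satisfying (1)--(3), suspension-stability of $\Theta'$ can only be known once the lemma itself is proved. The standard fix is either to replace $\Theta'$ by the smallest subcategory containing the three classes of maps and stable under colimits (proving the lemma for this minimal subcategory suffices, since it is contained in any $\Theta'$), or to strengthen your inductive hypothesis from "lies in $\Theta'$" to "is built from the maps $i_k^\alpha$, $\Ib_n$, $\triangledown_{k,n}$ by composition and colimits"; this stronger property is preserved by $[\uvar,1]$ because suspension carries each generating map to a map of the same shape ($[i_k^\alpha,1]=i_{k+1}^\alpha$, $[\Ib_n,1]=\Ib_{n+1}$, $[\triangledown_{k,n},1]=\triangledown_{k+1,n+1}$) and commutes with the relevant colimits computed under the two endpoints. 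With that adjustment your induction goes through, and your argument is a valid, if less economical, alternative to the paper's.
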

\begin{proof}
Suppose that $\Theta'$ fulfills these conditions.
As globular morphisms are compositions of pushouts along morphisms of shape $i_n^{\alpha}$, they belong to $\Theta'$.
 As algebraic morphisms are compositions of colimits of morphism of shape $\triangledown_{k,n}$ or $\Ib_n$, they belong to $\Theta'$.
The result then follows from proposition \ref{prop:algebraic ortho to globular} that states that every morphism factors as an algebraic morphism followed by a globular morphism.
\end{proof}

\begin{lemma}
\label{lemma:unit forced}
Let $n$ be an integer, and $G$ be either the Gray cylinder, the Gray cone, the Gray $\circ$-cone or an iterated suspension, and suppose
given a square 
\[\begin{tikzcd}
	{G(\Db_n)} \\
	& {G(\Db_{n+1})} & {G(\Db_n)} \\
	{G(\Db_n)}
	\arrow["f", from=2-2, to=2-3]
	\arrow["{G(i_n^-)}", from=1-1, to=2-2]
	\arrow["{G(i_n^+)}"', from=3-1, to=2-2]
	\arrow["id", curve={height=-18pt}, from=1-1, to=2-3]
	\arrow["id"', curve={height=18pt}, from=3-1, to=2-3]
\end{tikzcd}\]
Then, the morphism $f$ is $G(\Ib_n)$.
\end{lemma}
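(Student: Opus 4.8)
The plan is to prove uniqueness, since existence is immediate: because $\Ib_n\circ i_n^\epsilon = id_{\Db_n}$ in $\zocat$ for $\epsilon\in\{-,+\}$ and $G$ is a functor, $G(\Ib_n)$ already fits into the given square. So it suffices to show that any $f$ as in the statement coincides with $G(\Ib_n)$, and I will do this by comparing the two morphisms generator by generator on the polygraph $G(\Db_{n+1})$.

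First I would record that $f$ and $G(\Ib_n)$ agree on $G(\partial\Db_{n+1})$. Since $G$ preserves colimits (each of the Gray cylinder, Gray cone, Gray $\circ$-cone and iterated suspension does) and $\partial\Db_{n+1}$ is the pushout of $\Db_n\leftarrow \partial\Db_n\rightarrow\Db_n$ along $i_n^-$ and $i_n^+$, the image of $G(\partial\Db_{n+1})$ in $G(\Db_{n+1})$ is the union of the images of $G(i_n^-)$ and $G(i_n^+)$. On either image both $f$ and $G(\Ib_n)$ act as the retraction forced by the splitting (an element $G(i_n^\epsilon)(y)$ is sent to $y$), so they agree there. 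Next I would enumerate the generators of the polygraph $G(\Db_{n+1})$ that do not lie in $G(\partial\Db_{n+1})$: these are exactly the ones built from the top cell $e_{n+1}$. Using the explicit descriptions this gives, for an iterated suspension $\Sigma^m$, the single top cell $e_{n+1+m}$; for the Gray cylinder (remark \ref{defi:explicit Dbn otiomes [1]}), the cells $e_{n+1}\otimes\{0\}$, $e_{n+1}\otimes\{1\}$ and $e_{n+1}\otimes[1]$; and for the Gray cone, $e_{n+1}\star\emptyset$ and $e_{n+1}\star 1$ (the $\circ$-cone being reduced to the cone through the duality of proposition \ref{prop:star and duality}). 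I would treat these generators by increasing dimension, so that whenever I reach a generator, its boundary involves only generators already shown to agree.

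The forcing then splits into two cases. For a generator whose dimension strictly exceeds $\dim G(\Db_n)$ (the suspension top cell, $e_{n+1}\otimes[1]$, and $e_{n+1}\star 1$), both $f$ and $G(\Ib_n)$ must send it to a unit of $G(\Db_n)$, and a unit is determined by its boundary, on which the two morphisms already agree; hence they agree. The delicate case is a generator $g$ of dimension exactly $\dim G(\Db_n)=n+1$, namely $e_{n+1}\otimes\{\epsilon\}$ or $e_{n+1}\star\emptyset$. Here the retraction conditions force $\pi_n^-(f(g))=\pi_n^+(f(g))$: indeed $\pi_n^\pm(g)$ equals $e_n^\pm\otimes\{\epsilon\}$ (resp.\ $e_n^\pm\star\emptyset$), and the equalities $f\circ G(i_n^-)=f\circ G(i_n^+)=id$ send both $e_n^-\otimes\{\epsilon\}$ and $e_n^+\otimes\{\epsilon\}$ (resp.\ $e_n^\pm\star\emptyset$) to the single cell $e_n\otimes\{\epsilon\}$ (resp.\ $e_n\star\emptyset$). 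Thus $f(g)$ is an $(n+1)$-cell of $G(\Db_n)$ with equal $n$-boundary. I expect this to be the main obstacle, and I would resolve it by a short computation in Steiner's complex: applying $\lambda$, the group $\lambda(G(\Db_n))_{n+1}$ is freely generated by the class of the unique top generator ($e_n\otimes[1]$, resp.\ $e_n\star 1$), whose differential is nonzero. Since $\partial_n[f(g)]_{n+1}=[\pi_n^+ f(g)]_n-[\pi_n^- f(g)]_n=0$ and the group is torsion-free, $[f(g)]_{n+1}=0$, so $f(g)$ is a unit, matching $G(\Ib_n)(g)$. Having matched $f$ and $G(\Ib_n)$ on all generators of $G(\Db_{n+1})$, I conclude that $f=G(\Ib_n)$.
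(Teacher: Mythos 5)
Your proof is correct and follows essentially the same route as the paper: the paper also reduces to checking the images of the new top generators and, after applying $\lambda$ (legitimate since these objects have atomic and loop-free bases), observes that $\partial f(e_{n+1}\otimes\{\alpha\})=0$ forces $f(e_{n+1}\otimes\{\alpha\})=0$, while $f(e_{n+1}\otimes[1])=0$ holds for degree reasons, exactly your two cases. The only cosmetic difference is that the paper transports the whole problem to augmented directed complexes at once and treats only the cylinder (declaring the other cases similar), whereas you argue generator-by-generator at the polygraph level and invoke the Steiner computation just for the dimension-$(n+1)$ generators.
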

\begin{proof}
As the proof for any possibilities of $G$ are similar, we will show only the case $G:=\uvar\otimes [1]$.
As for any integer $n$, $\Db_n\otimes[1]$ admits a loop free and atomic basis, we can then show the desired assertion after applying the functor $\lambda$.
Remark first that the assumption implies that $\partial f((e_{n+1}\otimes \{\alpha\})=0$, and so $f((e_{n+1}\otimes \{\alpha\}) =0$. We also have $f(e_{n+1}\otimes[1])=0$ as $\lambda (\Db_n\otimes[1])_{n+2} =0$. This implies that $f$ is equal to $\lambda(G(\Ib_n))$.
\end{proof} 
\begin{lemma}
\label{lemma:comp forced}
Let $k<n$ be two integers, and $G$ be either the Gray cylinder, the Gray cone, the Gray $\circ$-cone or an iterated suspension, and suppose
given a square 
\[\begin{tikzcd}
	{G(\Db_{n-1})} && { G(\Db_{n-1}\coprod_k\Db_{n-1})} \\
	& {G(\Db_n)} && { G(\Db_{n}\coprod_k\Db_n)} \\
	{G(\Db_{n-1})} && { G(\Db_{n-1}\coprod_k\Db_{n-1})}
	\arrow["f", from=2-2, to=2-4]
	\arrow["{G(i_n^-)}"{description}, from=1-1, to=2-2]
	\arrow["{G(i_n^+)}"{description}, from=3-1, to=2-2]
	\arrow["{G(i_n^+)\coprod_k G(i_n^+)}"{description}, from=3-3, to=2-4]
	\arrow["{G(i_n^-)\coprod_k G(i_n^-)}"{description}, from=1-3, to=2-4]
	\arrow["{\triangledown_{n-1,k}}"', from=3-1, to=3-3]
	\arrow["{\triangledown_{n-1,k}}", from=1-1, to=1-3]
\end{tikzcd}\]
where we set $\triangledown_{n,n}:=id$.
Then, the morphism $f$ is $G(\triangledown_{n,k})$.
\end{lemma}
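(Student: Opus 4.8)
The plan is to follow the proof of Lemma \ref{lemma:unit forced} closely. First I would reduce to the case $G:=\uvar\otimes[1]$: the Gray cone and Gray $\circ$-cone are treated in the same way, and the iterated suspension case $G=\Sigma^m$ collapses to the case of the identity functor, since $\Sigma^m$ preserves the pushouts defining $\Db_n\coprod_k\Db_n$ and satisfies $\Sigma^m\triangledown_{n,k}=\triangledown_{n+m,\,k+m}$ and $\Sigma^m i_n^\epsilon=i_{n+m}^\epsilon$. Since $\Db_n\otimes[1]$ and $(\Db_n\coprod_k\Db_n)\otimes[1]$ are globular, hence admit loop-free and atomic bases, Theorem \ref{theorem:steiner} reduces the claim to the equality $\lambda f=\lambda(\triangledown_{n,k}\otimes[1])$, which can be checked on the basis of $\lambda(\Db_n\otimes[1])$. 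By the explicit description in \ref{defi:explicit Dbn otiomes [1]} this basis consists of the elements $e_j^\epsilon\otimes c$ with $j\le n$ and $c\in\{\{0\},\{1\},[1]\}$; writing $e_n'$ and $e_n''$ for the two top cells of $\Db_n\coprod_k\Db_n$, the target value is $\lambda(\triangledown_{n,k}\otimes[1])(e_n\otimes c)=e_n'\otimes c+e_n''\otimes c$.

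Next I would exploit the two commuting squares of the statement, which prescribe $f\circ G(i_n^-)$ and $f\circ G(i_n^+)$ and force them to agree with the composites built from $\triangledown_{n,k}\otimes[1]$. As the images of $i_n^-$ and $i_n^+$ together cover $\partial\Db_n$, this at once pins down $\lambda f(e_j^\epsilon\otimes c)=\lambda(\triangledown_{n,k}\otimes[1])(e_j^\epsilon\otimes c)$ for all $j<n$. The whole content of the lemma is therefore concentrated in the three generators $e_n\otimes\{0\}$, $e_n\otimes\{1\}$ and $e_n\otimes[1]$ lying over the top cell $e_n$, which is not in $\partial\Db_n$.

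The main obstacle is the pair $e_n\otimes\{0\}$, $e_n\otimes\{1\}$: unlike in Lemma \ref{lemma:unit forced}, where the problematic generator landed in a degree that was either one-dimensional or zero, here they land in degree $n$ of $\lambda(\Db_n\coprod_k\Db_n)\otimes\lambda[1]$, which is large, so a pure dimension count will not suffice and I must bring in positivity and acyclicity. Fixing $\alpha\in\{0,1\}$ and setting $X:=\lambda f(e_n\otimes\{\alpha\})$, I would record that $X$ is a positive element of degree $n$ whose differential, by the previous step, equals $\partial(e_n'\otimes\{\alpha\}+e_n''\otimes\{\alpha\})$, and in particular has trivial $\{1-\alpha\}$- and $[1]$-components. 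Decomposing the target complex into its $\{0\}$-, $\{1\}$- and $[1]$-components and inspecting these components of $\partial X$, I expect the positivity of $X$ together with the acyclicity of the Steiner complex $\lambda(\Db_n\coprod_k\Db_n)$ of the globular sum $\Db_n\coprod_k\Db_n$ to force the $\{1-\alpha\}$- and $[1]$-components of $X$ themselves to vanish; the acyclicity would follow from a Mayer–Vietoris argument applied to the presentation of $\lambda(\Db_n\coprod_k\Db_n)$ as a pushout of the acyclic complexes $\lambda\Db_n$ along $\lambda\Db_k$ furnished by Theorem \ref{theo:Kan condition}. Once $X$ is known to lie in the rank-two subgroup spanned by $e_n'\otimes\{\alpha\}$ and $e_n''\otimes\{\alpha\}$, on which $\partial$ is injective, the boundary condition pins it down to $e_n'\otimes\{\alpha\}+e_n''\otimes\{\alpha\}$, as wanted.

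Finally, having fixed $\lambda f$ on $e_n\otimes\{0\}$ and $e_n\otimes\{1\}$, I would note that the differential of $\lambda f(e_n\otimes[1])$ is then completely determined and equals $\partial(e_n'\otimes[1]+e_n''\otimes[1])$. In degree $n+1$ the complex $\lambda(\Db_n\coprod_k\Db_n)\otimes\lambda[1]$ is freely generated by $e_n'\otimes[1]$ and $e_n''\otimes[1]$, and $\partial$ is injective there, exactly as in Lemma \ref{lemma:unit forced}; this forces $\lambda f(e_n\otimes[1])=e_n'\otimes[1]+e_n''\otimes[1]$. Hence $\lambda f=\lambda(\triangledown_{n,k}\otimes[1])$ and, by Theorem \ref{theorem:steiner}, $f=G(\triangledown_{n,k})$.
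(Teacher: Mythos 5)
Your proposal is correct and follows essentially the same route as the paper's proof: reduce to the case $G:=\uvar\otimes[1]$, pass to augmented directed complexes via $\lambda$ (legitimate since these objects admit loop-free and atomic bases), observe that the two commuting squares determine $\partial\lambda f$ on the three generators lying over $e_n$, and conclude that this forces $\lambda f$ to coincide with $\lambda(\triangledown_{n,k}\otimes[1])$. The only difference is one of detail: where the paper simply asserts ``this forces the equalities'', you spell out the forcing via positivity and inspection of the $\{0\}$-, $\{1\}$- and $[1]$-components — which is indeed the operative mechanism — whereas the appeal to acyclicity of $\lambda(\Db_n\coprod_k\Db_n)$ and a Mayer--Vietoris argument is dispensable, since positivity of the $\{1-\alpha\}$- and $[1]$-components together with the explicit differentials of the two top-dimensional basis elements already yields their vanishing.
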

\begin{proof}
As the proof for any possibilities of $G$ are similar, we will show only the case $G:=\uvar\otimes [1]$.
As for any integer $n$, $\Db_n\otimes[1]$ admits a loop free and atomic basis, we can then show the desired assertion after applying the functor $\lambda$. Suppose first that $k<n-1$.
By assumption, we have 
$$
\begin{array}{rcl}
\partial f(e_n\otimes \{\alpha\})&=& \partial (e_n^0\otimes \{\alpha\} +e_n^1\otimes \{\alpha\})\\
\partial f(e_n\otimes [1])&=& \partial (e_n^0\otimes [1]) + \partial (e_n^1\otimes [1]) \\
\end{array}
$$
This forces the equalities
$$
\begin{array}{rcl}
 f(e_n\otimes \{\alpha\})&=& e_n^0\otimes \{\alpha\} +e_n^1\otimes \{\alpha\}\\
 f(e_n\otimes [1])&=& e_n^0\otimes [1] + e_n^1\otimes [1] \\
\end{array}
$$
and $f$ is then equal to $\triangledown_{n,k}\otimes[1]$. The case $k=n-1$ is similar.
\end{proof}

\begin{lemma}
\label{lemma:non trivial automorphisme 4}
The set of elements of $\zocatB$  admitting no non-trivial automorphisms is  stable by the Gray cylinder, the Gray cone, the Gray $\circ$-cone, and by the  iterated suspensions,
and contains globular sums.
\end{lemma}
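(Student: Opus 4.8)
The plan is to transport everything along Steiner's equivalence. By Theorem \ref{theorem:steiner} the adjunction $\lambda \dashv \nu$ restricts to an equivalence $\lambda : \zocatB \simeq \CDAB : \nu$, and an equivalence of categories induces a group isomorphism $\mathrm{Aut}(C) \cong \mathrm{Aut}(\lambda C)$ for every object. Hence a $\zo$-category $C \in \zocatB$ admits no non-trivial automorphisms if and only if the augmented directed complex $\lambda C$ does. All the technical content on the side of $\CDAB$ is already available: Propositions \ref{prop:non trivial automorphisme 0}, \ref{prop:non trivial automorphisme 1} and \ref{prop:non trivial automorphisme 2} state precisely that the Gray cylinder $\uvar \otimes [1]$, the Gray cone $\uvar \star 1$ and the suspension $[\uvar,1]$ preserve the absence of non-trivial automorphisms (the last one requiring a non-null input). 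It therefore suffices to check that globular sums lie in the set, and to match each Gray operation on $\zocatB$ with its counterpart on $\CDAB$ through $\lambda$.

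For globular sums: they admit atomic loop-free bases, so they lie in $\zocatB$, and I would prove rigidity by induction on the dimension. The object $[0]$ is terminal, hence rigid. For $[\mathbf{a},n]$ with $\mathbf{a} = \{a_0,\dots,a_{n-1}\}$, Remark \ref{remark:morphism of theta} describes an endomorphism $\phi$ as the data of a map $f : [n] \to [n]$ in $\Delta$ together with maps $a_i \to \prod_{f(i)\le k < f(i+1)} a_k$, and the assignment $\phi \mapsto f$ underlies a functor $\Theta \to \Delta$. If $\phi$ is invertible, comparing underlying maps shows $f$ is invertible in $\Delta$, whence $f = \mathrm{id}_{[n]}$ since $[n]$ has no non-trivial automorphisms there; the remaining data then reduces to automorphisms $a_i \to a_i$, which are identities by the induction hypothesis because $|a_i| < |[\mathbf{a},n]|$. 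Thus $\phi = \mathrm{id}$.

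For the operations: since $\nu$ is monoidal (Theorem \ref{theo:otimes in zocat}), so is its inverse $\lambda$, giving $\lambda(C \otimes [1]) \cong \lambda C \otimes [1]$; moreover $\lambda$ commutes with the Gray cone and with the suspension, these being defined by the same colimits on both sides (Construction \ref{cons:Gray cone for omega cat}, Proposition \ref{prop:comparaison betwen otimes and suspension}, equations \eqref{eq:defin of cstar costar CDA} and \eqref{eq:def of suspension cda}) and $\lambda$ preserving colimits and the tensor product. Combining these identifications with the equivalence and Propositions \ref{prop:non trivial automorphisme 0}, \ref{prop:non trivial automorphisme 1}, \ref{prop:non trivial automorphisme 2} shows that the Gray cylinder, the Gray cone and the suspension preserve the set, and iterated suspensions follow by iteration. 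For the Gray $\circ$-cone I would not argue directly but reduce to the cone via duality: Proposition \ref{prop:star and duality} gives $1 \costar C \cong (C^\circ \star 1)^\circ$, and the full duality $(\uvar)^\circ$ is an auto-equivalence of $\zocat$ preserving $\zocatB$ and the absence of non-trivial automorphisms, so rigidity of $C$ forces rigidity of $C^\circ$, then of $C^\circ \star 1$, and finally of $1 \costar C$.

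The one point requiring care, and the likely obstacle, is the non-null hypothesis in Proposition \ref{prop:non trivial automorphisme 2}: the suspension of the empty $\zo$-category is $\{0\}\coprod\{1\}$, which does carry the non-trivial automorphism swapping its two points, so the property literally fails for $\emptyset$. This is harmless in our situation because globular sums are non-empty and each of the four operations sends a non-empty $\zo$-category to a non-empty one (the cylinder retains the objects of $C \otimes \{0\}$, the cone and $\circ$-cone add a point, and the suspension has two objects). Hence, starting from globular sums, one never leaves the class of non-empty objects, and Proposition \ref{prop:non trivial automorphisme 2} always applies.
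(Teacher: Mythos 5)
Your proof is correct and follows essentially the same route as the paper's: globular sums are handled by induction on dimension via the projection to $\Delta$, stability under the cylinder, the cone and the suspension is obtained by transporting the question through Steiner's equivalence (theorem \ref{theorem:steiner}) to propositions \ref{prop:non trivial automorphisme 0}, \ref{prop:non trivial automorphisme 1} and \ref{prop:non trivial automorphisme 2}, and the $\circ$-cone is reduced to the cone by the duality $(\uvar)^\circ$, exactly as the paper does when it invokes closure of the set under dualities. Two differences are worth recording. First, in the inductive step for globular sums you reduce directly to automorphisms of the components $a_i$ using the description of morphisms of $\Theta$ from remark \ref{remark:morphism of theta}, whereas the paper reduces to automorphisms of the suspensions $[a_i,1]$ and invokes proposition \ref{prop:non trivial automorphisme 2} together with the induction hypothesis; your variant is marginally more economical, since it does not need the suspension stability at that stage. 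Second, and more substantively, you flag the non-null hypothesis of proposition \ref{prop:non trivial automorphisme 2}: the empty $\zo$-category lies in $\zocatB$ (its empty basis is vacuously atomic and loop-free), it is rigid, and $[\emptyset,1]=\{0\}\coprod\{1\}$ carries the swap automorphism, so the statement taken literally fails at $\emptyset$ for the suspension. The paper's proof passes over this point in silence, while your caveat — that every object reachable from a globular sum by these operations is non-empty, which is all that the application in theorem \ref{theo:appendince unicity of operation} requires — is a genuine, if minor, improvement in precision.
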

\begin{proof}
Let $S$ be the set of elements of $\zocatB$ admitting no non-trivial automorphisms.  We first show that $S$ contains globular sums. We proceed by induction, supposing that $S$ contains any globular sum of dimension $k$. Let $[\textbf{a},n]$ be a globular sum of dimension $k+1$, and let $\phi:[\textbf{a},n]\to [\textbf{a},n]$ be an isomorphism. In particular, $\phi$ induces an automorphism on $[n]$, so $\phi{i}=i$ for any $i\leq n$. The automorphism $\phi$ then induces, for all $i<n$, an automorphism $\phi_i:[a_i,1]\cong [a_i,1]$. However, proposition \ref{prop:non trivial automorphisme 2} states that $S$ is stable by suspension and the induction hypothesis then implies that for any $i<n$, $[a_i,1]$ has no non-trivial automorphisms, so $\phi_i$ is the identity. This implies that $\phi$ is also the identity, which concludes the proof.

It remains to show that $S$ is stable under the Gray cylinder, the Gray cone, the Gray $\circ$-cone, and the iterated suspensions. 
Using theorem \ref{theorem:steiner}, it is sufficient to show that $\lambda(S)$ is stable under the Gray cylinder, the Gray cone, the Gray $\circ$-cone, and the iterated suspensions. This directly follows from propositions \ref{prop:non trivial automorphisme 0}, \ref{prop:non trivial automorphisme 1}, \ref{prop:non trivial automorphisme 2}, and from the fact that $S$ is closed under dualities.
\end{proof}

\begin{proof}[Proof of theorem \ref{theo:appendince unicity of operation}]
The lemma \ref{lemma:non trivial automorphisme 4} implies that for any globular sum $a$, $G(a)$ as no non trivial isomorphisms.
As every globular sum is a colimit of globes, $\psi$ uniquely extends to a (\textit{a priori} non-natural) invertible transformation, $\psi:F_{|\Theta}\to G_{|\Theta}$. Let $\Theta'$ be the maximal subcategory of $\Theta$ such that $\psi_{|\Theta'}$ is natural.  The category $\Theta'$ is closed under colimits. The assumption implies that $\Theta'$ fulfills the first condition of lemma \ref{lemma:sub categgory of Theta}. Lemma \ref{lemma:unit forced} implies that it fulfills the second condition, and an easy induction on $(n-k)$ using lemma \ref{lemma:comp forced} implies that it fulfills the last condition. Applying lemma \ref{lemma:sub categgory of Theta}, $\psi:F_{|\Theta}\to G_{|\Theta}$ is then natural. Eventually, we can extend $\psi$ to a natural transformation between $F$ and $G$ by colimits.
\end{proof}

\subsection{Gray tensor product of simplicial sets}

\begin{notation}
We denote  by
\[\begin{tikzcd}
	{\Psh{\Theta}} & \zocat
	\arrow["\Fb", shift left=2, from=1-1, to=1-2]
	\arrow["\iota", shift left=2, from=1-2, to=1-1]
\end{tikzcd}\]
the adjunction between presheaves on $\Theta$ and $\zo$-categories.
\end{notation}

\begin{construction}
\label{construction of the gray tensor on simplicial set}
We define the functor  $\uvar\otimes \uvar :\Psh{\Theta}\times \Psh{\Theta}\to \Psh{\Theta}$, called once again the \textit{Gray tensor product}, as the left Kan extension of the functor 
$$\Theta\times \Theta \xrightarrow{\uvar\otimes \uvar} \zocat\xrightarrow{\iota} \Psh{\Theta}$$
where $\otimes:\Theta\times \Theta\to \zocat$ is the Gray tensor product defined in theorem \ref{theo:otimes in zocat}.

By construction, the functor $\Fb$ preserves the Gray tensor product, and the functor $\iota$ preserves the Gray tensor product of globular sums.
\end{construction}

The aim of this section is to prove the following result:

\begin{theorem}
\label{theo:otimes presserves W}
The functor
$$\uvar\otimes\uvar:\Psh{\Delta}\times \Psh{\Delta}\to \Psh{\Theta_2}$$
sends $\W_1\times \W_1$ onto $\overline{\W_2}$, where $\W_1$ and $\W_2$ are defined in \ref{defi:definition of W}, and $\overline{(\uvar)}$ in \ref{defi:precomplet}.
\end{theorem}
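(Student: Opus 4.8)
The plan is to use that $\uvar\otimes\uvar$ preserves colimits separately in each variable, reduce the statement to the generating maps of $\W_1$ tensored against a representable, and then treat the two families of generators—the Segal maps $\Sp_{[n]}\to[n]$ and the saturation map $E^{eq}\to[0]$—separately, the genuinely two-dimensional input coming from theorem \ref{theo: case of 1 and 2 category}.

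First I would carry out the formal reductions. For $f\colon A\to A'$ and $g\colon B\to B'$ in $\W_1$, factor $f\otimes g$ as $A\otimes B\xrightarrow{f\otimes\mathrm{id}_B}A'\otimes B\xrightarrow{\mathrm{id}_{A'}\otimes g}A'\otimes B'$; since $\overline{\W_2}$ is closed under composition it suffices to treat each factor. Fixing $f$, the functor $B\mapsto(f\otimes\mathrm{id}_B)$ is colimit preserving into the arrow category $\mathrm{Arr}(\Psh{\Theta_2})$ and preserves monomorphisms, so writing an arbitrary $B$ as the Reedy-cofibrant colimit of its representables (as in the proof of lemma \ref{lemma:i etoile of W is in M 0}) and using that $\overline{\W_2}$ is closed under Reedy-cofibrant colimits of arrows, it is enough to prove $f\otimes\mathrm{id}_{[m]}\in\overline{\W_2}$ for every representable $[m]$. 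The companion statement $\mathrm{id}_{[m]}\otimes g\in\overline{\W_2}$ follows from the first through the duality $(C\otimes D)^{op}\cong D^{op}\otimes C^{op}$ of proposition \ref{prop:otimes and duality}, since $\W_1$, $\W_2$ and $\overline{\W_2}$ are stable under the odd duality and $[m]^{op}\cong[m]$. Thus everything reduces to showing, for all $n,m$, that $\Sp_{[n]}\otimes[m]\to[n]\otimes[m]$ and $E^{eq}\otimes[m]\to[m]$ lie in $\overline{\W_2}$.

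For the Segal maps I would argue by induction on $n$, the cases $n\le1$ being identities. Writing $\Sp_{[n]}=\Sp_{[n-1]}\cup_{[0]}[1]$, the square with top row $\Sp_{[n-1]}\to\Sp_{[n]}$ and bottom row $[n-1]\to[n-1]\cup_{[0]}[1]$ is cocartesian in $\Psh{\Delta}$, and applying the colimit-preserving functor $\uvar\otimes[m]$ exhibits $\Sp_{[n]}\otimes[m]\to([n-1]\cup_{[0]}[1])\otimes[m]$ as a cobase change of $\Sp_{[n-1]}\otimes[m]\to[n-1]\otimes[m]$, hence in $\overline{\W_2}$ by the inductive hypothesis and closure under pushout. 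It then remains to treat the "last filling" $([n-1]\cup_{[0]}[1])\otimes[m]\to[n]\otimes[m]$. Here the genuinely two-dimensional input enters: the $\zo$-category $[n]\otimes[m]$ has a loop-free atomic basis, and I would present it as obtained from the subobject $([n-1]\cup_{[0]}[1])\otimes[m]$ by attaching, in order of increasing dimension, the missing generating $1$- and $2$-cells of the Gray prism. Each such attachment is a cocartesian square along a boundary inclusion $\partial[[k],1]\to[[k],1]$ with $k\le1$ between $(0,2)$-categories with loop-free atomic bases, so theorem \ref{theo: case of 1 and 2 category} makes each comparison $C\cup x\to D$ an element of $\overline{\W_2}$; the suspension–tensor formula of proposition \ref{prop:appendice formula for otimes}, together with the fact—used already in lemmas \ref{lemma: lambdA gamma} and \ref{lemma: lambdA gamma2}—that the wedge–suspension functors $\Psh{\Delta}\to\Psh{\Theta_2}$ carry $\overline{\W_1}$ into $\overline{\W_2}$, is what organizes these attachments and identifies the intermediate objects. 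Stability of $\overline{\W_2}$ under transfinite composition then yields the claim.

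The saturation map $E^{eq}\otimes[m]\to[m]$ would be handled in the same spirit: using the explicit colimit presentation of $E^{eq}$ and the interaction formula to decompose $E^{eq}\otimes[m]$ into suspensions of the shape $\Sigma^k E^{eq}\otimes(\mbox{lower data})$, the comparison is assembled from the saturation generators $\Sigma^k E^{eq}\to\Db_k$ ($k\le2$) of $\W_2$ together with the Segal case just established. The main obstacle throughout is this second step: making precise the cellular filtration of the Gray prism $[n]\otimes[m]$—equivalently, decomposing the tensored Segal and saturation maps into a sequence of single-cell attachments to which theorem \ref{theo: case of 1 and 2 category} applies—since this is the only point where the abstract closure properties of $\overline{\W_2}$ do not suffice and one must invoke the concrete combinatorics of the Gray tensor product recorded in remark \ref{defi:explicit Dbn otiomes [1]}.
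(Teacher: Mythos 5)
Your formal reductions are sound in outline (factoring $f\otimes g$ into one-variable maps, passing to representables via Reedy-cofibrant colimits of arrows, handling the other side by the duality of proposition \ref{prop:otimes and duality}), and your sketch for the saturation map is essentially the paper's lemma \ref{lemma:otimes presserves W2}. But there is a genuine gap in the Segal case, precisely at the step you yourself single out as the crux. The "last filling" $([n-1]\cup_{[0]}[1])\otimes[m]\to[n]\otimes[m]$ cannot be decomposed into attachments of "missing generating $1$- and $2$-cells", because there are none: every polygraph generator of $[n]\otimes[m]$ (the cells $\{i,i+1\}\otimes\{j\}$, $\{i\}\otimes\{j,j+1\}$, $\{i,i+1\}\otimes\{j,j+1\}$, and the objects) already lies in $[n-1]\otimes[m]$ or in $\{n-1,n\}\otimes[m]$, hence in your subobject. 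What the subobject lacks are exclusively \emph{composites} crossing the junction at the vertex $n-1$ (for instance the $1$-cell $(0,j)\to(n,j)$, and every $\Theta_2$-cell of $[n]\otimes[m]$ that does not factor through one of the two pieces). Moreover, theorem \ref{theo: case of 1 and 2 category} requires $C$ to be a $(0,2)$-category with a loop-free atomic basis, whereas $([n-1]\cup_{[0]}[1])\otimes[m]$ is only a presheaf: it is not $\W$-local (the spine through $(0,j)\to(n-1,j)\to(n,j)$ has no filler inside it), so the theorem cannot even be applied once with this subobject playing the role of $C$.

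The paper's lemma \ref{lemma:otimes presserves W1} gets around exactly this issue by never extending a large subobject: it builds from the bottom, where the force of theorem \ref{theo: case of 1 and 2 category} is that attaching a single generator to an honest $\zo$-category creates, up to $\overline{\W_2}$, all the new composites. Concretely, it sets $g([n],[m]):=\tau_0([n]\otimes[m])\cup_{x\in S_{n,m}}x$ (objects together with the generating $1$-cells) and applies the theorem one generator at a time (case $k=0$) to get $g([n],[m])\to\tau_1([n]\otimes[m])$ in $\overline{\W_2}$; it then identifies $g([n],[m])$ with $\colim_{\Sp_{[n]}\times\Sp_{[m]}}g$ to deduce, by left cancellation, that $\tau_1(\Sp_{[n]}\otimes\Sp_{[m]})\to\tau_1([n]\otimes[m])$ is in $\overline{\W_2}$; finally it attaches the $2$-generators (case $k=1$) through a cocartesian square and composes, obtaining $\Sp_{[n]}\otimes\Sp_{[m]}\to[n]\otimes[m]$ in $\overline{\W_2}$. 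Any repaired version of your argument would have to proceed this way — from the discrete $0$-skeleton upward, or over the whole grid at once as the paper does — rather than from $([n-1]\cup_{[0]}[1])\otimes[m]$; as written, your induction has no valid inductive step.
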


Informally, this result implies that we can define a Gray tensor product for $(\infty,1)$-categories. It is therefore a special case of the main theorem of Campion's paper \cite{campion2023gray}.

\begin{prop}
\label{prop:explicit Gray}
The $\Theta$-set $[1]\otimes[1]$ is the colimit, computed in $\Psh{\Theta}$, of the diagram
\[\begin{tikzcd}
	{[2]} & {[1]} & {[[1],1]} & {[1]} & {[2]}
	\arrow["\triangledown"', from=1-2, to=1-1]
	\arrow["{[d^1,1]}", from=1-2, to=1-3]
	\arrow["{[d^0,1]}"', from=1-4, to=1-3]
	\arrow["\triangledown", from=1-4, to=1-5]
\end{tikzcd}\]
\end{prop}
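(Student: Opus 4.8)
The plan is to identify the five objects of the diagram and to realize the claimed colimit as the $\Theta$-nerve of the $\zo$-category $\Db_1\otimes[1]$. First note that $[1]=\Db_1$, that $[[1],1]=\Db_2$ (the suspension of $\Db_1$), and that $[2]=[1]\vee[1]$. By construction of the Gray tensor product on $\Psh{\Theta}$ as the left Kan extension of $\Theta\times\Theta\xrightarrow{\otimes}\zocat\xrightarrow{\iota}\Psh{\Theta}$, which is cocontinuous in each variable, its value on the pair of representables is the original functor; hence the $\Theta$-set $[1]\otimes[1]$ is exactly $\iota(\Db_1\otimes[1])$, whose underlying polygraph was described in remark \ref{defi:explicit Dbn otiomes [1]}. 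Concretely it is the square with the four boundary $1$-cells $e_1\otimes\{0\}$, $e_1\otimes\{1\}$, $e_0^-\otimes[1]$, $e_0^+\otimes[1]$, together with a single nondegenerate $2$-cell $e_1\otimes[1]$ whose source is the composite $(e_1\otimes\{0\})\circ_0(e_0^+\otimes[1])$ and whose target is $(e_0^-\otimes[1])\circ_0(e_1\otimes\{1\})$. The two maps $\triangledown\colon[1]\to[2]$ are the whiskerings, i.e. the two composition cells $\Db_1\to[2]$, while $[d^1,1]$ and $[d^0,1]$ are the maps $i_1^-,i_1^+\colon\Db_1\to\Db_2$ selecting the source and target $1$-cells of $\Db_2$.

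Writing $D$ for the diagram, the geometry above supplies a cocone of $D$ into $\iota(\Db_1\otimes[1])$: the left copy of $[2]$ maps isomorphically onto the sub-$\zo$-category $L$ spanned by $00\to01\to11$, the right copy onto $R$ spanned by $00\to10\to11$, and $\Db_2$ onto the sub-$\zo$-category $M$ carrying the $2$-cell and its boundary, so that the long edge of $L$ is identified with the source of the $2$-cell (via $\triangledown$ and $i_1^-$) and the long edge of $R$ with its target (via $\triangledown$ and $i_1^+$). This cocone induces a comparison morphism $\phi\colon\colim_{\Psh{\Theta}}D\to\iota(\Db_1\otimes[1])$, and it remains to prove that $\phi$ is an isomorphism. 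Since colimits in $\Psh{\Theta}$ are computed pointwise, this amounts to showing that for every globular sum $c$ the canonical map $\theta_c\colon\colim_{i}\Hom_{\Theta}(c,D_i)\to\Hom_{\zocat}(c,\Db_1\otimes[1])$ is a bijection.

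I would establish this by a direct covering analysis of $\Db_1\otimes[1]$. For surjectivity one checks that every morphism $f\colon c\to\Db_1\otimes[1]$ factors through one of the three pieces $L$, $M$, $R$: every directed chain of nondegenerate $1$-generators in the square has length at most $2$, and the unique nondegenerate $2$-cell admits no nontrivial $0$-composite since $00$ and $11$ are respectively initial and terminal; hence the image of the composite cell $\triangledown_c\colon\Db_{|c|}\to c$ lies in a single maximal piece, and since the pieces are sub-$\zo$-categories closed under faces, so does the image of all of $c$. For injectivity one checks that two factorizations with the same image either agree on a common piece (each map $D_i\to\Db_1\otimes[1]$ being a monomorphism) or factor through the overlaps $L\cap M$, $M\cap R$ and $L\cap R$, which are respectively the two glued copies of $[1]$ interposed in $D$ and the pair of endpoints $\{00,11\}$; in every case the coincidence is already realized by the connecting maps of the zigzag. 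The main obstacle is precisely this combinatorial bookkeeping—verifying that $L$, $M$, $R$ are the maximal coherent sub-globular-sums and that their pairwise intersections are exactly the objects interposed in the diagram—after which the pointwise bijections $\theta_c$ assemble into the isomorphism $\phi$. Equivalently, one may note that $\Fb$ preserves colimits and fixes each representable $D_i$, so that $\Fb(\colim_{\Psh{\Theta}}D)\cong\Db_1\otimes[1]$ by the $\zocat$-level colimit of the introduction (itself a consequence of Steiner's theorem \ref{theo:Kan condition}), and then conclude once the same covering analysis shows $\colim_{\Psh{\Theta}}D$ to be $\W$-local, hence in the image of $\iota$.
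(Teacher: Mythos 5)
Your overall strategy is viable, and your closing ``equivalently'' remark is in fact exactly the paper's proof: the paper writes $P$ for the colimit, observes that $\Fb P\cong [1]\otimes[1]$ because $\Fb$ is cocontinuous and fixes the representables in the diagram, and then shows that $P$ is $\W$-local by checking that every morphism from a globular sum (or a spine) into $P$ factors uniquely through one of the three pieces, so that the unique right lifting property against $\W$ is inherited from $[2]$ and $[[1],1]$; being $\W$-local, $P$ lies in the image of $\iota$, whence $P\cong\iota\Fb P\cong\iota(\Db_1\otimes[1])$. Your primary route, the pointwise comparison $\theta_c$, is a legitimate variant, but note that it makes you prove strictly more: at the presheaf level every element of $(\colim D)(c)$ automatically comes from one of the pieces, so in the paper's formulation the ``surjectivity'' half is free and only uniqueness of factorizations needs an argument, whereas your formulation requires the full covering statement for maps into $\iota(\Db_1\otimes[1])$.

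And it is precisely there that your argument has a genuine flaw. The inference ``the image of the composite cell $\triangledown_c\colon\Db_{|c|}\to c$ lies in a single maximal piece, and since the pieces are sub-$\zo$-categories closed under faces, so does the image of all of $c$'' is invalid: a sub-$\zo$-category containing a composite need not contain its factors, and closure under faces (sources and targets) cannot recover them. Concretely, take $c=[2]$ and let $f$ send its two edges to the generators $00\to 01$ and $01\to 11$ of $L$. The composite cell of $c$ is then sent to the long diagonal $00\to 11$, which lies in the piece $M$ (it is the source of the $2$-cell), yet the image of $f$ contains the object $01$ and the two generators, none of which lie in $M$; so the location of the composite cell does not determine a piece containing the image of $f$. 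The covering claim itself is true, but it needs a different argument, for instance: if some cell of $c$ is sent to the nondegenerate $2$-cell, then, since the $0$-source $00$ and the $0$-target $11$ of that $2$-cell are respectively initial and terminal, every part of $c$ before (resp.\ after) that cell is sent to identities of $00$ (resp.\ of $11$), and $\Hom_{\Db_1\otimes[1]}(00,11)$ coincides with $\Hom_{M}(00,11)$, so $f$ factors through $M$; otherwise $f$ lands in the maximal sub-$1$-category of $\Db_1\otimes[1]$, the free category on the square, where your observation that a composable chain passes through $01$ or through $10$ but never both shows that $f$ factors through $L$ or through $R$. With this step repaired, your injectivity analysis of the overlaps (the two interposed copies of $[1]$, and the pair $\{00,11\}$ handled by the zigzag through $[[1],1]$) is correct, and the proof goes through.
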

\begin{proof}
We denote by $P$ the colimit of this diagram. Remark that $\Fb P$ is the $\zo$-category generated by the diagram
\[\begin{tikzcd}
	00 & 01 \\
	10 & 11
	\arrow[from=1-2, to=2-2]
	\arrow[from=2-1, to=2-2]
	\arrow[from=1-1, to=2-1]
	\arrow[from=1-1, to=1-2]
	\arrow[shorten <=4pt, shorten >=4pt, Rightarrow, from=1-2, to=2-1]
\end{tikzcd}\]
and we then have $\Fb P\cong [1]\otimes[1]$.  To conclude the proof, we have to show that $P$ is a $\zo$-category, i.e. that it has the unique right lifting property against $\W$. 

Let $f:[\textbf{a},n]\to P$ (resp. $f:\Sp_{[a,n]}\to p$) be a morphism. If there exists an integer $i<n$ such that  $f(i)=00$ and $f(i+1)=11$, then $f$ uniquely factors through $[[1],1]\to P$. If there exists an integer $i$ such that $f(i)=10$, then $f$ uniquely factors through the left inclusion $[2]\to P$. If there exists an integer $i$ such that $f(i)=01$, then $f$ uniquely factors through the right inclusion $[2]\to P$.
If none of these conditions is satisfied, then $f$ factors through $00$ or $11$.

As $[2]$ and $[[1],1]$ are $\zo$-categories, they have the unique right lifting property against $\W$, and so has $P$.
\end{proof}

\begin{lemma}
\label{lemma:when Gray and pullback work}
Let $C,D, E$ be three $\zo$-categories with loop-free and atomic bases. Let $f:C\to D$ be a morphism such that $f$ sends every generator of $C$ to a cell which is not a unit. The following square is then cartesian:
\[\begin{tikzcd}
	{E\otimes C } & {E\otimes  D} \\
	C & D
	\arrow["f"', from=2-1, to=2-2]
	\arrow[from=1-2, to=2-2]
	\arrow[from=1-1, to=2-1]
	\arrow["{f\otimes E}", from=1-1, to=1-2]
\end{tikzcd}\]
\end{lemma}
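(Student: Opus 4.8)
The plan is to transport the whole statement through Steiner's equivalence and to reduce it to a combinatorial bijection between coherent arrays. First I would record the shape of the square: the two vertical arrows are induced by the unique map $E\to\Db_0$ together with the identification $\Db_0\otimes(\uvar)\cong(\uvar)$ coming from the fact that $\Db_0$ is the unit of $\otimes$, and the top arrow is $\id_E\otimes f$. Since $C,D,E$ lie in $\zocatB$, theorem \ref{theorem:steiner} and theorem \ref{theo:otimes in zocat} give $E\otimes C=\nu(\lambda E\otimes\lambda C)$ and $E\otimes D=\nu(\lambda E\otimes\lambda D)$, and $\lambda C,\lambda D,\lambda E$ carry loop-free unitary bases. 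Because $\nu$ is a right adjoint it preserves limits (exactly as exploited in the proofs of proposition \ref{prop:cartesian squares} and lemma \ref{lemma: pullback and sum}), so $(E\otimes D)\times_D C=\nu P$ with $P:=(\lambda E\otimes\lambda D)\times_{\lambda D}\lambda C$ the pullback taken in $\CDA$. Hence it suffices to show that the canonical comparison $\phi:\lambda E\otimes\lambda C\to P$ becomes an isomorphism after applying $\nu$, i.e. that $\nu\phi$ is bijective on coherent arrays in each dimension. I would stress here that $\phi$ is \emph{not} an isomorphism of complexes — interchange of $\id\otimes\lambda f$ with the projection already fails on the summands of positive $E$-degree — which is precisely why neither theorem \ref{theo:Kan condition} nor the basis-square criterion applies directly: the map $\id_E\otimes f$ is not quasi-rigid, and only $\nu$, which sees just the positive cone and the coherent arrays, can repair the discrepancy.

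The core is then a bijection statement. A coherent array in $P$ is exactly a pair $(\xi,\zeta)$, where $\xi$ is a coherent array in $\lambda E\otimes\lambda D$ and $\zeta$ a coherent array in $\lambda C$, subject to the condition that the image of $\xi$ under the augmentation-projection $\lambda E\otimes\lambda D\to\lambda D$ equals the image of $\zeta$ under $\lambda f$. Writing each entry of an array in $\lambda E\otimes\lambda C$ in homogeneous components $\sum_{b}\,b\otimes c_b$, with $b$ ranging over the basis of $\lambda E$ and $c_b$ positive in $\lambda C$, the map $\phi$ sends such an array to the pair whose $(\lambda E\otimes\lambda D)$-part replaces each $c_b$ by $\lambda f(c_b)$ and whose $\lambda C$-part retains only the degree-zero factors $b$. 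I would then prove that this assignment is a bijection by reconstructing the array $z$ from $(\xi,\zeta)$ by induction on the dimension, recovering each coefficient $c_b$ from the equation $\lambda f(c_b)=d_b$ (the matching component of $\xi$) together with the constraints imposed by $\zeta$ and by the globular source/target relations that force $z$ to be coherent.

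The main obstacle is exactly this reconstruction step, because $\lambda f(c_b)=d_b$ does not pin down $c_b$ pointwise: $\lambda f$ need not be injective on the positive cones, so two distinct generators of $C$ may have the same image in $D$. The resolution is that the coefficients do not live in isolation — the globular boundary relations tie the components together across dimensions, and the degree-zero shadow carried by $\zeta$ distinguishes cells that the $(\lambda E\otimes\lambda D)$-part alone would merge (this is visible already on the simplest examples, where two parallel generators with equal $f$-image are separated by their $C$-shadow). It is here that the hypothesis that $f$ sends every generator of $C$ to a non-unit is decisive: it guarantees $\lambda f(b)\neq 0$ for every basis element $b$ of $\lambda C$, so no generator is erased in passing to $\lambda E\otimes\lambda D$ and the position of each cell in the $E$-direction stays legible after applying $f$; had a generator collapsed to a unit, its $E$-coordinate would be lost in both projections and the reconstruction — hence cartesianness — would genuinely fail. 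Concretely I would establish injectivity of $\nu\phi$ first, using loop-freeness to force the supports to agree once the $E$-factors and the $\lambda C$-shadow are fixed, and then surjectivity by assembling the reconstructed entries and verifying, with the non-unit hypothesis, that they constitute a positive coherent array mapping to $(\xi,\zeta)$. As a structural fallback should the direct array computation prove unwieldy, I would keep in reserve an induction on the dimension of $C$ built from the suspension formula of proposition \ref{prop:appendice formula for otimes} together with the cartesian squares of proposition \ref{prop:cartesian squares} and lemma \ref{lemma: pullback and sum}, reducing the general case to suspensions, where the behaviour of $f$ on generators is easier to track.
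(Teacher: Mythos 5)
Your overall route is the same as the paper's: the paper's entire proof is the one--sentence claim that the statement can be checked at the level of augmented directed complexes, and your reduction (Steiner's equivalence, $\nu$ preserving limits as a right adjoint, hence reducing to bijectivity of the comparison $\nu\phi:\nu(\lambda E\otimes\lambda C)\to\nu P$ on coherent arrays) is the natural way to make that precise. Your observation that $\phi$ itself is \emph{not} an isomorphism in $\CDA$, so that theorem \ref{theo:Kan condition} and the basis--square criterion are unavailable, is correct and is exactly the point the paper's proof glosses over. (A side remark: your injectivity step is also shakier than you suggest -- loop-freeness is not the operative mechanism, and the map is not even injective entrywise on positive elements; one really has to use the coherence of the whole array.)

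The genuine gap is the surjectivity/reconstruction step, and it is not a gap that can be closed under the stated hypothesis. The matching condition between $\xi$ and $\zeta$ only constrains the components of $\xi$ sitting in $E$-degree $0$; for a basis element $b$ of $\lambda E$ with $|b|>0$ you must solve $\lambda f(c_b)=d_b$ with $c_b$ \emph{positive}, and the hypothesis only guarantees that $\lambda f$ kills no basis element -- it does not prevent $\lambda f$ from \emph{splitting} a basis element, which is what destroys the existence of positive preimages. Concretely, take $E=C=[1]$ with generators $u$ and $x$ respectively, $D=[2]$ with generators $a_1\colon 0\to 1$, $a_2\colon 1\to 2$, and $f\colon C\to D$ the inclusion of the long edge, $f(x)=a_2\circ_0 a_1$: every generator of $C$ goes to a non-unit, so the hypothesis of the lemma holds. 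In $E\otimes D=[1]\otimes[2]$ consider the whiskered square $w:=(\{1\}\otimes a_2)\circ_0(u\otimes a_1)$. Writing $p_D\colon E\otimes D\to D$ for the vertical map of the square (induced by $E\to\Db_0$), one has $p_D(w)=a_2\circ_0\Ib_{a_1}=\Ib_{a_2\circ_0 a_1}=f(\Ib_x)$, so $(w,\Ib_x)$ is a $2$-cell of the pullback. But it has no preimage in $E\otimes C=[1]\otimes[1]$: the only non-degenerate $2$-cell there is $u\otimes x$, whose image has $\lambda$-class $u\otimes(a_1+a_2)\neq u\otimes a_1$; equivalently, a preimage would require a positive $c\in(\lambda C)_1^*$ with $\lambda f(c)=a_1$, and none exists since $\lambda f(x)=a_1+a_2$. (The same phenomenon occurs one dimension down: the ``staircase'' path of the prism through the middle vertex lies over $f(x)$ but lifts to nothing.) So for this $f$ the square is simply not cartesian: your reconstruction fails at exactly the point where you invoke the equation $\lambda f(c_b)=d_b$, and no amount of bookkeeping with $\zeta$ or with the boundary relations can repair it. The conclusion is that the non-unit hypothesis is too weak for the argument (it rules out erasure of generators but not their splitting); what the reconstruction actually needs is that $f$ send generators to generators, so that $\lambda f$ carries basis elements to basis elements and the components $d_b$ cannot be proper pieces of an image. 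Since the paper's ``easy computation'' is never carried out, this difficulty is invisible there; your attempt to perform the computation honestly is precisely what brings it to light, but as written the proposal does not prove the lemma.
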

\begin{proof}
We can show this result at the level of the corresponding augmented directed complex, where it is an easy computation. 
\end{proof}
\begin{lemma}
\label{lemma:product is a nice colimit}
Let $(a)_{i\leq n}$ be a sequence of elements of $\Theta$. There exists a diagram $F:I\to \Theta_+$ such that the presheaf $a_0\times...\times a_n$ is the colimit of $F$. 
\end{lemma}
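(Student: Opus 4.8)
The plan is to realize $a_0\times\cdots\times a_n$ as the canonical colimit of its cells, using that $\Theta^+$ is an elegant Reedy category. First I would note that products in $\Psh{\Theta}$ are computed pointwise, so $X:=a_0\times\cdots\times a_n$ is a genuine presheaf on $\Theta$, with $X(c)=\prod_i\Hom_{\Theta}(c,a_i)$ for every $c\in\Theta$. In particular $X$ need not be representable, since the Yoneda embedding does not preserve products; this is exactly what makes the statement have content, as a nontrivial colimit is unavoidable.

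The key input is the co-Yoneda (density) decomposition already invoked in the proof of lemma \ref{lemma:i etoile of W is in M 0}: for any presheaf $X$ on $\Theta$, the canonical diagram
$$\Theta^+_{/X}\longrightarrow \Theta^+\hookrightarrow \Psh{\Theta},\qquad (c,\,f\colon c\to X)\longmapsto c,$$
has colimit $X$ and is moreover Reedy cofibrant. This rests on $\Theta^+$ being an elegant Reedy category (corollary \ref{cor:thetaplus is reedy}) together with proposition \ref{prop:elelangat stable by slice}, which guarantees that the slice $\Theta^+_{/X}$ is again an elegant Reedy category, so that the notion of Reedy cofibrancy makes sense and the standard decomposition of a presheaf over an elegant Reedy category into its (non-degenerate) cells applies.

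Applying this to $X=a_0\times\cdots\times a_n$, I would set $I:=\Theta^+_{/X}$ and let $F\colon I\to\Theta^+$ be the projection displayed above. Then $F$ takes values in globular sums (and the empty $\zo$-category), $I$ is small since $\Theta$ is small and $X$ has small values, and $\colim_I F\cong a_0\times\cdots\times a_n$. This is exactly the asserted diagram.

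There is no serious obstacle: all the real work was done in establishing the elegant Reedy structure on $\Theta^+$ and the associated cell decomposition. The only point worth emphasizing is conceptual, namely that a product of representables is not representable, so the content of the lemma is precisely that such a product nonetheless lies in the colimit-cocompletion of $\Theta^+$, which the density decomposition supplies automatically. If one additionally wants $F$ to be Reedy cofibrant — as is needed when this is fed into the precocomplete-class machinery of definition \ref{defi:precomplet} — the same diagram already has this property, so nothing further is required.
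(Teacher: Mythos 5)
There is a genuine gap, and it comes from a misreading of the target category. In the statement, $\Theta_+$ is the positive part of the Reedy structure on $\Theta$, i.e.\ the wide subcategory whose morphisms are the monomorphisms (see proposition \ref{prop:theta is elegan reedy}); it is not the category $\Theta^+$ of definition \ref{defi:thetaplus} obtained by adjoining the empty $\zo$-category. Your diagram is the full density (co-Yoneda) diagram $\Theta^+_{/X}\to\Psh{\Theta}$, whose transition maps include all morphisms over $X$, in particular degeneracies such as $[1]\to[0]$; it therefore does not factor through $\Theta_+$, and the proposal proves only the weaker statement that $X$ is a colimit of globular sums along arbitrary maps --- which holds for \emph{every} presheaf and would make the lemma contentless. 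The constraint that all transition maps be monomorphisms is exactly where the work lies, and it is what the downstream application needs: in the proof of lemma \ref{lemma:nice colimit in presheaves and Gray}, lemma \ref{lemma:when Gray and pullback work} is applied to each map $F(i)\to F(j)$ of the diagram, and this requires that map to send every generator to a non-unit, which monomorphisms do and degeneracies do not. Concretely, for the degeneracy $[1]\to[0]$ the square with vertices $a\otimes[1]$, $a\otimes[0]$, $[1]$, $[0]$ is not cartesian (the pullback is $a\times[1]$, not $a\otimes[1]$), so with your diagram the argument establishing $a\otimes\iota(C)\cong\iota(a\otimes C)$ collapses.

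The paper's proof handles precisely this point: it takes $I$ to be the category of \emph{jointly non-degenerate} tuples $(j_i:b\to a_i)_{i\leq n}$ (no degenerate map $b\to b'$ factoring all the $j_i$), with only monomorphisms as morphisms, and then proves that the inclusion $I\to\Theta_{/a_0\times\cdots\times a_n}$ is final, so that the colimit over $I$ agrees with the density colimit $X$. Finality rests on an existence-and-uniqueness statement for non-degenerate factorizations of a tuple, where uniqueness uses elegance of $\Theta$ (via proposition 3.8 of Bergner--Rezk, as in lemma \ref{lemma:i etoile of W is in M 1}) and the stability statement of lemma \ref{lemma:i etoile of W is in M 0.5}. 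None of this appears in your proposal; to repair it you would need to replace the full slice $\Theta^+_{/X}$ by this subcategory of non-degenerate cells and monomorphisms and supply the finality argument.
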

\begin{proof}
Let $I$ be the full subcategory of $\Theta_{/a_0\times...\times a_n}$ whose objects are $n$-tuples of morphisms $(j_i:b\to a_i)_{i\leq n}$ such that there exists no morphism $b\to b'$ in $\Theta_{-}$ that factors all the $j_i$. Morphisms are the ones such that $b\to b'$ is in $\Theta_{+}$.

Let $(j_i:b\to a_i)_{i\leq n}$ be any element of $\Theta_{/a_0\times...\times a_n}$. 
We claim that it is sufficient to show that there exists a unique degenerate morphism $g:b\to b'$ that factors the morphisms $b\to a_i$ for all $i<n$, and such that the induced family of morphisms $(b'\to a_i)_{i<n}$ is an element of $I$. 
Indeed, suppose that such an element exists and is unique. Let $(h_i:b''\to a_i)_{i\leq n}$ be an element of $I$ and $b\to b''$ be a morphism between $(j_i)_{i\leq n}$ and  $(h_i)_{i\leq n}$. We can factor $b\to b''$ as a degenerate morphism $p:b\to \tilde{b}$ followed by a monomorphism $l:\tilde{b}\to b''$. Lemma \ref{lemma:i etoile of W is in M 0.5} implies that $(h_il)_{i\leq n}$ is in $I$, and $p:(j_i)_{i\leq n}\to (h_il)_{i\leq n}$ is then equal to $g$. As all the operations are functorial, this provides a right retraction structure to the morphism $\{g\}\to I_{(j_i)_{i\leq n}/}$. This inclusion is then final and as a consequence, the morphism $\alpha: I\to  \Theta_{/a_0\times...\times a_n}$ is final. This will then conclude the proof.

As any infinite sequence of degenerate morphisms is constant at some point, the existence of such element is immediate.
Suppose given two morphisms $b\to b'$, $b\to b''$ fulfilling the previous condition. By proposition \ref{prop:theta is elegan reedy}, the category $\Theta$ is Reedy elegant  and 
the proposition 3.8 of \cite{Bergner_reedy_category_and_the_theta_construction} then implies that there exists a globular sum $\tilde{b}$ and two degenerate morphisms $b'\to \tilde{b}$ and $b''\to \tilde{b}$ such that the induced square
\[\begin{tikzcd}
	b & {b'} \\
	{b''} & {\tilde{b}}
	\arrow[from=1-2, to=2-2]
	\arrow[from=2-1, to=2-2]
	\arrow[from=1-1, to=2-1]
	\arrow[from=1-1, to=1-2]
\end{tikzcd}\]
is cocartesian. The universal property of pushout implies that $b\to \tilde{b}$ also fulfills the previous condition. By definition of $b'$ and $b''$, this implies that they are equal to $\tilde{b}$, and this shows the uniqueness.
\end{proof}

\begin{lemma}
\label{lemma:nice colimit in presheaves and Gray}
Let $C$ be a $\zo$-category such that there exists a diagram $F:I\to \Theta_+$ with $\iota(C)$ being  the colimit of $F$. Let $a$ be an element of $\Theta$. The canonical morphism $a\otimes \iota(C) \to \iota (a\otimes C)$ is an isomorphism.
\end{lemma}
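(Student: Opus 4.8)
The canonical morphism in question is, up to canonical identification, a component of the unit of the localization adjunction $\Fb\dashv\iota$. Indeed, the functor $\Fb$ preserves the Gray tensor product and $\Fb\iota\cong\mathrm{id}$, so $\Fb(a\otimes\iota(C))\cong\Fb(a)\otimes\Fb\iota(C)\cong a\otimes C$, and under this identification the map $a\otimes\iota(C)\to\iota(a\otimes C)$ is the unit $\eta_{a\otimes\iota(C)}\colon a\otimes\iota(C)\to\iota\Fb(a\otimes\iota(C))$. Since $\iota$ is fully faithful and identifies $\zocat$ with the full subcategory of $\W$-local objects of $\Psh{\Theta}$, this unit is invertible if and only if $a\otimes\iota(C)$ is itself $\W$-local. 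Thus the whole statement reduces to showing that the presheaf $a\otimes\iota(C)$ is a $\zo$-category.

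The first step is to rewrite this presheaf as a colimit of globular sums. The Gray tensor product on $\Psh{\Theta}$ is defined by left Kan extension, hence is cocontinuous in each variable, so that $a\otimes\iota(C)\cong a\otimes\colim_I\iota F(i)\cong\colim_I\bigl(a\otimes\iota F(i)\bigr)$. Each $F(i)$ is a globular sum or the empty $\zo$-category; since $\iota$ preserves the Gray tensor product of globular sums (and $a\otimes\emptyset\cong\emptyset$ in both categories), one has $a\otimes\iota F(i)\cong\iota(a\otimes F(i))$. Therefore $a\otimes\iota(C)\cong\colim_I\iota(a\otimes F(i))$, a colimit in $\Psh{\Theta}$ of a diagram whose every term is a $\zo$-category with an atomic and loop-free basis.

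It then remains to prove that $\colim_I\iota(a\otimes F(i))$ is $\W$-local. The hypothesis says precisely that the parallel colimit $\colim_I\iota F(i)\cong\iota(C)$ is already $\W$-local; the goal is to transfer this locality through $a\otimes-$. Since the conclusion does not depend on the chosen presentation of $\iota(C)$, I would be free to take $F$ to be the canonical Reedy-cofibrant presentation indexed by $\Theta^+_{/\iota(C)}$, whose transition maps are monomorphisms; as $a\otimes-$ sends a monomorphism of globular sums to a monomorphism (the Gray tensor of an inclusion of polygraphs being an inclusion), the diagram $i\mapsto\iota(a\otimes F(i))$ is again Reedy cofibrant, so its colimit is computed as a union of its values. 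One then checks the unique right lifting property of this colimit against the two families generating $\W$ of Definition \ref{defi:definition of W} — the Segal maps $\Sp_b\to b$ and the saturation maps $\Sigma^m E^{eq}\to\Db_m$ — cell by cell, using the explicit polygraphic description of $\Db_n\otimes b$ (Remark \ref{defi:explicit Dbn otiomes [1]}) together with Steiner theory (Theorem \ref{theorem:steiner}), exactly in the spirit of the direct verification carried out in the proof of Proposition \ref{prop:explicit Gray}. The closure properties of $\overline{\W}$ recalled in Lemma \ref{lemma:precomplete included into cocomplete strict case} allow one to assemble the resulting lifts compatibly along the colimit.

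The genuinely delicate point, and the main obstacle, is this last locality claim: a colimit of $\W$-local presheaves need not be $\W$-local and $\iota$ does not preserve colimits, so the formal manipulations of the first two paragraphs do not by themselves conclude. All the content of the lemma lies in verifying that tensoring the local presentation of $\iota(C)$ by a globular sum again yields a local presheaf, everything else being bookkeeping with the adjunction $\Fb\dashv\iota$ and the cocontinuity of the Gray tensor product. In particular, the two-variable Theorem \ref{theo:otimes presserves W} may not be invoked here, since the present lemma is one of its ingredients; the locality must be established by the hands-on analysis described above.
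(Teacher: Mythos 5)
Your first two paragraphs match the paper's setup: the canonical map is the unit of $\Fb\dashv\iota$, so the statement reduces to showing that the presheaf $a\otimes\iota(C)$ is $\W$-local, and cocontinuity of $\otimes$ together with the fact that $\iota$ preserves Gray tensor products of globular sums rewrites it as $\colim_I\iota(a\otimes F(i))$. But the proposal then stops exactly where the proof has to start. You defer the locality claim to a ``cell by cell'' verification ``in the spirit of'' Proposition \ref{prop:explicit Gray}, assembled ``compatibly along the colimit'' using the closure properties of $\overline{\W}$; none of this is an argument. Locality is a \emph{unique right lifting} condition, Lemma \ref{lemma:precomplete included into cocomplete strict case} and the class $\overline{\W}$ say nothing about such liftings, and a colimit of $\W$-local presheaves is in general not $\W$-local --- as you yourself concede. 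What the paper actually does, and what is missing from your proposal, is a pullback mechanism: since the transition maps of $F$ lie in $\Theta_+$ (monomorphisms, hence carrying generators to non-unit cells), Lemma \ref{lemma:when Gray and pullback work} shows that the natural transformation $a\otimes F(-)\to F(-)$ is cartesian, whence each square relating $a\otimes F(i)\to a\otimes\iota(C)$ to $F(i)\to\iota(C)$ is cartesian. Every codomain of a morphism of $\W$ is representable, so in any lifting problem of $f:X\to Y\in\W$ against the projection $a\otimes\iota(C)\to\iota(C)$ the bottom map $Y\to\iota(C)$ factors through some $F(i)$; the lifting problem then pulls back to one against $a\otimes F(i)\to F(i)$, which is a morphism between $\W$-local objects and therefore $\W$-local. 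Since $\iota(C)$ is $\W$-local, so is $a\otimes\iota(C)$. This is the entire content of the lemma, and it is precisely the hypothesis ``$F$ is valued in $\Theta_+$'' that makes it run; your hands-on verification has no substitute for it.

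There is also an outright error in your reduction: you claim you may replace $F$ by ``the canonical Reedy-cofibrant presentation indexed by $\Theta^+_{/\iota(C)}$, whose transition maps are monomorphisms.'' This conflates $\Theta^+$ with $\Theta_+$. The slice $\Theta^+_{/\iota(C)}$, taken over the \emph{full} subcategory $\Theta^+\subset\zocat$ of Definition \ref{defi:thetaplus}, does have colimit $\iota(C)$, but its morphisms include degeneracies, so it is not a diagram in $\Theta_+$ and Lemma \ref{lemma:when Gray and pullback work} does not apply to its transition maps; conversely, the diagram of non-degenerate elements with monomorphisms is valued in $\Theta_+$, but its colimit is in general \emph{not} $\iota(C)$. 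If your substitution were legitimate, the hypothesis of the lemma would be vacuous --- every $\zo$-category would satisfy it --- and Lemma \ref{lemma:product is a nice colimit}, whose only purpose is to verify this hypothesis for finite products of globular sums, would be pointless. The hypothesis is a genuine restriction on $C$, not a matter of choosing a nicer presentation. (Your remark that Theorem \ref{theo:otimes presserves W} cannot be invoked here, on pain of circularity, is correct.)
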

\begin{proof}
The lemma \ref{lemma:when Gray and pullback work} implies that the natural transformation $a\otimes F(i)\to F(i)$ is cartesian. As a consequence, for any $i$, the square
\[\begin{tikzcd}
	{a\otimes F(i)} & {a\otimes (\colim_I F) \cong a\otimes \iota(C)} \\
	{F(i)} & {\colim_I F\cong \iota(C)}
	\arrow[from=1-2, to=2-2]
	\arrow[from=1-1, to=1-2]
	\arrow[from=2-1, to=2-2]
	\arrow[from=1-1, to=2-1]
	\arrow["\lrcorner"{anchor=center, pos=0.125}, draw=none, from=1-1, to=2-2]
\end{tikzcd}\]
is cartesian. 

Now, to show the desired result, we have to demonstrate that the $\Theta$-set $a\otimes\iota(C)$ already has a structure of $\io$-category, i.e. that it is $\W$-local. It is sufficient to show that for all $f:X\to Y$ in $\W$, any square
\[\begin{tikzcd}
	X & { a\otimes \iota(C)} \\
	Y & { \iota(C)}
	\arrow[from=1-2, to=2-2]
	\arrow[from=1-1, to=2-1]
	\arrow[from=1-1, to=1-2]
	\arrow[from=2-1, to=2-2]
\end{tikzcd}\]
admits a unique lift. Indeed, as $\iota(C)$ is an $\zo$-category, it is $\W$-local, and this will imply that $a\otimes \iota(C)$ also is. Suppose then given such a square. As every codomain of morphism of $\W$ is representable, there exists a (not necessarily unique) element $i$ of $I$, such that the bottom morphism factors as $Y\to F(i)\to \iota(C)$. The previous square then factors as
\[\begin{tikzcd}
	X & {a\otimes F(i)} & { a\otimes\iota(C)} \\
	Y & {F(i)} & { \iota(C)}
	\arrow[from=1-3, to=2-3]
	\arrow[from=1-2, to=1-3]
	\arrow[from=2-2, to=2-3]
	\arrow[from=1-2, to=2-2]
	\arrow["\lrcorner"{anchor=center, pos=0.125}, draw=none, from=1-2, to=2-3]
	\arrow[from=2-1, to=2-2]
	\arrow[from=1-1, to=1-2]
	\arrow[from=1-1, to=2-1]
\end{tikzcd}\]
where the right square is a pullback. The middle vertical morphism is $\W$-local because it's domain and codomain are, and this concludes the proof. 
\end{proof}
\begin{lemma}
\label{lemma:times et otimes}
Given $(a)_{i\leq n}$ and $b$ elements of $\Theta$, we have 
$$\iota(b\otimes (a_0\times ...\times a_n\otimes b)\cong b\otimes (a_0\times ...\times a_n)$$
\end{lemma}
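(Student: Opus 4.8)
The plan is to obtain the statement as the direct composite of the two immediately preceding lemmas, so the work is essentially bookkeeping about which category each operation is computed in. First I would fix the meaning of the product: the expression $a_0\times\dots\times a_n$ on the right is the product of representables computed in $\Psh{\Theta}$, while on the left it should be read as the product computed in $\zocat$. These agree under $\iota$: since $\iota$ is a right adjoint it preserves limits, and since $\Theta\hookrightarrow\zocat$ is fully faithful we have, for every $c\in\Theta$,
$$\Hom_{\zocat}(c, a_0\times\dots\times a_n)\cong\prod_{i\le n}\Hom_{\zocat}(c,a_i)\cong\prod_{i\le n}\Hom_{\Theta}(c,a_i),$$
so that $\iota(a_0\times\dots\times a_n)$ (product in $\zocat$) is canonically the presheaf $a_0\times\dots\times a_n$ (product in $\Psh{\Theta}$). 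I would therefore set $C:=a_0\times\dots\times a_n$ as a $\zo$-category, so that $\iota(C)$ is the product presheaf.

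Next I would invoke Lemma \ref{lemma:product is a nice colimit}, which provides a diagram $F:I\to\Theta_+$ whose colimit is precisely this product presheaf. In other words, $\iota(C)=\colim_I F$ with $F$ valued in $\Theta_+$, which is exactly the hypothesis demanded by Lemma \ref{lemma:nice colimit in presheaves and Gray}.

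Finally I would apply Lemma \ref{lemma:nice colimit in presheaves and Gray} with $a:=b$ and this $C$, concluding that the canonical comparison $b\otimes\iota(C)\to\iota(b\otimes C)$ is an isomorphism, i.e.
$$b\otimes(a_0\times\dots\times a_n)\;\cong\;\iota\bigl(b\otimes(a_0\times\dots\times a_n)\bigr),$$
which is the asserted identification.

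I do not expect a genuine obstacle here: all the substantive content lives in the two cited lemmas (the finality/retraction argument producing the $\Theta_+$-diagram in Lemma \ref{lemma:product is a nice colimit}, and the $\W$-locality/pullback stability argument in Lemma \ref{lemma:nice colimit in presheaves and Gray}). The only point needing care is the identification of the two ambient products above, and the verification that the diagram supplied by Lemma \ref{lemma:product is a nice colimit} indeed lands in $\Theta_+$ and has colimit $\iota(C)$, so that the hypotheses of Lemma \ref{lemma:nice colimit in presheaves and Gray} are met verbatim.
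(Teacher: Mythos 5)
Your proof is correct and follows exactly the paper's own argument: the paper also deduces the statement directly from Lemma \ref{lemma:product is a nice colimit} and Lemma \ref{lemma:nice colimit in presheaves and Gray}, with your added bookkeeping (that $\iota$, being a right adjoint, identifies the product in $\zocat$ with the product of representable presheaves) being the implicit step the paper leaves to the reader.
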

\begin{proof}
This is a direct consequence of lemmas \ref{lemma:product is a nice colimit} and \ref{lemma:nice colimit in presheaves and Gray}
\end{proof}

\begin{lemma}
\label{lemma:times et otimes2}
Let $A,B, C$ be three presheaves on $\Theta$. We have a canonical morphism 
$$(A \otimes B)\otimes C\to A\otimes (B\times C)$$
\end{lemma}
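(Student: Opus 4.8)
The plan is to construct the morphism by reducing to representables and then importing the associativity and the lax comparison that already live on $\zocat$.

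First I would observe that both assignments $(A,B,C)\mapsto (A\otimes B)\otimes C$ and $(A,B,C)\mapsto A\otimes(B\times C)$ are functors $\Psh{\Theta}^3\to\Psh{\Theta}$ that preserve colimits separately in each variable: the Gray tensor product is cocontinuous in each entry by construction (\ref{construction of the gray tensor on simplicial set}), and the cartesian product $B\times C$ is cocontinuous in each of $B$ and $C$ because $\Psh{\Theta}$ is cartesian closed. Consequently each of these two functors is the (iterated, one variable at a time) left Kan extension of its restriction to the representables $\Theta^3$, so that a natural transformation between them is the same datum as a natural transformation between these restrictions. It therefore suffices to produce a morphism $(a\otimes b)\otimes c\to a\otimes(b\times c)$, natural in $a,b,c\in\Theta$.

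Next I would identify both representable values with $\iota$ of a $\zo$-categorical construction. On the target, Lemma \ref{lemma:times et otimes} gives a natural isomorphism $a\otimes(b\times c)\cong\iota\bigl(a\otimes(b\times c)\bigr)$, where on the right the product and the Gray tensor are those of $\zocat$. On the source, $\iota$ preserves the Gray tensor product of globular sums (\ref{construction of the gray tensor on simplicial set}), so $a\otimes b\cong\iota(a\otimes b)$; writing the $\zo$-category $a\otimes b$ as a colimit of a diagram valued in $\Theta_+$ and applying Lemma \ref{lemma:nice colimit in presheaves and Gray} (together with the duality $(C\otimes D)^{op}\cong D^{op}\otimes C^{op}$ of Proposition \ref{prop:otimes and duality}, used to transport $c$ to the side handled by that lemma) yields $(a\otimes b)\otimes c\cong\iota\bigl((a\otimes b)\otimes c\bigr)$, all tensors now computed in $\zocat$.

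Finally, on $\zocat$ the Gray tensor product is a monoidal structure (Theorem \ref{theo:otimes in zocat}) whose unit $\Db_0$ is the terminal object; I would use its associator $(a\otimes b)\otimes c\xrightarrow{\sim}a\otimes(b\otimes c)$ together with the canonical lax comparison $\theta\colon b\otimes c\to b\times c$, obtained by pairing the two maps $b\otimes c\to b\otimes\Db_0\cong b$ and $b\otimes c\to\Db_0\otimes c\cong c$ induced by the terminal maps, to form the composite $(a\otimes b)\otimes c\cong a\otimes(b\otimes c)\xrightarrow{a\otimes\theta}a\otimes(b\times c)$. Applying $\iota$ and composing with the identifications of the previous step produces the desired morphism $(a\otimes b)\otimes c\to a\otimes(b\times c)$ on representables, and its naturality in $a,b,c$ follows from that of the associator, of $\theta$, and of the isomorphisms of Lemmas \ref{lemma:times et otimes} and \ref{lemma:nice colimit in presheaves and Gray}. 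I expect the only real subtlety to be the bookkeeping of the second step, namely checking that the identification $(a\otimes b)\otimes c\cong\iota\bigl((a\otimes b)\otimes c\bigr)$ is genuinely natural and that the invocation of Lemma \ref{lemma:nice colimit in presheaves and Gray} is legitimate once $c$ has been carried across the duality, rather than any conceptual difficulty.
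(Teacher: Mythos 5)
Your outline agrees with the paper's up to one crucial point: reduce to representables, recognize the target $a\otimes(b\times c)$ as $\iota\bigl(a\otimes(b\times c)\bigr)$ via Lemma \ref{lemma:times et otimes}, and finish with the $\zocat$-level map built from the associator and the lax comparison $b\otimes c\to b\times c$ (pairing the two projections through the unit $\Db_0$, which is terminal). Where you diverge is the treatment of the source. The paper never claims that the presheaf $(a\otimes b)\otimes c$ lies in the image of $\iota$: since the target does, the adjunction $\Fb\dashv\iota$ turns the desired presheaf morphism into a morphism of $\zocat$ out of $\Fb\bigl((a\otimes b)\otimes c\bigr)\cong(a\otimes b)\otimes c$, using that $\Fb$ preserves the Gray tensor product (construction \ref{construction of the gray tensor on simplicial set}) and that $\Fb\iota\cong\mathrm{id}$; the canonical $\zocat$-level map then does the rest. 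Nothing about the source beyond $\Fb$-preservation is required.

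Your substitute for this step has a genuine gap. To invoke Lemma \ref{lemma:nice colimit in presheaves and Gray} (after transporting $c$ across the duality) you must know that $\iota(b^{op}\otimes a^{op})$ is the colimit of a diagram valued in $\Theta_+$, i.e.\ of representables with monomorphisms as transition maps. This is not a formal fact about presheaves, and it is false in general: a presheaf possessing a non-degenerate element all of whose restrictions along monomorphisms are degenerate (for instance the quotient of $[2]$ by its $1$-skeleton in $\Psh{\Delta}$) is not such a colimit, because transition monomorphisms preserve the degenerate/non-degenerate dichotomy pointwise. The paper proves this property only for cartesian products of globular sums (Lemma \ref{lemma:product is a nice colimit}), and that proof hinges on Lemma \ref{lemma:i etoile of W is in M 0.5}, i.e.\ on the stability of non-degenerate families under restriction along monomorphisms, which in turn uses a structural property special to globular sums (no non-trivial cell has equal iterated source and target). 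To run your argument you would have to establish the analogous statements for the nerve of $a\otimes b$; this is very plausibly true (via Steiner theory: loop-freeness and atomicity of the basis of $\lambda a\otimes\lambda b$ rule out positive elements of positive degree with vanishing boundary, hence non-trivial cells with equal source and target), but it is real additional work, not the ``bookkeeping'' you describe. The fix is simple: delete the source identification altogether and use the adjunction as above; your own $\zocat$-level map and your naturality discussion then complete the proof.
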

\begin{proof}
It is sufficient to demonstrate the result when $A$, $B$ and $D$ are representable. In this case the lemma \ref{lemma:times et otimes} implies that $A\otimes (B\times C)$ is in the image of $\iota$. By adjunction, the desired comparaison morphism is induced by
$$\iota ((A \otimes B)\otimes C)\cong  \iota (A) \otimes \iota(B)\otimes \iota(C) \to \iota (A) \otimes (\iota(B)\times \iota(C))$$
\end{proof}

\begin{lemma}
\label{lemma:technical steiner}
Let $A$, $B$, $C$, $D$, and $E$ be presheaves on $\Theta$, and $k, m, n$ be integers. There exists a natural morphism
$$(\uvar)_A:\Hom([B,m],[C,n]\otimes D)\to \Hom([ B\otimes A,m],[C \otimes A,n]\otimes D)$$
such that for any pair of morphisms $f:[B,m]\to [n]\otimes D$ and $g:[F,k]\to [m]\otimes E$, 
$$\Fb((( f\otimes E)\circ g_B)_A)=\Fb((E\otimes f_A)\circ (g_B)_A)$$
\end{lemma}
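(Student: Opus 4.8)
The plan is to construct the operation $(\uvar)_A$ first and then to verify the displayed identity, reducing in both cases to the representable situation where Steiner theory makes every object explicit.

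First I would reduce to representables. The suspension functors $[\uvar,m]$, the Gray tensor product $\uvar\otimes\uvar$ on $\Psh{\Theta}$, and the localization $\Fb$ all preserve colimits, and every presheaf on $\Theta$ is a colimit of representables. Hence it is enough to define $(\uvar)_A$ and to check the identity when $A$, $B$, $C$, $D$, $E$ and $F$ are globular sums, the general case following by naturality and by passing to colimits in each variable. In this representable setting every object occurring --- globular sums, their suspensions, and their iterated Gray tensor products --- carries an atomic and loop free basis, since globular sums do (the example following definition \ref{defi:loop free and atomic}) and both $\otimes$ and $[\uvar,1]$ preserve such bases. Through the equivalence $\lambda\dashv\nu$ of theorem \ref{theorem:steiner} I may therefore compute everything inside $\CDAB$, and since $\Fb\iota\cong\id_{\zocat}$, applying $\Fb$ to a map between globular sums amounts to comparing morphisms of $\zo$-categories, where equality can be tested on basis elements.

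Next I would construct $(\uvar)_A$. Given $h\colon[B,m]\to[C,n]\otimes D$, the cells of $\lambda[B\otimes A,m]$ lying in the suspension direction are indexed by the basis elements $b\otimes a$ of $\lambda(B\otimes A)=\lambda B\otimes\lambda A$; guided by the comparison map of proposition \ref{prop:appendice formula for otimes cda}, which sends $[x\otimes y,1]$ to $[x,1]\otimes y$, I would define $h_A$ by applying ``$\otimes\lambda A$ in the suspension direction'' to the values of $h$, leaving the $D$-coordinate fixed and keeping the same images on the extremal objects. One then checks that this assignment respects differentials, positive submonoids and augmentations, hence defines a morphism of $\CDAB$, and therefore, via $\nu$ and $\iota$, a morphism $[B\otimes A,m]\to[C\otimes A,n]\otimes D$. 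Naturality in $A$, $B$, $C$ and $D$ is inherited from the naturality of that comparison map together with the functoriality of $\otimes$, and extending by colimits yields the required natural map $(\uvar)_A$ on hom-sets.

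Finally I would establish the interchange identity. With all data representable, both $\Fb(((f\otimes E)\circ g_B)_A)$ and $\Fb((E\otimes f_A)\circ(g_B)_A)$ are morphisms of $\zo$-categories with atomic loop free bases, so by theorem \ref{theorem:steiner} it suffices to compare the two induced morphisms of augmented directed complexes on basis elements. Unwinding the construction of the previous step and invoking the associativity of $\otimes$ on $\CDAB$, both routes send a generator of $\lambda[(F\otimes B)\otimes A,k]$ to the same expression, the only a priori difference being the position of the factor $\lambda A$ relative to the $D$- and $E$-coordinates. The hard part will be exactly this reconciliation: because $\otimes$ is not symmetric, the two composites need not coincide as morphisms of presheaves, and it is only after applying $\Fb$ --- that is, after realizing them in $\zocat$ and comparing through $\lambda$ --- that the placements of $\lambda A$ are identified. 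I expect the bulk of the work to be the bookkeeping of tracking each basis element through the iterated suspension--tensor comparison maps, using the explicit description of $\Db_n\otimes[1]$ recalled in \ref{defi:explicit Dbn otiomes [1]} to handle the whiskering and boundary terms.
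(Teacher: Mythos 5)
Your proposal follows essentially the same route as the paper's own proof: reduce to the representable case, use Steiner theory via $\lambda$ to define $f_A$ on basis elements by inserting the factor coming from $A$ into the suspension coordinate (exactly the paper's formula $\lambda f_A([b\otimes a,m]) := \sum_i [c_i\otimes a,n_i]\otimes[d_i]$, modelled on the comparison map of proposition \ref{prop:appendice formula for otimes cda}), and then verify the chain-map condition and the interchange identity by direct computation on basis elements. The paper likewise dismisses these final verifications as straightforward Steiner-theoretic computations, so your plan is a faithful match, including your correct observation that the identity only holds after applying $\Fb$.
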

\begin{proof}
It is sufficient to describe this morphism when $A,B,C,D$, and $E$ are representable. This allows us the use of Steiner theory to construct this application.
Let $f:[B,m]\to [C,n]\otimes D$ be a morphism. We set  $f_A:[B\otimes A,m]\to [C \otimes A,n]\otimes D$ as the unique morphism of $\zo$-categories such that for every $a\in B_A$, $b\in B_B$, and $m\in B_m$
$$\lambda f_A( [b\otimes a,m]):= \sum_{i\leq n} [c_i\otimes a_i,n_i]\otimes [d_i]$$
where $(c_i,d_i,n_i)$ is the unique sequence of elements of $B_C\times B_D\times B_{[n]}$ such that $\lambda f([b,m])= \sum_{i\leq n} \sum_{i\leq n} [c_i,n_i]\otimes [d_i]$.
The equality $\lambda f_A\partial=\partial\lambda f_A$ and the equality $\Fb(((E\otimes f)\circ g_B)_A)=\Fb((E\otimes f_A)\circ (g_B)_A)$ are straightforward computation using Steiner theory.
\end{proof}

\begin{lemma}
\label{lemma:otimes presserves W1}
Let $n$ and $m$ be two integers. The canonical morphism 
$$\Sp_{[n]}\otimes\Sp_{[m]}\to [n]\otimes[m]$$ 
is in $\overline{\W_2}$.
\end{lemma}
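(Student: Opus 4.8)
The plan is to reduce the two–variable statement to a one–variable one, and then to build the target up from the source by the homotopically cocartesian cell attachments provided by theorem \ref{theo: case of 1 and 2 category}.

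Write $s_p\colon\Sp_{[p]}\to[p]$ for the spine (Segal) map, which lies in $\W_1$, hence in $\W_2\subset\overline{\W_2}$. First I factor the map in question as
$$\Sp_{[n]}\otimes\Sp_{[m]}\xrightarrow{\ s_n\otimes\Sp_{[m]}\ }[n]\otimes\Sp_{[m]}\xrightarrow{\ [n]\otimes s_m\ }[n]\otimes[m],$$
and, $\overline{\W_2}$ being closed under composition (definition \ref{defi:precomplet}), treat the two factors separately. Since $\uvar\otimes\uvar$ preserves colimits in each variable, $s_n\otimes\Sp_{[m]}$ is the colimit of the Reedy cofibrant arrow–diagram obtained by applying $s_n\otimes\uvar$ to the spine diagram $[1]\leftarrow[0]\to[1]\leftarrow\cdots$ of $\Sp_{[m]}$; its terms are $s_n\otimes[1]$ and $s_n\otimes[0]=s_n\in\W_1$ (as $[0]=\Db_0$ is the unit of $\otimes$). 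For the second factor I invoke the duality $(C\otimes D)^{op}\cong D^{op}\otimes C^{op}$ of proposition \ref{prop:otimes and duality} together with the self–dualities $[p]^{op}\cong[p]$, $\Sp_{[p]}^{op}\cong\Sp_{[p]}$ and $s_m^{op}\cong s_m$: as $(\uvar)^{op}$ is an automorphism of $\Psh{\Theta_2}$ carrying $\W_2$ to itself it preserves $\overline{\W_2}$, and it identifies $[n]\otimes s_m$ with $s_m\otimes[n]$. Thus, granting the closure of $\overline{\W_2}$ under colimits of Reedy cofibrant diagrams, the whole statement follows from the single claim
$$(B)\qquad s_n\otimes[m]\colon\ \Sp_{[n]}\otimes[m]\longrightarrow[n]\otimes[m]\ \in\ \overline{\W_2}\quad\text{for all }n,m.$$

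To prove $(B)$ I exploit the tameness of the target. The $\zo$–category $[n]\otimes[m]=\nu(\lambda[n]\otimes\lambda[m])$ is a $(0,2)$–category with a loop–free and atomic basis (Gray tensor of globular sums), whose generators have dimension $\le 2$; and $\Db_1=[[0],1]$, $\Db_2=[[1],1]$ are exactly the cells $[[k],1]$ with $k\le1$ of theorem \ref{theo: case of 1 and 2 category}. Choosing an enumeration of the generators compatible with dimension and with the loop–free order yields a polygraph filtration $P_0\to P_1\to\cdots\to P_N=[n]\otimes[m]$ in which each step attaches one generator along its boundary via a cocartesian square $\partial[[k_i],1]\to P_i,\ [[k_i],1]\to P_{i+1}$. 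Each $P_i$ is again a $(0,2)$–category with a loop–free and atomic basis (its generators form a downward–closed subset), so theorem \ref{theo: case of 1 and 2 category} applies at every stage and gives that the presheaf map $P_i\cup x_{i+1}\to P_{i+1}$ lies in $\overline{\W_2}$. Performing the same attachments at the level of presheaves produces a transfinite composite $\check P_0\to\cdots\to\check P_N$ with a canonical map to $[n]\otimes[m]$, each of whose comparison maps $\check P_i\cup x_{i+1}\to\check P_{i+1}$ is a pushout of one of the maps above; stability of $\overline{\W_2}$ under pushout and transfinite composition then yields $\check P_0\to[n]\otimes[m]\in\overline{\W_2}$.

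It remains to identify $\Sp_{[n]}\otimes[m]$ with the initial (spine) stage of this filtration, i.e. to check that the cells of $\Sp_{[n]}\otimes[m]$ beyond the generators are precisely those forced as boundaries of higher generators and that the attaching maps factor through the previous stage in the form $C\cup x$ demanded by theorem \ref{theo: case of 1 and 2 category}. This is the only place a genuine computation enters, and it is settled using the explicit description of $\Db_k\otimes[1]$ in remark \ref{defi:explicit Dbn otiomes [1]} and of $[1]\otimes[1]$ in proposition \ref{prop:explicit Gray}. I expect this identification to be the main obstacle. Concretely, one can organise it as a double induction on $(n,m)$: the gluing lemma reduces the $n$–step to a pushout of $s_{n-1}\otimes[m]$ together with the ``last Segal map'' $([n-1]\cup_{[0]}[1])\otimes[m]\to[n]\otimes[m]$, which is itself assembled from pushouts of $(\Sp_{[2]}\to[2])\otimes[m]$; iterating in $m$ bottoms out at the single explicit instance $(\Sp_{[2]}\to[2])\otimes[1]$, handled directly by proposition \ref{prop:explicit Gray} and theorem \ref{theo: case of 1 and 2 category}.
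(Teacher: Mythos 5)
Your reduction of the two-variable statement to the one-variable claim (B) is sound, and it is a genuinely different organization from the paper's (the paper never separates the variables; it handles both spines at once). The per-stage use of theorem \ref{theo: case of 1 and 2 category} along a dimension-ordered polygraph filtration of $[n]\otimes[m]$ is also legitimate. The gap is in the assembly, i.e.\ exactly at the point you flag as the main obstacle, and it cannot be repaired along the lines you sketch. First, the transfinite composite $\check P_0\to\cdots\to[n]\otimes[m]$ does not exist: the attaching map of a $2$-generator of $[n]\otimes[m]$ is a map $\partial\Db_2\to\tau_1([n]\otimes[m])$ whose boundary is a \emph{composite} of several $1$-generators, so that cell cannot be pre-attached at the presheaf level to any stage in which those $1$-generators have not yet been attached categorically. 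Hence either $\check P_0$ (with all generators pre-attached) is undefined, or your chain must contain the attachment maps $\check P_i\to\check P_i\cup x_{i+1}$ themselves, which are monomorphisms not in $\overline{\W_2}$, so the composite is not in $\overline{\W_2}$ either. Second, and more fundamentally, $\Sp_{[n]}\otimes[m]$ is not a cellular presheaf at all: it is a union of the presheaves $\iota([1]\otimes[m])$, which are nerves of honest $\zo$-categories and therefore contain all composites of their cells, not merely generators and boundaries of generators. So the proposed identification of $\Sp_{[n]}\otimes[m]$ with an initial stage of the filtration is false. Your fallback double induction does not close this gap: the key step $([n-1]\vee[1])\otimes[m]\to[n]\otimes[m]$ is not assembled from pushouts of $(\Sp_{[2]}\to[2])\otimes[m]$ (presheaf pushouts freely adjoin cells and never identify them; already for $m=0$, $n=3$ a count of nondegenerate cells rules this out, since no pushout of $\Sp_{[2]}\to[2]$ can create the $3$-simplex of $[3]$), and deducing that step by cancellation from (B) for $n-1$ and for $n$ is circular.

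What is missing is the mechanism the definition of $\overline{\W_2}$ was designed to provide and which you never invoke: left cancellation (definition \ref{defi:precomplet}). This is how the paper bridges the cellular world and the composite-closed world, in two dimension-separated phases. It pre-attaches only the $1$-generators to the $0$-truncation, forming the cellular object $g([n],[m])=\tau_0([n]\otimes[m])\cup_{x\in S_{n,m}}x$ (possible, since those attaching maps land in the objects); repeated use of theorem \ref{theo: case of 1 and 2 category} gives $g([n],[m])\to\tau_1([n]\otimes[m])$ in $\overline{\W_2}$; a Reedy-colimit argument gives $g([n],[m])\cong\colim_{\Sp_{[n]}\times\Sp_{[m]}}g\to\tau_1(\Sp_{[n]}\otimes\Sp_{[m]})$ in $\overline{\W_2}$; and left cancellation then yields $\tau_1(\Sp_{[n]}\otimes\Sp_{[m]})\to\tau_1([n]\otimes[m])\in\overline{\W_2}$ --- this is the only way the non-cellular source ever enters the argument. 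Finally the $2$-generators, whose attaching maps \emph{do} exist in $\tau_1(\Sp_{[n]}\otimes\Sp_{[m]})$ (each lies in some $[1]\otimes[1]$), are transported across by a cocartesian square and attached with a second run of the theorem. Your one-variable reduction could be grafted onto exactly this scheme, replacing $\Sp_{[n]}\otimes\Sp_{[m]}$ by $\Sp_{[n]}\otimes[m]$ throughout; but some version of the cancellation step is unavoidable, and without it your proof of (B) does not go through.
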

\begin{proof}
We recall that the functors $\tau_0$ and $\tau_1$ are defined in \ref{cons:dumb truncatoin} and correspond, respectively, to the functor that forgets cells of dimension strictly higher than $0$ and to the functor that forgets cells of dimension strictly higher than $1$.
Let $\Delta^{glob}$ be the subcategory of $\Delta$ whose morphisms are the globular ones.
We consider the functor $g:\Delta^{glob}\times \Delta^{glob}\to \Psh{\Theta_2}$ defined by the formula 
$$g([n],[m]):=\tau_0([n]\otimes [m])\cup_{x\in S_{n,m}}x$$
where $S_{n,m}$ is the set of $1$-generators of $\tau_1([n]\otimes[m])$.
We have a canonical transformation $g(n,m)\to \tau_1([n]\otimes[m])$ which is pointwise in $\widehat{\W_2}$ by repeated application of theorem \ref{theo: case of 1 and 2 category}. For any pair of integers $n,m$, the morphism 
$$g([n],[m])\cong \colim_{\Sp_{[n]}\times \Sp_{[m]}}g\to \tau_1(\Sp_{[n]}\otimes\Sp_{[m]})$$ 
then also belongs to $\overline{\W_2}$. 
By two out of three, so is the morphism
$$\tau_1(\Sp_{[n]}\otimes \Sp_{[m]})\to \tau_1([n]\otimes[m])$$
Remark now that we have a cocartesian square
\[\begin{tikzcd}
	{\tau_1(\Sp_{[n]}\otimes\Sp_{[m]})} & {\Sp_{[n]}\otimes\Sp_{[m]}} \\
	{\tau_1([n]\otimes[m])} & {\tau_1([n]\otimes[m])\cup_{x\in T_{n,m}}x}
	\arrow[from=1-1, to=2-1]
	\arrow[from=1-1, to=1-2]
	\arrow[from=2-1, to=2-2]
	\arrow[from=1-2, to=2-2]
\end{tikzcd}\]
where $T_{n,m}$ is the set of $2$-generators of  $[n]\otimes[m]$. The theorem \ref{theo: case of 1 and 2 category} implies that 
$$\tau_1([n]\otimes[m])\cup_{x\in T_{n,m}}x\to [n]\otimes[m]$$
is in $\widehat{\W_2}$, and by stability by composition and pushout, so is 
$$\Sp_{[n]}\otimes\Sp_{[m]}\to [n]\otimes[m].$$
\end{proof}

\begin{prop}
\label{prop:otimes and suspension in presheaves}
Let $K$ be a simplicial set. The canonical morphism
$$1\coprod_{K\otimes\{0\}}K\otimes[1]\coprod_{K\otimes\{1\}}1\to [K,1]$$
is in $\overline{\W_2}$.
\end{prop}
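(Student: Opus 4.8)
The plan is to use that the two functors $K\mapsto 1\coprod_{K\otimes\{0\}}K\otimes[1]\coprod_{K\otimes\{1\}}1$ and $K\mapsto[K,1]$ both preserve colimits, together with the closure of $\overline{\W_2}$ under colimits of Reedy cofibrant diagrams, to reduce first to the representable case $K=[n]$ and then, via the spine of $[n]$, to the single base case $K=[1]$. Write $\Phi(K):=1\coprod_{K\otimes\{0\}}K\otimes[1]\coprod_{K\otimes\{1\}}1$ and let $c_K\colon\Phi(K)\to[K,1]$ be the comparison. Both $\Phi$ and $[\uvar,1]$ are left Kan extensions, hence colimit preserving, and both send monomorphisms to monomorphisms. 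Since $\Delta$ is an elegant Reedy category, any simplicial set $K$ is the colimit of the canonical Reedy cofibrant diagram of its representables, and mono-preservation shows that $c$ carries this to a Reedy cofibrant diagram in $\Arr(\Psh{\Theta})$; thus $c_K$ is a Reedy cofibrant colimit of the maps $c_{[n]}$, and it suffices to prove $c_{[n]}\in\overline{\W_2}$.

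For the reduction to $n\le 1$, let $s\colon\Sp_{[n]}\to[n]$ be the spine inclusion and consider the naturality square of $c$ along $s$. The lower map $[s,1]\colon[\Sp_{[n]},1]\to[[n],1]$ is exactly the Segal map $\Sp_{[[n],1]}\to[[n],1]$, since the inductive definition of $\Sp$ gives $\Sp_{[[n],1]}=[\Sp_{[n]},1]$; hence it lies in $\Wseg\subseteq\overline{\W_2}$. The upper map $\Phi(s)$ is obtained by gluing the maps $\Sp_{[n]}\otimes\{\epsilon\}=\Sp_{[n]}\xrightarrow{s}[n]=[n]\otimes\{\epsilon\}$ (which lie in $\Wseg\subseteq\overline{\W_2}$) with the map $\Sp_{[n]}\otimes[1]\to[n]\otimes[1]$, which lies in $\overline{\W_2}$ by Lemma \ref{lemma:otimes presserves W1} (taking $m=1$ and using $\Sp_{[1]}=[1]$); as the cylinder inclusions are monomorphisms, closure of $\overline{\W_2}$ under pushout gives $\Phi(s)\in\overline{\W_2}$. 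Moreover $c_{\Sp_{[n]}}$ is a Reedy cofibrant colimit of copies of $c_{[1]}$ and $c_{[0]}$, where $c_{[0]}$ is an isomorphism (as $[0]\otimes\{\epsilon\}$ is a point and $[0]\otimes[1]=[1]=[[0],1]$), so once the base case is known we get $c_{\Sp_{[n]}}\in\overline{\W_2}$. Then $c_{[n]}\circ\Phi(s)=[s,1]\circ c_{\Sp_{[n]}}\in\overline{\W_2}$, and left cancellation against $\Phi(s)\in\overline{\W_2}$ yields $c_{[n]}\in\overline{\W_2}$.

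It remains to treat the base case $c_{[1]}\colon\Phi([1])\to[[1],1]=\Db_2$, and this is the main obstacle. Contrary to first appearances, $\Phi([1])$ is \emph{not} the representable $\Db_2$: collapsing the two outer edges of the lax square does not collapse the two copies of $[2]$ in the presentation of Proposition \ref{prop:explicit Gray} to intervals, but to ``digons'' still carrying a nondegenerate $2$-simplex, so $c_{[1]}$ is a genuine equivalence rather than an identification. I would therefore argue as in Lemma \ref{lemma:otimes presserves W1}: starting from the explicit colimit description of $[1]\otimes[1]$ in Proposition \ref{prop:explicit Gray}, I would write $\Phi([1])$ as an explicit finite colimit, rectify the two collapsed simplices by Segal maps (which lie in $\W_2$), and then insert the mediating $2$-cell through an application of Theorem \ref{theo: case of 1 and 2 category} to a cocartesian square of the form $\partial[[1],1]\to[[1],1]$. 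Each step lands in $\overline{\W_2}$, so their composite $c_{[1]}$ does as well. The careful bookkeeping of this final identification of $\Phi([1])$, and of the factorization of its comparison map with $\Db_2$ into the above $\overline{\W_2}$-moves, is where the real computational content of the proof lies.
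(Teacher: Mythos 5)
Your two reduction steps coincide exactly with the paper's proof: first the Reedy-cofibrant colimit argument reducing to representables (with $\emptyset$ and $[0]$ trivial), then the spine argument via Lemma \ref{lemma:otimes presserves W1} with $m=1$ and left cancellation, reducing to $K=[1]$. The divergence is in the base case, which is precisely the step you defer as ``the real computational content,'' and there the one concrete tool you name is misdirected: no application of Theorem \ref{theo: case of 1 and 2 category} is needed, and no mediating $2$-cell has to be inserted. In the colimit presentation of Proposition \ref{prop:explicit Gray}, the middle term $[[1],1]$ contains neither of the two collapsed edges, so it passes to the quotient untouched; your $\Phi([1])$ is simply
$$([2]\textstyle\coprod_{[1]}[0]) \coprod_{[1]} [[1],1] \coprod_{[1]} ([0]\textstyle\coprod_{[1]}[2]),$$
two digons glued onto $[[1],1]$ along the long edges. (Theorem \ref{theo: case of 1 and 2 category} enters this story only inside Lemma \ref{lemma:otimes presserves W1}, which you have already used for the spine reduction.)

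Consequently $c_{[1]}$ is the pushout, along the inclusion of the two digons into $\Phi([1])$, of the collapse map $([2]\coprod_{[1]}[0])\coprod([0]\coprod_{[1]}[2])\to [1]\coprod [1]$; this is the single cocartesian square with which the paper concludes. All that remains is your ``Segal rectification,'' made precise: each digon collapse $[0]\coprod_{[1]}[2]\to[1]$ (and dually $[2]\coprod_{[1]}[0]\to[1]$) lies in $\overline{\W_2}$, because pushing out the Segal map $\Sp_{[2]}\to[2]$ (which is in $\W_2$) along the edge collapse $\Sp_{[2]}=[1]\coprod_{[0]}[1]\to[1]$ gives $[1]\to[0]\coprod_{[1]}[2]$ in $\overline{\W_2}$, and left cancellation against the identity of $[1]$ finishes. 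Equivalently, and even shorter: the inclusion $[[1],1]\to\Phi([1])$ is a pushout of $[1]\coprod[1]\to([2]\coprod_{[1]}[0])\coprod([0]\coprod_{[1]}[2])$, hence lies in $\overline{\W_2}$, and since $c_{[1]}$ composed with this inclusion is the identity, left cancellation gives $c_{[1]}\in\overline{\W_2}$. So your outline is correct and matches the paper, but the base case you flag as the hard part is a two-line pushout argument rather than an appeal to Theorem \ref{theo: case of 1 and 2 category}.
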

\begin{proof}
As $K$ is a colimit of representables indexed by the Reedy cofibrant diagram $\Delta^+_{/K}\to \Sset$ (definition \ref{defi:reedycof}), and as 
$1\coprod_{\uvar\otimes\{0\}}\uvar\otimes[1]\coprod_{\uvar\otimes\{1\}}1$ and  $[\uvar,1]$ preserve cofibrations,  it is sufficient to demonstrate the result when $K:=[n]$ for $n$ an integer or the empty simplicial set. The cases $K:=\emptyset$ and $K:=[0]$  are trivial. As  $[\uvar,1]$ and, by lemma \ref{lemma:otimes presserves W1}, $\uvar\otimes[1]$ send $\Sp_{[n]}\to [n]$ to $\overline{\W_2}$, it is sufficient to demonstrate the result when $[n]=[1]$. By proposition \ref{prop:explicit Gray}, the morphism $$1\coprod_{[1]\otimes\{0\}}[1]\otimes[1]\coprod_{[1]\otimes\{1\}}1\to [[1],1]$$ fits in the cocartesian square
\[\begin{tikzcd}
	{[0]\coprod_{[1]}[2]~~~\coprod~~~[0]\coprod_{[1]}[2]} & {1\coprod_{[1]\otimes\{0\}}[1]\otimes[1]\coprod_{[1]\otimes\{1\}}1} \\
	{[1]~~~\coprod~~~[1]} & {[[1],1]}
	\arrow[from=2-1, to=2-2]
	\arrow[from=1-2, to=2-2]
	\arrow[""{name=0, anchor=center, inner sep=0}, from=1-1, to=1-2]
	\arrow[from=1-1, to=2-1]
	\arrow["\lrcorner"{anchor=center, pos=0.125, rotate=180}, draw=none, from=2-2, to=0]
\end{tikzcd}\]
As the canonical morphisms $[0]\coprod_{[1]}[2]\to [1]$ and $[2]\coprod_{[1]}[0]\to [1]$ are in $\overline{\W_2}$, this concludes the proof.
\end{proof}

\begin{lemma}
\label{lemma:otimes presserves W2}
Let $n$ be an integer. The two morphisms
$$E^{eq}\otimes[n]\to [n] \quad\text{and}\quad [n]\otimes E^{eq}\to [n]$$
are in $\overline{\W_2}$.
\end{lemma}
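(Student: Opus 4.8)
The plan is to establish the first map $E^{eq}\otimes[n]\to[n]$ and to deduce the second from it by duality. Writing $E^{eq}$ as a colimit of simplices and using that $\uvar\otimes[n]$ preserves colimits, this map is a colimit of the collapses $[k]\otimes[n]\to[0]\otimes[n]=[n]$; since these legs are not themselves in $\overline{\W_2}$, no pointwise argument is available and the invertibility must be recovered globally. I would instead isolate the saturation content into the case $n=1$ (handled through the suspension formula \ref{prop:otimes and suspension in presheaves}) and a Segal--compatibility statement (extracted from lemma \ref{lemma:otimes presserves W1}), and then assemble the general case through the left cancellation property of $\overline{\W_2}$ (definition \ref{defi:precomplet}).

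For the base cases: when $n=0$ the map is $E^{eq}\to[0]$, a generator of $\Wsat\subset \W_2$. For $n=1$, I would factor the collapse $q:E^{eq}\otimes[1]\to[1]$ through $Y:=1\coprod_{E^{eq}\otimes\{0\}}E^{eq}\otimes[1]\coprod_{E^{eq}\otimes\{1\}}1$: since $q$ sends each end $E^{eq}\otimes\{\epsilon\}$ to the point $\{\epsilon\}$, it factors as $E^{eq}\otimes[1]\xrightarrow{c}Y\to[1]$, and by naturality in $K$ of the comparison of proposition \ref{prop:otimes and suspension in presheaves} the second map is the composite $Y\to[E^{eq},1]\to[1]$. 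Here $Y\to[E^{eq},1]$ lies in $\overline{\W_2}$ by proposition \ref{prop:otimes and suspension in presheaves}, and $[E^{eq},1]\to[1]=\Db_1$ is the suspension of $E^{eq}\to[0]$, a generator of $\Wsat$; while $c$ is the pushout of the coproduct $(E^{eq}\to[0])\amalg(E^{eq}\to[0])$ along $E^{eq}\otimes\{0\}\amalg E^{eq}\otimes\{1\}\hookrightarrow E^{eq}\otimes[1]$, hence in $\overline{\W_2}$. Thus $E^{eq}\otimes[1]\to[1]\in\overline{\W_2}$.

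Next I would prove $[k]\otimes(\Sp_{[n]}\to[n])\in\overline{\W_2}$ for every simplex $[k]$. Because $\Sp_{[1]}=[1]$, lemma \ref{lemma:otimes presserves W1} already gives $\Sp_{[k]}\otimes[1]\to[k]\otimes[1]$ in $\overline{\W_2}$; taking the Reedy cofibrant colimit (definition \ref{defi:reedycof}) of these along the spine decomposition of $[n]$ shows $\Sp_{[k]}\otimes\Sp_{[n]}\to[k]\otimes\Sp_{[n]}$ is in $\overline{\W_2}$. As its composite with $[k]\otimes\Sp_{[n]}\to[k]\otimes[n]$ equals $\Sp_{[k]}\otimes\Sp_{[n]}\to[k]\otimes[n]$, which is in $\overline{\W_2}$ again by lemma \ref{lemma:otimes presserves W1}, left cancellation puts $[k]\otimes\Sp_{[n]}\to[k]\otimes[n]$ in $\overline{\W_2}$. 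Finally, for general $n$, I would consider the square
\[\begin{tikzcd}
	{E^{eq}\otimes\Sp_{[n]}} & {E^{eq}\otimes[n]} \\
	{\Sp_{[n]}} & {[n]}
	\arrow[from=1-1, to=1-2]
	\arrow[from=1-1, to=2-1]
	\arrow[from=1-2, to=2-2]
	\arrow[from=2-1, to=2-2]
\end{tikzcd}\]
Its left vertical map is the Reedy cofibrant colimit along the spine of $[n]$ of the maps $E^{eq}\otimes[1]\to[1]$ and $E^{eq}\to[0]$ established above, hence in $\overline{\W_2}$; composed with $\Sp_{[n]}\to[n]\in\W_1$ this makes the lower-left composite lie in $\overline{\W_2}$. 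The top map is the Reedy cofibrant colimit over $\Delta^+_{/E^{eq}}$ of the maps $[k]\otimes(\Sp_{[n]}\to[n])$ of the previous paragraph, so it too is in $\overline{\W_2}$. Since the right-then-down composite equals the lower-left composite, it lies in $\overline{\W_2}$, and cancelling the top factor on the left yields $E^{eq}\otimes[n]\to[n]\in\overline{\W_2}$. The statement $[n]\otimes E^{eq}\to[n]$ then follows by applying $(\uvar)^{op}$, using $(C\otimes D)^{op}\cong D^{op}\otimes C^{op}$ (proposition \ref{prop:otimes and duality}), the self-dualities $[n]^{op}\cong[n]$ and $(E^{eq})^{op}\cong E^{eq}$, and the invariance of $\overline{\W_2}$ under dualities.

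The main obstacle is the base case $n=1$: the rest is bookkeeping with Reedy cofibrant colimits and left cancellation, but recognising that $E^{eq}\otimes[1]\to[1]$ becomes a suspended saturation generator once its two ends are collapsed --- and checking, via naturality in $K$ of the comparison of proposition \ref{prop:otimes and suspension in presheaves}, that the factorisation $q=(Y\to[E^{eq},1]\to[1])\circ c$ genuinely computes the collapse --- is where the real content lies.
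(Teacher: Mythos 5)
Your proof is correct and follows essentially the same route as the paper's: reduce to the case $n=1$ using the spine inclusions of lemma \ref{lemma:otimes presserves W1} together with left cancellation, then handle $E^{eq}\otimes[1]\to[1]$ by factoring it through $1\coprod_{E^{eq}\otimes\{0\}}E^{eq}\otimes[1]\coprod_{E^{eq}\otimes\{1\}}1\to[E^{eq},1]$ via proposition \ref{prop:otimes and suspension in presheaves} (the collapse of the two ends being a pushout of saturation generators along a monomorphism) and conclude with the saturation extension $[E^{eq},1]\to[1]\in\Wsat$. The only differences are cosmetic: you spell out the reduction to $n=1$ that the paper compresses into one sentence, and you obtain $[n]\otimes E^{eq}\to[n]$ by the duality $(\uvar)^{op}$ where the paper argues ``similarly''; both are valid.
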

\begin{proof}
As $\otimes$ sends spine inclusions to $\overline{\W_2}$, we can reduce to the case where $n=1$. By stability by pushouts along monomorphisms, and using lemma \ref{prop:otimes and suspension in presheaves}, the composite
$$E^{eq}\otimes[1]\to 1\coprod_{E^{eq}\otimes\{0\}}E^{eq}\otimes[1]\coprod_{E^{eq}\otimes\{1\}}1\to [E^{eq},1]$$
is in $\overline{\W_2}$. As $[E^{eq},1]\to [1]$ is in $\W_2$, this concludes the first assertion. We show the second one similarly.
\end{proof}

\begin{proof}[Proof of theorem \ref{theo:otimes presserves W}]
This is the content of lemmas \ref{lemma:otimes presserves W1} and \ref{lemma:otimes presserves W2}.
\end{proof}

%
%
%

\chapter{The $\iun$-category of $\io$-categories}

\minitoc
\vspace{1cm}
%
%
%
%
%
%
%
%
%

The first section is a recollection of results on presentable $\iun$-categories. In particular, we provide a very useful technical lemma that gives conditions for calculating colimits in $\infty$-presheaves using (strict) presheaves. We also present results on factorization systems and the localizations they induce. Finally, we conclude by giving some results on monomorphisms in $\iun$-categories.

In the second section, we define $\io$-categories and give some basic properties. 
We also define and study \textit{discrete Conduché functor}, which are morphisms having the unique right lifting property against 
units $\Ib_{n+1}:\Db_{n+1}\to \Db_n$ for any integer $n$, and against compositions $\triangledown_{k,n}:\Db_n\to \Db_n\coprod_{\Db_k}\Db_n$ for any pair of integers $k\leq n$. This notion was originally defined and studied in the context of strict $\omega$-category by Guetta in \cite{Guetta_conduche}. We then demonstrate the following result:
\begin{itheorem}[\ref{theo:pullback along conduche preserves colimits}]
Let $f:C\to D$ be a discrete Conduché functor. The pullback functor $f^*:\ocat_{/D}\to \ocat_{/C}$ preserves colimits.
\end{itheorem}

In the third section, using the Gray tensor product for $\zo$-categories, we construct a colimit-preserving functor
\[ \uvar\otimes\uvar:\ocat\times \icat\to \ocat \]
again called the Gray tensor product.
To be able to define lax phenomena, in particular lax colimits and limits, we will need to extend the functor $\uvar\otimes\uvar$ to \(\ocat\times \ocat\). Although one might at first be tempted to use the "general" Gray tensor product, this is not the operation that will be used subsequently, particularly for stating the universal property of lax colimits and limits. We then introduce a cocontinuous bifunctor:
\[ \ominus: \ocat\times \ocat\to \ocat \]
called the \textit{enhanced Gray tensor product}. This new operation can be seen as a generalization of the Gray product with $\iun$-categories.
 
 We then introduce the Gray operations, starting with the Gray cylinder $\uvar\otimes[1]$ which is the Gray tensor product with the directed interval $[1]:=0\to 1$. Then, we have the \textit{Gray cone} and the \textit{Gray $\circ$-cone}, denoted by $\uvar\star 1$ and $1\costar \uvar$, that send an $\io$-category $C$ onto the following pushouts:
\[\begin{tikzcd}[cramped]
	{C\otimes\{1\}} & {C\otimes[1]} & {C\otimes\{0\}} & {C\otimes[1]} \\
	1 & {C\star 1} & 1 & {1\costar C}
	\arrow[from=1-1, to=1-2]
	\arrow[from=1-1, to=2-1]
	\arrow[from=1-2, to=2-2]
	\arrow[from=1-3, to=1-4]
	\arrow[from=1-3, to=2-3]
	\arrow[from=1-4, to=2-4]
	\arrow[from=2-1, to=2-2]
	\arrow["\lrcorner"{anchor=center, pos=0.125, rotate=180}, draw=none, from=2-2, to=1-1]
	\arrow[from=2-3, to=2-4]
	\arrow["\lrcorner"{anchor=center, pos=0.125, rotate=180}, draw=none, from=2-4, to=1-3]
\end{tikzcd}\]

We then provide several formulas expressing the relationship between the Gray operations and the suspension $[\uvar,1]:\ocat\to \ocat$, identical to those in the strict case.
\begin{iprop}[\ref{prop:eq for cylinder}]
There is a natural identification between $[C,1]\otimes [1]$ and the colimit of the diagram
\begin{equation}
\begin{tikzcd}
	{[1]\vee [ C,1]} & {[C\otimes\{0\},1]} & {[C\otimes [1],1]} & {[C\otimes\{1\},1]} & {[C,1]\vee[1]}
	\arrow[from=1-2, to=1-1]
	\arrow[from=1-2, to=1-3]
	\arrow[from=1-4, to=1-3]
	\arrow[from=1-4, to=1-5]
\end{tikzcd}
\end{equation}
\end{iprop}

\begin{iprop}[\ref{prop:formula for the ominus}]
There is a  natural identification between $[C,1]\ominus[b,1]$ and the colimit of the following diagram
\begin{equation}
\begin{tikzcd}[column sep = 0.3cm]
	{[b,1]\vee[C,1]} & {[C\otimes\{0\}\times b,1]} & {[(C\otimes[1])\times b,1]} & {[C\otimes\{1\}\times b,1]} & {[C,1]\vee[b,1]}
	\arrow[from=1-2, to=1-3]
	\arrow[from=1-4, to=1-3]
	\arrow[from=1-4, to=1-5]
	\arrow[from=1-2, to=1-1]
\end{tikzcd}
\end{equation}
\end{iprop}

\begin{iprop}[\ref{prop:eq for Gray cone}]
There is a natural identification between $1\costar [C,1]$ and the colimit of the diagram
\begin{equation}
\begin{tikzcd}
	{[1]\vee [C,1]} & {[C,1]} & {[C\star 1,1]}
	\arrow[from=1-2, to=1-3]
	\arrow[from=1-2, to=1-1]
\end{tikzcd}
\end{equation}
There is a natural identification between $[C,1]\star 1$ and the colimit of the diagram
\begin{equation}
\begin{tikzcd}
	{[1\costar C,1]} & {[C,1]} & {[C,1]\vee[1]}
	\arrow[from=1-2, to=1-3]
	\arrow[from=1-2, to=1-1]
\end{tikzcd}
\end{equation}
\end{iprop}

 We conclude this chapter by proving results of strictification. In particular, we demonstrate the following theorem:
\begin{itheorem}[\ref{theo:strict stuff are pushout}]
Let $C\to D$ and $C\to E$ be two morphisms between strict $\io$-categories. The $\io$-categories
$D\coprod_{C}C\star 1$,  $1\costar C\coprod_CD$, and $D\coprod_{C\otimes\{0\}}C\otimes[1]\coprod_{C\otimes \{1\}}E$ are strict. In particular, $C\star 1$, $1\costar C$, and $C\otimes[n]$ for any integer $n$, are strict.
\end{itheorem}
In the process, we will demonstrate another fundamental equation combining $C\otimes[1]$, $1\costar C$, $C\star 1$, and $[C,1]$.
\begin{itheorem}[\ref{theo:formula between pullback of slice and tensor}]
Let $C$ be an $\io$-category. The five squares appearing in the following canonical diagram are both cartesian and cocartesian:
\[\begin{tikzcd}
	& {C\otimes\{0\}} & 1 \\
	{C\otimes\{1\}} & {C\otimes[1]} & {C\star 1} \\
	1 & {1\costar C} & {[C,1]}
	\arrow[from=2-3, to=3-3]
	\arrow[from=3-2, to=3-3]
	\arrow[from=2-2, to=3-2]
	\arrow[from=2-2, to=2-3]
	\arrow[from=1-2, to=1-3]
	\arrow[from=1-3, to=2-3]
	\arrow[from=1-2, to=2-2]
	\arrow[from=2-1, to=2-2]
	\arrow[from=3-1, to=3-2]
	\arrow[from=2-1, to=3-1]
\end{tikzcd}\]
where $[C,1]$ is the \textit{suspension of $C$}.
\end{itheorem}

\paragraph{Cardinality hypothesis.}
We fix during this chapter three Grothendieck universes $\U \in \V\in\Wcard$, such that $\omega\in \U$. 
All defined notions depend on a choice of cardinality. When nothing is specified, this corresponds to the implicit choice of the cardinality $\V$.
With this convention in mind, we denote by {$\Set$} the $\Wcard$-small $1$-category of $\V$-small sets, {$\igrd$} the $\Wcard$-small $\iun$-category of $\V$-small $\infty$-groupoids and {$\icat$} the $\Wcard$-small $\iun$-category of $\V$-small $\iun$-categories.

\section{Preliminaries}
\subsection{Explicit computation of some colimits}

\begin{definition}
For a category $B$, we denote by {$\Psh{B}$} the category of functors $B^{op}\to \Set$.
For a $\iun$-category $A$, we denote by \wcnotation{$\iPsh{A}$}{(psh@$\iPsh{\uvar}$} the $\iun$-category of functors $A^{op}\to \igrd$. A presheaf on $B$, (resp. a $\infty$-presheaves on $A$) is \textit{$\U$-small} if it is pointwise a $\U$-small set (resp. a $\U$-small $\infty$-groupoid).
\end{definition}
\begin{definition}
We have an adjunction:
\begin{equation}
\label{eq:adj betwen set and space}
\begin{tikzcd}
	{\pi_0:\igrd} & {\Set:\iota}
	\arrow[""{name=0, anchor=center, inner sep=0}, shift left=2, from=1-2, to=1-1]
	\arrow[""{name=1, anchor=center, inner sep=0}, shift left=2, from=1-1, to=1-2]
	\arrow["\dashv"{anchor=center, rotate=-90}, draw=none, from=1, to=0]
\end{tikzcd}
\end{equation}
 If $A$ is a $1$-category, the previous adjunction induces an adjunction:
\begin{equation}
\label{eq:adj betwen A set and A space}
\begin{tikzcd}
	{\pi_0:\iPsh{A}} & {\Psh{A}:\iota}
	\arrow[""{name=0, anchor=center, inner sep=0}, shift left=2, from=1-2, to=1-1]
	\arrow[""{name=1, anchor=center, inner sep=0}, shift left=2, from=1-1, to=1-2]
	\arrow["\dashv"{anchor=center, rotate=-90}, draw=none, from=1, to=0]
\end{tikzcd}
\end{equation}
\end{definition}
We recall that the notion of {elegant Reedy category} is defined in definition \ref{defi:reedy}.
The following lemma provides a powerful way to compute simple colimits in $\iun$-categories by reducing to computations in (stricts) categories. These techniques will be used freely in the rest of this text.

\begin{lemma}
\label{lemma:colimit computed in set presheaves}
Let $A$ be a $\V$-small category. We denote $\iota:\Psh{A}\to \iPsh{A}$ the canonical inclusion.
\begin{enumerate}
\item 
The functor $\iota$ preserves cocartesian square 
\[\begin{tikzcd}
	a & b \\
	c & d
	\arrow[from=1-1, to=2-1]
	\arrow[from=1-2, to=2-2]
	\arrow[from=1-1, to=1-2]
	\arrow[from=2-1, to=2-2]
	\arrow["\lrcorner"{anchor=center, pos=0.125, rotate=180}, draw=none, from=2-2, to=1-1]
\end{tikzcd}\]
where the left vertical morphism is a monomorphism.
\item 
The functor $\iota$ preserves colimit of finite diagrams of shape: 
\[\begin{tikzcd}
	& \bullet && {...} && \bullet \\
	\bullet && \bullet && \bullet && \bullet
	\arrow[from=1-2, to=2-1]
	\arrow[hook, from=1-2, to=2-3]
	\arrow[from=1-4, to=2-3]
	\arrow[hook, from=1-4, to=2-5]
	\arrow[from=1-6, to=2-5]
	\arrow[hook, from=1-6, to=2-7]
\end{tikzcd}\]
where morphisms labeled $\hookrightarrow$ are monomorphisms.
\item The functor $\iota$ preserves transfinite composition. 
\item For any $\V$-small elegant Reedy category $I$, and any functor $F:I\to \Psh{A}$ that is Reedy cofibrant, i.e such that for any $i\in I$, $\colim_{\partial i}F\to F(i)$ is a monomorphism,
the canonical comparison 
$$\iota \colim F\to \colim \iota F$$
is an isomorphism. In particular, if $A$ is itself an elegant Reedy category, for any set-valued presheaf $X$ on $A$, there is an equivalence 
$$\iota(X)\sim \colim_{A_{/X}}a.$$ 
\end{enumerate}
\end{lemma}
\begin{proof}
For this result, we use model categories. We consider the interval induces by the constant functor $I:A\to \Psh{\Delta}$ with value $[1]$. We then consider the model structure on $\Psh{A\times \Delta}$ produced by \cite[theorem 1.3.22]{cisinski_prefaisceaux_comme_modele} and induces by the homotopical data $(I\times \uvar,\emptyset)$. This model structure represent $\iPsh{A}$.
To conclude, we then have to show that all the given colimits, seen as (simplicialy constant) presheaves on $\Delta\times A$ are also homotopy colimits of the same diagrams. 
The first three assertions are \cite[proposition 2.3.26 and 2.3.13]{Cisinski_Higher_categories_and_homotopical_algebra}. For the last one, using the characterization of elegant Reedy categories given by \cite[proposition 3.8]{Bergner_reedy_category_and_the_theta_construction} and \cite[proposition 15.10.2]{Hirschhorn_Model_categories_and_their_localizations}, it’s easy to see that they have fibrant constants in the sense of \cite[definition 15.10.1]{Hirschhorn_Model_categories_and_their_localizations}. We can then apply theorem 19.9.1 of \cite{Hirschhorn_Model_categories_and_their_localizations}.
\end{proof}

\begin{definition}
Let $C$ be a $\iun$-category.
A full sub $\infty$-groupoid of the $\infty$-groupoid of arrows of $C$ is \wcnotionsym{cocomplete}{(s@$\widehat{S}$}{cocomplete $\infty$-groupoid of arrows} if it is closed under colimit and composition. For a $\infty$-groupoid $S$, we define $\widehat{S}$ as the smallest cocomplete full sub $\infty$-groupoid of the $\infty$-groupoid of arrows containing $S$. 
\end{definition}

\begin{definition}
Let $C$ be a $\iun$-category.
A  full sub $\infty$-groupoid $U$ of the $\infty$-groupoid of arrows of $C$ is \wcnotion{closed under left cancellation}{closed under left or right cancellation} (resp. \textit{closed under right cancellation}), if for any pair of composable morphisms $f$ and $g$, if $gf$ and $f$ are in $U$, so is $g$ (resp. if $gf$ and $g$ are in $U$, so is $f$).
\end{definition}

\begin{lemma}
\label{lemma:closed under colimit imply saturated_modified}
Let $C$ be an $\iun$-category, and $U$ a cocomplete full sub-$\infty$-groupoid of the $\infty$-groupoid of arrows of $C$. The $\infty$-groupoid $U$ is stable under transfinite composition, pushout, and left cancellation.
\end{lemma}
\begin{proof}
Let $\kappa$ be an cardinal and $x_\uvar:\kappa\to C$ be a functor such that for any $\alpha<\kappa$, $x_\alpha\to x_{\alpha+1}$ is in $U$. We show by transfinite composition that for any $\alpha\leq \kappa$, $x_0\to x_{\alpha}$ is in $U$. 

Suppose first that $\alpha$ is of the form $\beta+1$. The morphism $x_0\to x_{\beta+1}$ is equal to the composite $x_0\to x_\beta\to x_{\beta+1}$ and is then in $U$. If $\alpha$ is a limit ordinal, we have $x_0\to x_{\alpha}$ as the colimit of the sequence of morphisms $\{x_0\to x_{\beta}\}_{\beta<\alpha}$ and is then in $U$.

Suppose now  given a cocartesian square
\[\begin{tikzcd}[row sep=scriptsize]
	a & b \\
	c & d
	\arrow["f"', from=1-1, to=2-1]
	\arrow[from=1-1, to=1-2]
	\arrow["{f'}", from=1-2, to=2-2]
	\arrow[from=2-1, to=2-2]
	\arrow["\lrcorner"{anchor=center, pos=0.125, rotate=180}, draw=none, from=2-2, to=1-1]
\end{tikzcd}\]
with $f$ in $U$. Remark that $f'$ is the horizontal colimit of the diagram
\[\begin{tikzcd}[row sep=scriptsize]
	a & a & b \\
	c & a & b
	\arrow[from=2-2, to=2-1]
	\arrow[from=2-2, to=2-3]
	\arrow["id", from=1-3, to=2-3]
	\arrow["id", from=1-2, to=2-2]
	\arrow[from=1-2, to=1-3]
	\arrow["id"', from=1-2, to=1-1]
	\arrow["f"', from=1-1, to=2-1]
\end{tikzcd}\]
and then is in $U$.

Eventually, suppose given $f:a\to b$, $g:b\to c$ such that $gf$ and $f$ are in $U$. As $g$ is the horizontal colimit of the following diagram
\[\begin{tikzcd}
	b & a & a \\
	b & b & c
	\arrow["{id_b}", from=1-1, to=2-1]
	\arrow[from=1-2, to=1-1]
	\arrow[from=1-2, to=1-3]
	\arrow["f", from=1-2, to=2-2]
	\arrow["gf", from=1-3, to=2-3]
	\arrow[from=2-2, to=2-1]
	\arrow[from=2-2, to=2-3]
\end{tikzcd}\]
it is in $U$.
\end{proof}

\begin{prop}
\label{prop:link beetwenwidehat and overline}
Let $T$ be a set of morphisms of $\Psh{A}$. 
We have an inclusion
$\iota(\overline{T}) \subset \widehat{T}$
where $\overline{T}$ is the smallest precomplete class of morphisms containing $T$ (definition \ref{defi:precomplet}) .
\end{prop}
\begin{proof}
This directly follows from the definition of a precomplete class of morphisms, and from lemmas \ref{lemma:colimit computed in set presheaves} and \ref{lemma:closed under colimit imply saturated_modified}.
\end{proof}

\subsection{Factorization sytems}
\label{section:Factorization system}
 For the rest of the section, we fix a \textit{presentable $\iun$-category} $C$, i.e a $\iun$-category $C$ that is a reflexive and $\V$-accessible localization of a $\iun$-category of $\infty$-presheaves on a $\V$-small $\iun$-category.

 We recall some standard results on factorization systems, which appear in many places in the literature, such as in section 5.5.5 of \cite{Lurie_Htt} for the $\iun$-case and  \cite{Joyal_factorisation} for the strict case.

\begin{definition}
Let $S$ be a $\V$-small $\infty$-groupoid of maps of $C$. We denote by $\Arr_S(C)$ the full sub $\iun$-category of $\Arr(C)$ whose objects correspond to arrows of $S$.
\end{definition}

\begin{definition}
A \notion{weak factorization system in $(L,R)$} is the data of two full sub $\infty$-groupoids $L$ and $R$ of the $\infty$-groupoid of arrows of $C$, stable under composition and containing equivalences, and of a section 
$\Arr_R(C)\to \Arr_L(C)\times_C \Arr_R(C)$ of the functor $ \Arr_L(C)\times_C \Arr_R(C)\to \Arr(C)$ sending two arrows onto their composite.
This is a \wcnotion{factorization system}{factorization system in $(L,R)$} if the functor $\Arr(C)\to \Arr_L(C)\times_C \Arr_R(C)$ is an equivalence. 
\end{definition}

Until the end of this section, we suppose given such factorization system in $(L,R)$.

\begin{definition}
\label{defi:of lift}
Let $i$ and $p$ be two morphisms, and consider a commutative square of shape:
\[\begin{tikzcd}
	a & b \\
	c & d
	\arrow["i"', from=1-1, to=2-1]
	\arrow["p", from=1-2, to=2-2]
	\arrow[from=2-1, to=2-2]
	\arrow[from=1-1, to=1-2]
\end{tikzcd}\]
A \wcnotion{lift}{lift in a square} is a factorization of this square as two commutative triangles:
\[\begin{tikzcd}
	a & b && b \\
	c && c & d
	\arrow[from=1-1, to=1-2]
	\arrow["i"', from=1-1, to=2-1]
	\arrow["h"', from=2-1, to=1-2]
	\arrow["h", from=2-3, to=1-4]
	\arrow[from=2-3, to=2-4]
	\arrow["p", from=1-4, to=2-4]
\end{tikzcd}\]

Equivalently, we can see a square of the previous shape as a morphism $s:1\to \Sq({i,p}):=\Hom(a,b)\times_{\Hom(a,d)}\Hom(c,d)$\sym{(sq@$\Sq(i,p)$} and a lift as the data of a morphism $h:1\to \Hom(c,d)$ and of a commutative triangle
\[\begin{tikzcd}
	& {\Hom(c,b)} \\
	1 & {\Sq(i,p)}
	\arrow["s"', from=2-1, to=2-2]
	\arrow["h", from=2-1, to=1-2]
	\arrow[from=1-2, to=2-2]
\end{tikzcd}\]

The \textit{$\infty$-groupoid of lift of $s$} is the fibers of $\Hom(c,b)\to \Sq(i,p)$ at $s$.
\end{definition}

\begin{definition}
Let $i$ and $p$ be two morphisms. The morphism \wcnotion{$i$ has the unique left lifting property against $p$}{unique left or right lifting property}, or equivalently, \textit{$p$ has the unique right lifting property against $i$}, if for any square $s\in \Sq(i,p)$, the $\infty$-groupoid of lift of $s$ is contractible. This is equivalent to asking for the morphism $\Hom(c,d)\to \Sq(i,p)$ to be an equivalence.
\end{definition}

\begin{lemma}
\label{lemma:when weak factorization system are factoryzation system}
Suppose that we have a weak factorization system in $(L',R')$ such that morphisms in $R'$ have the unique right lifting property against morphisms of $L'$. The weak factorization system is a factorization system.
\end{lemma}
\begin{proof}
Our goal is to demonstrate that the fibers of $\Arr_{L'}(C)\times_C\Arr_{R'}(C)\to \Arr(C)$ are contractible. Let $f$ be a morphism of $C$. As we have a weak factorization system, there exists an element in the fiber at $f$. Suppose given two elements in this fiber. This corresponds to a square
\[\begin{tikzcd}
	\cdot & \cdot \\
	\cdot & \cdot
	\arrow["i"', from=1-1, to=2-1]
	\arrow["p"', from=2-1, to=2-2]
	\arrow["{i'}", from=1-1, to=1-2]
	\arrow["{p'}", from=1-2, to=2-2]
\end{tikzcd}\]
Morphisms between these two factorizations correspond to lifts in the previous square, which are contractible by assumption, and the fiber is then contractible. 
\end{proof}
We recall that in this section, we suppose that we have a factorization system in $(L,R)$.
\begin{lemma}
\label{lemma:caracterisation of L and R with lifting property 1}
Morphisms in $L$ have the unique left lifting property with respect to morphisms in $R$. 
\end{lemma}
\begin{proof}
Let $i:a\to c$ be a morphim of $L$ and $p:b\to d$ a morphism of $R$.
The factorization functor induces an equivalence between squares $s\in \Sq(i,p)$ and diagrams of shape
\[\begin{tikzcd}[row sep=tiny]
	a && b \\
	& e \\
	c && d
	\arrow[from=1-1, to=3-1]
	\arrow[from=1-3, to=3-3]
	\arrow[from=1-1, to=2-2]
	\arrow[from=3-1, to=2-2]
	\arrow[from=2-2, to=3-3]
	\arrow[from=2-2, to=1-3]
\end{tikzcd}\]
where all the morphisms of the left triangle are in $L$ and the ones of the right triangle are in $R$.
Such diagrams are then in equivalence between composite $c\to e\to b$ where the first morphism is in $S$ and the second in $R$. Using once again the factorization functor, we can see that this data is exactly equivalent to a lift in the square $s$.
\end{proof}

We now show the converse of the previous lemma.

\begin{lemma}
\label{lemma:caracterisation of L and R with lifting property 2}
A morphism having the unique left lifting property against morphisms of $R$ is in $L$. Analogously, a morphism having the unique right lifting property against morphisms of $L$ is in $R$.
\end{lemma}
\begin{proof}
Let $f$ be a morphism having the unique left lifting property against morphisms in $R$. We factorize the morphism $f$ in $i\in L$ followed by $p\in R$ and we want to produce an equivalence $f\sim i$. The previous data induces by construction a square
\[\begin{tikzcd}
	a & b \\
	c & d
	\arrow["f"', from=1-1, to=2-1]
	\arrow["i", from=1-1, to=1-2]
	\arrow["p", from=1-2, to=2-2]
	\arrow[Rightarrow, no head, from=2-1, to=2-2]
	\arrow[dashed, from=2-1, to=1-2]
\end{tikzcd}\]
By hypothesis, this square admits a lift $l:c\to b$, that we factorize in a morphism $r'\in L$ followed by a morphism $p'\in R$. The commutativity of the lower triangle implies equivalences $pl'\sim pp'r'\sim id$, and by unicity, $r'\sim id$ and $pp'\sim id$. The lift $l$ is equivalent to $p'$ and is then in $R$. The commutativity of the upper triangle implies $lf\sim lpi \sim i$ and by unicity again, $p'p\sim id$. The morphism $p$ is then an isomorphism, this implies that $f\sim i$, and $f$ is then in $L$. We proceed similarly for the dual assertion.
\end{proof}

\begin{prop}
\label{prop:caracterisation of L and R with lifting property}
A morphism is in $L$ (resp. in $R$) if and only if it has the unique left lifting property against morphisms of $R$ (resp. the unique right lifting property against the morphisms of $R$).
\end{prop}
\begin{proof}
This is the content of lemma \ref{lemma:caracterisation of L and R with lifting property 1} and \ref{lemma:caracterisation of L and R with lifting property 2}.
\end{proof}

\begin{prop}
\label{prop:fonctorialite des relevement}
The forgetful functor from the $\iun$-category of squares with lifts, and whose left (resp. right) vertical morphism is in $L$ (resp. in $R$), to the $\iun$-category of squares whose left (resp. right) vertical morphism is in $L$ (resp. in $R$), is an equivalence.

Roughly speaking, the formation of the lift in squares whose left (resp. right) vertical morphism is in $L$ (resp. in $R$) is functorial.
\end{prop}
\begin{proof}
The $\iun$-category of squares with lifts, and whose left (resp. right) vertical morphism is in $L$ (resp. in $R$), is the $\iun$-category
$$ \mbox{$\Arr_L(C)$}\times_C  \mbox{$\Arr(C)$}\times_C  \mbox{$\Arr_R(C)$}$$
and the  $\iun$-category   whose left (resp. right) vertical morphism is in $L$ (resp. in $R$) of squares is the limit of the diagram
\[\begin{tikzcd}
	{\Arr_L(C)\times_C \Arr(C)} & {\Arr(C)} & {\Arr(C)\times_C\Arr_R(C)}
	\arrow["\triangledown"', from=1-3, to=1-2]
	\arrow["\triangledown", from=1-1, to=1-2]
\end{tikzcd}\]
The forgetful functor is induced by the commutative diagram
\[\begin{tikzcd}
	{ \Arr_L(C)\times_C  \Arr(C)\times_C  \Arr_R(C)} && {\Arr(C)\times_C\Arr_R(C)} \\
	{\Arr_L(C)\times_C \Arr(C)} && {\Arr(C)}
	\arrow["\triangledown", from=1-3, to=2-3]
	\arrow["{ \Arr_L(C)\times_C \triangledown}"', from=1-1, to=2-1]
	\arrow["{ \triangledown\times_C  \Arr_R(C)}", from=1-1, to=1-3]
	\arrow["\triangledown"', from=2-1, to=2-3]
\end{tikzcd}\]
and we then have to show that it is cartesian.

By definition of factorization system, the morphism 
$$\triangledown:  \mbox{$\Arr_L(C)$}\times_C  \mbox{$\Arr_R(C)$}\to \Arr(C)$$ is an equivalence. The previous square is then equivalent to the square
\[\begin{tikzcd}
	{ \Arr_L(C)\times_C \Arr(C)_L\times_C\Arr_R(C)\times_C  \Arr_R(C)} && {\Arr(C)_L\times_C\Arr_R(C)\times_C\Arr_R(C)} \\
	{\Arr_L(C)\times_C \Arr_L(C)\times_C  \Arr_R(C)} && {\Arr_L(C)\times_C  \Arr_R(C)}
	\arrow["{\Arr_L(C)\times_C\triangledown}", from=1-3, to=2-3]
	\arrow["{ \Arr_L(C)\times_C  \Arr_L(C)\times_C\triangledown}"', from=1-1, to=2-1]
	\arrow["{ \triangledown\times_C  \Arr_R(C)\times_C  \Arr_R(C)}", shift left=2, draw=none, from=1-1, to=1-3]
	\arrow["{ \triangledown\times_C  \Arr_R(C)}"', from=2-1, to=2-3]
	\arrow[from=1-1, to=1-3]
\end{tikzcd}\]
which is obviously cartesian.
\end{proof}

\begin{prop}
\label{prop:cloture of L recap}
The $\infty$-groupoid $L$ is stable under colimit, transfinite composition, pushout, left cancellation and retract. The $\infty$-groupoid $R$ is stable under limit, composition, pullback, right cancellation and retract.
\end{prop}
\begin{proof}
Let $p:b\to d$ be a morphism of $R$ and $\{i_j:a_j\to c_j\}_{j:J}$ a family of morphisms of $L$ indexed by a functor $J\to \Arr_L(C)$, admitting a colimit $\bar{i}:\bar{a}\to \bar{c}$. Both functors $r\mapsto \Sq(r,p)$ and $c\mapsto\Hom(c,b)$ send colimits on limits. This implies that the morphism \[\Hom(\bar{c},b)\to\Sq(\bar{i},p)\] is the limit in $\Arr(\Sp)$ of the family of morphisms 
$$\Hom(c_j,b)\to\Sq({i_j,p}).$$
Each of these morphisms is an equivalence by assumption, so that implies that $\Hom(\bar{c},b)\to\Sq({\bar{i},p})$ is an equivalence. As this is true for any $p$ in $R$, proposition \ref{prop:caracterisation of L and R with lifting property} implies that $\bar{i}$ is in $L$.

This implies that $L$ is closed under colimit. As it is obviously closed under composition, lemma \ref{lemma:closed under colimit imply saturated_modified} implies that $L$ is closed under transfinite composition, pushout, and left cancellation.

Consider now a retract diagram:
\[\begin{tikzcd}
	a & {a'} & a \\
	c & {c'} & c
	\arrow["i", from=1-1, to=2-1]
	\arrow["i", from=1-3, to=2-3]
	\arrow[from=2-1, to=2-2]
	\arrow["{i'}", from=1-2, to=2-2]
	\arrow[from=1-1, to=1-2]
	\arrow[from=1-2, to=1-3]
	\arrow[from=2-2, to=2-3]
	\arrow["id"', curve={height=12pt}, from=2-1, to=2-3]
	\arrow["id", curve={height=-12pt}, from=1-1, to=1-3]
\end{tikzcd}\]
such that $i'$ is in $L$. For any morphism $p:b\to d$ of $R$, this induces a retract diagram
\[\begin{tikzcd}
	{\Hom(c,b)} & {\Hom(c',b)} & {\Hom(c,b)} \\
	{\Sq(i,p)} & {\Sq(i',p)} & {\Sq(i,p)}
	\arrow["{ }", from=1-1, to=2-1]
	\arrow["{ }", from=1-3, to=2-3]
	\arrow[from=2-1, to=2-2]
	\arrow["{ }", from=1-2, to=2-2]
	\arrow[from=1-1, to=1-2]
	\arrow[from=1-2, to=1-3]
	\arrow[from=2-2, to=2-3]
	\arrow["id"', curve={height=12pt}, from=2-1, to=2-3]
	\arrow["id", curve={height=-12pt}, from=1-1, to=1-3]
\end{tikzcd}\]
As equivalences are stable under retract, $\Hom(c,b)\to \Sq(i,p)$ is an equivalence, and as it is true for any $p$ in $R$, $i$ is in $L$.

We other assertion follows by duality.
\end{proof}

 We fix an $\infty$-groupoid $S$ of arrows of $C$ with $\U$-small domain and codomain.
\begin{definition}
 We define \sym{(ls@$L_S$}$L_S := \widehat{S}$, i.e as the smallest full sub $\infty$-groupoid of arrows of $C$ stable under colimits, composition and including $S$, and \wcnotation{$R_S$}{(rs@$R_S$} as the full sub $\infty$-groupoid of arrows of $C$ having the unique right lifting property against morphisms of $S$. 
 \end{definition}

\begin{construction}[Small object Argument]
\label{cons:small object argument}
Let $f:x\to y$ be an arrow. We define by induction on $\U$ a sequence $\{x_\alpha\}_{\alpha<\U}$ sending $\emptyset$ on $x$.
For a limit ordinal $\alpha<\U$, we set $x_{\alpha}:= \colim_{\alpha'<\alpha}{x_{\alpha'}}$. For a successor ordinal, we define $x_{\alpha+1}$ as the pushout:
\[\begin{tikzcd}
	{\colim_{a\to b\in S}\big(\colim_{\Sq(a\to b,x_\alpha\to y)}a\underset{\colim_{\Hom(b,x_\alpha)}a}{\coprod} \colim_{\Hom(b,x_\alpha)}b\big)} & {x_\alpha} \\
	{\colim_{a\to b\in S}\big(\colim_{\Sq(a\to b,x_\alpha\to y)} b\big)} & {x_{\alpha+1}} \\
	&& y
	\arrow[""{name=0, anchor=center, inner sep=0}, from=1-1, to=1-2]
	\arrow[from=2-1, to=2-2]
	\arrow[from=1-2, to=2-2]
	\arrow[from=1-1, to=2-1]
	\arrow[curve={height=12pt}, from=2-1, to=3-3]
	\arrow[curve={height=-12pt}, from=1-2, to=3-3]
	\arrow[dashed, from=2-2, to=3-3]
	\arrow["\lrcorner"{anchor=center, pos=0.125, rotate=180}, draw=none, from=2-2, to=0]
\end{tikzcd}\]
Let $i:x\to\tilde{x}$ be the transfinite composition of this sequence. There is an induced morphism $p:\tilde{x}\to y$, and an equivalence $f\sim pi$. 
\end{construction}

\begin{prop}
\label{prop:factorization system from S}
The previous construction defines a factorization system between $L_S$ and $R_S$. 
\end{prop}
\begin{proof}
Let $f:x\to y $ be any morphism. The previous construction produces functorially morphisms $i:x\to \tilde{x}$ and $p:\tilde{x}\to y $ whose composite is $f$. The morphism $i$ is  in $L_S$ by lemma  \ref{lemma:closed under colimit imply saturated_modified}. We then need to show that $p$ has the unique right lifting property against any morphism of $L_S$. Let $j:a\to b$ be any morphism in $L_S$, $n$ an integer and consider a commutative square
\[\begin{tikzcd}
	{a\coprod_{\colim_{\Sb_{n-1}} a} \colim_{\Sb_{n-1}} b} & {\tilde{x}} \\
	b & y
	\arrow[from=1-1, to=1-2]
	\arrow["j"', from=1-1, to=2-1]
	\arrow["p", from=1-2, to=2-2]
	\arrow[from=2-1, to=2-2]
\end{tikzcd}\]
By stability by $\omega$-small colimits, the object $a\coprod_{\colim_{\Sb_{n-1}} a} \colim_{\Sb_{n-1}} b$ is $\U$-small. There exists then $\alpha<\U$ such that the top morphism factors through $x_\alpha$, and by construction there exists a morphism $l:b\to x_{\alpha+1}$ and a comutative square
\[\begin{tikzcd}
	{a\coprod_{\colim_{\Sb_{n-1}} a} \colim_{\Sb_{n-1}} b} & {x_\alpha} & {\tilde{x}} \\
	& {x_{\alpha+1}} \\
	b && y
	\arrow[from=1-1, to=1-2]
	\arrow["j"', from=1-1, to=3-1]
	\arrow[from=1-2, to=1-3]
	\arrow[from=1-2, to=2-2]
	\arrow["p", from=1-3, to=3-3]
	\arrow[from=2-2, to=1-3]
	\arrow[from=2-2, to=3-3]
	\arrow["l", dotted, from=3-1, to=2-2]
	\arrow[from=3-1, to=3-3]
\end{tikzcd}\]
The induced diagonal is a lift in the first square. This implies that $\Hom(b,x)\to \Sq(j,p)$ has the right lifting property against $\Sb_{n-1}\to 1$. Eventually, this implies that $\Hom(b,x)\to \Sq(j,p)$ is an equivalence of $\infty$-groupoid, and $p$ then has the unique right lifting property against $i$. We then have a weak factorization system, which is a factorization system according to lemma \ref{lemma:when weak factorization system are factoryzation system}. 
\end{proof}

\subsection{Reflexive localization}

For the rest of the section, we fix a \textit{presentable $\iun$-category} $C$, and an $\infty$-groupoid $S$ of arrows of $C$ with $\U$-small domain and codomain. 
 
We recall that $L_S$ is $\widehat{S}$, i.e., the smallest full sub-$\infty$-groupoid of arrows of $C$ stable under colimits, composition, and including $S$, and $R_S$ is the full sub-$\infty$-groupoid of arrows of $C$ having the unique right lifting property against morphisms of $S$. 

\begin{definition}
 An object $x$ is \wcnotation{$S$-local}{local@$S$-local} if $x\to 1$ is in $R_S$, or equivalently if for any $i:a\to b\in S$, the induced functor $\Hom(i,x):\Hom(b,x)\to \Hom(a,x)$ is an equivalence.

We define \wcnotation{$C_{S}$}{(cs@$C_S$} as the full sub $\iun$-category of $C$ consisting of $S$-local objects.
\end{definition}
\begin{remark}
As $R_S$ is closed under right cancellation, any morphism between $S$-local objects is in $R_S$.
\end{remark}

\begin{theorem}
\label{theo:adjunction between presheaves and local presheaves}
The inclusion $\iota:C_S\to C$ is part of an adjunction
\[\begin{tikzcd}
	{\Fb_S:C} & {C_S:\iota}
	\arrow[""{name=0, anchor=center, inner sep=0}, shift left=2, from=1-1, to=1-2]
	\arrow[""{name=1, anchor=center, inner sep=0}, shift left=2, from=1-2, to=1-1]
	\arrow["\dashv"{anchor=center, rotate=-90}, draw=none, from=0, to=1]
\end{tikzcd}\]
Moreover, $\Fb_S:C\to C_S$ is the localization of $C$ by $\widehat{S}$.\sym{(f@$\Fb$}
\end{theorem}
\begin{proof}
For an object $x$, the small object argument provides a factorization of $x\to 1$ into a morphism $x\to \Fb_S x$ of $L_S$ followed by a morphism $\Fb_S x\to 1$ in $R_S$. The object $\Fb_Sx$ is then in $C_S$. As the factorization is functorial, this defines a functor $\Fb_S:C\to C_S$, and a natural transformation $\nu:id\to \Fb_S$ constant on $S$-local objects. As $\Fb_S\iota$ is equivalent to the identity, this induces the claimed adjunction. 

For the second proposition, let $F:C\to D$ be a functor sending morphisms of $L_S$ on equivalences. We define $\Db(F):= F\circ \iota$, and we have a diagram
\[\begin{tikzcd}
	C && D \\
	& {C_S}
	\arrow["{\Fb_S}"', from=1-1, to=2-2]
	\arrow[""{name=0, anchor=center, inner sep=0}, "F", from=1-1, to=1-3]
	\arrow["{\Db(F)}"', from=2-2, to=1-3]
	\arrow[shorten <=5pt, shorten >=5pt, Rightarrow, from=0, to=2-2]
\end{tikzcd}\]
that commutes up to the natural transformation $F\circ_0 \nu:F\to D(F)\circ \Fb_S$. However, the natural transformation $\nu$ is pointwise in $L_S$, which implies that $F\circ \nu$ is pointwise an equivalence, and the previous diagram then commutes. Now, let $G:C_S\to D$ be any other functor such that $G\circ\Fb_S \sim F$. By precomposing with iota, this implies that $G\sim F\circ \iota$.
\end{proof}

\begin{cor}
 \label{cor:derived colimit preserving functor}
The $\iun$-category $C_S$ is cocomplete. Moreover, if $F:C\to D$ is a colimit preserving functor sending $S$ onto equivalences, the induced functor $\Db F:C_S\to D$ preserves colimits.
\end{cor}
\begin{proof}
The first assertion is a direct consequence of the adjunction given in theorem \ref{theo:adjunction between presheaves and local presheaves}.

This adjunction also implies that the colimit of a functor $G:A\to C_S$ is given by $\Fb_S(\colim_{a:A} \iota G(a))$.
As the canonical morphism $\colim_{a:A} \iota G(a)\to \Fb_S(\colim_{a:A} \iota G(a))$ is  by construction in $\widehat{S}$ this proves the second assertion.
\end{proof}

\begin{construction}
 Suppose given an adjunction between two $\iun$-categories
\[\begin{tikzcd}
	{F:C} & {D:G}
	\arrow[""{name=0, anchor=center, inner sep=0}, shift left=2, from=1-1, to=1-2]
	\arrow[""{name=1, anchor=center, inner sep=0}, shift left=2, from=1-2, to=1-1]
	\arrow["\dashv"{anchor=center, rotate=-90}, draw=none, from=0, to=1]
\end{tikzcd}\]
with unit $\nu$ and counit $\epsilon$,
as well as an $\infty$-groupoid of morphisms $S$ of $C$ and $T$ of $D$ such that $F(S)\subset \widehat{T}$. 
By adjunction property, it implies that for any $T$-local object $d\in D$, $G(d)$ is $S$-local.
The previous adjunction induces a derived adjunction\sym{(lf@$\Lb F$} \sym{(rg@$\Rb G$}
\[\begin{tikzcd}
	{\Lb F:C_S} & {D_T:\Rb G}
	\arrow[""{name=0, anchor=center, inner sep=0}, shift left=2, from=1-1, to=1-2]
	\arrow[""{name=1, anchor=center, inner sep=0}, shift left=2, from=1-2, to=1-1]
	\arrow["\dashv"{anchor=center, rotate=-90}, draw=none, from=0, to=1]
\end{tikzcd}\]
where $\Lb F$ is defined by the formula $c\mapsto \Fb_T F(c)$ and $\Rb G$ is the restriction of $G$ to $D_T$. The unit is given by $\nu\circ \Fb_S$ and the counit by the restriction of $\epsilon$ to $D_T$.
\end{construction}
\begin{example}
\label{exe:exe localization}
\index[notation]{(f0@$f_{\mbox{$\exclam$}}:C_{/c}\to C_{d/}$}
\index[notation]{(f1@$f^*:C_{/d}\to C_{c/}$}
\index[notation]{(f2@$f_*:C_{/c}\to C_{d/}$}
\index[notation]{(lf0@$\Lb f_{\mbox{$\exclam$}}:(C_{/c})_{S_{c/}}\to (C_{d/})_{S_{d/}}$}
\index[notation]{(rf1@$\Rb f^*:(C_{/d})_{S_{d/}}\to (C_{c/})_{S_{c/}}$}
\index[notation]{(lf2@$\Lb f^*:(C_{/d})_{S_{d/}}\to (C_{c/})_{S_{c/}}$}
\index[notation]{(rf3@$\Rb f_*:(C_{/c})_{S_{c/}}\to (C_{d/})_{S_{d/}}$}
Let $C$ be a presentable $\iun$-category, $S$ a full sub $\infty$-groupoid of morphisms of $\iPsh{A}$ with $\U$-small codomain and domain. 
Eventually, we set \wcnotation{$S_{/c}$}{(sc@$S_{/c}$} as the $\infty$-groupoid of morphisms of shape
\[\begin{tikzcd}
	& b \\
	a & c
	\arrow[from=2-1, to=2-2]
	\arrow[from=1-2, to=2-2]
	\arrow["s", from=2-1, to=1-2]
\end{tikzcd}\]
where $s:S$.

 A morphism $f:c\to d$ induces an adjunction
\[\begin{tikzcd}
	{f_!:C_{/c}} & {C_{/d}:f^*}
	\arrow[""{name=0, anchor=center, inner sep=0}, shift left=2, from=1-1, to=1-2]
	\arrow[""{name=1, anchor=center, inner sep=0}, shift left=2, from=1-2, to=1-1]
	\arrow["\dashv"{anchor=center, rotate=-90}, draw=none, from=0, to=1]
\end{tikzcd}\] 
where the left adjoint is the composition and the right adjoint is the pullback. By construction, $f_!(S_{/c})\subset S_{/d}$. The previous adjunction can then be derived, and induced an adjunction:
\[\begin{tikzcd}
	{\Lb f_!:(C_{/c})_{S_{/c}}} & {(C_{/d})_{S_{/d}}:\Rb f^*}
	\arrow[""{name=0, anchor=center, inner sep=0}, shift left=2, from=1-1, to=1-2]
	\arrow[""{name=1, anchor=center, inner sep=0}, shift left=2, from=1-2, to=1-1]
	\arrow["\dashv"{anchor=center, rotate=-90}, draw=none, from=0, to=1]
\end{tikzcd}\]
where the right adjoint is just the restriction of $f^*$ to $S_{/d}$-local objects.

If the functor $f^*:C_{/d}\to C_{/c}$ preserves colimits and $f^*(S_{/c})\subset S_{/d}$, the adjunction
\[\begin{tikzcd}
	{f^*:C_{/d}} & {C_{/c}:f_*}
	\arrow[""{name=0, anchor=center, inner sep=0}, shift left=2, from=1-1, to=1-2]
	\arrow[""{name=1, anchor=center, inner sep=0}, shift left=2, from=1-2, to=1-1]
	\arrow["\dashv"{anchor=center, rotate=-90}, draw=none, from=0, to=1]
\end{tikzcd}\]
induces an adjunction 
\[\begin{tikzcd}
	{\Lb f^*:(C_{/d})_{S_{/d}}} & {(C_{/c})_{S_{/c}}:\Rb f_*}
	\arrow[""{name=0, anchor=center, inner sep=0}, shift left=2, from=1-1, to=1-2]
	\arrow[""{name=1, anchor=center, inner sep=0}, shift left=2, from=1-2, to=1-1]
	\arrow["\dashv"{anchor=center, rotate=-90}, draw=none, from=0, to=1]
\end{tikzcd}\]

\end{example}

\subsection{Monomorphisms}
We fix a cocomplete $\iun$-category $C$ until the end of the section. 
\begin{definition}
\label{defi:mono abstrait}
A morphism $i: a \to b$ in $C$ is a \notion{monomorphism} if the square 
\[\begin{tikzcd}
	A & A \\
	A & B
	\arrow[from=1-1, to=1-2]
	\arrow[from=1-1, to=2-1]
	\arrow[from=1-2, to=2-2]
	\arrow[from=2-1, to=2-2]
\end{tikzcd}\]
is cartesian.
\end{definition}

\begin{definition}
\label{defi:epi abstrait}
A morphism $i: a \to b$ in $C$ is an \notion{epimorphism} if the square 
\[\begin{tikzcd}
	A & B \\
	B & B
	\arrow[from=1-1, to=1-2]
	\arrow[from=1-1, to=2-1]
	\arrow[from=1-2, to=2-2]
	\arrow[from=2-1, to=2-2]
\end{tikzcd}\]
is cocartesian.
\end{definition}
\begin{remark}
A morphism $f$ in $C$ is a monomorphism if and only if the corresponding morphism $f'$ in $C^{op}$ is an epimorphism.
\end{remark}

In what follows, we will focus on monomorphisms, but all results have obvious analogues for epimorphisms thanks to the previous remark.

\begin{prop}
\label{prop:mono closed by colimit}
Monomorphisms are closed by limits.
\end{prop}
\begin{proof}
This directly follows from the fact that cartesian squares are closed by limits.
\end{proof}

\begin{lemma}
\label{lemma:mono 1}
Let $i: a \to b$ be a monomorphism. For any integer $n$, 
the morphism
$$a \to \Pi_{\Sb_n}a \underset{\Pi_{\Sb_n}b}{\times} b$$
is an equivalence.
\end{lemma}
\begin{proof}
As cartesian squares are stable by limits, the $\infty$-groupoid of morphisms of spaces $S \to T$ such that 
\[\begin{tikzcd}
	{\Pi_{T}a} & {\Pi_{T}b} \\
	{\Pi_{S}a} & {\Pi_{S}b}
	\arrow[from=1-1, to=1-2]
	\arrow[from=1-1, to=2-1]
	\arrow[from=1-2, to=2-2]
	\arrow[from=2-1, to=2-2]
\end{tikzcd}\]
is cartesian is closed by colimit. We will now show by induction on $n$ that it includes $\Sb_n \to 1$. The case $n = -1$ is trivial, and the case $n = 0$ follows from the definition of monomorphisms.

Suppose the result is true at stage $n$. The induction step follows from the fact that $\Sb_{n+1} \to 1$ is the horizontal colimit of the diagram
\[\begin{tikzcd}
	1 & {\Sb_n} & 1 \\
	1 & 1 & 1
	\arrow[from=1-1, to=2-1]
	\arrow[from=1-2, to=1-1]
	\arrow[from=1-2, to=1-3]
	\arrow[from=1-2, to=2-2]
	\arrow[from=1-3, to=2-3]
	\arrow[from=2-2, to=2-1]
	\arrow[from=2-2, to=2-3]
\end{tikzcd}\]
\end{proof}

\begin{prop}
\label{prop:mono 2}
Consider a commutative square
\[\begin{tikzcd}
	a & c \\
	b & d
	\arrow[from=1-1, to=1-2]
	\arrow["i"', from=1-1, to=2-1]
	\arrow["p", from=1-2, to=2-2]
	\arrow[from=2-1, to=2-2]
\end{tikzcd}\]
where the right vertical morphism is a monomorphism. The groupoid of lifts of this square is either empty or contractible.
\end{prop}
\begin{proof}
We have to show that the fibers of the morphism $\Hom(b, c) \to Sq(i, p)$, defined in \ref{defi:of lift}, are either contractible or empty. This is equivalent to showing that $\Hom(b, c) \to Sq(i, p)$ has the right lifting property against $\Sb_n \to 1$ for any $n\geq 0$, which in turn is equivalent to showing that $a \to b$ has the left lifting property against $a \to \Pi_{\Sb_n}a \underset{\Pi_{\Sb_n}b}{\times} b$ for any $n\geq 0$. As this last morphism is an equivalence by lemma \ref{lemma:mono 1}, this concludes the proof.
\end{proof}
\begin{prop}
\label{prop:non trivial fact about mono}
Suppose given a $\iun$-category $J$ and a family of squares
\begin{equation}
\label{eq:sqj}
\begin{tikzcd}
	{a_j} & {c_j} \\
	{b_j} & {d_j}
	\arrow[from=1-1, to=1-2]
	\arrow["{i_j}"', from=1-1, to=2-1]
	\arrow["{p_j}", from=1-2, to=2-2]
	\arrow[from=2-1, to=2-2]
\end{tikzcd}
\end{equation}
natural in $j:J$ and such that for any $j$, $p_j$ is a monomorphism. The following are equivalent:
\begin{enumerate}
\item for any $j$, the square \eqref{eq:sqj} admits lifts $\alpha_j: b_j \to c_j$,
\item for any $j$, the square \eqref{eq:sqj} admits unique lifts $\alpha_j: b_j \to c_j$,
\item there exists a family of lifts $\alpha_j: b_j \to c_j$ natural in $j: J$,
\item there exists a unique family of lifts $\alpha_j: b_j \to c_j$ natural in $j: J$.
\end{enumerate}
\end{prop} 
\begin{proof}
The equivalence between $(1)$ and $(2)$ follows from proposition \ref{prop:mono 2}. The equivalence between $(3)$ and $(4)$ follows from proposition \ref{prop:mono 2} applied to the $\iun$-category $\Fun(J, C)$. 
The implication $(3) \Rightarrow (1)$ is obvious. 

To conclude the proof, we suppose $(2)$ and we want to demonstrate $(4)$. 
We denote
$$\uvar \boxtimes \uvar: \Fun(J^{op}, \igrd) \times \Fun(J, C) \to C$$
the functor that sends a presheaf $X$ and a functor $F: J \to C$ to
$$\colim_{J_{/X}} F.$$

We want to show that $p_\uvar: c_\uvar \to d_\uvar$ has the unique right lifting property against $i_\uvar \sim 1 \boxtimes i_\uvar: a_\uvar \to b_\uvar$ where $1$ denotes the terminal presheaf.
As the morphisms having the unique right lifting property against $p_\uvar$ are closed by colimit, it is sufficient to demonstrate that $p_\uvar$ has the unique right lifting property against $h_j \boxtimes i$ where $h_j$ denotes the presheaf represented by $j$.

Note that by adjunction, for any $j$, lifts in the square 
\[\begin{tikzcd}
	{h_j \boxtimes a_\uvar} & {c_\uvar} \\
	{h_j \boxtimes b_\uvar} & {d_\uvar}
	\arrow[from=1-1, to=1-2]
	\arrow["{h_j \boxtimes i}"', from=1-1, to=2-1]
	\arrow["{p_\uvar}", from=1-2, to=2-2]
	\arrow[dashed, from=2-1, to=1-2]
	\arrow[from=2-1, to=2-2]
\end{tikzcd}\]
correspond to lifts in the square
\[\begin{tikzcd}
	{a_j} & {c_j} \\
	{b_j} & {d_j}
	\arrow[from=1-1, to=1-2]
	\arrow["{i_j}"', from=1-1, to=2-1]
	\arrow["{p_j}", from=1-2, to=2-2]
	\arrow[dashed, from=2-1, to=1-2]
	\arrow[from=2-1, to=2-2]
\end{tikzcd}\]
and thus exist and are unique by assumption. This concludes the proof.
\end{proof}

\section{Basic constructions}
\label{chapter:Basica construciton}
\subsection{$\io$-Categories}
\label{section:iocategories}
The definitions of section \ref{subsection:the categoru theta} will be used freely here.

\subsubsection{Definition of $\io$-categories}

\begin{definition}
We denote by 
$$[\uvar,\uvar]: \iPsh{\Theta}\times \iPsh{\Delta}\to \iPsh{\Delta[\Theta]}$$
the extension by colimit of the functor $\Theta\times \Delta\to \iPsh{\Delta[\Theta]}$ sending $(a,n)$ onto $[a,n]$.
For an integer $n$, we denote
$$[\uvar,n]:\iPsh{\Theta}^n\to \iPsh{\Theta}$$ 
the extension by colimit of the functor 
$\Theta^n\to\iPsh{\Theta}$ sending $\textbf{a}:=\{a_1,...,a_n\}$ onto $[\textbf{a},n]$.
\end{definition}

\begin{construction}  We have an adjunction 
\begin{equation}
\label{eq:underived adjunction part}
\begin{tikzcd}
	{ i_!:\iPsh{\Delta[\Theta]}} & {\iPsh{\Theta}:i^*}
	\arrow[shift left=2, from=1-1, to=1-2]
	\arrow[shift left=2, from=1-2, to=1-1]
\end{tikzcd}
\end{equation}
where the left adjoint is the left Kan extension of the functor $\Delta[\Theta]\xrightarrow{i} \Theta\to \iPsh{\Theta}$. The sets of morphisms $\W$ and $\M$ are respectively defined in \ref{defi:definition of W} and \ref{defi:defi of delta theta}.
There is an obvious inclusion $i_!(\M)\subset \W$. The previous adjunction then induced a derived adjunction
\begin{equation}
\label{eq:derived adjunction}
\begin{tikzcd}
	{\Lb i_!:\Psh{\Delta[\Theta]}_{\M}} & {\Psh{\Theta}_{\W}:\Rb i^*}
	\arrow[shift left=2, from=1-1, to=1-2]
	\arrow[shift left=2, from=1-2, to=1-1]
\end{tikzcd}
\end{equation}
\end{construction}

\begin{prop}
\label{prop:infini changing theta}
The unit and counit of the adjunction \eqref{eq:underived adjunction part} are respectively in $\widehat{\M}$ and $\widehat{\W}$. As a consequence, the adjunction \eqref{eq:derived adjunction} is an adjoint equivalence.
\end{prop}
\begin{proof}
We denote by $\iota:\Psh{\Theta}\to \iPsh{\Theta}$ and $\iota:\Psh{\Delta[\Theta]}\to \iPsh{\Delta[\Theta]}$ the two canonical inclusions. By the proposition \ref{prop:link beetwenwidehat and overline}, we have inclusions $\iota(\overline{\W})\subset \widehat{\W}$ and $\iota(\overline{\M})\subset \widehat{\M}$. The result then directly follows from theorem \ref{theo:unit and counit are in W}.  
\end{proof}

\begin{definition}
 A \wcnotion{$\io$-category}{category4@$\io$-category} is a $\W$-local $\infty$-presheaf $C\in \iPsh{\Theta}$. We then define \index[notation]{((a60@$\ocat$}
$$\ocat := \iPsh{\Theta}_{\W}.$$
Proposition \ref{prop:infini changing theta} implies that $\ocat$ identifies itself with the full sub $\iun$-category of $\iPsh{\Delta[\Theta]}$ of $\M$-local objects:
$$\ocat \sim \iPsh{\Delta[\Theta]}_{\M}.$$
We recall that the sets of morphisms $\W$ and $\M$ are respectively defined in \ref{defi:definition of W} and \ref{defi:defi of delta theta}
\end{definition}

\begin{construction}
We denote by $\pi_0:\iPsh{\Theta}\to \Psh{\Theta}$ the functor sending an $\infty$-presheaf $X$ onto the presheaf
$$\pi_0X:a\mapsto\pi_0(X_a)$$
This functor admits a fully faithful right adjoint: $\N:\Psh{\Theta}\to \iPsh{\Theta}$. 
As $\pi_0$ preserves $\W$, it induces an adjoint pair: \sym{(pi@$\pi_0:\ocat\to \zocat$}\sym{n@$\N:\zocat\to \ocat$}
\[\begin{tikzcd}
	{\pi_0:\ocat} & {\zocat:\N}
	\arrow[""{name=0, anchor=center, inner sep=0}, shift left=2, from=1-1, to=1-2]
	\arrow[""{name=1, anchor=center, inner sep=0}, shift left=2, from=1-2, to=1-1]
	\arrow["\dashv"{anchor=center, rotate=-90}, draw=none, from=0, to=1]
\end{tikzcd}\]
where the right adjoint $\N$ is fully faithful.
Every $\zo$-category can then be seen as an $\io$-category and we will call \wcnotion{strict}{strict $\io$-category} the $\io$-categories lying in the image of this functor. 
\label{cons:strict}
\end{construction}

\subsubsection{Equivalence between $\io$-categories}

\begin{definition} A \wcsnotion{$n$-cell}{cell@$n$-cell}{for $\io$-categories} of an $\io$-category is a morphism $\Db_n\to C$.
If $C$ is an $\io$-category, we denote by $C_n$ the value of $C$ on $\Db_n$. 
\end{definition}

\begin{prop}
\label{prop:equivalences detected on globes}
Let $C,D$ be two $\io$-categories, and $f:C\to D$ any map. The morphism $f$ is an equivalence if and only if for any $n$, the induced morphism $f_n: C_n\to D_n$ is an equivalence. 
\end{prop}
\begin{proof}
This is a necessary condition. For the converse, let $f$ be a morphism fulfilling this condition. To show that $f$ is an equivalence, we have to show that for any globular sum $a$, $f_a: C_a\to D_a$ is an equivalence. This is true as 
$$f_a:C_a\to D_a~\sim~ \lim_{\Db_n\in\Sp_a}{f_n:C_n\to D_n}.$$	
\end{proof}

\begin{lemma}
\label{lemma:equivalence if unique right lifting property against globes.}
A functor is an equivalence if it has the unique right lifting property against $\emptyset\to \Db_n$ for any $n\geq 0$. 
\end{lemma}
\begin{proof}
This is a necessary condition. For the converse, let $f:C\to D$ be a morphism fulfilling this condition. By definition of  unique left lifting property, it implies that the induced morphism
$f_n:C_n\to D_n$ is an equivalence for any $n\geq 0$. Using proposition \ref{prop:equivalences detected on globes}, $f$ is an equivalence.
 \end{proof}
 
\subsubsection{Suspension}

\begin{construction} Let $\iPsh{\Theta}_{\bullet,\bullet}$ be the $(\infty,1)$-category of $\infty$-presheaves on $\Theta$ with two distinguished points, i.e. of triples $(C,a,b)$ where $a$ and $b$ are elements of $C_0$.
The functor $[\uvar,1]:\Theta\to \iPsh{\Theta}_{\bullet,\bullet}$ that sends $a$ onto $([a,1],\{0\},\{1\})$ induces by extension by colimit an adjunction
\begin{equation}
\label{eq:suspesnion betweenpresheaves}
\begin{tikzcd}
	{[\uvar,1]:\iPsh{\Theta}} & {\iPsh{\Theta}_{\bullet,\bullet}:\hom_{\uvar}(\uvar,\uvar)}
	\arrow[""{name=0, anchor=center, inner sep=0}, shift left=2, from=1-1, to=1-2]
	\arrow[""{name=1, anchor=center, inner sep=0}, shift left=2, from=1-2, to=1-1]
	\arrow["\dashv"{anchor=center, rotate=-90}, draw=none, from=0, to=1]
\end{tikzcd}
\end{equation}
As the left adjoint preserves representables, the right adjoint commutes with colimit. It is then easy to check on representables that the unit of this adjunction is an equivalence. As a consequence, the left adjoint is fully faithful.
\end{construction}

\begin{remark}
\label{rem:suspension preserevs colimits index by theta plus}
We recall that the category $\Theta^+$, defined in \ref{defi:thetaplus}, is the full subcategory of $\zocat$ whose objects are globular sums or the empty category. 
Now remark that for any $\Theta$-presheaves $C$, we have 
$$[C,1]\sim \colim_{\Theta^+_{/C}} [\uvar,1].$$
\end{remark}

\begin{lemma}
\label{lemma:hom of the suspension prequel}
Let $C$ be an $\infty$-presheaves on $\Theta$. The canonical morphisms
$$C\to \hom_{[C,1]}(0,1)~~\hom_{[C,1]}(0,0)\to 1~~~ \hom_{[C,1]}(1,1) \sim 1~~~~\emptyset\to \hom_{[C,1]}(1,0)$$
are equivalences.
\end{lemma}
\begin{proof}
As both $\hom$ and $[\uvar,1]$ preserve colimits, it is sufficient to check this property on representables, where it is an easy computation.
\end{proof}

\begin{prop}
\label{prop:supspension preserves cat}
The functor $[\uvar,1]:\iPsh{\Theta}\to \iPsh{\Theta}$ preserves $\io$-categories.
\end{prop}
\begin{proof}
By construction, for any pair of integers $k<n$, and any pair $([\textbf{a},n],b)$ where $[\textbf{a},n]$ is a globular sum and $b$ is a globular sum or the empty $\io$-category, we have cartesian squares
\[\begin{tikzcd}
	1 & {\Hom_{\Theta}([\textbf{a},n],[b,1])} & {\Hom_{\Theta}(a_k,b)} & {\Hom_{\Theta}([\textbf{a},n],[b,1])} \\
	{\{\epsilon\}} & {\Hom_{\Delta}([n],[1])} & {\{\alpha_k\}} & {\Hom_{\Delta}([n],[1])}
	\arrow[""{name=0, anchor=center, inner sep=0}, from=2-1, to=2-2]
	\arrow[from=1-1, to=1-2]
	\arrow[from=1-2, to=2-2]
	\arrow[from=1-1, to=2-1]
	\arrow[from=1-4, to=2-4]
	\arrow[""{name=1, anchor=center, inner sep=0}, from=2-3, to=2-4]
	\arrow[from=1-3, to=2-3]
	\arrow[from=1-3, to=1-4]
	\arrow["\lrcorner"{anchor=center, pos=0.125}, draw=none, from=1-1, to=0]
	\arrow["\lrcorner"{anchor=center, pos=0.125}, draw=none, from=1-3, to=1]
\end{tikzcd}\]
where $\epsilon$ denote any constant functor with value $0$ or $1$, and $\alpha_k$ the morphism that sends $k$ on $0$ and $k+1$ on $1$.
Let $C$ be an object of $\iPsh{\Theta}$. We then have $[C,1] \sim \colim_{\Theta^+_{/C}} [\uvar,1]$.
As the evalutation commutes with colimits and as the $\iun$-category $\igrd$ is locally cartesian closed,  the previous cartesian squares induce   cartesian squares
\begin{equation}
\label{eq:prop:supspension preserves cat}
\begin{tikzcd}[column sep =0.3cm]
	1 & {\Hom_{\iPsh{\Theta}}([\textbf{a},n],[C,1])} & {\Hom_{\iPsh{\Theta}}(a_k,C)} & {\Hom_{\iPsh{\Theta}}([\textbf{a},n],[C,1])} \\
	{\{\epsilon\}} & {\Hom_{\Delta}([n],[1])} & {\{\alpha_k\}} & {\Hom_{\Delta}([n],[1])}
	\arrow[""{name=0, anchor=center, inner sep=0}, from=2-1, to=2-2]
	\arrow[from=1-1, to=1-2]
	\arrow[from=1-2, to=2-2]
	\arrow[from=1-1, to=2-1]
	\arrow[from=1-4, to=2-4]
	\arrow[""{name=1, anchor=center, inner sep=0}, from=2-3, to=2-4]
	\arrow[from=1-3, to=2-3]
	\arrow[from=1-3, to=1-4]
	\arrow["\lrcorner"{anchor=center, pos=0.125}, draw=none, from=1-1, to=0]
	\arrow["\lrcorner"{anchor=center, pos=0.125}, draw=none, from=1-3, to=1]
\end{tikzcd}
\end{equation}
Using the fact that pullbacks are stable by limits and using the two previous squares in the case where $[\textbf{a},n]$ is a globe, we get two cartesian squares:
\[\begin{tikzcd}[column sep =0.3cm]
	1 & {\Hom_{\iPsh{\Theta}}(\Sp_{[\textbf{a},n]},[C,1])} & {\Hom_{\iPsh{\Theta}}(\Sp_{a_k},C)} & {\Hom_{\iPsh{\Theta}}(\Sp_{[\textbf{a},n]},[C,1])} \\
	{\{\epsilon\}} & {\Hom_{\Delta}([n],[1])} & {\{\alpha_k\}} & {\Hom_{\Delta}([n],[1])}
	\arrow[""{name=0, anchor=center, inner sep=0}, from=2-1, to=2-2]
	\arrow[from=1-1, to=1-2]
	\arrow[from=1-2, to=2-2]
	\arrow[from=1-1, to=2-1]
	\arrow[from=1-4, to=2-4]
	\arrow[""{name=1, anchor=center, inner sep=0}, from=2-3, to=2-4]
	\arrow[from=1-3, to=2-3]
	\arrow[from=1-3, to=1-4]
	\arrow["\lrcorner"{anchor=center, pos=0.125}, draw=none, from=1-1, to=0]
	\arrow["\lrcorner"{anchor=center, pos=0.125}, draw=none, from=1-3, to=1]
\end{tikzcd}\]
This directly implies that $[C,1]$ is $\Wseg$-local when $C$ is.

Furthermore, for any integer $n>0$, the cartesian squares \eqref{eq:prop:supspension preserves cat} induces cartesian squares
\[\begin{tikzcd}
	1 & {\Hom_{\iPsh{\Theta}}(\Sigma^nE^{eq},[C,1])} & {\Hom_{\iPsh{\Theta}}(\Sigma^{n-1}E^{eq},C)} & {\Hom_{\iPsh{\Theta}}(\Sigma^nE^{eq},[C,1])} \\
	{\{\epsilon\}} & {\Hom_{\Delta}([1],[1])} & {\{\alpha_0\}} & {\Hom_{\Delta}([1],[1])}
	\arrow[from=1-1, to=1-2]
	\arrow[from=1-1, to=2-1]
	\arrow[from=1-2, to=2-2]
	\arrow[from=1-3, to=1-4]
	\arrow[from=1-3, to=2-3]
	\arrow[from=1-4, to=2-4]
	\arrow[""{name=0, anchor=center, inner sep=0}, from=2-1, to=2-2]
	\arrow[""{name=1, anchor=center, inner sep=0}, from=2-3, to=2-4]
	\arrow["\lrcorner"{anchor=center, pos=0.125}, draw=none, from=1-1, to=0]
	\arrow["\lrcorner"{anchor=center, pos=0.125}, draw=none, from=1-3, to=1]
\end{tikzcd}\]
which implies that $[C,1]$  is local with respect to $\Sigma^nE^{eq}\to \Sigma^{n}1$ when $C$ is.

Eventually, suppose given a diagram of shape
\begin{equation}
\label{eq:proof of sigma preserves omega cat 3}
\begin{tikzcd}
	{E^{eq}} & {[C,1]} \\
	{1}
	\arrow[from=1-1, to=2-1]
	\arrow[from=1-1, to=1-2]
\end{tikzcd}
\end{equation}
The canonical morphism $E^{eq}\to [C,1]\xrightarrow{\pi} [1]$ then factors through $0$ or $1$. As the two fibers of $\pi$ are trivial,  the diagram \eqref{eq:proof of sigma preserves omega cat 3} admits a unique lift, which concludes the proof.
\end{proof}

\begin{construction}
\label{cons:def of suspension}
As $[\uvar,1]$ sends $\W$ to a subset of $\M$, the functor $\hom_{\uvar,\uvar}(\uvar)$ preserves $\io$-categories. Combined with the proposition \ref{prop:supspension preserves cat}, this implies that 
the adjunction \eqref{eq:suspesnion betweenpresheaves} restricts to an adjunction:
\begin{equation}
\label{eq:suspesnion between category}
\begin{tikzcd}
	{[\uvar,1]:\ocat} & {\ocat_{\bullet,\bullet}:\hom_{\uvar}(\uvar,\uvar)}
	\arrow[""{name=0, anchor=center, inner sep=0}, shift left=2, from=1-1, to=1-2]
	\arrow[""{name=1, anchor=center, inner sep=0}, shift left=2, from=1-2, to=1-1]
	\arrow["\dashv"{anchor=center, rotate=-90}, draw=none, from=0, to=1]
\end{tikzcd}
\end{equation}
The left adjoint is the \wcsnotionsym{suspension functor}{((d60@$[\uvar,1]$}{suspension}{for $\io$-categories}\ssym{(hom@$\hom_{\uvar}(\uvar,\uvar)$}{for $\io$-categories}.
\end{construction}

\begin{prop}
\label{prop:hom of the suspension}
Let $C$ be an $\io$-categories. We have natural equivalences
$$\hom_{[C,1]}(0,1)\sim C~~~~\hom_{[C,1]}(0,0)\sim \hom_{[C,1]}(1,1) \sim 1~~~~\hom_{[C,1]}(1,0)\sim \emptyset.$$
\end{prop}
\begin{proof}
This is a direct consequence of lemma \ref{lemma:hom of the suspension prequel}.
\end{proof}

\begin{construction}
\label{cons:wiskering}
Suppose given an $\io$-category $C$ and a $1$-cells $f:x'\to x$. 
As $C$ is an $\io$-category, for any globular sum $a$, the morphism
$$\Hom([1]\vee[a,1],C)\to \Hom([1], C)\times_{\Hom([0],C)}\Hom([a,1],C)$$
is an equivalence. 
This induces a morphism
$$\Hom(a,\hom_C(x,y))\to \Hom([1]\vee[a,1],(C,x',y))\to \Hom(a,\hom_{C}(x',y))$$	
where the two distinguished points of $[1]\vee[a,1]$ are the extremal ones, and where the left-hand morphism is the restriction of the inverse of the previous morphism. By the Yoneda lemma, this corresponds to a morphism
$$f_!:\hom_C(x',y)\to \hom_C(x,y).$$
Conversely, a $1$-cell $g:y\to y'$ induces a morphism
$$g_!:\hom_C(x,y)\to \hom_C(x,y').$$
\end{construction}

\subsubsection{Special colimits}

\begin{definition} \label{defi: spetial colimits}
We denote by $\iota$ the inclusion of $\ocat$ into $\iPsh{\Theta}$.
A functor $F:I\to \ocat$ has a \snotion{special colimit}{for $\io$-categories} if the canonical morphism 
\begin{equation}
\label{eq:special colimit}
\colim_{i:I}\iota F(i)\to \iota(\colim_{i:I}F(i))
\end{equation}
is an equivalence of $\infty$-presheaves. 

Similarly, we say that a functor $\psi: I\to \Arr(\ocat)$ has a \textit{special colimit} if the canonical morphism 
$$\colim_{i:I}\iota \psi(i)\to \iota(\colim_{i:I}\psi(i))$$
is an equivalence in the arrow $\iun$-category of $\iPsh{\Theta}$.
\end{definition}

\begin{remark}
A functor $F:I\to \ocat$ then has a special colimit if and only if $\colim_IF$, computed in $\iPsh{\Theta}$, already is an $\io$-category.\end{remark}

\begin{remark}
Let $F,G:I\to \ocat$ be two functors, and $\psi:F\to G$ a cartesian natural transformation admitting a special colimit. As $\iPsh{\Theta}$ is cartesian closed, this implies that for any object $i$ of $I$, the canonical square
\[\begin{tikzcd}
	{F(i)} & {\colim_IF} \\
	{G(i)} & {\colim_IG}
	\arrow[from=1-1, to=1-2]
	\arrow["{\psi(i)}"', from=1-1, to=2-1]
	\arrow["\lrcorner"{anchor=center, pos=0.125}, draw=none, from=1-1, to=2-2]
	\arrow["{\colim_I\psi}", from=1-2, to=2-2]
	\arrow[from=2-1, to=2-2]
\end{tikzcd}\]
is cartesian.
\end{remark}

\begin{example}
\label{exemple:every iocategor is a special colimit}
Let $C$ be an $\io$-category. The canonical diagram $\Theta_{/C}\to \ocat$ has a special colimit, given by $C$.
\end{example}
\begin{prop}
\label{prop:special colimit}
Let $F,G:I\to \ocat$ be two functors, and $\psi:F\to G$ a natural transformation. If $\psi$ is cartesian, and $G$ has a special colimit, then $\psi$ and $F$ have special colimits. 
\end{prop}
\begin{proof}
We have to show that $F$ has a special colimit, it will directly imply that $\psi$ also has one. The morphism \eqref{eq:special colimit} is always in $\widehat{\W}$. To conclude, one then has to show that $\colim_{i:I}\iota \psi(i)$ is $\W$-local. To this extend, it is enough to demonstrate that the canonical morphism 
$$\colim_{i:I}\iota \psi(i): \colim_{i:I}\iota F(i)\to \colim_{i:I}\iota G(i)$$ 
has the unique right lifting property against $\W$. We then consider a square
\begin{equation}
\label{eq:proof special colimit}
\begin{tikzcd}
	a & {\colim_{i:I}\iota F(i)} \\
	b & { \colim_{i:I}\iota G(i)}
	\arrow[from=1-1, to=1-2]
	\arrow["f"', from=1-1, to=2-1]
	\arrow["{\colim_{i:I}\iota \psi(i)}", from=1-2, to=2-2]
	\arrow[from=2-1, to=2-2]
\end{tikzcd}
\end{equation}
where $f\in W$. As the domain of $f$ is representable, there always exists $j:I$, such that the bottom horizontal morphism factors through $G(j)$. As $\psi$ is cartesian, the square \eqref{eq:proof special colimit} factors in two squares, where the right one is cartesian. 
\[\begin{tikzcd}
	a & {F(i)} & {\colim_{i:I}\iota F(i)} \\
	b & {G(i)} & { \colim_{i:I}\iota G(i)}
	\arrow[from=1-1, to=1-2]
	\arrow["f"', from=1-1, to=2-1]
	\arrow[from=1-2, to=1-3]
	\arrow["{\psi(i)}", from=1-2, to=2-2]
	\arrow["\lrcorner"{anchor=center, pos=0.125}, draw=none, from=1-2, to=2-3]
	\arrow["{\colim_{i:I}\iota \psi(i)}", from=1-3, to=2-3]
	\arrow[from=2-1, to=2-2]
	\arrow[from=2-2, to=2-3]
\end{tikzcd}\]
Lifts in the square \eqref{eq:proof special colimit} are then equivalent to lifts in the left square, which exist and are unique as $F(i)\to G(i)$ has the unique right lifting property against $\W$.
\end{proof}

\begin{prop}
\label{prop:example of a special colimit}
For any integer $n$, and globular sums $a$ and $b$, the equalizer diagram 
\[\begin{tikzcd}
	{\coprod_{k+l=n-1}[a,k]\vee[a\times b,1]\vee[a,l]} & {\coprod_{k+l=n}[a,k]\vee[ b,1]\vee[a,l]}
	\arrow[shift left=2, from=1-1, to=1-2]
	\arrow[shift right=2, from=1-1, to=1-2]
\end{tikzcd}\]
where the top diagram is induced by $[a\times b,1]\to [a,1]\vee[b,1]$ and to bottom one by $[a\times b,1]\to [b,1]\vee[a,1]$,
has a special colimit, which is $[a,n]\times [b,1]$.
\end{prop}
\begin{proof}
The lemma \ref{lemma:colimit computed in set presheaves} implies that the colimit of the previous diagram, computed in $\iPsh{\Theta}$ is strict. It is then enough to show that this colimit, computed in $\Psh{\Theta}$, is equivalent to $[a,n]\times [b,1]$. As this last object is $\W$-local, this will concludes the proof. The remaining combinatorial exercise is left to the reader. 
\end{proof}

\begin{prop}
\label{prop:example of a special colimit 2}
Any sequence of $\io$-categories has a special colimit. 
\end{prop}
\begin{proof}
Suppose given such sequence.
If the sequence is finite, this is obviously true. Suppose now that the sequence is non finite. As codomains and domains of morphism of $\W$ are $\omega$-small, the colimit of the sequence, computed in $\iPsh{\Theta}$ is $\W$-local, which concludes the proof.
\end{proof}

\begin{lemma}
\label{lemma:[ ,1] preserves spcial limits}
For any $\io$-category $C$, the functor $[\uvar,1]:\Theta^+_{/C}\to \ocat$ has a special colimit given by $[C,1]$.
\end{lemma}
\begin{proof}
This is a direct consequence of proposition \ref{prop:supspension preserves cat}.
\end{proof}

\begin{lemma}
\label{lemma:[ ,1] vee [ ,1]preserves spcial limits}
We denote by $$
\begin{array}{cl}
 & [\uvar,1]\vee[1]:\ocat\to \ocat_{[0]\amalg[1]/}\\
 \mbox{(resp.} & [1]\vee[\uvar,1]:\ocat\to \ocat_{[1]\amalg[0]/})
\end{array}$$ the colimit preserving functor that sends an element $a$ of $\Theta$ onto the globular sum $[a,1]\vee[1]$ (resp. $[1]\vee[a,1]$).

For any $\io$-category $C$, the functors $[\uvar,1]\vee[1]$  and $[1]\vee[\uvar,1]$ have special colimits given respectively by $[C,1]\vee[1]$ and $[1]\vee[C,1]$.
\end{lemma}
\begin{proof}
To prove this, we establish a result analogous to the one given in the proposition \ref{prop:supspension preserves cat}. We omit its proof because it is long but essentially identical.
\end{proof}

\begin{prop}
\label{prop:example of a special colimit3}
Suppose given two cartesian squares
\[\begin{tikzcd}
	{ B} & C & D \\
	{\{0\}} & {[1]} & {\{1\}}
	\arrow[from=1-2, to=2-2]
	\arrow[from=1-1, to=1-2]
	\arrow[from=1-3, to=2-3]
	\arrow[from=1-1, to=2-1]
	\arrow[from=1-3, to=1-2]
	\arrow[from=2-1, to=2-2]
	\arrow[from=2-3, to=2-2]
	\arrow["\lrcorner"{anchor=center, pos=0.125}, draw=none, from=1-1, to=2-2]
	\arrow["\lrcorner"{anchor=center, pos=0.125, rotate=-90}, draw=none, from=1-3, to=2-2]
\end{tikzcd}\]
The diagram 
\[\begin{tikzcd}
	{[1]\vee[B,1]} & {[ B,1]} & {[C,1]} & {[D,1]} & {[D,1]\vee[1]}
	\arrow["\triangledown", from=1-4, to=1-5]
	\arrow["\triangledown"', from=1-2, to=1-1]
	\arrow[from=1-2, to=1-3]
	\arrow[from=1-4, to=1-3]
\end{tikzcd}\]
has a special colimit.
\end{prop}
\begin{proof}
Remark firsts that the colimit, computed in $\iPsh{\Theta}$, of the diagram
\[\begin{tikzcd}
	{[1]\vee[1]} & {[1]} & {[[1],1]} & {[1]} & {[1]\vee[1]}
	\arrow["\triangledown"', from=1-2, to=1-1]
	\arrow["\triangledown", from=1-4, to=1-5]
	\arrow["{[\{0\},1]}", from=1-2, to=1-3]
	\arrow["{[\{1\},1]}"', from=1-4, to=1-3]
\end{tikzcd}\]
is strict. We leave it to the reader to check that the colimit of this diagram, computed in $\Psh{\Theta}$, already is an $\io$-category. This implies that the diagram admits a special colimit.

We are now willing to show that the two squares
\[\begin{tikzcd}
	{[B,1]} & {[C,1]} & {[D,1]} \\
	{[1]} & {[[1],1]} & {[1]}
	\arrow[from=1-1, to=1-2]
	\arrow[from=1-3, to=1-2]
	\arrow["{[\{0\},1]}"', from=2-1, to=2-2]
	\arrow["{[\{1\},1]}", from=2-3, to=2-2]
	\arrow[from=1-1, to=2-1]
	\arrow[from=1-2, to=2-2]
	\arrow["\lrcorner"{anchor=center, pos=0.125}, draw=none, from=1-1, to=2-2]
	\arrow[from=1-3, to=2-3]
	\arrow["\lrcorner"{anchor=center, pos=0.125, rotate=-90}, draw=none, from=1-3, to=2-2]
\end{tikzcd}\]
are cartesian. We recall that $\Theta^+$ is the subcategory of $\zocat$ whose objects are globular sums or the empty $\io$-category. Lemma \ref{lemma:[ ,1] preserves spcial limits} implies that $[C,1]$ is the special colimit of the functor $[\uvar,1]:\Theta^+_{/C}\to \ocat$. As $\iPsh{\Theta}$ is cartesian closed, the pullback along morphisms between $\io$-categories preserves special colimits. We can then restrict to the case where $C$ is a globular sum or the empty $\io$-category. In this case, it directly follows from the fact that $\Theta^+$ is closed by pullback and that $\Theta^+\to \ocat$ preserves them.

By the same argument, one can demonstrate that the two squares
\[\begin{tikzcd}
	{[B,1]} & {[1]\vee[B,1]} & {[D,1]} & {[D,1]\vee[1]} \\
	{[1]} & {[1]\vee[1]} & {[1]} & {[1]\vee[1]}
	\arrow[from=1-3, to=1-4]
	\arrow[from=1-1, to=1-2]
	\arrow["\triangledown"', from=2-1, to=2-2]
	\arrow["\triangledown"', from=2-3, to=2-4]
	\arrow[from=1-4, to=2-4]
	\arrow[from=1-3, to=2-3]
	\arrow[from=1-1, to=2-1]
	\arrow[from=1-2, to=2-2]
	\arrow["\lrcorner"{anchor=center, pos=0.125}, draw=none, from=1-1, to=2-2]
	\arrow["\lrcorner"{anchor=center, pos=0.125}, draw=none, from=1-3, to=2-4]
\end{tikzcd}\]
are cartesian by reducing to the case where $B$ and $D$ are globular sums or empty $\io$-categories.

The result then follows from proposition \ref{prop:special colimit}.
\end{proof}
\begin{prop}
\label{prop:example of a special colimit4}
Suppose given a cartesian square
\[\begin{tikzcd}
	{ B} & C \\
	{\{0\}} & {[1]}
	\arrow[from=1-2, to=2-2]
	\arrow[from=1-1, to=1-2]
	\arrow[from=1-1, to=2-1]
	\arrow[from=2-1, to=2-2]
	\arrow["\lrcorner"{anchor=center, pos=0.125}, draw=none, from=1-1, to=2-2]
\end{tikzcd}\]
The diagram 
\[\begin{tikzcd}
	{[1]\vee[B,1]} & {[ B,1]} & {[C,1]}
	\arrow["\triangledown"', from=1-2, to=1-1]
	\arrow[from=1-2, to=1-3]
\end{tikzcd}\]
has a special colimit.
\end{prop}
\begin{proof}
The proof is similar to the previous one. 
\end{proof}

\subsubsection{$(\infty,n)$-Categories and truncation functors}

\begin{construction}
  We have an adjunction 
\begin{equation}
\label{eq:underived adjunction case n}
\begin{tikzcd}
	{ i_!:\iPsh{\Delta[\Theta_{n-1}]}} & {\iPsh{\Theta_n}:i^*}
	\arrow[shift left=2, from=1-1, to=1-2]
	\arrow[shift left=2, from=1-2, to=1-1]
\end{tikzcd}
\end{equation}
where the left adjoint is the left Kan extension of the functor $\Delta[\Theta_{n-1}]\xrightarrow{i} \Theta_{n}\to \iPsh{\Theta_{n}}$. We recall that the sets of morphisms $\W_n$ and $\M_n$ are respectively defined in \ref{defi:definition of W} and \ref{defi:defi of delta theta}
Remark that there is an obvious inclusion $i_!(\M_n)\subset \W_n$. The previous adjunction then induced a derived adjunction
\begin{equation}
\label{eq:derived adjunction case n}
\begin{tikzcd}
	{\Lb i_!:\Psh{\Delta[\Theta_{n-1}]}_{\M}} & {\Psh{\Theta_{n}}_{\W}:\Rb i^*}
	\arrow[shift left=2, from=1-1, to=1-2]
	\arrow[shift left=2, from=1-2, to=1-1]
\end{tikzcd}
\end{equation}
\end{construction}

\begin{prop}
\label{prop:infini changing theta n}
The unit and counit of the adjunction \eqref{eq:underived adjunction case n} are respectively in $\widehat{\M}_n$ and $\widehat{\W}_n$. As a consequence, the adjunction \eqref{eq:derived adjunction case n} is an adjoint equivalence.
\end{prop}
\begin{proof}
We denote by $\iota:\Psh{\Theta_n}\to \iPsh{\Theta_n}$ and $\iota:\Psh{\Delta[\Theta_{n-1}]}\to \iPsh{\Delta[\Theta_{n-1}]}$ the two canonical inclusions. By the proposition \ref{prop:link beetwenwidehat and overline}, we have inclusions $\iota(\overline{\W_{n}})\subset \widehat{\W_{n}}$ and $\iota(\overline{\M_{n}})\subset \widehat{\M_{n}}$. The result then directly follows from theorem \ref{theo:unit and counit are in W}.  
\end{proof}

\begin{definition}
 Let $n>0$ be an integer. An \wcnotion{$(\infty,n)$-category}{category3@$(\infty,n)$-category} is a $\W_n$-local $\infty$-presheaf $C\in \iPsh{\Theta_n}$. We then define \sym{((a50@$\ncat{n}$}
$$\ncat{n} := \iPsh{\Theta_n}_{\W_n}.$$
Proposition \ref{prop:infini changing theta n} implies that $\ncat{n}$ identifies itself with the full sub $\iun$-category of $\iPsh{\Delta[\Theta_{n-1}]}$ of $\M_n$-local objects:
$$\ncat{n} \sim \iPsh{\Delta[\Theta_{n-1}]}_{\M_n}.$$
\end{definition}

\begin{construction}
The inclusion $i_n:\Theta_n\to \Theta$ fits in an adjunction
\[\begin{tikzcd}
	{\tau^i_n:\Theta} & {\Theta_n:i_n}
	\arrow[""{name=0, anchor=center, inner sep=0}, shift left=2, from=1-1, to=1-2]
	\arrow[""{name=1, anchor=center, inner sep=0}, shift left=2, from=1-2, to=1-1]
	\arrow["\dashv"{anchor=center, rotate=-90}, draw=none, from=0, to=1]
\end{tikzcd}\]
where the left adjoint sends $\Db_k$ on $\Db_{\min{(n,k)}}$.
By extension by colimits, this induces an adjoint pair 
\begin{equation}
\label{eq:inclusion of n cat pre}
\begin{tikzcd}
	{\tau^i_n:\iPsh{\Theta}} & {\iPsh{\Theta_n}:i_n.}
	\arrow[""{name=0, anchor=center, inner sep=0}, shift left=2, from=1-1, to=1-2]
	\arrow[""{name=1, anchor=center, inner sep=0}, shift left=2, from=1-2, to=1-1]
	\arrow["\dashv"{anchor=center, rotate=-90}, draw=none, from=0, to=1]
\end{tikzcd}
\end{equation}
where the two functors are colimit preserving.
As the image of every morphism of $\W$ by $\tau^i_n$ is in $\W_n$ or is an equivalence, and as the image of $\W_n$ by $i_n$ is included in $\W$, the previous adjunction induces by localization an adjunction
\begin{equation}
\label{eq:inclusion of n cat}
\begin{tikzcd}
	{\tau^i_n:\ocat} & {\ncat{n}:i_n}
	\arrow[""{name=0, anchor=center, inner sep=0}, shift left=2, from=1-1, to=1-2]
	\arrow[""{name=1, anchor=center, inner sep=0}, shift left=2, from=1-2, to=1-1]
	\arrow["\dashv"{anchor=center, rotate=-90}, draw=none, from=0, to=1]
\end{tikzcd}
\end{equation}
where the two adjoints are colimit preserving.
The left adjoint is called the \snotionsym{intelligent $n$-truncation}{(taui@$\tau^i_n$}{for $\io$-categories}.
\end{construction}
\begin{prop}
\label{ref:infini n a full sub cat}
The functor $i_n: \ncat{n}\to \ocat$ is fully faithful.
\end{prop}
\begin{proof}
We have to check that the unit of the adjunction \eqref{eq:inclusion of n cat} is an equivalence. As the two functors preserve colimits, we have to show that the restriction to $\Theta$ of the unit is an equivalence which is obvious.
\end{proof}
\begin{construction}
Being colimit preserving, the functor $i_n$ is also part of an adjunction
\begin{equation}
\begin{tikzcd}
	{i_n:\ncat{n}} & {\ocat:\tau_n}
	\arrow[""{name=0, anchor=center, inner sep=0}, shift left=2, from=1-2, to=1-1]
	\arrow[""{name=1, anchor=center, inner sep=0}, shift left=2, from=1-1, to=1-2]
	\arrow["\dashv"{anchor=center, rotate=-90}, draw=none, from=1, to=0]
\end{tikzcd}
\end{equation}
The right adjoint is called the \wcsnotionsym{$n$-truncation}{(tau@$\tau_n$}{truncation@$n$-truncation}{for $\io$-category}. 
\end{construction}

\begin{notation}
We will identify objects of $\ncat{n}$ with their image in $\ocat$ and we will then also note by $\tau_n$ and $\tau^i_n$ the composites $i_n\tau^i_n$ and $i_n\tau^i_n$.
\end{notation}

\begin{prop}
\label{prop:taun preserves special colimits}
The functor $\tau_n:\ocat\to \ocat$ preserves special colimits.
\end{prop}
\begin{proof}

As $i_n$ preserves representable objects, the functor $\tau_n:\ocat\to \ncat{n}$ preserves colimits. Since $i_n:\iPsh{\Theta_n}\to \iPsh{\Theta}$ preserves colimits and sends $\mathcal{W}_n$ to a subset of $\mathcal{W}$, $\tau_n$ preserves $\io$-categories. This concludes the proof.
\end{proof}

\begin{definition}
\label{defi:trivialization}
A \notion{trivialization map} is a morphism in the smallest cocomplete class of morphisms including the morphisms $\Ib:\Db_{n+1}\to \Db_n$ for any integer $n$.
\end{definition}

\begin{example}
For all $\io$-category $C$, the canonical morphism $C\to \tau^i_nC$ is a trivialization.
\end{example}

\begin{prop}
\label{prop:inteligent trucatio and a particular colimit}
Trivialization maps are epimorphisms (definition \ref{defi:epi abstrait}).
\end{prop}
\begin{proof}
Let $i:C\to D$ be a trivialization map. We have to show that the square
\[\begin{tikzcd}
	C & D \\
	D & D
	\arrow[from=1-1, to=1-2]
	\arrow[from=1-1, to=2-1]
	\arrow[from=1-2, to=2-2]
	\arrow[from=2-1, to=2-2]
\end{tikzcd}\]
is cocartesian.
Since cocartesian squares are closed under colimits, we can reduce to the case where $C\to D$ is $\Db_{n+1}\to \Db_{n}$ for an integer $n$. We then need to show that the square
\[\begin{tikzcd}
	{\Db_{n+1}} & {\Db_n} \\
	{\Db_n} & {\Db_n}
	\arrow["\Ib", from=1-1, to=1-2]
	\arrow["\Ib"', from=1-1, to=2-1]
	\arrow[from=1-2, to=2-2]
	\arrow[from=2-1, to=2-2]
\end{tikzcd}\]
is a pushout. By definition, this corresponds to showing that for any $\io$-category $A$, the canonical morphism 
$$\alpha:\Hom(\Db_n,A)\to \Hom(\Db_n,A)\times_{\Hom(\Db_{n+1},A)}\Hom(\Db_n,A)$$ is an equivalence. 
We consider the morphism
$$\beta:\Hom(\Db_n,A)\times_{\Hom(\Db_{n+1},A)}\Hom(\Db_n,A)\to \Hom(\Db_{n+1},A)\times_{\Hom(\Db_{n+1},A)}\Hom(\Db_n,A)\sim\Hom(\Db_n,A)$$
that sends a triplet $(p,q,\psi:p\circ\Ib\sim q\circ\Ib)$ to $(p\circ\Ib,q,\psi:p\circ\Ib\sim q\circ\Ib)$. As $\beta$ is a projection, we have $\beta \alpha\sim id$. 
Furthermore, as the endomorphism $\Ib\circ i_{n+1}^+:\Db_n\to \Db_n$ is the identity, $\alpha$ is equivalent to the morphism 
$$\Hom(\Db_n,A)\sim \Hom(\Db_{n+1},A)\times_{\Hom(\Db_{n+1},A)}\Hom(\Db_n,A)\to \Hom(\Db_n,A)\times_{\Hom(\Db_{n+1},A)}\Hom(\Db_n,A)$$
that sends $(p',q,\psi:p'\sim q\circ\Ib)$ to $(p'\circ i_{n+1}^+,q,\psi:p\circ\Ib\circ i_{n+1}^+\sim q\circ\Ib)$
and we then also have $\alpha\beta\sim id$.
\end{proof}

\begin{construction}
 The family of truncation functor induces a sequence 
$$...\to \ncat{n+1}\xrightarrow{\tau_{n}} \ncat{n}\to...\to \ncat{1}\xrightarrow{\tau_{0}}\ncat{0}$$
which induces an adjunction
\begin{equation}
\label{eq:inductivity}
\begin{tikzcd}
	{\colim_{n:\Nb}:\lim_{n:\Nb}\ncat{n}} & {\ocat:(\tau_n)_{n:\Nb}}
	\arrow[""{name=0, anchor=center, inner sep=0}, shift left=2, from=1-1, to=1-2]
	\arrow[""{name=1, anchor=center, inner sep=0}, shift left=2, from=1-2, to=1-1]
	\arrow["\dashv"{anchor=center, rotate=-90}, draw=none, from=0, to=1]
\end{tikzcd}
\end{equation}
where the left adjoint sends a sequence $(C_n, C_n\sim \tau_nC_{n+1})_{n:\Nb}$ to the colimit of the induced sequence
$$i_0C_0\to i_1C_1\to ... \to i_nC_n\to ..., $$
and the right adjoint sends an $\io$-category $C$ to the sequence $(\tau_nC,\tau_nC\sim \tau_{n}\tau_{n+1}C)_{n:\Nb}$. Indeed, we have equivalence
$$
\begin{array}{rcl}
\Hom(\colim_{n:\Nb}i_n C_n,D)&\sim& \lim_{n:\Nb}\Hom(C_n,\tau_n D)
\\&\sim& \Hom( (C_n, C_n\sim \tau_nC_{n+1})_{n:\Nb},(\tau_n D,\tau_n D\sim \tau_{n}\tau_{n+1}D)_{n:\Nb})
\end{array}$$
natural in $(C_n, C_n\sim \tau_nC_{n+1})_{n:\Nb}$ and $D$.
\end{construction}

\begin{prop}
\label{prop:infini omega a limit of infini n}
The adjunction \eqref{eq:inductivity} is an adjoint equivalence. As a consequence, we have an equivalence
$$\ocat\sim \lim_{n:\Nb}\ncat{n}.$$
\end{prop}
\begin{proof}
According to proposition \ref{prop:example of a special colimit 2}, any sequence $(C_n)_{n:\Nb}:\lim_{n:\Nb}\ncat{n}$ has a special colimit.
Let $k$ be an integer. According to proposition \ref{prop:taun preserves special colimits}, this implies the equivalence
$$\tau_k(\colim_{n:\Nb}C_n) \sim \colim_{n:\Nb}(\tau_kC_n).$$
Furthermore, the sequence $(\tau_kC_n)_{n:\Nb}$ is constant after the rank $k$. We then have 
$$\tau_k\colim_{n:\Nb}C_n \sim \tau_k C_n.$$
This directly implies that the unit of the adjunction \eqref{eq:inductivity} is an equivalence. 

To conclude, one has to show that the right adjoint is conservative, i.e that a morphism $f$ is an equivalence if and only if for any $n$, $\tau_n f$ is an equivalence. This last statement is a direct consequence of proposition \ref{prop:equivalences detected on globes}.
\end{proof}

\subsubsection{$\ocat$ is Cartesian closed}

\begin{prop}
\label{prop:cartesian product preserves W}
The cartesian product in $\ocat$ preserves colimits in both variables.
\end{prop}

\begin{definition}
For any $C$ in $\ocat$, we define 
$$\uHom(C,\uvar):\ocat\to \ocat$$
the right adjoint of the colimit-preserving functor $\uvar\times C:\ocat\to \ocat$.
\end{definition}

We  need several lemmas to prove proposition \ref{prop:cartesian product preserves W}.

\begin{lemma}
\label{lemma:product of representable in preshaves on Delta Theta}
Let $a$, $b$ be two globular sums, and $n,m$ two integer. The colimit in $\iPsh{\Delta[\Theta]}$ of the diagram 
\[\begin{tikzcd}
	{\coprod_{k\leq n}[a\times b,\{k\}\times [m]]} && {\coprod_{l\leq m}[a\times b,[n]\times \{l\}]} \\
	{\coprod_{k\leq n}[b,m]} & {[a\times b,[n]\times [m]]} & {\coprod_{l\leq m}[a,n]}
	\arrow[from=1-1, to=2-1]
	\arrow[from=1-1, to=2-2]
	\arrow[from=1-3, to=2-2]
	\arrow[from=1-3, to=2-3]
\end{tikzcd}\]
is $[a,n]\times [b,m]$.
\end{lemma}
\begin{proof}
The lemma \ref{lemma:colimit computed in set presheaves} implies that the object 
$$K:=\coprod_{k\leq n}[b,m]\coprod_{\coprod_{k\leq n}[a\times b,\{k\}\times [m]]}[a\times b,[n]\times [m]]$$
is strict. As the induced morphism 
$\coprod_{l\leq m}[a\times b,[n]\times \{l\}]\to K$, is a monomorphism, the lemma \textit{op cit} implies that the colimit of the diagram given in the statement is strict. We can then show the result in the category of set valued presheaves on $ \Delta[\Theta]$ and we leave this combinatorial exercise to the reader.
\end{proof}

\begin{lemma}
\label{lemma:technical cartesian product preserves W}
Let $f$ be a morphism of $\W_1$ and $n$ an integer. The morphism $f\times [n]$ is in $\widehat{\W_1}$.
\end{lemma}
\begin{proof}
Suppose first that $f$ is of shape $\Sp_m\to [m]$. Remark first that for any $k$, the $\Delta$-space $[k]\times [m]$ is $\W_1$-local as both $[k]$ and $[m]$ are. We then have $\Fb_{\W_1}([k]\times[m])\sim [k]\times [m]$ where $\Fb_{\W_1}$ is the left adjoint appearing in the adjunction given in theorem \ref{theo:adjunction between presheaves and local presheaves}. 
As the fibrant replacement preserves colimits and as the cartesian product in $\iun$-categories preserves colimits, we have a sequence of equivalences in $\icat$:
$$
\begin{array}{rcl}
\Fb_{\W_1}(\Sp_m\times [n])&\sim& \Fb_{\W_1}([1]\times [n])\coprod_{ \Fb_{\W_1}([0]\times [n])}...\coprod_{ \Fb_{\W_1}([0]\times [n])}\Fb_{\W_1}([1]\times [n])\\
&\sim& [1]\times [n]\coprod_{ [0]\times [n]}...\coprod_{ [0]\times [n]} [1]\times [n]\\
&\sim & [m]\times [n]
\end{array}
$$
As by construction the morphism $\Sp_m\times [n]\to \Fb_{\W_1}(\Sp_m\times [n])$ is in $\widehat{\W_1}$, this concludes the proof of this case. We proceed similarly for the case $f:=E^{eq}\to [0]$.
\end{proof}

\begin{proof}[Proof of proposition \ref{prop:cartesian product preserves W}]
As the cartesian product on $\iPsh{\Theta}$ preserves colimits in both variables, according to corollary \ref{cor:derived colimit preserving functor}, we then have to show that for any globular sum $a$, and any $f\in\W$, $f\times a$ is in $\widehat{\W}$.

We demonstrate by induction on $k$ that for any $f\in\W_k$ and any globular sum $a$, $f\times a$ is in $\W_k$. The case $k=0$ is trivial as $\W_0$ is the singleton $\{id_{[0]}\}$.

Suppose then the statement is true at this stage $k$. We recall that we denote $(i_!,i^*)$ the left and right adjoints between $\iPsh{\Delta[\Theta]}$ and $\iPsh{\Theta}$. As $i^*$ preserves cartesian product, proposition \ref{prop:infini changing theta} implies that it is enough to show that for any $f\in\M_{k+1}$ and any object $[b,n]$, $f\times [b,n]$ is in $\widehat{\M}$. 

Suppose first that $f$ is of shape $[a,1]\to [c,1]$ for $a\to c \in \W_k$. According to lemma \ref{lemma:product of representable in preshaves on Delta Theta}, the morphism $f\times[b,m]$ is the colimit in depth of the diagram 
\[\begin{tikzcd}[column sep =0.1cm]
	{\coprod_{k\leq 1}[a\times b,\{k\}\times [m]]} && {\coprod_{l\leq m}[a\times b,[1]\times \{l\}]} \\
	{\coprod_{k\leq 1}[b,m]} & {[a\times b,[1]\times [m]]} & {\coprod_{l\leq m}[a,1]} \\
	& {\coprod_{k\leq 1}[c\times b,\{k\}\times [m]]} && {\coprod_{l\leq m}[c\times b,[1]\times \{l\}]} \\
	& {\coprod_{k\leq 1}[b,m]} & {[c\times b,[1]\times [m]]} & {\coprod_{l\leq m}[c,1]}
	\arrow[from=1-1, to=2-1]
	\arrow[from=1-1, to=2-2]
	\arrow[from=1-3, to=2-2]
	\arrow[from=1-3, to=2-3]
	\arrow[from=2-1, to=4-2]
	\arrow[from=1-1, to=3-2]
	\arrow[from=2-2, to=4-3]
	\arrow[from=2-3, to=4-4]
	\arrow[from=1-3, to=3-4]
	\arrow[from=3-2, to=4-3]
	\arrow[from=3-4, to=4-3]
	\arrow[from=3-4, to=4-4]
	\arrow[from=3-2, to=4-2]
\end{tikzcd}\]
The lemma \ref{lemma:the functor [] preserves classes} and the induction hypothesis implies that all the depth morphisms are in $\widehat{\M}$.
By stability by colimit, this implies that $f\times[b,m]$ belongs to $\widehat{\M}$.

Suppose now that $f$ is of shape $[a,\Sp_n]\to [a,n]$. According to lemma \ref{lemma:product of representable in preshaves on Delta Theta}, the morphism $f\times[b,m]$ is the colimit in depth of the diagram 
\[\begin{tikzcd}[column sep = 0.1cm]
	{\coprod_{k\leq n}[a\times b,\{k\}\times [m]]} && {\coprod_{l\leq m}[a\times b,\Sp_n\times \{l\}]} \\
	{\coprod_{k\leq n}[b,m]} & {[a\times b,\Sp_n\times [m]]} & {\coprod_{l\leq m}[a,\Sp_n]} \\
	& {\coprod_{k\leq n}[a\times b,\{k\}\times [m]]} && {\coprod_{l\leq m}[a\times b,[n]\times \{l\}]} \\
	& {\coprod_{k\leq n}[b,m]} & {[a\times b,[n]\times [m]]} & {\coprod_{l\leq m}[a,n]}
	\arrow[from=1-1, to=2-1]
	\arrow[from=1-1, to=2-2]
	\arrow[from=1-3, to=2-2]
	\arrow[from=1-3, to=2-3]
	\arrow[from=3-2, to=4-3]
	\arrow[from=2-3, to=4-4]
	\arrow[from=1-3, to=3-4]
	\arrow[from=3-4, to=4-4]
	\arrow[from=3-4, to=4-3]
	\arrow[from=3-2, to=4-2]
	\arrow[from=2-1, to=4-2]
	\arrow[from=1-1, to=3-2]
	\arrow[from=2-2, to=4-3]
\end{tikzcd}\]
The lemma \ref{lemma:technical cartesian product preserves W} implies that $\Sp_n\times [m]\to [n]\times [m]$ is in $\widehat{\W_1}$. Combined with lemma \ref{lemma:the functor [] preserves classes}, this implies that all the morphisms in depth are in $\widehat{\M}$. By stability by colimit, so is $f\times[b,m]$.

It remains to show the case $f= E^{eq}\to [0]$. According to lemma \ref{lemma:product of representable in preshaves on Delta Theta}, the morphism $f\times[b,m]$ is the horizontal colimit of the diagram
\[\begin{tikzcd}
	{\coprod_{k\leq m} E^{eq}} & {\coprod_{k\leq m} [b,E^{eq}\times \{k\}]} & { [b,E^{eq}\times [m]]} \\
	{\coprod_{k\leq m}[0]} & {\coprod_{k\leq m}[0]} & {[b,m]}
	\arrow[from=1-2, to=1-3]
	\arrow[from=1-2, to=1-1]
	\arrow[from=2-2, to=2-1]
	\arrow[from=2-2, to=2-3]
	\arrow[from=1-3, to=2-3]
	\arrow[from=1-2, to=2-2]
	\arrow[from=1-1, to=2-1]
\end{tikzcd}\]
The lemma \ref{lemma:technical cartesian product preserves W} implies that $ E^{eq}\times [m]\to [m]$ is in $\widehat{\W_1}$. Combined with lemma \ref{lemma:the functor [] preserves classes}, this implies that all the vertical morphisms are in $\widehat{\M}$. By stability by colimit, so is $f\times[b,m]$.
\end{proof}

\begin{cor}
\label{cor:if codomain a groupoid, then f is exponentiable}
Let $C$ be an $\io$-category, $S$ an $\infty$-groupoid, and $f:C\to S$ any morphism.
The functor $f^*:\ocat_{/S}\to \ocat_{/C}$ preserves colimits. 
\end{cor}
\begin{proof}
As $\iPsh{\Theta}$ is locally cartesian closed, we just have to verify that for any cartesian squares:
\[\begin{tikzcd}
	{C''} & {C'} & C \\
	a & b & S
	\arrow["i"', from=2-1, to=2-2]
	\arrow[from=1-3, to=2-3]
	\arrow["j", from=1-1, to=1-2]
	\arrow[from=1-2, to=1-3]
	\arrow[from=2-2, to=2-3]
	\arrow[from=1-1, to=2-1]
	\arrow[from=1-2, to=2-2]
	\arrow["\lrcorner"{anchor=center, pos=0.125}, draw=none, from=1-1, to=2-2]
	\arrow["\lrcorner"{anchor=center, pos=0.125}, draw=none, from=1-2, to=2-3]
\end{tikzcd}\]
if $i$ is in $\W$, then $j$ is in $\widehat{\W}$. Suppose given such cartesian squares. As $b$ is a globular form, $\tau^i_0(b)\sim 1$ and 
as $S$ is an $\infty$-groupoid, there exists an object $s$ of $S$ such that the morphism $b\to S$ factor through $\{s\}\to S$. If we denote by $C_s$ the fiber of $f$ in $\{s\}$, the morphisms $i$ and $j$ then fit in the following cartesian squares:
\[\begin{tikzcd}
	{C_s\times a} & {C_s\times b} & {C_s} & C \\
	a & b & {\{s\}} & S
	\arrow["i"', from=2-1, to=2-2]
	\arrow[from=1-3, to=2-3]
	\arrow["j", from=1-1, to=1-2]
	\arrow[from=1-2, to=1-3]
	\arrow[from=2-2, to=2-3]
	\arrow[from=1-1, to=2-1]
	\arrow[from=1-2, to=2-2]
	\arrow["\lrcorner"{anchor=center, pos=0.125}, draw=none, from=1-1, to=2-2]
	\arrow["\lrcorner"{anchor=center, pos=0.125}, draw=none, from=1-2, to=2-3]
	\arrow[from=2-3, to=2-4]
	\arrow[from=1-3, to=1-4]
	\arrow[from=1-4, to=2-4]
	\arrow["\lrcorner"{anchor=center, pos=0.125}, draw=none, from=1-3, to=2-4]
\end{tikzcd}\]
The proposition \ref{prop:cartesian product preserves W} implies that $j$ verifies the desired property, which concludes the proof.
\end{proof}

The following proposition implies that a natural transformation is an equivalence if and only if it is pointwise one. 
\begin{prop}
\label{prop:cartesian square and times}
For any $\io$-categories $X$ and $C$, the following natural square is cartesian:
\[\begin{tikzcd}
	{\tau_0\uHom(X,C)} & {\uHom(X,C)} \\
	{\uHom(\tau_0X,\tau_0C)} & {\uHom(\tau_0X,C)}
	\arrow[from=1-1, to=2-1]
	\arrow[from=1-2, to=2-2]
	\arrow[from=2-1, to=2-2]
	\arrow[from=1-1, to=1-2]
\end{tikzcd}\]
\end{prop}
\begin{proof}
By adjunction, we have to show that for any $X$ and $Y$, the square
\[\begin{tikzcd}
	{\tau_0Y\times X} & {Y\times X} \\
	{\tau_0Y\times \tau_0^i X} & {Y\times \tau_0^iX}
	\arrow[from=1-1, to=1-2]
	\arrow[from=1-1, to=2-1]
	\arrow[from=1-2, to=2-2]
	\arrow[from=2-1, to=2-2]
\end{tikzcd}\]
is cocartesian. As $\tau_0^i$ preserves colimits, and as $\tau_0$ preserves special colimits, we can reduce to the case where $X$ and $Y$ are representable. Since these two functors send $\Wseg$ to weak equivalences, we can further reduce to the case where $X$  is $\Db_n$ and $Y$ is $\Db_m$ for $n$ and $m$ two integers. We are then reduced to showing that the following square is cocartesian.
\begin{equation}
\label{eq:proof of cartesian}
\begin{tikzcd}
	{(\tau_0\Db_n)\times\Db_m} & {\Db_n\times\Db_m} \\
	{\tau_0\Db_n} & {\Db_n}
	\arrow[from=1-1, to=2-1]
	\arrow[from=2-1, to=2-2]
	\arrow[from=1-1, to=1-2]
	\arrow[from=1-2, to=2-2]
\end{tikzcd}
\end{equation}
To show the cocartesianess of \eqref{eq:proof of cartesian}, remark that if either $n$ or $m$ is null, this is trivial. If not, proposition \ref{prop:example of a special colimit} states that $\Db_n\times\Db_m$ is the colimit of the span:
$$[\Db_{n-1},1]\vee[\Db_{m-1},1]\leftarrow [\Db_{n-1}\times \Db_{m-1},1]\to [\Db_{m-1},1]\vee[\Db_{n-1},1]$$
Using the two cartesian squares
\[\begin{tikzcd}
	{[\Db_{m-1},1]} & {[\Db_{m-1},1]\vee[\Db_{n-1},1]} & {[\Db_{m-1},1]} & {[\Db_{n-1},1]\vee[\Db_{m-1},1]} \\
	{[0]} & {[\Db_{n-1},1]} & {[0]} & {[\Db_{n-1},1]}
	\arrow[from=1-3, to=2-3]
	\arrow[from=1-3, to=1-4]
	\arrow[from=2-3, to=2-4]
	\arrow[from=1-4, to=2-4]
	\arrow[from=1-2, to=2-2]
	\arrow["\lrcorner"{anchor=center, pos=0.125, rotate=180}, draw=none, from=2-4, to=1-3]
	\arrow[from=1-1, to=2-1]
	\arrow[from=2-1, to=2-2]
	\arrow[from=1-1, to=1-2]
	\arrow["\lrcorner"{anchor=center, pos=0.125, rotate=180}, draw=none, from=2-2, to=1-1]
\end{tikzcd}\]
this implies that the pushout of the upper span of \eqref{eq:proof of cartesian} is then the colimit of the diagram:
\begin{equation}
\label{eq:proof of cartesian2}
[\Db_{n-1},1]\leftarrow [\Db_{n-1}\times \Db_{m-1},1]\to [\Db_{n-1},1]
\end{equation}
The proposition \ref{prop:inteligent trucatio and a particular colimit} states that the square
\[\begin{tikzcd}
	{ \Db_{m-1}} & 1 \\
	1 & 1
	\arrow[from=2-1, to=2-2]
	\arrow[from=1-1, to=1-2]
	\arrow[from=1-1, to=2-1]
	\arrow[from=1-2, to=2-2]
\end{tikzcd}\]
is cocartesian. Combined with proposition \ref{prop:cartesian product preserves W}, this implies that the square
\[\begin{tikzcd}
	{\Db_{n-1}\times \Db_{m-1}} & {\Db_{n-1}} \\
	{\Db_{n-1}} & {\Db_{n-1}}
	\arrow[from=2-1, to=2-2]
	\arrow[from=1-1, to=1-2]
	\arrow[from=1-1, to=2-1]
	\arrow[from=1-2, to=2-2]
\end{tikzcd}\]
is cocartesian. 
As a consequence, the colimit of the span \eqref{eq:proof of cartesian2}, and so of the upper span of \eqref{eq:proof of cartesian}, is $ [\Db_{n-1},1]\sim \Db_n$, which concludes the proof. 
\end{proof}

\subsubsection{Dualities}
\begin{definition}
\label{defi:dualities non strict case}
In definition \ref{defi:dualities strict case}, for any subset $S$ of $\Nb^*$, we have defined the duality $(\uvar)^S:\zocat\to \zocat$.
 These functors restrict to functors $\Theta\to \Theta$ that induce by extension by colimit functors \ssym{((b49@$(\uvar)^S$}{for $\io$-categories}
$$(\uvar)^S:\iPsh{\Theta}\to \iPsh{\Theta}$$
which are once again called \snotion{dualities}{for $\io$-categories}.
It is easy to see that this functor preserves $\io$-categories and then induces functors
$$(\uvar)^S:\ocat\to \ocat.$$

In particular, we have the \snotionsym{odd duality}{((b60@$(\uvar)^{op}$}{for $\io$-categories} $(\uvar)^{op}$, corresponding to the set of odd integer, the \snotionsym{even duality}{((b50@$(\uvar)^{co}$}{for $\io$-categories} $(\uvar)^{co}$, corresponding to the subset of non negative even integer, the \snotionsym{full duality}{((b80@$(\uvar)^{\circ}$}{for $\io$-categories} $(\uvar)^{\circ}$, corresponding to $\Nb^*$ and the \snotionsym{transposition}{((b70@$(\uvar)^t$}{for $\io$-categories} $(\uvar)^t$, corresponding to the singleton $\{1\}$. Eventually, we have equivalences
$$((\uvar)^{co})^{op}\sim (\uvar)^{\circ} \sim ((\uvar)^{op})^{co}.$$
\end{definition}

\subsubsection{Monomorphisms and surjections}

\begin{definition} 
\label{defi:mono non-abstrait}
A morphism $f:C\to D$ is an \notion{surjection} if it is in the smallest cocomplete $\infty$-groupoid of arrows of $\ocat$ that includes the codiagonal $\Db_n\coprod\Db_n\to \Db_n$ for any $n\geq 0$. A morphism is a \notion{monomorphism} if it has the unique right lifting property against surjections.
\end{definition}

\begin{remark}
 A morphism $i:C\to D$ is then a monomorphism if and only if for any $n$, $C_n\to D_n$ is a monomorphism.
 \end{remark}
 
\begin{remark}
\label{new texte}
As equivalence are detected on globes, the morphism
$$D\to C\times_DC$$ is an equivalence if and only if 
$$\Hom(\Db_n,D)\to \Hom(\Db_n,C)\times_{\Hom(\Db_n,D)}\Hom(\Db_n,C)$$ 
is an equivalence 
for any integer $n$. As this last condition is equivalent to having the unique right lifting property against $\Db_n\coprod\Db_n\to \Db_n$,  monomorphisms in the sense of definition \ref{defi:mono non-abstrait} correspond to monomorphisms in the sens of definition \ref{defi:mono abstrait}.
However, surjections are not necessarily epimorphisms.
\end{remark}

 \begin{construction}
The small object argument induces a factorization system:
\begin{equation}
\label{eq:epimonomorphism factorization}
C\to \im i\to D
\end{equation}
of any morphism $i:C\to D$, where the left map is a surjection, and the right one is a monomorphism. The object \wcnotation{$\im i$}{(im@$\im$} is called the \wcnotion{image of $i$}{image of a morphism}. 
\end{construction}

We then have by construction the following result:

\begin{prop}
A morphism is an equivalence if and only if it is both a monomorphism and a surjection.
\end{prop}

\begin{prop}
\label{prop:the image is stable under cartesian product}
The image is stable under the cartesian product.
\end{prop}
\begin{proof}
One has to show that both surjections and monomorphisms are stable under the functor $\uvar\times A$ for $A$ any $\io$-category. For monomorphisms, it is a direct consequence of the fact that this notion has been defined with a right lifting property. For surjections, as $\uvar\times A$ commutes with colimit, we can reduce to show that for any $n$, 
$$(\Db_n\coprod\Db_n)\times A \sim \Db_n\times A\coprod\Db_n\times A\to \Db_n \times A$$
is a surjection. 
However, the $\infty$-groupoid of object $B$ such that 
$B\coprod B\to B$ is a surjection is closed by colimits and contains globes. This $\infty$-groupoid then contains all the object and so in particular $\Db_n \times A$.
\end{proof}

\begin{lemma}
\label{lemma:id is an epi}
For any integer $n$, the morphism $\Ib:\Db_{n+1}\to \Db_n$ is a surjection. 
\end{lemma}
\begin{proof}
Remark first that we have a cocartesian square:
\[\begin{tikzcd}
	{\partial\Db_n\coprod \partial\Db_n} & {\Db_n\coprod\Db_n} \\
	{\partial\Db_n} & {\partial\Db_{n+1}}
	\arrow[""{name=0, anchor=center, inner sep=0}, from=1-1, to=1-2]
	\arrow[from=1-1, to=2-1]
	\arrow[from=1-2, to=2-2]
	\arrow[from=2-1, to=2-2]
	\arrow["\lrcorner"{anchor=center, pos=0.125, rotate=180}, draw=none, from=2-2, to=0]
\end{tikzcd}\]
As the left hand morphism is a surjection, so is the right one. By stability by left cancellation, this implies that $\partial\Db_{n+1}\to \Db_n$ is a surjection.
Now, the map $\Ib$ can be factored as: 
\[\begin{tikzcd}
	{\partial\Db_{n+1}} & {\Db_n} \\
	{\Db_{n+1}} & {\Db_{n+1}\coprod_{\partial\Db_{n+1}}\Db_n} & {\Db_n} \\
	& {\partial\Db_{n+2}} & {\Db_{n+1}}
	\arrow[from=1-1, to=2-1]
	\arrow[""{name=0, anchor=center, inner sep=0}, from=1-1, to=1-2]
	\arrow[from=2-1, to=2-2]
	\arrow[from=1-2, to=2-2]
	\arrow[from=3-2, to=2-2]
	\arrow[from=2-2, to=2-3]
	\arrow[from=3-2, to=3-3]
	\arrow["\lrcorner"{anchor=center, pos=0.125, rotate=-90}, draw=none, from=2-3, to=3-2]
	\arrow[from=3-3, to=2-3]
	\arrow["\lrcorner"{anchor=center, pos=0.125, rotate=180}, draw=none, from=2-2, to=0]
\end{tikzcd}\]
and by stability under pushouts and composition, this  implies that $\Ib$ is a surjection. 
\end{proof}

\begin{prop}
\label{prop:intelignet truncation is poitwise an epi}
For any integer $n$, the canonical natural transformation $id\to \tau^i_n$ is pointwise a surjection. 
\end{prop}
\begin{proof}
This is a direct consequence of lemma \ref{lemma:id is an epi}.
\end{proof}

\begin{prop}
\label{prop:canonical epi}
For any integer $n$, any $(\infty,n)$-category $C$, and any $\io$-category $D$, the canonical morphisms
$$\alpha:\coprod_{C_n}\Db_n\to C~~~~~\beta:\coprod_{n\in \Nb}\coprod_{D_n}\Db_n\to D$$
are surjections.
\end{prop}
\begin{proof}
Let $I$ be the image of $\alpha$. We are willing to show that the canonical morphism $j:I\to C$ is an equivalence.
According to lemma \ref{lemma:equivalence if unique right lifting property against globes.}, we have to show that $j$ has the unique right lifting property against $\emptyset\to \Db_k$ for any $k\leq n$. As $j$ is a monomorphism, lifts against it are always unique according to proposition \ref{prop:mono 2}, and it is then sufficient to show that $\alpha$ has the non-unique right lifting property against $\emptyset\to \Db_k$ for any $k\leq n$, which is obviously true.

We proceed similarly for $\beta$.
\end{proof}

\begin{prop}
\label{prop:truncation of surjection is pushout}
Let $i:A\to B$ be a surjection and $n$ an integer. The canonical square 
\[\begin{tikzcd}
	A & B \\
	{\tau^i_n(A)} & {\tau^i_n(B)}
	\arrow[from=1-1, to=2-1]
	\arrow["{\tau^i_n(i)}"', from=2-1, to=2-2]
	\arrow["i", from=1-1, to=1-2]
	\arrow[from=1-2, to=2-2]
\end{tikzcd}\]
is cocartesian. 
\end{prop}
\begin{proof}
We can reduce to the case where $i$ is $\Db_k\coprod\Db_k\to \Db_k$. If $n\geq k$, it is directly true, and we then suppose $n<k$. In this case, the colimit of the span:
$$\Db_n\coprod \Db_n \leftarrow \Db_k\coprod\Db_k\to \Db_k$$
is $\Db_n\coprod_{\Db_k}\Db_n$. The proposition \ref{prop:inteligent trucatio and a particular colimit} implies that this pushout is $\Db_n$, which concludes the proof.
\end{proof}

\subsubsection{Fully faithful functors}

\begin{definition}
A functor $f:C\to D$ is \snotion{fully faithful}{for $\io$-categories} if for any pair of objects $a,b\in C$, the induced morphism 
$\hom_C(a,b)\to \hom_D(fa,fb)$ is an equivalence. 
\end{definition}

\begin{prop}
\label{prop:ff 1}
A functor is fully faithful if and only if it has the unique right lifting property against $\{0\}\coprod \{1\}\to \Db_n$ for $n>0$.
\end{prop}
\begin{proof}
Let $f:C\to D$ be a functor having the unique right lifting property against $\{0\}\coprod \{1\}\to \Db_n$ for $n>0$. As $[\emptyset,1] =\{0\}\coprod \{1\}$ and $[\Db_n,1] = \Db_{n+1}$, 
this is equivalent to asking for any pair of objects $c,d$ and for any integer $n$, that $f(c,d):\hom_C(c,d)\to \hom_D(f(c),f(d)$ has the unique right lifting property against $\emptyset\to \Db_n$, which in turn is equivalent to $f$ being fully faithful according to lemma \ref{lemma:equivalence if unique right lifting property against globes.}.
\end{proof}

\begin{prop}
\label{prop:ff 2}
Fully faithful functors are stable under limits.
\end{prop}
\begin{proof}
This is a consequence of the fact that fully faithful functors are characterized by unique right lifting properties.
\end{proof}

\begin{lemma}
\label{lemma:ff 2}
Let $p:C\to D$ be a fully faithful functor. The induced morphism $C_0\to D_0$ is a monomorphism.
\end{lemma}
\begin{proof}
To this extent, we have to show that $p:C\to D$ has the unique right lifting property against $1\coprod 1\to 1$. This is equivalent to show that $p$ has the unique right lifting property against $\iota: 1\coprod 1 \to E^{eq}$.

The proposition \ref{prop:ff 1} implies that $p$ as the unique right lifting property against $1\coprod 1\to \Db_1$. By stability under colimits, $p$ has the unique right lifting property against $1\coprod 1\to \partial \Db_2$.
By left cancellation, this implies that  $p$ has the unique right lifting property against $\partial\Db_2\to \Db_1$. As $\iota$ is a composition of pushouts along  $1\coprod 1\to \Db_1$ and  $\partial\Db_2\to \Db_1$, this directly concludes the proof.
\end{proof}

\begin{prop}
\label{prop:fully faithful plus surjective on objet}
A morphism $f:C\to D$ is an equivalence if and only if it is fully faithful and induces a surjection on objects.
\end{prop}
\begin{proof}
This is necessary. Suppose that $f$ is fully faithful. According to 	\ref{prop:ff 1}, for any $n>0$, $f_n:C_n\to D_n$ is an equivalence. If $f$ induces a surjection on objects, lemma \ref{lemma:ff 2} implies that $f_0:C_0\to D_0$ is an equivalence. We can then apply proposition \ref{prop:equivalences detected on globes}.
\end{proof}

\subsection{Discrete Conduché functors}
\label{section:conduche}
\begin{notation}
We denote \wcnotation{$\triangledown_{k,n}$}{(nabla@$\triangledown_{k,n}$} the unique globular morphism between $\Db_n$ and $\Db_n\coprod_{\Db_k}\Db_n$.
\end{notation}
\begin{definition}
A morphism $f:C\to D$ between $\io$-categories is a \snotion{discrete Conduché functor}{for $\io$-categories} if it has the unique right lifting property against 
units $\Ib_{n+1}:\Db_{n+1}\to \Db_n$ for any integer $n$, and against compositions $\triangledown_{k,n}:\Db_n\to \Db_n\coprod_{\Db_k}\Db_n$ for any pair of integers $k\leq n$.
\end{definition}
\begin{lemma}
\label{lemma:technicalconduche have the rlp aginst alebraic morphism}
The two following full sub $\infty$-groupoids of morphisms of $\ocat$ are equivalent: 
\begin{enumerate}
\item The smallest cocomplete full sub $\infty$-groupoid of morphisms containing the family of morphism $\{\Ib_{n+1}:\Db_{n+1}\to \Db_n,\}$ and the family $\{\triangledown_{k,n}:\Db_n\to \Db_n\coprod_{\Db_k}\Db_n\, ~k\leq n\}$.
\item The smallest cocomplete full sub $\infty$-groupoid of morphisms containing algebraic morphisms of $\Theta$ (this notion is defined in definition \ref{defi:algebraic and globular}). 
\end{enumerate}
\end{lemma}
\begin{proof}
For any pair of integers $k\leq n$, $\Ib_{n+1}$ and $\triangledown_{k,n}$ are algebraic morphisms. This directly induces the inclusion of the fist $\infty$-groupoid in the second one. To conclude, one has to show that every algebraic morphism $i:a\to b$ is contained in the first $\infty$-groupoid.

We proceed by induction on $|a|+|b|$. Suppose first that there exists $n$ such that $a=\Db_n$. In this case two cases have to be considered. Either $n>0$ and $i$ factors as $\Db_n\xrightarrow{\Ib_n} \Db_{n-1}\xrightarrow{j} b$. The result then follows by the induction hypothesis. Suppose now that $i$ does not factor though $\Ib_n$. In this case, there exists $k$ such that $i$ factors as $\Db_n\xrightarrow{\triangledown_{k,n}} \Db_n\coprod_{\Db_k}\Db_n\xrightarrow{j} b$. The unique factorization system between algebraic and globular morphisms given in proposition \ref{prop:algebraic ortho to globular} produces a diagram 
\[\begin{tikzcd}
	&& {\Db_n} &&& {b_2} \\
	& {\Db_k} &&& {b_1} \\
	{\Db_n} &&& {b_0} \\
	&& { \Db_n\coprod_{\Db_k}\Db_n } &&& b
	\arrow["j"{description}, from=4-3, to=4-6]
	\arrow[hook, from=3-1, to=4-3]
	\arrow[hook, from=1-3, to=4-3]
	\arrow["{j_2}"', from=1-3, to=1-6]
	\arrow["{j_0}", from=3-1, to=3-4]
	\arrow[hook, from=3-4, to=4-6]
	\arrow[hook, from=1-6, to=4-6]
	\arrow[hook, from=2-2, to=4-3]
	\arrow[hook, from=2-2, to=1-3]
	\arrow[hook', from=2-2, to=3-1]
	\arrow[hook', from=2-5, to=3-4]
	\arrow[hook, from=2-5, to=1-6]
	\arrow[hook, from=2-5, to=4-6]
	\arrow["{j_1}"{description}, from=2-2, to=2-5]
\end{tikzcd}\]
where arrows labeled by $\hookrightarrow$ are globular and the other ones are algebraic. Remark that we have a cocartesian square in $\iun$-category of arrows of $\ocat$:
\[\begin{tikzcd}
	{j_1} & {j_2} \\
	{j_0} & j
	\arrow[from=1-1, to=2-1]
	\arrow[from=1-1, to=1-2]
	\arrow[from=2-1, to=2-2]
	\arrow[from=1-2, to=2-2]
\end{tikzcd}\]
As $j_0$, $j_1$ and $j_2$ are in the first $\infty$-groupoid by induction hypothesis, so is $j$. By stability by composition, the morphism $i$ is then in the first $\infty$-groupoid.

Suppose now that the domain of $i:a\to b$ is not a globe. Using once again the unique factorization system between algebraic and globular, we can construct a functor $\Sp_a\to \Arr(\Theta)$ whose value on $\Db_n\hookrightarrow a$ is given by the unique algebraic morphism $j$ fitting in a commutative square
\[\begin{tikzcd}
	{\Db_n} & {b'} \\
	a & b
	\arrow[hook, from=1-1, to=2-1]
	\arrow[hook, from=1-2, to=2-2]
	\arrow["i"', from=2-1, to=2-2]
	\arrow["j", from=1-1, to=1-2]
\end{tikzcd}\]
where arrows labeled by $\hookrightarrow$ are globular. By induction hypothesis, $j$ is in the first $\infty$-groupoid. The colimit of 
$\Sp_a\to \Arr(\Theta)\to \Arr(\ocat)$ is then in the first $\infty$-groupoid. As this colimit is $i$, this concludes the proof.
\end{proof}

\begin{prop}
\label{prop:conduche have the rlp aginst alebraic morphism}
A morphism $f:X\to Y$ is a discrete Conduché functor if and only if it as the unique right lifting property against algebraic morphism of $\Theta$ (this notion is defined in definition \ref{defi:algebraic and globular}). 
\end{prop}
\begin{proof}
Given a morphism $f$, the full sub $\infty$-groupoid of morphisms having the unique left lifting property against $f$ is cocomplete. The result is then a direct implication of lemma 
\ref{lemma:technicalconduche have the rlp aginst alebraic morphism}.
\end{proof}

\begin{example}
The proposition \ref{prop:algebraic ortho to globular} implies that a morphism $a\to b$ between globular sums is a discrete Conduché functor if and only if it is globular.
\end{example}

\begin{lemma}
\label{lemma:conduche technical}
Let $p:C\to a$ be discrete Conduché functor with $a$ a globular sum. We denote by $(\Theta_{/p})^{Cd}$ the full sub $\iun$-category of $\Theta_{/p}$ whose objects are triangles 
\[\begin{tikzcd}
	b & C \\
	& a
	\arrow[from=1-1, to=1-2]
	\arrow["p", from=1-2, to=2-2]
	\arrow[from=1-1, to=2-2]
\end{tikzcd}\]
where every arrow is a discrete Conduché functor.
The canonical inclusion of $\iun$-category $\iota:(\Theta_{/p})^{Cd}\to \Theta_{/p}$ is final.
\end{lemma}
\begin{proof}
To prove this statement, we will endow $\iota$ with a structure of right deformation retract. We then first build a right inverse of $\iota$.
Any triangle 
\[\begin{tikzcd}
	b & C \\
	& a
	\arrow[from=1-1, to=1-2]
	\arrow["p", from=1-2, to=2-2]
	\arrow[from=1-1, to=2-2]
\end{tikzcd}\]
induces a diagram of shape
\[\begin{tikzcd}
	b & C \\
	{b'} & a
	\arrow[from=1-1, to=1-2]
	\arrow[from=1-1, to=2-1]
	\arrow[from=2-1, to=2-2]
	\arrow["l", from=2-1, to=1-2]
	\arrow["p", from=1-2, to=2-2]
\end{tikzcd}\]
where $b'$ is obtained in factorizing $b\to a$ in a algebraic morphism followed by a globular morphism, and $l$ comes from the unique right lifting property of $p$ against algebraic morphisms. By right cancellation, this implies that $l$ is a discrete Conduché functor.

 As these two operations are functorial, this defines a retraction $r: \Theta_{/p}\to (\Theta_{/p})^{Cd}$ sending the triangle spotted by $b,C$ and $a$ to the triangle spotted by $b',C$ and $a$. Moreover, this retraction comes along with a natural transformation $id\to r\iota$. As right deformation retracts are final, this concludes the proof.
\end{proof}

\begin{lemma}
\label{lemma:conduche preserves W}
Let $p:C\to D$ be a discrete Conduché functor. Then for any globular sums $a$, and any cartesian squares in $\iPsh{\Theta}$:
\[\begin{tikzcd}
	{C''} & {C'} & C \\
	{\Sp_a} & a & D
	\arrow["{p''}", from=1-1, to=2-1]
	\arrow["{p'}", from=1-2, to=2-2]
	\arrow["p", from=1-3, to=2-3]
	\arrow["j", from=1-1, to=1-2]
	\arrow[from=1-2, to=1-3]
	\arrow[from=2-1, to=2-2]
	\arrow[from=2-2, to=2-3]
	\arrow["\lrcorner"{anchor=center, pos=0.125}, draw=none, from=1-2, to=2-3]
	\arrow["\lrcorner"{anchor=center, pos=0.125}, draw=none, from=1-1, to=2-2]
\end{tikzcd}\]
the morphism $j$ is in $\widehat{\Wseg}$.
\end{lemma}
\begin{proof}
By stability under pullback,
the morphism $p'$ is a discrete Conduché functor. 
Taking the notations of lemma \ref{lemma:conduche technical}, $p'$ is equivalent to $\colim_{(\Theta_{/p})^{Cd}}b\to a$ where this colimit is taken in $\iPsh{\Theta}_{/a}$. As $\iPsh{\Theta}$ is locally cartesian closed and as $\widehat{\W}$ is by definition closed by colimits, we can then reduce to the case where $p'$ is a discrete Conduché functor between globular sums, i.e a globular morphism $b\to a$.
In this case, the following canonical square is a pullback
\[\begin{tikzcd}
	{\Sp_b} & b \\
	{\Sp_a} & a
	\arrow[from=2-1, to=2-2]
	\arrow["{p'}", from=1-2, to=2-2]
	\arrow[from=1-1, to=2-1]
	\arrow[from=1-1, to=1-2]
	\arrow["\lrcorner"{anchor=center, pos=0.125}, draw=none, from=1-1, to=2-2]
\end{tikzcd}\]
and this concludes the proof.
\end{proof}

\begin{lemma}
\label{lemma:pulback of Wsat preresult}
Consider a cartesian square 
\[\begin{tikzcd}
	X & Y \\
	{\Sigma^{n} E^{eq}} & {\Db_{n}}
	\arrow[from=1-1, to=2-1]
	\arrow["j", from=1-1, to=1-2]
	\arrow[from=2-1, to=2-2]
	\arrow[from=1-2, to=2-2]
	\arrow["\lrcorner"{anchor=center, pos=0.125}, draw=none, from=1-1, to=2-2]
\end{tikzcd}\]
in $\iPsh{\Theta}$. The morphism $j$ is in $\widehat{\W}$.
\end{lemma}
\begin{proof}
 If we are in the case $n=0$, this directly follows from the preservation of $\W$ by cartesian product, demonstrated in the proof of proposition \ref{prop:cartesian product preserves W}.
We now suppose the result is true at stage $n$, and we first show that for any square
\[\begin{tikzcd}
	X & Y \\
	{[\Sigma^{n} E^{eq},1]} & {[\Db_{n+1},1]}
	\arrow[from=1-1, to=2-1]
	\arrow["j", from=1-1, to=1-2]
	\arrow[from=2-1, to=2-2]
	\arrow["p", from=1-2, to=2-2]
	\arrow["\lrcorner"{anchor=center, pos=0.125}, draw=none, from=1-1, to=2-2]
\end{tikzcd}\]
in $\iPsh{\Delta[\Theta]}$, $j$ is in $\widehat{\M}$.
As $\iPsh{\Delta[\Theta]}$ is locally cartesian closed and $\widehat{\M}$ closed under colimits, one can suppose that $Y$ is of shape $[a,k]$ and we denote $f:[k]\to [1]$ the morphism induced by $p$. By stability under pullback, $X$ is then set-valued. Furthermore, we can then check in $\Psh{\Delta[\Theta]}$ that this presheaf fits in a cocartesian square:
\[\begin{tikzcd}
	{[\Sigma^nE^{eq}\times_{\Db_n} a,f^{-1}(0)]\coprod [\Sigma^nE^{eq}\times_{\Db_n}a,f^{-1}(1)]} & {[\Sigma^nE^{eq}\times_{\Db_n} a,k]} \\
	{[a,f^{-1}(0)]\coprod [a,f^{-1}(1)]} & X
	\arrow[from=1-1, to=2-1]
	\arrow[from=1-1, to=1-2]
	\arrow[from=2-1, to=2-2]
	\arrow[from=1-2, to=2-2]
\end{tikzcd}\]
By induction hypothesis $[\Sigma^nE^{eq}\times_{\Db_n} a,l]\to [a,l]$ is in $\widehat{\M}$ for any integer $l$. As $X\to [a,k]$ is the colimit in depth of the diagram
\[\begin{tikzcd}[column sep = 0.7cm]
	{[\Sigma^nE^{eq}\times_{\Db_n} a,f^{-1}(0)]} && { [\Sigma^nE^{eq}\times_{\Db_n}a,f^{-1}(1)]} \\
	{[a,f^{-1}(0)]} & {[\Sigma^nE^{eq}\times_{\Db_n} a,k]} & {[a,f^{-1}(1)]} \\
	& {[a,f^{-1}(0)]} && {[a,f^{-1}(1)]} \\
	& {[a,f^{-1}(0)]} & {[a,k]} & {[a,f^{-1}(1)]}
	\arrow[from=4-3, to=3-2]
	\arrow[from=3-2, to=4-2]
	\arrow[from=1-1, to=2-1]
	\arrow[from=1-1, to=2-2]
	\arrow[from=1-3, to=2-2]
	\arrow[from=1-3, to=2-3]
	\arrow[from=3-4, to=4-3]
	\arrow[from=3-4, to=4-4]
	\arrow[from=2-1, to=4-2]
	\arrow[from=1-1, to=3-2]
	\arrow[from=2-2, to=4-3]
	\arrow[from=2-3, to=4-4]
	\arrow[from=1-3, to=3-4]
\end{tikzcd}\]
this implies that this morphism is in $\widehat{\M}$.

We now return to $\infty$-presheaves on $\Theta$. We recall that we denote by $(i_!,i^*)$ the adjunction between $\iPsh{\Delta[\Theta]}$ and $\iPsh{\Theta}$. Suppose given a cartesian square:
\[\begin{tikzcd}
	X & Y \\
	{\Sigma^{n+1} E^{eq}} & {\Db_{n+1}}
	\arrow[from=1-1, to=2-1]
	\arrow["j", from=1-1, to=1-2]
	\arrow[from=2-1, to=2-2]
	\arrow[from=1-2, to=2-2]
	\arrow["\lrcorner"{anchor=center, pos=0.125}, draw=none, from=1-1, to=2-2]
\end{tikzcd}\]
This induces two squares
\[\begin{tikzcd}
	{i^*X} & {i^*Y} & {i_!i^*X} & {i_!i^*Y} \\
	{[\Sigma^{n}E^{eq},1]} & {[\Db_n,1]} & X & Y
	\arrow[from=2-3, to=2-4]
	\arrow[from=1-3, to=2-3]
	\arrow[from=1-4, to=2-4]
	\arrow["{i_!i^*j}", from=1-3, to=1-4]
	\arrow[from=1-1, to=2-1]
	\arrow[from=1-2, to=2-2]
	\arrow[from=2-1, to=2-2]
	\arrow["{i^*j}", from=1-1, to=1-2]
	\arrow["\lrcorner"{anchor=center, pos=0.125}, draw=none, from=1-1, to=2-2]
\end{tikzcd}\]
Where the cartesianess of the left square comes from the fact that $i^*$ preserves cartesian squares as it is a right adjoint. We just have demonstrated that $i^*j$ is in $\widehat{\M}$. Using proposition \ref{prop:infini changing theta}, and by left cancellation, the right square implies that $j$ is in $\widehat{\W}$, which concludes the proof.
\end{proof}

\begin{prop}
\label{prop:pulback of Wsat}
Let $p:C\to D$ be a functor between $\io$-categories. Then for any globular sums $a$, and any cartesian squares in $\iPsh{\Theta}$:
\[\begin{tikzcd}
	{C''} & {C'} & C \\
	{\Sigma^nE^{eq}} & {\Db_n} & D
	\arrow["p", from=1-3, to=2-3]
	\arrow["j", from=1-1, to=1-2]
	\arrow[from=1-2, to=1-3]
	\arrow[from=2-1, to=2-2]
	\arrow[from=2-2, to=2-3]
	\arrow["\lrcorner"{anchor=center, pos=0.125}, draw=none, from=1-2, to=2-3]
	\arrow["\lrcorner"{anchor=center, pos=0.125}, draw=none, from=1-1, to=2-2]
	\arrow[from=1-1, to=2-1]
	\arrow[from=1-2, to=2-2]
\end{tikzcd}\]
the morphism $j$ is in $\widehat{\W}$.
\end{prop}
\begin{proof}
This is a direct consequence of lemma \ref{lemma:pulback of Wsat preresult}.
\end{proof}
\begin{theorem}
\label{theo:pullback along conduche preserves colimits}
Let $f:C\to D$ be a discrete Conduché functor. The pullback functor $f^*:\ocat_{/D}\to \ocat_{/C}$ preserves colimits.
\end{theorem}
\begin{proof}
As $\iPsh{\Theta}$ is locally cartesian closed, we can use the corollary \ref{cor:derived colimit preserving functor}. The hypotheses are provided by lemmas \ref{lemma:conduche preserves W} and proposition \ref{prop:pulback of Wsat}.
\end{proof}

\section{Gray Operations}
\subsection{Gray operations on $\io$-categories}

\begin{construction}
We construct by induction on $m\in \mathbb{N}$ a colimit-preserving functor
\begin{equation}
\label{eqgray second step}
\uvar\otimes \uvar:\iPsh{\Theta_m}\times \iPsh{\Delta}\to \iPsh{\Theta}
\end{equation}
If $m=1$, the functor is the left Kan extension of the functor 
$$\Delta\times \Delta  \xrightarrow{\uvar\otimes \uvar} \zocat\xrightarrow{\iota} \iPsh{\Theta}$$
where $\otimes:\Delta\times \Delta\to \zocat$ is the Gray tensor product defined in theorem \ref{theo:otimes in zocat}.

Suppose the functor constructed at the stage $m$. We define the colimit-preserving functor 
\begin{equation}
\label{eqgray first step}
\uvar\otimes \uvar:\iPsh{\Delta[\Theta_{m}]}\times \iPsh{\Delta}\to \iPsh{\Theta}
\end{equation}
whose value on $([a,k],[n])$ fits  in the cocartesian square
\[\begin{tikzcd}
	{\coprod_{i\leq k}\colim_{[l,p]\to \{k\}\otimes[n]}[a\otimes[l],p]} & {\colim_{[l,p]\to [k]\otimes [n]}[a\otimes[l],p]} \\
	{\coprod_{i\leq k}\colim_{[l,p]\to \{k\}\otimes[n]}[[l],p]} & {[a,k]\otimes[n]}
	\arrow[""{name=0, anchor=center, inner sep=0}, from=1-1, to=1-2]
	\arrow[from=1-1, to=2-1]
	\arrow[from=1-2, to=2-2]
	\arrow[from=2-1, to=2-2]
	\arrow["\lrcorner"{anchor=center, pos=0.125, rotate=180}, draw=none, from=2-2, to=0]
\end{tikzcd}\]
Eventually, we define the functor 
$$
\uvar\otimes \uvar:\iPsh{\Theta_{m+1}}\times \iPsh{\Delta}\to \iPsh{\Theta}$$
as the left Kan extension of the functor \eqref{eqgray first step} along the inclusion $$ \iPsh{\Delta[\Theta_{m}]}\times \iPsh{\Delta}\to \iPsh{\Theta_{m+1}}\times \iPsh{\Delta}.$$

As $\Theta$ is equivalent to $\cup_n\Theta_n$, the functor \eqref{eqgray second step} induces a by extension by colimit functor 
\begin{equation}
\label{eqgray thrid step}
\uvar\otimes \uvar:\iPsh{\Theta}\times \iPsh{\Delta}\to \iPsh{\Theta}
\end{equation}

\end{construction}
\begin{prop}
\label{prop:gray is colimit preserving}
The functor 
$$\uvar\otimes \uvar:\iPsh{\Theta}\times \iPsh{\Delta} \to \iPsh{\Theta}$$
constructed in \eqref{eqgray thrid step} sends $\W_1\times \W$ to $\widehat{\W}$.
\end{prop}
\begin{proof}
We show by induction on $m$ that the functor 
$$\uvar\otimes \uvar:\iPsh{\Theta_m}\times \iPsh{\Delta} \to \iPsh{\Theta}$$
constructed in \eqref{eqgray second step} sends $\W_1\times \W_m$ to $\widehat{\W}$.

For $m=0$, this is directly true as this functor corresponds to the inclusion of $\iPsh{\Delta}$ in $\iPsh{\Theta}$ induced by the canonical inclusion $\Delta\to \Theta$.

Suppose the result is true at the stage $m$. 
We first want to show that the functor \eqref{eqgray first step}:
$$\uvar\otimes\uvar:\iPsh{\Delta[\Theta_m]}\times\iPsh{\Delta}\to \iPsh{\Theta}$$
sends $\M_{m+1}\times W_1$ to $\widehat{\W}$.

We first fix an object $a$ in $\Theta_m$. The functor $ [a,\uvar]\otimes \uvar: \Sset \times \Sset \to  \iPsh{\Theta}$ is the composite
\[\begin{tikzcd}
	{\iPsh{\Delta}\times \iPsh{\Delta}} & {\iPsh{\Theta_2}} & {\iPsh{\Delta[\Delta]}} & {\iPsh{\Delta[\Theta]}} & {\iPsh{\Theta}}
	\arrow["\uvar\otimes\uvar", from=1-1, to=1-2]
	\arrow["{i^*}", from=1-2, to=1-3]
	\arrow["{\Delta[a\otimes\uvar]}", from=1-3, to=1-4]
	\arrow["{i_!}", from=1-4, to=1-5]
\end{tikzcd}\]
According to proposition \ref{prop:i etoile of W is in M}, lemma \ref{lemma:colimit computed in set presheaves}, theorem \ref{theo:otimes presserves W}, and by the induction hypothesis, this functor then sends $\W_1 \times \W_1$ to $\widehat{\W}$.

We now fix two integers $n$ and $k$. Let $i:X \to Y$ be a morphism in $\W_m$. The morphism $ [X,k]\otimes [n] \to  [Y,k]\otimes[n]$ is a colimit of natural transformations that is pointwise in $\widehat{\W}$ and so is also in $\widehat{\W}$.

This implies that the functor \eqref{eqgray second step} sends $\M_{m+1}\times W_1$ to $\widehat{\W}$. By propositions \ref{prop:i etoile of W is in M} and \ref{prop:link beetwenwidehat and overline}, this implies that the functor 
$$\uvar\otimes\uvar:\iPsh{\Theta_{m+1}}\times\iPsh{\Delta}\to \iPsh{\Theta}$$
sends $\W_{m+1}\times W_1$ to $\widehat{\W}$.
\end{proof}

\begin{definition}
By proposition \ref{prop:gray is colimit preserving}, the functor \eqref{eqgray thrid step} induces a functor 
$$\uvar\otimes\uvar: \ocat\times \icat \to\ocat,$$
called the \snotionsym{Gray tensor product}{((d00@$\otimes$}{for $\io$-categories}.
\end{definition}

\begin{prop}
\label{prop:Gray tensor product and duality}
There is an equivalence
$(C\otimes K)^\circ \sim C^\circ\otimes K^\circ$ natural in $C$ and $K$.
\end{prop}
\begin{proof}
It is sufficient to construct this equivalence on pairs $(a,[n])$ where $a$ is a globular sum. We then proceed by induction on the dimension in $a$, using the natural equivalence $[k]\otimes[l]\sim [k]^\circ\otimes [l]^{\circ}$  provided by proposition \ref{prop:otimes and duality}.
\end{proof}

\begin{construction}
\label{cons:almost assoc of gray}
We construct by induction on $m\in \Nb$ a natural transformation $\psi$ between the two functors 
$$\begin{array}{ll}
(\uvar\otimes \uvar)\otimes \uvar:&\iPsh{\Theta_m}\times \iPsh{\Delta}\times \iPsh{\Delta} \to \iPsh{\Theta}\\
\uvar\otimes (\uvar\times \uvar):&\iPsh{\Theta_m}\times \iPsh{\Delta}\times \iPsh{\Delta} \to \iPsh{\Theta}
\end{array}$$
If $m=0$, this corresponds to the canonical morphism $\uvar\otimes\uvar\to \uvar\times \uvar$. 
Suppose the natural transformation is defined at the stage $m$.
Let $a$ be an object of $\Theta_m$ and $l,m,n$ three integers. By construction, $([a,n]\otimes[m])\otimes [l]$ is a quotient of 
$$P_{a,l,m,n}:=\colim_{[[k_0],k_1]\to [n]\otimes[m]}\colim_{[[k_2],k_3]\to [k_1]\otimes[l]}[a\otimes [k_0]\otimes[k_2], k_3],$$
while $[a,n]\otimes ([m]\times [l])$ is a quotient of
$$Q_{a,l,m,n}:=\colim_{[[k_4],k_3]\to [m]\otimes ([n]\times[l])}[a\otimes [k_4], k_3].$$
Lemma \ref{lemma:technical steiner} and the induction hypothesis then induce a morphism $$P_{a,l,m,n}\to Q_{a,l,m,n}.$$ 
We can check that this morphism passes to the quotient and then induces a natural morphism 
$$([a,n]\otimes [m])\otimes[l]\to [a,n]\otimes([m]\otimes[l])$$
By extension by colimit, this induces, for any $C$ in $\iPsh{\Theta_m}$, and any pair $K,L$ of objects of $\iPsh{\Delta}$, a natural morphism 
$$C\otimes K\otimes L\to  C\otimes (K\times L).$$
\end{construction}
\vspace{1cm}

The Gray tensor product will allows us to defined numerous operations, called the \textit{Gray operations}.

\subsubsection{Gray cylinder}

\begin{definition}
\label{defi:of gray cylinder}
The restricted functor 
$$\uvar\otimes[1]:\ocat\to \ocat$$
is called the \snotionsym{Gray cylinder}{((d30@$\uvar\otimes[1]$}{for $\io$-categories}.
\end{definition}

\begin{prop}
\label{prop:eq for cylinder}
There is a natural identification between $[C,1]\otimes [1]$ and the colimit of the diagram
\begin{equation}
\begin{tikzcd}
	{[1]\vee [ C,1]} & {[C\otimes\{0\},1]} & {[C\otimes [1],1]} & {[C\otimes\{1\},1]} & {[C,1]\vee[1]}
	\arrow[from=1-2, to=1-1]
	\arrow[from=1-2, to=1-3]
	\arrow[from=1-4, to=1-3]
	\arrow[from=1-4, to=1-5]
\end{tikzcd}
\end{equation}
In the previous diagram, morphisms $[C,1]\to [1]\vee[C,1]$ and $[C,1]\to [C,1]\vee[1]$ are the whiskerings.
\end{prop}
\begin{proof}
This is a direct consequence of the construction of the Gray cylinder and of proposition \ref{prop:explicit Gray}.
\end{proof}

\begin{example}
The $\io$-category $\Db_1\otimes[1]$ corresponds to the polygraph
\[\begin{tikzcd}
	00 & 01 \\
	10 & 11
	\arrow[from=1-1, to=2-1]
	\arrow[from=2-1, to=2-2]
	\arrow[from=1-1, to=1-2]
	\arrow[from=1-2, to=2-2]
	\arrow[shorten <=4pt, shorten >=4pt, Rightarrow, from=1-2, to=2-1]
\end{tikzcd}\]
The $\io$-category $\Db_2\otimes[1]$ corresponds to the polygraph
\[\begin{tikzcd}
	00 & 01 & 00 & 01 \\
	10 & 11 & 10 & 11
	\arrow[from=1-1, to=1-2]
	\arrow[""{name=0, anchor=center, inner sep=0}, from=1-1, to=2-1]
	\arrow[from=2-1, to=2-2]
	\arrow[""{name=1, anchor=center, inner sep=0}, from=1-2, to=2-2]
	\arrow[shorten <=4pt, shorten >=4pt, Rightarrow, from=1-2, to=2-1]
	\arrow[""{name=2, anchor=center, inner sep=0}, from=1-3, to=2-3]
	\arrow[from=1-3, to=1-4]
	\arrow[""{name=3, anchor=center, inner sep=0}, from=1-4, to=2-4]
	\arrow[shorten <=4pt, shorten >=4pt, Rightarrow, from=1-4, to=2-3]
	\arrow[""{name=4, anchor=center, inner sep=0}, curve={height=30pt}, from=1-1, to=2-1]
	\arrow[from=2-3, to=2-4]
	\arrow[""{name=5, anchor=center, inner sep=0}, curve={height=-30pt}, from=1-4, to=2-4]
	\arrow["{ }"', shorten <=6pt, shorten >=6pt, Rightarrow, from=0, to=4]
	\arrow["{ }"', shorten <=6pt, shorten >=6pt, Rightarrow, from=5, to=3]
	\arrow[shift left=0.7, shorten <=6pt, shorten >=8pt, no head, from=1, to=2]
	\arrow[shift right=0.7, shorten <=6pt, shorten >=8pt, no head, from=1, to=2]
	\arrow[shorten <=6pt, shorten >=6pt, from=1, to=2]
\end{tikzcd}\]
\end{example}

\begin{prop}
\label{prop:Gray cylinder and op}
We have a natural diagram whose vertical morphisms are equivalences:
\[\begin{tikzcd}
	{(C\otimes\{1\})^\circ} & {(C\otimes[1])^\circ} & {(C\otimes\{0\})^\circ} \\
	{C^\circ\otimes\{0\}} & {C^\circ\otimes[1]} & {C^\circ\otimes\{1\}}
	\arrow["\sim", from=1-2, to=2-2]
	\arrow["\sim", from=1-3, to=2-3]
	\arrow["\sim", from=1-1, to=2-1]
	\arrow[from=1-3, to=1-2]
	\arrow[from=1-1, to=1-2]
	\arrow[from=2-1, to=2-2]
	\arrow[from=2-3, to=2-2]
\end{tikzcd}\]
\end{prop}
\begin{proof}
This is a direct consequence of proposition \ref{prop:Gray tensor product and duality}.
\end{proof}

\begin{definition}
We denote by 
$$\begin{array}{rcl}
\ocat&\to&\ocat\\
C&\mapsto &C^{[1]}
\end{array}$$
the right adjoint of the Gray cylinder.\sym{(c@$C^{[1]}$}
\end{definition}

\begin{remark}
The proposition \ref{prop:comparaison betwen otimes and suspension week case withou the cocartesian square.} provides, for any $\io$-category $C$ and any pair of objects $a$, $b$ of $C$, a canonical cartesian square
\[\begin{tikzcd}
	{\hom_C(a,b)} & {C^{[1]}} \\
	{\{a\}\times \{b\}} & {C\times C}
	\arrow[from=1-1, to=1-2]
	\arrow[from=1-1, to=2-1]
	\arrow["\lrcorner"{anchor=center, pos=0.125}, draw=none, from=1-1, to=2-2]
	\arrow[from=1-2, to=2-2]
	\arrow[from=2-1, to=2-2]
\end{tikzcd}\]
\end{remark}

\subsubsection{Enhanced Gray tensor product}
\begin{definition}
 For any $C:\ocat$, we denote by \wcnotation{$m_C$}{(mc@$m_C$} the colimit preserving functor 
$\ocat\to\ocat$ whose value on a representable $[a,n]$ is $[a\times C,n]$. Remark that the assignation $C\mapsto m_C$ is natural in $C$ and that $m_1$ is the identity.
\end{definition}

\begin{construction}
We define the colimit preserving functor: 
$$\begin{array}{ccc}
\ocat\times\ocat &\to&\ocat\\
(X,Y)&\mapsto &X\ominus Y,
\end{array}
$$
called the 
 \snotionsym{enhanced Gray tensor product}{((d20@$\ominus$}{for $\io$-categories}
where for any $\io$-category $C$ and any element $[b,n]$ of $\Delta[\Theta]$, $X\ominus [b,n]$ is the following pushout: 
\[\begin{tikzcd}
	{\coprod\limits_{k\leq n}m_b(C\otimes\{k\})} & {m_b(C\otimes[n])} \\
	{\coprod\limits_{k\leq n}m_1(C\otimes\{k\})} & {C\ominus[b,n]}
	\arrow[from=1-1, to=2-1]
	\arrow[""{name=0, anchor=center, inner sep=0}, from=1-1, to=1-2]
	\arrow[from=1-2, to=2-2]
	\arrow[from=2-1, to=2-2]
	\arrow["\lrcorner"{anchor=center, pos=0.125, rotate=180}, draw=none, from=2-2, to=0]
\end{tikzcd}\]
By construction, the functor $\uvar\ominus \uvar$ commutes with colimits in both variables. Moreover, for any $\io$-category $C$ and $\iun$-category $K$, we have a canonical identification $C\ominus K \sim C\otimes K$.
\end{construction}

\begin{prop}
\label{prop:formula for the ominus}
There is a  natural identification between $[C,1]\ominus[b,1]$ and the colimit of the following diagram
\begin{equation}
\begin{tikzcd}[column sep = 0.3cm]
	{[b,1]\vee[C,1]} & {[C\otimes\{0\}\times b,1]} & {[(C\otimes[1])\times b,1]} & {[C\otimes\{1\}\times b,1]} & {[C,1]\vee[b,1]}
	\arrow[from=1-2, to=1-3]
	\arrow[from=1-4, to=1-3]
	\arrow[from=1-4, to=1-5]
	\arrow[from=1-2, to=1-1]
\end{tikzcd}
\end{equation}
\end{prop}
\begin{proof}
This is a consequence of proposition \ref{prop:eq for cylinder}.
\end{proof}

\begin{cor}
\label{cor:ominus et op}
Let $A$ and $B$ two $\io$-categories. There is an equivalence  
$$(A\ominus B)^\circ \sim A^\circ\ominus B^\circ$$
natural in $A$ and $B$.
\end{cor}
\begin{proof}
It is sufficient to construct the equivalence when $A$ is a globular sum $a$ and $B$ is of shape $[b,n]$. 
Remark first that the proposition \ref{prop:Gray tensor product and duality} implies that $(a\otimes[n])^\circ$ and $a^\circ\otimes[n]^\circ$ are isomorphic.  The results then directly follows from the definition of the operation $\ominus$ and from the equivalence $(m_b(\uvar))^\circ\sim m_{b^\circ}((\uvar)^\circ)$.
\end{proof}

\subsubsection{Gray cone and $\circ$-cone}

\begin{construction}
\label{cons:of gray cone}
 We define the \snotionsym{Gray cone}{((d40@$\uvar\star 1$}{for $\io$-categories} and the \snotion{Gray $\circ$-cone}{for $\io$-categories}\index[notation]{((d50@$1\overset{co}{\star}\_$!\textit{for $\io$-categories}}:
$$\begin{array}{ccccccc}
\ocat &\to&\ocat_{\bullet}&&\ocat &\to&\ocat_{\bullet}\\
C&\mapsto &C\star 1 & &C &\mapsto &1\costar C
\end{array}
$$
where $C\star 1$ and $1\costar C$ are defined as the following pushout: 
\[\begin{tikzcd}
	{C\otimes\{1\}} & {C\otimes [1]} & {C\otimes\{0\}} & {C\otimes [1]} \\
	1 & {C\star 1} & 1 & {1\costar C}
	\arrow[from=1-1, to=2-1]
	\arrow[from=1-1, to=1-2]
	\arrow[from=2-1, to=2-2]
	\arrow[from=1-2, to=2-2]
	\arrow["\lrcorner"{anchor=center, pos=0.125, rotate=180}, draw=none, from=2-2, to=1-1]
	\arrow[from=1-3, to=2-3]
	\arrow[from=2-3, to=2-4]
	\arrow[from=1-3, to=1-4]
	\arrow[from=1-4, to=2-4]
	\arrow["\lrcorner"{anchor=center, pos=0.125, rotate=180}, draw=none, from=2-4, to=1-3]
\end{tikzcd}\]

\end{construction}

\begin{prop}
\label{prop:eq for Gray cone}
There is a natural identification between $1\costar [C,1]$ and the colimit of the diagram
\begin{equation}
\begin{tikzcd}
	{[1]\vee [C,1]} & {[C,1]} & {[C\star 1,1]}
	\arrow[from=1-2, to=1-3]
	\arrow[from=1-2, to=1-1]
\end{tikzcd}
\end{equation}
There is a natural identification between $[C,1]\star 1$ and the colimit of the diagram
\begin{equation}
\begin{tikzcd}
	{[1\costar C,1]} & {[C,1]} & {[C,1]\vee[1]}
	\arrow[from=1-2, to=1-3]
	\arrow[from=1-2, to=1-1]
\end{tikzcd}
\end{equation}
In each of the two previous diagrams, morphisms $[C,1]\to [1]\vee[C,1]$ and $[C,1]\to [C,1]\vee[1]$ are the whiskerings.
\end{prop}
\begin{proof}
This is a consequence of proposition \ref{prop:eq for cylinder}.
\end{proof}

\begin{example}
The $\io$-categories $\Db_1\star 1$ and $1\costar \Db_1$ correspond respectively to the polygraphs: 
\[\begin{tikzcd}
	0 &&&& 0 \\
	1 & \star && \star & 1
	\arrow[from=1-1, to=2-1]
	\arrow[from=2-1, to=2-2]
	\arrow[""{name=0, anchor=center, inner sep=0}, from=1-1, to=2-2]
	\arrow[""{name=1, anchor=center, inner sep=0}, from=1-5, to=2-5]
	\arrow[from=2-4, to=1-5]
	\arrow[""{name=2, anchor=center, inner sep=0}, from=2-4, to=2-5]
	\arrow[shorten <=2pt, Rightarrow, from=0, to=2-1]
	\arrow[shift right=2, shorten <=4pt, shorten >=4pt, Rightarrow, from=1, to=2]
\end{tikzcd}\]
The $\io$-categories $\Db_2\star 1$ and $1\costar \Db_2$ correspond respectively to the polygraphs: 
\[\begin{tikzcd}
	0 & {~} & 0 &&& 0 & {~} & 0 \\
	1 & \star & 1 & \star & \star & 1 & \star & 1
	\arrow[""{name=0, anchor=center, inner sep=0}, from=1-1, to=2-1]
	\arrow[from=2-1, to=2-2]
	\arrow[""{name=1, anchor=center, inner sep=0}, from=1-3, to=2-3]
	\arrow[""{name=2, anchor=center, inner sep=0}, curve={height=30pt}, from=1-1, to=2-1]
	\arrow[from=2-3, to=2-4]
	\arrow[""{name=3, anchor=center, inner sep=0}, from=1-1, to=2-2]
	\arrow[""{name=4, anchor=center, inner sep=0}, draw=none, from=1-2, to=2-2]
	\arrow[""{name=5, anchor=center, inner sep=0}, from=1-3, to=2-4]
	\arrow[from=1-6, to=2-5]
	\arrow[""{name=6, anchor=center, inner sep=0}, from=1-6, to=2-6]
	\arrow[""{name=7, anchor=center, inner sep=0}, from=2-5, to=2-6]
	\arrow[from=1-8, to=2-7]
	\arrow[""{name=8, anchor=center, inner sep=0}, from=1-8, to=2-8]
	\arrow[""{name=9, anchor=center, inner sep=0}, from=2-8, to=2-7]
	\arrow[""{name=10, anchor=center, inner sep=0}, curve={height=-30pt}, from=1-8, to=2-8]
	\arrow[""{name=11, anchor=center, inner sep=0}, draw=none, from=1-7, to=2-7]
	\arrow["{ }"', shorten <=6pt, shorten >=6pt, Rightarrow, from=0, to=2]
	\arrow[shorten <=2pt, shorten >=2pt, Rightarrow, from=3, to=2-1]
	\arrow[shift left=0.7, shorten <=6pt, shorten >=8pt, no head, from=4, to=1]
	\arrow[shift right=0.7, shorten <=6pt, shorten >=8pt, no head, from=4, to=1]
	\arrow[shorten <=6pt, shorten >=6pt, from=4, to=1]
	\arrow[shorten <=2pt, Rightarrow, from=5, to=2-3]
	\arrow[shorten <=6pt, shorten >=6pt, Rightarrow, from=10, to=8]
	\arrow[shift right=2, shorten <=4pt, shorten >=4pt, Rightarrow, from=8, to=9]
	\arrow[shift right=2, shorten <=4pt, shorten >=4pt, Rightarrow, from=6, to=7]
	\arrow[shift right=0.7, shorten <=6pt, shorten >=8pt, no head, from=6, to=11]
	\arrow[shorten <=6pt, shorten >=6pt, from=6, to=11]
	\arrow[shift left=0.7, shorten <=6pt, shorten >=8pt, no head, from=6, to=11]
\end{tikzcd}\]
\end{example}

\begin{prop}
\label{prop:cone and op}
We have an
invertible natural transformation
$$ C\star 1\sim (1\costar C^{\circ})^\circ.$$
\end{prop}
\begin{proof}
 This directly follows from proposition \ref{prop:Gray cylinder and op} and from the construction of the Gray cone and $\circ$-cone.
\end{proof}

\begin{definition}
We  denote by 
$$\begin{array}{ccccccc}
\ocat_{\bullet} &\to& \ocat&&\ocat_{\bullet} &\to& \ocat\\
(C,c)&\mapsto &C_{/c} & &(C,c) &\mapsto &C_{c/}
\end{array}
$$
the right adjoints of the Gray cone and the Gray $\circ$-cone, respectively called the \wcsnotionsym{slice of $C$ over $c$}{(cc@$C_{/c}$}{slice over}{for $\io$-categories} and the \wcsnotionsym{slice of $C$ under $c$}{(cc@$C_{c/}$}{slice under}{for $\io$-categories}. 
\end{definition}

\begin{remark}
The proposition \ref{prop:cone and op} induces an
invertible natural transformation
$$C_{/c}\sim (C^{\circ}_{c/})^\circ.$$
\end{remark}

\begin{remark}
The proposition \ref{prop:comparaison betwen otimes and suspension week case withou the cocartesian square.} provides, for any $\io$-category $C$ and any pair of objects $a$, $b$ of $C$, two canonical cartesian squares
\[\begin{tikzcd}
	{\hom_C(a,b)} & {C_{/b}} & {\hom_C(a,b)} & {C_{a/}} \\
	{\{a\}} & C & {\{b\}} & C
	\arrow[from=1-1, to=1-2]
	\arrow[from=1-1, to=2-1]
	\arrow["\lrcorner"{anchor=center, pos=0.125}, draw=none, from=1-1, to=2-2]
	\arrow[from=1-2, to=2-2]
	\arrow[from=1-3, to=1-4]
	\arrow[from=1-3, to=2-3]
	\arrow["\lrcorner"{anchor=center, pos=0.125}, draw=none, from=1-3, to=2-4]
	\arrow[from=1-4, to=2-4]
	\arrow[from=2-1, to=2-2]
	\arrow[from=2-3, to=2-4]
\end{tikzcd}\]
\end{remark}

\subsubsection{$k$-Gray cylinder}
Let $C$ be an $\io$-category and $K$ a $(\infty,1)$-category.
There is a canonical morphism $C\otimes K\to C\times K$. In a way, one can see $C\times K$ as an intelligent truncated version of the Gray tensor product $C\otimes K$. We will make this intuition precise by constructing a hierarchy of Gray tensor products with $(\infty,1)$-categories. 
\begin{definition}
For $k\in \Nb\cup\{\omega\}$, we define the functor
$$\begin{array}{ccl}
\ocat \times \ncat{1}&\to &\ocat\\
(C,K)&\mapsto &C\otimes_k K
\end{array}$$
where $C\otimes_kK$ fits in the cocartesian square
\[\begin{tikzcd}
	{\colim_{n\geq k}(\tau_nC)\otimes K} & {C\otimes K} \\
	{\colim_{n\geq k}\tau^i_n((\tau_nC)\otimes K)} & {C\otimes_{k} K}
	\arrow[from=1-1, to=2-1]
	\arrow[from=1-1, to=1-2]
	\arrow[from=1-2, to=2-2]
	\arrow[from=2-1, to=2-2]
	\arrow["\lrcorner"{anchor=center, pos=0.125, rotate=180}, draw=none, from=2-2, to=1-1]
\end{tikzcd}\]

The induced functors $\uvar\otimes_k[1]:\ocat\to\ocat$ are called the \wcnotionsym{$k$-Gray cylinder}{((d10@$\otimes_n$}{Gray cylindera@$n$-Gray cylinder}.
\end{definition}

\begin{remark}
Remark that the endofunctor $\uvar\otimes_0[1]$ is the identity, 
the endofunctor $\uvar\otimes_1[1]$ is equivalent to $\uvar\times [1]$, and the endofunctor $\otimes_{\omega}[1]$ is just the normal Gray cylinder.
\end{remark}

\begin{prop}
\label{prop:otimesk preserves colimits}
For any integer $k>0$, $\uvar\otimes_k[1]$ preserves colimits. 
\end{prop}
\begin{proof}
In order to simplify the notation, for a functor $F:\ocat\to \ocat$, the $\infty$-presheaves $\colim_{\Theta_{/\Sigma^nE^{eq}}}\iota F$, where $\iota$ in the inclusion $\ocat\to \iPsh{\Theta}$, will just be denoted by $F(\Sigma^nE^{eq})$.

As $\tau$ and $\tau^i$ preserves colimits in $\iPsh{\Theta}$ and $\widehat{\Wseg}$, and as $\uvar\otimes[1]$ preserves colimits, we just have to show that for any $n$, $(\Sigma^nE^{eq})\otimes_k[1]\to (\Sigma^n1)\otimes_k[1]$ is in $\widehat{\W}$. 

We then proceed by induction on $k$. The cases $k=0$ and $k=1$ are trivial as $\uvar\otimes_0[1]$ is the identity and $\uvar\otimes_1[1]$ is the tensor product with $[1]$.

Suppose the result is true at the stage $k$ for $k>1$. If $n=0$, remark that $E^{eq}\otimes_k[1]$ (resp. $ 1\otimes_k[1]$) is equivalent to $E^{eq}\otimes[1]$ (resp. $ 1\otimes[1]$) and the morphism is then in $\widehat{\W}$. Now, if $n>0$, formula \eqref{eq:eq for k cylinder} implies that $(\Sigma^nE^{eq})\otimes_k[1]\to (\Sigma^n1)\otimes_k[1]$ is the colimit in depth of the following diagram:
\[\begin{tikzcd}[column sep=0.2cm]
	{[ \Sigma^{n-1}E^{eq}\otimes_{k-1}\{0\},1]} && {[ \Sigma^{n-1}E^{eq}\otimes_{k-1}\{1\},1]} \\
	{[1]\vee [ \Sigma^{n-1}E^{eq},1]} & {[ \Sigma^{n-1}E^{eq}\otimes_{k-1}[1],1]} & {[ \Sigma^{n-1}E^{eq},1]\vee[1]} \\
	& {[ \Sigma^{n-1}1\otimes_{k-1}\{0\},1]} && {[ \Sigma^{n-1}1\otimes_{k-1}\{1\},1]} \\
	& {[1]\vee [ \Sigma^{n-1}1,1]} & {[ \Sigma^{n-1}1\otimes_{k-1}[1],1]} & {[ \Sigma^{n-1}1,1]\vee[1]}
	\arrow[from=1-1, to=2-1]
	\arrow[from=1-1, to=2-2]
	\arrow[from=1-3, to=2-2]
	\arrow[from=1-3, to=2-3]
	\arrow[from=3-2, to=4-2]
	\arrow[from=3-2, to=4-3]
	\arrow[from=3-4, to=4-3]
	\arrow[from=3-4, to=4-4]
	\arrow[from=1-1, to=3-2]
	\arrow[from=2-2, to=4-3]
	\arrow[from=1-3, to=3-4]
	\arrow[from=2-1, to=4-2]
	\arrow[from=2-3, to=4-4]
\end{tikzcd}\]
by induction hypothesis, and using lemma \ref{lemma:the functor [] preserves classes}, all the morphisms in depth are in $\widehat{\W}$, and so is their colimit.
\end{proof}

\begin{prop}
\label{prop:eq:for k cylinder}
There is a natural identification between $[C,1]\otimes_{k+1} [1]$ and the colimit of the following diagram
\begin{equation}
\label{eq:eq for k cylinder}
\begin{tikzcd}
	{[1]\vee [ C,1]} & {[C\otimes_k\{0\},1]} & {[C\otimes_k [1],1]} & {[C\otimes_k\{1\},1]} & {[C,1]\vee[1]}
	\arrow[from=1-2, to=1-1]
	\arrow[from=1-2, to=1-3]
	\arrow[from=1-4, to=1-3]
	\arrow[from=1-4, to=1-5]
\end{tikzcd}
\end{equation}
\end{prop}
\begin{proof}
This is a consequence of proposition \ref{prop:eq for cylinder}.
\end{proof}

\begin{definition}
We denote by
$$(\uvar)^{[1]_k}:\ocat\to \ocat.$$
$k$-Gray cynlinder.
the right adjoint of the $k$-Gray cylinder.\sym{(ck@$C^{[1]_k}$}
\end{definition}

\subsection{Gray deformation retract}
\label{subsection:Gray deformation retract}

\begin{definition}
 A \wcnotion{left $k$-Gray deformation retract structure}{left or right $k$-Gray deformation retract structure} for a morphism $i:C\to D$ is the data of a \textit{retract}
 $r:D\to C$, a \textit{deformation} $\psi:D\otimes_k [1]\to D$, and equivalences
$$ri\sim id_C~~~~~\psi_{|D\otimes_k\{0\}}\sim ir~~~~~\psi_{|D\otimes_k\{1\}}\sim id_D~~~~~ \psi_{|C\otimes_k[1]}\sim i\cst_C
$$ 
A morphism $i:C\to D$ between $\io$-categories is a \wcnotion{left $k$-Gray deformation retract}{left or right $k$-Gray deformation retract} if it admits a left deformation retract structure. By abuse of notation, such data will just be denoted by $(i,r,\psi)$.
\end{definition}

\begin{definition}
We define dually the notion of \textit{right $k$-Gray deformation retract structure} and of \textit{right $k$-Gray deformation retract} in exchanging $0$ and $1$ in the previous definition.
\end{definition}

\begin{definition}
 A \textit{left $k$-Gray deformation retract structure} for a morphism $i:f\to g$ in the $\iun$-category of arrows of $\ocat$ is the data of a \textit{retract}
 $r:g\to f$, a \textit{deformation} $\psi:g\otimes_k [1]\to g$ and equivalences
$$ri\sim id_f~~~~~\psi_{|g\otimes_k\{0\}}\sim ir~~~~~\psi_{|g\otimes_k\{1\}}\sim id_D~~~~~ \psi_{|f\otimes_k[1]}\sim i\cst_C
$$ 
A morphism $i:C\to D$ between arrows of $\ocat$ is a \textit{left $k$-Gray deformation retract} if it admits a left deformation retract structure. By abuse of notation, such data will just be denoted by $(i,r,\psi)$.
\end{definition}

\begin{definition}
We define dually the notion of \textit{right $k$-Gray deformation retract structure} and of \textit{right $k$-Gray deformation retract} in exchanging $0$ and $1$ in the previous definition.
\end{definition}

\begin{example}
\label{example:canonical example of left deformation retract unmarked}
Let $k\in \Nb\cup\{\omega\}$ and 
let $C$ be an $(\infty,k)$-category. We consider the morphism $i:C\otimes\{0\}\to C\otimes[1]$. We define $r:C\otimes[1]\xrightarrow{C\otimes\Ib} C\otimes\{0\}$. Eventually, we set 
$$\psi:C\otimes[1]\otimes[1]\to C\otimes([1]\times [1])\xrightarrow{C\otimes \phi}C\otimes[1]$$
where the first morphism is constructed in \ref{cons:almost assoc of gray}, and
where $\phi:[1]\times[1]$ is the morphism sending $(i,j)$ on the minimum of $i$ and $j$. 

As $C$ is an $(\infty,k)$-category, $\psi$ factors through $C\otimes[1]\to \tau^i_k(C\otimes[1])\sim C\otimes_k[1]$. We denote by $\phi:C\otimes_k[1]\to C\otimes\{0\}$ the induced morphism.
The triple $(i,r,\phi)$ is a left $k$-Gray deformation retract structure. Conversely, 
 $C\otimes\{1\}\to C\otimes[1]$ is a right deformation retract.

One can show similarly that $1\to 1\costar C$ is a left $k$-Gray deformation retract, and $1\to C\star 1$ is a right $k$-Gray deformation retract.
\end{example}

\vspace{1cm} The $\infty$-groupoid of left and right Gray retracts enjoys many stability properties: 
\begin{prop}
\label{prop:left Gray deformation retract stable under pushout unmarked}
Let $(i_a,r_a,\psi_a)$ be a natural familly of left (resp. right) $k$-Gray deformation retract structures indexed by an $(\infty,1)$-category $A$.
The triple $(\colim_{A}i_a,\colim_{A}r_a,\colim_{A}\psi_a)$ is a left (resp. right) $k$-Gray deformation retract structure.
\end{prop}
\begin{proof}
This is an immediate consequence of the fact that $\uvar\otimes_k[1]$ preserves colimits.
\end{proof}
\begin{prop}
\label{prop:stability under pullback unmarked}
Suppose that we have a diagram 
\[\begin{tikzcd}
	X & Y & Z \\
	X & {Y'} & {Z'}
	\arrow[from=1-1, to=2-1]
	\arrow[from=1-2, to=2-2]
	\arrow[from=1-3, to=2-3]
	\arrow["p", from=1-1, to=1-2]
	\arrow["q"', from=1-3, to=1-2]
	\arrow["{p'}"', from=2-1, to=2-2]
	\arrow["{q'}", from=2-3, to=2-2]
\end{tikzcd}\]
such that $p\to p'$ and $q\to q'$ are left (resp. right) $k$-Gray deformation retract. The induced square $q^*p\to (q')^*p'$ is a left (resp. right) $k$-Gray deformation retract.
\end{prop}
\begin{proof}
The proof is an easy diagram chasing.
\end{proof}
\begin{prop}
\label{prop:stability by composition unmarked}
If $p\to p'$ and $p'\to p''$ are two left (resp. right) $k$-Gray deformation retracts, so is $p\to p''$.
\end{prop}
\begin{proof}
The proof is an easy diagram chasing.
\end{proof}

\begin{prop}
\label{prop:Gray deformation retract and passage to hom unmarked}
Let $(i:C\to D,r,\psi)$ be a left (resp. right) $(k+1)$-Gray deformation structure. For any $x: C$ and $y:D$ (resp. $x: D$ and $y:C$), the morphism
$$\begin{array}{cc}
&\hom_C(x,ry)\xrightarrow{i} \hom_D(ix,iry)\xrightarrow{{\psi_y}_!} \hom_D(ix,y)\\
(resp. &\hom_C(rx,y)\xrightarrow{i} \hom_D(irx,iy)\xrightarrow{{\psi_x}_!} \hom_D(x,iy))
\end{array}
$$
is a right (resp. left) $k$-Gray deformation retract, whose retract is given by 
$$\begin{array}{cc}
&\hom_D(ix,y)\xrightarrow{r}\hom_C(x,ry)\\
(resp. &\hom_D(x,iy)\xrightarrow{r}\hom_C(rx,y))
\end{array}$$
\end{prop}
\begin{proof}
By currying $\psi$, this induces a diagram
\[\begin{tikzcd}
	& C & D \\
	D & {D^{[1]_{k+1}}} \\
	&& D
	\arrow["r", curve={height=-6pt}, from=2-1, to=1-2]
	\arrow["i", from=1-2, to=1-3]
	\arrow["id"', curve={height=12pt}, from=2-1, to=3-3]
	\arrow[from=2-2, to=3-3]
	\arrow[from=2-2, to=1-3]
	\arrow["\psi"{description}, from=2-1, to=2-2]
\end{tikzcd}\]
For any pair of objects $(z,y)$ of $D$, according to proposition \prop{prop:eq:for k cylinder}, this induces a diagram
\[\begin{tikzcd}
	& {\hom_D(z,y)} \\
	{\hom_D(z,y)} & {\hom_D(irz,iry)\times_{\hom_D(irz,y)}\hom_D(irz,y)^{[1]_k}\times_{\hom_D(irz,y)} \hom_D(z,y)} \\
	{\hom_C(rz,ry)} & {\hom_D(irz,iry)}
	\arrow["r", from=2-1, to=3-1]
	\arrow["i", from=3-1, to=3-2]
	\arrow["id", curve={height=-12pt}, from=2-1, to=1-2]
	\arrow[from=2-2, to=1-2]
	\arrow[from=2-2, to=3-2]
	\arrow["\psi"{description}, from=2-1, to=2-2]
\end{tikzcd}\]
If $z$ is of shape $ix$, the diagram becomes
\[\begin{tikzcd}
	& {\hom_D(ix,y)} & {\hom_D(ix,y)} \\
	{\hom_D(ix,y)} & {\hom_D(ix,iry)\times_{\hom_D(ix,y)}\hom_D(ix,y)^{[1]_k}} & {\hom_D(ix,y)^{[1]_k}} \\
	{\hom_C(x,ry)} & {\hom_D(ix,iry)} & {\hom_D(ix,y)}
	\arrow["r"', from=2-1, to=3-1]
	\arrow["id", curve={height=-12pt}, from=2-1, to=1-2]
	\arrow[from=2-2, to=1-2]
	\arrow["\psi"{description}, from=2-1, to=2-2]
	\arrow["i"', from=3-1, to=3-2]
	\arrow[from=2-2, to=3-2]
	\arrow[""{name=0, anchor=center, inner sep=0}, "{{\psi_y}_!}"', from=3-2, to=3-3]
	\arrow[from=2-2, to=2-3]
	\arrow["id", from=1-2, to=1-3]
	\arrow[from=2-3, to=3-3]
	\arrow[from=2-3, to=1-3]
	\arrow["\lrcorner"{anchor=center, pos=0.125}, draw=none, from=2-2, to=0]
\end{tikzcd}\]
By decurrying, this induces a morphism $\phi:\hom_D(ix,y)\otimes_k[1]\to \hom_D(ix,y)$. We leave the reader verify that the triple $({\psi_y}_!i,r,\phi)$ is a right $k$-Gray deformation retract structure.
We proceed similarly for the other case.
\end{proof}

\begin{prop}
\label{prop:Gray deformation retract and passage to hom v2 unmarked}
For any left (resp. right) $(k+1)$-Gray deformation retract between $p$ and $p'$:
\[\begin{tikzcd}
	C & D \\
	{C'} & {D'}
	\arrow["p"', from=1-1, to=2-1]
	\arrow["i", from=1-1, to=1-2]
	\arrow["{p'}", from=1-2, to=2-2]
	\arrow["{i'}"', from=2-1, to=2-2]
\end{tikzcd}\]
and for any pair of objects $x: C$ and $y:D$ (resp. $x: D$ and $y:C$), the outer square of the following diagram
\[\begin{tikzcd}
	{\hom_{C}(x,ry)} & {\hom_{D}(ix,iry)} & {\hom_{D}(ix,y)} \\
	{\hom_{C'}(px,pr'y)} & {\hom_{D'}(p'i'x,p'i'r'y)} & {\hom_{D'}(p'i'x,p'y)}
	\arrow["{i'}"', from=2-1, to=2-2]
	\arrow["{{\psi'_{p'y}}_!}"', from=2-2, to=2-3]
	\arrow[from=1-1, to=2-1]
	\arrow[from=1-3, to=2-3]
	\arrow["{{\psi_y}_!}", from=1-2, to=1-3]
	\arrow["i", from=1-1, to=1-2]
	\arrow[from=1-2, to=2-2]
\end{tikzcd}\]
(resp.
\[\begin{tikzcd}
	{\hom_{C}(rx,y)} & {\hom_{D}(irx,iy)} & {\hom_{D}(x,iy)} \\
	{\hom_{C'}(pr'x,py)} & {\hom_{D'}(p'i'r'x,p'i'y)} & {\hom_{D'}(p'x,p'i'y)\big)}
	\arrow["{i'}"', from=2-1, to=2-2]
	\arrow["{{\psi'_{p'x}}_!}"', from=2-2, to=2-3]
	\arrow[from=1-1, to=2-1]
	\arrow[from=1-3, to=2-3]
	\arrow["{{\psi_x}_!}", from=1-2, to=1-3]
	\arrow["i", from=1-1, to=1-2]
	\arrow[from=1-2, to=2-2]
\end{tikzcd}\]
is a left (resp. right) $(k+1)$-Gray deformation retract, whose retract is given by
\[\begin{tikzcd}
	{\hom_{D}(ix,y)} & {\hom_{C}(x,ry)} & {(resp.\hom_{D}(x,iy)} & {\hom_{C}(rx,y)} \\
	{\hom_{D'}(p'i'x,p'y)\big)} & {\hom_{C'}(px,pr'y)} & {\hom_{D'}(p'x,p'i'y)} & {\hom_{C'}(pr'x,py)\big)}
	\arrow[from=1-3, to=2-3]
	\arrow["r", from=1-3, to=1-4]
	\arrow["{r'}"', from=2-3, to=2-4]
	\arrow[from=1-4, to=2-4]
	\arrow["{r'}"', from=2-1, to=2-2]
	\arrow["r", from=1-1, to=1-2]
	\arrow[from=1-2, to=2-2]
	\arrow[from=1-1, to=2-1]
\end{tikzcd}\]
\end{prop}
\begin{proof}
This comes from the fact that the construction of the retraction and the deformation in the previous proposition was functorial.
\end{proof}

\begin{prop}
\label{prop:suspension of left Gray deformation retract unmarked}
If $i$ is a left $k$-Gray deformation retract, $[i,1]$ is a right $(k+1)$-Gray deformation retract. Conversely, if $i$ is a right $k$-Gray deformation retract, $[i,1]$ is a left $(k+1)$-Gray deformation retract morphism.
\end{prop}
\begin{proof}
Let $(i:C\to D,r,\phi)$ be a left $k$-Gray deformation retract structure. Using proposition \ref{prop:eq:for k cylinder}, we define the morphism $\psi:[D,1]\otimes_{k+1}[1]\to [D,1]$ as the horizontal colimit of the following diagram:
\[\begin{tikzcd}
	{[1]^{}\vee[D,1]} & {[D\otimes_k\{0\},1]} & {[D\otimes_k[1],1]} & {[D\otimes_k\{1\},1]} & {[D,1]\vee[1]^{}} \\
	& {[C,1]} & {[D,1]} & {[D,1]}
	\arrow[from=1-4, to=1-5]
	\arrow["{[\phi,1]}"', from=1-3, to=2-3]
	\arrow[from=1-2, to=1-1]
	\arrow[from=1-2, to=1-3]
	\arrow[from=1-4, to=1-3]
	\arrow["{[i,1]}"', from=2-2, to=2-3]
	\arrow["{[r,1]}"', from=1-2, to=2-2]
	\arrow["{[id,1]}", from=2-4, to=2-3]
	\arrow["{[id,1]}", from=1-4, to=2-4]
	\arrow[from=1-1, to=2-2]
	\arrow[from=1-5, to=2-4]
\end{tikzcd}\]
Eventually, remark that the triple $([i,1],[r,1],\psi)$ is a right $(k+1)$-Gray deformation retract. The other assertion is demonstrated similarly.
\end{proof}

\begin{prop}
\label{prop:of left Gray deformation retract unmarked}
For any integer $n$,
if $n$ is even, $i_{n}^-:\Db_{n}\to \Db_{n+1}$ is a left $n$-Gray deformation retract and $i_{n}^+:\Db_{n}\to \Db_{n+1}$ is a right $n$-Gray deformation retract, and if $n$ is odd, $i_{n}^-$ is a right $n$-Gray deformation retract and $i_{n}^+$ is a left $n$-Gray deformation retract.
\end{prop}
\begin{proof}
It is obvious that $\{0\}\to [1]$ is a left $1$-Gray deformation retract and $\{1\}\to [1]$ is a right $1$-Gray deformation retract. A repeated application of \ref{prop:suspension of left Gray deformation retract unmarked} proves the assertion.
\end{proof}

\begin{prop}
\label{prop:when glob inclusion are left Gray deformation unmarked}
Let $a$ be a globular sum of dimension $(n+1)$. We denote by $s_n(a)$ and $t_n(a)$ the globular sum defined in definition \ref{defi:definition of source et but}.

If $n$ is even, $s_n(a)\to a$ is a left $n$-Gray deformation retract and $t_n(a)\to a$ is a right $n$-Gray deformation retract, and if $n$ is odd, $s_n(a)\to a$ is a right $n$-Gray deformation retract and $t_n(a)\to a$ is a left $n$-Gray deformation retract.
\end{prop}
\begin{proof}
This is a direct consequence of proposition \ref{prop:of left Gray deformation retract unmarked} and \ref{prop:left Gray deformation retract stable under pushout unmarked} as $s_n(a)\to a$ is a composition of pushouts of $i_{n}^-:\Db_{n}\to (\Db_{n+1})_t$. The other assertion is proved similarly.
\end{proof}

\subsection{Gray operations and strict objects}

\label{section:on preservation of strict}
Recall that in construction \ref{cons:strict} we defined an adjunction
\[\begin{tikzcd}
	{\pi_0:\ocat} & {\zocat:\N}
	\arrow[""{name=0, anchor=center, inner sep=0}, shift left=2, from=1-1, to=1-2]
	\arrow[""{name=1, anchor=center, inner sep=0}, shift left=2, from=1-2, to=1-1]
	\arrow["\dashv"{anchor=center, rotate=-90}, draw=none, from=0, to=1]
\end{tikzcd}\]
An $\io$-category lying in the image of the nerve functor $\N$ is called \textit{strict}.

\begin{prop}
\label{prop:pi0 preserves gray operation}
The functor $\pi_0$ preserves the Gray cylinder (defined in \ref{defi:of gray cylinder for strict} for $\zo$-categories and in \ref{defi:of gray cylinder} for $\io$-categories), the suspension (defined in \ref{defi:suspension zocat} for $\zo$-categories and in \ref{cons:def of suspension} for $\io$-categories), the Gray cone, and the Gray $\circ$-cone (defined in \ref{cons:Gray cone for omega cat} for $\zo$-categories and in \ref{cons:of gray cone} for $\io$-categories).
\end{prop}
\begin{proof}
We will demonstrate the result only in the case of the Gray cylinder, as the other cases are similar. 
As $\pi_0$ and $\uvar\otimes[1]$ preserve colimits, it is sufficient to demonstrate that the following triangle commutes up to an invertible natural transformation:
\[\begin{tikzcd}
	\Theta & \ocat \\
	& \zocat
	\arrow["{\uvar\otimes[1]}", from=1-1, to=1-2]
	\arrow["{\uvar\otimes[1]}"', from=1-1, to=2-2]
	\arrow["{\pi_0}", from=1-2, to=2-2]
\end{tikzcd}\]

Furthermore, an induction using  proposition \ref{prop:appendice formula for otimes} and proposition \ref{prop:eq for cylinder} provides a natural isomorphism between $\pi_0(\Db_n\otimes[1])$ and $\Db_n\otimes[1]$. The theorem \ref{theo:appendince unicity of operation} then provides the desired invertible natural transformation.
\end{proof}

The strict categories play an important role as they allow us to make explicit calculations. In particular, it will be very useful to know which cocontinuous functors preserve them.
\begin{prop}
\label{prop:criter stricte easy}
An $\io$-category $C$ is strict if and only if $C_0$ is a set and for any pair of objects $x,y$, $\hom_C(x,y)$ is strict.
\end{prop}
\begin{proof}
By definition, an $\io$-category is strict if and only if, for any globular sum $[\textbf{b},n]$, $\Hom([\textbf{b},n],C)$ is a set. However, as $C$ is $\W$-local, we have an equivalence between $\Hom([\textbf{b},n],C)$ and 
$$\coprod_{x_0,x_1,...,x_n\in C_0}\Hom(b_1, \hom_C(x_0,x_1))\times...\times \Hom(b_n, \hom_C(x_{n-1},x_n))$$
As all the objects of the previous expression are set by hypothesis, and as the inclusion of set into $\infty$-groupoid is stable under coproduct and product, $\Hom([b,n],C)$ is a set.
\end{proof}

\begin{prop}
\label{prop:suspension preserves stricte}
If $C$ is a strict $\io$-category, so is $[C,1]$. 
\end{prop}
\begin{proof}
If $C$ is a globular sum of the empty $\io$-category, this is obvious.

Suppose now that $C$ is any strict $\io$-category. Proposition \ref{prop:elelangat stable by slice} and corollary \ref{cor:thetaplus is reedy} imply that $[\uvar,1]:\Theta^+_{/C}\to \iPsh{\Theta}$ is Reedy cofibrant. According to proposition \ref{prop:supspension preserves cat}, the colimit of this diagram, computed in $\iPsh{\Theta}$, is $[C,1]$.
Lemma \ref{lemma:colimit computed in set presheaves} then implies that this object is strict.
\end{proof}

\begin{lemma}
\label{lemma:gray operation on globes are strict}
For any $n$, $\Db_n\otimes[1]$, $\Db_n\star 1$ and $1\costar \Db_n$ are strict.
\end{lemma}
\begin{proof}
We proceed by induction on $n$. The result is obviously true for $n=0$. Suppose it is true as the stage $n$. According to proposition \ref{prop:eq for cylinder}, $\Db_n\otimes[1]$ is the colimit of the following diagram
\begin{equation}
\label{eq:tensor of globuees is sitrict}
\begin{tikzcd}
	{[1]\vee \Db_n} & {\Db_n} & {[\Db_{n-1}\otimes [1],1]} & {\Db_n} & {\Db_n\vee[1]}
	\arrow[from=1-2, to=1-1]
	\arrow[from=1-2, to=1-3]
	\arrow[from=1-4, to=1-3]
	\arrow[from=1-4, to=1-5]
\end{tikzcd}
\end{equation}
The induction hypothesis and proposition \ref{prop:suspension preserves stricte} implies that all the objects are strict.
The proposition \ref{prop:cartesian squares} then implies that the diagram
\[\begin{tikzcd}
	{\Db_{n-1}} & {\Db_{n-1}\otimes[1]} & {\Db_{n-1}} \\
	{\{0\}} & {[1]} & {\{1\}}
	\arrow[from=1-1, to=2-1]
	\arrow[from=1-2, to=2-2]
	\arrow[from=1-3, to=2-3]
	\arrow[from=2-1, to=2-2]
	\arrow[from=1-1, to=1-2]
	\arrow[from=1-3, to=1-2]
	\arrow[from=2-3, to=2-2]
\end{tikzcd}\]
verifies the hypothesis of proposition \ref{prop:example of a special colimit3}. 
This implies that the colimit of \eqref{eq:tensor of globuees is sitrict} is special. The inductive hypothesis and lemma \ref{lemma:colimit computed in set presheaves} imply that its colimit is then strict. As this colimit is $\Db_n\otimes[1]$, this concludes the proof by induction.

We proceed similarly for the Gray cone and the Gray $\circ$-cone.
\end{proof}

\begin{prop}
\label{prop:comparaison betwen otimes and suspension week case withou the cocartesian square.}
There exists a natural transformation $C\otimes [1]\to [C,1]$ whose restriction to $C\otimes\{0\}$ (resp. to $C\otimes\{1\}$) is constant on $\{0\}$ (resp. on $\{1\}$) and such that the following square is cocartesian:
\begin{equation}
\label{eq:cocartesian link betwen gray and suspension}
\begin{tikzcd}
	{C\otimes\{0\}\coprod C\otimes\{1\}} & {C\otimes[1]} \\
	{\{0\}\coprod \{1\}} & {[C,1]}
	\arrow[from=1-1, to=1-2]
	\arrow[from=1-1, to=2-1]
	\arrow[from=1-2, to=2-2]
	\arrow[from=2-1, to=2-2]
	\arrow["\lrcorner"{anchor=center, pos=0.125, rotate=180}, draw=none, from=2-2, to=1-1]
\end{tikzcd}
\end{equation}
\end{prop}
\begin{proof}
As $\uvar\otimes[1]$ and $[\uvar,1]$ preserve colimits, it is sufficient to construct the natural transformation on globular sums. As $\pi_0$ commutes with the Gray cylinder, proposition \ref{prop:comparaison betwen otimes and suspension} induces a comparison
$$\pi_0 (a\otimes[1]) \to [a,1]$$ natural in $a:\Theta$. By adjunction, this induces a natural transformation 
$$a\otimes[1]\to [a,1].$$ It remains to demonstrate the cocartesianness of the square \eqref{eq:cocartesian link betwen gray and suspension}. As cocartesian squares are stable under colimits, we can reduce to the case where $C$ is equivalent to $\Db_n$ for $n$ an integer. We then proceed by induction on $n$. The case $n=0$ is trivial, as proposition \ref{prop:eq for cylinder} implies that $\Db_0\otimes[1]$ is equivalent to $[1]$. Suppose the result is true at the stage $n$. Using once again proposition \ref{prop:eq for cylinder}, we have that 
$$\Db_{n+1}\otimes[1]\coprod_{\Db_{n+1}\otimes (\{0\}\amalg \{1\})} \{0\}\amalg \{1\}$$ is equivalent to the colimit of the diagram
\[\begin{tikzcd}
	{[1]} & {[\Db_n\otimes\{0\},1]} & {[\Db_n\otimes[1],1]} & {[\Db_n\otimes\{1\},1]} & {[1]}
	\arrow[from=1-2, to=1-1]
	\arrow[from=1-2, to=1-3]
	\arrow[from=1-4, to=1-3]
	\arrow[from=1-4, to=1-5]
\end{tikzcd}\]
and by induction hypothesis, it is equivalent to 
$$[\Db_{n+1},1],$$
which concludes the proof.
\end{proof}

\begin{lemma}
\label{lemma:strictification2}
Let $\alpha$ be $-$ if $n$ is even (resp. odd) and $+$ if $n$ is odd (resp.even).
Consider a cartesian square
\begin{equation}
\label{eq:lemma:strictification2}
\begin{tikzcd}
	{C_0} & D \\
	{\Db_{n}} & {\Db_{n+1}}
	\arrow["p"', from=1-1, to=2-1]
	\arrow["{p'}", from=1-2, to=2-2]
	\arrow[from=1-1, to=1-2]
	\arrow["\lrcorner"{anchor=center, pos=0.125}, draw=none, from=1-1, to=2-2]
	\arrow["{i_{n}^\alpha}"', from=2-1, to=2-2]
\end{tikzcd}
\end{equation}
such that $p\to p'$ is a left $(n+1)$-Gray deformation retract (resp. a right $(n+1)$-Gray deformation retract). Let $C_1$ be the $\io$-category fitting in the pullback 
\begin{equation}
\label{eq:lemma:strictification3}
\begin{tikzcd}
	{C_1} & D \\
	{\Db_{n}} & {\Db_{n+1}}
	\arrow["p"', from=1-1, to=2-1]
	\arrow["{p'}", from=1-2, to=2-2]
	\arrow[from=1-1, to=1-2]
	\arrow["\lrcorner"{anchor=center, pos=0.125}, draw=none, from=1-1, to=2-2]
	\arrow["{i_{n}^{1-\alpha}}"', from=2-1, to=2-2]
\end{tikzcd}
\end{equation}
Then if $C_0$ and $C_1$ are strict, so is $D$.
\end{lemma}
\begin{proof}
We denote by $(i,r,\phi)$ the deformation retract structure corresponding to $C_0\to D$. 
We show this result by induction, and let's start with the case $n=0$. 
This corresponds to the case where $C_0\to D$ fits in a pullback diagram.
\[\begin{tikzcd}
	{C_0} & D \\
	{\{0\}} & {[1]}
	\arrow[from=1-1, to=1-2]
	\arrow[from=2-1, to=2-2]
	\arrow[from=1-1, to=2-1]
	\arrow[from=1-2, to=2-2]
\end{tikzcd}\]
 Let $x,y$ be two objects of $D$. Suppose first that $x$ and $y$ are over the same object of $[1]$. In this case, $\hom_D(x,y)$ is equivalent to either $\hom_{C_0}(x,y)$ or $\hom_{C_1}(x,y)$ and is then strict. If $x$ is over $1$ and $y$ over $0$, the $\infty$-groupoid $\hom_D(x,y)$ is empty. If $x$ is over $0$ and $y$ is over $1$, $\hom_D(x,y)$ is equivalent to $\hom_{C_0}(x,ry)$ according to \ref{prop:Gray deformation retract and passage to hom unmarked} and is then strict by hypothesis. Eventually, $\tau_0(D)$ is equivalent to $\tau_0(C_1)$ and is then a set. According to \ref{prop:criter stricte easy}, this implies that $D$ is strict.

Suppose now the result is true at stage $(n-1)$.
Let $p'\to p$ be a square verifying the condition.  Remark that, at the level of objects, the inclusion $C_0\to D$, its retract, and its deformation, are the identity.

Let $x$ and $y$ be two objects of $D$. 
As before, the only interesting case is when $x$ is over $0$ and $y$ is over $1$. In this case,
applying $\hom(\uvar,\uvar)$ to the square \eqref{eq:lemma:strictification2}, we get a cartesian square
\[\begin{tikzcd}
	{\hom_{C_0}(x,y)} & {\hom_D(x,y)} \\
	{\Db_{n-1}} & {\Db_{n}}
	\arrow[from=1-1, to=1-2]
	\arrow["{i_{n-1}^\alpha}"', from=2-1, to=2-2]
	\arrow[from=1-1, to=2-1]
	\arrow[from=1-2, to=2-2]
\end{tikzcd}\]
which is a right $n$-Gray deformation retract according to proposition \ref{prop:Gray deformation retract and passage to hom unmarked}.
Applying $\hom(\uvar,\uvar)$ to the square \eqref{eq:lemma:strictification3}, we get a cartesian square
\[\begin{tikzcd}
	{\hom_{C_1}(x,y)} & {\hom_D(x,y)} \\
	{\Db_{n-1}} & {\Db_{n}}
	\arrow[from=1-1, to=1-2]
	\arrow["{i_{n-1}^{1-\alpha}}"', from=2-1, to=2-2]
	\arrow[from=1-1, to=2-1]
	\arrow[from=1-2, to=2-2]
\end{tikzcd}\]
As $C_1$ is strict, so is $\hom_{C_1}(x,y)$.
 We can then apply the induction hypothesis, which implies that $\hom_D(x,y)$ is strict. As $\tau_0 D$ is equivalent to $\tau_0 C_{0}$, it is a set. We can apply proposition \ref{prop:criter stricte easy} which implies that $D$ is strict. 
\end{proof}

\begin{construction}
For an integer $n>0$,
we define by induction 
\begin{enumerate}
\item[$-$]
a left $(n+1)$-Gray retract structure for the inclusion 
\begin{equation}
\label{eq:Gray retract structurure for Gray cone}
\Db_n\star\emptyset \cup \Db_{n-1}\star 1 \to \Db_n\star 1
\end{equation}
where the gluing is performed along $i_n^\alpha:\Db_{n-1}\star\emptyset\to \Db_n\star\emptyset$ with $\alpha$ being $+$ if $n$ is odd and $-$ if not, 
\item[$-$]
a right $(n+1)$-Gray retract structure for the inclusion
\begin{equation}
\label{eq:Gray retract structurure for circ Gray cone}
1\costar\Db_{n-1} \cup \emptyset\costar\Db_{n}\to 1 \costar \Db_n
\end{equation}
where the gluing is performed along $i_n^\alpha:\emptyset\costar \Db_{n-1}\to \emptyset\costar \Db_n$ with $\alpha$ being $-$ if $n$ is odd and $+$ if not. 
\end{enumerate}
If $n=1$, the first morphism corresponds to the inclusion
\[\begin{tikzcd}
	\bullet & {} & \bullet \\
	\bullet & \bullet & \bullet & \bullet
	\arrow[""{name=0, anchor=center, inner sep=0}, from=1-3, to=2-3]
	\arrow[""{name=1, anchor=center, inner sep=0}, from=2-3, to=2-4]
	\arrow[""{name=2, anchor=center, inner sep=0}, from=1-3, to=2-4]
	\arrow[from=1-1, to=2-1]
	\arrow[from=2-1, to=2-2]
	\arrow[""{name=3, anchor=center, inner sep=0}, draw=none, from=1-2, to=2-2]
	\arrow[shift right=2, shorten <=2pt, shorten >=2pt, Rightarrow, from=2, to=1]
	\arrow[shorten <=6pt, shorten >=6pt, maps to, from=3, to=0]
\end{tikzcd}\]
and the second one to the inclusion:
\[\begin{tikzcd}
	& \bullet & {} & \bullet \\
	\bullet & \bullet & \bullet & \bullet
	\arrow[""{name=0, anchor=center, inner sep=0}, from=1-2, to=2-2]
	\arrow[""{name=1, anchor=center, inner sep=0}, from=2-3, to=2-4]
	\arrow[""{name=2, anchor=center, inner sep=0}, from=2-3, to=1-4]
	\arrow[from=1-4, to=2-4]
	\arrow[from=2-1, to=1-2]
	\arrow[""{name=3, anchor=center, inner sep=0}, draw=none, from=1-3, to=2-3]
	\arrow[shift left=2, shorten <=2pt, shorten >=2pt, Rightarrow, from=2, to=1]
	\arrow[shorten <=6pt, shorten >=6pt, maps to, from=0, to=3]
\end{tikzcd}\]
The propositions \ref{prop:of left Gray deformation retract unmarked} and \ref{prop:left Gray deformation retract stable under pushout unmarked} imply that the first morphism is a left $2$-Gray deformation retract and the second one a right $2$-Gray deformation retract. 
Suppose now that these two morphisms are constructed at stage $n$.
The proposition \ref{prop:eq for Gray cone} implies that $\Db_{n+1}\star\emptyset \cup \Db_{n}\star 1 \to \Db_{n+1}\star 1$ fits in the cocartesian square
\[\begin{tikzcd}
	{[1\costar\Db_{n-1}\cup \emptyset\star\Db_n,1]} & {\Db_n\star\emptyset\cup \Db_{n-1}\star 1} \\
	{[1\costar\Db_n,1]} & { \Db_n\star 1}
	\arrow[from=2-1, to=2-2]
	\arrow[from=1-1, to=1-2]
	\arrow[from=1-1, to=2-1]
	\arrow[from=1-2, to=2-2]
\end{tikzcd}\]
The induction hypothesis and the propositions \ref{prop:suspension of left Gray deformation retract unmarked} and \ref{prop:left Gray deformation retract stable under pushout unmarked} endow this morphism with a left $(n+2)$-Gray retract structure. We constructs similarly the right $(n+2)$-Gray retract structure for the inclusion $1\costar\Db_{n-1} \cup \emptyset\costar\Db_{n}\to 1 \costar \Db_n$.
\end{construction}

\begin{prop}
\label{prop:strict stuff are stable under Gray cone}
Let $C$ be a strict $\io$-category, $a$ a globular sum, and $f:a\to C$ any morphism. The $\io$-categories $C\coprod_a a\star 1$ and $1\costar a\coprod_a C$ are strict.
In particular, $a\star 1$ and $1\costar a$ are strict.
\end{prop}
\begin{proof}
We will prove the result by induction on the number of non-identity cells of $a$. Remark that for any globular sum $b$, there exists a globular sum $a$, an integer $n$, and a cocartesian square composed of globular morphism
\begin{equation}
\label{eq:first cocartesian square}
\begin{tikzcd}
	{\Db_{n-1}} & a \\
	{\Db_{n}} & b
	\arrow["{i^\alpha_{n-1}}"', from=1-1, to=2-1]
	\arrow["l"', from=2-1, to=2-2]
	\arrow["\lrcorner"{anchor=center, pos=0.125, rotate=180}, draw=none, from=2-2, to=1-1]
	\arrow[from=1-1, to=1-2]
	\arrow[from=1-2, to=2-2]
\end{tikzcd}
\end{equation}
with $\alpha=+$ if $n$ is odd, and $\alpha=-$ if $n$ is even, and such that $l$ admits a retract $r$. As $i^\alpha_{n-1}$ is globular, the pullback along this morphism preserves colimits according to theorem \ref{theo:pullback along conduche preserves colimits}. We then have a cartesian square:
\[\begin{tikzcd}
	a & b \\
	{\Db_{n-1}} & {\Db_{n}}
	\arrow["r", from=1-2, to=2-2]
	\arrow["{i^\alpha_{n-1}}"', from=2-1, to=2-2]
	\arrow[from=1-1, to=2-1]
	\arrow[from=1-1, to=1-2]
\end{tikzcd}\]
 We also define $a'$ as the pullback:
\[\begin{tikzcd}
	{a'} & b \\
	{\Db_{n-1}} & {\Db_{n}}
	\arrow["r", from=1-2, to=2-2]
	\arrow["{i^{-\alpha}_{n-1}}"', from=2-1, to=2-2]
	\arrow[from=1-1, to=2-1]
	\arrow[from=1-1, to=1-2]
\end{tikzcd}\]
and remark that $a'$ is a globular sum. Eventually, we fix a morphism $b\to C$. As $a$ and $a'$ are sub globular sum of $b$, the number of non-identity cells in each of them is strictly less than the one of $b$.
 We then suppose that for any strict $\io$-category $C$, and any morphism $b\to C$, the two induced $\io$-category 
 $C\coprod_a a\star 1$ and $C\coprod_{a'}a'\star 1$ are strict.
 
We are first willing to demonstrate that the two following squares are cartesian:
\begin{equation}
\label{the two annoying square}
\begin{tikzcd}
	{b\coprod_{a}a\star1} & {b\star 1} & {b\coprod_{a'}a'\star1} & {b\star 1} \\
	{[\Db_{n-1},1]} & {[\Db_{n},1]} & {[\Db_{n-1},1]} & {[\Db_{n},1]}
	\arrow[from=1-1, to=2-1]
	\arrow[from=1-1, to=1-2]
	\arrow["{[i^\alpha_{n-1},1]}"', from=2-1, to=2-2]
	\arrow[from=1-2, to=2-2]
	\arrow[from=1-3, to=2-3]
	\arrow[from=1-3, to=1-4]
	\arrow["{[i^{-\alpha}_{n-1},1]}"', from=2-3, to=2-4]
	\arrow[from=1-4, to=2-4]
\end{tikzcd}
\end{equation}
As $i^{\alpha}_{n-1}$, $i^{-\alpha}_{n-1}$, $[i^{\alpha}_{n-1},1]$, and $[i^{-\alpha}_{n-1},1]$ are globular, they are Conduché functors, and pullback along them preserves colimits according to theorem \ref{theo:pullback along conduche preserves colimits}.
The subcategory of $\Theta$ consisting of globular sums $b$ such that the two squares of \eqref{the two annoying square} are cartesian is then closed under colimits, and we can then reduce to the case where $b$ is a globe.

In this case, lemma \ref{lemma:gray operation on globes are strict}, proposition \ref{prop:suspension preserves stricte}, and the induction hypothesis imply that all the objects of these squares are strict. We can then show the cartesianess in $\zocat$, where it follows from lemma \ref{lemma: pullback and sum}.
This concludes the proof of the cartesianess of the square \eqref{the two annoying square}.

We claim that the square
\[\begin{tikzcd}
	{\{0\}} & {\{0\}} \\
	{[\Db_{n-1},1]} & {[\Db_n,1]}
	\arrow[from=1-1, to=1-2]
	\arrow[from=1-1, to=2-1]
	\arrow[from=1-2, to=2-2]
	\arrow[from=2-1, to=2-2]
\end{tikzcd}\]
is cartesian. This directly follows from proposition \ref{prop:cartesian squares} as all the objects are strict according to proposition \ref{prop:suspension preserves stricte}. This implies that the square
\begin{equation}
\label{eq:third square}
\begin{tikzcd}
	{C\otimes\{0\}} & {C\otimes\{0\}} \\
	{[\Db_{n-1},1]} & {[\Db_n,1]}
	\arrow[from=1-1, to=1-2]
	\arrow[from=1-1, to=2-1]
	\arrow[from=1-2, to=2-2]
	\arrow[from=2-1, to=2-2]
\end{tikzcd}
\end{equation}
is cartesian. Using one more time the fact that pullback along $[i^-_{n-1},1]$ and $[i^+_{n-1},1]$ preserves colimits, the cartesian squares \eqref{the two annoying square}
and \eqref{eq:third square} induces two cartesian squares:
\begin{equation}
\label{eq:two square in the proof of strict}
\begin{tikzcd}
	{C\coprod_{a}a\star1} & {C\coprod_bb\star 1} & {C\coprod_{a'}a'\star1} & {C\coprod_bb\star 1} \\
	{[\Db_{n-1},1]} & {[\Db_{n},1]} & {[\Db_{n-1},1]} & {[\Db_{n},1]}
	\arrow[from=1-1, to=2-1]
	\arrow[from=1-1, to=1-2]
	\arrow["{[i^\alpha_{n-1},1]}"', from=2-1, to=2-2]
	\arrow[from=1-2, to=2-2]
	\arrow[from=1-3, to=2-3]
	\arrow[from=1-3, to=1-4]
	\arrow["{[i^{-\alpha}_{n-1},1]}"', from=2-3, to=2-4]
	\arrow[from=1-4, to=2-4]
\end{tikzcd}
\end{equation}
By the induction hypothesis, the two top left objects are strict. 
Eventually, remark that the cocartesian \eqref{eq:first cocartesian square} induces  a cocartesian square
\[\begin{tikzcd}
	{\Db_n\coprod_{\Db_{n-1}}\Db_{n-1}\star 1} & {\Db_{n}\star 1} \\
	{C\coprod_{a}a\star1} & {C\coprod_bb\star 1}
	\arrow[from=2-1, to=2-2]
	\arrow[from=1-1, to=2-1]
	\arrow[from=1-1, to=1-2]
	\arrow[from=1-2, to=2-2]
	\arrow["\lrcorner"{anchor=center, pos=0.125, rotate=180}, draw=none, from=2-2, to=1-1]
\end{tikzcd}\] As the top horizontal morphism is a left $(n+1)$-Gray deformation retract, proposition \ref{prop:left Gray deformation retract stable under pushout unmarked} implies that the left square of \eqref{eq:two square in the proof of strict} also is. Lemma \ref{lemma:strictification2} then implies that $C\coprod_bb\star 1$ is strict. We prove similarly that $1\costar b\coprod_bC$ is strict.
\end{proof}

\vspace{1cm} 
We want to generalize proposition \ref{prop:strict stuff are stable under Gray cone} and provide an analogue for the Gray Cylinder. 
In what follows, we will utilize the results of sections \ref{section:Colimit of left cartesian fibrations} (specifically the corollaries \ref{cor:cor of the past101}, \ref{cor:cor of the past10}, \ref{cor:cor of the past3}).
We assure the reader that this is not a tautology, as the proofs of these results are not based on the following propositions and theorems.

\begin{lemma}
\label{lemma: first comming grom the future}
Let $j:C\to D$ be a morphism between $\io$-categories. The following squares are cartesian:
\[\begin{tikzcd}
	{1\costar C\coprod_CD} & {1\costar D} & {D\coprod_CC\star1} & {D\star 1} \\
	{[C,1]} & {[D,1]} & {[C,1]} & {[D,1]}
	\arrow[from=1-1, to=1-2]
	\arrow[from=1-1, to=2-1]
	\arrow["\lrcorner"{anchor=center, pos=0.125}, draw=none, from=1-1, to=2-2]
	\arrow[from=1-2, to=2-2]
	\arrow[from=1-3, to=1-4]
	\arrow[from=1-3, to=2-3]
	\arrow["\lrcorner"{anchor=center, pos=0.125}, draw=none, from=1-3, to=2-4]
	\arrow[from=1-4, to=2-4]
	\arrow["{[j,1]}"', from=2-1, to=2-2]
	\arrow[from=2-3, to=2-4]
\end{tikzcd}\]
\end{lemma}
\begin{proof}
This is the content of corollary \ref{cor:cor of the past10}.
\end{proof}

\begin{lemma}
\label{lemma: second comming grom the future}
Let $j:C\to D$ be a morphism between $\io$-categories. The following squares are cartesian:
\[\begin{tikzcd}
	{D\coprod_{C\otimes \{0\}}C\otimes[1]} & {D\star 1} && {C\otimes[1]\coprod_{C\otimes\{1\}}D} & {1\costar D} \\
	{1\costar C} & {[D,1]} && {C\star 1} & {[D,1]}
	\arrow[from=1-1, to=1-2]
	\arrow[from=1-1, to=2-1]
	\arrow["\lrcorner"{anchor=center, pos=0.125}, draw=none, from=1-1, to=2-2]
	\arrow[from=1-2, to=2-2]
	\arrow[from=1-4, to=1-5]
	\arrow[from=1-4, to=2-4]
	\arrow["\lrcorner"{anchor=center, pos=0.125}, draw=none, from=1-4, to=2-5]
	\arrow[from=1-5, to=2-5]
	\arrow[from=2-1, to=2-2]
	\arrow[from=2-4, to=2-5]
\end{tikzcd}\]
\end{lemma}
\begin{proof}
We will show only the cartesianness of the first square; the second follows by duality. The $\io$-category $1\costar C$ is equivalent to the colimit $\colim_{\Theta^{+}_{/C}}1\costar \uvar$ where $\Theta^+$ defined in \ref{defi:thetaplus}. Corollary \ref{cor:cor of the past3} states that pullback along the morphism $D\star 1\to [D,1]$ preserves colimits. We can then reduce to the case where $C$ is the empty $\io$-category or a globe. If $C$ is the empty $\io$-category, the result follows from lemma \ref{lemma: first comming grom the future}. It remains to demonstrate the case where $C$ is of shape $\Db_n$ for $n$ an integer.

 Remark now that the square factors as
\[\begin{tikzcd}
	{D\coprod_{\Db_n\otimes\{0\}}\Db_n\otimes[1]} & {D\coprod_{\Db_n}\Db_n\star 1} & {D\star 1} \\
	{1\costar \Db_n} & {[\Db_n,1]} & {[D,1]}
	\arrow[from=1-1, to=1-2]
	\arrow[from=1-1, to=2-1]
	\arrow[from=1-2, to=1-3]
	\arrow[from=1-2, to=2-2]
	\arrow[from=1-3, to=2-3]
	\arrow[from=2-1, to=2-2]
	\arrow[from=2-2, to=2-3]
\end{tikzcd}\]
The lemma \ref{lemma: first comming grom the future} implies that the right one is cartesian, and it is sufficient to demonstrate that the left one also is cartesian.

Using  corollary \ref{cor:cor of the past3} that states that pullback along the morphism $1\costar \Db_n\to [\Db_n,1]$ preserves colimits, it is sufficient to demonstrate that the two following squares are cartesian.
\[\begin{tikzcd}
	{\Db_n\otimes[1]} & {\Db_n\star1} & D & D \\
	{1\costar\Db_n} & {[\Db_n,1]} & {1\costar\Db_n} & {[\Db_n,1]}
	\arrow[from=1-1, to=1-2]
	\arrow[from=1-1, to=2-1]
	\arrow[from=1-2, to=2-2]
	\arrow[from=1-3, to=1-4]
	\arrow[from=1-3, to=2-3]
	\arrow[from=1-4, to=2-4]
	\arrow[from=2-1, to=2-2]
	\arrow[from=2-3, to=2-4]
\end{tikzcd}\]
For the first one, as all the objects are strict according to lemma \ref{lemma:gray operation on globes are strict}. We can then show the result in $\zocat$ where it follows from proposition \ref{prop:cartesian squares}. Remark now that the second square factors as
\[\begin{tikzcd}
	D & D \\
	1 & {\{0\}} \\
	{1\costar \Db_n} & {[\Db_n,1]}
	\arrow[from=1-1, to=1-2]
	\arrow[from=1-1, to=2-1]
	\arrow[from=1-2, to=2-2]
	\arrow[from=2-1, to=2-2]
	\arrow[from=2-1, to=3-1]
	\arrow[from=2-2, to=3-2]
	\arrow[from=3-1, to=3-2]
\end{tikzcd}\]
The top square is obviously cartesian. Remark that all objects of the second one are strict according to lemma \ref{lemma:gray operation on globes are strict}, and we can then show the cartesianess in $\zocat$ where it follows from proposition \ref{prop:cartesian squares}.
\end{proof}

\begin{lemma}
\label{lemma: thrid comming grom the future}
Let $j:C\to D$ and $k:C\to E$ be two morphisms between $\io$-categories. The following square is cartesian:
\[\begin{tikzcd}
	{D\coprod_{C\otimes \{0\}}C\otimes[1]\coprod_{C\otimes\{1\}}E} & {D\coprod_CC\star 1} \\
	{1\costar C\coprod_C E} & {[C,1]}
	\arrow[from=1-1, to=1-2]
	\arrow[from=1-1, to=2-1]
	\arrow[from=1-2, to=2-2]
	\arrow[""{name=0, anchor=center, inner sep=0}, from=2-1, to=2-2]
	\arrow["\lrcorner"{anchor=center, pos=0.125}, draw=none, from=1-1, to=0]
\end{tikzcd}\]
\end{lemma}
\begin{proof}
By remarking that $[\emptyset,1] \sim \{0\}\amalg \{1\}$ and $\emptyset \star 1\sim 1$, lemma \ref{lemma: first comming grom the future} induces a cartesian square:
\[\begin{tikzcd}
	{D\coprod 1} & {D\star 1} \\
	{\{0\}\coprod\{1\}} & {[D,1]}
	\arrow[from=1-1, to=1-2]
	\arrow[from=1-1, to=2-1]
	\arrow["\lrcorner"{anchor=center, pos=0.125}, draw=none, from=1-1, to=2-2]
	\arrow[from=1-2, to=2-2]
	\arrow[from=2-1, to=2-2]
\end{tikzcd}\]

This implies that the squares:
\[\begin{tikzcd}
	C & E & {D\star 1} \\
	{C\times\{1\}} & {E\times\{1\}} & {[D,1]}
	\arrow[from=1-1, to=1-2]
	\arrow[from=1-1, to=2-1]
	\arrow["\lrcorner"{anchor=center, pos=0.125}, draw=none, from=1-1, to=2-2]
	\arrow[from=1-2, to=1-3]
	\arrow[from=1-2, to=2-2]
	\arrow["\lrcorner"{anchor=center, pos=0.125}, draw=none, from=1-2, to=2-3]
	\arrow[from=1-3, to=2-3]
	\arrow[from=2-1, to=2-2]
	\arrow[from=2-2, to=2-3]
\end{tikzcd}\]
are cartesian. According to corollary \ref{cor:cor of the past3}, pullback along the morphism $D\star 1 \to [D,1]$ preserves colimits. Combined with lemma \ref{lemma: second comming grom the future}, this implies that the outer square of the diagram
\[\begin{tikzcd}
	{D\coprod_{C\otimes \{0\}}C\otimes[1]\coprod_{C\otimes\{1\}}E} & {D\coprod_CC\star 1} & {D\star 1} \\
	{1\costar C\coprod_C E} & {[C,1]} & {[D,1]}
	\arrow[from=1-1, to=1-2]
	\arrow[from=1-1, to=2-1]
	\arrow[from=1-2, to=1-3]
	\arrow[from=1-2, to=2-2]
	\arrow[from=1-3, to=2-3]
	\arrow[from=2-1, to=2-2]
	\arrow[from=2-2, to=2-3]
\end{tikzcd}\]
is cartesian. Furthermore, the lemma \ref{lemma: first comming grom the future} states the right hand square is cartesian. By the cancellation property of cartesian squares, this concludes the proof.
\end{proof}

\begin{theorem}
\label{theo:strict stuff are pushout}
Let $C\to D$ and $C\to E$ be two morphisms between strict $\io$-categories. The $\io$-categories
$D\coprod_{C}C\star 1$,  $1\costar C\coprod_CD$, and $D\coprod_{C\otimes\{0\}}C\otimes[1]\coprod_{C\otimes \{1\}}E$ are strict. In particular, $C\star 1$, $1\costar C$, and $C\otimes[n]$ for any integer $n$, are strict.
\end{theorem}
\begin{proof}
The proposition \ref{prop:suspension preserves stricte} implies that $[C,1]$ and $[D,1]$ are strict. As right adjoints preserve strict objects, so are $[C,1]_{0/}$,  $[C,1]_{/1}$,  $[D,1]_{0/}$, and $[D,1]_{/1}$. Corollary \ref{cor:cor of the past101} then implies that $1\costar C$, $C\star 1$, $1\costar D$, and $C\star 1$ are strict. As strict objects are closed by limits, lemmas \ref{lemma: first comming grom the future}, \ref{lemma: second comming grom the future} and \ref{lemma: thrid comming grom the future} conclude the proof.
\end{proof}

\begin{cor}
\label{cor:characterisaiont of Gray operation}
Let $F$ be an endofunctor of $\ocat$ such that the induced functor $\ocat\to \ocat_{F(\emptyset)/}$ is colimit preserving and $\psi$ an invertible natural transformation between $F(\Db_n)$ and $H(\Db_n)$ where $H$ is either the Gray cylinder, the Gray cone, the Gray $\circ$-cone or an iterated suspension.

Then, the natural transformation $\psi$ can be uniquely extended to an natural transformation between $F$ and $G$.  Moreover, this natural transformation is unique.
\end{cor}
\begin{proof}
We denote by $\Theta^+$ the category obtained from $\Theta$ by adding an initial element $\emptyset$. 
Remark first that the theorem \ref{theo:appendince unicity of operation} implies that we have an invertible natural transformation 
$$\pi_0 \circ F_{|\Theta^+}\to \pi_0 \circ H_{|\Theta^+}.$$
The theorem \ref{theo:strict stuff are pushout} and \ref{prop:suspension preserves stricte} imply that the canonical morphism 
$$H_{|\Theta^+}\to \N\circ \pi_0\circ 	H_{|\Theta^+}$$ is an equivalence. The two previous morphisms then induce a comparison:
$$F_{|\Theta^+}\to \N\circ \pi_0 \circ F_{|\Theta^+}\to H_{|\Theta^+}$$
By extension by colimits, this produces a natural transformation $\phi:F\to H$ extending $\psi$. The full sub $\infty$-groupoid of objects $C$ such that $\phi_C:F(C)\to H(C)$ is an equivalence is closed by colimits, contains globes, and so is the maximal sub 	$\infty$-groupoid.
\end{proof}
The previous corollary implies that the propositions \ref{prop:eq for cylinder} and \ref{prop:eq for Gray cone} characterize respectively the Gray cylinder, the Gray cone, and the Gray $\circ$-cone. 

\begin{cor}
\label{cor:crushing of Gray tensor is identitye}
The colimit preserving endofunctor $F:\ocat\to \ocat$, sending $[a,n]$ to the colimit of the span
$$\coprod_{k\leq n}\{k\}\leftarrow \coprod_{k\leq n}a\otimes\{k\}\to a\otimes[n]$$
is equivalent to the identity.
\end{cor}
\begin{proof}
The proposition \ref{prop:comparaison betwen otimes and suspension week case withou the cocartesian square.} implies that the restriction of $F$ to globes is equivalent to the restriction of the identity to globes. As the identity is the $0$-iterated suspension, we can apply corollary \ref{cor:characterisaiont of Gray operation}.
\end{proof}

\begin{remark}
\label{rem:somthing is the iun category}
The last corollary implies that for any $\io$-category $C$ and any globular sum $a$, the simplicial $\infty$-groupoid
$$\begin{array}{rcl}
\Delta^{op}&\to &\igrd\\
~[n]~&\mapsto &\Hom([a,n],C)
\end{array} $$
is a $\iun$-category.
\end{remark}

\begin{theorem}
\label{theo:formula between pullback of slice and tensor}
Let $C$ be an $\io$-category. The two following canonical squares are cartesian:
\[\begin{tikzcd}
	1 & {1\costar C} & 1 & {C\star 1} \\
	{\{0\}} & {[C,1]} & {\{1\}} & {[C,1]}
	\arrow[from=1-1, to=1-2]
	\arrow[from=2-1, to=2-2]
	\arrow[from=1-1, to=2-1]
	\arrow[from=1-2, to=2-2]
	\arrow[from=1-3, to=1-4]
	\arrow[from=2-3, to=2-4]
	\arrow[from=1-3, to=2-3]
	\arrow[from=1-4, to=2-4]
\end{tikzcd}\]
The five squares appearing in the following canonical diagram are both cartesian and cocartesian:
\[\begin{tikzcd}
	& {C\otimes\{0\}} & 1 \\
	{C\otimes\{1\}} & {C\otimes[1]} & {C\star 1} \\
	1 & {1\costar C} & {[C,1]}
	\arrow[from=2-3, to=3-3]
	\arrow[from=3-2, to=3-3]
	\arrow[from=2-2, to=3-2]
	\arrow[from=2-2, to=2-3]
	\arrow[from=1-2, to=1-3]
	\arrow[from=1-3, to=2-3]
	\arrow[from=1-2, to=2-2]
	\arrow[from=2-1, to=2-2]
	\arrow[from=3-1, to=3-2]
	\arrow[from=2-1, to=3-1]
\end{tikzcd}\]
\end{theorem}
\begin{proof}
The five squares of the second diagram are cocartesian by construction and by proposition \ref{prop:comparaison betwen otimes and suspension week case withou the cocartesian square.}. 

By remarking that $[\emptyset,1] \sim \{0\}\amalg \{1\}$ and $\emptyset \star 1\sim 1$, lemma \ref{lemma: first comming grom the future} induces a cartesian square:
\[\begin{tikzcd}
	{C\coprod 1} & {C\star 1} \\
	{\{0\}\coprod\{1\}} & {[C,1]}
	\arrow[from=1-1, to=1-2]
	\arrow[from=1-1, to=2-1]
	\arrow["\lrcorner"{anchor=center, pos=0.125}, draw=none, from=1-1, to=2-2]
	\arrow[from=1-2, to=2-2]
	\arrow[from=2-1, to=2-2]
\end{tikzcd}\]

This implies that the squares:
\[\begin{tikzcd}
	C & {C\star 1} & 1 & {C\star 1} \\
	{\{0\}} & {[C,1]} & {\{1\}} & {[C,1]}
	\arrow[from=1-1, to=1-2]
	\arrow[from=1-1, to=2-1]
	\arrow[from=1-2, to=2-2]
	\arrow[from=1-3, to=1-4]
	\arrow[from=1-3, to=2-3]
	\arrow[from=1-4, to=2-4]
	\arrow[from=2-1, to=2-2]
	\arrow[from=2-3, to=2-4]
\end{tikzcd}\]
are cartesian. We can similarly demonstrate that the square:
\[\begin{tikzcd}
	1 & {1\costar C} & C & {1\costar C} \\
	{\{0\}} & {[C,1]} & {\{1\}} & {[C,1]}
	\arrow[from=1-1, to=1-2]
	\arrow[from=1-1, to=2-1]
	\arrow[from=1-2, to=2-2]
	\arrow[from=1-3, to=1-4]
	\arrow[from=1-3, to=2-3]
	\arrow[from=1-4, to=2-4]
	\arrow[from=2-1, to=2-2]
	\arrow[from=2-3, to=2-4]
\end{tikzcd}\]
are cartesian. The lemma \ref{lemma: second comming grom the future} implies that the square:
\[\begin{tikzcd}
	{C\otimes[1]} & {C\star 1} \\
	{1\costar C} & {[C,1]}
	\arrow[from=1-1, to=1-2]
	\arrow[from=1-1, to=2-1]
	\arrow[from=1-2, to=2-2]
	\arrow[from=2-1, to=2-2]
\end{tikzcd}\]
is cartesian. The cartesianess of the remaining square follows by right cancellation of cartesian squares.
\end{proof}

%

\chapter{The $\iun$-category of marked $\io$-categories}	

\minitoc
\vspace{1cm}
%
%
%
%
%
%
%
%
%
%

This chapter is dedicated to the study of \textit{marked $\io$-categories}, which are pairs $(C,tC)$, where $C$ is an $\io$-category and $tC:=(tC_n)_{n>0}$ is a sequence of full sub $\infty$-groupoids of $C_n$ that include identities and are stable under composition and whiskering with (possibly unmarked) cells of lower dimensions. There are two canonical ways to mark an $\io$-category $C$. In the first, denoted by $C^\flat$, we mark as little as possible. In the second, denoted by $C^\sharp$, we mark everything.

The first section of the chapter defines these objects and establishes analogs of many results on $\io$-categories to this new framework. In particular, the \textit{marked Gray cylinder} $\uvar\otimes [1]^\sharp$ is defined. If $A$ is an $\io$-category, the underlying $\io$-category of $A^\sharp\otimes[1]^\sharp$ is $A\times [1]$, and the underlying $\io$-category of $A^\flat\otimes[1]^\sharp$ is $A\otimes[1]$. By varying the marking, and at the level of underlying $\io$-categories, we "continuously" move from the cartesian product with the directed interval to the Gray tensor product with the directed interval.

The motivation for introducing markings comes from the notion of \textit{left (and right) cartesian fibrations}. A left cartesian fibration is a morphism between marked $\io$-categories such that only the marked cells of the codomain have cartesian lifting, and the marked cells of the domain correspond exactly to such cartesian lifting. For example, a left cartesian fibration $X\to A^\sharp$ is just a "usual" left cartesian fibration where we have marked the cartesian lifts of the domain, and every morphism $C^\flat \to D^\flat$ is a left cartesian fibration.

After defining and enumerating the stability properties enjoyed by this class of left (and right) cartesian fibration, we give several characterizations of this notion in theorem \ref{theo:other characterisation of left caresian fibration}. 

The more general subclass of left cartesian fibrations that still behaves well is the class of \textit{classified left cartesian fibrations}. 
This corresponds to left cartesian fibrations $X\to A$ such that there exists a cartesian square:
\[\begin{tikzcd}
	X & Y \\
	A & {A^\sharp}
	\arrow[from=1-1, to=2-1]
	\arrow[from=2-1, to=2-2]
	\arrow[from=1-1, to=1-2]
	\arrow[from=1-2, to=2-2]
	\arrow["\lrcorner"{anchor=center, pos=0.125}, draw=none, from=1-1, to=2-2]
\end{tikzcd}\]
 where the right vertical morphism is a left cartesian fibration and $A^\sharp$ is obtained from $A$ by marking all cells. In the second section, we prove the following fundamental result:

\begin{itheorem}[\ref{theo:pullback along un marked cartesian fibration}]
Let $p:X\to A$ be a classified left cartesian fibration. Then the functor $p^*:\ocatm_{/A}\to \ocatm_{/X}$ preserves colimits.
\end{itheorem}

The third subsection is devoted to the proof of the following theorem
\begin{itheorem}[\ref{theo:left cart stable by colimit}]
Let $A$ be an $\io$-category and $F:I\to \ocatm_{/A^\sharp}$ be a diagram that is pointwise a left cartesian fibration. The colimit 
$\colim_IF$, computed in $\ocatm_{/A^\sharp}$, is a left cartesian fibration over $A^\sharp$.
\end{itheorem}

In the fourth subsection we study \textit{smooth} and \textit{proper} morphisms and we obtain the following expected result:
\begin{iprop}[\ref{prop:quillent theorem A}]
For a morphism $X\to A^\sharp$, and an object $a$ of $A$, we denote by $X_{/a}$ the marked $\io$-category fitting in the following cartesian squares. 
\[\begin{tikzcd}[cramped]
	{X_{/a}} & X \\
	{A^\sharp_{/a}} & {A^\sharp}
	\arrow[from=1-1, to=1-2]
	\arrow[from=1-1, to=2-1]
	\arrow["\lrcorner"{anchor=center, pos=0.125}, draw=none, from=1-1, to=2-2]
	\arrow[from=1-2, to=2-2]
	\arrow[from=2-1, to=2-2]
\end{tikzcd}\]
We denote by $\bot:\ocatm\to \ocat$ the functor sending a marked $\io$-category to its localization by marked cells.
\begin{enumerate}
\item Let $E$, $F$ be two elements of $\ocatm_{/A^\sharp}$ corresponding to morphisms $X\to A^\sharp$, $Y\to A^\sharp$, and
 $\phi:E\to F$ a morphism between them. We denote by $\Fb E$ and $\Fb F$ the left cartesian fibrant replacement of $E$ and $F$. 
 
The induced morphism $\Fb\phi:\Fb E\to \Fb F$ is an equivalence if and only if for any object $a$ of $A$, the induced morphism 
$$\bot X_{/a}\to \bot Y_{/a}$$ 
is an equivalence of $\io$-categories.
\item A morphism $X\to A^\sharp$ is initial if and only if for any object $a$ of $A$, $\bot X_{/a}$ is the terminal $\io$-category.
\end{enumerate}
\end{iprop}

Finally, in the last subsection, for a marked $\io$-category $I$, we define and study a (huge) $\io$-category $\uLCartc(I)$ that has classified left cartesian fibrations as objects and morphisms between classified left cartesian fibrations as arrows.

\paragraph{Cardinality hypothesis.}
We fix during this chapter two Grothendieck universes $\V\in\Wcard$, such that $\omega\in \U$. When nothing is specified, this corresponds to the implicit choice of the cardinality $\V$.
We then denote by $\Set$ the $\Wcard$-small $1$-category of $\V$-small sets, $\igrd$ the $\Wcard$-small $\iun$-category of $\V$-small $\infty$-groupoids and $\icat$ the $\Wcard$-small $\iun$-category of $\V$-small $\iun$-categories.

\section{Marked $\io$-categories}
\subsection{Definition of marked $\io$-categories}
\begin{definition}
A \notion{marked $\zo$-category} is a pair $(C,tC)$ where $C$ is an $\zo$-category and $tC:=(tC_n)_{n>0}$ is a sequence of subsets of $C_n$, containing identities, stable by composition and by whiskering with (possibly unmarked) cells of lower dimension.
A $n$-cell $a:\Db_n\to (C,tC)$ is \wcsnotion{marked}{marked $n$-cell}{for marked $\zo$-categories} if it belongs to $tC_n$.

A \textit{morphism of marked $\io$-categories} $f:(C,tC)\to (D,tD)$ is the data of a morphism on the underlying $\zo$-categories such that $f(tC_n)\subset tD_n$.
The category of marked $\zo$-categories is denoted by \wcnotation{$\zocatm$}{((a40@$\zocatm$}. 
\end{definition}

\begin{construction}
 There are two canonical ways to mark an $\zo$-category. For $C\in \zocat$, we define $C^\sharp := (C,(C_n)_{n>0})$ and $C^\flat := (C,(\Ib(C_{n-1})_{n>0}))$. The first one corresponds to the case where all cells are marked, and the second one where only the identities are marked. These two functors fit in the following adjoint triple:
\ssym{((b10@$(\uvar)^\sharp$}{for (marked) $\zo$-categories}\ssym{((b20@$(\uvar)^\flat$}{for (marked) $\zo$-categories}\ssym{((b30@$(\uvar)^\natural$}{for (marked) $\zo$-categories}
\[\begin{tikzcd}
	{(\uvar)^{\flat}: \zocat} & {\zocatm:(\uvar)^\natural} & {(\uvar)^\natural:\zocatm} & { \zocat	:(\uvar)^\sharp}
	\arrow[""{name=0, anchor=center, inner sep=0}, shift left=2, from=1-1, to=1-2]
	\arrow[""{name=1, anchor=center, inner sep=0}, shift left=2, from=1-2, to=1-1]
	\arrow[""{name=2, anchor=center, inner sep=0}, shift left=2, from=1-3, to=1-4]
	\arrow[""{name=3, anchor=center, inner sep=0}, shift left=2, from=1-4, to=1-3]
	\arrow["\dashv"{anchor=center, rotate=-90}, draw=none, from=0, to=1]
	\arrow["\dashv"{anchor=center, rotate=-90}, draw=none, from=2, to=3]
\end{tikzcd}\]
where $(\uvar)^\natural$ is the obvious forgetfull functor.
To simplify notations, for a marked $\zo$-category $C$, the marked $\zo$-categories $(C^\natural)^\flat$ and $(C^\natural)^\sharp$ will be simply denoted by $C^\flat$ and $C^\sharp$.
\end{construction}

\begin{example}
For $n$ an integer, we denote by $(\Db_n)_t$ the marked $\zo$-category whose underlying $\zo$-category is $\Db_n$ and whose only non-trivial marked cell is the top dimensional one.
\end{example}

\begin{definition}
We define the category \wcnotation{$t\Theta$}{(tTheta@$t\Theta$} as the full subcategory of $\zocatm$ whose objects are of shape $a^\flat$ for $a$ a globular sum, or \wcnotation{$(\Db_n)_t$}{(dn@$(\Db_n)_t$} for an integer $n\in\Nb$. Remark that this subcategory is dense in $\zocatm$.
\end{definition}

\begin{definition}
 We define the $\iun$-category of \wcnotion{stratified $\infty$-presheaves on $\Theta$}{stratified $\infty$-presheaf on $\Theta$}, noted by \wcnotation{$\tiPsh{\Theta}$}{(tPsh@$\tiPsh{\Theta}$}, as the full sub $\iun$-category of $\iPsh{t\Theta}$ whose objects correspond to $\infty$-presheaves $X$ such that the induced morphism
$X((\Db_n)_t)\to X(\Db_n)$
is a monomorphism. 
\end{definition}

\begin{prop}
\label{prop:marked presheaves are locally cartesian closed}
The $\iun$-category $\tiPsh{\Theta}$ is locally cartesian closed. 
\end{prop}
\begin{proof}
The $\iun$-category $\tiPsh{\Theta}$ is the localization of the $\iun$-category $\iPsh{t\Theta}$ along the set of map $\widehat{I}$ with $$I:=\{(\Db_n)_t\coprod_{\Db_n}(\Db_n)_t\to(\Db_n)_t\}_n.$$
As $\iPsh{t\Theta}$ is locally cartesian closed, we have to show that for any integer $n>0$ and any cartesian square in $\iPsh{t\Theta}$:
\[\begin{tikzcd}
	{X'} & X \\
	{(\Db_n)_t\coprod_{\Db_n}(\Db_n)_t} & {(\Db_n)_t}
	\arrow["{ }", from=1-2, to=2-2]
	\arrow[from=1-1, to=2-1]
	\arrow["{ }", from=1-1, to=1-2]
	\arrow[from=2-1, to=2-2]
	\arrow["\lrcorner"{anchor=center, pos=0.125}, draw=none, from=1-1, to=2-2]
\end{tikzcd}\]
the top horizontal morphism is in $\widehat{I}$. Using once again the locally cartesian closeness of $\iPsh{t\Theta}$, it is sufficient to show that for any integer $n>0$ and for any morphism 
$j:b\to (\Db_n)_t$ between elements of $t\Theta$, the morphism $i$ appearing in the following cartesian square of $\iPsh{t\Theta}$ is an equivalence or is in $I$:
\[\begin{tikzcd}
	B & b \\
	{(\Db_n)_t\coprod_{\Db_n}(\Db_n)_t} & {(\Db_n)_t}
	\arrow["j", from=1-2, to=2-2]
	\arrow[from=1-1, to=2-1]
	\arrow["i", from=1-1, to=1-2]
	\arrow[from=2-1, to=2-2]
\end{tikzcd}\]
Two cases have to be considered. If $j$ is the identity this is trivially true. If $j$ is any other morphism, it factors through $\Db_n\to (\Db_n)_t$. Remark now that we have two cartesian squares
\[\begin{tikzcd}
	b & b \\
	{\Db_n} & {\Db_n} \\
	{\Db_n} & {(\Db_n)_t}
	\arrow[from=1-1, to=1-2]
	\arrow[from=1-1, to=2-1]
	\arrow["\lrcorner"{anchor=center, pos=0.125}, draw=none, from=1-1, to=2-2]
	\arrow[from=1-2, to=2-2]
	\arrow[from=2-1, to=2-2]
	\arrow[from=2-1, to=3-1]
	\arrow["\lrcorner"{anchor=center, pos=0.125}, draw=none, from=2-1, to=3-2]
	\arrow[from=2-2, to=3-2]
	\arrow[from=3-1, to=3-2]
\end{tikzcd}\]
This implies that $b\sim \Db_n\times_{(\Db_{n})_t} b$ and then that $B\sim b\coprod_bb\sim b\sim b$. In particular, the morphism $i$ is the identity, which concludes the proof.
 \end{proof}

\begin{construction}
For a stratified $\infty$-presheaf $X$ on $\Theta$, we denote by $tX_n$ the $\infty$-groupoid $X((\Db_n)_t)$.
A stratified $\infty$-presheaves on $\Theta$ is then the data of a pair $(X,tX)$ such that $X\in \iPsh{\Theta}$ and $tX:=(tX_n)_{n>0}$ is a sequence of $\infty$-groupoid such that for any $n>0$, $tX_n$ is a full sub $\infty$-groupoid of $X_n$ including all units.

For $X\in \iPsh{\Theta}$, we define $X^\sharp := (X,(X_n)_{n>0})$ and $X^\flat := (X,(\Ib (X_{n-1})_{n>0})$ and we have an adjoint triple 
\ssym{((b10@$(\uvar)^\sharp$}{for (marked) $\io$-categories}\ssym{((b20@$(\uvar)^\flat$}{for (marked) $\io$-categories}\ssym{((b30@$(\uvar)^\natural$}{for (marked) $\io$-categories}
\[\begin{tikzcd}
	{(\uvar)^{\flat}: \iPsh{\Theta}} & {\tiPsh{\Theta}:(\uvar)^\natural} & {(\uvar)^\natural:\tiPsh{\Theta}} & { \Psh{\Theta}:(\uvar)^\sharp}
	\arrow[""{name=0, anchor=center, inner sep=0}, shift left=2, from=1-1, to=1-2]
	\arrow[""{name=1, anchor=center, inner sep=0}, shift left=2, from=1-2, to=1-1]
	\arrow[""{name=2, anchor=center, inner sep=0}, shift left=2, from=1-3, to=1-4]
	\arrow[""{name=3, anchor=center, inner sep=0}, shift left=2, from=1-4, to=1-3]
	\arrow["\dashv"{anchor=center, rotate=-90}, draw=none, from=0, to=1]
	\arrow["\dashv"{anchor=center, rotate=-90}, draw=none, from=2, to=3]
\end{tikzcd}\]
where $(\uvar)^\natural$ is the obvious forgetful functor.
\end{construction}

\begin{definition} We define the category \wcnotation{$t\Delta[t\Theta]$}{(tDeltaTheta@$t\Delta[t\Theta]$} as the pullback
\[\begin{tikzcd}
	{t\Delta[t\Theta]} & t\Theta \\
	{\Delta[\Theta]} & \Theta
	\arrow["{(\uvar)^\natural}", from=1-2, to=2-2]
	\arrow[from=2-1, to=2-2]
	\arrow[from=1-1, to=2-1]
	\arrow["\lrcorner"{anchor=center, pos=0.125}, draw=none, from=1-1, to=2-2]
	\arrow[from=1-1, to=1-2]
\end{tikzcd}\]
The objects of $t\Delta[t\Theta]$ then are of shape $[1]^\sharp$ or $[a,n]$ with $a\in t\Theta$ and $n\in \Delta$.
The $(\infty,1)$-category of \wcnotion{stratified presheaves on $\Delta[\Theta]$}{stratified $\infty$-presheaf on $\Delta[\Theta]$}, denoted by $\tiPsh{\Delta[\Theta]}$, is the full sub $\iun$-category of $\iPsh{t\Delta[t\Theta]}$ whose objects correspond to $\infty$-presheaves $X$ such that the induced morphism
$X((\Db_n)_t)\to X(\Db_n)$
is a monomorphism. 
\end{definition}

\begin{prop}
\label{prop:marked presheaves are locally cartesian closed2}
The $\iun$-category $\tiPsh{\Delta[\Theta]}$ is locally cartesian closed. 
\end{prop}
\begin{proof}
The proof is almost identical to the one of proposition \ref{prop:marked presheaves are locally cartesian closed}
\end{proof}

\begin{definition} For a stratified $\infty$-presheaf $X$ on $\Delta[\Theta]$, we denote by $tX_1$ the   $\infty$-groupoid $X([1]^\sharp)$, and for any $n>1$, we denote by $tX_n$ the $\infty$-groupoid $X((\Db_n)_t)$.

A stratified $\infty$-presheaf on $\Delta[\Theta]$ is then the data of a pair $(X,tX)$ such that $X\in \iPsh{\Delta[\Theta]}$
and $tX:=(tX_n)_{n>0}$ is a sequence of $\infty$-groupoid such that for any $n>0$, $tX_n$ is a full sub $\infty$-groupoid of $X_n$ including all units.

 For $X\in \iPsh{\Delta[\Theta]}$, we define once again $X^\sharp := (X,( X_n)_{n>0})$ and $X^\flat := (X, (\Ib (X_{n-1}))_{n>0})$ and we still have an adjoint triple
\[\begin{tikzcd}[column sep=0.5cm]
	{(\uvar)^\natural\iPsh{\Delta[\Theta]}} & {\tiPsh{\Delta[\Theta]}:(\uvar)^\natural} & {(\uvar)^\natural:\tiPsh{\Delta[\Theta]}} & {\iPsh{\Delta[\Theta]}:(\uvar)^\sharp}
	\arrow[""{name=0, anchor=center, inner sep=0}, shift left=2, from=1-1, to=1-2]
	\arrow[""{name=1, anchor=center, inner sep=0}, shift left=2, from=1-2, to=1-1]
	\arrow[""{name=2, anchor=center, inner sep=0}, shift left=2, from=1-3, to=1-4]
	\arrow[""{name=3, anchor=center, inner sep=0}, shift left=2, from=1-4, to=1-3]
	\arrow["\dashv"{anchor=center, rotate=-90}, draw=none, from=0, to=1]
	\arrow["\dashv"{anchor=center, rotate=-90}, draw=none, from=2, to=3]
\end{tikzcd}\]
where $(\uvar)^\natural$ is the obvious forgetfull functor.
\end{definition}

\begin{construction}
We once again have an adjunction:
\[\begin{tikzcd}
	{i_!:\tiPsh{\Delta[\Theta]}} & {\tiPsh{\Theta}:i^*}
	\arrow[shift left=2, from=1-1, to=1-2]
	\arrow[shift left=2, from=1-2, to=1-1]
\end{tikzcd}\]
induced by the canonical inclusion $t\Delta[t\Theta]\to t\Theta$.
For an integer $n$, we define the functor \sym{((b40@$(\uvar)^{\sharp_n}$}$(\uvar)^{\sharp_n}:\iPsh{\Theta}\to \tiPsh{\Theta}$ and $(\uvar)^{\sharp_n}:\iPsh{\Delta[\Theta]}\to \tiPsh{\Delta[\Theta]}$ sending a $\infty$-presheaf $X$ onto $ (X, (X^n_k)_{k>0})$ where $X^n_k:= \Ib(X_{k-1})$ if $k<n$, and $X^n_k:=X_k$ if not. We eventually set \sym{(tw@$\Wm$}\sym{(tm@$\Mm$}
$$\Wm:= \coprod_{n}(\Wseg)^{\sharp_n}\coprod (\Wsat)^\flat~~~~~\Mm:= \coprod_{n}(\Mseg)^{\sharp_n}\coprod(\Msat)^\flat$$
As $i_!(\Mm)$ is contained in $\Wm$, the previous adjunction induces a derived one:
\begin{equation}
\label{eq:derived marked adjunction theta and delta theta}
\begin{tikzcd}
	{\Lb i_!:\tiPsh{\Delta[\Theta]}_{\Mm}} & {\tiPsh{\Theta}_{\Wm}:i^*\Rb}
	\arrow[""{name=0, anchor=center, inner sep=0}, shift left=2, from=1-1, to=1-2]
	\arrow[""{name=1, anchor=center, inner sep=0}, shift left=2, from=1-2, to=1-1]
	\arrow["\dashv"{anchor=center, rotate=-90}, draw=none, from=0, to=1]
\end{tikzcd}
\end{equation}
\end{construction}

\begin{prop}
\label{prop:derived marked adjunction theta and delta theta}
The derived adjunction \eqref{eq:derived marked adjunction theta and delta theta}
is an adjoint equivalence.
\end{prop}
\begin{proof}
It is enough to show that for any element $a:t\Delta[t\Theta]$ and any $b:t\Theta$, $a\to i^*i_!a$ and $i_!i^*b\to b$ are respectively in $\widehat{\Mm}$ and $\widehat{\Wm}$. If $a$ is of shape $[b,n]^\flat$, this is a direct consequence of proposition \ref{prop:infini changing theta}, and if $a$ is $(\Db_n)_t$ the unit is the identity. We proceed similarly for $i_!i^*b\to b$.
\end{proof}

\begin{remark}
 The inclusion $t\Theta\to \zocatm$ induces an adjunction
\[\begin{tikzcd}
	{\tPsh{\Theta}} & \zocatm
	\arrow[""{name=0, anchor=center, inner sep=0}, shift left=2, from=1-1, to=1-2]
	\arrow[""{name=1, anchor=center, inner sep=0}, shift left=2, from=1-2, to=1-1]
	\arrow["\dashv"{anchor=center, rotate=-90}, draw=none, from=0, to=1]
\end{tikzcd}\]
and we can easily check that this induces an equivalence between $\zocatm$ and the sub-category of $\tPsh{\Theta}$ of $\Wm$-local objects.
Together with proposition \ref{prop:derived marked adjunction theta and delta theta}, this induces equivalences
$$\tPsh{\Theta}_{\Mm} \cong \tPsh{\Delta[\Theta]}_{\Wm}\cong \zocatm$$
\end{remark}

\begin{definition} A \notion{marked $\io$-category} is a $\Wm$-local stratified $\infty$-presheaves on $\Theta$. We then define
$$\ocatm:=\tiPsh{\Theta}_{\Wm}.$$

Proposition \ref{prop:derived marked adjunction theta and delta theta} implies that $\ocatm$ identifies itself with the full sub $\iun$-category of $\tiPsh{\Delta[\Theta]}$ of $\Mm$-local objects:
$$\ocatm \sim \tiPsh{\Delta[\Theta]}_{\Mm}.$$

Unfolding the definition, a marked $\io$-category is a pair $(C,tC)$ where $C$ is an $\io$-category and $tC:=(tC_n)_{n>0}$ is a sequence of full sub $\infty$-groupoids of $C_n$, containing identities, stable by composition and by whiskering with cells of lower dimension.
A $n$-cell $a:\Db_n\to (C,tC)$ is \wcsnotion{marked}{marked $n$-cell}{for marked $\io$-categories} if it belongs to the image of $tC_n$.
\end{definition}

\begin{construction}
There are two obvious ways to mark a $\io$-category. For $C\in \ocat$, we define $C^\sharp := (C,(C_n)_{n>0})$ and $C^\flat := (C,(\Ib(C_{n-1})_{n>0}))$. The first one corresponds to the case where all cells are marked, and the second one where only the identities are marked. These two functors fit in the following adjoint triple:
\[\begin{tikzcd}[column sep=0.7cm]
	{(\uvar)^{\flat}: \ocat} & {\ocatm:(\uvar)^\natural} & {(\uvar)^\natural:\ocatm} & { \ocat	:(\uvar)^\sharp}
	\arrow[""{name=0, anchor=center, inner sep=0}, shift left=2, from=1-1, to=1-2]
	\arrow[""{name=1, anchor=center, inner sep=0}, shift left=2, from=1-2, to=1-1]
	\arrow[""{name=2, anchor=center, inner sep=0}, shift left=2, from=1-3, to=1-4]
	\arrow[""{name=3, anchor=center, inner sep=0}, shift left=2, from=1-4, to=1-3]
	\arrow["\dashv"{anchor=center, rotate=-90}, draw=none, from=0, to=1]
	\arrow["\dashv"{anchor=center, rotate=-90}, draw=none, from=2, to=3]
\end{tikzcd}\]
where $(\uvar)^\natural$ is the obvious forgetful functor.	
To simplify notations, for a marked $\io$-category $C$, the marked $\io$-categories $(C^\natural)^\flat$ and $(C^\natural)^\sharp$ will be simply denoted by $C^\flat$ and $C^\sharp$.
\end{construction}

\begin{construction}
Following definition \ref{defi:dualities non strict case}, for any subset $S$ of $\Nb^*$, we define the duality\ssym{((b49@$(\uvar)^S$}{for marked $\io$-categories}
$$(\uvar)^S:\ocatm\to \ocatm$$
whose value on $(C,tC)$ is $(C^S,tC)$.
In particular, we have the \snotionsym{odd duality}{((b60@$(\uvar)^{op}$}{for marked $\io$-categories} $(\uvar)^{op}$, corresponding to the set of odd integer, the \snotionsym{even duality}{((b50@$(\uvar)^{co}$}{for marked $\io$-categories} $(\uvar)^{co}$, corresponding to the subset of non negative even integer, the \snotionsym{full duality}{((b80@$(\uvar)^{\circ}$}{for marked $\io$-categories} $(\uvar)^{\circ}$, corresponding to $\Nb^*$ and the \snotionsym{transposition}{((b70@$(\uvar)^t$}{for marked $\io$-categories} $(\uvar)^t$, corresponding to the singleton $\{1\}$. Eventually, we have equivalences
$$((\uvar)^{co})^{op}\sim (\uvar)^{\circ} \sim ((\uvar)^{op})^{co}.$$
\end{construction}

\begin{remark}
Given a functor $F:I\to \ocatm$, the colimit of $F$ is given by the marked $\io$-category $(C,tC)$ with 
$$C:=\colim_{I}F^\natural$$
and for any $n$, $(tC)_n$ is the image of the morphism 
$$\colim_I tF_n\to (\colim_{I}F)^\natural_n.$$
The case of the limit is easier as we have 
$$\lim_{I}F := (\lim_{I}F^\natural,(\lim_{I}(tF_n)_{n>0}).$$
In particular, if $(C,tC)$ and $(D,tD)$ are two marked $\io$-categories, we have
$$(C,tC)\times (D,tD):= (C\times D, (tC_n\times tD_n)_{n>0}).$$
\end{remark}
\begin{prop}
\label{prop:cartesian product preserves W marked version}
The cartesian product in $\ocatm$ preserves colimits in both variables.
\end{prop}
\begin{proof}
Let $F:I\to \ocatm$ be a diagram and $C$ a marked $\io$-category. The underlying $\io$-categories of $\colim_I (F\times C)$ and $(\colim_IF)\times C$ are the same as the cartesian product preserves colimits in $\ocat$. The equivalence of the two markings 
 is a direct consequence of the fact that the cartesian product in $\igrd$ preserves both colimits and the formation of image.
\end{proof}

\begin{definition}
For any $C$ in $\ocatm$, we define 
$$\uHom(C,\uvar):\ocatm\to \ocatm$$
as the right adjoint of the colimit-preserving functor $\uvar\times C:\ocatm\to \ocatm$.
\end{definition}

\begin{construction}
\label{cons:strict marked}
 We denote again $\pi_0:\tiPsh{\Theta}\to \tPsh{\Theta}$ the colimit preserving functor sending a stratified $\infty$-presheaf $X$ to the stratified presheaf $a\mapsto \pi_0(X_a)$. As this functor preserves $\Wm$, it induces an adjoint pair:
\sym{(pi@$\pi_0:\ocatm\to \zocatm$}\sym{n@$\N:\zocatm\to \ocatm$}
\[\begin{tikzcd}
	{\pi_0:\ocat} & {\zocat:\N}
	\arrow[""{name=0, anchor=center, inner sep=0}, shift left=2, from=1-1, to=1-2]
	\arrow[""{name=1, anchor=center, inner sep=0}, shift left=2, from=1-2, to=1-1]
	\arrow["\dashv"{anchor=center, rotate=-90}, draw=none, from=0, to=1]
\end{tikzcd}\]
where the right adjoint $\N$ is fully faithful.
A marked $\io$-category lying in the image of the nerve is called \wcnotion{strict}{strict marked $\io$-category}.
Remark eventually that the following square is cartesian
\[\begin{tikzcd}
	\zocatm & \ocatm \\
	\zocat & \ocat
	\arrow["\N", from=1-1, to=1-2]
	\arrow["{(\uvar)^\natural}", from=1-2, to=2-2]
	\arrow["{(\uvar)^\natural}"', from=1-1, to=2-1]
	\arrow["\N"', from=2-1, to=2-2]
\end{tikzcd}\]
A marked $\io$-category is then strict if and only if it's underlying $\io$-category is.
\end{construction}

\begin{construction}
 The \wcsnotionsym{marked suspension}{((d60@$[\uvar,1]$}{suspension}{for marked $\io$-categories} is the colimit preserving functor $$[\uvar,1]:\ocatm\to \ocatm_{\bullet,\bullet}$$ sending $a^\flat$ onto $[a,1]^\flat$ and $(\Db_n)_t$ to $([\Db_n,1])_t$. 
It then admits a right adjoint: \ssym{(hom@$\hom_{\uvar}(\uvar,\uvar)$}{for marked $\io$-categories}
$$\begin{array}{lll}
\ocatm_{\bullet,\bullet}&\to& \ocatm\\
(C,a,b)&\mapsto &\hom_C(a,b)
\end{array}
$$

With the same computation than the one of construction \ref{cons:wiskering}, we show that for a marked $\io$-category $C$, any $1$-cell $f:x\to x'$ induces for any object $y$, a morphism
$$f_!:\hom_C(x',y)\to \hom_C(x,y).$$
Conversely, a $1$-cell $g:y\to y'$ induces for any object $x$ a morphism
$$g_!:\hom_C(x,y)\to \hom_C(x,y')$$
\end{construction}

 In section \ref{section:iocategories}, we define the notion of fully faithful morphism of $\io$-categories. There is an equivalent notion for marked $\io$-categories:

\subsubsection{Fully faithful functor}
 We now give some adaptation of the result on fully faithful functors to the case of marked $\io$-categories without proofs, as they are obvious modifications to this new framework.
 
 \begin{definition}
A morphism $f:C\to D$ is \snotion{fully faithful}{for marked $\io$-categories} if for any pair of objects $x,y$, the morphism of marked $\io$-categories $\hom_C(x,y)\to \hom_D(fx,fy)$ is an equivalence, and if a $1$-cell $v$ is marked whenever $f(v)$ is.
\end{definition}

\begin{prop}
\label{prop:ff 1 marked case}
A morphism is fully faithful if and only if it has the unique right lifting property against $\emptyset\to \Db_n$ and $\Db_n\to (\Db_n)_t$ for $n>0$.
\end{prop}

\begin{prop}
\label{prop:ff 2 marked case}
Fully faithful morphisms are stable under limits.
\end{prop}

\begin{prop}
\label{prop:fully faithful plus surjective on objet marked case}
A morphism $f:C\to D$ is an equivalence if and only if it is fully faithful and surjective on objects.
\end{prop}

\subsubsection{Marked discrete Conduché functor}

\begin{definition} A morphism $f:C\to D$ between marked $\io$-categories is a \snotion{discrete Conduché functor}{for marked $\io$-categories} if for any triplet of integers $k< n\leq m$, $f$ has the unique right lifting property against $$\Ib_{m+1}:\Db_{m+1}^\flat\to \Db_{m}^\flat ~~\mbox{ and }~~\triangledown^{\sharp_n}_{k,m}:\Db_{m}^{\sharp_n}\to \Db_{m}^{\sharp_n}\coprod_{ \Db_{k}^\flat} \Db_{m}^{\sharp_n}.$$
\end{definition}

\begin{example}
If $f$ is a discrete Conduché functor between marked $\io$-categories, $f^\sharp$ is a discrete Conduché functor. Conversely, if $g$ is a discrete Conduché functor between $\io$-categories, so are $g^\sharp$, $g^\flat$ and $g^{\sharp_n}$ for any integer $n$.
\end{example}

\begin{definition}
A \notion{marked globular sum} is a marked $\io$-category whose underlying $\io$-category is a globular sum and such that for any pair of integers $k\leq n$, and any pair of $k$-composable $n$-cells $(x,y)$, $x\circ_k y$ is marked if and only if $x$ and $y$ are marked.

A morphism $i:a\to b$ between marked globular sum is \wcsnotion{globular}{globular morphism}{for marked $\zo$-categories} if the morphism $i^\natural$ is globular.
\end{definition}

\begin{remark}
The proposition \ref{prop:algebraic ortho to globular} implies that a morphism $a\to b$ between marked globular sums is a discrete Conduché functor if and only if it is globular.
\end{remark}

\begin{lemma}
\label{lemma:pullback by conduch marked preserves colimitpre}
Let $p:C\to D^\flat$ be a discrete Conduché functor between marked $\io$-categories. The canonical morphism $(C^\natural)^\flat\to C$ is an equivalence. 
\end{lemma}
\begin{proof}
Suppose given a marked $n$-cell $v:\Db_n\to C$. As the marking on $D$ is trivial, this induces a commutative square
\[\begin{tikzcd}
	{\Db_n} & { C^\natural} \\
	{\Db_{n-1}} & D
	\arrow[from=1-1, to=2-1]
	\arrow["v", from=1-1, to=1-2]
	\arrow["{p^\natural}", from=1-2, to=2-2]
	\arrow[from=2-1, to=2-2]
	\arrow["l"{description}, dashed, from=2-1, to=1-2]
\end{tikzcd}\]
that admits a lift $l$ as $p^\natural$ is a discrete Conduché functor, which concludes the proof.
\end{proof}

\begin{prop}
\label{prop:pullback by conduch marked preserves colimit}
Let $p:C\to D$ be a discrete Conduché functor between marked $\io$-categories. The pullback functor $p^*:\ocatm_{/D}\to\ocatm_{/C}$ preserves colimits.
\end{prop}
\begin{proof}
As $\tiPsh{\Theta}$ is locally cartesian closed, one has to show that for any pair of cartesian squares
\[\begin{tikzcd}
	{C''} & {C'} & C \\
	{D''} & {D'} & D
	\arrow["p", from=1-3, to=2-3]
	\arrow["i"', from=2-1, to=2-2]
	\arrow["j", from=1-1, to=1-2]
	\arrow[from=1-2, to=1-3]
	\arrow[from=2-2, to=2-3]
	\arrow[from=1-2, to=2-2]
	\arrow[from=1-1, to=2-1]
	\arrow["\lrcorner"{anchor=center, pos=0.125}, draw=none, from=1-1, to=2-2]
	\arrow["\lrcorner"{anchor=center, pos=0.125}, draw=none, from=1-2, to=2-3]
\end{tikzcd}\]
if $i$ is $\Wm$, then $j$ is in $\widehat{\Wm}$. Suppose first that $i$ is in $\Wsat^\flat$. According of the lemma \ref{lemma:pullback by conduch marked preserves colimitpre} the $\io$-categories $C'$ and $C''$ are  of shape $(E)^\flat$ and $(E')^\flat$ for $E$ and $E'$ two $\io$-categories. The proposition \ref{prop:pulback of Wsat} then implies that $i$ is in $\widehat{\W^\flat}\subset \widehat{\Wm}$. If $i$ is in $(\Wseg)^{\sharp_n}$ the proof is an easy adaptation of the one of lemma \ref{lemma:conduche preserves W}.
\end{proof}

\subsubsection{Special colimits}

 We now give some adaptation of the result on special colimits stated in the previous chapter to the case of marked $\io$-categories without proofs, as they are easy modifications.
\begin{definition}
We denote by $\iota$ the inclusion of $\ocatm$ into $\tiPsh{\Theta}$.
A functor $F:I\to \ocatm$ has a \snotion{special colimit}{for marked $\io$-categories} if the canonical morphism 
\begin{equation}
\label{eq:special colimit marked case}
\colim_{i:I}\iota F(i)\to \iota(\colim_{i:I}F(i))
\end{equation}
is an equivalence of stratified presheaves. 

Similarly, we say that a functor $\psi: I\to \Arr(\ocatm)$ has a \textit{special colimit} if the canonical morphism 
$$\colim_{i:I}\iota \psi(i)\to \iota(\colim_{i:I}\psi(i))$$
is an equivalence in the arrow $\iun$-category of $\tiPsh{\Theta}$.
\end{definition}

\begin{remark}
\label{rem on cartesian and special colim}Let $F,G:I\to \ocatm$ be two functors, and $\psi:F\to G$ a cartesian natural transformation admitting a special colimit. As $\tiPsh{\Theta}$ is cartesian closed, this implies that for any object $i$ of $I$, the canonical square
\[\begin{tikzcd}
	{F(i)} & {\colim_IF} \\
	{G(i)} & {\colim_IG}
	\arrow[from=1-1, to=1-2]
	\arrow["{\psi(i)}"', from=1-1, to=2-1]
	\arrow["\lrcorner"{anchor=center, pos=0.125}, draw=none, from=1-1, to=2-2]
	\arrow["{\colim_I\psi}", from=1-2, to=2-2]
	\arrow[from=2-1, to=2-2]
\end{tikzcd}\]
is cartesian.
\end{remark}

\begin{example}
Let $C$ be a marked $\io$-category. The canonical diagram $t\Theta_{/C}\to \ocat$ has a special colimit, given by $C$.
\end{example}
\begin{prop}
\label{prop:special colimit marked case}
Let $F,G:I\to \ocatm$ be two functors, and $\psi:F\to G$ a natural transformation. If $\psi$ is cartesian, and $G$ has a special colimit, then $\psi$ and $F$ have special colimits. 
\end{prop}

\begin{prop}
\label{prop:example of a special colimit marked case}
For any integer $n$, and element $a\in t\Theta$ and $b\in \Theta$, the equalizer diagram 
\[\begin{tikzcd}
	{\coprod_{k+l=n-1}[a,k]\vee[a\times b^\sharp,1]\vee[a,l]} & {\coprod_{k+l=n}[a,k]\vee[ b,1]^\sharp\vee[a,l]}
	\arrow[shift left=2, from=1-1, to=1-2]
	\arrow[shift right=2, from=1-1, to=1-2]
\end{tikzcd}\]
where the top diagram is induced by $[a\times b^\sharp,1]\to [a,1]\vee[b,1]^\sharp$ and to bottom one by $[a\times b^\sharp,1]\to [b,1]^\sharp\vee[a,1]$,
has a special colimit, which is $[a,n]\times [b,1]^\sharp$.
\end{prop}

\begin{prop}
\label{prop:example of a special colimit 2 marked case}
Any sequence of marked $\io$-categories has a special colimit. 
\end{prop}

\begin{prop}
\label{prop:example of a special colimit4 marked case}
Suppose given a cartesian square
\[\begin{tikzcd}
	{ B} & C \\
	{\{0\}} & {[1]^\sharp}
	\arrow[from=1-2, to=2-2]
	\arrow[from=1-1, to=1-2]
	\arrow[from=1-1, to=2-1]
	\arrow[from=2-1, to=2-2]
	\arrow["\lrcorner"{anchor=center, pos=0.125}, draw=none, from=1-1, to=2-2]
\end{tikzcd}\]
The diagram 
\[\begin{tikzcd}
	{[1]^\sharp\vee[B,1]} & {[B,1]} & {[C,1]}
	\arrow["\triangledown"', from=1-2, to=1-1]
	\arrow[from=1-2, to=1-3]
\end{tikzcd}\]
has a special colimit.
\end{prop}

\begin{prop}
\label{prop:example of a special colimit3 marked case}
Suppose given two cartesian squares
\[\begin{tikzcd}
	{ B} & C & D \\
	{\{0\}} & {[1]^\sharp} & {\{1\}}
	\arrow[from=1-2, to=2-2]
	\arrow[from=1-1, to=1-2]
	\arrow[from=1-3, to=2-3]
	\arrow[from=1-1, to=2-1]
	\arrow[from=1-3, to=1-2]
	\arrow[from=2-1, to=2-2]
	\arrow[from=2-3, to=2-2]
	\arrow["\lrcorner"{anchor=center, pos=0.125}, draw=none, from=1-1, to=2-2]
	\arrow["\lrcorner"{anchor=center, pos=0.125, rotate=-90}, draw=none, from=1-3, to=2-2]
\end{tikzcd}\]
The diagram 
\[\begin{tikzcd}
	{[1]^\sharp\vee[B,1]} & {[B,1]} & {[C,1]} & {[D,1]} & {[D,1]\vee[1]^\sharp}
	\arrow["\triangledown", from=1-4, to=1-5]
	\arrow["\triangledown"', from=1-2, to=1-1]
	\arrow[from=1-2, to=1-3]
	\arrow[from=1-4, to=1-3]
\end{tikzcd}\]
has a special colimit.
\end{prop}

\subsection{Gray tensor product of marked $\io$-categories}
\begin{construction}
We define the  \snotionsym{Gray tensor product}{((d00@$\otimes$}{for marked $\io$-categories}.
$$\uvar\otimes (\uvar)^\sharp:\ocatm\times \icat \to \ocatm$$
sending a marked $\io$-category $C$ and a $\iun$-category $K$ to the marked $\io$-category $C\otimes K^\sharp$, such that $(C\otimes K^\sharp)^\natural$ fits in the cocartesian square
\[\begin{tikzcd}
	{\coprod_{ tC}\Db_n\otimes K} & {C^\natural\otimes K} \\
	{\coprod_{ tC}\tau^i_n(\Db_n\otimes K)} & {(C\otimes K^\sharp)^\natural}
	\arrow[from=1-1, to=1-2]
	\arrow[from=1-1, to=2-1]
	\arrow[from=2-1, to=2-2]
	\arrow[from=1-2, to=2-2]
	\arrow["\lrcorner"{anchor=center, pos=0.125, rotate=180}, draw=none, from=2-2, to=1-1]
\end{tikzcd}\]
and such that $t(C\otimes K^\sharp)_n$ consists of $n$-cells lying in the image of the morphism 
$$\tau_{n-1}C\otimes K\coprod (tC)_n\otimes K_0\to (C\otimes K^\sharp)^\natural.$$
\end{construction}

\begin{prop}
\label{prop:Gray tensor product and dualitymarked}
There is an equivalence
$$(C\otimes K^\sharp)^\circ \sim C^\circ\otimes (K^\sharp)^\circ$$
natural in $C:\ocatm$ and $K:\icat$.
\end{prop}
\begin{proof}
Proposition \ref{prop:Gray tensor product and dualitymarked} provides an invertible transformation 
$$(C^\natural\otimes K)^\circ\sim (C^\natural)^\circ\otimes K.$$
The result follows from the definition of the Gray tensor product for marked $\io$-categories.
\end{proof}

\begin{prop}
\label{prop:otimes marked preserves colimits}
The functor $\uvar\otimes(\uvar)^\sharp:\ocatm\times \icat\to \ocatm$ preserves colimits. 
\end{prop}
\begin{proof}
By construction, we have two cocartesian squares:
\[\begin{tikzcd}
	{\coprod_{\colim tF }\Db_n\otimes K} & {\colim (F^\natural\otimes K)} \\
	{\coprod_{\colim tF }\tau^i_{n}(\Db_n\otimes K)} & {\colim (F\otimes K^\sharp)^\natural} \\
	{\coprod_{t(\colim F)}\Db_n\otimes K} & {(\colim F^\natural)\otimes K} \\
	{\coprod_{ t(\colim F)}\tau^i_{n}(\Db_n\otimes K)} & {((\colim F)\otimes K^\sharp)^\natural}
	\arrow[from=1-1, to=2-1]
	\arrow[from=3-1, to=4-1]
	\arrow[from=1-2, to=2-2]
	\arrow[from=3-2, to=4-2]
	\arrow[""{name=0, anchor=center, inner sep=0}, from=3-1, to=3-2]
	\arrow[""{name=1, anchor=center, inner sep=0}, from=1-1, to=1-2]
	\arrow[from=2-1, to=2-2]
	\arrow[from=4-1, to=4-2]
	\arrow["\lrcorner"{anchor=center, pos=0.125, rotate=180}, draw=none, from=2-2, to=1]
	\arrow["\lrcorner"{anchor=center, pos=0.125, rotate=180}, draw=none, from=4-2, to=0]
\end{tikzcd}\]
By the preservation of colimit by the Gray tensor product for $\io$-categories and by the functor $(\uvar)^\natural$, we have an equivalence 
$$\colim (F^\natural\otimes K)\sim (\colim F^\natural)\otimes K$$
However, the canonical morphism $\colim tF \to t(\colim F)$ is a surjection, and according to proposition \ref{prop:truncation of surjection is pushout}, the following canonical square
is cocartesian
\[\begin{tikzcd}
	{\coprod_{\colim tF }\Db_n\otimes K} & {\coprod_{t(\colim F)}\Db_n\otimes K} \\
	{\coprod_{\colim tF }\tau^i_{n}(\Db_n\otimes K)} & {\coprod_{ t(\colim F)}\tau^i_{n}(\Db_n\otimes K)}
	\arrow[from=1-1, to=2-1]
	\arrow[from=1-1, to=1-2]
	\arrow[from=2-1, to=2-2]
	\arrow[from=1-2, to=2-2]
\end{tikzcd}\]
Combined with the first two cocartesian squares, this implies that that $\colim (F\otimes K^\sharp)^\natural$ and $((\colim F)\otimes K^\sharp)^\natural$ are equivalent.

According to proposition \ref{prop:intelignet truncation is poitwise an epi} and by construction, the morphisms
 $$\colim (\tau^i_n F^\natural\otimes K)\to \tau^i_n(\colim F^\natural\otimes K)~~\mbox{ and }~~\colim (tF\otimes K_0) \to t(\colim F\otimes K_0)$$ are surjections. The marked $\io$-categories $\colim (F\otimes K^\sharp)$ and $(\colim F)\otimes K^\sharp$
 then have the same marked cells.
\end{proof}

\begin{prop}
\label{prop:associativity of Gray amput 1}
Let $C$ be a $\io$-category and $K$ a  $(\infty,1)$-categories.   The underlying $\io$-category of $C^\flat\otimes K^\sharp$ is $C\otimes K$.
\end{prop}
\begin{proof}
This is obvious.
\end{proof}

\subsubsection{Marked Gray cylinder}
\begin{definition}
The Gray tensor product for marked $\io$-category restricts to a functor
$$\uvar\otimes[1]^\sharp:\ocatm \to\ocatm$$ called the \wcsnotion{marked Gray cylinder}{Gray cylinder}{for marked $\io$-categories}\sym{((d30@$\uvar\otimes[1]^\sharp$}.
\end{definition}

\begin{prop}
\label{prop:eq for cylinder marked}
Let $C$ be a marked $\io$-category.
There is a natural identification between $[C,1]\otimes [1]^\sharp$ and the colimit of the diagram
\begin{equation}
\begin{tikzcd}
	{[1]^\sharp\vee [ C,1]} & {[C\otimes\{0\},1]} & {[C\otimes [1]^\sharp,1]} & {[C\otimes\{1\},1]} & {[C,1]\vee[1]^\sharp}
	\arrow[from=1-2, to=1-1]
	\arrow[from=1-2, to=1-3]
	\arrow[from=1-4, to=1-3]
	\arrow[from=1-4, to=1-5]
\end{tikzcd}
\end{equation}
In the previous diagram, morphisms $[C,1]\to [1]^\sharp\vee[C,1]$ and $[C,1]\to [C,1]\vee[1]^\sharp$ are the whiskerings.
\end{prop}
\begin{proof}
This is a direct consequence of proposition \ref{prop:eq for cylinder} and the definition of the marked Gray cylinder.
\end{proof}

\begin{example}
In all the following diagrams, marked cells are represented by crossed-out arrows.

The object $\Db_1^\flat\otimes[1]^\sharp$ corresponds to the diagram
\[\begin{tikzcd}
	00 & 01 \\
	10 & 11
	\arrow[from=1-1, to=2-1]
	\arrow["{/}"{marking}, from=2-1, to=2-2]
	\arrow["{/}"{marking}, from=1-1, to=1-2]
	\arrow[from=1-2, to=2-2]
	\arrow["{/}"{marking}, shorten <=4pt, shorten >=4pt, Rightarrow, from=1-2, to=2-1]
\end{tikzcd}\]
the object $(\Db_1)^\sharp\otimes[1]^\sharp$ corresponds to the diagram
\[\begin{tikzcd}
	00 & 01 \\
	10 & 11
	\arrow["{/}"{marking}, from=1-1, to=2-1]
	\arrow["{/}"{marking}, from=2-1, to=2-2]
	\arrow["{/}"{marking}, from=1-1, to=1-2]
	\arrow["{/}"{marking}, from=1-2, to=2-2]
\end{tikzcd}\]
the object $\Db_2^\flat\otimes[1]^\sharp$ corresponds to the diagram
\[\begin{tikzcd}
	00 & 01 & 00 & 01 \\
	10 & 11 & 10 & 11
	\arrow["{/}"{marking}, from=1-1, to=1-2]
	\arrow[""{name=0, anchor=center, inner sep=0}, from=1-1, to=2-1]
	\arrow["{/}"{marking}, from=2-1, to=2-2]
	\arrow[""{name=1, anchor=center, inner sep=0}, from=1-2, to=2-2]
	\arrow["{/}"{marking}, shorten <=4pt, shorten >=4pt, Rightarrow, from=1-2, to=2-1]
	\arrow[""{name=2, anchor=center, inner sep=0}, from=1-3, to=2-3]
	\arrow["{/}"{marking}, from=1-3, to=1-4]
	\arrow[""{name=3, anchor=center, inner sep=0}, from=1-4, to=2-4]
	\arrow["{/}"{marking}, shorten <=4pt, shorten >=4pt, Rightarrow, from=1-4, to=2-3]
	\arrow[""{name=4, anchor=center, inner sep=0}, curve={height=30pt}, from=1-1, to=2-1]
	\arrow["{/}"{marking}, from=2-3, to=2-4]
	\arrow[""{name=5, anchor=center, inner sep=0}, curve={height=-30pt}, from=1-4, to=2-4]
	\arrow["{ }"', shorten <=6pt, shorten >=6pt, Rightarrow, from=0, to=4]
	\arrow["{ }"', shorten <=6pt, shorten >=6pt, Rightarrow, from=5, to=3]
	\arrow[shift left=0.7, shorten <=6pt, shorten >=8pt, no head, from=1, to=2]
	\arrow[shift right=0.7, shorten <=6pt, shorten >=8pt, no head, from=1, to=2]
	\arrow["{/}"{marking}, shorten <=6pt, shorten >=6pt, from=1, to=2]
\end{tikzcd}\]
and the object $(\Db_2)_t\otimes[1]^\sharp$ corresponds to the diagram
\[\begin{tikzcd}
	00 & 01 & 00 & 01 \\
	10 & 11 & 10 & 11
	\arrow["{/}"{marking}, from=1-1, to=1-2]
	\arrow[""{name=0, anchor=center, inner sep=0}, from=1-1, to=2-1]
	\arrow["{/}"{marking}, from=2-1, to=2-2]
	\arrow[""{name=1, anchor=center, inner sep=0}, from=1-2, to=2-2]
	\arrow["{/}"{marking}, shorten <=4pt, shorten >=4pt, Rightarrow, from=1-2, to=2-1]
	\arrow[""{name=2, anchor=center, inner sep=0}, from=1-3, to=2-3]
	\arrow["{/}"{marking}, from=1-3, to=1-4]
	\arrow[""{name=3, anchor=center, inner sep=0}, from=1-4, to=2-4]
	\arrow["{/}"{marking}, shorten <=4pt, shorten >=4pt, Rightarrow, from=1-4, to=2-3]
	\arrow[""{name=4, anchor=center, inner sep=0}, curve={height=30pt}, from=1-1, to=2-1]
	\arrow["{/}"{marking}, from=2-3, to=2-4]
	\arrow[""{name=5, anchor=center, inner sep=0}, curve={height=-30pt}, from=1-4, to=2-4]
	\arrow["{=}"{marking}, draw=none, from=1, to=2]
	\arrow["{ /}"{marking}, shorten <=6pt, shorten >=6pt, Rightarrow, from=5, to=3]
	\arrow["{ /}"{marking}, shorten <=6pt, shorten >=6pt, Rightarrow, from=0, to=4]
\end{tikzcd}\]
\end{example}

\begin{prop}
\label{prop:otimes et op marked version}
There is diagram
\[\begin{tikzcd}
	{(C\otimes\{1\})^\circ} & {(C\otimes[1]^\sharp)^\circ} & {(C\otimes\{0\})^\circ} \\
	{C^\circ\otimes\{0\}} & {C^\circ\otimes[1]^\sharp} & {C^\circ\otimes\{1\}}
	\arrow["\sim", from=1-2, to=2-2]
	\arrow["\sim", from=1-3, to=2-3]
	\arrow["\sim", from=1-1, to=2-1]
	\arrow[from=1-3, to=1-2]
	\arrow[from=1-1, to=1-2]
	\arrow[from=2-1, to=2-2]
	\arrow[from=2-3, to=2-2]
\end{tikzcd}\]
natural in $C:\ocatm$,
where all vertical arrows are equivalences. 
\end{prop}
\begin{proof}
This directly follows from proposition \ref{prop:Gray tensor product and dualitymarked}.
\end{proof}

\begin{prop}
\label{prop:comparaison betwen otimes and suspension week case withou the cocartesian square.marked}
Let $C$ be an $\io$-category. There exists a natural transformation $C^\flat\otimes [1]^\sharp\to [C^\flat,1]$ whose restriction to $C^\flat\otimes\{0\}$ (resp. to $C^\flat\otimes\{1\}$) is constant on $\{0\}$ (resp. on $\{1\}$) and such that the following square is cocartesian:
\begin{equation}
\begin{tikzcd}
	{C^\flat\otimes\{0\}\coprod C^\flat\otimes\{1\}} & {C^\flat\otimes[1]^\sharp} \\
	{\{0\}\coprod \{1\}} & {[C^\flat,1]}
	\arrow[from=1-1, to=1-2]
	\arrow[from=1-1, to=2-1]
	\arrow[from=1-2, to=2-2]
	\arrow[from=2-1, to=2-2]
	\arrow["\lrcorner"{anchor=center, pos=0.125, rotate=180}, draw=none, from=2-2, to=1-1]
\end{tikzcd}
\end{equation}
\end{prop}
\begin{proof}
This is a direct consequence of proposition \ref{prop:comparaison betwen otimes and suspension week case withou the cocartesian square.} and of the definition of the marked Gray cylinder.
\end{proof}

\begin{definition}
We  denote by 
$$\begin{array}{rcl}
\ocatm&\to&\ocatm\\
C&\mapsto &C^{[1]^\sharp}
\end{array}$$
the right adjoint of the marked Gray cylinder.\sym{(c@$C^{[1]^\sharp}$}
\end{definition}

\begin{remark}The proposition \ref{prop:comparaison betwen otimes and suspension week case withou the cocartesian square.marked} provides, for any $\io$-category $C$ and any pair of objects $a$ and $b$ of $C$, a canonical cartesian square
\[\begin{tikzcd}
	{\hom_C(a,b)^\flat} & {(C^\sharp)^{[1]}} \\
	{\{a\}\times \{b\}} & {C^\sharp\times C^\sharp}
	\arrow[from=1-1, to=1-2]
	\arrow[from=1-1, to=2-1]
	\arrow["\lrcorner"{anchor=center, pos=0.125}, draw=none, from=1-1, to=2-2]
	\arrow[from=1-2, to=2-2]
	\arrow[from=2-1, to=2-2]
\end{tikzcd}\]
\end{remark}

\subsubsection{Marked Enhanced Gray tensor product}
\begin{definition}
 For any $C:\ocat$, we denote by \wcnotation{$m_{C^\sharp}$}{(mc@$m_{C^\sharp}$} the colimit preserving functor 
$\ocatm\to\ocatm$ whose value on $[a,n]^\flat$ is $[a\times C^\sharp,n]$, on $[1]^\sharp$ is $[C,1]^\sharp$, and on $[(\Db_n)_t,1]$ is $
[(\Db_n)_t\times C^\sharp,1]$.
Remark that the assignation $C\mapsto m_{C^\sharp}$ is natural in $C$ and that $m_1$ is the identity.
\end{definition}

\begin{construction}
We define the colimit preserving functor:
$$\begin{array}{ccc}
\ocatm\times\ocatm &\to& \ocatm\\
(X,Y)&\mapsto &X\ominus Y^\sharp
\end{array}
$$
called the \textit{marked} \snotionsym{enhanced Gray tensor product}{((d20@$\ominus$}{for marked $\io$-categories},
where for any marked $\io$-category $C$ and element $[b,n]$ of $\Delta[\Theta]$, $C\ominus [b,n]^\sharp$ is the following pushout: 
\begin{equation}
\label{eq: def of ominus marked}
\begin{tikzcd}
	{\coprod\limits_{k\leq n}m_{b^\sharp}(C\otimes\{k\})} & {m_{b^\sharp}(C\otimes[n]^\sharp)} \\
	{\coprod\limits_{k\leq n}m_1(C\otimes\{k\})} & {C\ominus[b,n]^\sharp}
	\arrow[from=1-1, to=2-1]
	\arrow[""{name=0, anchor=center, inner sep=0}, from=1-1, to=1-2]
	\arrow[from=1-2, to=2-2]
	\arrow[from=2-1, to=2-2]
	\arrow["\lrcorner"{anchor=center, pos=0.125, rotate=180}, draw=none, from=2-2, to=0]
\end{tikzcd}
\end{equation}
By construction, the functor $\uvar\ominus\uvar$ commutes with colimits in both variables. Moreover, for any marked $\io$-category $C$ and $\iun$-category $K$, we have a canonical identification $C\ominus K^{\sharp} \sim C\otimes K^\sharp$.
\end{construction}

\begin{prop}
There is a natural identification between $[C,1]\ominus[b,1]^\sharp$ and the colimit of the following diagram 
\begin{equation}
\label{eq:formula for the ominus marked case}
\begin{tikzcd}[column sep = 0.3cm]
	{[b,1]^\sharp\vee[C,1]} & {[C\otimes\{0\}\times b^\sharp,1]} & {[(C\otimes[1]^\sharp)\times b^\sharp),1]} & {[C\otimes\{1\}\times b^\sharp,1]} & {[C,1]\vee[b,1]^\sharp}
	\arrow[from=1-2, to=1-3]
	\arrow[from=1-4, to=1-3]
	\arrow[from=1-4, to=1-5]
	\arrow[from=1-2, to=1-1]
\end{tikzcd}
\end{equation}
\end{prop}
\begin{proof}
This directly follows from the construction of the marked enhanced Gray tensor product and from proposition \ref{prop:eq for cylinder marked}.
\end{proof}

\begin{prop}
\label{prop:ominus and opmarked}
There is an equivalence 
$$(C\ominus B^\sharp)^\circ\sim C^\circ\ominus (B^\circ)^\sharp$$
natural in $C$ and $B$.
\end{prop}
\begin{proof}
It it sufficient to construct this equivalence when $B$ is of shape $[b,n]$.
The proposition \ref{prop:Gray tensor product and dualitymarked} induces an equivalence
$$(C\otimes[n]^\sharp)^\circ\sim C^\circ\otimes ([n]^\circ)^\sharp.$$
 The results then directly follows from the definition of the operation $\ominus$ and from the equivalence $(m_{b^\sharp}(\uvar))^\circ\sim m_{(b^\sharp)^\circ}((\uvar)^\circ)$.
\end{proof}

\subsubsection{Marked Gray cone and $\circ$-cone}

\begin{construction}
\label{cons:slice and joint}
We define  the \wcsnotionsym{marked Gray cone}{((d40@$\uvar\star 1$}{Gray cone}{for marked $\io$-categories} and the \wcsnotion{marked Gray $\circ$-cone}{Gray $\circ$-cone}{for marked $\io$-categories}\index[notation]{((d50@$1\overset{co}{\star}\_$!\textit{for marked $\io$-categories}}:
$$\uvar\star 1:\ocatm\to\ocatm~~~~~~1\costar \uvar:\ocatm\to \ocatm,$$
where for any marked $\io$-category $C$, $C\star 1$ and $1\costar C$, fit in the following cocartesian square
\[\begin{tikzcd}
	{C\otimes\{1\}} & {C\otimes [1]^\sharp} & {C\otimes\{0\}} & {C\otimes [1]^\sharp} \\
	1 & {C\star 1} & 1 & {1\costar C}
	\arrow[from=1-1, to=1-2]
	\arrow[from=1-3, to=1-4]
	\arrow[from=1-4, to=2-4]
	\arrow[from=1-3, to=2-3]
	\arrow[from=2-3, to=2-4]
	\arrow[from=1-2, to=2-2]
	\arrow[from=1-1, to=2-1]
	\arrow[from=2-1, to=2-2]
	\arrow["\lrcorner"{anchor=center, pos=0.125, rotate=180}, draw=none, from=2-2, to=1-1]
	\arrow["\lrcorner"{anchor=center, pos=0.125, rotate=180}, draw=none, from=2-4, to=1-3]
\end{tikzcd}\]
\end{construction}

\begin{prop}
\label{prop: equation fullfill by cylinder and join marked version}
For any marked $\io$-category $C$, there is a natural identification between $1\costar [C,1]$ and the colimit of the diagram
\begin{equation}
\label{eq:eq for Gray cone marked version}
\begin{tikzcd}
	 {[1]^\sharp\vee [C,1]} & {[C,1]} & {[C\star 1,1]} 
	\arrow[from=1-2, to=1-1]
	\arrow[from=1-2, to=1-3]
\end{tikzcd}
\end{equation}
There is a natural identification between $[C,1] \star 1$ and the colimit of the diagram
\begin{equation}
\label{eq:eq for cojoin marked version}
\begin{tikzcd}
	 {[1\costar C,1]}& {[C,1]} & {[C,1]\vee[1]^\sharp} 
	\arrow[from=1-2, to=1-1]
	\arrow[from=1-2, to=1-3]
\end{tikzcd}
\end{equation}
\end{prop}
\begin{proof}
This directly follows from proposition \ref{prop:eq for cylinder marked} and from the definition of the Gray cone and $\circ$-cone.\end{proof}

\begin{example}
In all the following diagrams, marked cells are represented by crossed-out arrows.

The objects $\Db_1^\flat\star 1$ and $1\costar \Db_1^\flat$ correspond respectively the diagrams
\[\begin{tikzcd}
	0 &&&& 0 \\
	1 & \star && \star & 1
	\arrow[from=1-1, to=2-1]
	\arrow["{/}"{marking}, from=2-1, to=2-2]
	\arrow[""{name=0, anchor=center, inner sep=0}, "{/}"{marking}, from=1-1, to=2-2]
	\arrow[""{name=1, anchor=center, inner sep=0}, from=1-5, to=2-5]
	\arrow["{/}"{marking}, from=2-4, to=1-5]
	\arrow[""{name=2, anchor=center, inner sep=0}, "{/}"{marking}, from=2-4, to=2-5]
	\arrow["{/}"{marking}, shift right=2, shorten <=4pt, shorten >=4pt, Rightarrow, from=1, to=2]
	\arrow["{/}"{marking}, shorten <=2pt, Rightarrow, from=0, to=2-1]
\end{tikzcd}\]
the objects $(\Db_1)_t\star 1$ and $1\costar (\Db_1)_t$ correspond respectively the diagrams
\[\begin{tikzcd}
	0 &&&& 0 \\
	1 & \star && \star & 1
	\arrow["{/}"{marking}, from=1-1, to=2-1]
	\arrow["{/}"{marking}, from=2-1, to=2-2]
	\arrow["{/}"{marking}, from=1-1, to=2-2]
	\arrow["{/}"{marking}, from=1-5, to=2-5]
	\arrow["{/}"{marking}, from=2-4, to=1-5]
	\arrow["{/}"{marking}, from=2-4, to=2-5]
\end{tikzcd}\]
the objects $\Db_2^\flat\star 1$ and $1\costar \Db_2^\flat$ correspond respectively the diagrams
\[\begin{tikzcd}
	0 & {} & 0 &&& 0 & {} & 0 \\
	1 & \star & 1 & \star & \star & 1 & \star & 1
	\arrow[""{name=0, anchor=center, inner sep=0}, from=1-1, to=2-1]
	\arrow["{/}"{marking}, from=2-1, to=2-2]
	\arrow[""{name=1, anchor=center, inner sep=0}, from=1-3, to=2-3]
	\arrow[""{name=2, anchor=center, inner sep=0}, curve={height=30pt}, from=1-1, to=2-1]
	\arrow["{/}"{marking}, from=2-3, to=2-4]
	\arrow[""{name=3, anchor=center, inner sep=0}, "{/}"{marking}, from=1-1, to=2-2]
	\arrow[""{name=4, anchor=center, inner sep=0}, draw=none, from=1-2, to=2-2]
	\arrow[""{name=5, anchor=center, inner sep=0}, "{/}"{marking}, from=1-3, to=2-4]
	\arrow["{/}"{marking}, from=1-6, to=2-5]
	\arrow[""{name=6, anchor=center, inner sep=0}, from=1-6, to=2-6]
	\arrow[""{name=7, anchor=center, inner sep=0}, "{/}"{marking}, from=2-5, to=2-6]
	\arrow["{/}"{marking}, from=1-8, to=2-7]
	\arrow[""{name=8, anchor=center, inner sep=0}, from=1-8, to=2-8]
	\arrow[""{name=9, anchor=center, inner sep=0}, "{/}"{marking}, from=2-8, to=2-7]
	\arrow[""{name=10, anchor=center, inner sep=0}, curve={height=-30pt}, from=1-8, to=2-8]
	\arrow[""{name=11, anchor=center, inner sep=0}, draw=none, from=1-7, to=2-7]
	\arrow["{ }"', shorten <=6pt, shorten >=6pt, Rightarrow, from=0, to=2]
	\arrow["{/}"{marking}, shorten <=2pt, shorten >=2pt, Rightarrow, from=3, to=2-1]
	\arrow[shift left=0.7, shorten <=6pt, shorten >=8pt, no head, from=4, to=1]
	\arrow[shift right=0.7, shorten <=6pt, shorten >=8pt, no head, from=4, to=1]
	\arrow["{/}"{marking}, shorten <=6pt, shorten >=6pt, from=4, to=1]
	\arrow["{/}"{marking}, shorten <=2pt, Rightarrow, from=5, to=2-3]
	\arrow[shorten <=6pt, shorten >=6pt, Rightarrow, from=10, to=8]
	\arrow["{/}"{marking}, shift right=2, shorten <=4pt, shorten >=4pt, Rightarrow, from=8, to=9]
	\arrow["{/}"{marking}, shift right=2, shorten <=4pt, shorten >=4pt, Rightarrow, from=6, to=7]
	\arrow[shift right=0.7, shorten <=6pt, shorten >=8pt, no head, from=6, to=11]
	\arrow["{/}"{marking}, shorten <=6pt, shorten >=6pt, from=6, to=11]
	\arrow[shift left=0.7, shorten <=6pt, shorten >=8pt, no head, from=6, to=11]
\end{tikzcd}\]
and the objects $(\Db_2)_t\star 1$ and $1\costar (\Db_2)_t$ correspond respectively the diagrams
\[\begin{tikzcd}
	0 & {} & 0 &&& 0 & {} & 0 \\
	1 & \star & 1 & \star & \star & 1 & \star & 1
	\arrow[""{name=0, anchor=center, inner sep=0}, from=1-1, to=2-1]
	\arrow["{/}"{marking}, from=2-1, to=2-2]
	\arrow[""{name=1, anchor=center, inner sep=0}, from=1-3, to=2-3]
	\arrow[""{name=2, anchor=center, inner sep=0}, curve={height=30pt}, from=1-1, to=2-1]
	\arrow["{/}"{marking}, from=2-3, to=2-4]
	\arrow[""{name=3, anchor=center, inner sep=0}, "{/}"{marking}, from=1-1, to=2-2]
	\arrow[""{name=4, anchor=center, inner sep=0}, draw=none, from=1-2, to=2-2]
	\arrow[""{name=5, anchor=center, inner sep=0}, "{/}"{marking}, from=1-3, to=2-4]
	\arrow["{/}"{marking}, from=1-6, to=2-5]
	\arrow[""{name=6, anchor=center, inner sep=0}, from=1-6, to=2-6]
	\arrow[""{name=7, anchor=center, inner sep=0}, "{/}"{marking}, from=2-5, to=2-6]
	\arrow["{/}"{marking}, from=1-8, to=2-7]
	\arrow[""{name=8, anchor=center, inner sep=0}, from=1-8, to=2-8]
	\arrow[""{name=9, anchor=center, inner sep=0}, "{/}"{marking}, from=2-8, to=2-7]
	\arrow[""{name=10, anchor=center, inner sep=0}, curve={height=-30pt}, from=1-8, to=2-8]
	\arrow[""{name=11, anchor=center, inner sep=0}, draw=none, from=1-7, to=2-7]
	\arrow["{/}"{marking}, shorten <=6pt, shorten >=6pt, Rightarrow, from=0, to=2]
	\arrow["{/}"{marking}, shorten <=2pt, shorten >=2pt, Rightarrow, from=3, to=2-1]
	\arrow["{/}"{marking}, shorten <=2pt, Rightarrow, from=5, to=2-3]
	\arrow["{/}"{marking}, shorten <=6pt, shorten >=6pt, Rightarrow, from=10, to=8]
	\arrow["{/}"{marking}, shift right=2, shorten <=4pt, shorten >=4pt, Rightarrow, from=8, to=9]
	\arrow["{/}"{marking}, shift right=2, shorten <=4pt, shorten >=4pt, Rightarrow, from=6, to=7]
	\arrow["{=}"{description}, draw=none, from=4, to=1]
	\arrow["{=}"{marking}, draw=none, from=6, to=11]
\end{tikzcd}\]
\end{example}

\begin{prop}
\label{prop:star marked and dualiti}
There is a natural equivalence 
$$(C\star 1)^\circ \sim 1\costar C^\circ$$
\end{prop}
\begin{proof}
This directly follows from proposition \ref{prop:otimes et op marked version} and from the definition of the Gray cone and $\circ$-cone.
\end{proof}

\begin{definition}
\label{defi: marked slice}
We  denote by 
$$\begin{array}{ccccccc}
\ocatm_{\bullet} &\to&\ocatm&&\ocatm_{\bullet} &\to&\ocatm\\
(C,c)&\mapsto &C_{/c} & &(C,c) &\mapsto &C_{c/}
\end{array}
$$
the right adjoints of Gray cone and of the Gray $\circ$-cone, respectively called the \wcsnotionsym{slice of $C$ over $c$}{(cc@$C_{c/}$}{slice over}{for marked $\io$-categories} and the \wcsnotionsym{slice of $C$ under $c$}{(cc@$C_{/c}$}{slice under}{for marked $\io$-categories}.
\end{definition}

\begin{remark}
The proposition \ref{prop:star marked and dualiti} induces an  invertible natural transformation:
$$C_{/c}\sim (C^{\circ}_{c/})^\circ.$$
\end{remark}

\begin{remark}
\label{rem:cartesian square slicdes}
The proposition \ref{prop:comparaison betwen otimes and suspension week case withou the cocartesian square.marked} provides, for any $\io$-category $C$ and any pair of objects $a$, $b$ of $C$, two canonical cartesian squares
\[\begin{tikzcd}
	{\hom_C(a,b)^\flat} & {C^\sharp_{/b}} & {\hom_C(a,b)^\flat} & {C^\sharp_{a/}} \\
	{\{a\}} & C & {\{b\}} & C
	\arrow[from=1-1, to=1-2]
	\arrow[from=1-1, to=2-1]
	\arrow["\lrcorner"{anchor=center, pos=0.125}, draw=none, from=1-1, to=2-2]
	\arrow[from=1-2, to=2-2]
	\arrow[from=1-3, to=1-4]
	\arrow[from=1-3, to=2-3]
	\arrow["\lrcorner"{anchor=center, pos=0.125}, draw=none, from=1-3, to=2-4]
	\arrow[from=1-4, to=2-4]
	\arrow[from=2-1, to=2-2]
	\arrow[from=2-3, to=2-4]
\end{tikzcd}\]
\end{remark}

\subsubsection{Marked Gray operations and strict objects}

Recall that in construction \ref{cons:strict marked}, we defined an adjunction
\[\begin{tikzcd}
	{\pi_0:\ocatm} & {\zocatm:\N}
	\arrow[""{name=0, anchor=center, inner sep=0}, shift left=2, from=1-1, to=1-2]
	\arrow[""{name=1, anchor=center, inner sep=0}, shift left=2, from=1-2, to=1-1]
	\arrow["\dashv"{anchor=center, rotate=-90}, draw=none, from=0, to=1]
\end{tikzcd}\]
A marked $\io$-category lying in the image of the nerve is called \textit{strict}.
Remark eventually that the following square is cartesian
\[\begin{tikzcd}
	\zocatm & \ocatm \\
	\zocat & \ocat
	\arrow["\N", from=1-1, to=1-2]
	\arrow["{(\uvar)^\natural}", from=1-2, to=2-2]
	\arrow["{(\uvar)^\natural}"', from=1-1, to=2-1]
	\arrow["\N"', from=2-1, to=2-2]
\end{tikzcd}\]
A marked $\io$-category is then strict if and only if it's underlying $\io$-category is. 

\begin{prop}
\label{prop:suspension preserves stricte marked case}
If $C$ is a strict marked $\io$-category, so is $[C,1]$.
\end{prop}
\begin{proof}
This directly follows from proposition \ref{prop:suspension preserves stricte} and from the canonical equivalence $[C,1]^\natural \sim [C^\natural,1]$.
\end{proof}

\begin{lemma}
\label{lemma:a otimes 1 is strict}
Let $C$ be a marked $\io$-category.
The canonical squares
\[\begin{tikzcd}
	C & {C\otimes[1]^\sharp} & C \\
	{\{0\}} & {[1]^\sharp} & {\{1\}}
	\arrow[from=1-1, to=2-1]
	\arrow[from=2-1, to=2-2]
	\arrow[from=2-3, to=2-2]
	\arrow[from=1-2, to=2-2]
	\arrow[from=1-3, to=1-2]
	\arrow[from=1-1, to=1-2]
	\arrow[from=1-3, to=2-3]
\end{tikzcd}\]
are cartesian. 
\end{lemma}
\begin{proof}
As the morphisms $\{\epsilon\}\to [1]$ for $\epsilon\leq 1$ are discrete Conduché functors, pullback along them preserves colimits, and we can then reduce to the case where $C$ is of the shape $[1]^\sharp$ or $[a,1]$ with  $a$ is an element of $t\Theta$. 
The case $C:=[1]^\sharp$ is obvious as we have $[1]^\sharp\otimes[1]^\sharp\sim [1]^\sharp\times[1]^\sharp$ according to  proposition \ref{prop:associativity of Gray amput 1.5}. We then focus on the case $C:=[a,1]$.

We claim that for any marked $\io$-category $D$, the square
\begin{equation}
\label{eq:lemma:a otimes 1 is strict}
\begin{tikzcd}
	{\{\epsilon\}} & {[D,1]} \\
	{\{\epsilon\}} & {[1]^\sharp}
	\arrow[from=1-1, to=2-1]
	\arrow[from=2-1, to=2-2]
	\arrow[from=1-1, to=1-2]
	\arrow[from=1-2, to=2-2]
\end{tikzcd}
\end{equation}
is cartesian. To show this, as  the morphisms $\{\epsilon\}\to [1]$, are discrete Conduché functors one can reduce to the case where $D$ is in $\Theta_t$ or the empty $\io$-category, where it is obvious.

We now return to the proof of the assertion.  
Using the proposition \ref{prop:eq for cylinder marked}, the morphism $[a,1]\otimes[1]^\sharp\to [1]^\sharp$ is the horizontal colimit of the following diagram:
\[\begin{tikzcd}
	{[1]^\sharp\vee[a,1]} & {[a\otimes\{0\},1]} & {[a\otimes[1]^\sharp,1]} & {[a\otimes\{1\},1]} & {[a,1]\vee[1]^\sharp} \\
	{[1]^\sharp} & {[1]^\sharp} & {[1]^\sharp} & {[1]^\sharp} & {[1]^\sharp}
	\arrow[from=1-2, to=1-1]
	\arrow[from=1-4, to=1-5]
	\arrow[from=1-4, to=1-3]
	\arrow[from=1-2, to=1-3]
	\arrow[from=2-2, to=2-1]
	\arrow[from=2-2, to=2-3]
	\arrow[from=2-4, to=2-3]
	\arrow[from=2-4, to=2-5]
	\arrow["{s^0}", from=1-5, to=2-5]
	\arrow[from=1-4, to=2-4]
	\arrow[from=1-2, to=2-2]
	\arrow[from=1-3, to=2-3]
	\arrow["{s^1}"', from=1-1, to=2-1]
\end{tikzcd}\]

The results is then a direct application of the cartesian square \eqref{eq:lemma:a otimes 1 is strict} and of the fact  that pullbacks along morphisms $\{\epsilon\}\to [1]$ for $\epsilon\leq 1$ preserves colimits.
\end{proof}

\begin{prop}
\label{prop:tensor of glboer are strics}
For any object $a$ of $t\Theta$, the marked $\io$-categories $a\otimes [1]^\sharp$, $a\star 1$ and $1\costar a$ are strict. 
\end{prop}
\begin{proof}
We will show only the the strictness of the object $a\otimes[1]^\sharp$, as the proofs for $a\star 1$ and $1\costar a$ are similar.

Suppose first that $a$ is of shape $b^\flat$.
The first assertion of proposition \ref{prop:associativity of Gray amput 1} implies that the underlying $\io$-categories of $b^\flat\otimes [1]^\sharp$ is $b\otimes [1]$ which is strict according to theorem \ref{theo:strict stuff are pushout}.

To conclude, we have to show that for any integer $n$, $(\Db_n)_t\otimes[1]^\sharp$ is strict. We proceed by induction.
Suppose first that $a$ is $(\Db_1)_t$. The proposition \ref{prop:associativity of Gray amput 1.5} implies that 
$(\Db_1)_t \otimes[1]^\sharp$ is $([1]\times[1])^\sharp$ which is a strict object. 

Suppose now that $(\Db_n)_t\otimes[1]^\sharp$ is strict. 
The proposition \ref{prop:eq for cylinder marked} stipulates that $(\Db_{n+1})_t\otimes[1]^\sharp$ is the colimit of the diagram.
$$\begin{tikzcd}[column sep = 0.2cm]
	{[1]^\sharp\vee [(\Db_n)_t,1]} & {[(\Db_n)_t\otimes\{0\},1]} & {[(\Db_n)_t\otimes [1]^\sharp,1]} & {[(\Db_n)_t\otimes\{1\},1]} & {[(\Db_n)_t,1]\vee[1]^\sharp}
	\arrow[from=1-2, to=1-1]
	\arrow[from=1-2, to=1-3]
	\arrow[from=1-4, to=1-3]
	\arrow[from=1-4, to=1-5]
\end{tikzcd}$$
The induction hypothesis and the proposition \ref{prop:suspension preserves stricte marked case} implies that all the objects are strict. According to proposition \ref{prop:example of a special colimit3 marked case}, whose hypotheses are provided by lemma \ref{lemma:a otimes 1 is strict}, this diagram admits a special colimit. As all the morphisms are monomorphism, this implies that $(\Db_{n+1})_t\otimes[1]^\sharp$ is strict, which concludes the proof.
\end{proof}

\begin{theorem}
\label{theo:strict stuff are pushout marked}
Let $C\to D$ be a morphism between strict $\io$-categories. The marked $\io$-categories
$D^\flat\coprod_{C^\flat}C^\flat\star 1$, $1\costar C^\flat\coprod_{C^\flat}D^\flat$, and $D^\flat\coprod_{D^\flat\otimes\{0\}}D^\flat\otimes[1]^\sharp$ are strict. In particular, $C^\flat\star 1$, $1\costar C^\flat$, and $C^\flat\otimes[n]^\sharp$ for any integer $n$, are strict.
\end{theorem}
\begin{proof}
The proposition \ref{prop:associativity of Gray amput 1} and theorem \ref{theo:strict stuff are pushout marked} imply that the underlying $\io$-categories of these marked $\io$-categories are strict.
\end{proof}

\begin{prop}
\label{prop:crushing of Gray tensor is identitye marked case}
The colimit preserving endofunctor $F:\ocat\to \ocatm$, sending $[a,n]$ to the colimit of the span
$$\coprod_{k\leq n}\{k\}\leftarrow \coprod_{k\leq n}a^\flat\otimes\{k\}\to a^\flat\otimes[n]^\sharp$$
is equivalent to the functor $(\uvar)^\sharp:\ocat\to \ocatm$.
\end{prop}
\begin{proof}
This is a direct consequence of proposition \ref{prop:associativity of Gray amput 1}, of corollary \ref{cor:crushing of Gray tensor is identitye} and of the definition of the marking of the Gray tensor product for marked $\io$-categories.
\end{proof}
\begin{remark}
The last proposition implies that for any marked $\io$-category $C$ and any globular sum $a$, the simplicial $\infty$-groupoid
$$\begin{array}{rcl}
\Delta^{op}&\to &\igrd\\
~[n]~&\mapsto &\Hom([a,n]^\sharp,C)
\end{array} $$
is a $\iun$-category.
\end{remark}

\begin{theorem}
\label{theo:formula between pullback of slice and tensor marked case}
Let $C$ be an $\io$-category. The two following canonical squares are cartesian:
\[\begin{tikzcd}
	1 & {1\costar C^\flat} & 1 & {C^\flat\star 1} \\
	{\{0\}} & {[C,1]^\sharp} & {\{1\}} & {[C,1]^\sharp}
	\arrow[from=1-1, to=1-2]
	\arrow[from=2-1, to=2-2]
	\arrow[from=1-1, to=2-1]
	\arrow[from=1-2, to=2-2]
	\arrow[from=1-3, to=1-4]
	\arrow[from=2-3, to=2-4]
	\arrow[from=1-3, to=2-3]
	\arrow[from=1-4, to=2-4]
\end{tikzcd}\]
The five squares appearing in the following canonical diagram are both cartesian and cocartesian:
\[\begin{tikzcd}
	& {C^\flat\otimes\{0\}} & 1 \\
	{C^\flat\otimes\{1\}} & {C^\flat\otimes[1]^\sharp} & {C^\flat\star 1} \\
	1 & {1\costar C^\flat} & {[C,1]^\sharp}
	\arrow[from=2-3, to=3-3]
	\arrow[from=3-2, to=3-3]
	\arrow[from=2-2, to=3-2]
	\arrow[from=2-2, to=2-3]
	\arrow[from=1-2, to=1-3]
	\arrow[from=1-3, to=2-3]
	\arrow[from=1-2, to=2-2]
	\arrow[from=2-1, to=2-2]
	\arrow[from=3-1, to=3-2]
	\arrow[from=2-1, to=3-1]
\end{tikzcd}\]
\end{theorem}
\begin{proof}
This is a direct consequence of proposition \ref{prop:associativity of Gray amput 1}, of theorem \ref{theo:formula between pullback of slice and tensor} and of the definition of the marking of the Gray tensor product for marked $\io$-categories.
\end{proof}

\subsection{Gray tensor product and Cartesian product}

\begin{prop}
\label{prop:associativity of Gray amput 1.5}
Let $C$ be a $\io$-category and $K$ a  $(\infty,1)$-categories.   The canonical morphism $C^\sharp\otimes K^\sharp\to C^\sharp\times K^\sharp$ is an equivalence.
\end{prop}
\begin{proof}
As the Gray tensor product and the cartesian product preserve colimits, it is sufficient to demonstrate the equivalence when $C$ is a globe $\Db_n$ and $K$ is $[1]$. 
We proceed by induction on $n$. If $n=0$, this is trivial. For $n=1$, proposition \ref{prop:eq for cylinder marked} implies that $[1]^\flat\otimes[1]^\sharp$ is the colimit of the diagram:

\[\begin{tikzcd}
	{[1]^\sharp\vee[1]} & {[1]} & {[[1]^\sharp,1]} & {[1]} & {[1]\vee[1]^\sharp}
	\arrow["\triangledown"', from=1-2, to=1-1]
	\arrow[from=1-2, to=1-3]
	\arrow[from=1-4, to=1-3]
	\arrow["\triangledown", from=1-4, to=1-5]
\end{tikzcd}\]
By construction, we have a cocartesian square:
\[\begin{tikzcd}
	{[1]^\flat\otimes\{0\}\coprod [1]^\flat\otimes\{1\}} & {\tau^i_1([1]^\flat\otimes[1]^\sharp)} \\
	{[1]^\sharp\otimes\{0\}\coprod [1]^\sharp\otimes\{1\}} & {[1]^\sharp\otimes[1]^\sharp}
	\arrow[from=1-1, to=1-2]
	\arrow[from=1-1, to=2-1]
	\arrow[from=1-2, to=2-2]
	\arrow[from=2-1, to=2-2]
	\arrow["\lrcorner"{anchor=center, pos=0.125, rotate=180}, draw=none, from=2-2, to=1-1]
\end{tikzcd}\]
This implies that $[1]^\sharp\otimes[1]^\sharp$ is then equivalent to the colimit of the diagram:
\[\begin{tikzcd}
	{[1]^\sharp\vee[1]^\sharp} & {[1]^\sharp} & {[1]^\sharp\vee[1]^\sharp}
	\arrow["\triangledown"', from=1-2, to=1-1]
	\arrow["\triangledown", from=1-2, to=1-3]
\end{tikzcd}\]
which is equivalent to $[1]^\sharp\times[1]^\sharp$ by proposition \ref{prop:example of a special colimit}.

We now suppose that
$$\Db_n^\sharp\otimes[1]^\sharp\sim \Db_n^\sharp\times[1]^\sharp.$$
Combined with proposition \ref{prop:eq for cylinder marked}, this implies that $[\Db_n^\sharp,1]\otimes[1]^\sharp$ is the colimit of the diagram:
\[\begin{tikzcd}
	{[1]^\sharp\vee[\Db_n^\sharp,1]} & {[\Db_n^\sharp\times\{0\},1]} & {[\Db_n^\sharp\times [1]^\sharp,1]} & {[\Db_n^\sharp\times\{1\},1]} & {[\Db_n^\sharp\,1]\vee[1]^\sharp}
	\arrow[from=1-2, to=1-1]
	\arrow[from=1-2, to=1-3]
	\arrow[from=1-4, to=1-3]
	\arrow[from=1-4, to=1-5]
\end{tikzcd}\]
By construction, we have a cocartesian square:
\[\begin{tikzcd}
	{[\{0\},1]\otimes[1]^\sharp\coprod [\{1\},1]\otimes[1]^\sharp} & {[\Db_n^\sharp,1]\otimes[1]^\sharp} \\
	{[\{0\},1]^\sharp\times[1]^\sharp\coprod [\{1\},1]^\sharp\times[1]^\sharp} & {[\Db_n,1]^\sharp\otimes[1]^\sharp}
	\arrow[""{name=0, anchor=center, inner sep=0}, from=1-1, to=1-2]
	\arrow[from=1-1, to=2-1]
	\arrow[from=1-2, to=2-2]
	\arrow[from=2-1, to=2-2]
	\arrow["\lrcorner"{anchor=center, pos=0.125, rotate=180}, draw=none, from=2-2, to=0]
\end{tikzcd}\]

Eventually, the cocartesian square
\[\begin{tikzcd}
	{(\{0\}\amalg\{1\})\times[1]} & {\Db_n\times[1]} \\
	{\{0\}\amalg\{1\}} & {\Db_n}
	\arrow[from=1-1, to=1-2]
	\arrow[from=1-1, to=2-1]
	\arrow[from=1-2, to=2-2]
	\arrow[from=2-1, to=2-2]
	\arrow["\lrcorner"{anchor=center, pos=0.125, rotate=180}, draw=none, from=2-2, to=1-1]
\end{tikzcd}\]
 constructed in the proof of proposition \ref{prop:cartesian square and times} then implies that 
$[\Db_n,1]^\sharp\otimes[1]^\sharp$ is the colimit of the diagram:
\[\begin{tikzcd}
	{[1]^\sharp\vee[\Db_n^\sharp,1]} & {[\Db_n^\sharp,1]} & {[\Db_n^\sharp\,1]\vee[1]^\sharp}
	\arrow[from=1-2, to=1-1]
	\arrow[from=1-2, to=1-3]
\end{tikzcd}\]
and is then equivalent to $[\Db_n,1]^\sharp\times[1]^\sharp$ by proposition \ref{prop:example of a special colimit}.
\end{proof}
\begin{prop}
\label{prop:associativity of Gray2}
Let $D$ be an $\io$-category, $C$ a marked $\io$-category and $K$ an $\iun$-category.
The canonical morphism
$(D^\sharp\times C)\otimes K^\sharp\to D^\sharp\times (C\otimes K^\sharp)$ is an equivalence.
\end{prop}
\begin{proof}
As $\times$ and $\otimes$ preserve colimits, we can reduce to the case where $D$ is an element of $\Theta$, $C$ of $t\Theta$ and $K$ of $\Delta$, and we proceed by induction on the dimension of $D$. Remark first that if $D$ is $[0]$, the result is obvious, and if it is $(\Db_1)_t$, the result follows from  proposition \ref{prop:associativity of Gray amput 1.5}. Suppose then the result is true at the stage $n$. Using once again the fact that $\times$ and $\otimes$ preserve colimits, we can reduce to the case where
 $D^\sharp$ is $[a,1]^\sharp$, $C$ is $[b,1]$ with $b$ an element of $\Theta_t$ of dimension $n$, and $K^\sharp$ is $[1]^\sharp$.
 
 The proposition \ref{prop:example of a special colimit marked case} implies that $([a,1]^\sharp\times[b,1])\otimes[1]^\sharp$ is the colimit of the sequence: 
\begin{equation}
\label{eq:prop:associativity of Gray2}
\begin{tikzcd}
	{([a,1]^\sharp\vee [b,1])\otimes[1]^\sharp} & {[a^\sharp\times b,1]\otimes[1]^\sharp} & {([b,1]\vee [a,1]^\sharp)\otimes[1]^\sharp}
	\arrow[from=1-2, to=1-1]
	\arrow[from=1-2, to=1-3]
\end{tikzcd}
\end{equation}
The marked $\io$-category $([a,1]^\sharp\vee [b,1])\otimes[1]^\sharp$ is then the colimit of the diagram 
\[\begin{tikzcd}
	{[a,1]^\sharp\times [1]^\sharp} & {[1]^\sharp} & {[b,1]\otimes[1]^\sharp}
	\arrow[from=1-2, to=1-1]
	\arrow[from=1-2, to=1-3]
\end{tikzcd}\]
and using the propositions \ref{prop:eq for cylinder marked} and \ref{prop:example of a special colimit marked case}, $([a,1]^\sharp\vee [b,1])\otimes[1]^\sharp$ is the colimit of the diagram
\[\begin{tikzcd}
	{[1]^\sharp\vee[a,1]^\sharp\vee[b,1]} & {[a,1]^\sharp\vee[b,1]} & {[a,1]^\sharp\vee[1]^\sharp\vee[b,1]} \\
	&& {[a,1]^\sharp\vee[b\otimes\{0\},1]} \\
	&& {[a,1]^\sharp\vee[b\otimes[1]^\sharp,1]} \\
	&& {[a,1]^\sharp\vee[b\otimes\{1\},1]} \\
	&& {[a,1]^\sharp\vee[b,1]\vee[1]^\sharp}
	\arrow[from=4-3, to=5-3]
	\arrow[from=4-3, to=3-3]
	\arrow[from=2-3, to=3-3]
	\arrow[from=2-3, to=1-3]
	\arrow[from=1-2, to=1-3]
	\arrow[from=1-2, to=1-1]
\end{tikzcd}\]
Similarly, $([b,1]\vee [a,1]^\sharp)\otimes[1]^\sharp$ is the colimit of the diagram
\[\begin{tikzcd}
	{[1]^\sharp\vee[b,1]\vee[a,1]^\sharp} \\
	{[b\otimes\{0\},1]\vee[a,1]^\sharp} \\
	{[b\otimes[1]^\sharp,1]\vee[a,1]^\sharp} \\
	{[b\otimes\{1\},1]\vee[a,1]^\sharp} \\
	{[b,1]\vee[1]^\sharp\vee[a,1]^\sharp} & {[b,1]\vee[a,1]} & {[b,1]\vee[a,1]^\sharp\vee[1]^\sharp}
	\arrow[from=2-1, to=1-1]
	\arrow[from=2-1, to=3-1]
	\arrow[from=4-1, to=3-1]
	\arrow[from=4-1, to=5-1]
	\arrow[from=5-2, to=5-1]
	\arrow[from=5-2, to=5-3]
\end{tikzcd}\]
Eventually, the proposition \ref{prop:eq for cylinder marked} and the induction hypothesis imply that $[a^\sharp\times b,1]\otimes[1]^\sharp$ is the colimit of the diagram
\[\begin{tikzcd}
	{[1]^\sharp\vee[a^\sharp\times b,1]} \\
	& {[a^\sharp\times b\otimes\{0\},1]} \\
	& {[a^\sharp\times (b\otimes[1]^\sharp),1]} \\
	& {[a^\sharp\times b\otimes\{1\},1]} \\
	&& {[a^\sharp\times b,1]\vee[1]^\sharp}
	\arrow[from=2-2, to=1-1]
	\arrow[from=2-2, to=3-2]
	\arrow[from=4-2, to=3-2]
	\arrow[from=4-2, to=5-3]
\end{tikzcd}\]

All put together, $([a,1]^\sharp\times[b,1])\otimes[1]^\sharp$ is the colimit of the diagram
\[\begin{tikzcd}[column sep =0.1cm]
	{[1]^\sharp\vee[b,1]\vee[a,1]^\sharp} & {[1]^\sharp\vee[a^\sharp\times b,1]} & {[1]^\sharp\vee[a,1]^\sharp\vee[b,1]} & {[a,1]^\sharp\vee[b,1]} & {[a,1]^\sharp\vee[1]^\sharp\vee[b,1]} \\
	{[b\otimes\{0\},1]\vee[a,1]^\sharp} && {[a^\sharp\times b\otimes\{0\},1]} && {[a,1]^\sharp\vee[b\otimes\{0\},1]} \\
	{[b\otimes[1]^\sharp,1]\vee[a,1]^\sharp} && {[a^\sharp\times (b\otimes[1]^\sharp),1]} && {[a,1]^\sharp\vee[b\otimes[1]^\sharp,1]} \\
	{[b\otimes\{1\},1]\vee[a,1]^\sharp} && {[a^\sharp\times b\otimes\{1\},1]} && {[a,1]^\sharp\vee[b\otimes\{1\},1]} \\
	{[b,1]\vee[1]^\sharp\vee[a,1]^\sharp} & {[b,1]\vee[a,1]} & {[b,1]\vee[a,1]^\sharp\vee[1]^\sharp} & {[a^\sharp\times b,1]\vee[1]^\sharp} & {[a,1]^\sharp\vee[b,1]\vee[1]^\sharp}
	\arrow[from=4-5, to=5-5]
	\arrow[from=4-5, to=3-5]
	\arrow[from=2-5, to=3-5]
	\arrow[from=2-5, to=1-5]
	\arrow[from=5-4, to=5-5]
	\arrow[from=5-4, to=5-3]
	\arrow[from=2-3, to=2-5]
	\arrow[from=2-3, to=1-4]
	\arrow[from=1-4, to=1-5]
	\arrow[from=4-3, to=5-4]
	\arrow[from=4-3, to=3-3]
	\arrow[from=2-3, to=3-3]
	\arrow[from=3-3, to=3-5]
	\arrow[from=4-3, to=4-5]
	\arrow[from=1-4, to=1-3]
	\arrow[from=2-3, to=2-1]
	\arrow[from=3-3, to=3-1]
	\arrow[from=4-3, to=4-1]
	\arrow[from=4-1, to=3-1]
	\arrow[from=2-1, to=3-1]
	\arrow[from=1-2, to=1-3]
	\arrow[from=5-2, to=5-3]
	\arrow[from=5-2, to=5-1]
	\arrow[from=1-2, to=1-1]
	\arrow[from=4-1, to=5-1]
	\arrow[from=2-1, to=1-1]
	\arrow[from=2-3, to=1-2]
	\arrow[from=4-3, to=5-2]
\end{tikzcd}\]

Now, using the formula given in proposition \ref{prop:example of a special colimit marked case}, and taking the colimit line by line of the previous diagram, $([a,1]^\sharp\times[b,1])\otimes[1]^\sharp$ is the colimit of the diagram 
\[\begin{tikzcd}
	{[a,1]^\sharp\times([1]^\sharp\vee[b,1])} \\
	{[a,1]^\sharp\times[b\otimes\{0\},1]} \\
	{[a,1]^\sharp\times[b\otimes[1]^\sharp,1]} \\
	{[a,1]^\sharp\times[b\otimes\{1\},1]} \\
	{[a,1]^\sharp\times([b,1]\vee[1]^\sharp)}
	\arrow[from=2-1, to=1-1]
	\arrow[from=2-1, to=3-1]
	\arrow[from=4-1, to=3-1]
	\arrow[from=4-1, to=5-1]
\end{tikzcd}\]
Using for the last times proposition \ref{prop:eq for cylinder marked}, $([a,1]^\sharp\times[b,1])\otimes[1]^\sharp$ is equivalent to $[a,1]^\sharp\times([b,1]\otimes[1]^\sharp)$.
\end{proof}

\begin{prop}
 \label{prop:associativity of Gray amput2}
Let $D$ be a marked $\io$-category and $K,L$ two $(\infty,1)$-categories. 
There is a natural equivalence
$(D\otimes K^\sharp)\otimes L^\sharp\to D\otimes(K\times L)^\sharp$.
\end{prop}
\begin{proof}
The construction \ref{cons:almost assoc of gray} induces a natural transformation
$$\phi_{D,K,L}:((D^\natural\otimes K)\otimes L)^\flat \to (D^\natural\otimes(K\times L))^\flat\to D\otimes(K\times L)^\sharp.$$
Remark now that the canonical natural transformation $\psi_{D,K,L}:((D^\natural\otimes K)\otimes L)^\flat\to (D\otimes K^\sharp)\otimes L^\sharp$ is pointwise a colimit of morphisms of shape $\Ib:\Db_{n+1}^\flat\to \Db_n^\flat$ and $\Db_n^\flat\to (\Db_n)_t$ for $n$ an integer. Remark now that proposition \ref{prop:inteligent trucatio and a particular colimit} and the definition of marked $\io$-categories imply that the morphisms $\Ib:\Db_{n+1}^\flat\to \Db_n^\flat$ and $\Db_n^\flat\to (\Db_n)_t$ are epimorphisms in the sense of definition \ref{defi:epi abstrait}.

As a consequence, by the dual of proposition \ref{prop:mono closed by colimit}, $\psi$ is pointwise an epimorphism. The dual of proposition \ref{prop:non trivial fact about mono} then implies that $\phi$ factors through $\psi$ if and only if for any $D,K,L$, $\phi_{D,K,L}$ uniquely factors through $\psi_{D,K,L}$. 

Remark that $\psi_{D,K,L}$ obviously factors if $D,$ $K$ or $L$ is $[0]$.
As the marked Gray tensor product and the cartesian product commute with colimits, the full sub $\infty$-groupoid of elements $(D,K,L)$ of $\ocatm\times \icat\times \icat$ such that $\phi_{D,K,L}$ factors through $\psi_{D,K,L}$ and induces the desired equivalence is closed by colimits. 

 It is then sufficient to show that it includes $([1]^\sharp,[1],[1])$ and $([a,1],[1],[1])$ for $a\in t\Theta$. We can then proceed as in the proof of proposition \ref{prop:associativity of Gray2}, making these two objects explicit thanks to the equations given in proposition \ref{prop:eq for cylinder marked}. As the proof takes up a lot of space and is very similar to that of proposition \textit{prop:associativity of Gray2}, we leave it to the reader.
\end{proof}

\begin{prop}
\label{prop:associativity of ominus}
Let $C$ be a $\io$-category, $D$ a marked $\io$-category and $[b,n]$ a globular sum.
\begin{enumerate}
\item The underlying $\io$-category of $C^\flat\ominus [b,n]^\sharp$ is $C\ominus [b,n]$.
\item The canonical morphism $C^\sharp\ominus [b,n]^\sharp\to C^\sharp\times [b,n]^\sharp$ is an equivalence.
\item The canonical morphism
$(C^\sharp\times D)\ominus [b,n]^\sharp\to C^\sharp\times (D\ominus [b,n]^\sharp)$ is an equivalence.
\end{enumerate}
\end{prop}
\begin{proof}
This is a consequence of propositions \ref{prop:associativity of Gray amput 1},  \ref{prop:associativity of Gray amput 1.5}, \ref{prop:associativity of Gray2} and \ref{prop:associativity of Gray amput2} and of the construction of $\ominus$.
\end{proof}

\subsection{Marked Gray deformation retract}
We provide analogous results for section \ref{subsection:Gray deformation retract}, with proofs that are entirely similar and, therefore, omitted.

\begin{definition} A \wcnotion{left Gray deformation retract structure}{left or right Gray deformation retract structure} for a morphism $i:C\to D$ between marked $\io$-categories is the data of a \textit{retract}
 $r:D\to C$, a \textit{deformation} $\psi:D\otimes [1]^\sharp\to D$, and equivalences
$$ri\sim id_C~~~~~\psi_{|D\otimes\{0\}}\sim ir~~~~~\psi_{|D\otimes\{1\}}\sim id_D~~~~~ \psi_{|C\otimes[1]^\sharp}\sim i\cst_C
$$ 
A morphism $i:C\to D$ between marked $\io$-categories is a \wcnotion{left Gray deformation retract}{left or right Gray deformation retract} if it admits a left deformation retract structure. By abuse of notation, such data will just be denoted by $(i,r,\psi)$.

We define dually the notion of \textit{right Gray deformation retract structure} and of \textit{right Gray deformation retract} in exchanging $0$ and $1$ in the previous definition.

We define similarly the notion of \notion{left or right deformation retract} by replacing $\otimes$ by $\times$.

\end{definition}

\begin{definition}
 A \textit{left Gray deformation retract structure for a morphism $i:f\to g$} in the $\iun$-category of arrows of $\ocatm$ is the data of a \textit{retract}
 $r:g\to f$, a \textit{deformation} $\psi:g\otimes [1]^\sharp\to g$ and equivalences
$$ri\sim id_f~~~~~\psi_{|g\otimes\{0\}}\sim ir~~~~~\psi_{|g\otimes\{1\}}\sim id_D~~~~~ \psi_{|f\otimes[1]^\sharp}\sim i\cst_C
$$ 
A morphism $i:C\to D$ between two arrows of $\ocatm$ is a \textit{left Gray deformation retract} if it admits a left deformation retract structure. By abuse of notation, such data will just be denoted by $(i,r,\psi)$.

We define dually the notion of \textit{right Gray deformation retract structure} and of \textit{right Gray deformation retract} in exchanging $0$ and $1$ in the previous definition.

We define similarly the notion of \notion{left and right deformation retract} by replacing $\otimes$ by $\times$.

\end{definition}

\begin{example}
\label{example:canonical example of left deformation retract}
Let $C$ be a marked $\io$-category. The morphism $C\otimes\{0\}\to C\otimes[1]^\sharp$ is a left Gray deformation retract.
Indeed, the retract is given by $C\otimes\Ib:C\otimes[1]^\sharp\to C\otimes\{0\}$, and the deformation is induced by
$$(C\otimes[1]^\sharp)\otimes[1]^\sharp\sim C\otimes([1]\times [1])^\sharp\xrightarrow{C\otimes\psi^\sharp} C\otimes[1]^\sharp$$
where the first equivalence is the one of proposition \ref{prop:associativity of Gray amput2}, and $\psi:[1]\times[1]\to [1]$ is the unique morphism sending $(\epsilon,\epsilon')$ to $\epsilon\wedge \epsilon'$.

Similarly, the morphism $C\otimes\{1\}\to C\otimes[1]^\sharp$ is a right deformation retract.
\end{example}

\vspace{1cm} Left and right Gray retracts enjoy many stability properties: 
\begin{prop}
\label{prop:left Gray deformation retract stable under pushout}
Let $(i_a,r_a,\psi_a)$ be a natural family of left (resp. right) Gray deformation retract structures indexed by an $(\infty,1)$-category $A$.
The triple $(\colim_{A}i_a,\colim_{A}r_a,\colim_{A}\psi_a)$ is a left (resp. right) $k$-Gray deformation retract structure.
\end{prop}

\begin{prop}
\label{prop:stability under pullback}
Suppose given a diagram
\[\begin{tikzcd}
	X & Y & Z \\
	X & {Y'} & {Z'}
	\arrow[from=1-1, to=2-1]
	\arrow[from=1-2, to=2-2]
	\arrow[from=1-3, to=2-3]
	\arrow["p", from=1-1, to=1-2]
	\arrow["q"', from=1-3, to=1-2]
	\arrow["{p'}"', from=2-1, to=2-2]
	\arrow["{q'}", from=2-3, to=2-2]
\end{tikzcd}\]
such that $p\to p'$ and $q\to q'$ are left (resp. right) Gray deformation retract. The induced square $q^*p\to (q')^*p'$ is a left (resp. right) $k$-Gray deformation retract.
\end{prop}

\begin{prop}
\label{prop:stability by composition }
If $p\to p'$ and $p'\to p''$ are two left (resp. right) Gray deformation retracts, so is $p\to p''$.
\end{prop}

\begin{prop}
\label{prop:Gray deformation retract and passage to hom}
Let $(i:C\to D,r,\psi)$ be a left (resp. right) Gray deformation structure. For any $x: C$ and $y:D$ (resp. $x: D$ and $y:C$), the morphism
$$\begin{array}{cc}
&\hom_C(x,ry)\xrightarrow{i} \hom_D(ix,iry)\xrightarrow{{\psi_y}_!} \hom_D(ix,y)\\
(resp. &\hom_C(rx,y)\xrightarrow{i} \hom_D(irx,iy)\xrightarrow{{\psi_x}_!} \hom_D(x,iy))
\end{array}
$$
is a right (resp. left) Gray deformation retract, whose retract is given by 
$$\begin{array}{cc}
&\hom_D(ix,y)\xrightarrow{r}\hom_C(x,ry)\\
(resp. &\hom_D(x,iy)\xrightarrow{r}\hom_C(rx,y))
\end{array}$$

If $(i:C\to D,r,\psi)$ is a left (resp. right) deformation structure, for any $x: C$ and $y:D$ (resp. $x: D$ and $y:C$), the two morphisms above are inverses one of each other.
\end{prop}

\begin{prop}
\label{prop:Gray deformation retract and passage to hom v2}
For any left (resp. right) Gray deformation retracts between $p$ and $p'$:
\[\begin{tikzcd}
	C & D \\
	{C'} & {D'}
	\arrow["p"', from=1-1, to=2-1]
	\arrow["i", from=1-1, to=1-2]
	\arrow["{p'}", from=1-2, to=2-2]
	\arrow["{i'}"', from=2-1, to=2-2]
\end{tikzcd}\]
and for any pair of objects $x: C$ and $y:D$ (resp. $x: D$ and $y:C$), the outer square of the following diagram
\[\begin{tikzcd}
	{\hom_{C}(x,ry)} & {\hom_{D}(ix,iry)} & {\hom_{D}(ix,y)} \\
	{\hom_{C'}(px,pr'y)} & {\hom_{D'}(p'i'x,p'i'r'y)} & {\hom_{D'}(p'i'x,p'y)}
	\arrow["{i'}"', from=2-1, to=2-2]
	\arrow["{{\psi'_{p'y}}_!}"', from=2-2, to=2-3]
	\arrow[from=1-1, to=2-1]
	\arrow[from=1-3, to=2-3]
	\arrow["{{\psi_y}_!}", from=1-2, to=1-3]
	\arrow["i", from=1-1, to=1-2]
	\arrow[from=1-2, to=2-2]
\end{tikzcd}\]
(resp.
\[\begin{tikzcd}
	{\hom_{C}(rx,y)} & {\hom_{D}(irx,iy)} & {\hom_{D}(x,iy)} \\
	{\hom_{C'}(pr'x,py)} & {\hom_{D'}(p'i'r'x,p'i'y)} & {\hom_{D'}(p'x,p'i'y)\big)}
	\arrow["{i'}"', from=2-1, to=2-2]
	\arrow["{{\psi'_{p'x}}_!}"', from=2-2, to=2-3]
	\arrow[from=1-1, to=2-1]
	\arrow[from=1-3, to=2-3]
	\arrow["{{\psi_x}_!}", from=1-2, to=1-3]
	\arrow["i", from=1-1, to=1-2]
	\arrow[from=1-2, to=2-2]
\end{tikzcd}\]
is a left (resp. right) Gray deformation retract, whose retract is given by
\[\begin{tikzcd}
	{\hom_{D}(ix,y)} & {\hom_{C}(x,ry)} & {(resp.\hom_{D}(x,iy)} & {\hom_{C}(rx,y)} \\
	{\hom_{D'}(p'i'x,p'y)\big)} & {\hom_{C'}(px,pr'y)} & {\hom_{D'}(p'x,p'i'y)} & {\hom_{C'}(pr'x,py)\big)}
	\arrow[from=1-3, to=2-3]
	\arrow["r", from=1-3, to=1-4]
	\arrow["{r'}"', from=2-3, to=2-4]
	\arrow[from=1-4, to=2-4]
	\arrow["{r'}"', from=2-1, to=2-2]
	\arrow["r", from=1-1, to=1-2]
	\arrow[from=1-2, to=2-2]
	\arrow[from=1-1, to=2-1]
\end{tikzcd}\]
If $p\to p'$ is a left (resp. right) deformation structure, for any $x: C$ and $y:D$ (resp. $x: D$ and $y:C$), the two morphisms above are inverses one of each other.
\end{prop}

\begin{prop}
\label{prop:suspension of left Gray deformation retract}
If $i$ is a left Gray deformation retract, $[i,1]$ is a right Gray deformation retract. Conversely, if $i$ is a right Gray deformation retract, $[i,1]$ is a left Gray deformation retract morphism.
\end{prop}

\begin{prop}
\label{prop:when glob inclusion are left Gray deformation}
Let $a$ be a globular sum of dimension $(n+1)$. We denote by $s_n(a)$ and $t_n(a)$ the globular sum defined in \ref{defi:definition of source et but}. If $n$ is even, $s_n(a)^\flat\to a^{\sharp_n}$ is a left Gray deformation retract, and $t_n(a)^\flat\to a^{\sharp_n}$ is a right Gray deformation retract. Dually, if $n$ is odd, $t_n(a)^\flat\to a^{\sharp_n}$ is a left Gray deformation retract, and $s_n(a)^\flat\to a^{\sharp_n}$ is a right Gray deformation retract.
\end{prop}

\begin{prop}
\label{prop:exemple of right deformation retract}
Let $i:C\to D$ be a left Gray deformation retract and $A$ a marked $\io$-category.
The morphism $A\times i$ is a left Gray deformation retract. 
\end{prop}
\begin{proof}
Let $r$ and $\psi$ be retracts and deformation of $i$.
We define $\psi_A$ as the composite
$$(A\times D)\otimes[1]^\sharp\to A\times (D\otimes[1]^\sharp)\xrightarrow{A\times \psi} A\times D$$
Remark that the triple $(A\times i,A\times r,\psi_A)$ is a left Gray deformation retract structure.
\end{proof}

\section{Cartesian fibrations}
\subsection{Initial and final morphisms}

\begin{definition} We denote by \wcnotation{$\I$}{(i@$\I$} the set of morphisms of shape $X\otimes \{0\}\to X\otimes [1]^\sharp$ for $X$ being either $\Db_n^\flat$ or $(\Db_n)_t$. A morphism is \wcnotion{initial}{initial morphism} if it is in $\widehat{\I}$. Conversely, we denote by \wcnotation{$\F$}{(f@$\F$} the set of morphisms of shape $X\otimes \{1\}\to X\otimes [1]^\sharp$ for $X$ being either $\Db_n^\flat$ or $(\Db_n)_t$. A morphism is \wcnotion{final}{final morphism} if it is in $\widehat{\F}$.

Initial and final morphisms are stable under colimits, retract, composition and  left cancellation according to lemma \ref{lemma:closed under colimit imply saturated_modified}.
\end{definition}

\begin{remark}
The proposition \ref{prop:otimes et op marked version} implies that the full duality $(\uvar)^\circ$ sends final (resp. initial) morphisms to initial (resp. final) morphisms.
\end{remark}

\begin{example}
\label{exe:the easiest example of initial and finla morphism}
By stability of initial and final morphisms by colimits, for any marked $\io$-category $C$, $C\otimes\{0\}\to C\otimes[1]^\sharp$ is initial, and $C\otimes\{1\}\to C\otimes[1]^\sharp$ is final.
\end{example}

\begin{prop}
\label{prop:left Gray deformation retract are initial}
Left Gray deformation retracts (resp. left deformation retract) are initial and right Gray deformation retracts (resp. right deformation retract) are final. 
\end{prop}
\begin{proof} 
Let $i:C\to D$ be a left Gray deformation retract. The diagram
\[\begin{tikzcd}
	C & {D\otimes\{0\}} & C \\
	{D\otimes\{1\}} & {D\otimes [1]^\sharp} & D
	\arrow["i"', from=1-1, to=2-1]
	\arrow[from=2-1, to=2-2]
	\arrow[from=1-2, to=2-2]
	\arrow["\psi"', from=2-2, to=2-3]
	\arrow["i", from=1-1, to=1-2]
	\arrow["r", from=1-2, to=1-3]
	\arrow["i", from=1-3, to=2-3]
\end{tikzcd}\]
expresses $i$ as a retract of $D\otimes \{0\}\to D\otimes [1]^\sharp$, which is an initial morphism according to example \ref{exe:the easiest example of initial and finla morphism}. The morphism  $i$ is then initial. 

As left deformation retracts are left Gray deformation retracts, they are initial.
The case of right (Gray) deformation retracts follows by duality.
\end{proof}

\begin{cor}
\label{cor:when glob inclusion are final and initial}
Let $a$ be a globular sum of dimension $(n+1)$. We denote by $s_n(a)$ and $t_n(a)$ the globular sum defined in \ref{defi:definition of source et but}. If $n$ is even, $s_n(a)^\flat\to a^{\sharp_n}$ is initial, and $t_n(a)^\flat\to a^{\sharp_n}$ is final. Dually, if $n$ is odd, $t_n(a)^\flat\to a^{\sharp_n}$ is initial, and $s_n(a)^\flat\to a^{\sharp_n}$ is final
\end{cor}
\begin{proof}
This is a direct consequence of propositions \ref{prop:when glob inclusion are left Gray deformation} and \ref{prop:left Gray deformation retract are initial}.
\end{proof}

\begin{definition}
A \wcnotion{marked trivialization}{marked trivializations} is a morphism in the smallest cocomplete class of morphisms that includes $\Ib_n:(\Db_{n+1})_t\to \Db_n^\flat$ for any $n$.
\end{definition}

\begin{prop}
\label{prop:trivialization are initial}
Marked trivializations are  both initial and final.
\end{prop}
\begin{proof}
As final and initial morphisms are closed under colimits, it is sufficient to demonstrate that for any $n$, the morphism $\Ib_n:(\Db_{n+1})_t\to \Db_n^\flat$ is both initial and final.
According to lemma \ref{cor:when glob inclusion are final and initial} there exists $\alpha\in\{-,+\}$ such that 
 $i_{n}^\alpha:(\Db_n)^\flat\to (\Db_{n+1})_t$ is initial.
 As $\Ib_n$ is a retraction of this morphism, and as initial morphisms are closed under left cancellation, $\Ib_n$ is initial. The second case follows by duality.
\end{proof}

\begin{prop}
\label{prop:cotimes 1 to c is a trivialization}
Let $C$ be a marked $\io$-category.
The morphism $C\otimes[1]^\sharp\to C$ is a marked trivialization. In particular, this morphism is both initial and final. 
\end{prop}
\begin{proof}
As marked trivializations are stable under colimits, it is sufficient to demonstrate the result for $C$ being either $\Db_n^\flat$ or $(\Db_{n+1})_t$ for $n$ an integer. We will then proceed by induction. Suppose first that $C$ is $\Db_0^\flat$ or $(\Db_1)_t$. The first case is trivial, for the second one, remark that $(\Db_1)_t\otimes[1]^\sharp\sim [1]^\sharp\times[1]^\sharp\to [1]^\sharp$ is the horizontal colimit of the diagram
\[\begin{tikzcd}
	{[2]^\sharp} & {[1]^\sharp} & {[2]^\sharp} \\
	{[1]^\sharp} & {[1]^\sharp} & {[1]^\sharp}
	\arrow["{s^0}"', from=1-1, to=2-1]
	\arrow[from=1-2, to=2-2]
	\arrow["{s^1}", from=1-3, to=2-3]
	\arrow[from=1-2, to=1-1]
	\arrow[from=1-2, to=1-3]
	\arrow[from=2-2, to=2-1]
	\arrow[from=2-2, to=2-3]
\end{tikzcd}\]
and is then  a marked trivialization. Suppose now the result is true at the stage $(n-1)$. Let $C$ be $\Db_n^\flat$ (resp.$(\Db_{n+1})_t$). We set $D:=\Db_{n-1}^\flat$ (resp. $D:=(\Db_{n})_t$). We then have $C\sim [D,1]$. The proposition \ref{prop:eq for cylinder marked} implies that $C\otimes[1]^\sharp\to C$ is the horizontal colimit of the diagram:
\[\begin{tikzcd}
	{[1]^\sharp\vee[D,1]} & {[D\otimes\{0\},1]} & {[D\otimes[1]^\sharp,1]} & {[D\otimes\{1\},1]} & {[D,1]\vee[1]^\sharp} \\
	{[D,1]} & {[D,1]} & {[D,1]} & {[D,1]} & {[D,1]}
	\arrow[from=2-2, to=2-1]
	\arrow[from=2-2, to=2-3]
	\arrow[from=2-4, to=2-3]
	\arrow[from=2-4, to=2-5]
	\arrow[from=1-1, to=2-1]
	\arrow[from=1-3, to=2-3]
	\arrow[from=1-4, to=2-4]
	\arrow[from=1-5, to=2-5]
	\arrow[from=1-2, to=1-1]
	\arrow[from=1-2, to=1-3]
	\arrow[from=1-4, to=1-3]
	\arrow[from=1-4, to=1-5]
	\arrow[from=1-2, to=2-2]
\end{tikzcd}\]
The leftest and rightest morphisms obviously are   marked trivializations
As marked trivializations are stable by suspension, the induction hypothesis implies that the middle vertical morphisms of the previous diagram are in $K$, which concludes the proof.
\end{proof}
\begin{prop}
\label{prop:cotimes 1 to ctimes 1 is a trivialization}
Let $C$ be a marked $\io$-category.
The morphism $C\otimes[1]^\sharp\to C\times [1]^\sharp$ is a marked trivialization. In particular, this morphism is both initial and final. 
\end{prop}
\begin{proof}
As marked trivializations are stable by colimit, it is sufficient to demonstrate the result for $C$ being either $\Db_n^\flat$ or $(\Db_{n+1})_t$ for $n$ an integer. If $C$ is either $(\Db_0)^\flat$ or $(\Db_1)_t$ the considered morphism is the identity. We then suppose that $n>0$. Let $C$ be $\Db_n^\flat$ (resp.$(\Db_{n+1})_t$). We set $D:=\Db_{n-1}^\flat$ (resp. $D:=(\Db_{n})_t$). We then have $C\sim [D,1]$. The propositions \ref{prop:eq for cylinder marked} and  \ref{prop:example of a special colimit marked case} imply that $C\otimes[1]^\sharp\to C\times[1]^\sharp$ is the horizontal colimit of the diagram:
\[\begin{tikzcd}
	{[1]^\sharp\vee[D,1]} & {[D\otimes\{0\},1]} & {[D\otimes[1]^\sharp,1]} & {[D\otimes\{1\},1]} & {[D,1]\vee[1]^\sharp} \\
	{[1]^\sharp\vee[D,1]} & {[D,1]} & {[D,1]} & {[D,1]} & {[D,1]\vee[1]^\sharp}
	\arrow[from=2-2, to=2-1]
	\arrow[from=2-2, to=2-3]
	\arrow[from=2-4, to=2-3]
	\arrow[from=2-4, to=2-5]
	\arrow[from=1-1, to=2-1]
	\arrow[from=1-3, to=2-3]
	\arrow[from=1-4, to=2-4]
	\arrow[from=1-5, to=2-5]
	\arrow[from=1-2, to=1-1]
	\arrow[from=1-2, to=1-3]
	\arrow[from=1-4, to=1-3]
	\arrow[from=1-4, to=1-5]
	\arrow[from=1-2, to=2-2]
\end{tikzcd}\]
The proposition \ref{prop:cotimes 1 to c is a trivialization} then states that the middle vertical morphisms of the previous diagram are marked trivializations, which concludes the proof.
\end{proof}

\begin{prop}
\label{prop:suspension of initial}
If $i$ is an initial morphism, $[i,1]$ is a final morphism. Conversely, if $i$ is a final morphism, $[i,1]$ is an initial morphism.
\end{prop}
\begin{proof}
As the suspension preserves colimits, we can restrict to the case where $i$ is of shape $C\otimes\{0\}\to C\otimes[1]^\sharp$, and this is then a consequence of propositions \ref{prop:suspension of left Gray deformation retract} and \ref{prop:left Gray deformation retract are initial}.
\end{proof}

\begin{prop}
\label{prop:initial stable under product}
For any marked $\io$-category $K$, the functor $K\times\uvar:\ocatm\to \ocatm$ preserves initial and final morphisms. 
\end{prop}
\begin{proof}
The functor $K\times\uvar$ preserves colimits and this is then enough to show that 
it preserves left and right Gray deformation retracts, which is a consequence of proposition \ref{prop:exemple of right deformation retract}.
\end{proof}

\subsection{Left and right cartesian fibrations}
\label{subsection Left and right cartesian fibration}

\begin{definition}
A \wcnotion{left cartesian fibration}{left or right cartesian fibration} is a morphism $f:C\to D$ between marked $\io$-categories having the unique right lifting property against initial morphisms.
A \textit{right cartesian fibration} is a morphism $f:C\to D$ between marked $\io$-categories having the unique right lifting property against final morphisms.
\end{definition}
\begin{remark}
Left and right cartesian fibrations are stable under limits, retract, composition and  right cancellation according to the result of section \ref{section:Factorization system}.  
\end{remark}

\begin{remark}
The proposition \ref{prop:otimes et op marked version} implies that the full duality $(\uvar)^\circ$ sends left (resp. right) cartesian fibrations to right (resp. left) cartesian fibrations.
\end{remark}

\begin{construction}
\label{cons:of fb for fibration}
The construction \ref{cons:small object argument} produces a unique factorization system between initial (resp final) morphisms and left (resp. right) cartesian fibrations. If $f:A\to B$ is any morphism, we will denote by $\Fb f: A'\to B$ the left cartesian fibration obtained via this factorization system. This morphism will be called the \notion{left Cartesian replacement} of $f$.
\end{construction}

\begin{prop}
\label{prop:left fib over flat}
If $f:C\to D^\flat$ is a left cartesian fibration, then the canonical morphism $(C^\natural)^\flat\to C$ is an equivalence. Conversely, any morphism $C^\flat \to D^\flat$ is a left cartesian fibration.
\end{prop}
\begin{proof}
The first assertion is a consequence of the fact that marked trivializations are initial. The second assertion is a direct consequence of proposition \ref{prop:cotimes 1 to c is a trivialization}.
\end{proof}

\begin{prop}
\label{prop:cartesian fibration between arrow}
Let $p:X\to C$ be a morphism, and $x,y$ two objects of $X$. Then, if $p$ is a right (resp. left) cartesian fibration, the induced morphism $p:\hom_X(x,y)\to \hom_C(x,y)$ is a left (resp. right) cartesian fibration.
\end{prop}
\begin{proof}
This is a direct consequence of proposition \ref{prop:suspension of initial}.
\end{proof}

\begin{prop}
\label{prop:left Gray transfomration stable under pullback along cartesian fibration}
Consider a cocartesian square
\[\begin{tikzcd}
	{X''} & {X'} & X \\
	{Y''} & {Y'} & Y
	\arrow["p"', from=1-3, to=2-3]
	\arrow["{p''}"', from=1-1, to=2-1]
	\arrow["{p'}"', from=1-2, to=2-2]
	\arrow["j", from=1-1, to=1-2]
	\arrow[from=1-2, to=1-3]
	\arrow["i"', from=2-1, to=2-2]
	\arrow[from=2-2, to=2-3]
	\arrow["\lrcorner"{anchor=center, pos=0.125}, draw=none, from=1-1, to=2-2]
	\arrow["\lrcorner"{anchor=center, pos=0.125}, draw=none, from=1-2, to=2-3]
\end{tikzcd}\]
If $p$ is a left (resp. right) cartesian fibration and $i$ is a right (resp. left) Gray deformation retract, then $p''\to p'$ is a right (resp. left) Gray deformation retract. Moreover, this left (resp. right) Gray deformation retract structure is functorial in $p$.

Similarly, if $p$ is a left (resp. right) cartesian fibration and $i$ is a right (resp. left) deformation retract, then $p''\to p'$ is a right (resp. left) deformation retract.  This left (resp. right) deformation retract structure is functorial in $p$. 
\end{prop}
\begin{proof}
We suppose that $p$ is a right cartesian fibration. By stability under pullbacks, so is $p'$.
Let $(i:C\to D,r,\phi)$ be a left Gray deformation retract structure.
We define the morphism $\psi$ as the lift of the following commutative square:
\[\begin{tikzcd}
	{X''\otimes[1]^\sharp\cup X'\otimes\{0\}} && {X'} \\
	{X'\otimes[1]^\sharp} & {Y''\otimes[1]^\sharp} & {Y'}
	\arrow[from=1-1, to=2-1]
	\arrow["{p'}", from=1-3, to=2-3]
	\arrow["{(X''\otimes\Ib)\cup id}", from=1-1, to=1-3]
	\arrow[from=2-1, to=2-2]
	\arrow[from=2-2, to=2-3]
	\arrow["\psi"{description}, dotted, from=2-1, to=1-3]
\end{tikzcd}\]
Remark that the restriction of $\psi$ to $X'\otimes\{1\}$ factors through $X''$ and then defines a retract $s:Y\to X$ of $j$. This provides a right Gray deformation structure for $p\to p''$. We proceed similarly for the dual case.

The functoriality of the Gray deformation retract structure comes from the fact that only functorial operations were used. Indeed, pullbacks, pushouts and the Gray tensor product are functorial. The formation of the lift $\psi$ is also functorial according to proposition \ref{prop:fonctorialite des relevement}.

To verify the second claim, one may utilize the same proof, exchanging $\otimes$ with $\times$.
\end{proof}

\begin{cor}
\label{cor:morphism between is an equivalence when equivalence on fiber}
Let $p:X\to B^\sharp$ and $q:Y\to B^\sharp$ be two left cartesian fibrations and $\phi:p\to q$ a morphism over $ B^\sharp$. The morphism $\phi$ is an equivalence if and only if, for any object $b$ of $B$, the induced morphism $\{b\}^*\phi :\{b\}^*X\to \{b\}^*Y$ is an equivalence.
\end{cor}
\begin{proof}
As $\tiPsh{\Theta}$ is locally cartesian closed, pullback commutes with special colimits, and as every $\io$-category is the special colimit of its $k$-truncation for $k\in \Nb$ according to proposition \ref{prop:example of a special colimit 2 marked case} , one can suppose that $B$ is a marked $(\infty,k)$-category for $k<\omega$, and we then proceed by induction on $k$. 
The initialization is trivial.
Suppose then the result is true for $(\infty,k)$-categories and that $B$ is an $(\infty,k+1)$-category. Remark first that $\phi$ induces an equivalence between $\tau_0(X)$ and $\tau_0(Y)$. 

Let $x$ and $y$ be two objects of $X$ and
 $v:[1]^\sharp\to B^\sharp$ be a cell whose source is $px$ and target $py$. This induces cartesian squares
\[\begin{tikzcd}
	{X_1} & {X_v} & X \\
	{Y_1} & {Y_v} & Y \\
	{\{1\}} & {[1]^\sharp} & {B^\sharp}
	\arrow["{\phi_1}", from=1-1, to=2-1]
	\arrow["\phi", from=1-3, to=2-3]
	\arrow["{\phi_v}", from=1-2, to=2-2]
	\arrow[from=2-3, to=3-3]
	\arrow[from=2-2, to=3-2]
	\arrow[from=2-1, to=3-1]
	\arrow[from=3-1, to=3-2]
	\arrow["v"', from=3-2, to=3-3]
	\arrow[from=2-1, to=2-2]
	\arrow[from=1-1, to=1-2]
	\arrow[from=2-2, to=2-3]
	\arrow[from=1-2, to=1-3]
	\arrow["\lrcorner"{anchor=center, pos=0.125}, draw=none, from=1-1, to=2-2]
	\arrow["\lrcorner"{anchor=center, pos=0.125}, draw=none, from=1-2, to=2-3]
	\arrow["\lrcorner"{anchor=center, pos=0.125}, draw=none, from=2-2, to=3-3]
	\arrow["\lrcorner"{anchor=center, pos=0.125}, draw=none, from=2-1, to=3-2]
\end{tikzcd}\]
By hypothesis, $\phi_1$ is an equivalence.
According to proposition \ref{prop:left Gray transfomration stable under pullback along cartesian fibration}, $\phi_1\to \phi_v$ is a right deformation retract, and according to proposition \ref{prop:Gray deformation retract and passage to hom}, this induces a  square
\[\begin{tikzcd}
	{\hom_{X_1}(x,ry)} & {\hom_{X_v}(x,y)} \\
	{\hom_{Y_1}(\phi x,r\phi y)} & {\hom_{Y_v}(\phi x,\phi y)}
	\arrow[from=1-1, to=1-2]
	\arrow[from=1-1, to=2-1]
	\arrow[from=1-2, to=2-2]
	\arrow[from=2-1, to=2-2]
\end{tikzcd}\]
where horizontal morphisms are equivalences. By hypothesis, the left vertical one is an equivalence, and then, by two out of three, so is the right vertical one. 

We then have, for any $1$-cell $v$, the following cartesian squares
\[\begin{tikzcd}
	{\hom_{X_v}(x,y)} & {\hom_{X}(x,y)} \\
	{\hom_{Y_v}(\psi x,\psi y)} & {\hom_{Y}(\psi x,\psi y)} \\
	{\{v\}} & {\hom_{B}(px,py)^\sharp}
	\arrow["\sim"', from=1-1, to=2-1]
	\arrow[from=3-1, to=3-2]
	\arrow[from=2-2, to=3-2]
	\arrow[from=1-2, to=2-2]
	\arrow[from=2-1, to=2-2]
	\arrow[from=2-1, to=3-1]
	\arrow[from=1-1, to=1-2]
\end{tikzcd}\]
where the arrow labeled  by $\sim$ is an equivalence. As $\hom_{B}(px,py)^\sharp$ is an $(\infty,k)$-category, the induction hypothesis implies that $\hom_{X}(x,y)\to \hom_{Y}(\psi x,\psi y)$  is an equivalence. The morphism $\phi$ is then fully faithful, and as we already know that it is essentially surjective, this concludes the proof.
\end{proof}

\vspace{1cm}  We have by construction \ref{cons:of fb for fibration} a factorization system in initial morphism followed by left cartesian fibration, and another one in final morphism followed by right cartesian fibration. We are willing to find an explicit expression for such factorization in some easy cases. We then fix $i:C^\flat \to D$ with $D$ being any marked $\io$-category.

\begin{definition}
If $C^\flat\to D$ is a functor between marked $\io$-categories, we define $D_{/C^{\flat}}$ and $D_{C^{\flat}/}$ as the following pullbacks 
\[\begin{tikzcd}
	{D_{C^{\flat}/}} & {D^{[1]^\sharp}} && {D_{/C^{\flat}}} & {D^{[1]^\sharp}} \\
	{C^{\flat}} & D && {C^{\flat}} & D
	\arrow["{(i_0^-)_!}", from=1-2, to=2-2]
	\arrow[from=2-1, to=2-2]
	\arrow[from=1-1, to=2-1]
	\arrow[from=1-1, to=1-2]
	\arrow["\lrcorner"{anchor=center, pos=0.125}, draw=none, from=1-1, to=2-2]
	\arrow["{(i_1^-)_!}", from=1-5, to=2-5]
	\arrow[from=1-4, to=2-4]
	\arrow[from=2-4, to=2-5]
	\arrow[from=1-4, to=1-5]
	\arrow["\lrcorner"{anchor=center, pos=0.125}, draw=none, from=1-4, to=2-5]
\end{tikzcd}\]
If $C$ is the terminal $\io$-category, this notation is compatible with the one of the slice over and under introduced in paragraph.
\end{definition}

\begin{lemma}
\label{lemma:explicit factoryzation 1}
The morphism $i:C^\flat\to D_{/C^{\flat}}$ appearing in the following diagram
\[\begin{tikzcd}
	& D \\
	{C^{\flat}} & {D_{C^{\flat}/}} & {D^{[1]^\sharp}} \\
	& {C^{\flat}} & D
	\arrow["{(i_0^-)_!}", from=2-3, to=3-3]
	\arrow[from=3-2, to=3-3]
	\arrow[from=2-2, to=3-2]
	\arrow[from=2-2, to=2-3]
	\arrow["\lrcorner"{anchor=center, pos=0.125}, draw=none, from=2-2, to=3-3]
	\arrow[curve={height=-12pt}, from=2-1, to=1-2]
	\arrow[curve={height=-12pt}, from=1-2, to=2-3]
	\arrow["id"', from=2-1, to=3-2]
	\arrow["i", dashed, from=2-1, to=2-2]
\end{tikzcd}\]
is initial.
\end{lemma}
\begin{proof}
Using proposition \ref{prop:associativity of Gray amput2}, we have a natural transformation
$$(\uvar\otimes[1]^\sharp)\otimes[1]^\sharp \sim \uvar\otimes([1]^\sharp\times[1]^\sharp)
\xrightarrow{\uvar\otimes\psi} \uvar\otimes[1]^\sharp$$
where $\psi$ sends $(\epsilon,\epsilon')$ on $\max(\epsilon,\epsilon')$. 
This induces squares:
\[\begin{tikzcd}
	{D^{}_{/C^\flat}} & {D^{[1]^\sharp}} & {(D^{[1]^\sharp})^{[1]^\sharp}} \\
	{C^\flat} & D & {D^{[1]^\sharp}}
	\arrow[from=1-1, to=1-2]
	\arrow[from=1-1, to=2-1]
	\arrow["\lrcorner"{anchor=center, pos=0.125}, draw=none, from=1-1, to=2-2]
	\arrow[from=1-2, to=1-3]
	\arrow[from=1-2, to=2-2]
	\arrow[from=1-3, to=2-3]
	\arrow[from=2-1, to=2-2]
	\arrow["{D^{s_0}}"', from=2-2, to=2-3]
\end{tikzcd}\]
As we have two squares
\[\begin{tikzcd}
	{(D^{}_{/C^\flat})^{[1]^\sharp}} & {(D^{[1]^\sharp})^{[1]^\sharp}} & {C^\flat} & D \\
	{(C^{\flat})^{[1]^\sharp}} & {D^{[1]^\sharp}} & {(C^{\flat})^{[1]^\sharp}} & {D^{[1]^\sharp}}
	\arrow[from=1-1, to=1-2]
	\arrow[from=1-1, to=2-1]
	\arrow["\lrcorner"{anchor=center, pos=0.125}, draw=none, from=1-1, to=2-2]
	\arrow[from=1-2, to=2-2]
	\arrow[from=1-3, to=1-4]
	\arrow[from=1-3, to=2-3]
	\arrow[from=1-4, to=2-4]
	\arrow["{D^{s_0}}"', from=2-1, to=2-2]
	\arrow[from=2-3, to=2-4]
\end{tikzcd}\]
This induces, by the universal property of pullback, a morphism $:D^{}_{/C^\flat})\to (D^{}_{/C^\flat})^{[1]^\sharp}$ and, by adjunction, a morphism $\phi:D^{}_{/C^\flat})\otimes[1]^\sharp\to D^{}_{/C^\flat}).$
We set $r:D_{C^{\flat}/}\to C^\flat$ as the canonical projection. Eventually, remark that $(i,r,\phi)$ is a left Gray deformation retract. According to proposition \ref{prop:left Gray deformation retract are initial}, this concludes the proof.
\end{proof}

\begin{lemma}
\label{lemma:explicit factoryzation 2}
The composite $q:D_{C^{\flat}/}\to D^{[1]^\sharp}\xrightarrow{D^{i_0^+}} D$ is a left cartesian fibration.
\end{lemma}
\begin{proof}
Consider a commutative diagram
\begin{equation}
\label{eq:lemma:explicit factoryzation 2}
\begin{tikzcd}
	{K\otimes\{0\}} & {D_{C^{\flat}/}} \\
	{K\otimes[1]^\sharp} & D
	\arrow[from=1-1, to=2-1]
	\arrow[from=1-1, to=1-2]
	\arrow[from=2-1, to=2-2]
	\arrow[from=1-2, to=2-2]
\end{tikzcd}
\end{equation}
The $\infty$-groupoid of lifts of this previous diagram is equivalent to the $\infty$-groupoid of pairs consisting of a commutative triangle 
\[\begin{tikzcd}
	{K\otimes\{0\}\otimes\{0\}} \\
	{K\otimes[1]^\sharp\otimes\{0\}} & {C^\flat}
	\arrow[from=1-1, to=2-1]
	\arrow["f", from=1-1, to=2-2]
	\arrow[dashed, from=2-1, to=2-2]
\end{tikzcd}\]
where $f$ is induced by $K\otimes\{0\}\to D_{C^{\flat}/}\to C^{\flat}$,
 and a lift in the induced diagram
\[\begin{tikzcd}
	{K\otimes\{0\}\otimes[1]^\sharp\cup K\otimes[1]^\sharp\otimes\{1\} \cup K\otimes[1]^\sharp\otimes\{0\}} & D \\
	{K\otimes[1]^\sharp\otimes[1]^\sharp} & 1
	\arrow[from=1-1, to=1-2]
	\arrow[from=1-1, to=2-1]
	\arrow[from=2-1, to=2-2]
	\arrow[from=1-2, to=2-2]
	\arrow[dashed, from=2-1, to=1-2]
\end{tikzcd}\]
According to proposition \ref{prop:cotimes 1 to c is a trivialization}, the morphism $K\otimes[1]^\sharp\otimes\{0\}\to C^\flat$ factors through a morphism $K\to C^\flat$, and is then uniquely determined by $f:K\otimes\{0\}\otimes\{0\}\to C^\flat$,  and proposition \ref{prop:associativity of Gray amput2} provides a natural equivalence between $(K\otimes[1]^\sharp)\otimes[1]^\sharp$ and $K\otimes([1]^\sharp\times [1]^\sharp)$. The $\infty$-groupoid of lifts of the diagram \eqref{eq:lemma:explicit factoryzation 2} is then equivalent  to the  $\infty$-groupoid of lifts of  the left square of the following diagram
\[\begin{tikzcd}[column sep =0.7cm]
	{K\otimes\{0\}\otimes[1]^\sharp\cup K\otimes[1]^\sharp\otimes\{1\} \cup K\otimes[1]^\sharp\otimes\{0\}} & {K\otimes[1]^\sharp\cup K\otimes[1]^\sharp} & D \\
	{K\otimes[1]^\sharp\otimes[1]^\sharp} & {K\otimes[2]^\sharp} & 1
	\arrow[from=1-1, to=2-1]
	\arrow[from=1-3, to=2-3]
	\arrow[from=2-1, to=2-2]
	\arrow[""{name=0, anchor=center, inner sep=0}, from=1-1, to=1-2]
	\arrow[from=1-2, to=2-2]
	\arrow[from=1-2, to=1-3]
	\arrow[from=2-2, to=2-3]
	\arrow[dashed, from=2-2, to=1-3]
	\arrow["\lrcorner"{anchor=center, pos=0.125, rotate=180}, draw=none, from=2-2, to=0]
\end{tikzcd}\]
As $K\otimes[1]^\sharp\coprod_{K\otimes[0]}K\otimes[1]^\sharp\to K\otimes[2]^\sharp$ is an equivalence, this   $\infty$-groupoid is contractible.
\end{proof}

\begin{prop}
\label{prop:explicit factoryzation}
The factorisation of $p:C^\flat \to D$
in an initial morphism followed by a left cartesian fibration is
$$C^\flat \xrightarrow{i} D_{C^\flat/}\xrightarrow{q} D,$$
and its factorization in a final morphism and a right cartesian fibration is 
$$C^\flat \xrightarrow{i} D_{/C^\flat}\xrightarrow{q} D.$$
\end{prop}
\begin{proof}
This is a direct application of lemma \ref{lemma:explicit factoryzation 1} and \ref{lemma:explicit factoryzation 1} and of their dual version.
\end{proof}
\begin{remark}
The more important example of the previous proposition is the case $C:=\{a\}$. In this case, the corresponding left cartesian fibration is the slice of $D$ under $a$
$$D_{a/}\to D$$
defined in \ref{defi: marked slice},
 and the corresponding right cartesian fibration is the slice of $D$ over $a$
$$D_{/a}\to D$$
also defined in \ref{defi: marked slice},
\end{remark}
\begin{definition}
Let $p:X\to Y$ be a morphism between $\io$-categories. A marked $1$-cell $v:x\to x'$ is \wcnotion{left cancellable}{left or right cancellable $1$-cell} if for any $y$, the following natural square is cartesian:
\[\begin{tikzcd}
	{\hom_X(x',y)} & {\hom_X(x,y)} \\
	{\hom_Y(px',py)} & {\hom_Y(px,py)}
	\arrow["{v_!}", from=1-1, to=1-2]
	\arrow["{p(v)_!}"', from=2-1, to=2-2]
	\arrow[from=1-2, to=2-2]
	\arrow[from=1-1, to=2-1]
\end{tikzcd}\]

Conversely, a $1$-cell $v:y\to y'$ is \textit{right cancellable} if for any $x$, the following natural square is cartesian:
\[\begin{tikzcd}
	{\hom_X(x,y)} & {\hom_X(x,y')} \\
	{\hom_Y(px,py)} & {\hom_Y(px,py')}
	\arrow["{v_!}", from=1-1, to=1-2]
	\arrow["{p(v)_!}"', from=2-1, to=2-2]
	\arrow[from=1-2, to=2-2]
	\arrow[from=1-1, to=2-1]
\end{tikzcd}\]
\end{definition}

\begin{notation}
Given $a$ an element of $\Theta_t$. The morphism $[1]^\sharp\hookrightarrow[1]^\sharp\vee[a,1]^\sharp$ and $[a,1]^\sharp\hookrightarrow[1]^\sharp\vee[a,1]^\sharp$ denote the canonical globular morphisms.
\end{notation}

\begin{lemma}
\label{lemma:technical lemma on cancellable cell 1}
Let $p$ be a morphism. 
The following conditions are equivalent:
\begin{enumerate}
\item $p$ has the unique right lifting property against $\{0\}\to [1]^\sharp$ and marked $1$-cells are left cancellable.
\item $p$ has the unique right lifting property against $[a,1]\xrightarrow{\triangledown} [1]^\sharp\vee[a,1]$ for any object $a$ of $t\Theta$.
\item $p$ has the unique right lifting property against $[a,1]\xrightarrow{\triangledown} [1]^\sharp\vee[a,1]$ and $[1]^\sharp\xrightarrow{\triangledown}[1]^\sharp\vee[1]^\sharp$ for any object $a$ of $t\Theta$.
\end{enumerate}
Conversely, the following are equivalent:
\begin{enumerate}
\item[(1)'] $p$ has the unique right lifting property against $\{1\}\to [1]^\sharp$ and marked $1$-cells are right cancellable.
\item[(2)'] $p$ has the unique right lifting property against $[a,1]\xrightarrow{\triangledown} [a,1]\vee[1]^\sharp$ for any object $a$ of $t\Theta$.
\item[(3)'] $p$ has the unique right lifting property against $[a,1]\xrightarrow{\triangledown}[a,1]\vee[1]^\sharp$ and $[1]^\sharp\xrightarrow{\triangledown}[1]^\sharp\vee[1]^\sharp$ for any object $a$ of $t\Theta$.
\end{enumerate}
\end{lemma}
\begin{proof}
 The fact that $1$-cells are left cancellable is equivalent to asking that $i$ has the unique right lifting property against 
$$[a,1]\amalg_{\{0\}} [1]^\sharp\to [1]^\sharp\vee[a,1]$$
 for any object $a$ of $t\Theta$, and where $[a,1]\amalg_{\{0\}} [1]^\sharp$ is the pushout of the span
\[\begin{tikzcd}
	{[a,1]} & {\{0\}} & {[1]}
	\arrow["{i_0^-}"', from=1-2, to=1-1]
	\arrow["{i_0^-}", from=1-2, to=1-3]
\end{tikzcd}\]
 Suppose that $p$ fulfills $(1)$.
The class of morphisms having the unique right lifting property against $p$ is closed under pushout. The morphism $p$ then has the unique left lifting property against $$[a,1] \to [a,1]\amalg_{\{0\}} [1]^\sharp.$$ As remarked above, the fact that marked $1$-cells are left cancellable implies that $p$ has the unique right lifting property against 
$$[a,1]\amalg_{\{0\}} [1]^\sharp\to [1]^\sharp\vee[a,1].$$
By stability by composition, $p$ then has the unique right lifting property against  
$$[a,1]\xrightarrow{\triangledown} [1]^\sharp\vee[a,1].$$
This implies $(1)\Rightarrow (2)$.

Suppose now that $p$ fulfills $(2)$. Remark that we have a retract
\[\begin{tikzcd}
	{\{0\}} & {[1]} & {\{0\}} \\
	{[1]^\sharp} & {[1]^\sharp\vee[1]} & {[1]^\sharp}
	\arrow[from=1-1, to=2-1]
	\arrow[hook, from=2-1, to=2-2]
	\arrow["id\vee\Ib"', from=2-2, to=2-3]
	\arrow[from=1-1, to=1-2]
	\arrow["\triangledown"', from=1-2, to=2-2]
	\arrow["\Ib", from=1-2, to=1-3]
	\arrow[from=1-3, to=2-3]
\end{tikzcd}\]
and as the class of morphisms having the unique right lifting property against $p$ is closed under retracts, this implies that $p$ has the unique right lifting property against $\{0\}\to [1]^\sharp$. 
By stability under pushout, $p$ has the unique right lifting property against 
$$[a,1] \to [a,1]\amalg_{\{0\}} [1]^\sharp.$$
By stability under left cancellation, $p$ then has the unique right lifting property against 
$$[a,1]\amalg_{\{0\}} [1]^\sharp\to [1]^\sharp\vee[a,1].$$
As remarked above, this implies that $1$-cells are left cancellable. We then have $(1)\Leftrightarrow (2)$.

There is an obvious implication $(3)\Rightarrow (2)$.  To show the converse, we suppose that $p$ fulfills $(1)$ and $(2)$. As the
class of morphisms having the unique right lifting property against $p$ is closed under pushout and composition,
 it then contains $\{0\}\to [1]^\sharp$ and $\{0\}\to [1]^\sharp\to  [1]^\sharp\vee[1]^\sharp$. By left cancellation, it also includes $ [1]^\sharp\xrightarrow{\triangledown}[1]^\sharp\vee[1]^\sharp$.
The proof of the equivalence of $(1)'$, $(2)'$ and $(3)'$ is symetrical.
\end{proof}

\begin{lemma}
\label{lemma:technical lemma on cancellable cell 3}
Let $p:X\to Y$ be a morphism having the unique right lifting property against marked trivializations, such that for any element $a$ of $t\Theta$, and any cartesian squares: 	
\[\begin{tikzcd}
	{X''} & {X'} & X \\
	{[a,1]} & {[1]^\sharp\vee[a,1]} & Y
	\arrow["p"', from=1-3, to=2-3]
	\arrow["{p''}"', from=1-1, to=2-1]
	\arrow["{p'}"', from=1-2, to=2-2]
	\arrow["k", from=1-1, to=1-2]
	\arrow[from=1-2, to=1-3]
	\arrow[hook, from=2-1, to=2-2]
	\arrow[from=2-2, to=2-3]
	\arrow["\lrcorner"{anchor=center, pos=0.125}, draw=none, from=1-1, to=2-2]
	\arrow["\lrcorner"{anchor=center, pos=0.125}, draw=none, from=1-2, to=2-3]
\end{tikzcd}\]
the square $p''\to p'$ is a right deformation retract.
Then, $p$ has the unique right lifting property against $[a,1]\xrightarrow{\triangledown} [1]^\sharp\vee[a,1]$ for any object $a$ of $t\Theta$. 
\end{lemma}
\begin{proof}
Suppose given a square
\[\begin{tikzcd}
	{[a,1]} & X \\
	{[1]^\sharp\vee[a,1]} & Y
	\arrow["p"', from=1-2, to=2-2]
	\arrow["\triangledown"', from=1-1, to=2-1]
	\arrow[from=1-1, to=1-2]
	\arrow["g"', from=2-1, to=2-2]
\end{tikzcd}\]
and let $p'$ and $p''$ be the morphisms appearing in the following cartesian squares:
\[\begin{tikzcd}
	{X''} & {X'} & X \\
	{[a,1]} & {[1]^\sharp\vee[a,1]} & Y
	\arrow["p"', from=1-3, to=2-3]
	\arrow["{p''}"', from=1-1, to=2-1]
	\arrow["{p'}"', from=1-2, to=2-2]
	\arrow["k", from=1-1, to=1-2]
	\arrow[from=1-2, to=1-3]
	\arrow[hook, from=2-1, to=2-2]
	\arrow["g"', from=2-2, to=2-3]
	\arrow["\lrcorner"{anchor=center, pos=0.125}, draw=none, from=1-1, to=2-2]
	\arrow["\lrcorner"{anchor=center, pos=0.125}, draw=none, from=1-2, to=2-3]
\end{tikzcd}\]
Let $(k:X''\to X',r,\phi)$ the left deformation retract existing by hypothesis.

We first demonstrate that any diagram of shape
\begin{equation}
\label{eq in lemma:technique}
\begin{tikzcd}
	{\{0\}} & {X'} \\
	{ [1]^\sharp} & {[1]^\sharp\vee[a,1]}
	\arrow["{\{x\}}", from=1-1, to=1-2]
	\arrow[from=1-1, to=2-1]
	\arrow["{p'}"', from=1-2, to=2-2]
	\arrow[hook, from=2-1, to=2-2]
\end{tikzcd}
\end{equation}
admits a unique lifting given by $\phi(x):x\to r(x)$. Let $l:[1]^\sharp\to X'$ be any lifting. Remark that $l(1)$ is in $X''$.
The deformation $\phi$ then induces a square in $X'$ of shape
\[\begin{tikzcd}
	x & {l(1)} \\
	{r(x)} & {l(1)}
	\arrow["{/}"{marking, allow upside down}, from=1-1, to=1-2]
	\arrow["l", draw=none, from=1-1, to=1-2]
	\arrow["{/}"{marking, allow upside down}, from=1-1, to=2-1]
	\arrow["{\phi(x)~~}"', draw=none, from=1-1, to=2-1]
	\arrow[Rightarrow, no head, from=1-2, to=2-2]
	\arrow["{/}"{marking, allow upside down}, from=2-1, to=2-2]
\end{tikzcd}\]
Moreover, the marked $1$-cell $r(x)\to r(x)$ is over a unit, and as $p'$ has the unique right lifting property against marked trivializations, this implies that this $1$-cell is an equivalence. As all the operations to produce the previous square were functorial, we have constructed a retraction structure for the inclusion of $\{\phi(x):x\to r(x)\}$ into the space of lifts of the square \eqref{eq in lemma:technique}.

To conclude the proof of the proposition, one has to demonstrate that the induced diagram
\[\begin{tikzcd}
	{[a,1]} & {X'} \\
	{[1]^\sharp\vee[a,1]} & {[1]^\sharp\vee[a,1]}
	\arrow["j", from=1-1, to=1-2]
	\arrow["\triangledown"', from=1-1, to=2-1]
	\arrow["{p'}"', from=1-2, to=2-2]
	\arrow["id"', from=2-1, to=2-2]
\end{tikzcd}\]
admits a unique lifting. 
We denote by $x_0$ and $x_2$ the image of the object of $[a,1]$ via the morphism $j$. As demonstrated above, the unique marked $1$-cell in $X'$ over $[1]^\sharp\hookrightarrow [1]^\sharp\vee[a,1]$ with $x_0$ for source is $\phi(x_0):x_0\to r(x_0)$.
The $\infty$-groupoid of lifts of this diagram is then equivalent to the $\infty$-groupoid of lifts of the following diagram
\[\begin{tikzcd}
	\emptyset & {\hom_{X'}(rx_0,x_2)} \\
	a & {\hom_{X'}(x_0,x_2)}
	\arrow["{{\phi_{x_0}}_!}", from=1-2, to=2-2]
	\arrow[from=2-1, to=2-2]
	\arrow[from=1-1, to=2-1]
	\arrow[from=1-1, to=1-2]
\end{tikzcd}\]
However, the right vertical morphism is an isomorphism according to proposition \ref{prop:Gray deformation retract and passage to hom} which concludes the proof.
\end{proof}
\begin{definition}
\label{defi: of Fg et Ig}
 Keeping in mind the last lemma, we define \wcnotation{$\I_{g}$}{(ig@$\I_g$} and \wcnotation{$\F_{g}$}{(fg@$\F_g$} as the smallest sets of morphisms of $\zocatm$ fullfilling these conditions:
\begin{enumerate}
\item for any $a\in \Theta^t$, the globular morphism $[a,1]\hookrightarrow[1]^\sharp\vee[a,1]$ is in $\F_g$ and the globular morphism $[a,1]\hookrightarrow[a,1]\vee[1]^\sharp$ is in $\I_g$
\item for any $i$ in $\F_g$, $[i,1]$ is in $\I_g$, for any $j$ in $\I_g$, $[i,1]$ is in $\F_g$,
\end{enumerate}
\end{definition}

\begin{remark}
Propositions \ref{prop:left Gray deformation retract stable under pushout} and \ref{prop:suspension of left Gray deformation retract} then imply that morphisms of $\I_g$ are left Gray deformation retracts and morphisms of $\F_g$ are right Gray deformation retracts.
\end{remark}

\begin{definition}
We extend by induction the definition of right and left cancellable to cells of any dimension as follows: a $n$-cell $v$ is \wcnotion{left or right cancellable}{left cancellable $n$-cell} (resp. \textit{right cancellable}) if the corresponding $(n-1)$-cell of $\hom_X(x,y)$ is left cancellable (resp. right cancellable) for the morphism $\hom_X(x,y)\to \hom_Y(px,py)$, where $x$ and $y$ denote the $0$-sources and $0$-but of $v$.
\end{definition}

\begin{lemma}
\label{lemma:technical lemma on cancellable cell 2}
Let $p':X'\to Y'$ be a morphism such that $p$ has the unique right lifting property against marked trivializations and suppose that we have a left Gray deformation retract $p'\to p$. We denote by $(r:Y'\to Y,i,\phi)$ the left deformation retract structure induced on the codomain, and suppose that the deformation $\phi:Y\otimes[1]^\sharp\to Y$ factors through $\psi:Y\times[1]^\sharp\to Y$. Then, the square $p'\to p$ is a left deformation retract.
\end{lemma}
\begin{proof}
Proposition \ref{prop:cotimes 1 to ctimes 1 is a trivialization} states that $Y\otimes[1]^\sharp\to Y\times [1]^\sharp$ is a marked trivialization. There is then a lift in the following diagram:
\[\begin{tikzcd}
	{X\otimes[1]^\sharp} && X \\
	{X\times[1]^\sharp} & {Y\times[1]^\sharp} & Y
	\arrow[from=1-1, to=2-1]
	\arrow[from=2-1, to=2-2]
	\arrow[from=1-3, to=2-3]
	\arrow["{\phi'}", from=1-1, to=1-3]
	\arrow["\psi"', from=2-2, to=2-3]
	\arrow["{\psi'}"{description}, dotted, from=2-1, to=1-3]
\end{tikzcd}\]
where $\phi'$ is the deformation induced on domains.
This endows $p'\to p$ with a structure of left deformation retract, where the retraction is the same, and the deformation is given by $(\psi',\psi)$.
\end{proof}

\begin{theorem}
\label{theo:other characterisation of left caresian fibration}
Consider the following shape of diagram
\begin{equation}
\label{eq:prop:other characterisation of left caresian fibration}
\begin{tikzcd}
	{X''} & {X'} & X \\
	{Y''} & {Y'} & Y
	\arrow["p"', from=1-3, to=2-3]
	\arrow["{p''}"', from=1-1, to=2-1]
	\arrow["{p'}"', from=1-2, to=2-2]
	\arrow[from=1-1, to=1-2]
	\arrow[from=1-2, to=1-3]
	\arrow["i"', from=2-1, to=2-2]
	\arrow[from=2-2, to=2-3]
	\arrow["\lrcorner"{anchor=center, pos=0.125}, draw=none, from=1-1, to=2-2]
	\arrow["\lrcorner"{anchor=center, pos=0.125}, draw=none, from=1-2, to=2-3]
\end{tikzcd}
\end{equation}
The following are equivalent:
\begin{enumerate}
\item The morphism $p$ is a left cartesian fibration.
\item $p$ has the unique right lifting property against marked trivialization, and for any diagram of shape \eqref{eq:prop:other characterisation of left caresian fibration},
if $i$ is a right Gray deformation retract, so is $p''\to p'$. 
\item $p$ has the unique right lifting property against marked trivialization and, for any diagram of shape \eqref{eq:prop:other characterisation of left caresian fibration},
if $i$ is in $\F_g$, the square $p''\to p'$ is a right Gray deformation retract.
\item For any even integer $n$, $p$ has the unique right lifting property against $i_n^+:\Db_{n}\to (\Db_{n+1})_t$ and marked $n$-cells are right cancellable; for any odd integer $p$ has the unique right lifting property against $i_n^-:\Db_{n}\to (\Db_{n+1})_t$ and marked $n$-cells are left cancellable.
\item $p$ as the unique right lifting property against $\{0\}\to [1]^\sharp$, marked $1$-cells are left cancellable, and
for any pair of objects $(x,y)$ of $X$, $\hom_X(x,y)\to \hom_Y(px,py)$ is a right cartesian fibration.
\end{enumerate} 
Conversely, the following are equivalent:
\begin{enumerate}
\item[(1)'] The morphism $p$ is a right cartesian fibration.
\item[(2)'] $p$ has the unique right lifting property against marked trivialization and for any diagram of shape \eqref{eq:prop:other characterisation of left caresian fibration},
if $i$ is a left Gray deformation retract, so is $p''\to p'$.
\item[(3)'] $p$ has the unique right lifting property against marked trivialization, and for any diagram of shape \eqref{eq:prop:other characterisation of left caresian fibration},
if $i$ is in $\I_g$, the square $p''\to p'$ is a left Gray deformation retract.
\item[(4)'] For any even integer $n$, $p$ has the unique right lifting property against $i_n^-:\Db_{n}\to (\Db_{n+1})_t$ and marked $n$-cells are left cancellable; for any odd integer $p$ has the unique right lifting property against $i_n^+:\Db_{n}\to (\Db_{n+1})_t$ and marked $n$-cells are right cancellable.
\item[(5)'] $p$ as the unique right lifting property against $\{1\}\to [1]^\sharp$, marked $1$-cells are right cancellable, and
for any pair of objects $(x,y)$ of $X$, $\hom_X(x,y)\to \hom_Y(px,py)$ is a left cartesian fibration.
\end{enumerate} 

\end{theorem}
\begin{proof}
The implication from $(1)$ to $(2)$ and $(1)'$ to $(2)'$ is the content of proposition \ref{prop:left Gray transfomration stable under pullback along cartesian fibration}.

The implication from $(2)$ to $(3)$ and $(2)'$ to $(3)'$ comes from the fact that $\I_g$ (resp. $\F_g$) consists of right (resp. left) Gray deformation retracts.

Suppose now that $p$ fulfills condition $(3)$. Lemma \ref{lemma:technical lemma on cancellable cell 2} implies that if $i$ is of shape $[a,1]\hookrightarrow [1]^\sharp\vee[a,1]$ for $a:t\Theta$, $p''\to p'$ is a right deformation retract. Lemma 
\ref{lemma:technical lemma on cancellable cell 3} and \ref{lemma:technical lemma on cancellable cell 1} then imply that $p$ has the unique right lifting property against $\{0\}\to [1]^\sharp$ and marked $1$-cells are left cancellable.

We are now willing to show that for any pair of objects $(x,y)$, $\hom_X(x,y)\to \hom_Y(px,py)$ fulfills condition $(3)'$, and an obvious induction will complete the proof of $(3)\Rightarrow (4)$.
We then consider $x,y$ two objects of $X$, $i:b\to a$ in $\I_g$ and any morphism $a\to \hom_Y(px,py)$. The previous data induces a pullback square
\[\begin{tikzcd}
	{X''} & {X'} & X \\
	{[b,1]} & {[a,1]} & Y
	\arrow["p"', from=1-3, to=2-3]
	\arrow["{p''}"', from=1-1, to=2-1]
	\arrow["{p'}"', from=1-2, to=2-2]
	\arrow[from=1-1, to=1-2]
	\arrow[from=1-2, to=1-3]
	\arrow["{[i,1]}"', from=2-1, to=2-2]
	\arrow[from=2-2, to=2-3]
	\arrow["\lrcorner"{anchor=center, pos=0.125}, draw=none, from=1-1, to=2-2]
	\arrow["\lrcorner"{anchor=center, pos=0.125}, draw=none, from=1-2, to=2-3]
\end{tikzcd}\]
where the bottom right morphism sends $\{0\}$ to $px$ and $\{1\}$ to $py$.
By construction, $[i,1]$ is in $\F_g$, and so by assumption, the morphism $p'\to p''$ is a right Gray deformation retract. Applying the functor $\hom_{\uvar}(\uvar,\uvar)$ we get the following pullback diagram:
\[\begin{tikzcd}
	{\hom_{X''}(x,y)} & {\hom_{X'}(x,y)} & {\hom_{X}(x,y)} \\
	b & a & {\hom_{Y}(px,py)}
	\arrow["{\tilde{p}}"', from=1-3, to=2-3]
	\arrow["{\tilde{p}''}"', from=1-1, to=2-1]
	\arrow["{\tilde{p}'}"', from=1-2, to=2-2]
	\arrow[from=1-1, to=1-2]
	\arrow[from=1-2, to=1-3]
	\arrow["i"', from=2-1, to=2-2]
	\arrow[from=2-2, to=2-3]
	\arrow["\lrcorner"{anchor=center, pos=0.125}, draw=none, from=1-1, to=2-2]
	\arrow["\lrcorner"{anchor=center, pos=0.125}, draw=none, from=1-2, to=2-3]
\end{tikzcd}\]
and the dual version of proposition \ref{prop:Gray deformation retract and passage to hom v2} implies that $\tilde{p}''\to \tilde{p}'$ is a left Gray deformation retract. As this is true for any $i:b\to a$ in $\I_g$, for any object of $X$, and any $a\to \hom_Y(px,py)$, this implies that $\hom_X(x,y)\to \hom_Y(px,py)$ fulfills condition $(3)'$. As mentioned above, an obvious induction induces $(3)\Rightarrow (4)$. We show similarly $(3)'\Rightarrow (4)'$.

Now let's show $(4)\Rightarrow (1)$ and $(4)'\Rightarrow (1)'$. We show by induction on $n$ that for any 
element $a$ of $t\Gb_n:=\{\Db_k\}_{0\leq k\leq n}\cup \{(\Db_k)_t\}_{1\leq k\leq n}$, if $p$ fulfills $(4)$ (resp. $(4)'$) $p$ has the unique right lifting property against
 $a\otimes\{0\}\to a\otimes[1]^\sharp$ (against $a\otimes\{1\}\to a\otimes[1]^\sharp$).
 
Suppose then that this is true at the stage $n$, and suppose that $p$ fulfills $(4)$. 
Let $a$ be an object of $t\Gb_n$. 	Remark that according to the proposition \ref{prop:eq for cylinder marked}, $[a,1]\otimes\{0\}\to [a,1]\otimes[1]^\sharp$ fits in the sequence of pushouts
\[\begin{tikzcd}
	{[0]} & {[a,1]\otimes \{0\}} \\
	{[1]^\sharp} & {[a,1]\vee[1]^\sharp} & {[a\otimes\{1\},1]} \\
	{[a,1]} & {[a,1]\vee[1]^\sharp\cup[a\otimes[1]^\sharp,1]} & {[a\otimes[1]^\sharp,1]} \\
	{[1]^\sharp\vee[a,1]} & {[a,1]\otimes[1]^\sharp}
	\arrow["{i_0^-}"', from=1-1, to=2-1]
	\arrow[""{name=0, anchor=center, inner sep=0}, "{i_0^+}", from=1-1, to=1-2]
	\arrow[from=2-1, to=2-2]
	\arrow[hook, from=1-2, to=2-2]
	\arrow[""{name=1, anchor=center, inner sep=0}, from=2-3, to=2-2]
	\arrow[from=3-3, to=3-2]
	\arrow[from=2-2, to=3-2]
	\arrow[from=2-3, to=3-3]
	\arrow[""{name=2, anchor=center, inner sep=0}, from=3-1, to=3-2]
	\arrow[from=4-1, to=4-2]
	\arrow[from=3-2, to=4-2]
	\arrow["\triangledown"', from=3-1, to=4-1]
	\arrow["\lrcorner"{anchor=center, pos=0.125, rotate=180}, draw=none, from=2-2, to=0]
	\arrow["\lrcorner"{anchor=center, pos=0.125, rotate=90}, draw=none, from=3-2, to=1]
	\arrow["\lrcorner"{anchor=center, pos=0.125, rotate=180}, draw=none, from=4-2, to=2]
\end{tikzcd}\]
By induction hypothesis, for any pair of objects $(x,y)$ of $X$, $\hom_X(x,y)\to \hom_Y(px,py)$ has the unique right lifting property against $a\otimes\{1\}\to a\otimes[1]^\sharp$ for $a\in t\Gb_n$. Furthermore, lemma \ref{lemma:technical lemma on cancellable cell 1} implies that $p$ has the unique right lifting property against $\triangledown:[a,1]\to [1]^\sharp\vee[a,1]$. The morphism $p$ then has the unique right lifting property against $[a\otimes\{1\},1]\to [a\otimes[1]^\sharp,1]$ for $a\in t\Gb_n$. The class of morphisms having the unique right lifting property against $p$ being closed under colimits, this implies that it includes $[a,1]\otimes\{0\}\to [a,1]\otimes[1]^\sharp$. To conclude, one has to show that $p$ has the unique right lifting property against $[1]^\sharp\times\{0\}\to [1]^\sharp\times [1]^\sharp$. Remark that according to proposition \ref{prop:example of a special colimit marked case}, $[1]^\sharp\times\{0\}\to [1]^\sharp\times[1]^\sharp$ fits in the sequence of pushouts:
\[\begin{tikzcd}
	{[0]} & {[1]^\sharp\times\{0\}} \\
	{[1]^\sharp} & {[1]^\sharp\vee[1]^\sharp} & {[1]^\sharp} \\
	& {[1]^\sharp\times[1]^\sharp} & {[1]^\sharp\vee[1]^\sharp}
	\arrow["{i_0^-}"', from=1-1, to=2-1]
	\arrow["{i_0^+}", from=1-1, to=1-2]
	\arrow["\triangledown"', from=2-3, to=2-2]
	\arrow["\triangledown", from=2-3, to=3-3]
	\arrow[hook, from=2-1, to=2-2]
	\arrow[from=2-2, to=3-2]
	\arrow[from=3-3, to=3-2]
	\arrow[hook, from=1-2, to=2-2]
	\arrow["\lrcorner"{anchor=center, pos=0.125, rotate=180}, draw=none, from=2-2, to=1-1]
	\arrow["\lrcorner"{anchor=center, pos=0.125, rotate=90}, draw=none, from=3-2, to=2-3]
\end{tikzcd}\]
According to lemma \ref{lemma:technical lemma on cancellable cell 1}, $p$ has the unique right lifting property against $\triangledown:[1]^\sharp\to [1]^\sharp\vee[1]^\sharp$ and so also against $[1]^\sharp\times\{0\}\to [1]^\sharp\times [1]^\sharp$. This concludes the proof of the implication $(4)\Rightarrow (1)$. We show similarly $(4)'\Rightarrow (1)'$.

Eventually, the implication $(1)\Rightarrow (5)$ and $(1)'\Rightarrow (5)'$ are a consequence of proposition \ref{prop:cartesian fibration between arrow} and of the implications $(1)\Rightarrow (4)$ and $(1)'\Rightarrow (4)'$. The implications $(5)\Rightarrow (4)$ and $(5)'\Rightarrow (4)'$ are a consequence of the equivalences  $(1)'\Leftrightarrow (4)'$ and $(1)\Leftrightarrow (4)$ applied to the morphisms $\hom_X(x,y)\to \hom_Y(px,py)$ for all objects $x,y$.
\end{proof}

\begin{cor}
\label{cor:on the fact that fib are define against representable}
A morphism $p:X\to A^\sharp$ is a left cartesian fibration if and only if for any globular sum $b$ and morphism $j:b\to A$, $j^*p$ is a left cartesian fibration over $b^\sharp$.
\end{cor} 
\begin{proof}
This is a direct consequence of the equivalence between conditions $(1)$ and $(3)$ of theorem \ref{theo:other characterisation of left caresian fibration}, and the fact that the codomains of marked trivializations and the codomains of morphisms of $\F_g$ are marked globular sums. 
\end{proof}

\subsection{Cartesian fibration are exponentiable}
\label{subsection:A criterion to be a left cartesian fibration}

We recall that a {marked globular sum} is a marked $\io$-category whose underlying $\io$-category is a globular sum and such that for any pair of integers $k\leq n$, and any pair of $k$-composable $n$-cells $(x,y)$, $x\circ_k y$ is marked if and only if $x$ and $y$ are marked.

 A morphism $i:a\to b$ between marked globular sums is {globular} if the morphism $i^\natural$ is {globular}.

 A globular morphism $i$ between marked globular sums is then a discrete Conduché functor, which implies according to proposition \ref{prop:pullback by conduch marked preserves colimit} that the functor $i^*:\ocatm_{/b}\to \ocatm_{/a}$ preserves colimits.

\begin{definition}
 Let $b$ be a globular sum and $f:X\to b^\sharp$ a morphism. We say that $f$ is \wcnotion{$b$-exponentiable}{expo@$b$-exponentiable} if 
the canonical morphism $$\colim_{i:\Sp_b^\sharp} {i}^*f\to f$$ is an equivalence. 
\end{definition}

\begin{remark}
\label{rem:a expo implies a expo with other marking}
Let $b$ be a marked globular sum and $f:X\to b^\sharp$ a $b$-exponentiable morphism. As the canonical morphism $\iota:b\to b^{\sharp}$ is a marked discrete Conduché functor, it is exponentiable by proposition \ref{prop:pullback by conduch marked preserves colimit}, and  we have an equivalence
$$\colim_{i:\iota^*\Sp_b^\sharp} {i}^*f\to \iota^*f$$
\end{remark}

\begin{prop}
\label{prop:exponantiable stable under colim}
Let $F:I\to \ocatm_{/b^\sharp}$ be a functor which is pointwise $b$-exponentiable. The morphism $\colim_I F$ is $b$-exponentiable 
\end{prop}
\begin{proof}
Remark that all morphisms $\Db_n^\sharp\to b^\sharp$ in $\Sp^\sharp_b$ are globular, and so are discrete Conduché functors. We then have a sequence of equivalences
$$\colim_{i:\Sp_b^\sharp} {i}^*\colim_I F\sim \colim_I\colim_{i:\Sp_b^\sharp} {i}^*F\sim \colim_I F.$$
\end{proof}

\begin{prop}
\label{prop:how to create exponentiable}
Let $a$ be a globular sum, and $f:X\to a^\sharp$ be a morphism. The induced morphism $\colim_{i:\Sp_{a}^\sharp}i^*f$ is $a$-exponentiable.
\end{prop}
\begin{proof}
As marked globular morphisms are marked discrete Conduché functors, for any $j:\Db_n^\sharp\to a^\sharp\in \Sp_a$, $j^*\colim_{i:\Sp_{a}^\sharp}i^*f$ is equivalent to $j^*f$. We then have a sequence of equivalences
$$ \colim_{j:\Sp_{a}^\sharp}j^* \colim_{i:\Sp_{a}^\sharp}i^*f \sim \colim_{j:\Sp_{a}^\sharp}j^*f .$$
\end{proof}

\begin{prop}
\label{prop:exponantiable stable under pullback}
Let $f:X\to b^\sharp$ be $b$-exponentiable and $j:a^\sharp\to b^\sharp$ a globular morphism. The morphism $j^*f:X\to a^\sharp$ is $a$-exponentiable.
\end{prop}
\begin{proof}
The data induces a diagram in $\tiPsh{\Theta}$:
\[\begin{tikzcd}[cramped]
	& {X'} && X \\
	{X'''} && {X''} \\
	& {\Sp_b^\sharp} && {b^\sharp} \\
	{\Sp_a^\sharp} && {a^\sharp}
	\arrow["i", from=1-2, to=1-4]
	\arrow[from=1-4, to=3-4]
	\arrow[from=2-1, to=1-2]
	\arrow["{~~~~j}", from=2-1, to=2-3]
	\arrow[from=2-1, to=4-1]
	\arrow[from=2-3, to=1-4]
	\arrow[from=2-3, to=4-3]
	\arrow[from=3-2, to=1-2]
	\arrow[from=3-2, to=3-4]
	\arrow[from=4-1, to=3-2]
	\arrow[from=4-1, to=4-3]
	\arrow[from=4-3, to=3-4]
\end{tikzcd}\]
where, by stability by composition and right cancelation, all squares are cartesian. The hypothesis implies that $i$ is in $\widehat{\W}$. As $j$ is a conduché fibration,  the morphism $i'$ is in $\widehat{\W}$. This directly implies that $j^*f$ is $a$-exponentiable.
\end{proof}

\begin{lemma}
\label{lemma:unseful}
Suppose given a cocartesian square
\[\begin{tikzcd}[cramped]
	a & c \\
	d & b
	\arrow["k", from=1-1, to=1-2]
	\arrow["l"', from=1-1, to=2-1]
	\arrow["i", from=1-2, to=2-2]
	\arrow["j"', from=2-1, to=2-2]
	\arrow["\lrcorner"{anchor=center, pos=0.125, rotate=180}, draw=none, from=2-2, to=1-1]
\end{tikzcd}\]
where all objects are marked globular sums and all morphisms are globular, and let $f:X\to b^\sharp$ be a $b^\natural$-exponentiable morphism. We denote $\iota:b\to b^\sharp$ the canonical morphism.
The morphism $\iota^*f$ is the coproduct in the category of arrows of the span
$$j^*f\leftarrow (ik)^*f\to  i^*f.$$
\end{lemma}
\begin{proof}
This directly follows from remark \ref{rem:a expo implies a expo with other marking} and proposition \ref{prop:exponantiable stable under pullback}.
\end{proof}

\begin{lemma}
\label{lemma:technical lemma exponentiability}
Let $i:c\to d$ be a morphism in the set $\F_g$ defined in \ref{defi: of Fg et Ig}, $b$ a globular sum, and $f:d\to b^\sharp$ any morphism. 
Then, there exists a commutative square
\[\begin{tikzcd}
	{c'} & {d'} & {b^\sharp} \\
	c & d
	\arrow["h", from=2-1, to=1-1]
	\arrow["g", from=2-2, to=1-2]
	\arrow["{i'}", from=1-1, to=1-2]
	\arrow["i"', from=2-1, to=2-2]
	\arrow[from=1-2, to=1-3]
	\arrow["f"', from=2-2, to=1-3]
\end{tikzcd}\]
\begin{enumerate}
\item $d\to d'$ is a finite composition of pushouts of morphism of shape $i_n^\alpha:\Db_n\to (\Db_{n+1})_t$ with $n$ an integer and $\alpha:=+$ if $n$ is even, and $-$ if not.
\item $d'\to b^\sharp$ is globular.
\item $h\to g$ is a right Gray deformation retract.
\end{enumerate}
\end{lemma}
\begin{proof}
We obtain $(d')^\natural$ by factorizing $f^\natural$ into an algebraic morphism $g^\natural$ followed by a globular morphism. The marking $d'$ is the smaller one that makes $g$ a morphism of marked $\zo$-categories. By construction, $c\to d$ fits in a cocartesian square
\[\begin{tikzcd}
	{\Db_n^\flat} & c \\
	{ (\Db_{n+1})_t} & d
	\arrow[from=1-2, to=2-2]
	\arrow["{i^\alpha_n}"', from=1-1, to=2-1]
	\arrow[from=2-1, to=2-2]
	\arrow[from=1-1, to=1-2]
	\arrow["\lrcorner"{anchor=center, pos=0.125, rotate=180}, draw=none, from=2-2, to=1-1]
\end{tikzcd}\]
where all morphisms are globular, and where $\alpha$ is $+$ if $n$ is even, and $-$ if not. As the procedure is similar for any $n$, we will suppose that $n=0$, and $d$ is then equivalent to $[1]^\sharp\vee[a,1]$ for $a\in t\Theta$. The fact that $g$ is algebraic implies that there exists a marked globular sum $c'$ and an integer $k$, such that $d'$ is of shape $[k]^\sharp\vee c'$ and such that $gi$ factors through $c'$. These data verify the desired condition.
\end{proof}

\begin{prop}
\label{prop:criterion to be left cartesian firbation}
Let $p:X\to b^\sharp$ be a morphism exponentiable in $b$. Consider also the following shape of diagram
\begin{equation}
\label{eq:prop:criterion to be left cartesian firbation}
\begin{tikzcd}
	{X''} & {X'} & X \\
	c & {d} & {b^\sharp}
	\arrow["p"', from=1-3, to=2-3]
	\arrow["i"', from=2-1, to=2-2]
	\arrow["j"', from=2-2, to=2-3]
	\arrow["{p'}"', from=1-2, to=2-2]
	\arrow["{p''}"', from=1-1, to=2-1]
	\arrow[from=1-2, to=1-3]
	\arrow[from=1-1, to=1-2]
	\arrow["\lrcorner"{anchor=center, pos=0.125}, draw=none, from=1-1, to=2-2]
	\arrow["\lrcorner"{anchor=center, pos=0.125}, draw=none, from=1-2, to=2-3]
\end{tikzcd}
\end{equation}
The following are equivalent.
\begin{enumerate}
\item For any globular morphism $i:[a,1]^\sharp\to b^\sharp$, $i^*p$ is a left cartesian fibration.
\item For any diagram of shape \eqref{eq:prop:criterion to be left cartesian firbation}, if $i$ is  $i_n^\alpha:\Db_n\to (\Db_{n+1})_t$ with $n$ an integer and $\alpha:=+$ if $n$ is even and $-$ if not, and $j$ is  globular, then $p''\to p'$ is a right Gray deformation retract.
\item For any diagram of shape \eqref{eq:prop:criterion to be left cartesian firbation}, if $i$ marked globular and is a finite composition of pushouts of morphism of shape $i_n^\alpha:\Db_n\to (\Db_{n+1})_t$ with $n$ an integer and $\alpha:=+$ if $n$ is even and $-$ if not, and $j$ is  globular, then $p''\to p'$ is a right Gray deformation retract.
\item For any diagram of shape \eqref{eq:prop:criterion to be left cartesian firbation}, if $i$ is in  the set $\F_g$ defined in \ref{defi: of Fg et Ig}, then $p''\to p'$ is a right Gray deformation retract.
\item The morphism $p$ is a left cartesian fibration.
\end{enumerate}
\end{prop}
\begin{proof}
The implication $(1)\Rightarrow (2)$
comes from theorem \ref{theo:other characterisation of left caresian fibration} as morphisms of shape $ i_n^\alpha$ are right Gray deformation retracts according to proposition \ref{prop:when glob inclusion are left Gray deformation}, and as every globular morphism $\Db_{n+1}\to b$ factors through a globular morphism $[a,1]\to b$.

We suppose that the second condition is fulfilled. As right Gray deformation retracts are stable under composition according to proposition \ref{prop:stability by composition }, we can restrict to the case where $i':c\to d$ fits in a cocartesian square
\[\begin{tikzcd}
	{\Db_n^\flat} & c \\
	{ (\Db_{n+1})_t} & d
	\arrow[from=1-2, to=2-2]
	\arrow["{i^\alpha_n}"', from=1-1, to=2-1]
	\arrow[from=2-1, to=2-2]
	\arrow[from=1-1, to=1-2]
	\arrow["\lrcorner"{anchor=center, pos=0.125, rotate=180}, draw=none, from=2-2, to=1-1]
\end{tikzcd}\]
where all morphisms are globular, and where $\alpha$ is $+$ if $n$ is even, and $-$ if not.
Let $p_0$ and $p_1$ be the morphism fitting in cocartesian squares
\[\begin{tikzcd}
	{X_0} & {X_1} & X \\
	{\Db_{n}^\flat} & {(\Db_{n+1})_t} & { b^\sharp}
	\arrow["p"', from=1-3, to=2-3]
	\arrow[from=2-2, to=2-3]
	\arrow["{p_1}"', from=1-2, to=2-2]
	\arrow[from=1-2, to=1-3]
	\arrow["\lrcorner"{anchor=center, pos=0.125}, draw=none, from=1-2, to=2-3]
	\arrow["{ i_n^\alpha}"', from=2-1, to=2-2]
	\arrow["{p_0}"', from=1-1, to=2-1]
	\arrow[from=1-1, to=1-2]
\end{tikzcd}\]
This defines a diagram in the $(\infty,1)$-category of arrows of $\ocatm$:
\[\begin{tikzcd}
	{p_0} & {p_0} & {p''} \\
	{p_1} & {p_0} & {p''}
	\arrow[from=2-2, to=2-1]
	\arrow[from=1-1, to=2-1]
	\arrow[from=1-2, to=1-1]
	\arrow[from=1-2, to=1-3]
	\arrow[from=1-2, to=2-2]
	\arrow[from=2-2, to=2-3]
	\arrow[from=1-3, to=2-3]
\end{tikzcd}\]
By assumption, $p$ is $b$-exponentiable and the proposition \ref{prop:exponantiable stable under pullback} then implies that $p'$ is $d$-exponentiable. The lemma \ref{lemma:unseful} implies that the morphism $p''\to p'$ is the horizontal colimit of the previous diagram. As $p$ fulfills condition $(2)$, $p_0\to p_1$ is a right Gray deformation retract, and proposition \ref{prop:left Gray deformation retract stable under pushout} implies that $p''\to p'$ also is a right Gray deformation retract. This proves $(2)\Rightarrow (3)$.

Suppose now that condition $(3)$ is fulfilled and let $i$ be in $\F_g$. Consider the diagram
\[\begin{tikzcd}
	{c'} & {d'} & {b^\sharp} \\
	c & d
	\arrow["h", from=2-1, to=1-1]
	\arrow["g", from=2-2, to=1-2]
	\arrow["{i'}", from=1-1, to=1-2]
	\arrow["i"', from=2-1, to=2-2]
	\arrow[from=1-2, to=1-3]
	\arrow["f"', from=2-2, to=1-3]
\end{tikzcd}\]
induced by lemma \ref{lemma:technical lemma exponentiability}. 
We denote by $\tilde{p}''$ and $\tilde{p}'$ the morphisms fitting in the following cartesian squares.
\[\begin{tikzcd}
	{\tilde{X}''} & {\tilde{X}'} & X \\
	{ c'} & { d'} & { b^\sharp}
	\arrow["{ i'}"', from=2-1, to=2-2]
	\arrow[from=2-2, to=2-3]
	\arrow["p"', from=1-3, to=2-3]
	\arrow["{\tilde{p}'}"', from=1-2, to=2-2]
	\arrow["{\tilde{p}''}"', from=1-1, to=2-1]
	\arrow[from=1-1, to=1-2]
	\arrow[from=1-2, to=1-3]
	\arrow["\lrcorner"{anchor=center, pos=0.125}, draw=none, from=1-2, to=2-3]
	\arrow["\lrcorner"{anchor=center, pos=0.125}, draw=none, from=1-1, to=2-2]
\end{tikzcd}\]
As $p$ fulfills $(3)$, $\tilde{p}''\to \tilde{p}'$ is a right Gray deformation retract. By construction, 
the square $h\to g$ also is a right Gray deformation retract. 
As $p''$ and $p'$ are respectively the pullback of $\tilde{p}''$ along $h$ and the pullback of $\tilde{p}'$ along $g$, the dual version of \ref{prop:stability under pullback} implies that $p''\to p'$ is a right Gray deformation retract.

The implication $(4)\Rightarrow (5)$ is induced by theorem \ref{theo:other characterisation of left caresian fibration}. Eventually, the implication $(5)\Rightarrow (1)$ is a consequence of the preservation of left cartesian fibration under pullback.
\end{proof}

\begin{cor}
\label{cor:fibration over representable are expenitalbe}
Any left cartesian fibration $p$ over $a^\sharp$ is $a$-exponentiable. 
\end{cor}
\begin{proof}
We define $q:=\colim_{i:\Sp_a^\sharp}i^*p$. This morphism comes with a canonical comparison $q\to p$. According to proposition \ref{prop:how to create exponentiable}, $q$ is $a$-exponentiable.  For any globular morphism $j:[b,1]^\sharp\to a$, we have $j^*q\sim j^*p$ as $j$ is a discrete Conduché functor. In particular, $j^*q$ is a left cartesian fibration and  $q$ then verifies the first condition of proposition \ref{prop:criterion to be left cartesian firbation}. This implies that $q$ is a left cartesian fibration.

As all morphisms $j:1\to a^\sharp$ are marked globular, and so are discrete Conduché functors, 
there are equivalences
$$j^*\colim_{i:\Sp_a^\sharp}i^*p\sim j^*p$$
and the morphism $q\to p$ induces an equivalence on fiber. This morphisms is then an equivalence according to corollary \ref{cor:morphism between is an equivalence when equivalence on fiber}.
\end{proof}

\begin{lemma}
\label{lemma:pulback of Wsat}
Let $f:A\to B^\sharp$ be a left cartesian fibration, $n$ an integer, and consider a diagram of $\tiPsh{\Theta}$ of shape
\[\begin{tikzcd}
	{A''} & {A'} & A \\
	{(\Sigma^nE^{eq})^\flat} & {\Db_n^\flat} & {B^\sharp}
	\arrow["f", from=1-3, to=2-3]
	\arrow["{f'}", from=1-2, to=2-2]
	\arrow["{f''}", from=1-1, to=2-1]
	\arrow["i"', from=2-1, to=2-2]
	\arrow["j", from=1-1, to=1-2]
	\arrow[from=1-2, to=1-3]
	\arrow[from=2-2, to=2-3]
	\arrow["\lrcorner"{anchor=center, pos=0.125}, draw=none, from=1-2, to=2-3]
	\arrow["\lrcorner"{anchor=center, pos=0.125}, draw=none, from=1-1, to=2-2]
\end{tikzcd}\]
Then $j$ is in $\widehat{\Wm}$.
\end{lemma}
\begin{proof}
As $f'$ and $f''$ are left cartesian fibrations, the only marked cell in $A'$ and $A''$ are the identities according to proposition \ref{prop:left fib over flat}. We can then suppose that the left square lies in $\ocat$, and then apply proposition \ref{prop:pulback of Wsat}.
\end{proof}

\begin{lemma}
\label{lemma:pullback along markkin}
Let $b$ be a globular sum, and $n$ an integer. For any cartesian squares in $\iPsh{\Theta}$,
\[\begin{tikzcd}
	{A''} & {A'} & {b^{\sharp_n}} \\
	{B''} & {B'} & {b^{\sharp}}
	\arrow[from=1-3, to=2-3]
	\arrow["{ }", from=1-2, to=2-2]
	\arrow[from=1-1, to=2-1]
	\arrow["i"', from=2-1, to=2-2]
	\arrow["j", from=1-1, to=1-2]
	\arrow[from=1-2, to=1-3]
	\arrow[from=2-2, to=2-3]
	\arrow["\lrcorner"{anchor=center, pos=0.125}, draw=none, from=1-2, to=2-3]
	\arrow["\lrcorner"{anchor=center, pos=0.125}, draw=none, from=1-1, to=2-2]
\end{tikzcd}\]
if $i$ is in $\widehat{\Wm}$, so is $j$.
\end{lemma}
\begin{proof}
As $\tiPsh{\Theta}$ is cartesian closed, one can suppose that $i$ is in $\W$. In this case the diagram can be seen as a diagram in $\Psh{\Theta}$. The proof is an easy verification of all the possible cases.
\end{proof}

\begin{prop}
\label{prop:W stable under pullback}
For any cartesian square of $\tiPsh{\Theta}$,
\[\begin{tikzcd}
	{A''} & {A'} & A \\
	{B''} & {B'} & {B^\sharp}
	\arrow["f", from=1-3, to=2-3]
	\arrow[from=1-2, to=2-2]
	\arrow[from=1-1, to=2-1]
	\arrow["i"', from=2-1, to=2-2]
	\arrow["j", from=1-1, to=1-2]
	\arrow[from=1-2, to=1-3]
	\arrow[from=2-2, to=2-3]
	\arrow["\lrcorner"{anchor=center, pos=0.125}, draw=none, from=1-2, to=2-3]
	\arrow["\lrcorner"{anchor=center, pos=0.125}, draw=none, from=1-1, to=2-2]
\end{tikzcd}\]
where $f$ is a left cartesian fibration, if $i$ is in $\widehat{\Wm}$, so is $j$.
\end{prop}
\begin{proof}
As $\tiPsh{\Theta}$ is cartesian closed, one can suppose that $i$ is in $\W$. Several cases have to be considered.
If $i$ is of shape $(\Sigma^nE^{eq})^\flat\to \Db_n^\flat$, this is lemma \ref{lemma:pulback of Wsat}. 
Suppose now that $i$ is of shape $\Sp_b^{\sharp_n}\to b^{\sharp_n}$. This induces a diagram
\[\begin{tikzcd}
	{A''} && {A'} && A \\
	& {A''''} && {A'''} \\
	{\Sp_b^{\sharp_n}} && {b^{\sharp_n}} && {B^\sharp} \\
	& {\Sp_b^{\sharp}} && {b^{\sharp}}
	\arrow[""{name=0, anchor=center, inner sep=0}, "f", from=1-5, to=3-5]
	\arrow[from=1-3, to=3-3]
	\arrow[from=1-1, to=3-1]
	\arrow["i"'{pos=0.6}, from=3-1, to=3-3]
	\arrow["j", from=1-1, to=1-3]
	\arrow[from=1-3, to=1-5]
	\arrow[from=3-3, to=3-5]
	\arrow["\lrcorner"{anchor=center, pos=0.125}, draw=none, from=1-1, to=3-3]
	\arrow[from=3-1, to=4-2]
	\arrow[from=3-3, to=4-4]
	\arrow["{i'}"', from=4-2, to=4-4]
	\arrow[from=4-4, to=3-5]
	\arrow[from=2-2, to=4-2]
	\arrow["{j'}"{pos=0.4}, from=2-2, to=2-4]
	\arrow[from=2-4, to=4-4]
	\arrow[from=2-4, to=1-5]
	\arrow[from=1-1, to=2-2]
	\arrow[from=1-3, to=2-4]
	\arrow["\lrcorner"{anchor=center, pos=0.125}, draw=none, from=2-4, to=3-5]
	\arrow["\lrcorner"{anchor=center, pos=0.125}, draw=none, from=1-3, to=0]
\end{tikzcd}\]
where all squares are cartesian. Corollary \ref{cor:fibration over representable are expenitalbe} implies that $j'$ is in $\widehat{\W}$, and according to lemma \ref{lemma:pullback along markkin}, so is $j$.
\end{proof}

\begin{definition}
\label{defi:classified}
 A left cartesian fibration $A\to B$ is \wcnotion{classified}{classified left cartesian fibration} if there exists a cocartesian square: 
\[\begin{tikzcd}
	A & {A'} \\
	B & {B^\sharp}
	\arrow[from=1-1, to=1-2]
	\arrow[from=1-2, to=2-2]
	\arrow[from=1-1, to=2-1]
	\arrow[from=2-1, to=2-2]
	\arrow["\lrcorner"{anchor=center, pos=0.125}, draw=none, from=1-1, to=2-2]
\end{tikzcd}\]
\end{definition}

\begin{theorem}
\label{theo:pullback along un marked cartesian fibration}
Let $p:A\to B$ be a classified left cartesian fibration. The functor $p^*:\ocatm_{/B}\to \ocatm_{/A}$ preserves colimits. 
\end{theorem}
\begin{proof}
As $\tPsh{\Theta}$ is locally cartesian closed, it is enough to show that the functor $p^*:\tiPsh{\Theta}_{/B}\to \tiPsh{\Theta}_{/A}$ sends $\Wm$ onto $\widehat{\Wm}$.
As morphisms fulfilling this property are stable under pullback, one can suppose that $p$ is of shape $B\to A^\sharp$, then applies proposition \ref{prop:W stable under pullback}.
\end{proof}

\begin{cor}
\label{cor:fib over a colimit}
Let $B$ be the colimit of a diagram $F:I\to \ocat$, and
 $p:X\to \colim_i B_i$ a left cartesian fibration. The canonical morphism
 $$ \colim_{i:B_i\to B}i^*p\to p$$
 is an equivalence.
\end{cor}
\begin{proof}
This morphism corresponds to the square
\[\begin{tikzcd}
	{\colim_{i:I}p^*B_i} & X \\
	{\colim_{i:I}B_i} & {B^\sharp}
	\arrow[from=2-1, to=2-2]
	\arrow[from=1-1, to=2-1]
	\arrow[from=1-1, to=1-2]
	\arrow["p", from=1-2, to=2-2]
\end{tikzcd}\]
The lower horizontal morphism is an equivalence by hypothesis, and the upper one is an equivalence as $p^*$ preserves colimits.
\end{proof}

\subsection{Colimits of cartesian fibrations}
\label{section:Colimit of left cartesian fibrations}
Through this section, we will identify any marked $\io$-category $C$ with the canonical induced morphism $C\to1$. If $f:X\to Y$ is a morphism, $f\times C$ then corresponds to the canonical morphism $X\times C\to Y$.

\begin{lemma}
\label{lemma: colimit of fib over b}
Let $b$ be a globular sum and $F:I\to \ocatm_{/b^\sharp}$ be a diagram that is pointwise a left cartesian fibration. The induced morphism 
$\colim_IF$ is a left cartesian fibration over $b^\sharp$. 
\end{lemma}
\begin{proof}
We denote $X:I\to \ocatm$ the diagram induced by $F$ by taking the domain.
Remark first that proposition \ref{prop:exponantiable stable under colim} and corollary \ref{cor:fibration over representable are expenitalbe} imply that $\colim_IF$ is $b$-exponentiable. 
Let $n$ be an integer. Suppose given cartesian squares
\[\begin{tikzcd}
	{Y'} & Y & {\colim_IX} \\
	{\Db_n^\flat} & {(\Db_{n+1})_t} & {b^\sharp}
	\arrow["f", from=1-1, to=1-2]
	\arrow[from=1-1, to=2-1]
	\arrow["\lrcorner"{anchor=center, pos=0.125}, draw=none, from=1-1, to=2-2]
	\arrow[from=1-2, to=1-3]
	\arrow[from=1-2, to=2-2]
	\arrow["\lrcorner"{anchor=center, pos=0.125}, draw=none, from=1-2, to=2-3]
	\arrow["{\colim_IF}", from=1-3, to=2-3]
	\arrow["{i_n^\alpha}"', from=2-1, to=2-2]
	\arrow["j"', from=2-2, to=2-3]
\end{tikzcd}\]
where $\alpha$ is $+$ is $n$ is even and $-$ if not and with $j$ globular. According to proposition \ref{prop:criterion to be left cartesian firbation}, we have to show that $f$ is a right Gray deformation retract to conclude. As $F$ is pointwise a left cartesian fibration, proposition \ref{prop:left Gray transfomration stable under pullback along cartesian fibration} implies that for any $i:I$, the morphism $f(i)$ appearing in the cartesian squares:
\[\begin{tikzcd}
	{Y'} & Y & {X(i)} \\
	{\Db_n^\flat} & {(\Db_{n+1})_t} & {b^\sharp}
	\arrow["{F(i)}", from=1-3, to=2-3]
	\arrow["{i_n^\alpha}"', from=2-1, to=2-2]
	\arrow[from=1-2, to=2-2]
	\arrow[from=1-2, to=1-3]
	\arrow["{f(i)}", from=1-1, to=1-2]
	\arrow[from=1-1, to=2-1]
	\arrow["j"', from=2-2, to=2-3]
	\arrow["\lrcorner"{anchor=center, pos=0.125}, draw=none, from=1-1, to=2-2]
	\arrow["\lrcorner"{anchor=center, pos=0.125}, draw=none, from=1-2, to=2-3]
\end{tikzcd}\]
is a right Gray deformation retract, and that the corresponding Gray deformation retract structure is functorial in $i:I$.
 As $j$ and $ji_n^\alpha$ are marked globular, they are discrete Conduché functors, and so exponentiable according to proposition \ref{prop:pullback by conduch marked preserves colimit}. The following canonical morphism
 $$\colim_I f(i)\to f$$
 is then an equivalence. As right Gray deformation retract structures are stable by colimits, this concludes the proof.
\end{proof}

\begin{lemma}
\label{lemma: colimit of fib over b2}
Let $A$ be an $\io$-category and $F:I\to \ocatm_{/A^\sharp}$ be a diagram that is pointwise a left cartesian fibration. Let $i:a^\sharp\to b^\sharp$ be a morphism between globular sums and $j:b^\sharp\to A^\sharp$ any morphism.
The canonical comparison $$\colim_I (ji)^*F\to i^*\colim_I j^*F$$
is an equivalence.
\end{lemma}
\begin{proof}
Lemma \ref{lemma: colimit of fib over b} implies that the two morphisms are left cartesian fibrations. As equivalences between these morphisms are detected on fibers, we can suppose that $a$ is $[0]$. In this case, the morphism $i$ is a discrete Conduché functor, and is then exponentiable according to proposition \ref{prop:pullback by conduch marked preserves colimit}. This directly concludes the proof.
\end{proof}

\begin{theorem}
\label{theo:left cart stable by colimit}
Let $A$ be an $\io$-category and $F:I\to \ocatm_{/A^\sharp}$ be a diagram that is pointwise a left cartesian fibration. The induced morphism 
$\colim_IF$ is a left cartesian fibration over $A^\sharp$.
\end{theorem}
\begin{proof}
Consider the functor $\psi:\Theta_{/A}\to \Arr(\ocatm)$ whose value on $j:b\to A$ is $\colim_I j^*F$.
As $F$ is pointwise a left cartesian fibration, the corollary \ref{cor:fib over a colimit} induces equivalences
$$\colim_{\Theta_{/A}}\psi:= \colim_{j:b\to A}\colim_I j^*F\sim \colim_I \colim_{j:b\to A}j^*F\sim \colim_I F$$

 The functor $\psi$  is cartesian according to lemma \ref{lemma: colimit of fib over b2}, and as $\codom \psi$ as a special colimit (given by $A^\sharp$), so has $\psi$ according to proposition \ref{prop:special colimit marked case}. As seen in remark \ref{rem on cartesian and special colim}, the fact that $\tiPsh{\Theta}$ is cartesian closed  implies that for any $j:b\to A$, the following canonical morphism
$$\colim_I j^* F=: \psi(j)\to j^*\colim_{\Theta_{/A}}\psi\sim j^* \colim_I F$$
is an equivalence. As the left object is a left cartesian fibration according to lemma \ref{lemma: colimit of fib over b}, so is the right one.
As this is true for any $j:b\to A$, the corollary \ref{cor:on the fact that fib are define against representable} implies that $ \colim_I F$ is a left cartesian fibration.
\end{proof}

\begin{cor}
\label{cor:inclusion of lcatt into the slice preserves colimits}
Let $A$ be an $\io$-category. The inclusion $\LCart(A^\sharp)\to \ocatm_{/A^\sharp}$ preserves both colimits and limits.
\end{cor}
\begin{proof}
The preservation of limits is a consequence of the fact that that this inclusion is a right adjoint. The preservation of colimits is a direct consequence of the theorem \ref{theo:left cart stable by colimit}.
\end{proof}

\vspace{1cm}
We now use the last theorem to provide an alternative explicit expression of the left cartesian fibration $\Fb h^0_{[C,1]}$. We obtain this in the theorem \ref{theo:equivalence between slice and join}.

\begin{prop}
\label{prop:appendice version equivalence betwen slice and join strict word}
Let $C$ be an $\zo$-category with an atomic and loop free basis such that $1\costar C$ is strict. The canonical projection $\gamma:1\costar C^\flat \to [C,1]^\sharp$ is a left cartesian fibration.
\end{prop}
\begin{proof}
Let $C$ be such $\zo$-category.
The hypothesis and the proposition \ref{prop:suspension preserves stricte marked case} imply that both the domain and the codomain of $\gamma$ are strict. We can then show the result in $\zocatm$ and use Steiner theory.
By construction, the basis of $1\costar \lambda C$ is given by the graduated set: 
$$(B_{1\costar \lambda C})_n:=
\left\{
\begin{array}{ll}
\{\emptyset \costar c,c\in (B_{C})_0\}\cup \{\emptyset \costar c,c \in (B_C)_0\}&\mbox{if $n=0$}\\
\{1 \costar c,c\in (B_{C})_{n-1}\}\cup \{\emptyset \costar c,c\in (B_C)_n\} &\mbox{if $n>0$}\\
\end{array}\right.
$$
where $B_C$ is the basis of $C$. The derivative is induced by: 
$$\partial (1\costar c):= 1\costar \partial c + (-1)^{|c|}\emptyset\otimes c~~~~~~~~~~\partial(\emptyset\star c):= \emptyset\costar \partial c$$
where we set the convention $\partial c:=0$ if $|c|=0$.
Let $n$ be an integer and $x$ an element of $(1\costar \lambda C)_n$. The induced morphism $\Db_n\to 1\costar C^\flat$ is marked if and only if there is no element of shape $\emptyset\star c$ in the support of $x$.

For an integer $n>0$, we define $s_n: (\Sigma \lambda C)_n\to (1\costar \lambda C)_n$ as the unique group morphism fulfilling $$s_n(\Sigma c):= 1\costar c$$ for $c$ any element of $\lambda C_{n-1}$. Remark that for any non negative integer $n$, and any element $d$ of $(1\costar \lambda C)_n$, $s_n(d)$ is contained in $d$. However, the family of morphism $\{s_n\}_{n\in \Nb}$ does not commute with the derivative. Let $n$ be an integer and $x$ an element of $(1\costar \lambda C)_n$. The induced morphism $\Db_n\to 1\costar C^\flat$ is therefore marked if and only if $x$ is equal to $s_n\gamma_n(x)$.

Eventually,
we recall that $(\Db_n)_t\otimes[1]^\sharp$ is the colimit of the diagram:
\[\begin{tikzcd}
	{(\Db_n)_t\otimes\{0\}\coprod (\Db_n)_t\otimes\{1\}} & {\Db_n^\flat\otimes\{0\}\coprod \Db_n^\flat\otimes\{1\}} & {\tau^ i_ n(\Db_n^\flat\otimes[1]^\sharp)}
	\arrow[from=1-2, to=1-1]
	\arrow[from=1-2, to=1-3]
\end{tikzcd}\]
We then have to show that for any integer $n$, any diagram of shape 
\[\begin{tikzcd}
	{\lambda\Db_n\otimes\{0\}\cup \lambda\partial\Db_n\otimes[1]} & {1\costar \lambda C} \\
	{\lambda\Db_n\otimes[1]} & {\Sigma \lambda C}
	\arrow[from=1-1, to=2-1]
	\arrow["f"', from=2-1, to=2-2]
	\arrow["g", from=1-1, to=1-2]
	\arrow[from=1-2, to=2-2]
\end{tikzcd}\]
with $f(e_n\otimes[1])$ and $f(e^\alpha_k\otimes[1])$ for $\alpha\in\{-,+\}$ and $k<n$ correponding to a marked cell, admits a unique lifting $l$ with the following extra condition: if $n>0$, if $f(e_n\otimes[1])$ is null and if $g(e_n\otimes\{0\})$ corresponds to a marked cell, then $l(e_n\otimes[1])$ is null and $l(e_n\otimes\{1\})$ corresponds to a marked cell.

Suppose first that $n=0$. We set $l_0:\lambda (\Db_0\otimes[1])_0\to (1\costar \lambda C)_0$ as the unique group morphism extending $g_0$ and such that 
$$l_0(e_0\otimes\{1\}):= \partial s_1(f_1(e_0\otimes[1])+ g_0(e_0\otimes\{1\}).$$
We also define $l_1:\lambda (\Db_0\otimes[1])_1\to (1\costar \lambda C)_1$ as the group morphism characterized by: 
$$l_1(e_0\otimes[1]):= s_1(f_1(e_0\otimes[1])).$$
For $k>1$, we set $l_k:\lambda (\Db_0\otimes[1])_k\to (1\costar \lambda C)_k$ as the constant morphism on $0$.
We directly deduce the equality $\partial l= l \partial$.
We then have defined the desired lifting, which is obviously the unique one possible.

Suppose now that $n>0$. We set $l_k:=g_k:\lambda (\Db_n\otimes[1])_k\to (1\costar \lambda C)_k$ for $k<n$ and $l_n:\lambda (\Db_n\otimes[1])_n\to (1\costar \lambda C)_n$ as the unique group morphism extending $g_n$ and such that 
$$l_n(e_n\otimes\{1\}) := (-1)^\alpha \partial s_{n+1}( f(e_n\otimes[1])) - (-1)^\alpha s_{n}( f((\partial e_n)\otimes[1])) + g_n(e_n\otimes\{0\})$$
where $\alpha$ is $+$ if $n$ is even and $-$ if not.
We define $l_{n+1}:\lambda (\Db_n\otimes[1])_{n+1}\to (1\costar \lambda C)_{n+1}$ as the group morphism characterized by: 
$$l_{n+1}(e_n\otimes[1]):= s_{n+1}(f_{n+1}(e_n\otimes[1])).$$
Eventually, for $k>n$, we set $l_k:\lambda (\Db_n\otimes[1])_k\to (1\costar \lambda C)_k$ as the constant morphism on $0$.

For an integer $k<n$ and $\alpha\in\{-,+\}$, as the $(k+1)$-cell corresponding to $g_{k+1}(e_k^\alpha\otimes [1])$ is marked, we have an equality
$$g_{k+1}(e_k^\alpha\otimes [1]) = s_{k+1}f_{k+1}(e_k^\alpha\otimes [1]).$$
This then implies the equalities
$$\begin{array}{rcl}
\partial(l_{n+1}(e_n\otimes[1])) &=& l_{n+1}(\partial (e_n\otimes[1]))\\
\partial(l_{n}(e_n\otimes \{1\}))&=& g_{n-1}(\partial e_n\otimes \{1\})
\end{array}$$
As it was the only non trivial case, we have $l\partial = \partial l.$
We then have defined the desired lifting, which is obviously the unique one possible. Moreover, if we suppose that $f(e_n\otimes[1])$ is null and $g(e_n\otimes\{0\})$ corresponds to a marked cell, this implies that 
$ s_{n+1}( f(e_n\otimes[1])) =0$ and that the $g_n(e_n\otimes\{0\})$ is in the image of $s_n$. The object $f(e_n\otimes[1])$ also is in the image of $s_n$ and so corresponds to a marked cell.
\end{proof}

\begin{lemma}
\label{lemma:equivalence betwen slice and join strict word}
There is a unique morphism $1\costar C^\flat\to [C,1]^\sharp_{0/}$ fitting in a square
\[\begin{tikzcd}
	1 & {[C,1]^\sharp_{0/}} \\
	{1\costar C^\flat} & {[C,1]^\sharp}
	\arrow[from=1-1, to=2-1]
	\arrow[from=1-1, to=1-2]
	\arrow[from=1-2, to=2-2]
	\arrow[from=2-1, to=2-2]
	\arrow[dotted, from=2-1, to=1-2]
\end{tikzcd}\]
This morphism is an equivalence whenever $C$ is a globular sum.
\end{lemma}
\begin{proof}
We have by construction a cocartesian square
\[\begin{tikzcd}
	{C^\flat\otimes\{0\}} & {C^\flat\otimes[1]^\sharp} \\
	1 & {1\costar C^\flat}
	\arrow[from=1-1, to=2-1]
	\arrow[from=1-1, to=1-2]
	\arrow[from=2-1, to=2-2]
	\arrow[from=1-2, to=2-2]
	\arrow["\lrcorner"{anchor=center, pos=0.125, rotate=180}, draw=none, from=2-2, to=1-1]
\end{tikzcd}\]
which implies that $1\to 1\costar C^\flat$ is initial. This directly implies the first assertion.
We now prove the second assertion. We suppose that $C$ is a globular sum $a$. The $\io$-categories $1\costar a$ is strict according to proposition \ref{prop:strict stuff are stable under Gray cone}. Proposition \ref{prop:appendice version equivalence betwen slice and join strict word} states that the canonical morphism $1\costar a^\flat \to [a,1]^\sharp$ is a left Cartesian fibration. As the comparison map is initial by left cancellation, it is an equivalence. 
\end{proof}

\begin{prop}
\label{prop:equivalence betwen slice and join strict word2}
Let $j:C\to D$ be a morphism between $\io$-categories. The following diagram is cartesian
\[\begin{tikzcd}
	{1\costar C^\flat\coprod_{C^\flat}D^\flat} & {[D,1]^\sharp_{0/}} \\
	{[C,1]^\sharp} & {[D,1]^\sharp}
	\arrow[from=1-1, to=1-2]
	\arrow[from=1-1, to=2-1]
	\arrow[from=1-2, to=2-2]
	\arrow["{[j,1]^\sharp}"', from=2-1, to=2-2]
\end{tikzcd}\]
\end{prop}
\begin{proof}
Suppose first that $C$ is a globular form $b$.
The lemma \ref{lemma:equivalence betwen slice and join strict word} implies that the morphism $1\costar b^\flat\to [b,1]^\sharp$ is equivalent to $\Fb h_0^{[b,1]}$.
We then have to check that the canonical morphism 
\begin{equation}
\label{eq:in a technical lemma}
\Fb h_0^{[b,1]}\coprod_{b^\flat}C^\flat\to [j,1]^*\Fb h_0^{[D,1]}
\end{equation}
is an equivalence. According to theorem \ref{theo:left cart stable by colimit}, the two objects are left Cartesian fibrations, and we then have to check that this morphism induces equivalences on fibers. Furthermore, remark that the two morphisms $\{0\}\to [b,1]^\sharp$ and $\{1\}\to [b,1]^\sharp$ are discrete Conduché functors and then exponentiable according to proposition \ref{prop:pullback by conduch marked preserves colimit}. The fibers on $0$ and $1$ of the morphism \eqref{eq:in a technical lemma} then correspond to the equivalences
$$1\coprod_{\emptyset}\emptyset\sim 1~~~~\mbox{ and }~~~~ b\coprod_bD\sim D.$$

Suppose now that $C$ is any $\io$-category. Remark that we have an equivalence 
$$\colim_{b\to C}[b,1]\sim [C,1]$$
where $b$ ranges over globular sums or the empty $\io$-category. The theorem \ref{theo:pullback along un marked cartesian fibration} and the already proved case then induce equivalences
$$[C,1]^\sharp\times_{[D,1]^\sharp}[D,1]^\sharp_{0/}\sim \colim_{i:b\to C}1\costar b^\flat\coprod_{b^\flat}D^\flat \sim 1\costar C^\flat\coprod_{C^\flat}D^\flat$$
over $[C,1]^\sharp$. This concludes the proof.

\end{proof}

\begin{theorem}
\label{theo:equivalence between slice and join}
Let $C$ be an $\io$-category. The left Cartesian fibration 
$$\Fb h^0_{[C,1]}:[C,1]^\sharp_{0/}\to [C,1]^\sharp$$ 
is equivalent to the projection 
$$1\costar C^\flat\to [C,1]^\sharp.$$
\end{theorem}
\begin{proof}
This directly follows from proposition \ref{prop:equivalence betwen slice and join strict word2} applied to the morphism $id:C\to C$.
\end{proof}

\begin{cor}
\label{cor:cor of the past101}
Let $C$ be an $\io$-category. 
We have equivalences 
$$1\costar C\sim [C,1]_{0/}~~~~~ C\star 1\sim [C,1]_{/1}.$$
\end{cor}
\begin{proof}
By forgetting the marking, theorem \ref{theo:equivalence between slice and join} implies that $1\costar C$ is equivalent to $[C,1]_{0/}$. The proof of the second assertion is dual.
\end{proof}

\begin{cor}
\label{cor:cor of the past10}
Let $j:C\to D$ be a morphism between $\io$-categories. The following squares are cartesian:
\[\begin{tikzcd}
	{1\costar C\coprod_CD} & {1\costar D} & {D\coprod_CC\star1} & {D\star 1} \\
	{[C,1]} & {[D,1]} & {[C,1]} & {[D,1]}
	\arrow[from=1-1, to=1-2]
	\arrow[from=1-1, to=2-1]
	\arrow["\lrcorner"{anchor=center, pos=0.125}, draw=none, from=1-1, to=2-2]
	\arrow[from=1-2, to=2-2]
	\arrow[from=1-3, to=1-4]
	\arrow[from=1-3, to=2-3]
	\arrow["\lrcorner"{anchor=center, pos=0.125}, draw=none, from=1-3, to=2-4]
	\arrow[from=1-4, to=2-4]
	\arrow["{[j,1]}"', from=2-1, to=2-2]
	\arrow[from=2-3, to=2-4]
\end{tikzcd}\]
\end{cor}
\begin{proof}
To show the cartesianness of the first square, we apply the functor $(\underline{\phantom{x}})^\natural$ to the cartesian square given in proposition \ref{prop:equivalence betwen slice and join strict word2} and the equivalence given in theorem \ref{theo:equivalence between slice and join}. The proof of the cartesianness of the second square is dual.
\end{proof}

\begin{cor}
\label{cor:cor of the past3}
Let $C$ be an $\io$-category. We denote by $\gamma:C\star 1\to [C,1]$ and $\gamma':1\costar C\to [C,1]$ the two canonical projections. The functors $\gamma^*:\ocat_{/[C,1]}\to \ocat_{/C\star 1}$ and $\gamma^*:\ocat_{/[C,1]}\to \ocat_{/1\costar C}$ preserve colimits. 
\end{cor}
\begin{proof}
We have a cocartesian square
\[\begin{tikzcd}
	{(1\costar C)^\flat} & {1\costar C^\flat} \\
	{[C,1]^\flat} & {[C,1]^\sharp}
	\arrow["{\gamma^\flat}"', from=1-1, to=2-1]
	\arrow[from=1-2, to=2-2]
	\arrow[from=2-1, to=2-2]
	\arrow[from=1-1, to=1-2]
	\arrow["\lrcorner"{anchor=center, pos=0.125}, draw=none, from=1-1, to=2-2]
\end{tikzcd}\]
The theorem \ref{theo:equivalence between slice and join} implies that the right hand morphism is a left cartesian fibration, and $\gamma^\flat$ is then a classified left cartesian fibration. The result is then a direct consequence of theorem \ref{theo:pullback along un marked cartesian fibration}.
The proof of the second assertion is dual.
\end{proof}

\subsection{Smooth and proper morphisms}
\begin{definition}
For a marked $\io$-category $C$, we denote by \textit{$\LCart(C)$} \sym{(lcart@$\LCart(\uvar)$}\sym{(rcart@$\RCart(\uvar)$} (resp. $\RCart(C)$) the full sub $\iun$-category of $\ocatm_{/C}$ whose objects are left cartesian fibrations. By theorem \ref{theo:adjunction between presheaves and local presheaves}, we can equivalently define $\LCart(C)$ as the localization of $\ocatm_{/C}$ along $\widehat{\I_{/C}}$.
\end{definition}

\begin{definition}
 For $E$, $F$ two objects of $\LCart(C)$ corresponding respectively to two left cartesian fibrations
$p:X\to C$ and $q:X\to C$, we denote by \wcnotation{$\Map(E,F)$}{(map@$\Map(\uvar,\uvar)$} the $\io$-category fitting in the cocartesian square:
\[\begin{tikzcd}
	{\Map(E,F)} & {\uHom(X,Y)} \\
	{\{p\}} & {\uHom(X,C)}
	\arrow["{q_!}", from=1-2, to=2-2]
	\arrow[from=2-1, to=2-2]
	\arrow[from=1-1, to=1-2]
	\arrow[from=1-1, to=2-1]
	\arrow["\lrcorner"{anchor=center, pos=0.125}, draw=none, from=1-1, to=2-2]
\end{tikzcd}\]
\end{definition}

 We recall that a left cartesian fibration $X\to C$ is \textit{classified} when there exists a cartesian square: 
\[\begin{tikzcd}
	X & {X'} \\
	C & {C^\sharp}
	\arrow[from=1-1, to=1-2]
	\arrow[from=1-2, to=2-2]
	\arrow[from=1-1, to=2-1]
	\arrow[from=2-1, to=2-2]
	\arrow["\lrcorner"{anchor=center, pos=0.125}, draw=none, from=1-1, to=2-2]
\end{tikzcd}\]

\begin{definition}
We denote by \wcnotation{$\LCartc(C)$}{(lcart@$\LCartc(\uvar)$} the full sub $\iun$-category of $\LCart(C)$ whose objects are classified left cartesian fibrations.
\end{definition}

\begin{construction} Remark that every morphism $f:C\to D$ induces an adjunction
\[\begin{tikzcd}
	{f_!:\ocat_{/C}} & {\ocat_{/D}:f^*}
	\arrow[shift left=2, from=1-1, to=1-2]
	\arrow[shift left=2, from=1-2, to=1-1]
\end{tikzcd}\]
where the left adjoint $f_!$ is the composition and the right one is the pullback.
This induces an adjunction at the level of localized $\iun$-category:
\[\begin{tikzcd}
	{\Lb f_!:\LCart(C)} & {\LCart(D):\Rb f^*=f^*}
	\arrow[shift left=2, from=1-1, to=1-2]
	\arrow[shift left=2, from=1-2, to=1-1]
\end{tikzcd}\]
\end{construction}

\begin{definition} A morphism $f:C\to D$ is \wcnotion{smooth}{smooth morphism} if $f^*:\ocatm_{/D}\to \ocatm_{/C}$ preserves colimits, and for every cartesian square of the form
\begin{equation}
\label{eq:smooth diagram}
\begin{tikzcd}
	{C''} & {C'} & C \\
	{D''} & {D'} & D
	\arrow["{v'}", from=1-1, to=1-2]
	\arrow[from=1-2, to=1-3]
	\arrow["f", from=1-3, to=2-3]
	\arrow[from=2-2, to=2-3]
	\arrow["v"', from=2-1, to=2-2]
	\arrow[from=1-2, to=2-2]
	\arrow[from=1-1, to=2-1]
	\arrow["\lrcorner"{anchor=center, pos=0.125}, draw=none, from=1-1, to=2-2]
	\arrow["\lrcorner"{anchor=center, pos=0.125}, draw=none, from=1-2, to=2-3]
\end{tikzcd}
\end{equation}
if $v$ is inital, so is $v'$.
\end{definition}

\begin{remark}
As seen in example \ref{exe:exe localization},
when $f$ is smooth, the functor $f^*$ admits a left adjoint
\[\begin{tikzcd}
	{f^*:\ocatm_{/D}} & {\ocatm_{/C}:f_*}
	\arrow[""{name=0, anchor=center, inner sep=0}, shift left=2, from=1-2, to=1-1]
	\arrow[""{name=1, anchor=center, inner sep=0}, shift left=2, from=1-1, to=1-2]
	\arrow["\dashv"{anchor=center, rotate=-90}, draw=none, from=1, to=0]
\end{tikzcd}\]
and as $f^*$ preserves initial morphisms, this induces a derived adjunction:
\[\begin{tikzcd}
	{\Lb f^*:\LCart(D)} & {\LCart(C):\Rb f_*}
	\arrow[""{name=0, anchor=center, inner sep=0}, shift left=2, from=1-2, to=1-1]
	\arrow[""{name=1, anchor=center, inner sep=0}, shift left=2, from=1-1, to=1-2]
	\arrow["\dashv"{anchor=center, rotate=-90}, draw=none, from=1, to=0]
\end{tikzcd}\]
where $\Rb f_*$ is just the restriction of $f_*$.
\end{remark}

\begin{prop}
\label{prop:projection are smooth}
Let $I, J$ be two marked $\io$-categories. The projection $I\times J\to I$ is smooth. 
\end{prop}
\begin{proof}
This is a direct consequence of the fact that cartesian product preserves colimits and initial morphisms.
\end{proof}
\begin{prop}
\label{prop:left cartesian fibration are smooth}
Classified right cartesian fibrations are smooth.
\end{prop}
\begin{proof}
The theorem \ref{theo:pullback along un marked cartesian fibration} states that $f^*$ preserves colimits. Suppose given a diagram of shape \eqref{eq:smooth diagram}. As initial morphisms are the smallest cocomplete class containing morphism $I$, and as $f^*$ preserves colimits, one can suppose that $v$ belongs to $I$, and then is a left Gray deformation retract. To conclude, one applies proposition
\ref{prop:left Gray transfomration stable under pullback along cartesian fibration}.
\end{proof}

\begin{definition} A morphism $f:C\to D$ is \wcnotion{proper}{proper morphism} if $f^*:\ocatm_{/D}\to \ocatm_{/C}$ preserves colimits and for every cartesian square of the form
\begin{equation}
\label{eq:proper diagram}
\begin{tikzcd}
	{C''} & {C'} & C \\
	{D''} & {D'} & D
	\arrow["{v'}", from=1-1, to=1-2]
	\arrow[from=1-2, to=1-3]
	\arrow["f", from=1-3, to=2-3]
	\arrow[from=2-2, to=2-3]
	\arrow["v"', from=2-1, to=2-2]
	\arrow[from=1-2, to=2-2]
	\arrow[from=1-1, to=2-1]
	\arrow["\lrcorner"{anchor=center, pos=0.125}, draw=none, from=1-1, to=2-2]
	\arrow["\lrcorner"{anchor=center, pos=0.125}, draw=none, from=1-2, to=2-3]
\end{tikzcd}
\end{equation}
if $v$ is final, so is $v'$.
\end{definition}
\begin{remark}
As $(\uvar)^\circ$ sends initial to final morphisms, a morphism $f$ is then proper if and only if $f^{\circ}$ is smooth. Propositions \ref{prop:projection are smooth} and \ref{prop:left cartesian fibration are smooth} then imply that projections and classified right cartesian fibrations are proper.
\end{remark}

\begin{definition}
\label{defi:of bot}
We denote by $\bot:\ocatm\to \ocat$ the left Kan extension of the functor $t\Theta\to \ocat$ that sends $a^\flat$ on $a$ and $(\Db_{n+1})_t$ on $\Db_n$. Roughly speaking, $\bot$ sends a marked $\io$-category to it's localization by marked cells. By abuse of notation, we also denote\sym{((g3@$\bot$} $\bot: 
\Arr(\ocatm)\to \ocat$, the composite functor 
$$\Arr(\ocatm)\xrightarrow{\dom}\ocatm\xrightarrow{\bot} \ocat.$$
\end{definition}

\begin{lemma}
\label{lemma:bot send initial and final to we}
The  functor $\bot$ preserves colimits and sends initial and final morphisms to equivalences.
\end{lemma}
\begin{proof}
The functor $\bot$ obviously preserves colimits. It is then sufficient to show that it sends $C\otimes[1]^\sharp\to C$ to an equivalence for any marked $\io$-category $C$. As $\bot$ sends marked trivializations to equivalences, this directly follows from Proposition \ref{prop:cotimes 1 to c is a trivialization}.
\end{proof}

\begin{construction}
The previous lemma implies that  for any object $E$ of $\LCart(A)$ and for any morphism $i:A\to B$, we then have a canonical equivalence 
\begin{equation}
\label{eq:bot kill pull}
\bot \Lb i_! E\sim \bot E.
\end{equation}
Let $A$ be an $\io$-category and $a:1\to A^\sharp$ an object of $A$. 
According to proposition \ref{prop:explicit factoryzation}, the factorisation of $a:1\to A^\sharp$ in a final morphism followed by a right cartesian fibration is given by the canonical inclusion $\{a\}\to A^\sharp_{/a}$ and the canonical projection $\pi_a:A^\sharp_{/a}\to A^\sharp$.
Let $E$ be an object of $\LCart(A^\sharp)$ corresponding to a left cartesian fibration $p:X\to A^\sharp$.
We then have a diagram
\[\begin{tikzcd}
	{X_a} & {X_{/a}} & X \\
	{\{a\}} & {A^\sharp_{/a}} & {A^\sharp}
	\arrow["i", from=1-1, to=1-2]
	\arrow[from=1-1, to=2-1]
	\arrow["\lrcorner"{anchor=center, pos=0.125}, draw=none, from=1-1, to=2-2]
	\arrow[from=1-2, to=1-3]
	\arrow[from=1-2, to=2-2]
	\arrow["\lrcorner"{anchor=center, pos=0.125}, draw=none, from=1-2, to=2-3]
	\arrow["p", from=1-3, to=2-3]
	\arrow[from=2-1, to=2-2]
	\arrow["{\pi_a}"', from=2-2, to=2-3]
\end{tikzcd}\]
and the morphism $i$ is final as $p$ is proper. As $\bot$ sends final morphisms to equivalences, we then have an invertible natural transformation: 
\begin{equation}
\label{eq:explicit derived fiber}
\Rb a^*E\sim \bot \Rb a^*E\sim \bot \Rb \pi_a^*E
\end{equation}
\end{construction}

\begin{prop}
\label{prop:fiber preserves colimits}
The functor $\Rb a^*:\LCart(A^\sharp)\to \LCart(1)\sim \ocat$ preserves colimits. 
\end{prop}
\begin{proof}
As $\pi_a$ is a right cartesian fibration, it is smooth and $\Rb \pi_a^*$ then preserves colimits. The functor $\bot$ also preserves them. The result then follows from the equivalence \eqref{eq:explicit derived fiber}.
\end{proof}

\begin{construction}
 Let $E$ be an object of $\ocatm_{/A^\sharp}$ corresponding to a morphism $X\to A^\sharp$. We denote $\tilde{X}\to A^\sharp$ the left cartesian replacement of $E$ (constructed in \ref{cons:of fb for fibration}). We then have a diagram
\[\begin{tikzcd}
	{X_{/a}} & {\tilde{X}_{/a}} & {A^\sharp_{/a}} \\
	X & {\tilde{X}} & {A^\sharp}
	\arrow[from=1-1, to=1-2]
	\arrow[from=1-1, to=2-1]
	\arrow["\lrcorner"{anchor=center, pos=0.125}, draw=none, from=1-1, to=2-2]
	\arrow[from=1-2, to=1-3]
	\arrow[from=1-2, to=2-2]
	\arrow["\lrcorner"{anchor=center, pos=0.125}, draw=none, from=1-2, to=2-3]
	\arrow["{\pi_a}", from=1-3, to=2-3]
	\arrow[from=2-1, to=2-2]
	\arrow["{\Fb E}"', from=2-2, to=2-3]
\end{tikzcd}\]
 As $\pi_a$ is smooth, the canonical morphism 
$X_{/a}\to \tilde{X}_{/a}$ is initial. Combined with \eqref{eq:explicit derived fiber}, this induces an equivalence:
\begin{equation}
\label{eq:explicit derived fiber2}
\Rb a^*(\Fb E)\sim \bot X_{/a}
\end{equation}
\end{construction}
\begin{prop}
\label{prop:quillent theorem A}
For a morphism $X\to A^\sharp$, and an object $a$ of $A$, we denote by $X_{/a}$ the marked $\io$-category fitting in the following cartesian square: 
\[\begin{tikzcd}
	{X_{/a}} & X \\
	{A^\sharp_{/a}} & {A^\sharp}
	\arrow[from=1-1, to=1-2]
	\arrow[from=1-1, to=2-1]
	\arrow["\lrcorner"{anchor=center, pos=0.125}, draw=none, from=1-1, to=2-2]
	\arrow[from=1-2, to=2-2]
	\arrow[from=2-1, to=2-2]
\end{tikzcd}\]
We denote by $\bot:\ocatm\to \ocat$ the functor sending a marked $\io$-category to its localization by marked cells.
\begin{enumerate}
\item Let $E$, $F$ be two elements of $\ocatm_{/A^\sharp}$ corresponding to morphisms $X\to A^\sharp$, $Y\to A^\sharp$, and
 $\phi:E\to F$ a morphism between them. The induced morphism $\Fb\phi:\Fb E\to \Fb F$ is an equivalence if and only if for any object $a$ of $A$, the induced morphism 
$$\bot X_{/a}\to \bot Y_{/a}$$ 
is an equivalence of $\io$-categories.
\item A morphism $X\to A^\sharp$ is initial if and only if for any object $a$ of $A$, $\bot X_{/a}$ is the terminal $\io$-category.
\end{enumerate}
\end{prop}
\begin{proof}
The first assertion is a direct consequence of the equation \eqref{eq:explicit derived fiber2} and of the fact that equivalences between left cartesian fibrations are detected on fibers.

A morphism $p:X\to A$ is initial if and only if $\Fb p$ is equivalent to the identity of $A^\sharp$, and according to the first assertion, if and only if for any object $a$ of $A$, the canonical morphism $\bot X_{/a}\to \bot A^\sharp_{/a}$ is an equivalence. However, the canonical morphism $\{a\}\to A_{/a}^\sharp$ is final, and $\bot A^\sharp_{/a}$ is then the terminal $\io$-category. This concludes the proof of the second assertion.
\end{proof}

\begin{construction}
Suppose given a commutative square of marked $\io$-categories: 
\begin{equation}
\label{eq:BC data}
\begin{tikzcd}
	A & C \\
	{B^\sharp} & {D^\sharp}
	\arrow["j", from=1-1, to=1-2]
	\arrow["u", from=1-2, to=2-2]
	\arrow["v"', from=1-1, to=2-1]
	\arrow["i"', from=2-1, to=2-2]
\end{tikzcd}
\end{equation}
This induces a square 
\begin{equation}
\label{eq:BC lax commutative square}
\begin{tikzcd}
	{\LCartc(C)} & {\LCartc(A)} \\
	{\LCart(D^\sharp)} & {\LCart(B^\sharp)}
	\arrow["{\Rb j^*}", from=1-1, to=1-2]
	\arrow["{\Lb  u_!}"', from=1-1, to=2-1]
	\arrow["{\Lb v_!}", from=1-2, to=2-2]
	\arrow["{\Rb i^*}"', from=2-1, to=2-2]
	\arrow[shorten <=8pt, shorten >=8pt, Rightarrow, from=1-2, to=2-1]
\end{tikzcd}
\end{equation}
that commutes up to a natural transformation 
\begin{equation}
\label{eq:BC nat}
\begin{array}{rcl}
\Lb v_!\circ \Rb j^*&\to &\Lb v_!\circ \Rb j^* \circ \Rb u^* \circ \Lb u_!\\
&\sim & \Lb v_!\circ \Rb v^* \circ \Rb i^* \circ \Lb u_!\\
&\to& \Rb i^* \circ \Lb u_!
\end{array}
\end{equation}
\end{construction}
\begin{definition}
A square \eqref{eq:BC data} verifies the \notion{Beck-Chevaley condition} if this natural transformation \eqref{eq:BC nat} is an equivalence. This square verifies the \notion{weak Beck-Chevaley condition} if the natural transformation once composed with $\bot$ becomes an equivalence.
\end{definition}

\begin{prop}
\label{prop:base change}
If the square \eqref{eq:BC data} is cartesian and $i$ is smooth, then it verifies the Beck-Chevaley condition.
\end{prop}
\begin{proof}
By construction, $\Lb v_!\circ \Rb j^*$ sends an object $E$ of $\LCartc(C)$ onto the fibrant replacement of $ v_!j^* E$. 
As $i$ is smooth, $\Rb i^* \circ \Lb u_!$ sends an object $E$ of $\LCart(C)$ onto the fibrant replacement of $i^*u_! E$. As pullbacks are stable under composition, we have $i^*u_!\sim v_!j^*$.
\end{proof}

\begin{lemma}
\label{lemma:smoth technical 1}
A square \eqref{eq:BC data} where both $j$ and $i$ are final verifies the weak Beck-Chevaley condition.
\end{lemma}
\begin{proof}
As  $\bot$ sends initial and final morphisms to equivalences, for any $E: \LCartc(A)$ and any $F:\LCartc(C)$, we have equivalences
$$\bot \Lb v_! E \sim \bot E~~~\mbox{ and }~~~\bot \Lb v_! F \sim \bot F.$$
Moreover, as classified left cartesian fibrations are proper, for any $G:\LCartc(C)$ and $H:\LCart(D^\sharp)$,  we have equivalences
$$\bot \Rb j^*G \sim \bot G~~~\mbox{ and }~~~\bot \Rb i^* H \sim \bot H.$$
This implies  the result.
\end{proof}

\begin{lemma}
\label{lemma:smoth technical 2}
Suppose given a cartesian square 
\[\begin{tikzcd}
	A & C \\
	{B^\sharp} & {D^\sharp}
	\arrow["j", from=1-1, to=1-2]
	\arrow["u", from=1-2, to=2-2]
	\arrow["v"', from=1-1, to=2-1]
	\arrow["i"', from=2-1, to=2-2]
\end{tikzcd}\]
such that for any object $b$ of $B^\sharp$, the outer square of the induced diagram
\[\begin{tikzcd}
	{A_{b/}} & A & C \\
	{B^\sharp_{/b}} & {B^\sharp} & {D^\sharp}
	\arrow["j", from=1-2, to=1-3]
	\arrow["u", from=1-3, to=2-3]
	\arrow["v"', from=1-2, to=2-2]
	\arrow["i"', from=2-2, to=2-3]
	\arrow["{v'}"', from=1-1, to=2-1]
	\arrow["{\pi_b}"', from=2-1, to=2-2]
	\arrow["{\pi_b'}", from=1-1, to=1-2]
	\arrow["\lrcorner"{anchor=center, pos=0.125}, draw=none, from=1-1, to=2-2]
	\arrow["\lrcorner"{anchor=center, pos=0.125}, draw=none, from=1-2, to=2-3]
\end{tikzcd}\]
verifies the weak Beck Chevaley condition. Then the right hand square verifies the Beck Chevaley condition.
\end{lemma}
\begin{proof}
Let $E$ be an element of $\LCart(C)$. Using the hypothesis, the fact that $\pi_a$ is a right cartesian fibration, and so smooth,, we have a sequence of equivalences: 
$$\begin{array}{rcll}
\bot \Rb \pi_b^*  \Lb v_! \Rb j^*E&\sim &\bot \Lb v'_! \Rb {\pi'_b}^*  \Rb j^*E&(\ref{prop:base change})\\
&\sim & \bot \Rb \pi_b^*  \Rb i  \Lb u_! E&\mbox{(hypothesis)}
\end{array}$$
Using the equivalence \eqref{eq:explicit derived fiber}, this implies that for any element $b$ of $B$, we have an equivalence 
$$ \Rb b^*  \Lb v_! \Rb j^*E\to \Rb b^*  \Rb i  \Lb u_!E$$
which concludes the proof as equivalences between left cartesian fibrations are detected fiberwise.
\end{proof}

\begin{prop}
\label{prop:BC condition}
Let $i:I\to A^\sharp$ and $j:C^\sharp\to D^\sharp$ be two morphisms. The square 
\[\begin{tikzcd}
	{C^\sharp\times I} & {D^\sharp\times I} \\
	{C^\sharp\times A^\sharp} & {D^\sharp\times A^\sharp}
	\arrow[from=1-1, to=2-1]
	\arrow[from=2-1, to=2-2]
	\arrow[from=1-1, to=1-2]
	\arrow[from=1-2, to=2-2]
\end{tikzcd}\]
verifies the Beck-Chevaley condition.
\end{prop}
\begin{proof}
According to lemma \ref{lemma:smoth technical 2}, one has to show that for any pair $(a,c)$ where $a$ is an object of $A^\sharp$ and $c$ of $C^\sharp$, the induced cartesian square
\[\begin{tikzcd}
	{C^\sharp_{c/}\times I_{a/}} & {D^\sharp\times I} \\
	{C^\sharp_{c/}\times A_{a/}^\sharp} & {D^\sharp\times A^\sharp}
	\arrow[from=1-2, to=2-2]
	\arrow[from=1-1, to=2-1]
	\arrow[from=1-1, to=1-2]
	\arrow[from=2-1, to=2-2]
\end{tikzcd}\]
verifies the weak Beck-Chevaley condition. Remark that this square factors as two cartesian squares:
\[\begin{tikzcd}
	{C^\sharp_{c/}\times I_{a/}} & {D^\sharp_{j(c)/}\times I_{a/}} & {D^\sharp\times I} \\
	{C^\sharp_{c/}\times A_{a/}^\sharp} & {D^\sharp_{j(c)/}\times A_{a/}^\sharp} & {D^\sharp\times A^\sharp}
	\arrow[from=1-3, to=2-3]
	\arrow[from=1-1, to=2-1]
	\arrow[from=2-1, to=2-2]
	\arrow[from=2-2, to=2-3]
	\arrow[from=1-1, to=1-2]
	\arrow[from=1-2, to=1-3]
	\arrow[from=1-2, to=2-2]
\end{tikzcd}\]
The two morphisms $\{c\}\to C^\sharp_{c/}$ and $\{c\}\to D^\sharp_{j(c)/}$ are initial, and by stability by left cancellation, so is $C^\sharp_{c/}\to D^\sharp_{j(c)/}$. By stability by cartesian product, the two horizontal morphisms of the left square are initial. Lemma \ref{lemma:smoth technical 1} then implies that the left square verifies the weak Beck-Chevaley condition. According to proposition \ref{prop:base change}, the right square fulfills the Beck-Chevaley condition, and so \textit{a fortiori}, the weak one. The outer square then verified the weak Beck-Chevaley condition, which concludes the proof.
\end{proof}

\begin{construction}
Suppose given a commutative square of marked $\io$-categories:
\begin{equation}
\label{eq:BC data}
\begin{tikzcd}
	A & {C^\sharp} \\
	B & {D^\sharp}
	\arrow["j", from=1-1, to=1-2]
	\arrow["u", from=1-2, to=2-2]
	\arrow["v"', from=1-1, to=2-1]
	\arrow["i"', from=2-1, to=2-2]
\end{tikzcd}
\end{equation}
where  $j$ and $i$ are smooth. This induces a square
\begin{equation}
\label{eq:BC lax commutative square2}
\begin{tikzcd}
	{\LCartc(B)} & {\LCart(D^\sharp)} \\
	{\LCartc(A)} & {\LCart(C^\sharp)}
	\arrow["{\Rb j_*}"', from=2-1, to=2-2]
	\arrow["{\Lb  u^*}", from=1-2, to=2-2]
	\arrow["{\Lb v^*}"', from=1-1, to=2-1]
	\arrow["{\Rb i_*}", from=1-1, to=1-2]
	\arrow[shorten <=8pt, shorten >=8pt, Rightarrow, from=1-2, to=2-1]
\end{tikzcd}
\end{equation}
that commutes up to a natural transformation 
\begin{equation}
\label{eq:BC nat2}
\begin{array}{rcl}
\Lb u^*\circ \Rb i_*&\to & \Rb j_*\circ\Lb j^* \circ \Lb u^*\circ \Rb i_*\\
&\sim &\Rb j_*\circ\Lb v^*\circ\Lb i^*  \circ \Rb i_*\\
&\to &\Rb j_*\circ\Lb v^*
\end{array}
\end{equation}
\end{construction}

\begin{definition}
A square \eqref{eq:BC data} verifies the \notion{opposed Beck-Chevaley condition} if $i$ and $j$ are smooth and  the natural transformation \eqref{eq:BC nat2} is an equivalence.
\end{definition}

\begin{prop}
\label{prop:base change2}
If the square \eqref{eq:BC nat2} is cartesian,  and $i$ and $j$ are smooth, then it verifies the opposed Beck-Chevaley condition.
\end{prop}
\begin{proof}
By adjunction, it is sufficient to show that the induced natural transformation
$$\Lb v_!\circ \Rb j^*\to \Rb i^* \circ \Lb u_!:\LCart(C^\sharp)\to \LCart(B)$$
is an equivalence. By construction, $\Lb v_!\circ \Rb j^*$ sends an object $E$ of $\LCart(C^\sharp)$ onto the fibrant replacement of $ v_!j^* E$. 
As $i$ is smooth, $\Rb i^* \circ \Lb u_!$ sends an object $E$ of $\LCart(C^\sharp)$ onto the fibrant replacement of $i^*u_! E$. As pullbacks are stable under composition, we have $i^*u_!\sim v_!j^*$.
\end{proof}

\begin{prop}
\label{prop:BC condition 2}
Let $i:I\to A^\sharp$ be a smooth morphism and $j:C^\sharp\to D^\sharp$ any morphism. The square
\[\begin{tikzcd}
	{ C^\sharp\times I} & { C^\sharp\times A^\sharp} \\
	{D^\sharp\times I} & { D^\sharp\times A^\sharp}
	\arrow[from=1-1, to=2-1]
	\arrow[from=1-2, to=2-2]
	\arrow[from=1-1, to=1-2]
	\arrow[from=2-1, to=2-2]
\end{tikzcd}\]
verifies the opposed Beck-Chevaley condition.
\end{prop}
\begin{proof}
As $id_{C^\sharp}\times i$ and $id_{D^\sharp}\times i$ are pullbacks of $i$, they are smooth. The result  is then follows from proposition \ref{prop:base change2}.
\end{proof}

\subsection{The $\Wcard$-small $\io$-category of $\V$-small left cartesian fibrations}

 Let $I$ be a marked $\io$-category, and $a$ a globular sum. We recall that the pullback along the canonical projection $\pi_a:I\times a^\flat\to I$ induces an adjunction
\[\begin{tikzcd}
	{{\pi_a}_!:\ocat_{/I\times a^\flat}} & {\ocatm_{/I}:{\pi_a}^*}
	\arrow[""{name=0, anchor=center, inner sep=0}, shift left=2, from=1-1, to=1-2]
	\arrow[""{name=1, anchor=center, inner sep=0}, shift left=2, from=1-2, to=1-1]
	\arrow["\dashv"{anchor=center, rotate=-90}, draw=none, from=0, to=1]
\end{tikzcd}\]
\begin{lemma}
\label{lemma:to show fully faithfullness1}
Let $E$ and $F$ be two objects of $\ocatm_{/I}$ and $\psi:\pi_{[a,1]}^*E\to \pi_{[a,1]}^*F$ an equivalence.
The exists a unique commutative diagram of shape
\[\begin{tikzcd}
	{(\pi_{[a,1]})_!\pi_{[a,1]}^*E} & {(\pi_{[a,1]})_!\pi_{[a,1]}^*F} \\
	E & F
	\arrow["{(\pi_{[a,1]})_!\psi}", from=1-1, to=1-2]
	\arrow["\epsilon", from=1-2, to=2-2]
	\arrow["\epsilon"', from=1-1, to=2-1]
	\arrow["\phi"', dashed, from=2-1, to=2-2]
\end{tikzcd}\]
Moreover, the arrow $\phi$ is an equivalence.
\end{lemma}
\begin{proof}

We denote by $p:X\to I$ and $q:Y\to I$ the morphisms corresponding to $E$ and $F$.
Unfolding the definition, we have to show the existence and uniqueness of lifts in the diagram
\begin{equation}
\label{eq:square to show fully faithfulness}
\begin{tikzcd}
	{X\times [a,1]^\flat} & {Y\times [a,1]^\flat} & Y \\
	X && I
	\arrow["{\dom \psi}", from=1-1, to=1-2]
	\arrow[from=1-1, to=2-1]
	\arrow[from=1-2, to=1-3]
	\arrow["q", from=1-3, to=2-3]
	\arrow["\alpha"{description}, dashed, from=2-1, to=1-3]
	\arrow["p"', from=2-1, to=2-3]
\end{tikzcd}
\end{equation}
and that the unique lift is an equivalence.

Remark that $[a,1]\to 1$ in a trivialization in the sense of definition \ref{defi:trivialization}. This morphism is then an epimorphism according to proposition \ref{prop:inteligent trucatio and a particular colimit}. As $(\uvar)^{\flat}$ and $(\uvar)\times A$ are left adjoints, they preserve epimorphisms, and $X\times [a,1]^\flat\to X$ is then an epimorphism. The dual of proposition \ref{prop:mono 2} then implies that  the space of lifts of the square \eqref{eq:square to show fully faithfulness} is empty or contractible. 

Suppose now that we already know that lifts in squares of shape \eqref{eq:square to show fully faithfulness} always exist. By the unicity of lifts, an inverse to a lift $\alpha$ in this square is given by a lift $\beta$ in the square
\[\begin{tikzcd}
	{Y\times [a,1]^\flat} & {X\times [a,1]^\flat} & X \\
	Y && I
	\arrow["{\dom \phi}", from=1-1, to=1-2]
	\arrow[from=1-1, to=2-1]
	\arrow[from=1-2, to=1-3]
	\arrow["p", from=1-3, to=2-3]
	\arrow["\beta"{description}, from=2-1, to=1-3]
	\arrow["q"', from=2-1, to=2-3]
\end{tikzcd}\]
where $\phi:E\to F$ is an inverse of $\psi$. As a consequence, if lifts exist, they are equivalences.

It then remains to show that there exists a lift. We will denote also $[a,1]$ the canonical morphism $[a,1]\to 1$.
Let $\phi$ be an inverse of $\psi$. We denote $\tilde{\psi}: E\times [a,1]\to F$ and $\tilde{\phi}:F\times [a,1]\to E$ the morphisms induced by the adjunction from $\psi$ and $\phi$. For $\epsilon\in\{0,1\}$, we denote by ${\psi}_{\{\epsilon\}}: E\times \{\epsilon\}\to F$ and ${\phi}_{\{\epsilon\}}:F\times \{\epsilon\}\to E$ the induced morphisms. In particular, $\psi_{\epsilon}$ and $\phi_{\epsilon}$ are inverses of each other.

By construction, we have a commutative diagram
\[\begin{tikzcd}
	{E\times [a,1]^\flat\times [a,1]^\flat} & {F\times[a,1]^\flat} \\
	{E\times[a,1]^\flat} & E
	\arrow["{\tilde{\psi}\times[a,1]^\flat}", from=1-1, to=1-2]
	\arrow["{\tilde{\phi}}", from=1-2, to=2-2]
	\arrow["{X\times \triangledown}", from=2-1, to=1-1]
	\arrow["\pi"', from=2-1, to=2-2]
\end{tikzcd}\]
where $\triangledown$ is the diagonal. This corresponds to a commutative diagram in the $\iun$-category $[n]\mapsto \Hom_{\ocatm_{/I}}(E\times [a,n]^\flat,E)$:
\[\begin{tikzcd}
	{id_E\sim\phi_0\circ \psi_0} & {\phi_0\circ \psi_1} \\
	{\phi_1\circ \psi_0} & {\phi_1\circ \psi_1\sim id_E}
	\arrow["{\phi_0\circ_0\tilde\psi}", from=1-1, to=1-2]
	\arrow["{\tilde\phi\circ_0\psi_0}"', from=1-1, to=2-1]
	\arrow["{id_{id_E}}"{description}, from=1-1, to=2-2]
	\arrow["{\tilde\phi\circ_0\psi_1}", from=1-2, to=2-2]
	\arrow["{\phi_1\circ_0\psi}"', from=2-1, to=2-2]
\end{tikzcd}\]
As $\phi_0\circ\psi_0\sim \text{id}$ and $\phi_1\circ\psi_1\sim \text{id}$, the previous diagram induces two commutative triangles in the $\iun$-category $[n]\mapsto \Hom_{\ocatm_{/I}}(E\times [a,n]^\flat,F)$:
\[\begin{tikzcd}
	{\psi_1} &&& {\psi_0} & {\psi_1} \\
	{\psi_0} & {\psi_1} &&& {\psi_0}
	\arrow["{\psi_1\circ_0\tilde\phi\circ_0\psi_0}"', from=1-1, to=2-1]
	\arrow["{id_{\psi_1}}", from=1-1, to=2-2]
	\arrow["\tilde\psi", from=1-4, to=1-5]
	\arrow["{id_{\psi_0}}"', from=1-4, to=2-5]
	\arrow["{\psi_0\circ_0\tilde\phi\circ_0\psi_1}", from=1-5, to=2-5]
	\arrow["\tilde\psi"', from=2-1, to=2-2]
\end{tikzcd}\]
Viewed as a $1$-cell of $[n]\mapsto \Hom_{\ocatm_{/I}}(E\times [a,n]^\flat,F)$, $\tilde{\psi}$ is then an equivalence. This implies the existence of lifts in the following diagram
\[\begin{tikzcd}[cramped]
	{[a,1]^\flat} & {\Map(E,F)} \\
	1
	\arrow["\tilde\psi", from=1-1, to=1-2]
	\arrow[from=1-1, to=2-1]
	\arrow["\alpha"', dashed, from=2-1, to=1-2]
\end{tikzcd}\]
which corresponds to a lift in the square \eqref{eq:square to show fully faithfulness}.
\end{proof}

\begin{lemma}
\label{lemma:to show fully faithfullness2}
Let $I$ be a marked $\io$-category and $a$ a globular form. 
The canonical morphisms of $\infty$-groupoids:
$$\pi_{[a,1]}^*:\tau_0\ocatm_{/I}\to \tau_0\ocatm_{/I\times [a,1]^\flat}$$
$$\pi_{[a,1]}^*:\tau_0 \Arr(\ocatm_{/I})\to \tau_0\Arr(\ocatm_{/I\times [a,1]^\flat})$$
are fully faithful.
\end{lemma}
\begin{proof}
Let $E$ and $F$ be two objects of $\ocatm_{/I}$. The morphism 
$$\Hom_{\tau_0\ocatm_{/I}}(E,F) \to \Hom_{\tau_0\ocatm_{/I\times[a,1]^\flat}}(\pi_{[a,1]}^*E,\pi_{[a,1]}^*F) $$ has an inverse that sends $\psi:\pi_{[a,1]}^*E\to \pi_{[a,1]}^*F$ onto the morphism $\phi:E\to F$ appearing in the commutative square provided by lemma \ref{lemma:to show fully faithfullness1}.

The second assertion is demonstrated similarly.
\end{proof}

\begin{prop}
\label{prp:to show fully faithfullness3}
Let $I$ be a marked $\io$-category and $a$ a globular form. We denote by $\pi_a:I\times a^\flat\to I$ the canonical projection.
The canonical morphisms of $\infty$-groupoids:
$$\Rb{\pi_a}^*:\tau_0\LCartc(I)\to \tau_0\LCartc(I\times a^\flat)$$
$$\Rb{\pi_a}^*:\tau_0 \Arr(\LCartc(I))\to \tau_0\Arr(\LCartc(I\times a^\flat))$$
are fully faithful.
\end{prop}
\begin{proof}
Let $[\textbf{b},n]:= a$. Considere first the adjunction:
\[\begin{tikzcd}
	{\LCartc(I\times [b_0,1]^\flat)\times_{\LCartc(I)}...\times_{\LCartc(I)}\LCartc(I\times [b_{n-1},1]^\flat)} \\
	{\LCartc(I^\flat\times [\textbf{b},n])}
	\arrow[""{name=0, anchor=center, inner sep=0}, shift left=2, from=2-1, to=1-1]
	\arrow[""{name=1, anchor=center, inner sep=0}, "{\colim_I}", shift left=2, from=1-1, to=2-1]
	\arrow["\dashv"{anchor=center, rotate=-180}, draw=none, from=1, to=0]
\end{tikzcd}\]
The corollary \ref{cor:fib over a colimit} implies that the counit of this adjunction is an equivalence.
This implies that the right adjoint
$$\LCartc(I^\flat\times [\textbf{b},n])\to \LCartc(I\times [b_0,1]^\flat)\times_{\LCartc(I)}...\times_{\LCartc(I)}\LCartc(I\times [b_{n-1},1]^\flat)$$ is fully faithful. 
By right cancellation and using the fact that fully faithful functors are stable by limits, it is sufficient to show that for any $k<n$, 
$$\Rb{\pi_{[b_i,1]}}^*:\tau_0\LCartc(I)\to \tau_0\LCartc(I\times [b_k,1]^\flat)$$
is fully faithful. 
Moreover, for any such $k$, we have a commutative square
\[\begin{tikzcd}
	{\tau_0\LCartc(I)} & {\tau_0\LCartc(I\times [b_k,1]^\flat)} \\
	{\tau_0\ocatm_{/I}} & {\tau_0\ocatm_{/I\times [b_k,1]^\flat}}
	\arrow["{\Rb{\pi_{[b_k,1]}}^*}", from=1-1, to=1-2]
	\arrow[from=1-1, to=2-1]
	\arrow[from=1-2, to=2-2]
	\arrow["{{\pi_{[b_k,1]}}^*}"', from=2-1, to=2-2]
\end{tikzcd}\]
whose vertical morphisms are fully faithful by construction. The results the follows from lemma \ref{lemma:to show fully faithfullness2} by right cancellation.

The second assertion is demonstrated similarly.

\end{proof}

\begin{definition}
For an $\io$-category $A$ and a globular sum $a$, we define $\LCart(A^\sharp;a)$ as the full sub $\iun$-category of $\LCartc(A^\sharp\times a^\flat)$ whose objects are of shape $E\times id_a^\flat$ for $E$ an object of $\LCart(A^\sharp)$. The proposition \ref{prp:to show fully faithfullness3} implies that the canonical morphism 
$$\tau_0\LCart(A^\sharp)\to \tau_0\LCart(A^\sharp;a)$$
is an equivalence of $\infty$-groupoid.
\end{definition}

\begin{definition}
 We define \wcnotation{$\uLCart(A^\sharp)$}{(lcart@$\uLCart(\uvar)$} as the $\Wcard$-small $\io$-category whose value on $[a,n]$ is given by:
$$\uLCart(A^\sharp)([a,n]):=\Hom([n],\LCart(A^\sharp;a)).$$
For a marked $\io$-category $I$ and a globular sum $a$, we define similarly $\LCartc(I;a)$ as the full sub $\iun$-category of $\LCartc(I\times a^\flat)$ whose objects are of shape $E\times id_a^\flat$ for $E$ an object of $\LCartc(I)$. The proposition \ref{prp:to show fully faithfullness3} implies that the canonical morphism 
$$\tau_0\LCartc(I)\to \tau_0\LCartc(I;a)$$
is an equivalence of $\infty$-groupoid. We define \wcnotation{$\uLCartc(I)$}{(lcartc@$\uLCartc(\uvar)$} as the $\Wcard$-small $\io$-category whose value on $[a,n]$ is given by:
$$\uLCartc(I)([a,n]):=\Hom([n],\LCartc(I;a)).$$
These two definitions are compatible as we have an equivalence between $\uLCartc(A^\sharp)$ and $\uLCart(A^\sharp)$.
\end{definition}
\begin{remark}
 Let $E$ and $F$ be two objects of $\uLCartc(I)$, and $a$ a globular sum. Remark that a morphism $[a,1]\to \uLCartc(I)$ corresponds to a morphism $E\times id_a^\flat\to F\times id_a^\flat$, and so to a morphism $X\times a^\flat\to Y$ over $I$ where $X$ and $Y$ are respectively the domain of $E$ and $F$. We then have an equivalence: 
\begin{equation}
\hom_{\uLCart(I)}(E,F)\sim \Map(E,F). 
\end{equation}
This then implies that $\LCartc(I)$ is locally $\V$-small.
\end{remark}

\begin{construction}
Let $i:I\to J$ be a morphism between marked $\io$-category, $a$ a globular sum, and $p$ a classified left cartesian fibration over 
$a^\flat\times J$. Remark that we have a canonical equivalence $$\Rb (i\times id_{a^\flat})^*(p\times id_{a^\flat})\sim (\Rb i^*p)\times id_{a^\flat}$$ natural in $a:\Theta^{op}$. The functor $\Rb (i\times id_{a^\flat})^*$ then restricts to a functor 
$$(i_a)^*:\LCartc(J;a)\to \LCartc(I;a)$$
natural in $a:\Theta^{op}$, and then to a morphism of $\io$-categories:
\begin{equation}
\label{eq:i pullback}
i^*:\uLCartc(J)\to \uLCartc(I)
\end{equation}
\index[notation]{(f5@$f^*:	\uLCartc(J)\to \uLCartc(I)$}
\end{construction}

\begin{construction}
Let $i:I\to A^\sharp$ be a morphism between marked $\io$-categories. We are now willing to construct a morphism $i_!:\uLCartc(I)\to \uLCart(A^\sharp)$ which corresponds to $\Lb i_!:\LCartc(I)\to \LCart(A^\sharp)$ on the maximal sub $\iun$-category.

We denote by $E_0$ and $E_1$ the $\iun$-categories fitting in the cartesian square: 
\[\begin{tikzcd}
	{E_0} & \Theta & {E_1} & \Theta \\
	{\Arr^{fib}(\ocatm)} & \ocatm & {\Arr^{fib}(\ocatm)} & \ocatm
	\arrow["\codom"', from=2-1, to=2-2]
	\arrow["{\psi_0}", from=1-2, to=2-2]
	\arrow[from=1-1, to=2-1]
	\arrow[from=1-1, to=1-2]
	\arrow["\lrcorner"{anchor=center, pos=0.125}, draw=none, from=1-1, to=2-2]
	\arrow["{\psi_1}", from=1-4, to=2-4]
	\arrow[from=1-3, to=2-3]
	\arrow["\codom"', from=2-3, to=2-4]
	\arrow[from=1-3, to=1-4]
	\arrow["\lrcorner"{anchor=center, pos=0.125}, draw=none, from=1-3, to=2-4]
\end{tikzcd}\]
where $\Arr^{fib}(\ocatm)$ is the full sub $\iun$-category of $\Arr(\ocatm)$ whose objects are classified left cartesian fibrations, and where $\psi_0$ and $\psi_1$ send respectively $a$ on $I\times a^\flat$ and $A^\sharp\times a^\flat$. 
The morphism $i$ induces an adjunction
\begin{equation}
\label{eq:adj i pull}
\begin{tikzcd}
	{i_!:E_0} & {E_1:i^*}
	\arrow[""{name=0, anchor=center, inner sep=0}, shift left=2, from=1-1, to=1-2]
	\arrow[""{name=1, anchor=center, inner sep=0}, shift left=2, from=1-2, to=1-1]
	\arrow["\dashv"{anchor=center, rotate=-90}, draw=none, from=0, to=1]
\end{tikzcd}
\end{equation}
where the left adjoint sends a left cartesian fibration $p$ over $I\times a^\flat$ to $\Lb (i\times id_a^\flat)_!p$ and the right adjoint sends a left cartesian fibration $q$ over $A^\sharp\times a^\flat$ to $\Rb (i\times id_a^\flat)^* q$.
\end{construction}

\begin{lemma}
\label{lemma:technical lemma i pull}
Let $p$ be a left cartesian fibration over $I^\sharp$. We have an equivalence $$\Lb (i\times id_{a^\flat})_!(p\times id_{a^\flat})\sim (\Lb i_! p)\times id_{a^\flat}.$$
Let $q$ be a left cartesian fibration over $A^\sharp$. We have an equivalence $$\Rb (i\times id_{a^\flat})^*(q\times id_{a^\flat})\sim (\Rb i^* q)\times id_{a^\flat}.$$
\end{lemma}
\begin{proof}
The first assertion is straightforward as the cartesian product with $a^\flat$ preserves initial morphisms and left cartesian fibrations. The second assertion is obvious.
\end{proof}

\begin{construction}
We define $\tilde{E_0}$ and $\tilde{E_1}$ as the full sub $\iun$-categories of $E_0$ and $E_1$ whose objects are respectively of shape $p\times id_a^\flat$ and $q\times id_a^\flat$ for $p$ and $q$ classified left cartesian fibrations over $I$ and $A^\sharp$.
The last lemma implies that \eqref{eq:adj i pull} restricts to an adjunction
\begin{equation}
\label{eq:adj i pull2}
\begin{tikzcd}
	{i_!:\tilde{E_0}} & {\tilde{E_1}:i^*}
	\arrow[""{name=0, anchor=center, inner sep=0}, shift left=2, from=1-1, to=1-2]
	\arrow[""{name=1, anchor=center, inner sep=0}, shift left=2, from=1-2, to=1-1]
	\arrow["\dashv"{anchor=center, rotate=-90}, draw=none, from=0, to=1]
\end{tikzcd}
\end{equation}
\end{construction}

\begin{lemma}
\label{lemma:technical lemma i pull2} $~$
\begin{enumerate}
\item
Let $q\to q'$ be a morphism in $\tilde{E_0}$ corresponding to a cartesian square. The induced morphism $i_!(q)\to i_!(q')$ also corresponds to a cartesian square. 
\item
Let $q\to q'$ be a morphism in $\tilde{E_1}$ corresponding to a cartesian square. The induced morphism $i^*(q)\to i^*(q')$ also corresponds to a cartesian square. 
\end{enumerate}
\end{lemma}
\begin{proof}
Cartesian morphisms in $\tilde{E_0}$ corresponds to cartesian squares
\[\begin{tikzcd}[cramped]
	{X\times a^{\flat}} & {X\times b^{\flat}} \\
	{I\times a^{\flat}} & {I\times b^{\flat}}
	\arrow[from=1-1, to=1-2]
	\arrow["{p\times id_a^\flat}"', from=1-1, to=2-1]
	\arrow["{p\times id_b^\flat}", from=1-2, to=2-2]
	\arrow[from=2-1, to=2-2]
\end{tikzcd}\]
and cartesian morphisms in $\tilde{E_1}$ corresponds to cartesian squares
\[\begin{tikzcd}[cramped]
	{Y\times a^{\flat}} & {Y\times b^{\flat}} \\
	{A^\sharp\times a^{\flat}} & {A^\sharp\times b^{\flat}}
	\arrow[from=1-1, to=1-2]
	\arrow["{q\times id_a^\flat}"', from=1-1, to=2-1]
	\arrow["{q\times id_b^\flat}", from=1-2, to=2-2]
	\arrow[from=2-1, to=2-2]
\end{tikzcd}\]
The results directly follows from lemma \ref{lemma:technical lemma i pull}.
\end{proof}
\begin{construction}
The canonical projection $\tilde{E_0}\to \Theta$ and $\tilde{E_1}\to \Theta$ are Grothendieck fibrations in $\iun$-categories. The cartesian lifting is given by cartesian squares. Moreover, their Grothendieck deconstructions correspond respectively to 
$a\mapsto \LCartc(I;a)$ and $a\mapsto \LCart(A^\sharp;b)$. As both $i_!$ and $i^*$ preserve cartesian lifting according to lemma \ref{lemma:technical lemma i pull2}, they induce by Grothendieck deconstruction a family of adjunction
\begin{equation}
\label{eq:adj i pull3}
\begin{tikzcd}
	{(i_a)_!:\LCartc(I;a)} & {\LCart(A^\sharp;a):(i_a)^*}
	\arrow[""{name=0, anchor=center, inner sep=0}, shift left=2, from=1-1, to=1-2]
	\arrow[""{name=1, anchor=center, inner sep=0}, shift left=2, from=1-2, to=1-1]
	\arrow["\dashv"{anchor=center, rotate=-90}, draw=none, from=0, to=1]
\end{tikzcd}
\end{equation}
natural in $a:\Theta^{op}$. The family of functors $(i_a)_!$ then induces a morphism of $\io$-category\index[notation]{(f4@$f_{\mbox{$\exclam$}}:\uLCartc(I)\to \uLCart(A^\sharp)$}
\begin{equation}
\label{eq:i pull}
i_!:\uLCartc(I)\to \uLCart(A^\sharp)
\end{equation}
which corresponds to $\Lb i_!:\LCartc(I)\to \LCart(A^\sharp)$ on the maximal sub $\iun$-category.
The unit and counit of adjunction \eqref{eq:adj i pull3} induce morphisms
\begin{equation}
\label{eq:i pull unit an counit}
\mu:id\to i^*i_!~~~~ \epsilon:i_!i^*\to id
\end{equation}
and equivalences
$(\epsilon\circ_0 i_!)\circ_1(i_!\circ_0 \mu) \sim id_{i_!}$ and $(i^*\circ_0 \epsilon)\circ_1 (\mu \circ_0 i^* )\sim id_{i^*}$.
\end{construction}

\begin{construction} Let $j:C^\sharp\to D^\sharp$ be a morphism between $\io$-categories. We claim that the commutative square 
\[\begin{tikzcd}
	{\uLCart(D^\sharp\times A^\sharp)} & {\uLCartc(D^\sharp\times I)} \\
	{\uLCart(C^\sharp\times A^\sharp)} & {\uLCartc(C^\sharp\times I)}
	\arrow["{(j\times id_{I})^*}", from=1-2, to=2-2]
	\arrow["{( id_{D^\sharp}\times i)^*}", from=1-1, to=1-2]
	\arrow["{( id_{C^\sharp}\times i)^*}"', from=2-1, to=2-2]
	\arrow["{(j\times id_{A^\sharp})^*}"', from=1-1, to=2-1]
\end{tikzcd}\]
induces a commutative square
\begin{equation}
\label{eq:commutative pull push}
\begin{tikzcd}
	{\uLCartc(D^\sharp\times I)} & {\uLCartc(C^\sharp\times I)} \\
	{\uLCart(D^\sharp\times A^\sharp)} & {\uLCart(C^\sharp\times A^\sharp)}
	\arrow["{( id_{D^\sharp}\times i)_!}"', from=1-1, to=2-1]
	\arrow["{(j\times id_{I})^*}", from=1-1, to=1-2]
	\arrow["{( id_{C^\sharp}\times i)_!}", from=1-2, to=2-2]
	\arrow["{(j\times id_{A^\sharp})^*}"', from=2-1, to=2-2]
\end{tikzcd}
\end{equation}
\textit{A priori}, the natural transformations \eqref{eq:i pull unit an counit} implies that this square commutes up the natural transformation:
$$
\begin{array}{rcl}
( id_{C^\sharp}\times i)_!\circ (j\times id_{I})^*&\to &( id_{C^\sharp}\times i)_! \circ (j\times id_{I})^* \circ ( id_{D^\sharp}\times i)^*\circ ( id_{D^\sharp}\times i)_!\\
&\sim &( id_{C^\sharp}\times i)_!\circ ( id_{C^\sharp}\times i)^*\circ (j\times id_{A^\sharp})^*\circ ( id_{D^\sharp}\times i)_!\\
&\to&(j\times id_{A^\sharp})^*\circ ( id_{D^\sharp}\times i)_!
\end{array}
$$
Proposition \ref{prop:BC condition} implies that this natural transformation is pointwise an equivalence, and so is globally an equivalence.
\end{construction}

\begin{construction} We now suppose that the morphism $i:I\to A^\sharp$ is smooth, and we are willing to construct a morphism $i_*:\uLCart(A^\sharp)\to \uLCart(I)$ which corresponds to $\Rb i_*:\LCartc(I)\to \LCart(A^\sharp)$ on the sub maximal $\iun$-categories.. 

As smooth morphisms are stable by pullback, the maps $i\times id_b^\flat$ are smooth for any $b:\Theta$. The morphism $i^*:E_0\to E_1$ then preserves colimits and fits into an adjunction
\begin{equation}
\label{eq:adj i pullstar}
\begin{tikzcd}
	{i^*:E_1} & {E_0:i_*}
	\arrow[""{name=0, anchor=center, inner sep=0}, shift left=2, from=1-1, to=1-2]
	\arrow[""{name=1, anchor=center, inner sep=0}, shift left=2, from=1-2, to=1-1]
	\arrow["\dashv"{anchor=center, rotate=-90}, draw=none, from=0, to=1]
\end{tikzcd}
\end{equation}
where the left adjoint sends a left cartesian fibration $p$ over $A^\sharp\times a^\flat$ to $ (i\times id_a^\flat)^*p$ and the right adjoint sends a left cartesian fibration $q$ over $I\times a^\flat$ to $\Rb (i\times id_a^\flat)_* q$.
\end{construction}
\begin{lemma}
\label{lemma:technical lemma i pullstar}
Let $p$ be a left cartesian fibration over $I$. We have an equivalence $$\Rb (i\times id_{a^\flat})_*(p\times id_{a^\flat})\sim (\Rb i_* p)\times id_{a^\flat}.$$
\end{lemma}
\begin{proof}
This is a straightforward calculation.
\end{proof}

\begin{construction}
We recall that $\tilde{E_0}$ and $\tilde{E_1}$ are defined as the full sub $\iun$-categories of $E_0$ and $E_1$ whose objects are respectively of shape $p\times id_a^\flat$ and $q\times id_a^\flat$ for $p$ and $q$ classified left cartesian fibrations over $I$ and $A^\sharp$.
The lemma \ref{lemma:technical lemma i pullstar} and the second assertion of lemma \ref{lemma:technical lemma i pull} imply that the adjunction \eqref{eq:adj i pullstar} restricts to an adjunction
\begin{equation}
\label{eq:adj i pull2star}
\begin{tikzcd}
	{i^*:\tilde{E_1}} & {\tilde{E_0}:i_*}
	\arrow[""{name=0, anchor=center, inner sep=0}, shift left=2, from=1-1, to=1-2]
	\arrow[""{name=1, anchor=center, inner sep=0}, shift left=2, from=1-2, to=1-1]
	\arrow["\dashv"{anchor=center, rotate=-90}, draw=none, from=0, to=1]
\end{tikzcd}
\end{equation}
\end{construction}

\begin{lemma}
\label{lemma:technical lemma i pull2star} 
Let $q\to q'$ be a morphism in $\tilde{E_0}$ corresponding to a cartesian square. The induced morphism $i_*(q)\to i_*(q')$ also corresponds to a cartesian square. 
\end{lemma}
\begin{proof}
The proof is similar to that of the lemma \ref{lemma:technical lemma i pull2}, using lemma \ref{lemma:technical lemma i pullstar} instead of  lemma \ref{lemma:technical lemma i pull}.
\end{proof}

\begin{construction}
The lemmas \ref{lemma:technical lemma i pull2} and \ref{lemma:technical lemma i pull2star} imply that the two adjoints of \eqref{eq:adj i pull2star} preserve the cartesian cells of the Grothendieck fibrations $\tilde{E_0}\to \Theta$ and $\tilde{E_1}\to \Theta$. These two adjoints then induce by
 Grothendieck deconstruction a family of adjunction
\begin{equation}
\label{eq:adj i pull3star}
\begin{tikzcd}
	{(i_a)^*:\LCart(A^\sharp;a)} & {\LCartc(I;a):(i_a)_*}
	\arrow[""{name=0, anchor=center, inner sep=0}, shift left=2, from=1-1, to=1-2]
	\arrow[""{name=1, anchor=center, inner sep=0}, shift left=2, from=1-2, to=1-1]
	\arrow["\dashv"{anchor=center, rotate=-90}, draw=none, from=0, to=1]
\end{tikzcd}
\end{equation}
natural in $a:\Theta^{op}$. The family of functors $(i_a)_*$ then induces a morphism of $\io$-categories\index[notation]{(f6@$f_*:\uLCartc(I)\to \uLCart(A^\sharp)$}
\begin{equation}
\label{eq:i push op}
i_*:\uLCartc(I)\to \uLCart(A^\sharp)
\end{equation}
which is equivalent to $\Rb i_*:\LCartc(I)\to \LCart(A^\sharp)$ on the sub maximal $\iun$-categories.
The unit and counit of adjunction \eqref{eq:adj i pull3star} induce natural transformation
\begin{equation}
\label{eq:i pull unit an counit op}
\mu: id\to i_*i^*~~~~ \epsilon:i^*i_*\to id
\end{equation}
and equivalences
$(\epsilon\circ_0 i^*)\circ_1(i^*\circ_0 \mu) \sim id_{i^*}$ and $(i_*\circ_0 \epsilon)\circ_1 (\mu \circ_0 i_* )\sim id_{i_*}$.
\end{construction}

\begin{construction} Let $j:C^\sharp\to D^\sharp$ be a morphism between $\io$-categories. We claim that the commutative square 
\[\begin{tikzcd}
	{\uLCart(D^\sharp\times A^\sharp)} & {\uLCartc(D^\sharp\times I)} \\
	{\uLCart(C^\sharp\times A^\sharp)} & {\uLCartc(C^\sharp\times I)}
	\arrow["{(j\times id_{I})^*}", from=1-2, to=2-2]
	\arrow["{( id_{D^\sharp}\times i)^*}", from=1-1, to=1-2]
	\arrow["{( id_{C^\sharp}\times i)^*}"', from=2-1, to=2-2]
	\arrow["{(j\times id_{A^\sharp})^*}"', from=1-1, to=2-1]
\end{tikzcd}\]
induces a commutative square
\begin{equation}
\label{eq:commutative pull push op}
\begin{tikzcd}
	{\uLCartc(D^\sharp\times I)} & {\uLCart(D^\sharp\times A^\sharp)} \\
	{\uLCartc(C^\sharp\times I)} & {\uLCart(C^\sharp\times A^\sharp)}
	\arrow["{( id_{D^\sharp}\times i)_*}", from=1-1, to=1-2]
	\arrow["{(j\times id_{I})^*}"', from=1-1, to=2-1]
	\arrow["{( id_{C^\sharp}\times i)_*}"', from=2-1, to=2-2]
	\arrow["{(j\times id_{A^\sharp})^*}", from=1-2, to=2-2]
\end{tikzcd}
\end{equation}

\textit{A priori}, the natural transformations \eqref{eq:i pull unit an counit op} implies that this square commutes up the natural transformation:
$$
\begin{array}{rcl}
(j\times id_{A^\sharp})^*\circ (id_{D^\sharp}\times i)_*&\to & (id_{C^\sharp}\times i)_*\circ(id_{C^\sharp}\times i)^*\circ(j\times id_{A^\sharp})^*\circ (id_{D^\sharp}\times i)_*\\
&\sim &(id_{C^\sharp}\times i)_*\circ(j\times id_I)^*\circ(id_{D^\sharp}\times i)^*\circ (id_{D^\sharp}\times i)_*\\
&\to &(id_{C^\sharp}\times i)_*\circ(j\times id_I)^*
\end{array}
$$
Proposition \ref{prop:BC condition 2} implies that this natural transformation is pointwise an equivalence, and so is globally an equivalence.
\end{construction}

%
%

\chapter{The $\io$-category of $\io$-categories}	

\minitoc
\vspace{1cm}
%
%
%
%
%
%
%
%
%
%
%

This chapter aims to establish generalizations of the fundamental categorical constructions to the $\io$ case. In this new theory, the Gray product plays an essential role. Firstly, it allows the definition of the notion of \textit{lax transformation}:

\begin{idefi}
Let $f, g: A \to B$ be two morphisms between $\io$-categories. A \textit{lax transformation} between $f$ and $g$ is given by a morphism
\[ \psi: A \otimes [1] \to B, \]
whose restriction to $A \otimes \{0\} \sim A$ is equivalent to $f$, and whose restriction to $A \otimes \{1\} \sim A$ is equivalent to $g$.

We can then show that a lax transformation corresponds to the following data:
\begin{enumerate}
\item[$-$] for every object $a$ in $A$, a morphism $f(a) \to g(a)$,
\item[$-$] for every $1$-cell $a \to b$, a $2$-cell in $B$ fitting into the following diagram:
\[
\begin{tikzcd}
	{f(a)} & {g(a)} \\
	{f(b)} & {g(b)}
	\arrow[from=2-1, to=2-2]
	\arrow[from=1-2, to=2-2]
	\arrow[from=1-1, to=2-1]
	\arrow[from=1-1, to=1-2]
	\arrow[shorten <=6pt, shorten >=6pt, Rightarrow, from=1-2, to=2-1]
\end{tikzcd}
\]
\item[$-$] for every $n$-cell in $A$, an $(n+1)$-cell in $B$ fitting into a more complex version of the above diagram,
\item[$-$] multiple coherences that all these cells satisfy.
\end{enumerate} 
The usefulness of the Gray product lies in the fact that it compactly encodes all these data and coherences.

\end{idefi}

The notion of lax transformation allows us to state the Grothendieck Lax construction.
\begin{itheorem}[\ref{theo:lcartc et ghom}]
\label{theo:lax gr}
Let $\uni$ be the $\io$-category of small $\io$-categories, and $A$ an $\io$-category. There is an equivalence
\[ \int_A: \gHom(A, \uni) \sim \uLCart^c(A) \]
where $\gHom(A, \uni)$ is the $\io$-category of morphisms from $A$ to $\uni$, with $1$-cells being the lax transformations $A \otimes [1] \to \uni$, and $\uLCart^c(A)$ is the $\io$-category of left cartesian fibrations over $A$, with $1$-cells being morphisms that do not necessarily preserve cartesian liftings.
\end{itheorem}

We also obtain a very precise construction of the functor $\int_A$. Given a functor $f: A \to \uni$, the left cartesian fibration $\int_A f$ is a colimit (calculated in $\ocatm_{/A}$) of a simplicial object whose value at $n$ is of the form
\[ \coprod_{x_0, \ldots, x_n: A_0} X(x_0) \times \hom_A(x_0, \ldots, x_n) \times A_{x_n/} \to A, \]
where $A_{x/}$ is the \textit{lax slice} of $A$ over $x$.
This formula is similar to the one given in \cite{Gepner_Lax_colimits_and_free_fibration} for $\iun$-categories, and to the one given in \cite{Warren_the_strict_omega_groupoid_interpretation_of_type_theory} for strict $\omega$-categories.

The result we provide is actually stronger than previously stated, as it allows us to choose "to what extent" the transformations between functors $A \to \uni$ are lax, which induces "to what extent" the morphisms between the fibrations preserve cartesian liftings. The result we have presented corresponds to the case where the transformations are "totally lax". Applying it to the case where the transformations between functors are "not lax at all" - that is, are natural transformations - we obtain the following corollary:

\begin{icor}[\ref{cor:lcar et hom}]
\label{cor:grd}
Let $A$ be an $\io$-category. There is an equivalence
\[ \uHom(A, \uni) \sim \uLCart(A) \]
where $\uHom(A, \uni)$ is the $\io$-category of morphisms from $A$ to $\uni$, with $1$-cells being the natural transformations $A \times [1] \to \uni$, and $\uLCart(A)$ is the $\io$-category of left cartesian fibrations over $A$, with $1$-cells being morphisms that preserve cartesian liftings.
\end{icor}

In the $(\infty, n)$-categorical case, the equivalence between  $\uHom(A, \uni)$ and $\uLCart(A)$ given in Corollary \ref{cor:lcar et hom} was already proven by Nuiten in \cite{Nuiten_on_straightening_for_segal_spaces} and by Rasekh in \cite{Rasekh_yoneda_lemma_for_simplicial_spaces}. In the $(\infty, 1)$-case, the equivalence  between  $\gHom(A, \uni)$ and $\uLCartc(A)$ was already proven by Haugseng-Hebestreit-Linskens-Nuiten in \cite{haugseng2023lax}.

\vspace{1cm}

Given a locally small $\io$-category $C$, we construct the Yoneda embedding $y: C \to \widehat{C}$ where $\widehat{C} := \uHom(C^t, \uni)$. We then prove the Yoneda lemma:

\begin{itheorem}[\ref{theo:Yoneda lemma}]
Let $C$ be a locally $\U$-small $\io$-category. 
The Yoneda embedding $C \to \widehat{C}$ is fully faithful. Furthermore,
there is an equivalence between the functor
\[ \hom_{\w{C}}(y_{\uvar}, \uvar): C^t \times \w{C} \to \uni \]
and the functor 
\[ \ev: C^t \times \w{C} \to \uni. \]

Given an object $c$ of $C$, the induced equivalence on fibers:
\[ \hom_{\widehat{C}}(y_c, y_c) \sim \hom_C(c, c) \]
sends $\{id_{y_c}\}$ to $\{id_c\}$.
\end{itheorem}

Having the Yoneda lemma at our disposal with all the correct functorialities is an extremely powerful tool for developing the theory of $\io$-categories, as it encodes many and complex coherences.

In the $(\infty,n)$-categorical context, the  equivalence, non-functorial in $c$, between the functors $\hom(y_c, \uvar): \w{C} \to \uni$ and $\ev(c, \uvar): \w{C} \to \uni$ for any object $c$ of $C$ is demonstrated in \cite{Rasekh_yoneda_lemma_for_simplicial_spaces}, \cite{Hinich_colimit_in_enriched_infini_categories}, and \cite{Heine_an_equivalence_between_enricherd_infini_categorories_and_categories_with_weak_action}.

\vspace{1cm}

We then define the notion of a \textit{lax colimit}:
\begin{idefi}
Let $f: I \to C$ be a morphism between $\io$-categories, and $tI$ a marking on $I$.

A \textit{lax colimit} for $f$ relative to $(tI_n)_{n>0}$ is the universal data of
\begin{enumerate}
\item[$-$] an object $\laxcolim_I F$,
\item[$-$] for every $1$-cell $i: a \to b$ in $I$, a diagram
\[\begin{tikzcd}
	{} & {F(b)} \\
	{F(a)} & {\laxcolim_I F}
	\arrow["{F(i)}", curve={height=-30pt}, from=2-1, to=1-2]
	\arrow[from=2-1, to=2-2]
	\arrow[shorten <=8pt, shorten >=8pt, Rightarrow, from=1-2, to=2-1]
	\arrow[draw=none, from=1-1, to=2-1]
	\arrow[from=1-2, to=2-2]
\end{tikzcd}\]
where the $2$-cell present is an equivalence if $i$ is in $tI_1$.
\item[$-$] for every $2$-cell $u: i \to j$, a diagram
\[\begin{tikzcd}
	& {F(b)} & {} & {F(b)} \\
	{F(a)} & {\laxcolim_I F} & {F(a)} & {\laxcolim_I F}
	\arrow[""{name=0, anchor=center, inner sep=0}, "{F(i)}"{description}, from=2-1, to=1-2]
	\arrow[""{name=1, anchor=center, inner sep=0}, from=2-1, to=2-2]
	\arrow[from=1-2, to=2-2]
	\arrow[""{name=2, anchor=center, inner sep=0}, from=1-2, to=2-2]
	\arrow[""{name=3, anchor=center, inner sep=0}, "{F(j)}", curve={height=-30pt}, from=2-1, to=1-2]
	\arrow["{F(j)}", curve={height=-30pt}, from=2-3, to=1-4]
	\arrow[from=2-3, to=2-4]
	\arrow[from=1-4, to=2-4]
	\arrow[shorten <=8pt, shorten >=8pt, Rightarrow, from=1-4, to=2-3]
	\arrow[""{name=4, anchor=center, inner sep=0}, draw=none, from=1-3, to=2-3]
	\arrow[shift right=2, shorten <=12pt, shorten >=12pt, Rightarrow, from=2, to=1]
	\arrow[shorten <=4pt, shorten >=4pt, Rightarrow, from=3, to=0]
	\arrow[shift left=0.7, shorten <=14pt, shorten >=16pt, no head, from=2, to=4]
	\arrow[shorten <=14pt, shorten >=14pt, from=2, to=4]
	\arrow[shift right=0.7, shorten <=14pt, shorten >=16pt, no head, from=2, to=4]
\end{tikzcd}\]
where the $3$-cell present is an equivalence if $u$ is in $tI_2$,
\item[$-$] etc.
\end{enumerate}
Varying $tI$ thus allows us to adjust the "laxness" of the universal property that this object must satisfy.
\end{idefi}

We conclude this chapter by establishing generalizations of the standard results in category theory to the $\io$-categorical case, including the characterization of presheaves as completion by lax colimits, the calculation of lax Kan extensions using slices, the relationships between duality and (co)lax limits, as well as various characterizations and properties of adjunctions.

\paragraph{Cardinality hypothesis.}
We fix during this chapter three Grothendieck universes $\U \in \V\in\Wcard$, such that $\omega\in \U$. 
All defined notions depend on a choice of cardinality. When nothing is specified, this corresponds to the implicit choice of the cardinality $\V$.
We denote by $\Set$ the $\Wcard$-small $1$-category of $\V$-small sets, $\igrd$ the $\Wcard$-small $\iun$-category of $\V$-small $\infty$-groupoids and $\icat$ the $\Wcard$-small $\iun$-category of $\V$-small $\iun$-categories. 

\section{Lax Grothendieck construction}
\label{section:Univalence}
\subsection{Internal category}
\begin{definition} For $X$ an object of $\iPsh{\Theta}$ and $K$ a simplicial $\infty$-groupoid, we define the simplicial object $\langle X, K\rangle$ of $\ocat$ whose value on $n$ is given by \index[notation]{((g20@$\langle a,n\rangle$}
$$\langle X,K\rangle_n := X\times K_n$$
If $K$ is the representable $[n]$, this object is simply denoted by $\langle X,n\rangle$.
We also define the following set of morphism of $\iPsh{\Delta\times \Theta}$:\sym{(t@$\T$}
$$\T:= \{\langle a,f\rangle,~ a\in \Theta, f\in \mbox{$\W_1$}\} \cup \{\langle g,n\rangle,~ g\in \W, [n]\in \Delta\}$$
\end{definition}
\begin{definition}  A \wcnotion{$\ioun$-category}{category5@$\ioun$-category} is a $\T$-local $\infty$-presheaf $C\in \iPsh{\Theta\times \Delta}$. We then naturally define \sym{((a80@$\ouncat$}
$$\ouncat := \iPsh{\Theta\times \Delta}_{\T}.$$
\end{definition}

\begin{remark}
Unfolding the definition, an $\ioun$-category is a simplicial object $C:\Delta^{op}\to \ocat$
such that the induced morphisms
$$C_0\to\lim_{[k]\to E^{eq}}C_k~~~~\mbox{ and }~~~C_n\to C_1\times_{C_0}\times...\times_{C_0}C_1~n\in \Nb$$
are equivalences. 
Remark that we have a cartesian square
\[\begin{tikzcd}
	\ouncat & {\Fun(\Theta^{op},\icat)} \\
	{\ocat\times \ocat} & {\Fun(\Theta^{op},\igrd)\times \Fun(\Theta^{op},\igrd)}
	\arrow[from=1-2, to=2-2]
	\arrow[""{name=0, anchor=center, inner sep=0}, from=2-1, to=2-2]
	\arrow[from=1-1, to=2-1]
	\arrow[from=1-1, to=1-2]
	\arrow["\lrcorner"{anchor=center, pos=0.125}, draw=none, from=1-1, to=0]
\end{tikzcd}\]
where the lower horizontal morphism is induced by the canonical inclusion of $\io$-category onto $\infty$-presheaves on $\Theta$, and the right vertical one is induced by the functor that maps an $\iun$-category to the pair consisting of the $\infty$-groupoid of objects and the $\infty$-groupoid of arrows.
\end{remark}

\begin{definition} 
A morphism $p:X\to A$ between two $\infty$-presheaves on $\Theta\times \Delta$ is a \notion{left fibration} if it has the unique right lifting property against the set of morphism \sym{(j@$\J$}
$$\J:=\{\langle a,\{0\}\rangle \to \langle a,n\rangle~,a\in\Theta, [n]\in\Delta\}\cup \{\langle g,0\rangle,~ g\in\W\}$$
\end{definition} 

\begin{remark}
Unfolding the notation, this is equivalent to asking that $X_0\to A_0$ is $\W$-local, and that the natural square 
\[\begin{tikzcd}
	{X_n} & {X_{\{0\}}} \\
	{A_n} & {A_{\{0\}}}
	\arrow[from=1-1, to=1-2]
	\arrow[from=1-1, to=2-1]
	\arrow[from=2-1, to=2-2]
	\arrow[from=1-2, to=2-2]
\end{tikzcd}\]
is cartesian. 
\end{remark}

\begin{prop}
\label{prop:if left fib the fib}
We have an inclusion $T\subset \widehat{J}$.
\end{prop}
\begin{proof}
Let $a$ be an object of $\Theta$.
The  $\infty$-groupoid of morphisms $i$ of $\iPsh{\Delta}$ such that $\langle a,i\rangle$ is in $\widehat{J}$ contains by definition $\{0\}\to [n]$, and is closed by colimits and left cancelation. This $\infty$-groupoid then contains all initial morphism between $\infty$-presheaves on $\Delta$. As morphisms of $\W_1$ are initial, $\widehat{J}$ includes morphisms of shape $\langle a, f\rangle$ for  $a\in \Theta$ and $f\in \W_1$.

Let $g:a\to b$ be a morphism of $\W$ and $n$ an integer. We have a commutative square
\[\begin{tikzcd}
	{\langle a,\{0\}\rangle} & {\langle a,n\rangle} \\
	{\langle b,\{0\}\rangle} & {\langle b,n\rangle}
	\arrow["{\langle g,\{0\}\rangle}"', from=1-1, to=2-1]
	\arrow["{\langle g,n\rangle}", from=1-2, to=2-2]
	\arrow[from=1-1, to=1-2]
	\arrow[from=2-1, to=2-2]
\end{tikzcd}\]
The two horizontal morphisms are in $\widehat{J}$. By left cancellation, this implies that  $\langle g,n\rangle$ is in $ \widehat{J}$ which concludes the proof.
\end{proof}

\begin{remark}
If $X\to A$ is a left fibration, with $A$ a $\ioun$-category, the last proposition implies that $X$ is also a $\ioun$-category. We denote by \wcnotation{$\Lfib(A)$}{(lfib@$\Lfib(\uvar)$} the full sub $\iun$-category of $\ouncat_{/A}$ whose objects are left fibrations.
\end{remark}

\begin{prop}
\label{prop:lfib and W}
There is a canonical equivalence: 
$$\Lfib(\langle a,C \rangle)\sim \Fun(C,\ocat_{/a})$$
natural in $a:\Theta^{op}$ and $C:\icat^{op}$.
\end{prop}
\begin{proof}
Let $a$ be an object of $\Theta^{op}$ and $C$ an $\iun$-category. We have a canonical equivalence 
$$\iPsh{\Theta\times \Delta}_{/\langle a , C\rangle}\sim \iPsh{\Theta_{/a}\times \Delta_{/C}}\sim \Fun(\Theta_{/a}^{op},\iPsh{\Delta}_{/C})$$
The previous equivalence induces an equivalence
$$(\iPsh{\Theta\times \Delta}_{/\langle a,C\rangle})_{\{\langle b,\{0\}\rangle \to \langle b,[n]\rangle\}_{/\langle a , C\rangle}} \sim \Fun(\Theta_{/a}^{op}, (\iPsh{\Delta}_{/C})_{\I^0_{/C}})$$
where $\I^0_{/C}$ corresponds to the $\infty$-groupoid of morphisms of $\iPsh{\Delta}_{/C}$ of shape
\[\begin{tikzcd}
	& {[n]} \\
	{\{0\}} & C
	\arrow[from=2-1, to=2-2]
	\arrow[from=2-1, to=1-2]
	\arrow[from=1-2, to=2-2]
\end{tikzcd}\]
for $n$ any integer.
The $\iun$-category $(\iPsh{\Delta}_{/C})_{\I^0_{/C}}$ is equivalent to the $\iun$-category of Grothendieck $\V$-small opfibrations fibered in $\infty$-groupoid over $C$, which is itself equivalent to $\Fun(C,\igrd)$ according to the Grothendieck construction. 
We then have an equivalence 
\begin{equation}
\label{eq:lfib and W}
(\iPsh{\Theta\times \Delta}_{/\langle a,C\rangle})_{\{\langle b,\{0\}\rangle \to \langle b,[n]\rangle\}_{/\langle a , C\rangle}} \sim \Fun (\Theta_{/a}^{op}, \Fun(C,\igrd))\sim \Fun (C, \iPsh{\Theta}_{/a})
\end{equation}
By definition, $\Lfib(\langle a,C\rangle)$ is the fully faithful sub $\iun$-category of the left hand $\iun$-category corresponding to objects that are local with respect to the set of morphism
 $\{\langle g,0\rangle, g\in \W\}_{/\langle a,C\rangle}.$ 
Such $\infty$-presheaves corresponds via the equivalence \eqref{eq:lfib and W} to functors $C\to \iPsh{\Theta}_{/a}$ that are pointwise $\W_{/a}$-local. As $\W_{/a}$-local $\infty$-presheaves on $\Theta_{/a}$ corresponds to elements of $\ocat_{/a}$, we have an equivalence
$$\Lfib(\langle a,C\rangle)\sim \Fun(C,\ocat_{/a}).$$
\end{proof}

\begin{construction}
A morphism $f:A\to B$ between two $\infty$-presheaves on $\Theta\times \Delta$ induces an adjunction
\begin{equation}
\label{eq:adj between left fibration}
\begin{tikzcd}
	{f_!:\ouncat{/A}} & {\ouncat_{/B}:f^*}
	\arrow[shift left=2, from=1-1, to=1-2]
	\arrow[shift left=2, from=1-2, to=1-1]
\end{tikzcd}
\end{equation}
where $f_!$ is the composition and $f^*$ is the pullback.
As $\Lfib(A)$ is the localization of $\ouncat_{/A}$ along the class of morphisms $\widehat{\J_{/A}}$,
the previous adjunction induces a derived adjunction:
\begin{equation}
\label{eq:derived adj between left fibration}
\begin{tikzcd}
	{\Lb f_!:\Lfib(A)} & {\Lfib(B):\Rb f^*}
	\arrow[shift left=2, from=1-1, to=1-2]
	\arrow[shift left=2, from=1-2, to=1-1]
\end{tikzcd}
\end{equation}
where $\Lb f_!$ sends $E$ onto $\Fb f_!E$ and $\Rb f^*$ is just the restriction of $f^*$ to $\Lfib(B)$.
\end{construction}

\begin{construction}
We denote by $\pi_!:\Fun(\Delta^{op},\iPsh{\Theta})\to \iPsh{\Delta[\Theta]}$ the functor induced by extention by colimits by the canonical morphism $\pi:\Delta\times \Theta\to \Delta[\Theta]$. We also define $\Noiun:\iPsh{\Delta[\Theta]}\to \Fun(\Delta^{op},\iPsh{\Theta})$ as the right adjoint of $\pi_!$. As $\pi_!$ preserves representable, \wcnotation{$\Noiun$}{(noiun@$\Noiun$} preserves colimits. Remark that the image of $T$ by $\pi_!$ is contained in $\widehat{\M}$, and $\Noiun$ induces then by restriction a functor
$$\Noiun:\ocat\to \ouncat.$$
If $C$ is an $\io$-category, $\Noiun C$ corresponds to the simplicial object in $\ocat$:
\[\begin{tikzcd}
	\cdots & {\coprod_{x_0,x_1,x_2:\tau_0C}\hom_C(x_0,x_1,x_2)} & {\coprod_{x_0,x_1:\tau_0C}\hom_C(x_0,x_1)} & {\coprod_{x_0:\tau_0C}1}
	\arrow[from=1-2, to=1-3]
	\arrow[shift left=2, from=1-3, to=1-4]
	\arrow[shift left=4, from=1-2, to=1-3]
	\arrow[shift right=2, from=1-3, to=1-2]
	\arrow[shift right=4, from=1-2, to=1-3]
	\arrow[shift left=2, from=1-3, to=1-2]
	\arrow[from=1-4, to=1-3]
	\arrow[shift right=2, from=1-3, to=1-4]
\end{tikzcd}\]
\end{construction}
\begin{notation}
If $p:X\to \Noiun C$ is a left fibration, and $x$ an object of $C$, we will denote by $X(x)$ the fiber of $p_0:X_0\to \Noiun C$ on $x$, and $E(x)$ the canonical morphism $X(x)\to 1$. Unfolding the definitions, and using corollary \ref{cor:if codomain a groupoid, then f is exponentiable}, we then have for any integer $n$ a canonical equivalence:
$$X_n \sim \coprod_{x_0,...,x_n}X(x_0)\times \hom_C(x_0,...,x_n)$$
\end{notation}

\begin{prop}
\label{prop:equivalence beetwen left fibration}
Let $C$ be an $\io$-category, and $E$, $F$ two objects of $\Lfib(\Noiun C)$ corresponding to morphisms $X\to \Noiun C$, $Y\to \Noiun C$. Let $\phi:E\to F$ be a morphism.
The following are equivalent:
\begin{enumerate}
\item $\phi$ is an equivalence,
\item for any object $x$ of $C$, the induced morphism $\Rb x^*\phi:\Rb x^*E\to \Rb x^*E$ is an equivalence,
\item for any object $x$ of $C$, the induced morphism $\phi(x):X(x)\to Y(x)$ is an equivalence,
\end{enumerate}
\end{prop}
\begin{proof}
The implication $(1)\Rightarrow (2)$ is direct.
The implication $(2)\Rightarrow (3)$ comes from the fact that for any object $x$ of $C$, the value on $0$ of the simplicial object $\Rb x^*E$ (resp. $\Rb x^*F$) is $X(x)\to 1$ (resp. $Y(x)\to 1$). 

Suppose now that $\phi$ fulfills the last condition. As $\Noiun C$ is $C_0\sim \coprod_{C_0}1$, we have equivalences 
$$X_0\sim \coprod_{x:C_0} X(x)~~~~~Y_0\sim \coprod_{x:C_0} Y(x).$$
The morphism $\phi_0:X_0\to Y_0$ is then an equivalence. Eventually, as $E$ and $F$ are left fibrations, we have 
$$X_n\sim X_{\{0\}}\times_{ (\Noiun C)_{\{0\}} }(\Noiun C)_n\sim Y_{\{0\}}\times_{ (\Noiun C)_{\{0\}} }(\Noiun C)_n\sim Y_n.$$
This implies $(3)\Rightarrow (1)$, which concludes the proof. 
\end{proof}

\begin{prop}
\label{prop:lfib and W 2}
There is an equivalence, natural in $C:\ocat^{op}$,
between $\Lfib(\Noiun [C,1])$ and the $\iun$-category whose objects are arrows  of shape
$$X(0)\times C\to X(1)$$
and morphisms are natural transformations such that the induced morphism
$X(0)\times C\to Y(0)\times C$
is of shape $f\times id_C$.

For a left fibration $E$ corresponding to a morphism $X\to \Noiun [C,1]$, this arrow is the one appearing in the diagram:
\[\begin{tikzcd}[sep =0.3cm]
	& {X_1} && {X_0} \\
	{X(0)^\flat\times C^\flat} && {X(1)^\flat} \\
	& {\Noiun([C,1])_1} && {\Noiun([C,1])_{\{1\}}} \\
	{(C^\flat,0,1)} && {\{1\}}
	\arrow[from=4-1, to=3-2]
	\arrow[from=3-2, to=3-4]
	\arrow[from=2-1, to=1-2]
	\arrow[from=1-2, to=1-4]
	\arrow[from=2-1, to=4-1]
	\arrow[from=1-2, to=3-2]
	\arrow[from=1-4, to=3-4]
	\arrow[from=4-1, to=4-3]
	\arrow[from=2-3, to=4-3]
	\arrow[from=4-3, to=3-4]
	\arrow[from=2-3, to=1-4]
	\arrow[from=2-1, to=2-3]
\end{tikzcd}\]
where the left and the right squares are cartesian.
\end{prop}
\begin{proof}
Left fibrations are detected on pullback along representable. The functor $\Lfib(\uvar)$ then sends colimits of $\iPsh{\Theta\times \Delta}$ to limits.
Remark that we have a cocartesian square
\[\begin{tikzcd}[cramped]
	{\langle C,\{0\}\rangle\coprod\langle C,\{1\}\rangle} & {\langle C,1\rangle} \\
	{\langle [0],\{0\}\rangle\coprod \langle [0],\{1\}\rangle} & {\Noiun[C,1]}
	\arrow[from=1-1, to=1-2]
	\arrow[from=1-1, to=2-1]
	\arrow[from=1-2, to=2-2]
	\arrow[from=2-1, to=2-2]
	\arrow["\lrcorner"{anchor=center, pos=0.125, rotate=180}, draw=none, from=2-2, to=1-1]
\end{tikzcd}\]
According to proposition \ref{prop:lfib and W}, and as $\Lfib(\uvar)$ send colimits to limits, $\Lfib(\Noiun [C,1])$ fits in the cartesian square
\[\begin{tikzcd}
	{\Lfib(\Noiun [C,1])} & {\Fun([1],\ocat_{/C})} \\
	{\ocat\times \ocat} & {\ocat_{/C}\times \ocat_{/C}}
	\arrow[""{name=0, anchor=center, inner sep=0}, from=2-1, to=2-2]
	\arrow[from=1-1, to=2-1]
	\arrow[from=1-2, to=2-2]
	\arrow[from=1-1, to=1-2]
	\arrow["\lrcorner"{anchor=center, pos=0.125}, draw=none, from=1-1, to=0]
\end{tikzcd}\]
Using the adjunction 
\[\begin{tikzcd}
	{\dom:\ocat_{/C}} & {\ocat:\uvar\times C}
	\arrow[""{name=0, anchor=center, inner sep=0}, shift left=2, from=1-1, to=1-2]
	\arrow[""{name=1, anchor=center, inner sep=0}, shift left=2, from=1-2, to=1-1]
	\arrow["\dashv"{anchor=center, rotate=-90}, draw=none, from=0, to=1]
\end{tikzcd}\]
the $\iun$-category $\Lfib(\Noiun [C,1])$ fits in the cartesian square
\[\begin{tikzcd}
	{\Lfib(\Noiun [C,1])} & {\Fun([1],\ocat)} \\
	{\ocat\times \ocat} & {\ocat\times \ocat}
	\arrow["{(\uvar\times C,id)}"', from=2-1, to=2-2]
	\arrow[from=1-1, to=2-1]
	\arrow[from=1-2, to=2-2]
	\arrow[from=1-1, to=1-2]
	\arrow["\lrcorner"{anchor=center, pos=0.125}, draw=none, from=1-1, to=2-2]
\end{tikzcd}\]
The first assertion then follows from the last cartesian square and the proposition \ref{prp:to show fully faithfullness3} applied to $I:=1.$
 The second is obtained by walking through the equivalences used in the proof of proposition \ref{prop:lfib and W}.
\end{proof}

\begin{prop}
\label{prop:lfib and W 3}
There is an equivalence natural in $C:\ocatm^{op}$ between $\Lfib(([C,1]\otimes[1]^\sharp)^\natural)$ and the $\iun$-category whose objects are diagrams of shape
\[\begin{tikzcd}
	{X(0,0)\times C^\natural\otimes\{0\}} & {X(0,1)\times C^\natural} \\
	& {X(0,0)\times (C\otimes[1]^\sharp)^\natural} & {X(1,1)} \\
	{X(0,0)\times C^\natural\otimes\{1\}} & {X(1,0)}
	\arrow[from=1-1, to=1-2]
	\arrow[from=1-2, to=2-3]
	\arrow[from=3-2, to=2-3]
	\arrow[from=1-1, to=2-2]
	\arrow[from=2-2, to=2-3]
	\arrow[from=3-1, to=3-2]
	\arrow[from=3-1, to=2-2]
\end{tikzcd}\]
such that $X(0,0)\times C^\natural\otimes\{0\}\to X(0,1)\times C^\natural$ is of shape $f\times id_{C^\natural}$. Morphisms are natural transformations such that the induced morphisms 
$X(0,1)\times C^\natural\to Y(0,1)\times C^\natural$ and $X(0,0)\times (C\otimes[1]^\sharp)^\natural\to Y(0,0)\times (C\otimes[1]^\sharp)^\natural$
are of shape $g\times C^\natural$ and $h\times (C\otimes[1]^\sharp)^\natural$.
\end{prop}
\begin{proof}
The proposition \ref{prop:eq for cylinder marked} implies that $([C,1]\otimes[1]^\sharp)^\natural$ is the colimit of the diagram
\[\begin{tikzcd}
	{[1]\vee[C,1]^\natural} & {[C\otimes^\natural\{0\},1]} & {[C\otimes[1]^\sharp,1]^\natural} & {[C^\natural\otimes\{1\},1]} & {[C,1]^\natural\vee[1]}
	\arrow[from=1-4, to=1-3]
	\arrow[from=1-4, to=1-5]
	\arrow[from=1-3, to=1-2]
	\arrow[from=1-1, to=1-2]
\end{tikzcd}\]
According to proposition \ref{prop:example of a special colimit3 marked case} and lemma \ref{lemma:a otimes 1 is strict}, this colimit is special, and the  $\iun$-category $\Noiun ([C,1]\otimes[1]^\sharp)^\natural$ is then the colimit, computed in $\Psh{\Theta\times\Delta}$, of the diagram
\[\begin{tikzcd}[column sep =0.2cm]
	{\langle C^\natural,1\rangle\coprod\langle C^\natural,\{2\}\rangle} & {\langle C^\natural\otimes\{0\},1\rangle} && {\langle C^\natural\otimes\{1\},1\rangle} & {\langle C^\natural,\{0\}\rangle\coprod \langle C^\natural,1\rangle} \\
	{\langle [0],1\rangle\coprod \langle [0],1\rangle} & {\langle C^\natural,2\rangle} & {\langle (C\otimes[1]^\sharp)^\natural,1\rangle} & {\langle C^\natural,2\rangle} & {\langle [0],1\rangle\coprod \langle [0],1\rangle}
	\arrow[from=1-5, to=2-4]
	\arrow[from=1-1, to=2-2]
	\arrow[from=1-2, to=2-2]
	\arrow[from=1-2, to=2-3]
	\arrow[from=1-4, to=2-3]
	\arrow[from=1-4, to=2-4]
	\arrow[from=1-1, to=2-1]
	\arrow[from=1-5, to=2-5]
\end{tikzcd}\]
We then deduce the result from the proposition \ref{prop:lfib and W} in the same way as in the previous proof.
\end{proof}

\begin{prop}
\label{prop:Lfib commue with colimit}
Let $F:I\to \ocat$ be a $\Wcard$-small diagram. The canonical functor
$$\Lfib(\Noiun \colim_IF)\to \lim_I\Lfib(\Noiun F)$$
is an equivalence, where $\colim_IF$ denotes the colimit taken in $\ocat$.
\end{prop}
\begin{proof}
Let $C$ be an object of $\iPsh{\Theta}$.
As left fibrations are detected by unique right lifting property against morphisms whose codomains are of shape $\langle a,n\rangle$, a morphism $p:X\to \Noiun C$ is a left fibration if and only if for any $i:[a,n]\to C$, $(\Noiun i)^*p$ is a left fibration. 
The functor 
$$\begin{array}{ccl}
\Psh{\Delta[\Theta]}^{op}&\to &\icat_{\Wcard}\\
X&\mapsto & \Lfib(\Noiun X)
\end{array}$$
then sends colimits to limits, where $\icat_{\Wcard}$ denotes the (huge) $\iun$-category of $\Wcard$-small $\iun$-categories. To conclude the proof, we then have to show that it sends any morphism $f\in\M$ to an equivalence. If $f$ is of shape $[g,1]$ for $g\in\W$, this directly follows from proposition \ref{prop:lfib and W 2}. Suppose now that $f$ is $[a,\Sp_n]\to [a,n]$. Remark that we have a cocartesian square:
\[\begin{tikzcd}
	{\langle a, \Sp_n\rangle} & {\Noiun ([a,\Sp_n])} \\
	{\langle a,n\rangle} & {\Noiun ([a,n])}
	\arrow[from=1-1, to=2-1]
	\arrow[from=2-1, to=2-2]
	\arrow[from=1-1, to=1-2]
	\arrow[from=1-2, to=2-2]
	\arrow["\lrcorner"{anchor=center, pos=0.125, rotate=180}, draw=none, from=2-2, to=1-1]
\end{tikzcd}\]
The morphism $\Lfib(\Noiun [a,\Sp_n])\to \Lfib(\Noiun [a,n])$ then fits in the cartesian square: 
\[\begin{tikzcd}
	{\Lfib(\Noiun [a,n])} & {\Lfib(\langle a,n\rangle)} \\
	{\Lfib(\Noiun [a,\Sp_n])} & {\Lfib(\langle a, \Sp_n\rangle)}
	\arrow[from=1-2, to=2-2]
	\arrow[from=1-1, to=1-2]
	\arrow[""{name=0, anchor=center, inner sep=0}, from=2-1, to=2-2]
	\arrow[from=1-1, to=2-1]
	\arrow["\lrcorner"{anchor=center, pos=0.125}, draw=none, from=1-1, to=0]
\end{tikzcd}\]
According to proposition \ref{prop:lfib and W}, we have equivalences
$$\Lfib(\langle a ,\Sp_n\rangle)\sim \lim_{[k]\to\Sp_n}\Fun([k],\ocat_{/a})\sim \Fun([n],\ocat_{/a})\sim \Lfib(\langle a ,n\rangle)$$
It remains the case $f:=E^{eq}\to 1$. We have equivalences $\Noiun E^{eq}\sim \langle [0],E^{eq}\rangle$ and $\Noiun 1\sim 1$ .
The proposition \ref{prop:lfib and W} induces equivalences
$$ \Lfib( \langle [0],E^{eq}\rangle) \sim \lim_{[k]\to E^{eq}}\Fun([k],\ocat)\sim \Fun(1,\ocat)$$
which concludes the proof.
\end{proof}

\begin{definition} 

Let $A$ be an $\ioun$-category. An object $E:\ouncat_{/A}$ is \wcnotion{$\U$-small}{small object@$\U$-small object of $\ouncat_{/A}$} if for any morphism $i:\langle b,n\rangle\to A$, the space of morphism between $i$ and $E$ is $\U$-small. Remark that an object $F$ of $\Lfib(\Noiun A)$ corresponding to a left fibration $X\to \Noiun A$ is $\U$-small if an only if for any object $a$ of $A$, $X(a)$ is $\U$-small .

Eventually, we define $\Lfib_{\U}( A)$ as the full sub $\iun$-category of $\Lfib( A)$ whose objects correspond to $\U$-small left fibrations. In particular, $\Lfib_{\U}( A)$ is a $\V$-small $\iun$-category unlike $\Lfib( A)$ which is a $\Wcard$-small $\iun$-category. 
\end{definition}
\begin{construction}
\label{cons:defi of uni}
The proposition \ref{prop:Lfib commue with colimit} implies that the functor 
$$C:\ocat\mapsto \tau_0\mbox{$\Lfib_{\U}$}(\Noiun C)$$
sends colimits to limits. We then define $\uni$ as the $\io$-category that represents this object:
\begin{equation}
\label{eq:defi of uni}
\begin{array}{rcll}
\uni:&\Theta^{op} &\to &\igrd\\
& a&\mapsto & \tau_0\mbox{$\Lfib_{\U}$}(\Noiun a)
\end{array}
\end{equation}

 We then have by definition an equivalence 
\begin{equation}
\Hom(C,\uni)\sim \tau_0 \mbox{$\Lfib_{\U}$}(\Noiun C).
\end{equation}
As the functor $\Noiun$ preserves product, for any $\io$-category $D$, 
we also have a canonical equivalence
\begin{equation}
\Hom(C,\uHom(D,\uni))\sim \tau_0(\mbox{$\Lfib_{\U}$}(\Noiun C\times \Noiun D)).
\end{equation}
Eventually, by construction, the $\infty$-groupoid of objects of $\uni$ corresponds to the $\infty$-groupoid of $\U$-small $\io$-categories, and according to proposition \ref{prop:lfib and W 2}, we have an equivalence 
\begin{equation}
\label{eq:hom of uni}
\hom_{\uni}(C,D)\sim \uHom(C,D).
\end{equation}
The $\iota$-category $\uni$ is called the $\iota$-category of small $\iota$-categories.
\end{construction}

\begin{construction}  \label{cons:dualities fo omega}
Let $S$ be a subset of $\Nb^*$. We define the subset $\Sigma S=\{i+1,i\in S\}$. 
Remark that for any $n$, we have \ssym{((b49@$(\uvar)^S$}{for $\uni$}
$$((\Noiun C)_n)^S\sim (\Noiun C^{\Sigma S})_n$$
We then set the functor 
$$(\uvar)^S:\uni\to (\uni)^{\Sigma S}$$
sending a $\U$-small left fibration $X\to \Noiun C$ to the left fibration $n\mapsto (X_n^S\to (\Noiun C^{\Sigma S})_n^S)$. These functors are called \snotion{dualities}{for $\uni$}.
In particular, we have the \snotionsym{odd duality}{((b60@$(\uvar)^{op}$}{for $\uni$} $(\uvar)^{op}:\uni\to \uni^{co}$, corresponding to the set of odd integer, the \snotionsym{even duality}{((b50@$(\uvar)^{co}$}{for $\uni$} $(\uvar)^{co}:\uni\to (\uni^{t })^{op}$, corresponding to the subset of non negative even integer, the \snotionsym{full duality}{((b80@$(\uvar)^{\circ}$}{for $\uni$} $(\uvar)^{\circ}:\uni \to \uni^{t\circ}$, corresponding to $\Nb^*$ and the \snotionsym{transposition}{((b70@$(\uvar)^t$}{for $\uni$} $(\uvar)^t:\uni \to \uni^{\Sigma t}$, corresponding to the singleton $\{1\}$. Eventually, we have equivalences
$$((\uvar)^{co})^{op}\sim (\uvar)^{\circ} \sim ((\uvar)^{op})^{co}.$$ 
\end{construction}

\subsection{Grothendieck construction}
\begin{notation*}
Through this section, we will identify any marked $\io$-categories $C$ with the canonical induced morphism $C\to1$. If $f:X\to Y$ is a morphism, $f\times C$ then corresponds to the canonical morphism $X\times C\to Y$.
\end{notation*}
\begin{definition} Let $A$ be an $\io$-category and $a$ an object of $A$, we denote by \wcnotation{$h_a^A$}{(h@$h_{a}^{A}$} the morphism $1\to A^\sharp$ induces by $a$.
\end{definition}

We recall that the notion of left cartesian fibrant replacement is defined in construction \ref{cons:of fb for fibration}. Given a morphism $f:X\to I$, its left cartesian fibrant replacement is denoted $\Fb f:\tilde{X}\to I$. Proposition \ref{prop:explicit factoryzation} states that the left fibrant replacement of $h_a^A$, which we then denote by \wcnotation{$\Fb h^A_a$}{(fh@$\Fb h_{a}^{A}$)}, is the fibration $A^\sharp_{a/}\to A^\sharp$. Remark \ref{rem:cartesian square slicdes} provides, for any object $b$ of $A^\sharp$, a cartesian square
\begin{equation}
\label{eq:fiber of slice}
\begin{tikzcd}
	{\hom_A(a,b)^\flat} & {A^{\sharp}_{a/}} \\
	{\{b\}} & {A^{\sharp}}
	\arrow[from=2-1, to=2-2]
	\arrow[from=1-1, to=2-1]
	\arrow[from=1-1, to=1-2]
	\arrow["{\Fb h_a^A}", from=1-2, to=2-2]
\end{tikzcd}
\end{equation}
which induces a canonical morphism $h^A_b\times \hom_A(a,b)^\flat\to \Fb h^A_a$, and consequently, a morphism $\Fb h^A_b\times \hom_A(a,b)^\flat\to \Fb h^A_a$.

The case of $A:=[C,1]$ will be of particular interest. The morphism $\Fb h^{[C,1]}_{1}$ is just $ h^{[C,1]}_{1}$ and theorem \ref{theo:equivalence between slice and join} implies that $\Fb h^{[C,1]}_{0}$ is the canonical morphism $1\costar C^\flat\to [C,1]^\sharp$. In this last case, the square \eqref{eq:fiber of slice} corresponds to the square
\[\begin{tikzcd}
	{C^\flat} & {1\costar C^\flat} \\
	{\{1\}} & {[C,1]^\sharp}
	\arrow[from=2-1, to=2-2]
	\arrow[from=1-1, to=2-1]
	\arrow[from=1-1, to=1-2]
	\arrow["{\Fb h_0^{[C,1]}}", from=1-2, to=2-2]
\end{tikzcd}\]
induces by the one of theorem \ref{theo:formula between pullback of slice and tensor marked case}.

\begin{notation}
When nothing is specified, the morphism $C^\flat \to \Fb h_0^{[C,1]}$ will always corresponds to this square.
\end{notation}

\begin{definition}
\sym{(fh@$\Fb h^C_{\cdot}$}\sym{(cpoint@$C_{\cdot/}$}
Let $C$ be an $\io$-category. We define the simplicial marked $\io$-category $C_{\cdot/}$ and the simplicial arrow of marked $\io$-categories $\Fb h^C_{\cdot}$ whose value on an integer $n$ is given by the following pullback 
\[\begin{tikzcd}
	{(C_{\cdot/})_n} & {(C^{\sharp})^{[n+1]^\sharp}} \\
	{(\Noiun C)_n^\flat\times C^{\sharp}} & {(C^\sharp)^{[n]^\sharp}\times (C^\sharp)^{\{n+1\}}}
	\arrow[""{name=0, anchor=center, inner sep=0}, from=2-1, to=2-2]
	\arrow[from=1-2, to=2-2]
	\arrow["{(\Fb h_{\cdot})_n}"', from=1-1, to=2-1]
	\arrow[from=1-1, to=1-2]
	\arrow["\lrcorner"{anchor=center, pos=0.125}, draw=none, from=1-1, to=0]
\end{tikzcd}\]
 and where the functoriality in $n$ is induced by the universal property of pullback.
Unfolding the definition, on all integer $n$, the canonical morphism $(C_{\cdot/})_n\to C^\sharp$ corresponds to the morphism 
$$ \coprod\limits_{x_0,...,x_n:C_0} \hom_C^\flat(x_0,...,x_n)\times \Fb h_{x_n}^C$$
and is then a left cartesian fibration according to theorem \ref{theo:left cart stable by colimit}.
\end{definition}

\begin{construction}
\label{cons:definition of integral de grot}
Let $E$ be an object of $\ouncat_{/\Noiun C}$ corresponding to an arrow $X \to\Noiun C$. The \wcnotion{Grothendieck construction}{grothendieck construction@Grothendieck construction} of $E$, is the object of $\ocatm_{/C^\sharp}$ defined by the formula
$$\int_CE:=\colim_n (X^\flat \times_{(\Noiun C)^\flat } \Fb h_{\cdot})_n.$$
As the Grothendieck construction is by definition a colimit of left cartesian fibrations, the theorem \ref{theo:left cart stable by colimit} implies that it is also a left cartesian fibration. The Grothendieck construction then defines a functor
$$\int_{C}: \ouncat_{/\Noiun C}\to \LCart(C^\sharp).$$
Unfolding the definition, if $E$ is a left fibration, $\int_C E$ is the colimit of a simplicial diagram whose value on $n$ is:
$$
\coprod\limits_{x_0,...,x_n:C_0}X(x_0)\times \hom_C^\flat(x_0,...,x_n)\times \Fb h_{x_n}^C$$
\end{construction}

\begin{example}
\label{exe:of int}
Let $C$ be an $\io$-category.
Let $E$ be an object of $\Lfib(\Noiun [C,1])$ corresponding to a morphism $X\to \Noiun ([C,1])$. According to proposition \ref{prop:lfib and W 2}, this object corresponds to a morphism $X(0)\times C\to X(1)$. The arrow $\int_{[C,1]}E$ corresponds to the colimit of the following diagram:
\[\begin{tikzcd}
	{X(0)^\flat\times\Fb h^{[C,1]}_{0}} & {X(0)^\flat\times C^\flat} & {X(1)^\flat}
	\arrow[from=1-2, to=1-1]
	\arrow[from=1-2, to=1-3]
\end{tikzcd}\]
The domain of this arrow is then the colimit of the following diagram:
\[\begin{tikzcd}
	{X(0)^\flat\times[C,1]^\sharp_{0/}} & {X(0)^\flat\times C^\flat} & {X(1)^\flat}
	\arrow[from=1-2, to=1-1]
	\arrow[from=1-2, to=1-3]
\end{tikzcd}\]
\end{example}

\begin{lemma}
\label{lemma:intpreserces initial}
The functor $\int_C:\ouncat_{/\Noiun C}\to \LCart(C^\sharp)$ preserves colimits. Moreover, it sends morphisms of $\J$ to equivalences. 
\end{lemma}
\begin{proof}
According to corollary \ref{cor:inclusion of lcatt into the slice preserves colimits}, it is sufficient to show that the composite 
$$\ouncat_{/\Noiun C}\xrightarrow{ \int_C} \LCart(C^\sharp)\xrightarrow{\dom}\ocatm$$
preserves colimits.

To this extend, we consider the functor
$$\alpha: \iPsh{\Theta\times \Delta}_{/\Noiun C}\to \iPsh{t\Theta\times \Delta}$$
sending an object $E$ of $\Lfib(\Noiun C)$ corresponding to a morphism $X\to (\Noiun C)$ to 
$X\times_{(\Noiun C)^\flat } C_{\cdot/}$, 
and the functor 
$$\beta:\iPsh{t\Theta\times \Delta}\to \ocatm$$
that is the left Kan extension of the functor $t\Theta\times \Delta\to t\Theta\to \mPsh{\Theta}$. As $\iPsh{\Theta\times \Delta}$ is locally cartesian closed, $\alpha$ preserves colimits.
The composite $\beta\circ\alpha$ then preserves colimits. Moreover, we have a commutative diagram
\[\begin{tikzcd}
	{\iPsh{\Theta\times \Delta}_{/\Noiun C}} & \ocatm \\
	{\ouncat_{/\Noiun C}} & {\LCart(C^\sharp)}
	\arrow["\beta\circ\alpha", from=1-1, to=1-2]
	\arrow["\Fb"', from=1-1, to=2-1]
	\arrow["{\int_C}"', from=2-1, to=2-2]
	\arrow["\dom"', from=2-2, to=1-2]
\end{tikzcd}\]
According to proposition \ref{prop:if left fib the fib}, one then has to show that $\beta\circ\alpha$ sends any morphism of $\J$ to an equivalence to conclude. Indeed, it will implies that $\beta\circ \alpha$ lifts to a colimit preserving functor $$\Db(\beta\circ \alpha):\ouncat_{/\Noiun C}\to \ocatm,$$ and the previous square implies that this morphism is equivalent to $\dom \int_C$.

Suppose given two cartesian squares
\[\begin{tikzcd}
	X & {X'} & { C_{\cdot/}} \\
	{\langle a, \{0\}\rangle} & {\langle a, [n]\rangle} & {(\Noiun C)^\flat}
	\arrow["f"', from=2-1, to=2-2]
	\arrow[from=1-3, to=2-3]
	\arrow[from=1-1, to=2-1]
	\arrow[from=1-2, to=2-2]
	\arrow[from=2-2, to=2-3]
	\arrow[from=1-2, to=1-3]
	\arrow["g", from=1-1, to=1-2]
	\arrow["\lrcorner"{anchor=center, pos=0.125}, draw=none, from=1-1, to=2-2]
	\arrow["\lrcorner"{anchor=center, pos=0.125}, draw=none, from=1-2, to=2-3]
\end{tikzcd}\]
By currying, we see these objects as functors $t\Theta^{op}\to \iPsh{\Delta}$. The right vertical morphism is then pointwise a right fibration of $\iun$-categories fibered in $\infty$-groupoids, as it corresponds, for a fixed $a:t\Theta$ and $n:\Delta$, to the morphism of $\infty$-groupoid:
$$\coprod_{x_0,...,x_n:C_0}\Hom(a,\hom_C(x_0,...,x_n)^\flat)\times\Hom(a,C^\sharp_{x_n/})\to \coprod_{x_0,...,x_n:C_0}\Hom(a,\hom_C(x_0,...,x_n)^\flat).$$

As the morphism $f$ is pointwise initial, so is $g$. 
As $\beta$ sends pointwise initial morphisms to equivalence, this implies that $\beta\alpha (f):= \beta(g)$ is an equivalence. 

Suppose now given two cartesian squares
\[\begin{tikzcd}
	X & {X'} & { C_{\cdot/}} \\
	{\langle a, 0\rangle} & {\langle b, 0\rangle} & {(\Noiun C)^\flat}
	\arrow["{\langle f,0\rangle}"', from=2-1, to=2-2]
	\arrow[from=1-3, to=2-3]
	\arrow[from=1-1, to=2-1]
	\arrow[from=1-2, to=2-2]
	\arrow[from=2-2, to=2-3]
	\arrow[from=1-2, to=1-3]
	\arrow["g", from=1-1, to=1-2]
	\arrow["\lrcorner"{anchor=center, pos=0.125}, draw=none, from=1-1, to=2-2]
	\arrow["\lrcorner"{anchor=center, pos=0.125}, draw=none, from=1-2, to=2-3]
\end{tikzcd}\]
with $f\in \W$. By currying, we see these objects as functors $\Delta\to\iPsh{t\Theta}$. The right vertical morphism is then pointwise a right cartesian fibration. As the morphism $\langle f,0\rangle$ is pointwise in $\widehat{\Wm}$, so is $g$. The morphism $\colim_n g_n$ is then in $\widehat{\Wm}$ and $\beta\alpha (f):= \beta(g)$ is an equivalence.
\end{proof}

\begin{construction} 
 We will denote also by 
$$\int_C:\Lfib(\Noiun C)\to \LCart(C^\sharp)$$ 
the restriction of the Grothendieck construction. 
This will not cause any confusion as from now on we will only consider the 
Grothendieck construction of left fibration.
 The lemma \ref{lemma:intpreserces initial} then implies that this functor is colimit preserving, and it is then part of an adjunction \index[notation]{(partial@$\partial_C$}
\begin{equation}
\label{eq:underived GR constuction}
\begin{tikzcd}
	{\int_C:\Lfib(\Noiun C)} & { \LCart(C^\sharp):\partial_C}
	\arrow[""{name=0, anchor=center, inner sep=0}, shift left=2, from=1-1, to=1-2]
	\arrow[""{name=1, anchor=center, inner sep=0}, shift left=2, from=1-2, to=1-1]
	\arrow["\dashv"{anchor=center, rotate=-90}, draw=none, from=0, to=1]
\end{tikzcd}
\end{equation}
\end{construction}
\begin{lemma}
\label{lemma:partial fiber}
Let $i:C^\sharp\to D^\sharp$ be a morphism. The natural transformation $$\partial_{C}\circ\Rb i^*\to \Rb (\Noiun{i})^*\circ\partial_D$$ is an equivalence.
\end{lemma}
\begin{proof}
As equivalences between left fibrations are detected on fibers, one can suppose that $C$ is the terminal $\io$-category. Let $c$ denote the object of $D$ corresponding to $i$.
Let $E$ be an object of $\Lfib(\Noiun1)$, corresponding to a morphism $A\to 1$. According to lemma \ref{lemma:intpreserces initial}, we then have equivalences
$$\begin{array}{rclr}
\Lb i_! \int_1 E &\sim & \Lb i_! ( A^\flat\times h_1^1)\\
&\sim & A^\flat\times \Fb h_c^D\\
	&=: &\int_D {\Noiun{i}}_!E\\
	&\sim &\int_D \Lb(\Noiun{i})_!E& (\ref{lemma:intpreserces initial})\\
\end{array}$$
The canonical morphism $\Lb i_!\circ \int_1 \to \int_D \circ \Lb{(\Noiun{i})}_!$ is then an equivalence, which implies by adjunction that $\partial_{1}\circ\Rb_i^*\to \Rb (\Noiun{i})^*\circ\partial_D$ also is.
\end{proof}
\begin{definition}
 Let $C$ be an $\io$-category and $c$ an object of $C^\sharp$.
We define $(\Noiun C)_{/c}$ as the simplicial object in $\ocat$ whose value on $(a,n)$ fits in the cocartesian square 
\[\begin{tikzcd}
	{((\Noiun C)_{/c})_{(a,n)}} & {(\Noiun C)_{(a,n+1)}} \\
	{\{c\}} & {(\Noiun C)_{(a,\{n+1\})}}
	\arrow[from=2-1, to=2-2]
	\arrow[from=1-1, to=2-1]
	\arrow[from=1-2, to=2-2]
	\arrow[from=1-1, to=1-2]
	\arrow["\lrcorner"{anchor=center, pos=0.125, rotate=45}, draw=none, from=1-1, to=2-2]
\end{tikzcd}\]
Unfolding the definition, $(\Noiun C)_{/c}$ is the simplicial diagram whose value on $n$ is
$$\coprod_{x_0,...,x_n}\hom_C(x_0,...,x_n,c)$$
\end{definition}

\begin{lemma}
\label{lemma:fiber of F h .}
There is an equivalence
$$((\Noiun C)_{/c})^\flat \sim c^* \Fb h_{\cdot}.$$
\end{lemma}
\begin{proof}
A morphism $\langle a,n\rangle\to (c^* \Fb h_{\cdot})^\natural$ is the data of a commutative diagram
\[\begin{tikzcd}
	{a^\flat\otimes\{n+1\}} & {a^\flat\otimes[n+1]^\sharp} & {\coprod_{k\leq n} a^\flat\otimes\{k\}} \\
	{\{c\}} & {C^\sharp} & {\coprod_{k\leq n} \{k\}}
	\arrow[from=1-1, to=1-2]
	\arrow[from=1-1, to=2-1]
	\arrow[from=1-2, to=2-2]
	\arrow[from=1-3, to=1-2]
	\arrow[from=1-3, to=2-3]
	\arrow[from=2-1, to=2-2]
	\arrow[from=2-3, to=2-2]
\end{tikzcd}\]
which is, according to proposition \ref{prop:crushing of Gray tensor is identitye marked case}, equivalent to a diagram
\[\begin{tikzcd}
	{\{n+1\}} & {[a,n]} \\
	{\{c\}} & {C^\sharp}
	\arrow[from=1-1, to=1-2]
	\arrow[from=1-1, to=2-1]
	\arrow[from=1-2, to=2-2]
	\arrow[from=2-1, to=2-2]
\end{tikzcd}\]
and so to a morphism $\langle a,n\rangle\to (\Noiun C)_{c/}$. As $c^*\Fb h_{\cdot}$ has a trivial marking, this shows the desired equivalence.
\end{proof}
\begin{lemma}
\label{lemma:int fiber 1}
Let $p:X\to \Noiun C$ be a left fibration, and $c$ an object of $C$. 
The canonical morphism 
$$X(c)\to \colim_n (X\times_{\Noiun C} (\Noiun C)_{/c})_n$$
is an equivalence.
\end{lemma}
\begin{proof}
We will show a slightly stronger statement, which is that the morphism
$$X(c)\to \colim_n (X\times_{(\Noiun C)} (\Noiun C)_{/c})_n$$
is an equivalence when the colimit is taken in $\infty$-presheaves on $\Theta$.
As the colimit in presheaves commutes with evaluation, one has to show that for any globular sum $a$, the canonical morphism of $\infty$-groupoids
$$\Hom(a,X(c))\to \colim_n (\Hom(a,X_n)\times_{\Hom(a,(\Noiun C)_n)}\Hom(a, (\Noiun C)_{/c})_n)$$
is an equivalence. Remark that the simplicial $\infty$-groupoid $ \Hom(a, ((\Noiun C)_{/c})_\bullet)$ is equivalent to the simplicial $\infty$-groupoid $(\Hom(a,\Noiun C)_\bullet)_{/c}$.
If we denote also by $\Hom(a,X(c))$ the constant simplicial $\infty$-groupoid $n\mapsto \Hom(a,X(c))$, we have a cartesian square
\[\begin{tikzcd}[column sep =0.3cm]
	{\Hom(a,X(c))} & { \Hom(a,X_\bullet)\times_{\Hom(a,(\Noiun C)_\bullet)}\Hom(a, (\Noiun C)_\bullet)_{/c}} & { \Hom(a,X_\bullet)} \\
	{\{c\}} & {\Hom(a, (\Noiun C)_\bullet)_{/c}} & {\Hom(a, (\Noiun C)_\bullet)}
	\arrow[from=1-3, to=2-3]
	\arrow[from=1-1, to=1-2]
	\arrow[from=1-2, to=2-2]
	\arrow[from=1-1, to=2-1]
	\arrow[""{name=0, anchor=center, inner sep=0}, from=2-1, to=2-2]
	\arrow[""{name=1, anchor=center, inner sep=0}, from=2-2, to=2-3]
	\arrow[from=1-2, to=1-3]
	\arrow["\lrcorner"{anchor=center, pos=0.125}, draw=none, from=1-2, to=1]
	\arrow["\lrcorner"{anchor=center, pos=0.125}, draw=none, from=1-1, to=0]
\end{tikzcd}\]
Moreover, the left vertical morphism is a left fibration of $\iun$-category fibered in $\infty$-groupoid.
As pullbacks along left fibrations preserve final morphisms,
the morphism
$$\Hom(a,X(c))\to \Hom(a,X_\bullet)\times_{\Hom(a,(\Noiun C)_\bullet)}\Hom(a, (\Noiun C)_\bullet)_{/c}$$
is final. Taking the colimit, this implies the result.
\end{proof}

\begin{lemma}
\label{lemma:int fiber 2}
Let $i:C^\sharp\to D^\sharp$ be a morphism. The natural transformation 
$$\int_D\circ \Rb(\Noiun i)^*\to \Rb i^* \circ\int_C$$
is an equivalence.
\end{lemma}
\begin{proof}
 As equivalences between left cartesian fibrations are detected on fibers, one can suppose that $C$ is the terminal $\io$-category. Let $c$ denote the object of $D$ corresponding to $i$ and let $E$ be an object of $\Lfib(\Noiun C)$, corresponding to a left fibration $X\to \Noiun C$. 
 By construction, $\int_CE$ is a colimit of left cartesian fibrations. However, as proposition \ref{prop:fiber preserves colimits} states that $\Rb i^*$ commutes with colimit, we have 
$$\begin{array}{rclc}
\Rb i^*\int_CE&\sim &\colim_n X_n^\flat\times_{(\Noiun C)_n^\flat}\Rb i^*\Fb h^C_{\cdot}\\
&\sim &\colim_{n}(X\times_{\Noiun C} (\Noiun C)_{/c})^\flat_n&(\ref{lemma:fiber of F h .})
\end{array}$$
Moreover, remark that $\int_1 \Rb (\Noiun i)^* E$ is equivalent to $X(c)$, and the canonical morphism
$\int_D \Rb(\Noiun i)^*E\to \Rb i^* \int_CE$ is then the image by $(\uvar)^\flat$ of the equivalence given by lemma \ref{lemma:int fiber 1}.
\end{proof}

\begin{prop}
\label{prop: derived int and partial are natural}
The functors $\int_C$ and $\partial_C$ are natural in $C:\ocat^{op}$.
\end{prop}
\begin{proof}
We denote by $\Arr^{fib}(\ocatm)$ (resp. $\Arr^{fib}(\ouncat)$) the full sub $\iun$-category of $\Arr(\ocatm)$ (resp. $\Arr(\ouncat)$) whose objects are $\U$-small left cartesian fibrations (resp. $\U$-small left fibrations). 
We also set $\ocat\times_{\ocatm}\Arr^{fib}(\ocatm)$ and $\ocat\times_{\ouncat}\Arr^{fib}(\ouncat)$ as the pullbacks:
\[\begin{tikzcd}
	{\ocat\times_{\ocatm}\Arr^{fib}(\ocatm)} & {\Arr^{fib}(\ocatm)} \\
	\ocat & \ocatm \\
	{\ocat\times_{\ouncat}\Arr^{fib}(\ouncat)} & {\Arr^{fib}(\ouncat)} \\
	\ocat & \ouncat
	\arrow[""{name=0, anchor=center, inner sep=0}, "{(\uvar)^{\sharp}}"', from=2-1, to=2-2]
	\arrow["\codom", from=1-2, to=2-2]
	\arrow[from=1-1, to=2-1]
	\arrow[from=1-1, to=1-2]
	\arrow[""{name=1, anchor=center, inner sep=0}, "\Noiun"', from=4-1, to=4-2]
	\arrow[from=3-1, to=4-1]
	\arrow["\codom", from=3-2, to=4-2]
	\arrow[from=3-1, to=3-2]
	\arrow["\lrcorner"{anchor=center, pos=0.125}, draw=none, from=1-1, to=0]
	\arrow["\lrcorner"{anchor=center, pos=0.125}, draw=none, from=3-1, to=1]
\end{tikzcd}\]
The two left vertical morphism inherit from the right vertical morphisms of a structure of Grothendieck fibrations fibered in $\iun$-categories, where cartesian liftings are given by morphisms between arrows corresponding to cartesian squares.

As the assignation $C\mapsto \Fb h_{\cdot}^C$ can be promoted in a functor $\ocat\to \Arr(\Fun(\Delta,\ocatm))$
the functors $\int_C$ and $\partial_C$ are the restrictions of two functors $\int$ and $\partial$ fitting in commutative triangles:
\[\begin{tikzcd}
	& {\ocat\times_{\ocatm}\Arr^{fib}(\ocatm)} \\
	{\ocat\times_{\ouncat}\Arr^{fib}(\ouncat)} & \ocat \\
	& {\ocat\times_{\ouncat}\Arr^{fib}(\ouncat)} \\
	{\ocat\times_{\ocatm}\Arr^{fib}(\ocatm)} & \ocat
	\arrow[from=1-2, to=2-2]
	\arrow[from=3-2, to=4-2]
	\arrow[from=2-1, to=2-2]
	\arrow["\int", from=2-1, to=1-2]
	\arrow["\partial", from=4-1, to=3-2]
	\arrow[from=4-1, to=4-2]
\end{tikzcd}\]
Lemmas \ref{lemma:partial fiber} and \ref{lemma:int fiber 2} imply that these two functors preserve cartesian arrows, and the Grothendieck deconstruction then implies the desired result.
\end{proof}

\begin{theorem}
\label{theo:gr construction}
For any $\io$-category $C$, the adjunction 
$$\begin{tikzcd}
	{\int_C:\Lfib(\Noiun C)} & { \LCart(C^\sharp):\partial_C}
	\arrow[""{name=0, anchor=center, inner sep=0}, shift left=2, from=1-1, to=1-2]
	\arrow[""{name=1, anchor=center, inner sep=0}, shift left=2, from=1-2, to=1-1]
	\arrow["\dashv"{anchor=center, rotate=-90}, draw=none, from=0, to=1]
\end{tikzcd}$$
defined in \eqref{eq:underived GR constuction}, is an adjoint equivalence.
\end{theorem}
\begin{proof}
As equivalences between left fibrations and between left cartesian fibrations are detected on fibers, and as the two functors are natural in $C$, it is sufficient to show the result for $C$ being the terminal $\io$-category. In this case remark that $\Lfib(\Noiun1)\sim \LCart(1)$ and that both $\int_1$ and $\partial_1$ are the identities. 
\end{proof}

\begin{cor}
\label{cor:fib over a colimit2}
Let $F:I\to \ocatm$ be a $\Wcard$-small diagram. The canonical functor
$$\LCartc(\colim_IF) \to \lim_I \LCartc(F)$$
is an equivalence.
\end{cor}
\begin{proof}
This functor fits in an adjunction:
\[\begin{tikzcd}
	{\colim_I:\lim_I\LCartc(F)} & {\LCartc(\colim_I F)}
	\arrow[""{name=0, anchor=center, inner sep=0}, shift left=2, from=1-2, to=1-1]
	\arrow[""{name=1, anchor=center, inner sep=0}, shift left=2, from=1-1, to=1-2]
	\arrow["\dashv"{anchor=center, rotate=-90}, draw=none, from=1, to=0]
\end{tikzcd}\]
The corollary \ref{cor:fib over a colimit} implies that the counit of this adjunction is an equivalence. To conclude, we have to show that the right adjoint is essentially surjective.
By definition, the morphism $\tau_0\LCart(I^\sharp)\to \tau_0\LCartc(I)$ is an equivalence.
According to theorem \ref{theo:gr construction}, on the $\infty$-groupoid of objects, the right adjoint corresponds to the equivalence
$$\tau_0\Lfib(\Noiun\colim_I F^\sharp) \to \lim_I \tau_0\Lfib(\Noiun F^\sharp)$$
given in proposition \ref{prop:Lfib commue with colimit}.
\end{proof}

\begin{cor}
\label{cor:antecedant of slice}
Let $C$ be an $\io$-category and $c$ be an object of $c$. The left fibration $\partial_C \Fb h_c$ is the morphism of simplicial objects:
\[\begin{tikzcd}[column sep =0.5cm]
	\cdots & {\coprod_{x_0,x_1,x_2:C_0}\hom_C(y,x_0,x_1,x_2)} & {\coprod_{x_0,x_1:C_0}\hom_C(y,x_0,x_1)} & {\coprod_{x_0:C_0}\hom_C(y,x_0)} \\
	\cdots & {\coprod_{x_0,x_1,x_2:C_0}\hom_C(x_0,x_1,x_2)} & {\coprod_{x_0,x_1:C_0}\hom_C(x_0,x_1)} & {\coprod_{x_0:C_0}1}
	\arrow[shift right=4, from=2-2, to=2-3]
	\arrow[shift left=4, from=2-2, to=2-3]
	\arrow[from=2-2, to=2-3]
	\arrow[shift left=2, from=2-3, to=2-2]
	\arrow[shift right=2, from=2-3, to=2-2]
	\arrow[shift left=2, from=2-3, to=2-4]
	\arrow[shift right=2, from=2-3, to=2-4]
	\arrow[from=2-4, to=2-3]
	\arrow[from=1-3, to=2-3]
	\arrow[from=1-4, to=2-4]
	\arrow[shift left=2, from=1-3, to=1-4]
	\arrow[from=1-4, to=1-3]
	\arrow[shift right=2, from=1-3, to=1-4]
	\arrow[shift right=4, from=1-2, to=1-3]
	\arrow[from=1-2, to=1-3]
	\arrow[shift left=4, from=1-2, to=1-3]
	\arrow[shift right=2, from=1-3, to=1-2]
	\arrow[shift left=2, from=1-3, to=1-2]
	\arrow[from=1-2, to=2-2]
\end{tikzcd}\]
\end{cor}
\begin{proof}
We denote by $E:=X\to \Noiun C$ this left fibration.
According to theorem \ref{theo:gr construction}, we can equivalently show that the Grothendieck integral of $E$ is the morphism $C^{\sharp}_{c/}\to C$. 
Remark that we have by construction a family of cartesian squares
\[\begin{tikzcd}
	{X_n\times_{(\Noiun C)_n} (C_{\cdot/})_n} & {(C^\sharp)^{[1+n+1]^\sharp}} & {(C^\sharp)^{[1]^\sharp}} \\
	{\{c\}\times (\Noiun C)_n\times C^\sharp} & {C^\sharp \times (C^\sharp)^{[n]^\sharp}\times C^\sharp} & {C^\sharp\times C^\sharp}
	\arrow[from=1-1, to=2-1]
	\arrow[from=1-1, to=1-2]
	\arrow[""{name=0, anchor=center, inner sep=0}, from=2-1, to=2-2]
	\arrow[from=1-2, to=2-2]
	\arrow["{(C^{\sharp})^{h_n}}", from=1-2, to=1-3]
	\arrow[from=2-2, to=2-3]
	\arrow[from=1-3, to=2-3]
	\arrow["\lrcorner"{anchor=center, pos=0.125}, draw=none, from=1-2, to=2-3]
	\arrow["\lrcorner"{anchor=center, pos=0.125}, draw=none, from=1-1, to=0]
\end{tikzcd}\]
natural in $n$, where $h_n:[1]\to [1+n+1]$ is the simplicial morphism preserving the extremal points. The outer square factors in two cartesian squares:
\[\begin{tikzcd}
	{X_n\times_{(\Noiun C)_n} (C_{\cdot/})_n} & {C^{\sharp}_{c/}} & {(C^\sharp)^{[1]^\sharp}} \\
	{\{c\}\times (\Noiun C)_n\times C^\sharp} & {\{c\}\times C^\sharp} & {C^\sharp\times C^\sharp}
	\arrow[from=1-1, to=2-1]
	\arrow[from=1-1, to=1-2]
	\arrow[from=2-1, to=2-2]
	\arrow[from=1-2, to=2-2]
	\arrow["\lrcorner"{anchor=center, pos=0.125}, draw=none, from=1-1, to=2-2]
	\arrow[from=2-2, to=2-3]
	\arrow[from=1-3, to=2-3]
	\arrow[from=1-2, to=1-3]
	\arrow["\lrcorner"{anchor=center, pos=0.125}, draw=none, from=1-2, to=2-3]
	\arrow["\lrcorner"{anchor=center, pos=0.125}, draw=none, from=1-1, to=2-2]
\end{tikzcd}\]
This provides a canonical morphism 
$$\int_{C} E := \colim_n ( X_n\times_{(\Noiun C)_n} (\Fb h_{\cdot})_n)\to \Fb h_c^C$$
Using the naturality of the integral given in proposition \ref{prop: derived int and partial are natural}, we can see that on fibers, this morphism corresponds to the identity
$$\text{id}:\hom_C(c,d)\to \hom_C(c,d).$$

As $\int_{C} E$ and $\Fb h_c^C$ are left cartesian fibrations, this implies that they are equivalent.
\end{proof}

\begin{cor}
\label{cor:recapitulatif}
There is an equivalence, natural in $C:\ocat$,
between $\LCart([C,1]^\sharp)$ and the $\iun$-category whose objects are arrows of shape
$$\phi:X_0\times C\to X_1$$ and morphisms are natural transformations such that the induced morphism
$X_0\times C\to Y_0\times C$
is of shape $f\times \text{id}_C$.

For a left cartesian fibration $p:X\to [C,1]^\sharp$, the corresponding arrow is 
$$\phi:X_0\times C\to X_1$$
where $X_0$ and $X_1$ fit in the cartesian squares
\[\begin{tikzcd}
	{X_0^\flat} & X & {X_1^\flat} \\
	{\{0\}} & {[C,1]^\sharp} & {\{1\}}
	\arrow[from=1-1, to=1-2]
	\arrow[from=1-1, to=2-1]
	\arrow["\lrcorner"{anchor=center, pos=0.125}, draw=none, from=1-1, to=2-2]
	\arrow["p", from=1-2, to=2-2]
	\arrow[from=1-3, to=1-2]
	\arrow["\lrcorner"{anchor=center, pos=0.125, rotate=-90}, draw=none, from=1-3, to=2-2]
	\arrow[from=1-3, to=2-3]
	\arrow[from=2-1, to=2-2]
	\arrow[from=2-3, to=2-2]
\end{tikzcd}\]
and $\phi^\flat$ is the restriction to $X_0^\flat\times C^\flat \to X_0^\flat\times [C,1]^\sharp_{0/}$ of the lift appearing in the square
\[\begin{tikzcd}
	{X_0^\flat} & X \\
	{X_0^\flat\times[C,1]^{\sharp}_{0/}} & {[C,1]^\sharp}
	\arrow[from=1-1, to=1-2]
	\arrow[from=1-1, to=2-1]
	\arrow["p", from=1-2, to=2-2]
	\arrow["\phi"{description}, from=2-1, to=1-2]
	\arrow[from=2-1, to=2-2]
\end{tikzcd}\]

Conversely, given a morphism $\phi:X_0\times C\to X_1$, the corresponding left cartesian fibration is the horizontal colimit of the diagram
\[\begin{tikzcd}
	{X_0^\flat \times \Fb h_0^{[C,1]}} & {X_0^\flat\times C^\flat} & {X_1}
	\arrow[from=1-2, to=1-1]
	\arrow["\phi", from=1-2, to=1-3]
\end{tikzcd}\]
\end{cor}
\begin{proof}
This is a consequence of proposition  \ref{prop:lfib and W 2}, theorem \ref{theo:gr construction}, and example \ref{exe:of int}.
\end{proof}

\begin{cor}
\label{cor:explicit partial}
Let $E$ be an object of $\ocat_{/[b,1]^\sharp}$ corresponding to a morphism $p:X\to [b,1]^\sharp$. 
Consider the induced cartesian squares:
\[\begin{tikzcd}[sep=small]
	{ X_{0}\times b^\flat} && { X_{/1}} \\
	& { X_{0}} && X \\
	{b^\flat} && {[b,1]^\sharp_{/1}} \\
	& {\{0\}} && {[b,1]^\sharp}
	\arrow[from=1-1, to=2-2]
	\arrow["g", from=1-1, to=1-3]
	\arrow["f", from=1-3, to=2-4]
	\arrow[from=3-3, to=4-4]
	\arrow[from=4-2, to=4-4]
	\arrow[from=3-1, to=3-3]
	\arrow[from=3-1, to=4-2]
	\arrow[from=1-3, to=3-3]
	\arrow[from=2-4, to=4-4]
	\arrow[from=1-1, to=3-1]
	\arrow[from=2-2, to=2-4]
	\arrow[from=2-2, to=4-2]
\end{tikzcd}\]
The arrow associated to $\Fb E$ via the equivalence of corollary \ref{cor:recapitulatif} is equivalent to
\begin{equation}
\label{eq:cor:explicit parital}
(\bot X_{0})\times b\xrightarrow{\bot g} \bot X_{/1}
\end{equation}
where $\bot$ is the functor defined in \ref{defi:of bot}.
\end{cor}
\begin{proof}
We denote $\tilde{X}\to [b,1]^\sharp$ the morphism associated with the left cartesian fibrant replacement of $E$, denoted $\Fb E$.
As $[b,1]^\sharp_{/1}\to [b,1]^\sharp$ and $\{0\}\to [b,1]^\sharp$ are right cartesian fibrations, they are smooth, and the canonical morphisms
$$X_{/1}\to \tilde{X}_{/1}~~~~~~~\mbox{ and }~~~~~~~X_{0}\to \tilde{X}_{0}$$
are initial.
As $\bot$ sends initial morphisms to equivalences, the induced morphisms
$$\bot X_{/1}\to \bot \tilde{X}_{/1}~~~~~~~\mbox{ and }~~~~~~~\bot X_{0}\to \bot \tilde{X}_{0}$$
are equivalences. We can then suppose that $E$ corresponds to a left cartesian fibration.

As $\{1\} \to [b,1]^{\sharp}$ is a right Gray deformation retract, so is the inclusion $X_1\to X_{/1}$ according to proposition \ref{prop:left Gray transfomration stable under pullback along cartesian fibration}. The right Gray deformation retract structure induces a diagram:
\[\begin{tikzcd}
	{X_{/1}\otimes\{0\}} \\
	& {X_{/1}\otimes[1]^\sharp} & {X_{/1}} \\
	{X_{/1}\otimes\{1\}} & {X_1\otimes\{1\}}
	\arrow["id", curve={height=-18pt}, from=1-1, to=2-3]
	\arrow[from=3-1, to=2-2]
	\arrow[from=3-2, to=2-3]
	\arrow["r"', from=3-1, to=3-2]
	\arrow[from=1-1, to=2-2]
	\arrow["\phi"{description}, from=2-2, to=2-3]
\end{tikzcd}\]
By post composing with $g:X_0\otimes b^\flat \to X_{/1}$ and post composing $f:X_{/1}\to X$, we get a diagram:
\[\begin{tikzcd}
	{( X_0\times b^\flat)\otimes\{0\}} & {X_0} \\
	& {( X_0\times b^\flat)\otimes[1]^\sharp} & X \\
	{( X_0\times b^\flat)\otimes\{1\}} & {X_1\otimes\{1\}}
	\arrow[from=1-1, to=2-2]
	\arrow[from=3-1, to=2-2]
	\arrow[from=2-2, to=2-3]
	\arrow[from=1-1, to=1-2]
	\arrow["rg"', from=3-1, to=3-2]
	\arrow[from=3-2, to=2-3]
	\arrow[from=1-2, to=2-3]
\end{tikzcd}\]
Remark furthermore that the following diagram:
\[\begin{tikzcd}
	{( X_0\times b^\flat)\otimes\{0\}} & {X_0\times\{0\}} & {X_0\times \Fb h^{[b,1]}_{0/}} \\
	{( X_0\times b^\flat)\otimes[1]^\sharp} & X & {[b,1]^\sharp}
	\arrow[from=1-1, to=1-2]
	\arrow[from=1-1, to=2-1]
	\arrow[from=1-2, to=1-3]
	\arrow[from=1-3, to=2-3]
	\arrow["l"{description}, dashed, from=2-1, to=1-3]
	\arrow[from=2-1, to=2-2]
	\arrow[from=2-2, to=2-3]
\end{tikzcd}\]
admits a lift $l$. Indeed, the left vertical morphism is initial, and the right vertical one is a left cartesian fibration. 
All put together, we get a diagram 
\[\begin{tikzcd}
	{X_0\times b^\flat} & {X_0\times \Fb h^{[b,1]}_{0}} \\
	{X_{1}} & E
	\arrow[from=1-1, to=1-2]
	\arrow["rg"', from=1-1, to=2-1]
	\arrow[from=1-2, to=2-2]
	\arrow[from=2-1, to=2-2]
\end{tikzcd}\]
where the upper horizontal morphism is induced by the restriction of $l$ to $(X_0\times b^\flat)\otimes\{1\}$.
As $X_1\to X_{/1}$ is initial, we have $ \bot X_1 \sim \bot X_{/1}$ and $\bot r$ is an equivalence. As $X_0$ and $X_1$ are fibers of a left cartesian fibration, they are trivially marked, and we then have $(\bot X_0)^\flat \sim X_0$ and $(\bot X_1)^\flat \sim X_1$.
The previous diagram then corresponds to a diagram
\[\begin{tikzcd}
	{(\bot X_0)^{\flat}\times b^\flat} & {(\bot X_0)^{\flat}\times  \Fb h_{0}^{[b,1]}} \\
	{(\bot X_{/1})^{\flat}} & E
	\arrow[from=1-1, to=1-2]
	\arrow["{(\bot g)^\flat}"', from=1-1, to=2-1]
	\arrow[from=1-2, to=2-2]
	\arrow[from=2-1, to=2-2]
\end{tikzcd}\]

 We denote by $F$ the left fibration associated to the span \eqref{eq:cor:explicit parital} by corollary \ref{cor:recapitulatif}. The previous square then corresponds to a morphism 
$$\int_{[b,1]}F\to E$$
Using the naturality of $\int_{[b,1]}$, one can see that this morphism induces an equivalence on fibers, and is then an equivalence. Applying $\partial_{[b,1]}$ and using theorem \ref{theo:gr construction}, this concludes the proof.
\end{proof}

\begin{definition}
 A left cartesian fibration is \wcnotion{$\U$-small}{small left@$\U$-small left cartesian fibration} if its fibers are $\U$-small $\io$-categories.

 For an $\io$-category $A$, we denote by $\LCart_{\U}(A^\sharp)$ the full sub $\iun$-category of $\LCart(A^\sharp)$ whose objects correspond to $\U$-small left cartesian fibrations over $A^\sharp$.
 \end{definition}
\begin{cor}
\label{cor: Grt equivalence}
Let $\uni$ be the $\V$-small $\io$-category of $\U$-small $\io$-categories and $A$ a $\V$-small $\io$-category. There is an equivalence
$$\int_A:\Hom(A,\uni)\to \tau_0 \LCart_{\U}(A^\sharp)$$
natural in $A:\ocat^{op}$.
\end{cor}
\begin{proof}
This is a direct consequence of the theorem \ref{theo:gr construction} and the definition of $\uni$.
\end{proof}
\begin{cor}
\label{cor: universal fibration}
The left cartesian fibration $\int_{\uni}id$ is the universal left cartesian fibration with $\U$-small fibers, i.e for any left cartesian fibration $X\to A^\sharp$ with $\U$-small fibers, there exists a unique morphism $X\to \uni$ and a unique cartesian square:
\[\begin{tikzcd}
	X & {\dom\int_{\uni}id} \\
	{A^\sharp} & {\uni^\sharp}
	\arrow[from=1-1, to=2-1]
	\arrow[from=2-1, to=2-2]
	\arrow["{\int_{\uni}id}", from=1-2, to=2-2]
	\arrow[from=1-1, to=1-2]
	\arrow["\lrcorner"{anchor=center, pos=0.125}, draw=none, from=1-1, to=2-2]
\end{tikzcd}\]
\end{cor}
\begin{proof}
This is a direct consequence of the corollary \ref{cor: Grt equivalence} and the functoriality of the Grothendieck construction given in proposition \ref{prop: derived int and partial are natural}.
\end{proof}

\subsection{Lax Univalence}
\begin{notation*}
Through this section, we will identify any marked $\io$-category $C$ with the canonical induced morphism $C\to1$. If $f:X\to Y$ is a morphism, $f\times C$ then corresponds to the canonical morphism $X\times C\to Y$.
\end{notation*}

For the remaining of this section, we fix a marked $\io$-category $I$. As $[n]^\sharp_{\{k\}/} \sim [n-k]^\sharp$, proposition \ref{prop:explicit factoryzation} implies that $\Fb h^{[n]}_k$, the left cartesian fibrant replacement of $h_k:\{k\}\to [n]$, corresponds to the inclusion $(d_{0}^\sharp)^k:[n-k]^\sharp\to [n]^\sharp$.

\begin{construction}
 We define the functor \index[notation]{(intt@${{\oint}_{n,I}}$}
$$\oint_{n,I}: \Fun([n],\ocatm_{/I})\to \ocatm_{/I\otimes[n]^\sharp}$$
whose value on a morphism $E:[n]\to \ocatm_{/I}$ corresponding to a sequence $E_0\to ....\to E_n$, is
$$\oint_{n,I}E:=\colim_{m} \coprod_{i_0\leq... \leq i_m\leq n} E_{i_0}\otimes \Fb h_{i_m}^{[n]}.$$
As this functor is colimit preserving, it induces an adjunction \index[notation]{(partiall@$\ringpartial_{n,I}$}
\begin{equation}
\label{eq:Gr adj lax 1}
\begin{tikzcd}
	{\oint_{n,I}:\Fun([n],\ocatm_{/I})} & {\ocatm_{/I\otimes[n]^\sharp}:\ringpartial_{n,I}}
	\arrow[""{name=0, anchor=center, inner sep=0}, shift left=2, from=1-1, to=1-2]
	\arrow[""{name=1, anchor=center, inner sep=0}, shift left=2, from=1-2, to=1-1]
	\arrow["\dashv"{anchor=center, rotate=-90}, draw=none, from=0, to=1]
\end{tikzcd}
\end{equation}
\end{construction}

\begin{lemma}
\label{lemma:oint preserves init}
The functor $\oint_{n,I}$ sends a natural transformation that is pointwise initial to an initial morphism.
\end{lemma}
\begin{proof}
As initial morphisms are closed under colimits, we have to show that for any integer $k$, and any morphism $E\to F$ of $\ocatm_{/I}$ corresponding to a triangle $X\xrightarrow{i} Y\to I$, the induced morphism $X\otimes [n-k]^\sharp\to Y\otimes [n-k]^\sharp$ over $I\otimes[n]^\sharp$ is initial whenever $i$ is. For this, remark that there is a square
\[\begin{tikzcd}
	{X\otimes\{0\}} & {X\otimes[n-k]^\sharp} \\
	{Y\otimes\{0\}} & {Y\otimes[n-k]^\sharp}
	\arrow["i", from=1-1, to=2-1]
	\arrow[from=2-1, to=2-2]
	\arrow[from=1-1, to=1-2]
	\arrow[from=1-2, to=2-2]
\end{tikzcd}\]
where the two horizontal morphisms are initial.
By stability by composition and left cancellation of initial morphism, this implies the result.
\end{proof}

\begin{construction}
According to the last lemma, the adjunction \eqref{eq:Gr adj lax 1} induces a derived adjunction
\begin{equation}
\label{eq:Gr adj lax 2}
\begin{tikzcd}
	{\Lb \oint_{n,I}:\Fun([n],\LCart(I))} & {\LCart(I\otimes[n]^\sharp):\Rb \ringpartial_{n,I}}
	\arrow[""{name=0, anchor=center, inner sep=0}, shift left=2, from=1-1, to=1-2]
	\arrow[""{name=1, anchor=center, inner sep=0}, shift left=2, from=1-2, to=1-1]
	\arrow["\dashv"{anchor=center, rotate=-90}, draw=none, from=0, to=1]
\end{tikzcd}
\end{equation}
where $\Rb \ringpartial_{n,I}$ is just the restriction of $\ringpartial_{n,I}$ to $\LCart(I\otimes[n]^\sharp)$.
\end{construction}

\begin{remark}
Let $E:[n]\to \ocatm_{/I}$ corresponding to a sequence $E_0\to \dots \to E_n$. By construction, we have
$$\Lb \oint_{n,I}E:=\colim_{m} \coprod_{i_0\leq\dots \leq i_m\leq n} \Fb (E_{i_0}\otimes \Fb h_{i_m}^{[n]}).$$
\end{remark}

\begin{lemma}
\label{lemma:ringpartial fiber}
Let $i:[n]^\sharp \to [m]^\sharp$ and $j:I\to J$ be two morphisms. Let $E$ be an object of $\LCart(I\otimes [m]^\sharp)$. The natural transformation
$$\ringpartial_{n,I} (j\otimes i)^*E\to j^*\circ \ringpartial_{m,J} E\circ i^\natural $$
 is an equivalence.
\end{lemma}
\begin{proof}
As invertible natural transformations are detected pointwise, one can suppose that $n=0$, and let $k$ be the image of $[0]$ by $i$.
Let $E_0\to E_1\to.. \to E_m $ be the sequence of morphisms of $\LCart(J)$ corresponding to $\ringpartial_{m,J} E$.

The object $ j^*\circ \ringpartial_{m,J} E\circ i^\natural$ is then equivalent to $ j^* E_k$ by definition. 
As $\ringpartial_{0,I}$ is the identity, we have to show that the canonical morphism $(j\otimes \{k\})^*E\to j^*E_k$ is an equivalence. Remark that for any
 $F$ of $\ocatm_{/I}$, we have by adjunction a commutative square: 
\[\begin{tikzcd}
	{\Hom(F,(j\otimes  \{k\})^*E)} & {\Hom(F,j^* E_k)} \\
	{\Hom((j\otimes  \{k\})_!F,E)} & {\Hom((j_!F)\otimes h_k^{[n]},E)}
	\arrow[from=1-1, to=1-2]
	\arrow["\sim", from=1-1, to=2-1]
	\arrow["\sim", from=1-2, to=2-2]
	\arrow[from=2-1, to=2-2]
\end{tikzcd}\]
where the two vertical morphisms are equivalences. As $((j\otimes \{k\})_!F\sim (j_!F)\otimes h_k^{[n]}$, the lower morphism is an equivalence, and so is the top one. This implies the desired result.
\end{proof}

\vspace{1cm} In the following lemmas and proposition, we focus on the case where $I$ is of the form $A^\sharp$, where everything happens more simply thanks to the equivalence $A^\sharp\otimes[n]^\sharp\sim A^\sharp\times[n]^\sharp$ provided by proposition \ref{prop:associativity of Gray amput 1.5}.
\begin{lemma}
\label{lemme:oint a sharp is natural1}
Let $j:A\to B$ be a morphism between $\io$-categories and $i:[n]\to [m]$ a morphism of $\Delta$. Let $E$ be an object of $\Fun([n],\LCart(A^\sharp))$.
The canonical morphism 
$$\Lb\oint_{n,A^\sharp}( \Rb j^*\circ E\circ i)\to \Rb(j\times i^\sharp)^* \Lb \oint_{m,B^\sharp} E$$
is an equivalence.
\end{lemma}
\begin{proof}
As equivalences in $\Fun([m],\LCart(B^\sharp))$ are detected on points, an equivalences on $\LCart(B^\sharp\times [m]^\sharp)$ are detected on fibers, we can suppose that $n=0$, $A=1$, and we denote by $k$ the image of $i$ and $a$ the image of $B$. As $\Lb\oint_{0,1}$ is the identity, one has to show that the canonical morphism 
\begin{equation}
\label{eq:equationoint a sharp is natural1}
\Rb a^* E_k\to \Rb(a\times \{k\})^* \Lb \oint_{m,B^\sharp} E
\end{equation}
 is an equivalence.

Moreover, for any $l\leq n$, the proposition \ref{prop:cotimes 1 to ctimes 1 is a trivialization} implies that the canonical morphism $\Fb(E_l\otimes \Fb h_l^{[n]})\to E_l\times \Fb h_l^{[n]}$ is an equivalence, as this two left cartesian fibrations are replacement of $E_l\otimes h_l^{[n]}\sim E_l\times h_l^{[n]}$.
Furthermore, we have $\Rb\{k\}^* h_l^{[n]}\sim \emptyset$ if $l>k$ and $\{k\}^* h_l^{[n]}\sim 1$ if not. As a consequence, 
$$\Rb(a\times \{k\})^*\Fb(E_l\otimes \Fb h_l^{[n]})\sim \emptyset\mbox{ if $l>k$, and }\Rb(a\times \{k\})^*\Fb(E_l\otimes \Fb h_l^{[n]})\sim \Rb a^*E_l\mbox{ if not.}$$ According to proposition \ref{prop:fiber preserves colimits}, $\Rb(a\times \{k\})^*$ preserves colimits, we then have 
$$ \Rb(a\times \{k\})^* \Lb \oint_{m,B^\sharp} E\sim \colim_m\coprod_{i_0\leq ...\leq i_m\leq k}\Rb a^*E_{i_0}\sim \colim_{i:[k]}\Rb a^*E_i\sim \Rb a^*E_k.$$
The morphism \eqref{eq:equationoint a sharp is natural1} is then an equivalence, which concludes the proof.
\end{proof}

\begin{prop}
\label{prop:ring partial is natural}
The functor $\Rb\ringpartial_{n,I}$ is natural in $n:\Delta^{op}$ and $I:\ocatm^{op}$. The functor $\Lb \oint_{n,A^\sharp}$ is natural in $n:\Delta^{op}$ and $A:\ocat^{op}$.
\end{prop}
\begin{proof}
The proof is similar to the one of proposition \ref{prop: derived int and partial are natural}, using lemma \ref{lemma:ringpartial fiber} and lemma \ref{lemme:oint a sharp is natural1} instead of lemma \ref{lemma:partial fiber} and lemma \ref{lemma:int fiber 2}. 
 \end{proof}

\begin{prop}
\label{prop:lax gr construction particular case}
For any $\io$-category $A$ and any integer $n$, the adjunction 
\[\begin{tikzcd}
	{\Lb \oint_{n,A^\sharp}:\Fun([n],\LCart(A^\sharp))} & {\LCart((A\times[n])^\sharp):\Rb \ringpartial_{n,A^\sharp}}
	\arrow[""{name=0, anchor=center, inner sep=0}, shift left=2, from=1-1, to=1-2]
	\arrow[""{name=1, anchor=center, inner sep=0}, shift left=2, from=1-2, to=1-1]
	\arrow["\dashv"{anchor=center, rotate=-90}, draw=none, from=0, to=1]
\end{tikzcd}\]
 is an adjoint equivalence.
\end{prop}
\begin{proof}
As in both case equivalences are detected on fibers, and as these functors are natural in $A$ and $n$, one can show the result for $A$ being the terminal $\io$-category and $n=0$. In this case remark that these two functors are the identities. 
\end{proof}

\vspace{1cm} We return to the general case.

\begin{construction}
We set \wcnotation{$\Fun^c([n],\LCart(I))$}{(func@$\Fun^c([\uvar],\uvar)$} as the pullback
\[\begin{tikzcd}
	{\Fun^c([n],\LCart(I))} & {\Fun([n],\LCart(I))} \\
	{\prod_{k\leq n}\LCart(I^\sharp)} & {\prod_{k\leq n}\Fun(\{k\},\LCart(I))}
	\arrow[from=1-1, to=2-1]
	\arrow[""{name=0, anchor=center, inner sep=0}, from=2-1, to=2-2]
	\arrow[from=1-2, to=2-2]
	\arrow[from=1-1, to=1-2]
	\arrow["\lrcorner"{anchor=center, pos=0.125}, draw=none, from=1-1, to=0]
\end{tikzcd}\]
where $I^\sharp$ stand for $(I^\natural)^\sharp$. An object of this $\iun$-category is then a sequence in $\LCart(I)$:
\[\begin{tikzcd}
	{F_0} & {...} & {F_ n}
	\arrow[from=1-1, to=1-2]
	\arrow[from=1-2, to=1-3]
\end{tikzcd}\]
such that for any integer $i\leq n$, $F_i$ is classified. 
A $1$-cell of this $\iun$-category is a sequence of square in $\LCart(I)$:
\[\begin{tikzcd}
	{F_0} & {...} & {F_ n} \\
	{G_0} & {...} & {G_n}
	\arrow[from=1-1, to=1-2]
	\arrow[from=1-2, to=1-3]
	\arrow[from=1-2, to=2-2]
	\arrow[from=1-1, to=2-1]
	\arrow[from=1-3, to=2-3]
	\arrow[from=2-1, to=2-2]
	\arrow[from=2-2, to=2-3]
\end{tikzcd}\]
such that for any $k\leq n$, the morphism $F_k\to G_k$ comes from a morphism beetwen the corresponding objects of $\LCart(I^\sharp)$.
\end{construction}

\begin{prop}
\label{prop:Fun preserve colimies}
Let $F:I\to \ocatm$ be a $\Wcard$-small diagram. The canonical functor
$$\Fun^c([n],\LCart(\colim_I F))\to \lim_I\Fun^c([n],\LCart(F))$$
is an equivalence.
\end{prop}
\begin{proof}
This morphism fits in an adjunction:
\[\begin{tikzcd}
	{\colim_I:\lim_I\Fun^c([n],\LCart(F))} & {\Fun^c([n],\LCart(\colim_I F))}
	\arrow[""{name=0, anchor=center, inner sep=0}, shift left=2, from=1-2, to=1-1]
	\arrow[""{name=1, anchor=center, inner sep=0}, shift left=2, from=1-1, to=1-2]
	\arrow["\dashv"{anchor=center, rotate=-90}, draw=none, from=1, to=0]
\end{tikzcd}\]
The corollary \ref{cor:fib over a colimit} implies that the counit of this adjunction is an equivalence. To conclude, we have to show that the right adjoint is essentially surjective. On objects, this adjunction corresponds to the canonical equivalence 
$$\lim_I\Hom([n],\LCartc(F))\sim \Hom([n],\LCartc(\colim_IF))$$
induced by corollary \ref{cor:fib over a colimit2}
\end{proof}

\begin{construction}
 As $\Rb\ringpartial_{0,I}$ is the identity, lemma \ref{lemma:ringpartial fiber} implies that the functor
$$\LCart((I\otimes[n]^\sharp)^\sharp)\to \LCart(I\otimes[n]^\sharp) \xrightarrow{\Rb\ringpartial_{n,I}} \Fun([n],\LCart(I))$$
 factors through a functor \index[notation]{(partialll@$\ringpartial_{n,I}^c$}
 \begin{equation}
 \label{eq:def of right partial classified}
\ringpartial_{n,I}^c:\LCart((I\otimes[n]^\sharp)^\sharp)\to \Fun^c([n],\LCart(I))
\end{equation}
\end{construction}
We are now willing to show that this functor is an equivalence, and to this extent, we will construct an inverse.

\begin{notation}
For $a$ an object of $t\Theta$. We set $[a,1]^\sharp:=([a,1]^\natural)^\sharp$
 and we denote $\iota$ the canonical inclusion $[a,1]\to [a,1]^\sharp$.
 \end{notation}

\begin{lemma}
\label{lemma:replacement of unmarked slice}
Let $a$ be an object of $t\Theta$.
We have an equivalence
$$\Lb\iota_!\Rb\iota^*\Fb h_{1}^{[a^\natural,1]}\sim \Fb h_{1}^{[a^\natural,1]}$$
and an equivalence
$$\Lb\iota_!\Rb\iota^*\Fb h_{0}^{[a^\natural,1]}\sim \Fb h_{0}^{[a^\natural,1]}\coprod_{a^\flat\otimes\{0\}}(a\otimes[1]^\sharp)^\flat.$$
Moreover the morphism 
$\Lb\iota_!(a^\flat \to \Fb h_{0}^{[a^\natural,1]})$ corresponds to the inclusion 
$$(a\otimes\{0\})^\flat \to (a\otimes[1]^\sharp)^\flat \to \Fb h_{0}^{[a^\natural,1]}\coprod_{a^\flat\otimes\{0\}}(a\otimes[1]^\sharp)^\flat.$$
\end{lemma}
\begin{proof}
The first assertion is trivial after noting that $\Fb h_{1}^{[a^\natural,1]}$ is the inclusion $\{1\}\to [a,1]^\sharp$. The theorem \ref{theo:equivalence between slice and join} implies that 
$\iota_!\Rb\iota^*\Fb h_{0}^{[b,1]}$ and $\iota_!\Rb\iota^*\Fb h_{0}^{[(\Db_n)_t,1]}$ are respectively equivalent to 
$$(1\costar b)^\flat\to [b,1]^\sharp~~~~ \mbox{and}~~~~ (1\costar \Db_n)^{\sharp_{n+1}}\to [\Db_n,1]^\sharp$$
The theorem \ref{theo:formula between pullback of slice and tensor marked case} induces cartesian diagrams
\[\begin{tikzcd}[sep =0.1cm]
	{b^\flat\otimes\{0\}} && {(b\otimes[1])^\flat} && {\Db_n^\flat\otimes\{0\}} && {(\Db_n\otimes[1])^{\sharp_{n+1}}} \\
	& 1 && {(1\costar b)^\flat} && 1 && {(1\costar \Db_n)^{\sharp_{n+1}}} \\
	{b^\flat} && {b^\flat\star 1} && {\Db_n^\flat} && {\Db_n^\flat\star 1} \\
	& {\{0\}} && {[b,1]^\sharp} && {\{0\}} && {[\Db_n,1]^\sharp}
	\arrow[from=4-2, to=4-4]
	\arrow[from=2-4, to=4-4]
	\arrow[from=1-3, to=3-3]
	\arrow[from=1-1, to=3-1]
	\arrow[from=2-2, to=4-2]
	\arrow[from=3-1, to=4-2]
	\arrow[from=3-3, to=4-4]
	\arrow[from=1-3, to=2-4]
	\arrow[from=1-1, to=2-2]
	\arrow[from=1-1, to=1-3]
	\arrow[from=2-2, to=2-4]
	\arrow[from=3-1, to=3-3]
	\arrow[from=1-5, to=2-6]
	\arrow[from=1-5, to=1-7]
	\arrow[from=1-7, to=2-8]
	\arrow[from=3-7, to=4-8]
	\arrow[from=3-5, to=4-6]
	\arrow[from=1-5, to=3-5]
	\arrow[from=2-6, to=4-6]
	\arrow[from=1-7, to=3-7]
	\arrow[from=2-8, to=4-8]
	\arrow[from=3-5, to=3-7]
	\arrow[from=4-6, to=4-8]
	\arrow[from=2-6, to=2-8]
\end{tikzcd}\]
Remark furthermore that we have an equivalence
$$\bot(\Db_n\otimes[1])^{\sharp_{n+1}}\sim \tau^i_{n}(\Db_n\otimes[1])=: ((\Db_n)_t\otimes[1]^\sharp)^\natural.$$
Applying the full duality to theorem \ref{theo:equivalence between slice and join} and using the corollary \ref{cor:explicit partial}, this proves the first assertion.

The second assertion follows from the naturality in $E$ of the construction given in corollary \ref{cor:explicit partial} and from the squares
\[\begin{tikzcd}
	{b^\flat\otimes\{1\}} & {b^\flat} & {\Db_n^\flat\otimes\{1\}} & {\Db_n^\flat} \\
	{(b\otimes[1])^\flat} & {(1\costar b)^\flat} & {(\Db_n\otimes[1])^{\sharp_{n+1}}} & {(1\costar \Db_n)^{\sharp_{n+1}}} \\
	{b^\flat\star 1} & {[b,1]^\sharp} & {\Db_n^\flat\star 1} & {[\Db_n,1]^\sharp}
	\arrow[from=2-1, to=3-1]
	\arrow[from=2-1, to=2-2]
	\arrow[from=3-1, to=3-2]
	\arrow[from=2-2, to=3-2]
	\arrow[from=3-3, to=3-4]
	\arrow[from=2-4, to=3-4]
	\arrow[from=2-3, to=3-3]
	\arrow[from=2-3, to=2-4]
	\arrow[from=1-1, to=2-1]
	\arrow[from=1-1, to=1-2]
	\arrow[from=1-2, to=2-2]
	\arrow[from=1-4, to=2-4]
	\arrow[from=1-3, to=2-3]
	\arrow[from=1-3, to=1-4]
\end{tikzcd}\]
that are cartesian according to theorem \ref{theo:formula between pullback of slice and tensor marked case}.
\end{proof}

\begin{lemma}
\label{lemma:explicit iota excalmation}
Let $a$ be an object of $t\Theta$ and $E$ an object of $\LCart([a,1]^\sharp)$. 
The left cartesian fibration $\Lb\iota_!\Rb \iota^*E$ is the left cartesian fibration
$$
X_0^\flat\times (\Fb h_0^{[a^\natural,1]}\coprod_{ a^\flat} (a\otimes[1]^\sharp)^\flat) \coprod_{X_0^\flat\times (a\otimes\{1\})^\flat}X_1^\flat 
$$
where $X_0\times a^\natural \to X_1$ is the arrow corresponding to $E$ via the equivalence given in corollary \ref{cor:recapitulatif}.
\end{lemma}
\begin{proof}
By corollary \ref{cor:recapitulatif}, $E$ corresponds to the left cartesian fibration
$$X_0^\flat\times \Fb h_0^{[a^\natural,1]}\coprod_{X_0^\flat\times a^\flat}X_1^\flat.$$
Lemma \ref{lemma:replacement of unmarked slice} provides an initial morphism from $\iota_!\Rb \iota^*E$ to this object, and theorem \ref{theo:left cart stable by colimit} implies that this object is a left cartesian fibration.
\end{proof}

\begin{lemma}
\label{lemma:characterisation of natural transoformation}
Let $\psi: \iota_!\Rb \iota^*\to \Lb\iota_!\Rb \iota^*$ be a natural transformation, endowed with a family of natural commutative squares:
\[\begin{tikzcd}
	{ \iota_!\Rb \iota^*(B^\flat\times E)} & {\Lb\iota_!\Rb \iota^*(B^\flat\times E)} \\
	{B^\flat \times\iota_!\Rb \iota^*E} & {B^\flat \times\iota_!\Rb \iota^*E}
	\arrow["{\psi_{B^\flat\times E}}", from=1-1, to=1-2]
	\arrow[from=1-1, to=2-1]
	\arrow["{B^\flat\times\psi_{E}}"', from=2-1, to=2-2]
	\arrow[from=1-2, to=2-2]
\end{tikzcd}\]
where we identify marked $\io$-categories with their canonical morphims to the terminal marked $\io$-category.

The natural transformation $\psi$ is then the one obtained by the functorial factorization in initial morphisms followed by left cartesian fibrations. 
\end{lemma}
\begin{proof}
The natural transformation $\psi$ induces a natural transformation $\Db \psi:\Lb\iota_!\Rb \iota^*\to \Lb\iota_!\Rb \iota^*$ and we have to check that this last natural transformation is the identity. The corollary \ref{cor:recapitulatif} states that $E$ is a colimit of left cartesian fibration of shape $B^\flat \times \Fb h^{[a^\natural,1]}_{\epsilon}$ for $\epsilon\in \{0,1\}$. The hypothesis implies that we just have to show that $\Db \psi_{\Fb h^{[a^\natural,1]}_{0}}$ and $\Db \psi_{\Fb h^{[a^\natural,1]}_{1}}$ are equivalences, and we will check this on fibers.

Using the explicit expression of $\Lb\iota_!\Rb \iota$ given in lemma \ref{lemma:explicit iota excalmation}, we have equivalences
$$\{0\}^*\Lb\iota_!\Rb \iota \Fb h_0^{[a^\natural,1]} \sim 1~~~~~~~~
\{0\}^*\Lb\iota_!\Rb \iota \Fb h_1^{[a^\natural,1]} \sim \emptyset~~~~~~~~
\{1\}^*\Lb\iota_!\Rb \iota \Fb h_0^{[a^\natural,1]} \sim 1$$
which directly implies that $\{0\}^*\Db \psi_{\Fb h_0^{[a^\natural,1]}}$, $\{0\}^*\Db \psi_{\Fb h_1^{[a^\natural,1]}}$ and $\{1\}^*\Db \psi_{\Fb h_1^{[a^\natural,1]}}$ are equivalences. The only case remaining is $\{1\}^*\Db \psi_{\Fb h_0^{[a^\natural,1]}}$. This morphism corresponds to an endomorphism of $(a\otimes[1]^\sharp)^\natural$, which is a strict object according to \ref{prop:tensor of glboer are strics}. By right cancellation, the morphism induced by the domain of $\Db\psi_{\Fb h_0^{[a^\natural,1]}}$ is a left cartesian fibration. There exists then a lift in the following diagram
\begin{equation}
\label{eq:square in proof of replement ofneofoeijfoepaj}
\begin{tikzcd}[cramped]
	{\{0\}} & {[a,1]^{\sharp}_{0/}\coprod_{a^\flat\otimes\{0\}}(a\otimes[1]^\sharp)^\flat} \\
	{[a,1]^{\sharp}_{0/}} & {[a,1]^{\sharp}_{0/}\coprod_{a^\flat\otimes\{0\}}(a\otimes[1]^\sharp)^\flat}
	\arrow[from=1-1, to=1-2]
	\arrow[from=1-1, to=2-1]
	\arrow["{\dom\Db\psi_{\Fb h_0^{[a^\natural,1]}}}", from=1-2, to=2-2]
	\arrow["l"{description}, from=2-1, to=1-2]
	\arrow["\gamma"', from=2-1, to=2-2]
\end{tikzcd}
\end{equation}
where $\gamma$ is the canonical inclusion. As $l$ and $\gamma$ are lifts in the following diagram:
\[\begin{tikzcd}
	{\{0\}} & {[a,1]^{\sharp}_{0/}\coprod_{a^\flat\otimes\{0\}}(a\otimes[1]^\sharp)^\flat} \\
	{[a,1]^{\sharp}_{0/}} & {[a,1]^\sharp}
	\arrow[from=1-2, to=2-2]
	\arrow[from=1-1, to=2-1]
	\arrow[from=2-1, to=2-2]
	\arrow[from=1-1, to=1-2]
	\arrow[from=2-1, to=1-2]
\end{tikzcd}\]
they are equivalent. Taking the fiber on $\{1\}$ of the  square \eqref{eq:square in proof of replement ofneofoeijfoepaj}, this induces a commutative triangle:
\[\begin{tikzcd}
	{(a\otimes\{0\})^\natural} & {(a\otimes[1]^\sharp)^\natural} \\
	& {(a\otimes[1]^\sharp)^\natural}
	\arrow[from=1-1, to=1-2]
	\arrow["{\{1\}^*\Db \psi_{\Fb h_0^{[a^\natural,1]}}}", from=1-2, to=2-2]
	\arrow[from=1-1, to=2-2]
\end{tikzcd}\]
Eventually, the naturality induces a commutative squares. 
\[\begin{tikzcd}[cramped]
	{(a\otimes\{1\})^\natural} & {(a\otimes[1]^\sharp)^\natural} \\
	{(a\otimes\{1\})^\natural} & {(a\otimes[1]^\sharp)^\natural}
	\arrow[from=1-1, to=1-2]
	\arrow["{\{1\}^*\Db\psi_{a^\flat\times \Fb h_1^{[a^\natural,1]}}\sim id}"', from=1-1, to=2-1]
	\arrow["{\{1\}^*\Db\psi_{\Fb h_0^{[a^\natural,1]}}}", from=1-2, to=2-2]
	\arrow[from=2-1, to=2-2]
\end{tikzcd}\]
The restriction of the morphism $\{1\}^*\Db\psi_{\Fb h_0^{[a^\natural,1]}}:(a\otimes[1]^\sharp)^\natural\to (a\otimes[1]^\sharp)^\natural$ to $a\otimes\{0\}$ and $a\otimes\{1\}$ is therefore the identity.
Using Steiner theory, we can easily show that it forces $\{1\}^*\Db\psi_{\Fb h_0^{[a^\natural,1]}}$ to also be the identity.
\end{proof}

\begin{construction}
Let $F$ be an object of $\LCart([a,1]^\sharp)$, and $\phi:\Rb \iota^* E\to \Rb \iota^* F$ a morphism. By adjunction, this corresponds to a morphism $\tilde{\phi}: \iota_! \Rb\iota^*E\to F$, and as $F$ corresponds to a left cartesian fibration, this induces a morphism $\Db\tilde{\phi}:\Lb\iota_! \Rb\iota^*E\to F$. Using once again theorem \ref{theo:gr construction}, this induces a morphism 
$\partial_{[a^\natural,1]} \Lb\iota_! \Rb\iota^*E\to \partial_{[a^\natural,1]} F$, that corresponds, according to the explicit expression of $\Lb\iota_!\Rb \iota$ given in lemma \ref{lemma:explicit iota excalmation}, to a commutative square
\[\begin{tikzcd}
	{X_0\times a^\natural} & {Y_0\times a^\natural} \\
	{X_0\times (a\otimes[1]^\sharp)^\natural \coprod_{X_0\times a^\natural}X_1} & {Y_1}
	\arrow["{\Db \tilde{\phi}_0\times a^\natural}", from=1-1, to=1-2]
	\arrow[from=1-1, to=2-1]
	\arrow[from=1-2, to=2-2]
	\arrow["{\Db \tilde{\phi}_1}"', from=2-1, to=2-2]
\end{tikzcd}\]
where $Y_0\times a^\flat \to Y_1$ corresponds to $\partial_{[a^\natural,1]} F$ via the equivalence given in proposition \ref{prop:lfib and W 2}.
This is equivalent to a diagram
\begin{equation}
\label{eq:lax technical big diagram}
\begin{tikzcd}
	{X_0\times a^\natural} && {Y_0\times a^\natural} \\
	& {X_0\times (a\otimes [1]^\sharp)^\natural} && {Y_1} \\
	{X_0\times a^\natural} && {X_1}
	\arrow["{\Db \tilde{\phi}_0\times a^\natural}", from=1-1, to=1-3]
	\arrow[from=1-1, to=2-2]
	\arrow[from=1-3, to=2-4]
	\arrow[from=2-2, to=2-4]
	\arrow[from=3-1, to=2-2]
	\arrow[from=3-1, to=3-3]
	\arrow[from=3-3, to=2-4]
\end{tikzcd}
\end{equation}
According to proposition \ref{prop:lfib and W 3}, this corresponds to an object $\xi(\phi)$ of $\Lfib(\Noiun([a,1]\otimes[1]^\sharp)^\natural))$ endowed with two equivalences:
$$\partial_{[a^\natural,1]}E\sim \Noiun([a,1]\otimes\{0\})^*\xi(\phi)~~~~~~~
\partial_{[a^\natural,1]} F\sim \Noiun([a,1]\otimes\{1\})^*\xi(\phi)$$
Using the naturality of $\int_C$ demonstrated in proposition \ref{prop: derived int and partial are natural}, these equivalences induce equivalences:
\begin{equation}
\label{eq:fiber of xi}
E\sim ([a,1]\otimes\{0\})^*\int_{([a,1]\otimes[1]^\sharp)^\natural}\xi(\phi)~~~~~~~
 F\sim ([a,1]\otimes\{1\})^*\int_{([a,1]\otimes[1]^\sharp)^\natural}\xi(\phi)
\end{equation}
All the operations we performed were functorial and admitted inverses. 
We then have constructed an equivalence
\begin{equation}
\label{eq:inverse of ring partial}
\int_{([a,1]\otimes[1]^\sharp)^\natural}\xi:\Fun^c([1],\LCart([a,1]))\to \LCart(([a,1]\otimes[1]^\sharp)^\sharp)
\end{equation}
\end{construction}
 
\begin{lemma}
\label{lemma:lax Gr construction technical}
Let $a$ be an object of $t\Theta$,  $E,F$  two objects of $\LCart([a,1]^\sharp)$ and $\phi:\Rb \iota^* E\to \Rb \iota^* F$ a morphism.
By adjunction, this corresponds to a morphism $\tilde{\phi}: \iota_! \Rb\iota^*E\to F$.

There is a unique commutative square of shape
\begin{equation}
\label{eq:lemma:lax Gr construction technical}
\begin{tikzcd}
	{\iota_!\iota^*E\otimes \{0\}} & {E\otimes\{0\}} \\
	{\iota_!\iota^*E\otimes id_{[1]^\sharp}} & {\int_{([a,1]\otimes[1]^\sharp)^\natural}\xi(\phi)} \\
	{\iota_!\iota^*E\otimes\{1\}} & {F\otimes \{1\}}
	\arrow[from=1-2, to=2-2]
	\arrow[from=3-2, to=2-2]
	\arrow[from=1-1, to=2-1]
	\arrow[from=3-1, to=2-1]
	\arrow[from=3-1, to=3-2]
	\arrow[from=2-1, to=2-2]
	\arrow[from=1-1, to=1-2]
\end{tikzcd}
\end{equation}
where the upper horizontal morphism is induced by the unit of the adjunction $(\iota_!,\iota^*)$.

Moreover, the bottom horizontal morphism is $\tilde\phi$.
\end{lemma}
\begin{proof}
The unicity and existence of the middle horizontal morphism come from the initiality of the morphism $\iota_!\iota^*E\otimes \{0\}\to \iota_!\iota^*E\otimes [1]^\sharp$. The unicity and existence of the lower horizontal morphism is a consequence of the equation \eqref{eq:fiber of xi}. 
As the diagram \eqref{eq:lax technical big diagram} factors as
\[\begin{tikzcd}
	& {Y(0)\times a^\natural} \\
	{X(0)\times a^\natural} & {X(0)\times a^\natural} & {Y(1)} \\
	& {X(0)\times (a\otimes [1]^\sharp)^\natural} & {X(0)\times (a\otimes [1]^\sharp)^\natural\coprod_{X(0)\times a^\natural}X(1)} \\
	{X(0)\times a^\natural} & {X(1)}
	\arrow[from=1-2, to=2-3]
	\arrow[from=4-1, to=4-2]
	\arrow[from=2-1, to=3-2]
	\arrow[from=4-1, to=3-2]
	\arrow["{\Db \tilde{\phi}(0)\times a^\natural}", from=2-1, to=2-2]
	\arrow[from=3-2, to=3-3]
	\arrow[from=4-2, to=3-3]
	\arrow[from=2-2, to=1-2]
	\arrow[from=2-2, to=3-3]
	\arrow[from=3-3, to=2-3]
\end{tikzcd}\]
the downer square of the diagram of \eqref{eq:lemma:lax Gr construction technical} factors as
\[\begin{tikzcd}
	{\iota_!\iota^*E\otimes [1]^\sharp} & {\int_{([a,1]\otimes[1]^\sharp)^\natural}\xi(\mu_E)} & {\int_{([a,1]\otimes[1]^\sharp)^\natural}\xi(\phi)} \\
	{\iota_!\iota^*E\otimes\{1\}} & {\Lb \iota_! \iota^*E\otimes\{1\}} & {F\otimes \{1\}}
	\arrow[from=2-3, to=1-3]
	\arrow[from=2-1, to=1-1]
	\arrow[from=2-1, to=2-2]
	\arrow[from=1-1, to=1-2]
	\arrow[from=2-2, to=1-2]
	\arrow[from=1-2, to=1-3]
	\arrow["{\Db \tilde{\phi}}"', from=2-2, to=2-3]
\end{tikzcd}\]
where $\mu_E$ denotes the canonical morphism $\iota_!\iota^*E\to \Lb \iota_! \iota^*E$. To conclude, one has to show that the lower left horizontal morphism is $\mu_E$. As these constructions are natural, and commute with the cartesian product with $B^\flat\to 1$ for $B$ an $\io$-category, the lemma \ref{lemma:characterisation of natural transoformation} implies the desired result.
\end{proof}

\begin{lemma}
\label{lemma:ring partial eq for a 1}
Let $a$ be an object of $t\Theta$.
The functor $\ringpartial^c_{1,[a,1]}$ defined in \eqref{eq:def of right partial classified}  is an equivalence.
\end{lemma}
\begin{proof}
The lemma \ref{lemma:lax Gr construction technical} induces a diagram
\[\begin{tikzcd}
	{\iota^*E\otimes\Fb h^{[1]}_1} & {\iota^*E\otimes\Fb h^{[1]}_0} \\
	{\iota^*F\otimes\Fb h^{[1]}_1} & {(\iota\otimes id_{[1]})^*\int_{([a,1]\otimes[1]^\sharp)^\natural}\xi(\phi)}
	\arrow[from=2-1, to=2-2]
	\arrow[from=1-1, to=2-1]
	\arrow[from=1-1, to=1-2]
	\arrow[from=1-2, to=2-2]
\end{tikzcd}\]
which corresponds to a natural transformation 
$$\oint_{1,[a,1]} \phi \to (\iota\otimes id_{[1]})^* \int_{([a,1]\otimes[1]^\sharp)^\natural}\xi(\phi)~~~\leftrightsquigarrow~~~ \phi\to \ringpartial^c_{1,[a,1]}\int_{([a,1]\otimes[1]^\sharp)^\natural}\xi(\phi)$$
Eventually, remark that proposition \ref{prop:ring partial is natural} and the equivalences \eqref{eq:fiber of xi} imply that this natural transformation is pointwise an equivalence. 
The functor \eqref{eq:inverse of ring partial} is then a left inverse of $\ringpartial^c_{1,[a,1]}$. As it is an equivalence, so is $\ringpartial^c_{1,[a,1]}$.
\end{proof}

\begin{prop}
\label{prop:ring partial eq for I n}
For any marked $\io$-category $I$, and integer $n$, the morphism 
$$\ringpartial^c_{n,I}:\LCart((I\otimes[n]^\sharp)^\sharp) \to \Fun^c([n],\LCart(I))$$
defined in \eqref{eq:def of right partial classified} is an equivalence of $\iun$-categories.
\end{prop}
\begin{proof}
Corollary \ref{cor:fib over a colimit2}, and propositions \ref{prop:otimes marked preserves colimits} and \ref{prop:Fun preserve colimies} imply that the two functors on $\Delta^{op}\times \ocatm^{op}$:
$$\begin{array}{rcl}
(n,I)&\mapsto & \LCartc(I\otimes[n]^\sharp)\\
(n,I)&\mapsto &\Fun^c([n],\LCartc(I))
\end{array}$$
send colimits to limits. We can then reduce to the case where $I$ is an element of $t\Theta$ and $n=1$. 
If $I$ is $[1]^\sharp$, remark that $\ringpartial^c_{n,[1]^\sharp}$ is equivalent to $\ringpartial_{n,[1]^\sharp}$ which is an equivalence according to proposition \ref{prop:lax gr construction particular case}.
If $I$ is of shape $[a,1]$ for $a$ in $t\Theta$, this is the content of lemma \ref{lemma:ring partial eq for a 1}.
\end{proof}

\begin{definition} We recall that a left cartesian fibration is $\U$-small if its fibers are $\U$-small $\io$-categories. For an $\io$-category $A$, we denote by $\LCart_{\U}(A^\sharp)$ the full sub $\iun$-category of $\LCart_{\U}(A^\sharp)$ whose objects correspond to $\U$-small left cartesian fibrations over $A^\sharp$. For a marked $\io$-category $I$, we define similarly $\LCartc_{\U}(I)$ as the full sub $\iun$-category of $\LCartc_{\U}(I)$ whose objects correspond to $\U$-small classified left cartesian fibrations over $I$.
\end{definition}

\begin{cor}
\label{cor:univalence}
Let $\uni$ be the $\V$-small $\io$-category of $\U$-small $\io$-categories.
Let $n$ be an integer and $I$ be a $\V$-small marked $\io$-category. We denote by $I^\sharp$ the marked $\io$-category obtained from $I$ by marking all cells, and $\iota:I\to I^\sharp$ the induced morphism. There is an equivalence, natural in $[n]:\Delta^{op}$ and $I:\ocatm^{op}$, between functors
$$f:I\otimes[n]^\sharp\to \uni^\sharp$$
and sequences
$$\iota^*\int_{I^\natural}f_0\to ... \to \iota^*\int_{I^\natural}f_n$$
where for any $k\leq n$, $f_k$ is the functor $I^\natural\to \uni$ induced by $I\otimes\{k\}\to I\otimes[n]^\sharp\to \uni^\sharp$.
\end{cor}
\begin{proof}
This is a direct application of the equivalence 
$$\tau_0\LCart((I\otimes[n]^\sharp)^\sharp) \to \Hom([n],\LCartc(I))$$
induced by proposition \ref{prop:ring partial eq for I n}.
\end{proof}

\begin{cor}
\label{cor:parametric univalence}
Let $n$ be an integer, $I$ a $\V$-small marked $\io$-category, and $A$ an $\io$-category. We denote by $I^\sharp$ the marked $\io$-category obtained from $I$ by marking all cells, and $\iota:I\to I^\sharp$ the induced morphism. There is an equivalence, natural in $[n]:\Delta^{op}$ and $I:\ocatm^{op}$, between functors
$$f:I\otimes[n]^\sharp\to \uHom(A,\uni)$$
and sequences
$$(\iota\times A^\sharp)^*\int_{I^\natural\times A}f_0\to ... \to (\iota\times A^\sharp)^*\int_{I^\natural\times A}f_n$$
where for any $k\leq n$, $f_k$ is the functor $I^\natural\times A\to \uni$ induced by $(I\otimes\{k\})\times A^\sharp\to (I\otimes[n]^\sharp)\times A^\sharp\to \uni^\sharp$.
\end{cor}
\begin{proof}
This is a direct application of the last corollary and the equivalence $(I\otimes[n]^\sharp)\times A^\sharp\sim (I\times A^\sharp)\otimes[n]^\sharp$ given in proposition \ref{prop:associativity of Gray2}.
\end{proof}

\begin{cor}
\label{cor:univalence tranche}
Let $I$ be a $\V$-small marked $\io$-category and $c$ an object of $\uni$. We denote by $I^\sharp$ the marked $\io$-category obtained from $I$ by marking all cells, and $\iota:I\to I^\sharp$ the induced morphism. There is an equivalence, natural in $I:\ocatm^{op}$, between functors
$$f:I\to \uni^\sharp_{c/}$$
and arrows:
$$I\times \int_1c\to \iota^* \int_{I^\natural}\tilde{f}$$
where $\tilde{f}$ is the induced functor $I^\natural\to \uni_{c/}\to\uni $.
\end{cor}
\begin{proof}
By construction, we have a cocartesian square.
\[\begin{tikzcd}
	{I\otimes\{0\}} & {I\otimes[1]^\sharp} \\
	1 & {1\costar I}
	\arrow[from=1-1, to=2-1]
	\arrow[from=2-1, to=2-2]
	\arrow[from=1-1, to=1-2]
	\arrow[from=1-2, to=2-2]
	\arrow["\lrcorner"{anchor=center, pos=0.125, rotate=180}, draw=none, from=2-2, to=1-1]
\end{tikzcd}\]
As $\tau_0\LCart(\uvar)$ sends colimits to limits, this is a consequence of the last corollary.
\end{proof}

\begin{cor}
\label{cor:parametric univalence tranche}
Let $I$ be a $\V$-small marked $\io$-category, $A$ an $\io$-category, and $g$ an object of $\uHom(A,\uni)$. We denote by $I^\sharp$ the marked $\io$-category obtained from $I$ by marking all cells, and $\iota:I\to I^\sharp$ the induced morphism. There is an equivalence, natural in $I:\ocatm^{op}$, between functors
$$f:I\to \uHom(A,\uni)^\sharp_{g/}$$
and arrows:
$$I\times \int_Ag\to (\iota\times A^\sharp)^* \int_{I^\natural\times A}\tilde{f}$$
where $\tilde{f}:I^\natural\times A\to \uni$ is the functor corresponding to $I^\natural\to\uHom(A,\uni)_{g/}\to \uHom(A,\uni)$.
\end{cor}
\begin{proof}
We once again have a cocartesian square
\[\begin{tikzcd}
	{I\otimes\{0\}} & {I\otimes[1]^\sharp} \\
	1 & {1\costar I}
	\arrow[from=1-1, to=2-1]
	\arrow[from=2-1, to=2-2]
	\arrow[from=1-1, to=1-2]
	\arrow[from=1-2, to=2-2]
	\arrow["\lrcorner"{anchor=center, pos=0.125, rotate=180}, draw=none, from=2-2, to=1-1]
\end{tikzcd}\]
As $\tau_0\LCart(\uvar)$ sends colimits to limits, this is a consequence of the last corollary and the equivalence $(I\otimes[1]^\sharp)\times A^\sharp\sim (I\times A^\sharp)\otimes[1]^\sharp$ given in proposition \ref{prop:associativity of Gray2}.
\end{proof}

\subsection{Lax Grothendieck construction}

\begin{definition} For $I$ a marked $\io$-category and $A$ an $\io$-category, we define the $\io$-category \wcnotation{$\gHom(I,A)$}{(hom@$\gHom(\uvar,\uvar)$}, whose value on a globular sum $a$, is given by 
$$\Hom(a,\gHom(I,A)):=\Hom(I\ominus a^\sharp,A^\sharp)$$
\end{definition}

The section is devoted to the proof of the following theorem:
\begin{theorem}
\label{theo:lcartc et ghom}
Let $I$ be a $\U$-small marked $\io$-category.
Let $\uni$ be the $\V$-small $\io$-category of $\U$-small $\io$-categories, and $\uLCartc_{\U}(I)$ the $\V$-small $\io$-category of $\U$-small left cartesian fibrations. 
There is an equivalence
$$\gHom(I,\uni)\sim \uLCartc_{\U}(I)$$
natural in $I$.
On the maximal sub $\infty$-groupoid, this equivalence corresponds to the Grothendieck construction of theorem \ref{theo:gr construction}.
\end{theorem}

\begin{cor}
\label{cor:lcar et hom}
Let $A$ be a $\U$-small $\io$-category.
Let $\uLCartc_{\U}(A^\sharp)$ be the $\V$-small $\io$-category of $\U$-small left cartesian fibrations. 
There is an equivalence
$$\uHom(A,\uni)\sim \uLCart_{\U}(A^\sharp)$$
natural in $A$.
On the maximal sub $\infty$-groupoid, this equivalence corresponds to the Grothendieck construction of theorem \ref{theo:gr construction}.
\end{cor}
\begin{proof}
This is a consequence of the equivalences $\uLCart_{\U}(A^\sharp)\sim \uLCartc_{\U}(A^\sharp)$, of the previous theorem and of the equivalence 
$\uHom(A,\uni)\sim \gHom(A^\sharp,\uni)$ induced by the second assertion of proposition \ref{prop:associativity of ominus}.
\end{proof}

\begin{construction}
\label{cons: i pull and push beetwe io category of morphism}
The theorem \ref{theo:lcartc et ghom} provides equivalences \index[notation]{(f8@$f^*:\gHom(I,\uni)\to \gHom(J,\uni)$}
$$ \gHom(I,\uni)\sim \uLCartc_{\U}(I) ~~~~\mbox{and}~~~~\uHom(A,\omega)\sim \uLCart_{\U}(A^\sharp)$$
By construction, for any morphism $f:I\to J$ between marked $\omega$-categories, we have a morphism 
$$f^*:\gHom(J,\uni)\to \uHom(I,\uni)$$
Suppose now that the codomain of $f$ is of shape $A^\sharp$.
 The morphism \eqref{eq:i pull} induces a morphism \index[notation]{(f7@$f_{\mbox{$\exclam$}}:\gHom(I,\uni)\to \uHom(A,\uni)$}
$$f_!:\gHom(I,\uni)\to \uHom(A,\uni)$$ and \eqref{eq:i pull unit an counit} induces natural transformations:
$$
\mu:id\to f^*f_!~~~~ \epsilon:f_!f^*\to id
$$
coming along with equivalences:
$(\epsilon\circ_0 f_!)\circ_1(f_!\circ_0 \mu) \sim id_{f_!}$ and $(f^*\circ_0 \epsilon)\circ_1 (\mu \circ_0 f^* )\sim id_{f^*}$.
When $f$ is proper, the morphism \eqref{eq:i push op} induces a morphism \index[notation]{(f9@$f_*:\gHom(I,\uni)\to \uHom(A,\uni)$}
$$f_*:\gHom(I,\uni)\to \uHom(A,\uni)$$
 and \eqref{eq:i pull unit an counit op} induces natural transformations:
 $$
\mu: id\to f_*f^*~~~~ \epsilon:f^*f_*\to id
$$
coming along with equivalences:
$(\epsilon\circ_0 f^*)\circ_1(f^*\circ_0 \mu) \sim id_{f^*}$ and $(f_*\circ_0 \epsilon)\circ_1 (\mu \circ_0 f_* )\sim id_{f_*}$.
Moreover, for every morphism $j:C\to D^\sharp$, \eqref{eq:commutative pull push} 
induces a canonical commutative square
\[\begin{tikzcd}
	{\gHom(D^\sharp\times I,\uni)} & {\uHom(D\times A ,\uni)} \\
	{\gHom(C^\sharp\times I,\uni)} & {\uHom(C\times A,\uni)}
	\arrow["{( id_{D^\sharp}\times f)_!}", from=1-1, to=1-2]
	\arrow["{(j\times id_{I})^*}"', from=1-1, to=2-1]
	\arrow["{( id_{C^\sharp}\times f)_!}"', from=2-1, to=2-2]
	\arrow["{(j\times id_{A^\sharp})^*}", from=1-2, to=2-2]
\end{tikzcd}\]
and when $f$ is proper, \eqref{eq:commutative pull push op} induces a canonical commutative square
\[\begin{tikzcd}
	{\gHom(D^\sharp\times I,\uni)} & {\uHom(D\times A ,\uni)} \\
	{\gHom(C^\sharp\times I,\uni)} & {\uHom(C\times A,\uni)}
	\arrow["{( id_{D^\sharp}\times f)_*}", from=1-1, to=1-2]
	\arrow["{(j\times id_{I})^*}"', from=1-1, to=2-1]
	\arrow["{( id_{C^\sharp}\times f)_*}"', from=2-1, to=2-2]
	\arrow["{(j\times id_{A^\sharp})^*}", from=1-2, to=2-2]
\end{tikzcd}\]
\end{construction}

\vspace{1cm}
We now turn our attention back to the proof of the theorem \ref{theo:lcartc et ghom}.

\begin{notation}
We denote by $\pi_b:I\times b^\flat\to I$ the canonical projection.
\end{notation}
\begin{lemma}
\label{lemma:lax univalence 0}
Let $I$ be a marked $\io$-category and $b^\flat$ a globular sum. 
There is an equivalence of $\iun$-categories:
$$\LCart(I\times b^\flat)\sim \LCart(I)_{/\pi_b}$$
\end{lemma}
\begin{proof}
Remark first that we have an equivalence 
$$(\ocatm_{/I})_{/\pi_b}\sim \ocatm_{/I\times b}$$
Now suppose given a triangle 
\[\begin{tikzcd}
	& {I\times b^\flat} \\
	X & I
	\arrow[from=2-1, to=1-2]
	\arrow["{\pi_b}", from=1-2, to=2-2]
	\arrow[from=2-1, to=2-2]
\end{tikzcd}\]
As left cartesian fibrations are stable by composition and right cancellation, and as $\pi_b$ is a left cartesian fibration,  the diagonal morphism is a left cartesian fibration if and only if the horizontal morphism is. 

The $\iun$-categories $\LCart(I)_{/\pi_b}$ and  $\LCart(I\times b^\flat)$ then identify with the same full sub $\iun$-category of $(\ocatm_{/I})_{/\pi_b}\sim \ocatm_{/I\times b}$.
\end{proof}

\begin{lemma}
\label{lemma:lax univalence 1}
There is a family of cartesian squares
\[\begin{tikzcd}
	{\tau_0\LCart([a\times b,n]^\sharp)} & {\tau_0\LCart([a,n]^\sharp\times b^\flat)} \\
	{\prod_{k\leq n}\tau_0\LCart(\{k\})} & {\prod_{k\leq n}\tau_0\LCart(\{k\}\times b^\flat)}
	\arrow[from=2-1, to=2-2]
	\arrow[from=1-2, to=2-2]
	\arrow[from=1-1, to=2-1]
	\arrow[from=1-1, to=1-2]
\end{tikzcd}\]
natural in $a,b$ and $n$.
\end{lemma}
\begin{proof}
Remark first that the proposition
\ref{prop:crushing of Gray tensor is identitye marked case} provides cocartesian squares:
\[\begin{tikzcd}
	{\coprod_{k\leq n}(a^\flat\times b^\flat)\otimes\{k\}} & {(a^\flat\times b^\flat)\otimes[n]^\sharp} & {\coprod_{k\leq n}a^\flat\otimes\{k\}} & {a^\flat\otimes[n]^\sharp} \\
	{\coprod_{k\leq n}\{k\}} & {[a\times b,n]^\sharp} & {\coprod_{k\leq n}\{k\}} & {[a,n]^\sharp}
	\arrow[from=1-1, to=2-1]
	\arrow[from=2-1, to=2-2]
	\arrow[from=1-1, to=1-2]
	\arrow[from=1-2, to=2-2]
	\arrow["\lrcorner"{anchor=center, pos=0.125, rotate=180}, draw=none, from=2-2, to=1-1]
	\arrow[from=2-3, to=2-4]
	\arrow[from=1-4, to=2-4]
	\arrow[from=1-3, to=2-3]
	\arrow[from=1-3, to=1-4]
	\arrow["\lrcorner"{anchor=center, pos=0.125, rotate=180}, draw=none, from=2-4, to=1-3]
\end{tikzcd}\]
According to the corollary \ref{cor:fib over a colimit2}, and proposition \ref{prop:ring partial eq for I n}, and as $\Rb (\pi_{\uvar})_!:\LCart(1)\to \LCart(\uvar^\flat)$ factors through $\LCartc(\uvar^\flat)$, 
this induces cartesian squares:
\begin{equation}
\label{eq:lemma:lax univalence 2}
\begin{tikzcd}[column sep = 0.3cm]
	{\LCart([a\times b,n]^\sharp)} & {\Fun([n],\LCart(a^\flat\times b^{\flat}))} & {\LCart([a,n]^\sharp)} & {\Fun([n],\LCart( a^{\flat}))} \\
	{\prod_{k\leq n}\LCart(1)} & {\prod_{k\leq n}\LCart(a^\flat\times b^{\flat})} & {\prod_{k\leq n}\LCart(1)} & {\prod_{k\leq n}\LCart(a^\flat)}
	\arrow[from=1-3, to=2-3]
	\arrow[""{name=0, anchor=center, inner sep=0}, from=2-3, to=2-4]
	\arrow[from=1-3, to=1-4]
	\arrow[from=1-4, to=2-4]
	\arrow[from=1-1, to=1-2]
	\arrow[from=1-1, to=2-1]
	\arrow[""{name=1, anchor=center, inner sep=0}, from=2-1, to=2-2]
	\arrow[from=1-2, to=2-2]
	\arrow["\lrcorner"{anchor=center, pos=0.125}, draw=none, from=1-1, to=1]
	\arrow["\lrcorner"{anchor=center, pos=0.125}, draw=none, from=1-3, to=0]
\end{tikzcd}
\end{equation}
 As the $\iun$-categorical slice and the maximal full sub $\infty$-groupoid preserve cartesian squares,
the second cartesian square induces a cartesian square
\[\begin{tikzcd}
	{\tau_0(\LCart( [a,n]^\sharp)_{/\pi_b})} & {\Hom([n],\LCart( a^{\flat})_{/\pi_b})} \\
	{\prod_{k\leq n}\tau_0(\LCart(1)_{/b^\flat})} & {\prod_{k\leq n}\tau_0(\LCart( a^{\flat})_{/\pi_b})}
	\arrow[from=1-1, to=2-1]
	\arrow[""{name=0, anchor=center, inner sep=0}, from=2-1, to=2-2]
	\arrow[from=1-1, to=1-2]
	\arrow[from=1-2, to=2-2]
	\arrow["\lrcorner"{anchor=center, pos=0.125}, draw=none, from=1-1, to=0]
\end{tikzcd}\]
and according to  lemma \ref{lemma:lax univalence 0}, this corresponds to  a cartesian square
\[\begin{tikzcd}
	{\tau_0\LCart([a,n]^\sharp\times b^\flat)} & {\Hom([n],\LCart( a^{\flat}\times b^\flat))} \\
	{\prod_{k\leq n}\tau_0\LCart(  b^\flat)} & {\prod_{k\leq n}\tau_0\LCart( a^{\flat}\times b^\flat)}
	\arrow[from=1-1, to=2-1]
	\arrow[""{name=0, anchor=center, inner sep=0}, from=2-1, to=2-2]
	\arrow[from=1-1, to=1-2]
	\arrow[from=1-2, to=2-2]
	\arrow["\lrcorner"{anchor=center, pos=0.125}, draw=none, from=1-1, to=0]
\end{tikzcd}\]
Combined with the first cartesian square of \eqref{eq:lemma:lax univalence 2}, this induces a commutative diagram
\[\begin{tikzcd}
	{\tau_0\LCart([a\times b, n]^\sharp)} & {\tau_0\LCart([a,n]^\sharp\times b^\flat)} & {\Hom([n],\LCart( a^{\flat}\times b^\flat))} \\
	{\prod_{k\leq n}\tau_0\LCart(  1)} & {\prod_{k\leq n}\tau_0\LCart(  b^\flat)} & {\prod_{k\leq n}\tau_0\LCart( a^{\flat}\times b^\flat)}
	\arrow[from=1-2, to=2-2]
	\arrow[""{name=0, anchor=center, inner sep=0}, from=2-2, to=2-3]
	\arrow[from=1-2, to=1-3]
	\arrow[from=1-3, to=2-3]
	\arrow[from=2-1, to=2-2]
	\arrow[from=1-1, to=2-1]
	\arrow[from=1-1, to=1-2]
	\arrow["\lrcorner"{anchor=center, pos=0.125}, draw=none, from=1-2, to=0]
\end{tikzcd}\]
where the right and the outer square are cartesian. By right cancellation, the left square is cartesian which concludes the proof.
\end{proof}

\begin{remark}
\label{rem:explaining lax univalence}
Remark that $\LCartc(1)\sim \LCart(1)$.
The proposition \ref{prp:to show fully faithfullness3} and the construction of $\LCartc(\uvar)$ implies that the bottom horizontal morphism of the square given in lemma \ref{lemma:lax univalence 1} is a limit of fully faithful morphisms. As fully faithful morphisms are stable by limits and pullbacks, we can rephrase lemma \ref{lemma:lax univalence 1} by saying that $\tau_0\LCart([a\times b,n]^\sharp)$ is the sub $\infty$-groupoid of $\tau_0\LCart([a,n]^\sharp\times b^\flat)$ whose objects are left cartesian fibrations $X\to [a,n]^\sharp\times b^{\flat}$ such that for any $k\leq n$, there exists a cartesian square of the shape
\[\begin{tikzcd}
	{Y\times b^{\flat}} & X \\
	{\{k\}\times b^{\flat}} & {[a,n]\times b^{\flat}}
	\arrow[from=1-1, to=1-2]
	\arrow[from=1-1, to=2-1]
	\arrow["\lrcorner"{anchor=center, pos=0.125}, draw=none, from=1-1, to=2-2]
	\arrow[from=1-2, to=2-2]
	\arrow[from=2-1, to=2-2]
\end{tikzcd}\]
\end{remark}

\begin{lemma}
\label{lemma:lax univalence 2.5}
Let $b$ be a globular sum and let  $F:I\to \ocat$ be a $\Wcard$-small diagram. 
The canonical morphism
$$\LCart(\colim_IF^\sharp\times b^\flat) \to \lim_I \LCart(F^\sharp\times b^\flat)$$
is an equivalence.
\end{lemma}
\begin{proof}
The corollary \ref{cor:fib over a colimit2} implies that the canonical morphism
$$\LCart(\colim_IF^\sharp) \to \lim_I \LCart(F^\sharp)$$
is an equivalence.
As the $\iun$-categorical slice preserves limits, the previous equivalence induces an equivalence
$$\LCart(\colim_IF^\sharp)_{/\pi_b} \to \lim_I \LCart(F^\sharp)_{/\pi_b}.$$
The results then follows from lemma \ref{lemma:lax univalence 0}.
\end{proof}

\begin{lemma}
\label{lemma:lax univalence 2}
There is a family of cartesian squares
\[\begin{tikzcd}
	{\tau_0\LCart((I\ominus[b,n]^\sharp)^\sharp)} & {\tau_0\LCart((I\otimes[n]^\sharp)^\sharp\times b^\flat)} \\
	{\prod_{k\leq n}\tau_0\LCart(I^\sharp\otimes\{k\})} & {\prod_{k\leq n}\tau_0\LCart((I^\sharp\otimes\{k\})\times b^\flat)}
	\arrow[from=2-1, to=2-2]
	\arrow[from=1-2, to=2-2]
	\arrow[from=1-1, to=2-1]
	\arrow[from=1-1, to=1-2]
\end{tikzcd}\]
natural in $I,b$ and $n$.
\end{lemma}
\begin{proof}
By definition, $(I\ominus [b,n]^\sharp)^\sharp$ fits in the following cocartesian square:
\[\begin{tikzcd}
	{\colim_{[a,m]\to \amalg_kI^\natural\otimes \{k\}}[a\times b,m]^\sharp} & {\colim_{[a,m]\to \amalg_kI^\natural\otimes \{k\}}[a,m]^\sharp} \\
	{\colim_{[a,m]\to (I\otimes[n]^\sharp)^\natural}[a\times b,m]^\sharp} & {(I\ominus [b,n]^\sharp)^\sharp}
	\arrow[from=1-1, to=2-1]
	\arrow[from=2-1, to=2-2]
	\arrow[from=1-2, to=2-2]
	\arrow[""{name=0, anchor=center, inner sep=0}, from=1-1, to=1-2]
	\arrow["\lrcorner"{anchor=center, pos=0.125, rotate=180}, draw=none, from=2-2, to=0]
\end{tikzcd}\]
Combined with corollary \ref{cor:fib over a colimit2}, this implies that the $\infty$-groupoid $\tau_0\LCart((I\ominus [b,n]^\sharp)^\sharp)$ fits in the cartesian square:
\[\begin{tikzcd}
	{\tau_0\LCart((I\ominus [b,n]^\sharp)^\sharp)} & {\lim_{[a,m]\to \amalg_kI^\natural\otimes \{k\}}\tau_0\LCart([a,m]^\sharp)} \\
	{\lim_{[a,m]\to (I\otimes[n]^\sharp)^\natural}\tau_0\LCart([a\times b,m]^\sharp)} & {\lim_{[a,m]\to \amalg_kI^\natural\otimes \{k\}}\tau_0\LCart([a\times b,m]^\sharp)}
	\arrow[from=1-1, to=1-2]
	\arrow[from=1-1, to=2-1]
	\arrow[from=1-2, to=2-2]
	\arrow[""{name=0, anchor=center, inner sep=0}, from=2-1, to=2-2]
	\arrow["\lrcorner"{anchor=center, pos=0.125}, draw=none, from=1-1, to=0]
\end{tikzcd}\]
By the remark \ref{rem:explaining lax univalence} and by the fact that any morphism $\{l\}\to [a,m]\to (I\otimes[n]^\sharp)^\natural$ uniquely factors through $\coprod_{k}I^\natural \otimes\{k\}$,
we get a cartesian square
\[\begin{tikzcd}
	{\tau_0\LCart((I\ominus [b,n]^\sharp)^\sharp)} & {\lim_{[a,m]\to \amalg_kI^\natural\otimes \{k\}}\tau_0\LCart([a,m]^\sharp)} \\
	{\lim_{[a,m]\to (I\otimes[n]^\sharp)^\natural}\tau_0\LCart([a,m]^\sharp\times b^\flat)} & {\lim_{[a,m]\to \amalg_kI^\natural\otimes \{k\}}\tau_0\LCart([a,m]^\sharp\times b^\flat)}
	\arrow[from=1-1, to=1-2]
	\arrow[from=1-1, to=2-1]
	\arrow[from=1-2, to=2-2]
	\arrow[""{name=0, anchor=center, inner sep=0}, from=2-1, to=2-2]
	\arrow["\lrcorner"{anchor=center, pos=0.125}, draw=none, from=1-1, to=0]
\end{tikzcd}\]
Eventually, the lemma \ref{lemma:lax univalence 2.5} induces equivalences
$$\begin{array}{rll}
\lim_{[a,m]\to (I\otimes[n]^\sharp)^\natural}\tau_0\LCart([a,m]^\sharp\times b^\flat)&\sim&\tau_0 \LCart((I\otimes[n]^\sharp)^\sharp\times b^\flat)\\
\lim_{[a,m]\to I^\natural}\tau_0\LCart([a,m]^\sharp\times b^\flat)&\sim&\tau_0 \LCart(I^\sharp\times b^\flat)\\
\lim_{[a,m]\to I^\natural}\tau_0\LCart([a,m]^\sharp)&\sim &\tau_0\LCart(I^\sharp)
\end{array}
$$
This concludes the proof.
\end{proof}

\begin{lemma}
\label{lemma:lax univalence 3}
There is a family of cartesian squares
\[\begin{tikzcd}
	{\Hom([n], \LCartc(I;b))} & {\tau_0\LCart((I\otimes[n]^\sharp)^\sharp\times b^\flat)} \\
	{\prod_{k\leq n}\tau_0\LCart(I^\sharp\otimes\{k\})} & {\prod_{k\leq n}\tau_0\LCart((I^\sharp\otimes\{k\})\times b^\flat)}
	\arrow[from=2-1, to=2-2]
	\arrow[from=1-2, to=2-2]
	\arrow[from=1-1, to=2-1]
	\arrow[from=1-1, to=1-2]
\end{tikzcd}\]
natural in $I,b$ and $n$.
\end{lemma}
\begin{proof}
By the construction of $\LCartc(I;b)$, we have a cartesian square 
\[\begin{tikzcd}
	{\Hom([n], \LCartc(I;b))} & {\Hom([n], \LCart(I\times b^\flat))} \\
	{\prod_{k\leq n} \tau_0\LCartc(I)} & {\prod_{k\leq n} \tau_0\LCart(I\times b^\flat)}
	\arrow[""{name=0, anchor=center, inner sep=0}, from=2-1, to=2-2]
	\arrow[from=1-2, to=2-2]
	\arrow[from=1-1, to=2-1]
	\arrow[from=1-1, to=1-2]
	\arrow["\lrcorner"{anchor=center, pos=0.125}, draw=none, from=1-1, to=0]
\end{tikzcd}\]
According to lemma \ref{lemma:lax univalence 0}, this implies that the outer square of the following diagram
\[\begin{tikzcd}
	{\Hom([n], \LCartc(I;b))} & {\Hom([n], \LCartc(I)_{/\pi_b})} & {\Hom([n], \LCart(I)_{/\pi_b})} \\
	{\prod_{k\leq n} \tau_0\LCartc(I)} & {\prod_{k\leq n}\tau_0 (\LCartc(I)_{/\pi_b})} & {\prod_{k\leq n}\tau_0 (\LCart(I)_{/\pi_b})}
	\arrow[from=1-1, to=1-2]
	\arrow[from=1-1, to=2-1]
	\arrow[from=1-2, to=1-3]
	\arrow[from=1-2, to=2-2]
	\arrow[from=1-3, to=2-3]
	\arrow[from=2-1, to=2-2]
	\arrow[from=2-2, to=2-3]
\end{tikzcd}\]
is cartesian. As $\LCart^c(\uvar)\to \LCart(\uvar)$ is fully faithful, the right-hand square is cartesian, and so is the left one by right cancellation.
Now, remark that the naturality in $n$ of the equivalence $\ringpartial^c_{n,I}$ from proposition \ref{prop:ring partial eq for I n} implies that it sends $\pi_b:(I\otimes[n]^\sharp)^\sharp\times b^\flat\to b^{\flat}$ to the constant functor with value $\pi_b:I\otimes b^{\flat}\to b^{\flat}$. Combined with proposition \ref{prop:ring partial eq for I n}, the left-hand square in the previous diagram induces a cartesian square
\[\begin{tikzcd}
	{\Hom([n], \LCartc(I;b))} & {\tau_0(\LCart((I\otimes[n]^\sharp)^\sharp)_{/\pi_b})} \\
	{\prod_{k\leq n}\tau_0\LCart(I^\sharp\otimes\{k\})} & {\prod_{k\leq n}\tau_0(\LCart(I^\sharp\otimes\{k\})_{/\pi_b})}
	\arrow[""{name=0, anchor=center, inner sep=0}, from=2-1, to=2-2]
	\arrow[from=1-2, to=2-2]
	\arrow[from=1-1, to=2-1]
	\arrow[from=1-1, to=1-2]
	\arrow["\lrcorner"{anchor=center, pos=0.125}, draw=none, from=1-1, to=0]
\end{tikzcd}\]
Eventually, a last application of lemma \ref{lemma:lax univalence 0} concludes the proof.
\end{proof}

\begin{remark}
\label{rem:explaining lax univalence}
The proposition \ref{prp:to show fully faithfullness3} and the construction of $\LCartc(\uvar)$ imply that the bottom horizontal morphism of squares given in lemmas \ref{lemma:lax univalence 2} and \ref{lemma:lax univalence 3} is a limit of fully faithful morphisms. As fully faithful morphisms are stable by limits and pullbacks, we can rephrase lemmas \ref{lemma:lax univalence 2} and \ref{lemma:lax univalence 3} by saying that 
$\tau_0\LCart((I\ominus[b,n]^\sharp)^\sharp)$ and 
$\Hom([n], \LCartc(I;b))$ are equivalent to the full subcategory of $\tau_0\LCart((I\otimes[n]^\sharp)^\sharp\times b^\flat)$
whose objects are left cartesian fibrations $X\to (I\otimes[n]^\sharp)^\sharp\times b^\flat$ such that for any $k\leq n$, there exists a cartesian square of the shape
\[\begin{tikzcd}
	{Y\times b^{\flat}} & X \\
	{(I^\sharp\otimes \{k\})\times b^\flat} & {(I\otimes[n]^\sharp)^\sharp\times b^\flat}
	\arrow[from=1-1, to=1-2]
	\arrow[from=1-1, to=2-1]
	\arrow["\lrcorner"{anchor=center, pos=0.125}, draw=none, from=1-1, to=2-2]
	\arrow[from=1-2, to=2-2]
	\arrow[from=2-1, to=2-2]
\end{tikzcd}\]
\end{remark}

\begin{lemma}
\label{lemma:lax univalence 4}
There is an equivalence 
$$\tau_0(\LCart((I\ominus [b,n]^\sharp)^\sharp) \sim \Hom([n],\LCartc(I;b))$$
natural in $I:\ocatm^{op}$, $b:\Theta^{op}$ and $[n]:\Delta^{op}$.
\end{lemma}
\begin{proof}
This is a direct consequence of lemmas \ref{lemma:lax univalence 2} and \ref{lemma:lax univalence 3} as these two objects fit in the same cartesian square.
\end{proof}

\begin{proof}[Proof of theorem \ref{theo:lcartc et ghom}]
The lemma \ref{lemma:lax univalence 4} provides an equivalence
$$\tau_0(\LCart((I\ominus [b,n]^\sharp)^\sharp) \sim \Hom([n],\LCartc(I;b))$$
that preserves $\U$-smallness.
\end{proof}

\section{Yoneda lemma and other results}

\subsection{Yoneda lemma}
\begin{definition} An $\io$-category $C$ is \wcnotion{locally $\U$-small}{locally $\U$-small $\io$-category} if for any pair of objects $x$ and $y$, $\hom_C(x,y)$ is $\U$-small. 
\end{definition}

\begin{prop}
\label{prop:when Hom A B is locally small}
Let $A$ be a $\U$-small $\io$-category, and $C$ is a locally $\U$-small $\io$-category. The $\io$-category $\uHom(A,C)$ is locally $\U$-small. 
\end{prop}
\begin{proof}
We have to check that for any globular sum $b$, the morphism 
$$\Hom(A\times [b,1],C)\to \Hom(A\times (\{0\}\amalg\{1\}),C)$$
has $\U$-small fibers. As $A$, seen as an $\infty$-presheaves on $\Theta$, is a $\U$-small colimit of representables, we can reduce to the case where $A\in \Theta$. As $C$ is local with respect to Segal extensions, and as the cartesian product conserves them, we can reduce to the case where $A$ is of shape $[a,1]$ for $a$ a globular sum. We now fix a morphism $f:[a,1]\times (\{0\}\amalg\{1\})\to C$. Eventually, using the canonical equivalence between $[a,1]\times [b,1]$ and the colimit of the span
$$[a,1]\vee[b,1]\leftarrow [a\times b,1]\to [b,1]\vee[a,1],$$
the $\infty$-groupoid $\Hom([a,1]\times [b,1],C)_f$ fits in a cartesian square:
\[\begin{tikzcd}
	{\Hom([a,1]\times [b,1],C)_f} & {\Hom(b,\hom_C(f(0,0),f(0,1)))} \\
	{\Hom(b,\hom_C(f(1,0),f(1,1)))} & {\Hom(a\times b,\hom_C(f(0,0),f(1,1)))}
	\arrow[from=1-1, to=1-2]
	\arrow[from=1-1, to=2-1]
	\arrow[from=1-2, to=2-2]
	\arrow[from=2-1, to=2-2]
\end{tikzcd}\]
As all these objects are $\U$-small by assumption, this concludes the proof.
\end{proof}

\begin{construction} Let $C$ be an $\io$-category $C$. We define the simplicial object $S(\Noiun C)$ by the formula
$$S(\Noiun C)_n:= \coprod_{x_0,...,x_n:A_0} \coprod_{y_0,...,y_n:A_0}\hom_C(x_n,...,x_0,y_0,...,y_n)$$
This object comes along with a canonical projection 
\begin{equation}
\label{eq:definition of the hom0}
S(\Noiun C)\to \Noiun{C^t}\times \Noiun C.
\end{equation}
which obviously is a left fibration. As this construction if functorial, it induces a functor:
$$\begin{array}{rcl}
\ocat &\to &\Arr(\ouncat)\\
C&\mapsto & (S(\Noiun C)\to \Noiun{C^t}\times \Noiun C)
\end{array}$$
\end{construction}

Until the end of this section, we fix a locally $\U$-small $\io$-category $C$. 
\begin{construction}
The left fibration \eqref{eq:definition of the hom0} is then $\U$-small, and by the definition of $\uni$ given in \ref{cons:defi of uni}, this induces a morphism
\begin{equation}
\label{eq:definition of the hom}
\hom_C(\uvar,\uvar):C^t\times C\to \uni
\end{equation}
\end{construction}

\begin{remark}
We recall for a $\io$-category $A$ and an object $x$ of $A$, $\Fb h^{A}_{x}$ denote the left cartesian fibrant replacement of the morphism $h^A_x:\{x\}\to A$.
By the explicit construction of $\Fb h^{A}_{x}$ given in proposition \ref{prop:explicit factoryzation}, we have a canonical equivalence
$$\Fb h^{C^t\times C}_{(x,y)}\sim \Fb h^{C^t}_{x}\times \Fb h^{C}_{y}$$
The Grothendieck construction (defined in \eqref{eq:definition of the hom}) of the morphism \eqref{eq:definition of the hom} is then the colimit of a simplicial object whose value on $n$ is given by:
$$\coprod_{x_0,...,x_n}\coprod_{y_0,...,y_n} \Fb h^{C^t}_{x_n}\times \hom_{C}(x_n,...,x_0,y_0,...,y_n)^\flat\times \Fb h^{C}_{y_n}$$\end{remark}

\begin{definition} We define the \wcnotion{$\io$-category of $\io$-presheaves on $C$}{presheaves@$\io$-presheaves} \sym{(c@$\w{C}$}:$$\w{C}:=\uHom(C^t,\uni ).$$ This $\io$-category is locally $\U$-small according to proposition \ref{prop:when Hom A B is locally small}. The \notion{Yoneda embedding}\sym{(y@$y_{\uvar}$} $y: C\to \w{C}$ is the functor induced by the hom functor \eqref{eq:definition of the hom} by currying.
\end{definition}

\begin{definition}
An $\io$-presheaves is \wcnotion{representable}{representable $\io$-presheaves} if it is in the image of $y$.
\end{definition}

\begin{construction}
We recall that for a subset $S$ of $\Nb^*$, and an object $X$ of $\ouncat$, we denote by $X^S$ the simplicial object $n\mapsto X_n^S$. We also set $\Sigma S:=\{i+1,i\in S\}$. We then have equivalences
$$(\Noiun C)^S\sim \Noiun (C^{\Sigma C}) ~~~\mbox{and}~~~ S(\Noiun C))^S\sim S(\Noiun (C^{\Sigma C}))$$
For an object $X$ of $\ouncat$, we denote by $X_{op}$ the simplicial object $n\mapsto X_{n^{op}}$. We then have equivalences 
$$(\Noiun C)_{op}\sim \Noiun (C^{t}) ~~~\mbox{and}~~~ S(\Noiun C))_{op}\sim S(\Noiun (C^t))$$
Using the dualities defined in construction \ref{cons:dualities fo omega}, we then have commutative diagrams
\[\begin{tikzcd}
	{(C^{t\Sigma S}\times C^{\Sigma S})^{\Sigma S}} && {\uni^{\Sigma S}} && {C\times C^t} \\
	{C^t\times C} && \uni && {C^t\times C} & \uni
	\arrow["{\hom_{C}}"', from=2-1, to=2-3]
	\arrow["{\hom^{\Sigma S}_{C^{\Sigma S}}}", from=1-1, to=1-3]
	\arrow["\sim"', from=1-1, to=2-1]
	\arrow["{(\uvar)^S}", from=1-3, to=2-3]
	\arrow["{\hom_C}"', from=2-5, to=2-6]
	\arrow["\tw"', from=1-5, to=2-5]
	\arrow["{\hom_{C^t}}", from=1-5, to=2-6]
\end{tikzcd}\]
where $\tw$ is the functor exchanging the argument. This two diagram corresponds to invertible natural transformations
$$\hom_{C^{\Sigma S}}(x,y)\sim \hom_{C}(x,y)^S~~~\mbox{and}~~~\hom_{C^t}(x,y)\sim \hom_{C}(y,x).$$

In combining the two previous diagrams, we get a commutative square:
\[\begin{tikzcd}
	{(C^{\circ t}\times C^{\circ})^{\circ t}} && {\uni^{{\circ t}}} \\
	{C^t\times C} && \uni
	\arrow["{\hom_{C}}"', from=2-1, to=2-3]
	\arrow["{\hom^{\circ t}_{C^\circ}}", from=1-1, to=1-3]
	\arrow["\tw"', from=1-1, to=2-1]
	\arrow["{(\uvar)^\circ}", from=1-3, to=2-3]
\end{tikzcd}\]
corresponding to the natural transformation
$$\hom_{C^\circ}(x,y)\sim \hom_{C}(y,x)^\circ.$$
\end{construction}
\begin{prop}
\label{prop:Yoneda is Fb} 
Let $A$ be an locally $\U$-small $\io$-category.
Let $a$ be an object of $A$.
There is an equivalence
$$\int_{A}\hom_A(a,\uvar)\to \Fb h^{A}_a$$
Taking the fibers on $a$, the induced morphism $\hom_A(a,a)\to \hom_A(a,a)$ preserves the identity.

 In particular, for any object $c$ of $C$, this induces an equivalence
$$\int_{C^t}y_c\to \Fb h^{C^t}_c$$
\end{prop}
\begin{proof}
By construction, $\int_{A}\hom_A(a,\uvar)$ is the Grothendieck construction of the left fibration:
\[\begin{tikzcd}[column sep =0.4cm]
	\cdots & {\coprod_{x_0,x_1,x_2:A_0}\hom_{A}(a,x_0,x_1,x_2)} & {\coprod_{x_0,x_1:A_0}\hom_{A}(a,x_0,x_1)} & {\coprod_{x_0:A_0}\hom_{A}(a,x_0)} \\
	\cdots & {\coprod_{x_0,x_1,x_2:A_0}\hom_{A}(x_0,x_1,x_2)} & {\coprod_{x_0,x_1:A_0}\hom_{A}(x_0,x_1)} & {\coprod_{x_0:A_0}1}
	\arrow[shift right=4, from=2-2, to=2-3]
	\arrow[shift left=4, from=2-2, to=2-3]
	\arrow[from=2-2, to=2-3]
	\arrow[shift left=2, from=2-3, to=2-2]
	\arrow[shift right=2, from=2-3, to=2-2]
	\arrow[shift left=2, from=2-3, to=2-4]
	\arrow[shift right=2, from=2-3, to=2-4]
	\arrow[from=2-4, to=2-3]
	\arrow[from=1-3, to=2-3]
	\arrow[from=1-4, to=2-4]
	\arrow[shift left=2, from=1-3, to=1-4]
	\arrow[from=1-4, to=1-3]
	\arrow[shift right=2, from=1-3, to=1-4]
	\arrow[shift right=4, from=1-2, to=1-3]
	\arrow[from=1-2, to=1-3]
	\arrow[shift left=4, from=1-2, to=1-3]
	\arrow[shift right=2, from=1-3, to=1-2]
	\arrow[shift left=2, from=1-3, to=1-2]
	\arrow[from=1-2, to=2-2]
\end{tikzcd}\]
The results then follow from the corollary \ref{cor:antecedant of slice}.
\end{proof}

\begin{definition}
The identity $\w{C}\to \w{C}$ induces by currying a canonical morphism \sym{(ev@$\ev$}
$$\ev: C^t\times \w{C}\to \uni$$
called the \textit{evaluation functor}.
\end{definition}

\begin{remark}
 Given an object $c$ of $C$ and $f$ of $\widehat{C}$, we then have $\ev(c,f)\sim f(c)$ and so 
$$(c,\{f\})^*\int_{C\times \w{C}}\ev\sim c^*\int_{C^t}f$$
\end{remark}

\begin{prop}
\label{prop:un fonctorial Yoneda}
For any object $c$ of $C$, there exists a unique pair consisting of a morphism
$$\int_{\w{C}} \hom_{\w{C}}(y_c,\uvar)\to \int_{\w{C}}\ev(c,\uvar)$$
and a commutative square of shape
\begin{equation}
\label{eq:prop:un fonctorial Yoneda}
\begin{tikzcd}
	{\{id_{y_c}\}} & {\hom_{\widehat{C}}(y_c,y_c)\sim \{y_c\}^* \int_{\w{C}}\hom_{\w{C}}(y_c,\uvar)} \\
	{\{id_c\}} & {\hom_C(c,c)\sim \{y_c\}^* \int_{\w{C}}\ev(c,\uvar)}
	\arrow[Rightarrow, no head, from=1-1, to=2-1]
	\arrow[from=1-1, to=1-2]
	\arrow[from=2-1, to=2-2]
	\arrow[from=1-2, to=2-2]
\end{tikzcd}
\end{equation}
Moreover, this comparison morphism is an equivalence.
\end{prop}
\begin{proof}
The proposition \ref{prop:Yoneda is Fb} implies that $\int_{\w{C}}\hom_{\w{C}}(y_c,\uvar)$ is equivalent to $\Fb h^{\w{C}}_{y_c}$. A natural transformation $\int_{\w{C}}\hom_{\widehat{C}}(y_c,\uvar)\to  \int_{\w{C}}\ev(c,\uvar)$ then corresponds to a morphism 
$\Fb h^{\w{C}}_{y_c}\to  \int_{\w{C}}\ev(c,\uvar)$ and is then uniquely characterized by the value on $\{id_{y_c}\}$, which proves the uniqueness.

It remains to show the existence.
Let $E$ be an object of $\ocatm_{/\w{C}^\sharp}$ corresponding to a morphism $g:X\to \w{C}^\sharp$ . We denote $\iota:X\to X^\sharp$ the canonical inclusion. According to proposition \ref{prop:Yoneda is Fb}, a morphism $E\to \int_{\w{C}}\hom_{\widehat{C}}(y_c,\uvar)$ corresponds to a morphism $E\to \Fb h^{\w{C}}_{y_c}$, and by proposition \ref{prop:explicit factoryzation}, to a triangle
\[\begin{tikzcd}
	& {\w{C}^\sharp_{y_c/}} \\
	X & {\w{C}^\sharp}
	\arrow[from=2-1, to=1-2]
	\arrow[from=1-2, to=2-2]
	\arrow[from=2-1, to=2-2]
\end{tikzcd}\]
According to corollary \ref{cor:parametric univalence tranche}, this data is equivalent to the one of  a morphism
$$X\times \int_{C^t}y_c\to (\iota\times (C^t)^\sharp)^*\int_{X^\natural\times C^t}\tilde{g}$$
where $\tilde{g}$ is the morphism defined by currying from $g^\natural:X^\natural\to \w{C}$. 

As the proposition \ref{prop:Yoneda is Fb} states that $\int_{C^t}y_c\sim \Fb h_c^{C^t}$, we have then constructed an equivalence.
\begin{equation}
\label{eq:evaluation 4}
\Hom(E, \int_{\w{C}}\hom_{\w{C}}(y_c,\uvar)) \sim \Hom(X\times \Fb h_c^{C^t}, (\iota\times (C^t)^\sharp)^*\int_{X^\natural\times C^t}\tilde{g})
\end{equation}
natural in $E$.

Now, a morphism 
 $$E\to \int_{\w{C}}\ev(c,\uvar)$$
  corresponds by adjunction to a morphism 
\begin{equation}
\label{eq:evaluation 1}
id_X\to g^*\int_{\w{C}}\ev(c,\uvar)
\end{equation}
However, we have a canonical commutative square
\[\begin{tikzcd}
	{X^\natural} & {\w{C}} \\
	{X^\natural\times C^t} & \uni
	\arrow["{\ev(c,\uvar)}", from=1-2, to=2-2]
	\arrow["{g^\natural}", from=1-1, to=1-2]
	\arrow["{\tilde{g}}"', from=2-1, to=2-2]
	\arrow["{X^\natural\times \{c\}}"', from=1-1, to=2-1]
\end{tikzcd}\]
where $\tilde{g}$ is the morphism obtained by currying from $g^\natural:X^\natural\to \w{C}$. Using the naturality of the Grothendieck construction, the previous commutative square implies that the data of \eqref{eq:evaluation 1} corresponds to a morphism
$$id_X\to (\iota\times \{c\})^* \int_{X^\natural\times C^t}\tilde{g}$$
an by adjunction, to a morphism 
$$
X\times \Fb h_c^{C^t}\to (\iota\times (C^t)^\sharp)^*\int_{X^\natural\times C^t}\tilde{g}
$$
We then have constructed an equivalence 
\begin{equation}
\label{eq:evaluation 3}
\Hom(E, \int_{\w{C}}\ev(c,\uvar))\sim \Hom(X\times \Fb h_c^{C^t}, (\iota\times (C^t)^\sharp)^*\int_{X^\natural\times C^t}\tilde{g})
\end{equation}
natural in $E$.

Combining the equivalences \eqref{eq:evaluation 4} and \eqref{eq:evaluation 3}, we get an equivalence 
$$ \Hom(E, \int_{\w{C}} \hom_{\w{C}}(y_c,\uvar))\sim \Hom(E, \int_{\w{C}}\ev(c,\uvar))$$
natural in $E$.
Walking through all the equivalences, we can  see that when $E$ is $h^{\w{C}}_{y_c}$, this equivalence sends the upper horizontal morphism of \eqref{eq:prop:un fonctorial Yoneda} to the lower horizontal one.

By the Yoneda lemma for $(\infty,1)$-categories, this induces an equivalence 
$$\int_{\w{C}} \hom_{\w{C}}(y_c,\uvar)\sim \int_{\w{C}}\ev(c,\uvar).$$
that comes along with the desired commutative square.
\end{proof}

\begin{lemma}
\label{lemma:a particular Kan extension}
Let $i:C\to D$ be a morphism between locally $\U$-small $\io$-categories.
The canonical morphism of $\LCart((C^t)^\sharp\times D^\sharp)$:
$$\Lb(id\times i)_!\int_{C^t\times C}\hom_{C} \to \int_{C^t\times D}\hom_D(i(\uvar),\uvar)$$
is an equivalence.
\end{lemma}
\begin{proof}
Let $c,d$ be any objects of respectively $C$ and $D$.  We then have equivalences
$$\begin{array}{rcll}
\Rb (c,d)^* \Lb (id\times i)_!\int_{ C^t\times C^t}\hom_{ C}&\sim&\Rb \{d\}^*  \Lb i_! \Rb (id\times \{c\})^*\int_{ C^t\times C}\hom_{C}& (\ref{prop:BC condition})\\
&\sim& \Rb \{d\}^* \Lb i_!\Fb h^{C}_{c} &(\ref{prop:Yoneda is Fb})\\
&\sim & \Rb \{d\}^* \Fb h^{D}_{i(c)}\\
&\sim & \hom_D(i(c),d)^\flat
\end{array}$$
Remark that we also have an equivalence 
$$\Rb (c,d)^*\int_{C^t\times D}\hom_D(i(\uvar),\uvar)\sim \hom_D(i(c),d)^\flat$$
and that the induced endomorphism of $ \hom_D(i(c),d)^\flat$ is the identity. As equivalences are detected pointwise, this concludes the proof.
\end{proof}

\begin{theorem}
\label{theo:Yoneda lemma}
Let $C$ be a locally $\U$-small $\io$-category. 
The Yoneda embedding $C\to \widehat{C}$ is fully faithful. Furthermore,
there is an equivalence between the functor
$$\hom_{\w{C}}(y_{\uvar},\uvar):C^t\times \w{C}\to \uni$$ and
the functor 
$$\ev:C^t\times \w{C}\to \uni.$$

Given an object $c$ of $C$, the induced equivalence on fibers:
$$\hom_{\widehat{C}}(y_c,y_c)\sim \hom_C(c,c)$$
sends $\{id_{y_c}\}$ onto $\{id_c\}$.
\end{theorem}
\begin{proof}
The triangle
\[\begin{tikzcd}
	C \\
	{\w{C}} & {\w{C}}
	\arrow["y", from=1-1, to=2-2]
	\arrow["y"', from=1-1, to=2-1]
	\arrow["id"', from=2-1, to=2-2]
\end{tikzcd}\] 
induces by adjunction a triangle
\[\begin{tikzcd}
	{C^t\times C} \\
	{C^t\times\w{C}} & {\w{C}}
	\arrow["\hom", from=1-1, to=2-2]
	\arrow["{id\times y}"', from=1-1, to=2-1]
	\arrow["\ev"', from=2-1, to=2-2]
\end{tikzcd}\]
This corresponds to an equivalence
$$\int_{C^t\times C}\hom_{C}(\uvar,\uvar)\to (id\times y)^*\int_{C^t\times \w{C}}\ev.$$
By naturality, for any object $c$ of $C$, the pullback of the previous equivalence along $C^t\times\{c\}$ is the identity. In particular, the induced morphism $\hom(c,c)\to \hom(c,c)$ between the fibers over $(c,c)$ preserves the object $\{id_c\}$. According to lemma \ref{lemma:a particular Kan extension}, the previous equivalence induces by adjunction a morphism
\begin{equation}
\label{eq:proof of yoneda}
\Lb(id\times y)_!\int_{C^t\times C}\hom_{C}(\uvar,\uvar)\sim \int_{C^t\times \w{C}}\hom_{\w{C}}(y_{\uvar},\uvar)\to \int_{C^t\times \w{C}}\ev.
\end{equation}
that comes along, by construction, with a commutative square
\[\begin{tikzcd}
	{\{id_{y_c}\}} & {\hom_{\widehat{C}}(y_c,y_c)\sim \{y_c\}^* \int_{\w{C}}\hom_{\w{C}}(y_c,\uvar)} \\
	{\{id_c\}} & {\hom_C(c,c)\sim \{y_c\}^* \int_{\w{C}}\ev(c,\uvar)}
	\arrow[Rightarrow, no head, from=1-1, to=2-1]
	\arrow[from=1-1, to=1-2]
	\arrow[from=2-1, to=2-2]
	\arrow[from=1-2, to=2-2]
\end{tikzcd}\]
for any object $c$ of $C$. The restriction of the morphism \eqref{eq:proof of yoneda} to $\w{C}\times \{c\}$ is then equivalent to the natural transformation given in proposition \ref{prop:un fonctorial Yoneda}, and is an equivalence.  As equivalences between left cartesian fibrations are detected on fibers, the morphism \eqref{eq:proof of yoneda} is an equivalence, and by the Grothendieck deconstruction given in corollary \ref{cor: Grt equivalence}, this induces the desired equivalence between the functors $\hom_{\w{C}}(y_{\uvar},\uvar)$ and $\ev_{\w{C}}(y_{\uvar},\uvar)$. 

It remains to show that the Yoneda embedding is fully faithful. Remark that the morphism \eqref{eq:proof of yoneda} induces  by adjunction a commutative triangle
\[\begin{tikzcd}
	{\int_{C^t\times C}\hom_{C}(\uvar,\uvar)} & {(id\times y)^*\Lb(id\times y)_!\int_{C^t\times C}\hom_{C}(\uvar,\uvar)\sim (id\times y)^*\int_{C^t\times \w{C}}\hom_{\w{C}}(y_{\uvar},\uvar)} \\
	& {(id\times y)^*\int_{C^t\times \w{C}}\ev}
	\arrow[from=1-1, to=1-2]
	\arrow[from=1-1, to=2-2]
	\arrow[from=1-2, to=2-2]
\end{tikzcd}\]
where the diagonal and the right vertical morphism are equivalences. By two out of three, so is the horizontal one. By taking the fibers, this implies that the canonical morphism
$$\hom(y_{\uvar},y_{\uvar}):\hom_C(c,d)\sim \hom_{\widehat{C}}(y_c,y_d)$$ 
is an equivalence. This shows that the Yoneda embedding is fully faithful.
\end{proof}

\begin{cor}
\label{cor: universal fibration 2}
The universal left cartesian fibration with $\U$-small fibers is the canonical projection 
$\uni^\sharp_{1/}\to \uni^\sharp$.
\end{cor}
\begin{proof}
The corollary \ref{cor: universal fibration 2} states that universal left cartesian fibration with $\U$-small fibers is $\int_{\uni}id$. The Yoneda lemma implies that this left cartesian fibration is equivalent to $\int_{\uni}\hom_{\uni}(1,\uvar)$. Eventually, the proposition \ref{prop:Yoneda is Fb} states that this left cartesian fibration is equivalent to $\uni^\sharp_{1/}\to \uni^\sharp$.
\end{proof}

\subsection{Adjoint functors}

\begin{definition}
Let $C$ and $D$ be two locally $\U$-small $\io$-categories and $u:C\to D,$ $v:D\to C$ two functors. An \notion{adjunction structure} for the pair $(u,v)$ is the data of a invertible natural transformation
$$\phi: \hom_D(u(\uvar),\uvar)\sim \hom_C(\uvar,v(\uvar))$$
In this case, $u$ is a \wcnotion{left adjoint}{left or right adjoint} of $v$ and $v$ is a \textit{right adjoint} of $u$.
\end{definition}

\begin{prop}
\label{prop:adj if slice as terminal}
Let $u:C\to D$ be a functor between locally $\U$-small $\io$-categories. 
For $b$ an object of $D$, we define $(C^t)^\sharp_{b/}$ and $C^\sharp_{b/}$ as the marked $\io$-categories fitting in the cartesian squares:
\[\begin{tikzcd}
	{(C^t)^\sharp_{/b}} & {(D^t)^\sharp_{b/}} & {C^\sharp_{b/}} & {D^\sharp_{b/}} \\
	{(C^t)^\sharp} & {(D^t)^\sharp} & {C^\sharp} & {D^\sharp}
	\arrow[from=1-3, to=2-3]
	\arrow["u"', from=2-3, to=2-4]
	\arrow[from=1-4, to=2-4]
	\arrow["\lrcorner"{anchor=center, pos=0.125}, draw=none, from=1-3, to=2-4]
	\arrow[from=1-3, to=1-4]
	\arrow[from=1-1, to=2-1]
	\arrow[from=1-1, to=1-2]
	\arrow[from=1-2, to=2-2]
	\arrow["{u^t}"', from=2-1, to=2-2]
	\arrow["\lrcorner"{anchor=center, pos=0.125}, draw=none, from=1-1, to=2-2]
\end{tikzcd}\]
The following are equivalent.
\begin{enumerate}
\item The functor $u$ admits a right adjoint.
\item For any element $b$ of $D$, the marked $\io$-category $(C^t)^\sharp_{b/}$ 
admits an initial element.
\end{enumerate}
Similarly, the following are equivalent.
\begin{enumerate}
\item[(1)'] The functor $u$ admits a left adjoint.
\item[(2)'] For any element $b$ of $D$, $C^\sharp_{b/}$ admits an initial element.
\end{enumerate}
\end{prop}
\begin{proof}
Suppose first that $(1)$ is fulfilled, and let $v:D\to C$ be a functor and $\phi:\hom(u(a),b)\sim\hom(a,v(b))$ be an invertible natural transformation. In particular, this implies that we have an equivalence
$$\int_{C^t\times D}\hom_D(u(a),b)\sim \int_{C^t\times D}\hom_C(a,v(b))$$
Pulling back along $C^t\times \{b\}$ where $b$ is any object of $D$, we get an equivalence between 
$(C^t)^\sharp_{b/}$ and $(C^t)^\sharp_{v(b)/}$. As this last marked $\io$-category admits an initial element, given by the image $id_{v(b)}$, this shows the implication $(1)\Rightarrow (2)$.

For the converse, suppose that $u$ fulfills condition $(2)$. The functor $\hom_D(u(\uvar),\uvar)):C^t\times D\to \uni$ corresponds by adjonction to a functor $v':D\to \w{C}$. By assumption, for any $b\in B$, $v'(b)$ is a representable $\io$-presheaf. The Yoneda lemma then implies that $v$ factors through a functor $v:D\to C$. Using once again Yoneda lemma, we have a sequence of equivalences
$$\hom_D(u(a),b)\sim v'(b)(a)\sim \hom_C(b,v(a)).$$

The equivalence between $(1)'$ and $(2)'$ is proved similarly.
\end{proof}

\begin{construction}
\label{cons:of unit and counit}
 Let $(u,v,\phi)$ be an adjunction structure. There is a transformation 
$$\hom_C(a,a')\to \hom_D(u(a),u(a'))\to \hom_C(a,vu(a'))$$
natural in $a:C^t$, $a':C$. According to the Yoneda lemma, this corresponds to a natural transformation $\mu: id_C \to vu$, called the \wcnotion{unit of the adjunction}{unit and counit of an adjunction}. Similarly, the natural transformation:
$$\hom_D(b,b')\to \hom_C(v(b),v(b'))\to \hom_C(uv(b),b')$$
induces a natural transformation $\epsilon:uv\to id_D$, called \textit{the counit of the adjunction.}
\end{construction}

\begin{theorem}
\label{theo:two adjunction definition}
Let $u:C\to D$ and $v:D\to C$ be two functors between locally $\U$-small $\io$-categories. 
The two following are equivalent. 
\begin{enumerate}
\item The pair $(u,v)$ admits an adjunction structure.
\item Their exists a pair of natural transformations $\mu: id_C \to vu$ and $\epsilon:uv\to id_D$ together with equivalences $(\epsilon\circ_0 u)\circ_1(u\circ_0 \mu) \sim id_{u}$ and $(v\circ_0 \epsilon)\circ_1 (\mu \circ_0 v )\sim id_{v}$.
\end{enumerate}
Given an adjunction structure, the two natural transformations are the units and the counits defined in construction \ref{cons:of unit and counit}. Given the data of $(2)$, the adjunction structure of $(u,v)$ is given by the composite 
$$\hom_D(u(a),b)\to \hom_C(vu(a),v(b))\xrightarrow{(\mu_a)_!} \hom_C(a,v(b)).$$
Moreover, the unit of this adjunction is $\mu$ and its counit is $\epsilon$.
\end{theorem}
Before giving the proof of this theorem, we give some corollaries.

\begin{cor}
\label{cor:adjonction induced adjunction by post composition}
Let $(u:B\to C,v:C\to B)$ be an adjoint pair between locally $\U$-small $\io$-categories and $D$ a locally $\U$-small $\io$-category.
If $C$ and $B$ are $\U$-small, this induces an adjunction
\[\begin{tikzcd}
	{\uvar\circ u:\uHom(C,D)} & {\uHom(B,D):\uvar\circ v}
	\arrow[""{name=0, anchor=center, inner sep=0}, shift left=2, from=1-1, to=1-2]
	\arrow[""{name=1, anchor=center, inner sep=0}, shift left=2, from=1-2, to=1-1]
	\arrow["\dashv"{anchor=center, rotate=-90}, draw=none, from=0, to=1]
\end{tikzcd}\]
and if $D$ is $\U$-small an adjunction
\[\begin{tikzcd}
	{u\circ \uvar:\uHom(D,C)} & {\uHom(D,B):v\circ\uvar}
	\arrow[""{name=0, anchor=center, inner sep=0}, shift left=2, from=1-1, to=1-2]
	\arrow[""{name=1, anchor=center, inner sep=0}, shift left=2, from=1-2, to=1-1]
	\arrow["\dashv"{anchor=center, rotate=-90}, draw=none, from=0, to=1]
\end{tikzcd}\]
\end{cor}
\begin{proof}
Let $\mu$ and $\epsilon$ be the unit and the counit of the adjunction. We define $\mu': \uHom(C,D)\times [1]\to \uHom(C,D)$, induced by currying the morphism 
$$ \uHom(C,D)\times [1]\times C\xrightarrow{id\times \mu} \uHom(C,D)\times C\xrightarrow{\ev} D$$
and $\epsilon':\uHom(B,D)\times [1]\to \uHom(B,D)$ by currying the morphism 
$$ \uHom(B,D)\times [1]\times B\xrightarrow{id\times \epsilon} \uHom(B,D)\times B\xrightarrow{\ev} B$$
We can easily check that $\mu'$ and $\epsilon'$ fulfill the triangle identities, and theorem \ref{theo:two adjunction definition} then implies that the pair $(\uvar\circ u,\uvar\circ v)$ admits an adjunction structure. We proceed similarly for the second assertion. 
\end{proof}

\begin{cor}
\label{cor:naive kan extension}
Let $i:I\to A^\sharp$ be a morphism between $\U$-small $\io$-category. The functor $i^*:\uHom(A,\uni)\to \gHom(I,\uni)$ has a left adjoint given by the functor $i_!:\gHom(I,\uni)\to \uHom(A,\uni)$. If $i$ is proper, the functor $i^*$ has a right adjoint $i_*:\gHom(I,\uni)\to \uHom(A,\uni)$.
\end{cor} 
\begin{proof}
In construction \ref{cons: i pull and push beetwe io category of morphism}, for a morphism $i:I\to A^\sharp$ between marked $\io$-categories, we defined the morphism $i_!:\gHom(I,\uni)\to \uHom(A,\uni)$ and when $i$ is proper, a morphism $i_*:\gHom(I,\uni)\to \uHom(A,\uni)$.

With the characterization of adjunction given in theorem \ref{theo:two adjunction definition}, this is a direct consequence of natural transformations given in construction \ref{cons: i pull and push beetwe io category of morphism} and of the lax Grothendieck construction given in theorem \ref{theo:lcartc et ghom}
\end{proof}

\vspace{1cm}
The remaining of the section is devoted to the proof of theorem \ref{theo:two adjunction definition}.

\begin{lemma}
\label{lemma:naturality of hom apply to natural transformation}
Suppose we have two morphisms $f:C\to D$ and $g:C\to D$ between locally $\U$-small $\io$-categories as well as a natural transformation $\nu:f\to g$. This induces a commutative diagram 
\[\begin{tikzcd}
	{\hom_C(a,b)} & {\hom_D(g(a),g(b))} \\
	{\hom_D(f(a),f(b))} & {\hom_D(f(a),g(b))}
	\arrow[from=1-1, to=2-1]
	\arrow[from=1-1, to=1-2]
	\arrow["{(\nu_{a})_!}", from=1-2, to=2-2]
	\arrow["{(\nu_{b})_!}"', from=2-1, to=2-2]
\end{tikzcd}\]
natural in $a:C^t, b:C$.
\end{lemma}
\begin{proof}
Remark that $\hom_{[1]}(0,1)\sim \hom_{[1]}(1,1)\sim \hom_{[1]}(0,0)=1$.
Using the naturality of the hom functor, we have a commutative diagram 
\[\begin{tikzcd}
	{\hom_C(a,b)\times \hom_{[1]}(0,0)} & {\hom_D(f(a),f(b))} \\
	{\hom_C(a,b)\times \hom_{[1]}(0,1)} & {\hom_D(f(a),g(b))} \\
	{\hom_C(a,b)\times \hom_{[1]}(1,1)} & {\hom_D(g(a),g(b))}
	\arrow["{(\nu_{a})_!}"', from=3-2, to=2-2]
	\arrow["{(\nu_{b})_!}", from=1-2, to=2-2]
	\arrow["\sim"', from=1-1, to=2-1]
	\arrow["\sim", from=3-1, to=2-1]
	\arrow[from=2-1, to=2-2]
	\arrow[from=3-1, to=3-2]
	\arrow[from=1-1, to=1-2]
\end{tikzcd}\]
where the left-hand vertical morphisms are equivalences.
\end{proof}

\begin{lemma}
\label{lemma:If unit and counit so adjunction}
Let $u:C\to D$ and $v:D\to C$ be two functors between locally $\U$-small $\io$-categories, $\mu:id_C\to vu$, $\epsilon:uv\to id_D$ be two natural transformations coming along with equivalences 
$$(\epsilon\circ_0 u)~\circ_1~(u\circ_0 \mu) \sim id_{u}~~~~ (v\circ_0 \epsilon)~\circ_1(\mu \circ_0 v )\sim id_{v}.$$
If we set $\phi$ as the composite 
$$\hom_D(u(a),b)\to \hom_C(vu(a),v(b))\xrightarrow{(\mu_a)_!} \hom_C(a,v(b)),$$
the triple $(u,v,\phi)$ is an adjunction structure.
Moreover, the unit of the adjunction is $\mu$ and its counit is $\epsilon$.
\end{lemma}
\begin{proof}
Suppose we have such data. We define $\psi$ as the composite
$$\hom_C(a,v(b))\to \hom_D(u(a),uv(b))\xrightarrow{(\epsilon_a)_!} \hom_D(u(a),b)
$$
natural in $a:C^t$ and $b:D$. We then have to show that $\psi$ and $\phi$ are inverse of each other. For this consider the diagram
\[\begin{tikzcd}
	{\hom_D(u(a),b)} & {\hom_C(vu(a),v(b))} & {\hom_C(a,v(b))} \\
	& {\hom_D(uvu(a),uv(b))} & {\hom_D(u(a),uv(b))} \\
	{\hom_D(u(a),b)} & {\hom_D(uvu(a),b)} & {\hom_D(u(a),b)}
	\arrow["{(\mu_a)_!}", from=1-2, to=1-3]
	\arrow["{(u(\mu_{a}))_!}"', from=3-2, to=3-3]
	\arrow[from=1-1, to=1-2]
	\arrow["{(u(\mu_{a}))_!}", from=2-2, to=2-3]
	\arrow[from=1-2, to=2-2]
	\arrow[from=1-3, to=2-3]
	\arrow["{(\epsilon_b)_!}", from=2-2, to=3-2]
	\arrow["{(\epsilon_b)_!}", from=2-3, to=3-3]
	\arrow["{(\epsilon_{u(a)})_!}"', from=3-1, to=3-2]
	\arrow[Rightarrow, no head, from=1-1, to=3-1]
\end{tikzcd}\]
which is commutative thanks to lemma \ref{lemma:naturality of hom apply to natural transformation} applied to the natural transformation $\epsilon:uv \to id$ and thanks to the naturality of the $\hom$.
By hypothesis, the left lower horizontal morphism is equivalent to the identity.
The outer square then defines an equivalence between $\psi\circ \phi$ and the identity. We show similarly $\phi\circ \psi\sim id$.

For the second assertion, remark that the composition 
$$\hom_C(a,a')\to \hom_D(u(a),u(a'))\xrightarrow{\phi(a,u(a'))} \hom_C(a,vu(a'))$$
is by definition equivalent to 
$$\hom_C(a,a')\to \hom_D(vu(a),vu(a'))\xrightarrow{(\mu_a)_!} \hom_C(a,vu(a'))$$
and according to the lemma \ref{lemma:naturality of hom apply to natural transformation}, to
$$\hom_C(a,a')\xrightarrow{(\mu_{a'})_!} \hom_C(a,vu(a'))$$
The Yoneda lemma then implies that the unit of the adjunction is $\mu$. We proceed similarly for the counit.
\end{proof}

\vspace{1cm} 

For the remaining, we fix two functors $u:C\to D$ and $v:D\to C$ between $\io$-categories as well as an adjunction structure
$$\phi:\hom_{D}(u(a),b)\sim \hom_C(a,v(b))$$
natural in $a:C^t$ and $b:D$.
We will consider the unit $\mu:id_C\to vu$ and the counit $\epsilon:uv\to id$ constructed in \ref{cons:of unit and counit}.

\begin{lemma}
\label{lemma: if ajdunction then unit 1}
The natural transformation
$$\hom_D(u(a),b)\to \hom_C(vu(a),v(b))\xrightarrow{(\mu_a)_!}\hom_C(a,v(b))$$
is equivalent to $\phi:\hom_D(u(a),b)\to \hom_D(a,v(b))$.
Similarly, the natural transformation

$$\hom_C(a,v(b))\to \hom_D(u(a),uv(b))\xrightarrow{(\epsilon_b)_!}\hom_D(u(a),b)$$
is equivalent to $\phi^{-1}:\hom_D(a,v(b))\to \hom_D(u(a),b)$.
\end{lemma}
\begin{proof}
Remark that we have a commutative diagram
\[\begin{tikzcd}
	{\hom_C(a,b)} & {\hom_D(u(a),u(b))} & {\hom_C(vu(a),vu(b))} \\
	{\hom_D(u(a),u(b))} && {\hom_D(a,vu(b))}
	\arrow["{(\mu_a)_!}", from=1-3, to=2-3]
	\arrow[from=1-2, to=1-3]
	\arrow[from=1-1, to=1-2]
	\arrow["{(\mu_b)_!}"{description}, from=1-1, to=2-3]
	\arrow["\phi"{description}, from=2-1, to=2-3]
	\arrow[from=1-1, to=2-1]
\end{tikzcd}\]
The commutativity of the left triangle comes from the definition of $\mu$, and the second one, from the lemma \ref{lemma:naturality of hom apply to natural transformation}, applied to $\mu$.
This then induces a commutative square
\[\begin{tikzcd}
	{\int_{C^t\times C}\hom_C} && {\Rb(id\times u)^*\int_{C^t\times D}\hom_D(u(\uvar),\uvar)} \\
	{\Rb(id\times u)^*\int_{C^t\times D}\hom_D(u(\uvar),\uvar)} && {\Rb(id\times u)^*\int_{C^t\times D}\hom_C(\uvar,v(\uvar))}
	\arrow[from=1-1, to=1-3]
	\arrow["{\Rb (id\times u)^*\int_{C^t\times D}(\mu_a)_!\circ \hom_v}", from=1-3, to=2-3]
	\arrow[from=1-1, to=2-1]
	\arrow["{\Rb(id\times u)^*\int_{C^t\times D}\phi}"', from=2-1, to=2-3]
\end{tikzcd}\]
By adjunction, this corresponds to a commutative square 
\[\begin{tikzcd}
	{\Lb(id\times u)_!\int_{C^t\times C}\hom_C} && {\int_{C^t\times D}\hom_D(u(\uvar),\uvar)} \\
	{\int_{C^t\times D}\hom_D(u(\uvar),\uvar)} && {\int_{C^t\times D}\hom_C(\uvar,v(\uvar))}
	\arrow[from=1-1, to=1-3]
	\arrow["{\int_{C^t\times D}(\mu_a)_!\circ \hom_v}", from=1-3, to=2-3]
	\arrow["{\int_{C^t\times D}\phi}"', from=2-1, to=2-3]
	\arrow[from=1-1, to=2-1]
\end{tikzcd}\]
However, the top horizontal and left vertical morphisms are equivalences according to lemma \ref{lemma:a particular Kan extension}.
We then have an equivalence $$ \int_{C^t\times D}(\mu_a)_!\circ \hom_v\sim \int_{C^t\times D}\phi$$
which implies the result.
The other assertion is shown similarly.
\end{proof}

\begin{lemma}
\label{lemma:if ajdunction then unit 2}
There are equivalences
$(\epsilon\circ_0 u)\circ_1(u\circ_0 \mu) \sim id_{u}$ and $(v\circ_0 \epsilon)\circ_1 (\mu \circ_0 v )\sim id_{v}$.
\end{lemma}
\begin{proof}
As the proof of the two assertions are similar, we will only show the second one.
To demonstrate this, it is enough to show that the induced natural transformation 
\begin{equation}
\label{eq:sequence ajdunction}
\hom_C(a,v(b))\xrightarrow{(\mu_{v(b)})_!} \hom_C(a,vuv(b)) \xrightarrow{(v(\epsilon_{(b)}))_!} \hom_C(a,v(b))\xrightarrow{\phi^{-1}}\hom_D(u(a),b)
\end{equation}
is equivalent to $\phi^{-1}$.
By definition, the first morphism is equivalent to the composition
$$\hom_C(a,v(b))\to \hom_D(u(a),uv(b))\xrightarrow{\phi} \hom_C(a,vuv(b))$$
and as $\phi^{-1}$ is a natural transformation, we have a commutative square
\[\begin{tikzcd}
	{\hom_C(a,vuv(b))} & {\hom_C(a,v(b))} \\
	{\hom_C(u(a),uv(b))} & {\hom_D(u(a),b)}
	\arrow["{(v(\epsilon_{b}))_!}", from=1-1, to=1-2]
	\arrow["{\phi^{-1}}", from=1-2, to=2-2]
	\arrow["{\phi^{-1}}"', from=1-1, to=2-1]
	\arrow["{(\epsilon_{b})_{!}}"', from=2-1, to=2-2]
\end{tikzcd}\]
The composite of the sequence \eqref{eq:sequence ajdunction} is then equivalent to 
$$\hom_C(a,v(b))\to \hom_D(u(a),uv(b))\xrightarrow{(\epsilon_{b})_{!}} \hom_D(u(a),b)$$
which is itself equivalent to $\phi^{-1}$ according to lemma \ref{lemma: if ajdunction then unit 1}.
\end{proof}

\begin{proof}[Proof of theorem \ref{theo:two adjunction definition}]
The implication $(1)\Rightarrow (2)$ is given by lemma \ref{lemma:If unit and counit so adjunction} and the contraposed by the lemma \ref{lemma:if ajdunction then unit 2}.
\end{proof}

\subsection{Lax colimits}

 For $I$ a marked $\io$-category and $A$ an $\io$-category,
we recall that $\gHom(I,A)$ is the $\io$-category whose value on a globular sum $a$ is given by:
$$\Hom(a,\gHom(I,A)):=\Hom(I\ominus a^\sharp, A^\sharp)$$
\begin{remark}
Let $B$ be an $\io$-category.
We want to give an intuition of the object $\gHom(B^\flat,\omega)$. The objects of this $\io$-category are the functors $I\to \omega$. The $1$-cells are the lax transformations $F\Rightarrow G$. For $n>1$, the $n$-cells are the lax transformations $F^{\times \Db_{n-1}}\Rightarrow G$ where $F^{\times \Db_{n-1}}:I\to \omega$ is the functor that sends $i$ onto $F(i)\times \Db_{n-1}$.
This last assertion is a consequence of the equivalence 
$$\tau_0(\LCart((I\ominus [b,n]^\sharp)^\sharp) \sim \Hom([n],\LCartc(I;b))$$
provided by the lemma \ref{lemma:lax univalence 4}.
\end{remark}
\begin{prop}
If $I$ is $\U$-small and $A$ is locally $\U$-small, the $\io$-category $\gHom(I,A)$ is locally $\U$-small.
\end{prop}
\begin{proof}
We have to check that for any globular sum $b$, the morphism 
$$\Hom(I\ominus [b,1]^\sharp,A^\sharp)\to \Hom(I\ominus (\{0\}\amalg\{1\}),A^\sharp)$$
has $\U$-small fibers. As $I$, seen as an $\infty$-presheaves on $t\Theta$, is a $\U$-small colimit of representables, we can reduce to the case where $I\in t\Theta$. As $A$ is local with respect to Segal extensions, and as $\ominus$ conserves them, we can reduce to the case where $I$ is of shape $[1]^\sharp$ or $[a,1]$ for $a$ a in $t\Theta$. If $I$ is $[1]^\sharp$, according to the second assertion of proposition \ref{prop:associativity of ominus}, $[1]^\sharp\ominus [b,1]^\sharp$ is equivalent to $([1]\times [b,1])^\sharp$ and the result follows from proposition \ref{prop:when Hom A B is locally small}.

For the second case, we fix a morphism $f:[a,1]\times (\{0\}\amalg\{1\})\to A$. 
Using the canonical equivalence between $[a,1]\ominus [b,1]^\sharp$ and the colimit of the diagram \eqref{eq:formula for the ominus marked case},
the $\infty$-groupoid $\Hom(I\ominus [b,1]^\sharp,A^\sharp)_f$ is the limit of the diagram:
\[\begin{tikzcd}[column sep=0.1cm]
	{\Hom(a^\natural,\hom(f(1,0),f(1,1)))} & {\Hom((a\otimes\{0\}^\sharp)^\natural\times b,\hom(f(0,0),f(1,1))} \\
	{\Hom((a\otimes[1]^\sharp)^\natural\times b,\hom(f(0,0),f(1,1)))} \\
	{\Hom(a^\natural,\hom(f(0,0),f(1,0)))} & {\Hom((a\otimes\{0\}^\sharp)^\natural\times b,\hom(f(0,0),f(1,1))))}
	\arrow[from=2-1, to=3-2]
	\arrow[from=2-1, to=1-2]
	\arrow[from=3-1, to=3-2]
	\arrow[from=1-1, to=1-2]
\end{tikzcd}\]
As all these objects are $\U$-small by assumption, this concludes the proof.
\end{proof}

\begin{definition}
Let $I$ be a $\U$-small marked $\io$-category, $A$ a locally $\U$-small $\io$-category $A$ and $F:I\to A^\sharp$ a functor. 
A \notion{lax colimit} of $F$ is an object \wcnotation{$\laxcolim_IF$}{(laxcolim@$\laxcolim$} of $A$ together with an equivalence
$$\hom_{A}(\laxcolim_IF, b)\sim \hom_{\gHom(I,A)}(F,\cst b)$$
natural in $b:A$. 
Conversely, a \notion{lax limit} of $F$ is an object \wcnotation{$\laxlim_IF$}{(laxlim@$\laxlim$} of $A$ together with an equivalence
$$\hom_{A}(b,\laxlim_IF)\sim \hom_{\gHom(I,A)}(\cst b,F)$$
natural in $b:A$. 
\end{definition}

\begin{remark}
The proposition \ref{prop:ominus and opmarked}  induces an equivalence
$$\gHom(I,A)^{\circ}\sim\gHom(I^{\circ},A^{\circ})$$
As a consequence, a functor $F:I\to A^\sharp$ admits a lax colimit if and only if $F^\circ:I^\circ\to (A^\circ)^\sharp$ admits a lax limit. If $F$ admits such lax colimit, the lax limit of $F^\circ$ is the image by the canonical equivalence $A_0\sim A^\circ_0$ of the lax colimit of $F$ and we then have
$$\laxcolim_IF\sim \laxlim_{I^{\circ}}F^{\circ}.$$
\end{remark}

\begin{definition}
We say that a locally $\U$-small $\io$-category $C$ is \notion{lax $\U$-complete} (resp. \notion{lax $\U$-cocomplete}), if for any $\U$-small marked $\io$-category $I$ and any functor $F:I\to C$, $F$ admits limits (resp. colimits).
\end{definition}

\begin{remark}
Using proposition \ref{prop:adj if slice as terminal}, $C$ is lax $\U$-complete (resp. lax $\U$-cocomplete) if and only if for any $\U$-small marked $\io$-category $I$, the functor $\cst:C\to \gHom(I,C)$ admits a right adjoint (resp. a left adjoint).
\end{remark}

\begin{remark}
We want to give an intuition of the lax colimits.
Let $I$ be a $\U$-small marked $\io$-category, $A$ a locally $\U$-small $\io$-category $A$ and $F:I\to A^\sharp$ a functor admitting a lax colimit $\laxcolim_IF$. For any $1$-cell $i:a\to b$ in $I$, we have a triangle
\[\begin{tikzcd}
	{} & {F(b)} \\
	{F(a)} & {\laxcolim_IF}
	\arrow["{F(i)}", curve={height=-30pt}, from=2-1, to=1-2]
	\arrow[from=2-1, to=2-2]
	\arrow[shorten <=8pt, shorten >=8pt, Rightarrow, from=1-2, to=2-1]
	\arrow[draw=none, from=1-1, to=2-1]
	\arrow[from=1-2, to=2-2]
\end{tikzcd}\]
If $i$ is marked, the preceding $2$-cell is an equivalence. 
For any $2$-cell $u:i\to j$, we have a diagram
\[\begin{tikzcd}
	& {F(b)} & {} & {F(b)} \\
	{F(a)} & {\laxcolim_IF} & {F(a)} & {\laxcolim_IF}
	\arrow[""{name=0, anchor=center, inner sep=0}, "{F(i)}"{description}, from=2-1, to=1-2]
	\arrow[""{name=1, anchor=center, inner sep=0}, from=2-1, to=2-2]
	\arrow[from=1-2, to=2-2]
	\arrow[""{name=2, anchor=center, inner sep=0}, from=1-2, to=2-2]
	\arrow[""{name=3, anchor=center, inner sep=0}, "{F(j)}", curve={height=-30pt}, from=2-1, to=1-2]
	\arrow["{F(j)}", curve={height=-30pt}, from=2-3, to=1-4]
	\arrow[from=2-3, to=2-4]
	\arrow[from=1-4, to=2-4]
	\arrow[shorten <=8pt, shorten >=8pt, Rightarrow, from=1-4, to=2-3]
	\arrow[""{name=4, anchor=center, inner sep=0}, draw=none, from=1-3, to=2-3]
	\arrow[shift right=2, shorten <=12pt, shorten >=12pt, Rightarrow, from=2, to=1]
	\arrow[shorten <=4pt, shorten >=4pt, Rightarrow, from=3, to=0]
	\arrow[shift left=0.7, shorten <=14pt, shorten >=16pt, no head, from=2, to=4]
	\arrow[shorten <=14pt, shorten >=14pt, from=2, to=4]
	\arrow[shift right=0.7, shorten <=14pt, shorten >=16pt, no head, from=2, to=4]
\end{tikzcd}\]
If $u$ is marked, the $3$-cell is an equivalence. We can continue these diagrams in higher dimensions and we have
similar assertions for lax limits.

The marking therefore allows us to play on the "lax character" of the universal property that the lax colimit must verify.
\end{remark}

\begin{prop}
\label{prop:expliciti colimit for presheaves}
Let $A$ be a $\U$-small $\io$-category and $I$ a $\U$-small marked $\io$-category. The $\io$-category $\widehat{A}$ is lax $\U$-complete and lax $\U$-cocomplete.

For a morphism $g:I\to \w{A}^\sharp$ corresponding to an object $E$ of $\LCartc(I\times (A^t)^\sharp)$, we then have $$
\int_{A^t}\laxcolim_I g \sim \Lb (t\times id_{(A^t)^\sharp})_!E~~~~~~~\int_{A^t}\laxlim_I g \sim \Rb (t\times id_{(A^t)^\sharp})_*E
$$
\end{prop}
\begin{proof}
Recall that $\gHom(I,\w{A})$ is equivalent to $\gHom(I\times (A^t)^\sharp,\uni)$. Let $t$ be the canonical morphism $I\to 1$. 
As $t$ is smooth, corollary \ref{cor:naive kan extension} induces adjunctions
\[\begin{tikzcd}
	{\gHom(I,\w{A})} && {\w{A}}
	\arrow[""{name=0, anchor=center, inner sep=0}, "{(t\times id_A)^*}"{description}, from=1-3, to=1-1]
	\arrow[""{name=1, anchor=center, inner sep=0}, "{(t\times id_A)_!}", shift left=5, from=1-1, to=1-3]
	\arrow[""{name=2, anchor=center, inner sep=0}, "{(t\times id_A)_*}"', shift right=5, from=1-1, to=1-3]
	\arrow["\dashv"{anchor=center, rotate=-90}, draw=none, from=0, to=2]
	\arrow["\dashv"{anchor=center, rotate=-90}, draw=none, from=1, to=0]
\end{tikzcd}\]
As the functor $$(t\times id_A)^*:\gHom(I,\widehat{A})\to \w{A}$$ is equivalent to the functor $\cst:\gHom(I,\widehat{A})\to \w{A}$, this concludes the proof.
\end{proof}

\begin{prop}
\label{prop:eval commutes with colimit and limits}
Let $i:A\to B$ be a morphism between $\U$-small $\io$-categories and $I$ a $\U$-small marked $\io$-category. We have canonical commutative squares:
\[\begin{tikzcd}[cramped]
	{\gHom(I,\w{B})} & {\w{B}} & {\gHom(I,\w{B})} \\
	{\gHom(I,\w{A})} & {\w{A}} & {\gHom(I,\w{A})}
	\arrow["{\laxcolim_I}", from=1-1, to=1-2]
	\arrow["{(id_I\times i^t)^*}"', from=1-1, to=2-1]
	\arrow["{i^*}", from=1-2, to=2-2]
	\arrow["{\laxlim_I}"', from=1-3, to=1-2]
	\arrow["{(id_I\times i^t)^*}", from=1-3, to=2-3]
	\arrow["{\laxcolim_I}"', from=2-1, to=2-2]
	\arrow["{\laxlim_I}", from=2-3, to=2-2]
\end{tikzcd}\]
\end{prop}
\begin{proof}
This directly follows from the squares given in \ref{cons: i pull and push beetwe io category of morphism} and from proposition \ref{prop:expliciti colimit for presheaves}.
\end{proof}

\begin{remark}
In particular, choosing $A:=1$, the previous proposition implies that the lax colimits and lax limits in $\io$-presheaves commute with evaluation.
\end{remark}

The next proposition implies that limits and colimits in $\io$-presheaves can be detected as the level of the sub maximal $\iun$-categories of $\gHom(I,\w{A})$ and $\w{A}$. We recall that the sub maximal $\iun$-categories of $\gHom(I,\w{A})$, denoted by $\tau_1\gHom(I,\w{A})$, is the adjoint of the functor $[n]\mapsto (I\otimes[n]^\sharp)^\natural$.
\begin{prop}
Let $I$ be a $\U$-small marked $\io$-category, and $g:I\to A^\sharp$ a functor. An object $f$ of $\w{A}$ has a structure of colimit of the functor $g$ if and only if there exists an equivalence
$$\Hom_{\tau_1\w{A}}(f,h)\sim \Hom_{\tau_1\gHom(I,\w{A})}(F,\cst h)$$
natural in $h:(\tau^1 \w{A})^{op}$.
Similarly, the object $f$ has a structure of limit of the functor $F$ if and only if there exists an equivalence
$$\Hom_{\tau_1\w{A}}(h,f)\sim \Hom_{\tau_1\gHom(I,\w{A})}(\cst h,F)$$
natural in $h:(\tau^1 \w{A})^{op}$.
\end{prop}
\begin{proof}
We recall that theorem \ref{theo:lcartc et ghom} and corollary \ref{cor:lcar et hom} induces equivalences
$$\tau_1\w{A}\sim \LCart_{\U}((A^t)^\sharp)~~~ \tau_1\gHom(I,A)\sim \LCartc_{\U}(I\otimes (A^t)^\sharp)$$
and that we have a triplet of adjoints
\[\begin{tikzcd}
	{\LCartc_{\U}(I\otimes (A^t)^\sharp)} && {\LCart_{\U}((A^t)^\sharp)}
	\arrow[""{name=0, anchor=center, inner sep=0}, "{\Lb (t\times id_{A^t})_!}", shift left=5, from=1-1, to=1-3]
	\arrow[""{name=1, anchor=center, inner sep=0}, "{\Rb(t\times id_{A^t})_*}"', shift right=5, from=1-1, to=1-3]
	\arrow[""{name=2, anchor=center, inner sep=0}, "{\Rb(t\times id_{A^t})^*}"{description}, from=1-3, to=1-1]
	\arrow[""{name=3, anchor=center, inner sep=0}, "\dashv"{anchor=center, rotate=-90}, draw=none, from=0, to=2]
	\arrow["\dashv"{anchor=center, rotate=-90}, draw=none, from=3, to=1]
\end{tikzcd}\]
which is the image by $\tau_1$ of the triplet of adjoints given in the proof of proposition \ref{prop:expliciti colimit for presheaves}. As $\Rb(t\times id_{A^t})^*:\LCart_{\U}((A^t)^\sharp)\to\LCartc_{\U}(I\otimes (A^t)^\sharp)$ is equivalent to the functor $\cst:\tau_1\gHom(I,A)\to \tau_1\w{A}$, this concludes the proof.
\end{proof}

\begin{example}
We recall that we denote by $\bot:\Arr(\ocatm)\to \ocat$ the functor sending a left fibration $Y\to A$ to the localization of $Y$ by marked cells. This functors sends initial and final morphisms to equivalences. If $E$ is a left cartesian fibration over a marked $\io$-category $I$, we then have $\bot E\sim \Lb t_! E$ where $t$ denotes the morphism $I\to 1$.

 Let $g:I\to \uni$ be a diagram. We denote $\iota:I\to I^\sharp$ the canonical inclusion.
By the explicit expression of lax colimit given in proposition \ref{prop:expliciti colimit for presheaves}, we then have an equivalence 
$$\laxcolim_I g \sim \bot \iota^*\int_{I^{\natural}}g^\natural.$$
If $I$ is equivalent to $I^\flat$, we then have
$$\laxcolim_I g \sim \dom(\int_{I^{\natural}}g^\natural)^\natural.$$
 \begin{enumerate}
 \item[$-$]
Let $c:1\to \uni$ be a morphism corresponding to an $\io$-category $C$. For any $\io$-category $A$, we then have 
$$\laxcolim_{A^\sharp} \cst_c\sim (\tau_0 A)\times C~~~~~\laxcolim_{A^\flat} \cst_c\sim A\times C$$

 \item[$-$] Let $f:[b,1]\to \uni$ be a morphism corresponding to a morphism $A\times b\to B$. We then have 
 $$\laxcolim_{[b,1]^\flat} f\sim A\times (1\costar b)\coprod_{A\times b}B$$
 \end{enumerate}
\end{example}

\begin{example}
Using the explicit expression of lax limit given in proposition \ref{prop:expliciti colimit for presheaves}, we have an equivalence
$$\laxlim_I g \sim \Map(id_I,\iota^*\int_{I^{\natural}}g^\natural)$$
 \begin{enumerate}
 \item[$-$]
Let $c:1\to \uni$ be a morphism corresponding to an $\io$-category $C$. For any $\io$-category $A$, we then have 
$$\laxlim_{A^\sharp} \cst_c\sim \uHom(\tau_0 A, C)~~~~~\laxlim_{A^\flat} \cst_c\sim \uHom(A,C)$$

 \item[$-$] Let $f:[b,1]\to \uni$ be a morphism corresponding to a morphism $A\times b\to B$. Let $c$ be a globular sum. According to corollary \ref{cor:univalence tranche}, a morphism $id_{[b,1]^\flat}\times c^\flat\to \iota^*\int_{[b,1]^{\flat}}g^\natural$ corresponds to a diagram 
\[\begin{tikzcd}
	1 \\
	& {1\costar [b,1]} & \uni \\
	{[b,1]}
	\arrow[from=1-1, to=2-2]
	\arrow["{\{c\}}", curve={height=-12pt}, from=1-1, to=2-3]
	\arrow[from=2-2, to=2-3]
	\arrow[from=3-1, to=2-2]
	\arrow["f"', curve={height=12pt}, from=3-1, to=2-3]
\end{tikzcd}\]
and according to proposition \ref{prop:lfib and W 3}, to a diagram	
\begin{equation}
\label{eq:in exemple of limit}
\begin{tikzcd}[cramped]
	{c\times b\otimes\{0\}} && {A\times b} \\
	& {c\times( b\otimes[1])} && B \\
	{c\times b\otimes\{1\}} && c
	\arrow[from=1-1, to=1-3]
	\arrow[from=1-1, to=2-2]
	\arrow[from=1-3, to=2-4]
	\arrow[from=2-2, to=2-4]
	\arrow[from=3-1, to=2-2]
	\arrow[from=3-1, to=3-3]
	\arrow[from=3-3, to=2-4]
\end{tikzcd}
\end{equation}
where the upper horizontal morphism is of shape $g\times b$. 
As the cartesian product commutes with colimits, proposition \ref{prop:eq for cylinder} implies that the following square is cocartesian:
\[\begin{tikzcd}
	{c\times b\otimes\{1\}} && c \\
	{c\times( b\otimes[1])} && {c\times 1\star b}
	\arrow[from=1-1, to=1-3]
	\arrow[from=1-1, to=2-1]
	\arrow[from=1-3, to=2-3]
	\arrow[""{name=0, anchor=center, inner sep=0}, from=2-1, to=2-3]
	\arrow["\lrcorner"{anchor=center, pos=0.125}, draw=none, from=1-1, to=0]
\end{tikzcd}\]
The diagram \eqref{eq:in exemple of limit} then corresponds to a diagram:
\[\begin{tikzcd}[cramped]
	{c\times b\otimes\{0\}} && {A\times b} \\
	{c\times b\star 1} && B
	\arrow[from=1-1, to=1-3]
	\arrow[from=1-1, to=2-1]
	\arrow[from=1-3, to=2-3]
	\arrow[from=2-1, to=2-3]
\end{tikzcd}\]
We then have 
 $$\laxlim_{[b,1]^\flat} f\sim A\prod_{\uHom(b,B)} \uHom(b\star 1,B).
 $$
 \end{enumerate}
 \end{example}

\begin{lemma}
\label{lemma:tehcnical colimit}
Let $F:I\to A^\sharp$ be a morphism between $\U$-small marked $\io$-categories. 
There is an equivalence 
$$ \hom_{\gHom(I,A)}(\cst_a,F)\sim \laxlim_I\hom_A(a,F(\uvar))$$
natural in $F: \gHom(I,A)$ and $a:A^t$.
\end{lemma}
\begin{proof}
Remark that there is a commutative square:
\[\begin{tikzcd}
	A & {\gHom(I,A)} \\
	{\w{A}} & {\gHom(I,\w{A})}
	\arrow["\cst", from=1-1, to=1-2]
	\arrow["\cst"', from=2-1, to=2-2]
	\arrow["y"', from=1-1, to=2-1]
	\arrow["{\gHom(I,y)}", from=1-2, to=2-2]
\end{tikzcd}\]
and that the right vertical morphism is fully faithful as $y$ is.
We then have a sequence of equivalences
$$
\begin{array}{rcll}
\hom_{\gHom(I,A)}(\cst_a,F)&\sim& \hom_{\gHom(I,\w{A})}(\cst_{y_a},\gHom(I,y)(F))\\
&\sim& \hom_{\w{A}}(y_a,\laxlim_{I}\gHom(I,y)(F)))\\
&\sim& (\laxlim_{I}\gHom(I,y)(F))(a)&\mbox{(Yoneda lemma)}\\
&\sim& \laxlim_I\hom_A(a,F(i))
\end{array}$$
where the last one comes from the fact that evaluations commute with lax limits.
\end{proof}

\begin{prop}
\label{prop:other characthereisation of limits}
Consider a functor $F:I\to A^\sharp$ between $\U$-small marked $\io$-categories. We have an equivalence
$$\hom_A(a,\laxlim_IF)\sim \laxlim_I\hom_A(a,F(\uvar))$$
natural in $a:A^t$.
Dually, we have an equivalence
$$\hom_A(\laxcolim,a)\sim \laxlim_{I^t}\hom_A(F(\uvar),a)^\circ$$
natural in $a:A$. 
\end{prop}
\begin{proof}
By lemma \ref{lemma:tehcnical colimit} and by definition of lax limit, we have two equivalences
 $$\hom_A(a,\laxlim_IF)  \sim \laxlim_I\hom_A(a,F(\uvar))\sim \hom(\cst_a,F)$$
This proves the first assertion. The second one follows by duality, using the fact that the functor 
$$(\uvar)^\circ:\uni\to \uni^{t\circ}$$
preserves limits as it is an equivalence.
\end{proof}

\begin{cor}
\label{cor:left adjoint preserves limits}
Left adjoints between $\U$-small $\io$-categories preserve lax colimits and right adjoints preserve lax limits.
\end{cor}
\begin{proof}
Let $u:C\to D$ and $v:D\to C$ be two adjoint functors. Let $F:I\to C^\sharp$ be a functor admitting a lax colimit.
We then have a sequence of equivalences
$$
\begin{array}{rclc}
\hom_C(u(\laxcolim_IF),b)&\sim &\hom_D(\laxcolim_IF,v(b))\\
&\sim & \laxlim_{I^t}\hom_D(F,v(b))^\circ&(\ref{prop:other characthereisation of limits})\\
&\sim &\laxlim_{I^t}\hom_C(u(F),b)^\circ\\
&\sim &\hom_C(\laxcolim_Iu(F),b)&(\ref{prop:other characthereisation of limits})
\end{array}
$$
natural in $b:D$. The result then follows from the Yoneda lemma applied to $C^t$. The other assertion is proved similarly.
\end{proof}

\begin{cor}
Consider a functor $F:I\to A^\sharp$ between $\U$-small marked $\io$-categories. Then $F$ admits a lax limit if and only if there exists an object $l$ and an equivalence
$$\hom_A(a,l)\sim \hom_{\gHom(I,\uni)}(\cst 1,\hom_A(a,F(\uvar))$$
natural in $a:A^t$. If such an object exists, then $l$ is a lax limit of $F$.
\end{cor}
\begin{proof}
Remark that we have an equivalence
$$\hom_{\gHom(I,\uni)}(\cst 1,\hom_A(a,F(\uvar)))\sim \hom_{\uni}(1,\laxlim_I\hom_A(a,F(\uvar))$$
Eventually, the Yoneda lemma implies that 
$$\hom_{\uni}(1,\laxlim_I\hom_A(a,F(\uvar))\sim\laxlim_I\hom_A(a,F(\uvar))$$
The result then follows from proposition \ref{prop:other characthereisation of limits}.
\end{proof}

\begin{remark}
The characterization of  lax limit given in previous corollary is the generalization to the case $\io$ of the characterization of lax limit for $(\infty,2)$-categories given in \cite[corollary 5.1.7]{Gagna_fibrations_and_lax_limit_infini_2_categories}. The other follows by duality.
\end{remark}

The proof of the following proposition is a direct adaptation of the one of proposition 5.1 of \cite{Gepner_Lax_colimits_and_free_fibration}.
\begin{prop}
\label{prop:explicit hom between morphism}
Let $f,g:A\to B$ be two morphisms between $\U$-small $\io$-categories.
There is an equivalence
$$\hom_{\uHom(A,B)}(f,g)\sim \laxlim_{a\to b: S(A)}\hom_{B}(f(a),g(a)).$$
\end{prop}
\begin{proof}
Remark first that the right term is in fact equivalent to 
$$\laxlim_{a\to b: S(A)}h^*\hom_{B}(\uvar,\uvar)$$
where $h$ is the left cartesian fibration $S(A)\to A^t\times A$ corresponding to $\hom_A: A^t\times A\to \uni$. We then have 
$$
\begin{array}{rcll}
\laxlim_{a\to b: S(A)}\hom_{B}(f(a),g(a)) &\sim &\hom_{\uni}(1,\laxlim_{a\to b: S(A)}h^*\hom_{B}(\uvar,\uvar)) &(\ref{theo:Yoneda lemma})\\
&\sim &\hom_{\gHom(S(A),\uni)}(\cst 1,h^*\hom_{B}(\uvar,\uvar))\\
&\sim &\hom_{\uHom(A^t\times A,\uni)}(h_! \cst 1,\hom_{B}(\uvar,\uvar))&(\ref{cor:naive kan extension})\\
\end{array}$$
By construction, $h_! \cst 1$ is the Grothendieck deconstruction of the left cartesian fibration $\Lb h_!id \sim h$, and so is equivalent to $\hom_A$. 
We then have 
$$\laxlim_{a\to b: S(A)}\hom_{B}(f(a),g(a))\sim \hom_{\uHom(A^t\times A,\uni)}(\hom_A(\uvar,\uvar),\hom_B(f(\uvar),g(\uvar)))$$
We have a canonical equivalence $\uHom(A^t\times A,\uni)\sim \uHom(A,\w{A})$ sending the functor $\hom_A$ to the Yoneda embedding $y^A$, and $\hom_B(f(\uvar),g(\uvar))$ is $f^*(y^B\circ g)$. 
We then have 
$$
\begin{array}{rcll}
\hom_{}(\hom_A(\uvar,\uvar),\hom_B(f(\uvar),g(\uvar)))&\sim &\hom_{\uHom(A,\w{A})}(y^A,f^*(y^B\circ g))\\
&\sim &\hom_{\uHom(A,\w{B})}(f_!\circ y^A,y^B\circ g)&(\ref{cor:naive kan extension})\\
&\sim &\hom_{\uHom(A,\w{B})}(y^B\circ f,y^B\circ g)&(\ref{prop:left extension commutes with Yoneda})\\
&\sim &\hom_{\uHom(A,B)}(f, g)&(\mbox{Yoneda lemma})\\
\end{array}$$
\end{proof}

\begin{theorem}
\label{theo:initiality colimit}
Let  $i:I\to J$ be a  morphisms between $\U$-small marked $\io$-categories. We denote $\bot:\ocatm\to \ocat$ the functor sending a marked $\io$-category to its localization by marked cells.

The following are equivalent 
\begin{enumerate}
\item For any morphism  $F:J\to A^\sharp$  that admits a lax colimit, the functor $F\circ i$ also admits a lax colimit, and the canonical morphism:
$$ \laxcolim_{J}F\circ i \to \laxcolim_{I}F$$
is an equivalence.
\item  For any object $j$ in $J$, the canonical morphism 
$$\bot I_{j/}\to \bot J_{j/}$$
is an equivalence, where $I_{j/}$ and $J_{j/}$ fit in the cartesian squares
\[\begin{tikzcd}
	{I_{j/}} & {J_{j/}} & {J^{\sharp}_{j/}} \\
	I & J & {J^\sharp}
	\arrow[from=1-1, to=1-2]
	\arrow[from=1-1, to=2-1]
	\arrow["\lrcorner"{anchor=center, pos=0.125}, draw=none, from=1-1, to=2-2]
	\arrow[from=1-2, to=1-3]
	\arrow[from=1-2, to=2-2]
	\arrow["\lrcorner"{anchor=center, pos=0.125}, draw=none, from=1-2, to=2-3]
	\arrow[from=1-3, to=2-3]
	\arrow[from=2-1, to=2-2]
	\arrow[from=2-2, to=2-3]
\end{tikzcd}\]
\end{enumerate}
Dually, the following are equivalent 
\begin{enumerate}
\item[(1)'] For any morphism  $F:J\to A^\sharp$  that admits a lax limit, the functor $F\circ i$ also admits a lax limit, and the canonical morphism:
$$\laxlim_{I}F\to \laxlim_{J}F\circ i$$
is an equivalence.
\item[(2)']
For any object $j$ in $J$, the canonical morphism 
$$\bot I_{/j}\to \bot J_{/j}$$
is an equivalence, where $I_{/j}$ and $J_{/j}$ fit in the cartesian squares
\[\begin{tikzcd}
	{I_{/j}} & {J_{/j}} & {J^{\sharp}_{/j}} \\
	I & J & {J^\sharp}
	\arrow[from=1-1, to=1-2]
	\arrow[from=1-1, to=2-1]
	\arrow["\lrcorner"{anchor=center, pos=0.125}, draw=none, from=1-1, to=2-2]
	\arrow[from=1-2, to=1-3]
	\arrow[from=1-2, to=2-2]
	\arrow["\lrcorner"{anchor=center, pos=0.125}, draw=none, from=1-2, to=2-3]
	\arrow[from=1-3, to=2-3]
	\arrow[from=2-1, to=2-2]
	\arrow[from=2-2, to=2-3]
\end{tikzcd}\]
\end{enumerate}
\end{theorem} 
We directly give a corollary.
\begin{cor}
\label{cor:limit and final}
Let $i:I\to J$ and $F:J\to A^\sharp$ be two morphisms between $\U$-small marked $\io$-categories.
If $i$ is final, and $F$ admits a lax colimit, the functor $F\circ i$ also admits a lax colimit, and the canonical morphism:
$$ \laxcolim_{J}F\circ i \to \laxcolim_{I}F$$
is an equivalence.

Dually, if $i$ is initial, and $F$ admits a lax limit, the functor $F\circ i$ also admits a lax limit, and the canonical morphism:
$$\laxlim_{I}F\to \laxlim_{J}F\circ i$$
is an equivalence.
\end{cor}
\begin{proof}
This directly follows from theorem \ref{theo:initiality colimit} and proposition \ref{prop:quillent theorem A}.
\end{proof}
To prove theorem \ref{theo:initiality colimit}, we need several lemmas.

\begin{lemma}
\label{lemma:colimit restricted to final}
Let $i:I\to J$ be a morphism between $\U$-small marked $\io$-categories and $f:J\to \uni$ a morphism.  If $i$ is initial, then the canonical morphism
$$\laxlim_{I}f\to \laxlim_{J}f	\circ i$$
is an equivalence.
\end{lemma}
\begin{proof}
We denote by $E$ (resp. $H$) the object of $\LCart(J)$ (resp. $\LCart(I)$) corresponding to $f$ (resp. $f\circ i$), and $p:X\to J$ (resp. $q:Y\to I$) the corresponding left cartesian fibration. We then have a cartesian square:
\[\begin{tikzcd}
	Y & X \\
	I & J
	\arrow["{i'}", from=1-1, to=1-2]
	\arrow["q"', from=1-1, to=2-1]
	\arrow["p", from=1-2, to=2-2]
	\arrow["i"', from=2-1, to=2-2]
\end{tikzcd}\]
By proposition \ref{prop:expliciti colimit for presheaves}, for any $\io$-category $C$, we have 
$$\Hom(C,\laxlim f)\sim \Hom_{\ocatm_{/J}}(C\times id_i,p)$$
$$
\Hom(C,\laxlim f\circ i)\sim \Hom_{\ocatm_{/I}}(C\times id_j,q)\sim \Hom_{\ocatm_{/I}}(C\times i,p)$$
where we denote by $C$ the canonical morphism $C\to 1$.
As $i$ is initial and $p$ is a left cartesian fibration, we have 
$$ \Hom_{\ocatm_{/J}}(C\times id_i,p)\sim  \Hom_{\ocatm_{/I}}(C\times i,p)$$ which concludes the proof.
\end{proof}

\begin{lemma}
\label{lemma:colimit restricted to final2}
Let $i:I\to J$ be a morphism between $\U$-small marked $\io$-categories, $A$ an locally $\U$-small $\io$-category, and $f:J\to A^\sharp$ a morphism. If $i$ is final, then the canonical morphism
$$\laxcolim_{I}f\circ i\to \laxcolim_{J}f$$
is an equivalence.
\end{lemma}
\begin{proof}
Suppose that $i$ is final. Remark that according to \ref{prop:other characthereisation of limits}, it is sufficient to demonstrate for any integer $a$ that the morphism
$$ \laxlim_{J^t}\hom(f(\uvar),a)^\circ\to  \laxlim_{I^t}\hom(f\circ i(\uvar),a)^\circ$$ 
is an equivalence. Using the equivalence $(\uvar)^\circ:\uni^{t\circ}\to \uni$,
this is equivalent to demonstrating that  
the morphism
$$ \laxlim_{J^\circ}\hom(f(\uvar),a)^{\circ}\to  \laxlim_{I^\circ}\hom(f\circ i(\uvar),a)^{\circ}$$
is an equivalence. As $I^{\circ}\to J^{\circ}$ is initial, this follows from lemma \ref{lemma:colimit restricted to final2}.
\end{proof}

\begin{proof}[ Proof of theorem \ref{theo:initiality colimit}]
Let  $i:I\to J$ be a morphism fulfilling condition $(1)$. Let $f:J\to A^\sharp$ be a morphism. Remark first that $f$ factors as
$$J\xrightarrow{\iota} J^\sharp\xrightarrow{g} A^\sharp.$$
We denote $p:K\to J^{\sharp}$ the left fibrant cartesian replacement of $\iota$. By proposition \ref{prop:quillent theorem A}, we then have a canonical triangle:
\[\begin{tikzcd}
	I & J \\
	& K
	\arrow["i", from=1-1, to=1-2]
	\arrow[from=1-1, to=2-2]
	\arrow[from=1-2, to=2-2]
\end{tikzcd}\]
where the two morphisms with codomain $K$ are initial. This induces a triangle:
\[\begin{tikzcd}
	{\laxcolim_I f\circ i} & {\laxcolim_Jf} \\
	& {\laxcolim_Kp}
	\arrow[from=1-1, to=1-2]
	\arrow[from=1-1, to=2-2]
	\arrow[from=1-2, to=2-2]
\end{tikzcd}\]
According to lemma \ref{lemma:colimit restricted to final2}, the  vertical and the diagonal morphisms are equivalences, and so is the horizontal one by two out of three. This concludes the proof of the implication $(1)\Rightarrow (2)$.

To show the converse, let $i:I\to J$ be a morphism fulfilling condition $(2)$ and $j$ an element of $J$. 
Consider the functor 
$$f:J\to J^{\sharp}\xrightarrow{\hom_{J^{\natural}}(j,\uvar))} \uni$$
By proposition \ref{prop:Yoneda is Fb}, $f$ and $f\circ i$ correspond via the Grothendieck construction to the classified left cartesian fibrations $J_{j/}\to J$ and $I_{j/}\to I$. By proposition \ref{prop:expliciti colimit for presheaves}, and by assumption, we then have equivalences
$$\bot I_{j/}\sim \laxcolim_If\circ i \sim \laxcolim_Jf  \sim\bot J_{j/}$$

The equivalence between $(1)$ and $(2)$ follows by duality.

\end{proof}

\vspace{1cm} We suppose the existence of a Grothendieck universe $\Z$ containing $\Wcard$. As a consequence, we can use all the results of the last three subsections to respectively $\V$-small and locally $\V$-small objects.

\begin{definition}
Let $A$ be a $\U$-small $\io$-category. Let $f$ be an object of $\w{A}$. We define $A^\sharp_{/f}$ as the following pullback
\[\begin{tikzcd}
	{A^\sharp_{/f}} & {\w{A}^\sharp_{/f}} \\
	{A^\sharp} & {\w{A}^\sharp}
	\arrow[from=1-1, to=2-1]
	\arrow[from=1-1, to=1-2]
	\arrow[from=1-2, to=2-2]
	\arrow[from=2-1, to=2-2]
\end{tikzcd}\]
\end{definition}

\begin{theorem}
\label{theo:presheaevs colimi of representable}
The colimit of the functor 
$\pi:A^\sharp_{/f}\to A^\sharp\to \w{A}^\sharp$ is $f$.
\end{theorem}
\begin{proof}
We denote by $\pi'$ the canonical projection $\w{A}^\sharp_{/f}\to \w{A}^\sharp$, and 
$t_{A^\sharp_{/f}}:A^\sharp_{/f}\to 1$, $t_{ \w{A}^\sharp_{/f}}:\w{A}^\sharp_{/f}\to 1$ the canonical morphisms.
Let  $E$ be the object of $\LCart(A^\sharp\times A^\sharp_{/f})$ induced by currying $\pi$, 
and $F$  the object of $\LCart(A^\sharp\times \w{A}^\sharp_{/f})$ induced by currying $\pi'$.
By proposition \ref{prop:expliciti colimit for presheaves}, we have equivalences 
$$\int_{A^t}\colim_{A^\sharp_{/f}}\pi \sim (id_{(A^t)^\sharp}\times t_{A^\sharp_{/f}})_!E
~~~~~~~~
\int_{A^t}\colim_{\w{A}^\sharp_{/f}}\pi' \sim (id_{(A^t)^\sharp}\times t_{ \w{A}^\sharp_{/f}})_!F$$

We denote by $X\to A^\sharp\times A^\sharp_{/f}$ the left cartesian fibration corresponding to $E$,
and by $Y\to (A^t)^\sharp\times \w{A}^\sharp_{/f}$ the left fibration corresponding to $F$. All this data fits in the diagram
\[\begin{tikzcd}
	X && {S(A)} \\
	& Y && {\dom(\int_{A^t\times \w{A}}\ev)} \\
	{(A^t)^\sharp\times A^{\sharp}_{/f}} && {(A^t)^\sharp\times A^{\sharp}} \\
	& {(A^t)^\sharp\times (\w{A})^{\sharp}_{/f}} && {(A^t)^\sharp\times \w{A}^\sharp}
	\arrow["E", from=1-1, to=3-1]
	\arrow[from=3-1, to=3-3]
	\arrow[from=1-1, to=1-3]
	\arrow[from=1-3, to=3-3]
	\arrow[from=3-3, to=4-4]
	\arrow[from=3-1, to=4-2]
	\arrow["i", from=1-3, to=2-4]
	\arrow[from=2-4, to=4-4]
	\arrow[from=4-2, to=4-4]
	\arrow["j", from=1-1, to=2-2]
	\arrow[from=2-2, to=2-4]
	\arrow["F"{pos=0.4}, from=2-2, to=4-2]
\end{tikzcd}\]
where all squares are cartesian. 
Furthermore, according to the Yoneda lemma, $\dom(\int_{A^t\times \w{A}}\ev))$ is equivalent to $\dom(\int_{A^t\times \w{A}}\hom_{\w{A}}(y_{\uvar},\uvar))$, and 
lemma \ref{lemma:a particular Kan extension} implies that $i$ is initial. As the lower horizontal morphism is a right cartesian fibration, and the dual version of proposition \ref{prop:left cartesian fibration are smooth} induces that $j$ is initial. 
This implies that the canonical morphism
$$(id_{(A^t)^\sharp}\times \bot_{A^\sharp_{/f}})_!E\to (id_{(A^t)^\sharp}\times \bot_{ \w{A}^\sharp_{/f}})_!F$$
is an equivalence, and we then have
$$\colim_{A^\sharp_{/f}}\pi\sim \colim_{\w{A}^\sharp_{/f}}\pi'$$
However, 
$A^\sharp_{/f}$ admits a terminal element, given by $id_f$, and according to corollary \ref{cor:limit and final}, we have 
$$\colim_{A^\sharp_{/f}}\pi\sim f.$$
\end{proof}

\begin{cor}
\label{cor:if cocomplete then Yoneda right adjoint}
A $\U$-small $\io$-category $A$ is lax $\U$-cocomplete if and only if the Yoneda embedding has a left adjoint, which we will also note by \wcnotation{$\laxcolim$}{(laxcolim@$\laxcolim:\widehat{C}\to C$}.
\end{cor}
\begin{proof}
If such a left adjoint exists, as $\w{A}$ is lax $\U$-cocomplete, corollary 
\ref{cor:left adjoint preserves limits} implies that $A$ is lax $\U$-cocomplete. Suppose now that $A$ is lax $\U$-cocomplete and let $f:A^t\to \uni$ be a functor. Let $c$ be the colimit of the functor $A^\sharp_{/f}\to A^\sharp$.
According to theorem \ref{theo:presheaevs colimi of representable}, we have a sequence of equivalences
$$\begin{array}{rcll}
 \hom_{\w{A}}(f,y(a))&\sim &\hom_{\w{A}}(\laxcolim_{A^\sharp_{/f}}y(\uvar),y(a))\\
 &\sim & \laxlim_{(A^\sharp_{/f})^t}\hom_{\w{A}}(y(\uvar),y(a))^\circ\\
 &\sim & \laxlim_{(A^\sharp_{/f})^t} \hom_A(\uvar,a)^\circ&\\
 &\sim &\hom_{A} (c,a)\\
\end{array}$$
natural in $a:A^t$. The functor 
$$a:A\mapsto \hom_{\w{A}}(f,y(a))$$
is then representable, which concludes the proof according to proposition \ref{prop:adj if slice as terminal}.
\end{proof}

\begin{definition} Let $i:A\to B$ be a functor between two $\U$-small $\io$-categories. We define $N_i:B\to \w{A}$ as
$$a:A^t, b:B\mapsto \hom_B(i(a),b)$$
\end{definition}
\begin{cor}
\label{cor:adjonction with prehseaves}
Let $i:A\to B$ be a functor between two $\U$-small $\io$-categories with $B$ lax $\U$-cocomplete. 
The morphism $N_i:B\to \w{A}$
admits a left adjoint that sends an $\io$-presheaf $f$ to $\laxcolim_{A^\sharp_{/f}} i(\uvar)$
\end{cor}
\begin{proof}
The proof is similar to the one of corollary \ref{cor:if cocomplete then Yoneda right adjoint}.
\end{proof}

\subsection{Kan extentions}

We suppose the existence of a Grothendieck universe $\Z$ containing $\Wcard$. As a consequence, we can use all the results of the last three subsections to respectively $\V$-small and locally $\V$-small objects.

\begin{definition}
Let $f:A\to B^\sharp$ be a morphism between marked $\U$-small $\io$-categories. This induces for any $\io$-category $C$ a morphism 
$$\uvar\circ f:\gHom(B,C)\to \uHom(A,C).$$
Let $g:A\to C$ be a morphism.
A \notion{left Kan extension} of $g$ along $f$ is a functor \sym{(lanf@$\Lan_fg$}$\Lan_fg:B\to C$ and an equivalence
$$\hom_{\uHom(B,C)}(\Lan_fg,h)\sim \hom_{\gHom(A,C)}(g, h\circ f).$$
Remark that if the left Kan extension along $f$ exists for any $g$, the proposition \ref{prop:adj if slice as terminal} implies that
the assignation $g\mapsto \Lan_fg$ can be promoted to a left adjoint, which is called the \notion{global left Kan extension} of $f$. 
\end{definition}

\begin{prop}
\label{prop:Kan extension an naive kan extension}
Let $C$ be a $\U$-small $\io$-category, $f:I\to B^\sharp$ a functor between $\U$-small $\io$-categories and $g:I\to \uHom(C,\uni)$ a functor. The functor $g$ then corresponds to a morphism $\tilde{g}:\gHom( C^\sharp\times I,\uni)$.
The left Kan extension of $f$ along $g$ corresponds to the morphism $(id_{C^\sharp}\times f)_!\tilde{g}$.
\end{prop}
\begin{proof}
This is a direct consequence of corollary \ref{cor:naive kan extension}.
\end{proof}
	
\begin{cor}
\label{cor:Kan extension of Yonedal along i}
Let $i:A\to B$ be a morphism between $\U$-small $\io$-categories. The left Kan extension of the Yoneda embedding $y:A\to \w{A}$ along $i$ is $N_i:B\to \widehat{A}$.
\end{cor}
\begin{proof}
According to proposition \ref{prop:Kan extension an naive kan extension}, the desired left Kan extension is given by 
$$(B^t\times i)_!\hom_B$$
which is $N_i$ according to lemma \ref{lemma:a particular Kan extension}.
\end{proof}

\begin{construction}
\label{cons:adjoint presheaves}
 According to corollary \ref{cor:naive kan extension}, a morphism $f:A\to B$ between $\U$-small $\io$-categories induces an adjoint pair:
\begin{equation}
\begin{tikzcd}
	{f_!:\w{A}} & {\w{B}:f^*}
	\arrow[""{name=0, anchor=center, inner sep=0}, shift left=2, from=1-1, to=1-2]
	\arrow[""{name=1, anchor=center, inner sep=0}, shift left=2, from=1-2, to=1-1]
	\arrow["\dashv"{anchor=center, rotate=-90}, draw=none, from=0, to=1]
\end{tikzcd}
\end{equation}
\end{construction}

\begin{prop}
\label{prop:left extension commutes with Yoneda}
Let $f:A\to B$ be a morphism between $\U$-small $\io$-categories.
There is an equivalence 
$$f_!(y_a)\sim y_{f(a)}$$
natural in $a:A$.
\end{prop}
\begin{proof}
Consider the sequence of equivalences
$$\begin{array}{rcll}
\hom_{\w{B}}(f_!(y_a), g)&\sim &\hom_{\w{A}}(y_a, f^*(g))& (\ref{cons:adjoint presheaves})\\
&\sim &\ev(a,f^*(g))&(\mbox{Yoneda lemma})\\
&\sim &\ev(f(a),g)&(\mbox{naturality of $\ev$})\\
&\sim & \hom_{\w{B}}(y_{f(a)}, g)&(\mbox{Yoneda lemma})\\
\end{array}$$
Eventually, the Yoneda lemma applied to $(\w{B})^t$ concludes the proof.
\end{proof}

\begin{prop}
Let $i:A\to B$ a functor between $\U$-small $\io$-categories. The left Kan extension of $y^B\circ i$ along $y^A$ is given by $i_!$.
\end{prop}
\begin{proof}
Let $i:A\to B$ be any functor. Remark first that the Yoneda lemma and the corollary \ref{cor:Kan extension of Yonedal along i} imply that the left Kan extension of $y:A\to \w{A}$ along $y:A\to \w{A}$ is the identity of $\w{A}$.
We then have a sequence of equivalences
$$
\begin{array}{rcll}
\hom_{\uHom(\w{A},\w{A})}(i_!,f)&\sim &\hom_{\uHom(\w{A},\w{A})}(id,i^*\circ f)&(\ref{cor:naive kan extension}) \\
&\sim & \hom_{\uHom(A,\w{A})}(y_A,i^*\circ f\circ y^A)&(\mbox{Yoneda lemma}) \\
&\sim & \hom_{\uHom(A,\w{B})}(i_! \circ y^A, f\circ y^A)&(\ref{cor:naive kan extension}) \\
&\sim & \hom_{\uHom(A,\w{B})}( y_B\circ i, f\circ y^A)&(\ref{prop:left extension commutes with Yoneda})\\
\end{array}
$$
natural in $f:\uHom(\w{A},\w{B})$.
\end{proof}

\begin{cor}
For any morphism $A\to B$ between $\U$-small $\io$-categories with $B$ lax $\U$-cocomplete, there exists a unique colimit preserving functor $\w{A}\to B$ extending $i$.
\end{cor}
\begin{proof}
Let $|\uvar|_i: \w{A}\to B$ be the functor defined in corollary \ref{cor:adjonction with prehseaves}. 
As this functor is an extension of $A$, it fulfills the desired condition, that shows the existence.
The $\io$-category of functors verifying the desired property is given by the pullback 
\[\begin{tikzcd}
	{\uHom_!(\w{A},B)_{i}} & {\uHom_!(\w{A},B)} \\
	{\{i\}} & {\uHom(A,B)}
	\arrow[from=2-1, to=2-2]
	\arrow[from=1-2, to=2-2]
	\arrow[from=1-1, to=2-1]
	\arrow[from=1-1, to=1-2]
\end{tikzcd}\]
where $\uHom_!(\w{A},B)$ is the full sub $\io$-category of $\uHom(\w{A},B)$ whose objects are colimit preserving functors.
As $|\uvar|_i$ is the left Kan extension of $i$ along the Yoneda embedding, there is a transformation 
$$|\uvar|_i\to h$$ natural in $h:\uHom(\w{A},B))_{i}$. To conclude, one has to show that for any object $h$ of $\uHom(\w{A},B))_{i}$ , $|\uvar|_i\to h$ is an equivalence, and so that for any object $f$ of $\w{A}$, $|f|_i\to h(f)$ is an equivalence. As $f$ is a lax colimit of representables as shown in theorem \ref{theo:presheaevs colimi of representable} and as both $|\uvar|_i$ and $h$ preserve lax colimits, this is immediate.
\end{proof}

\begin{cor}
Let $A,B$ and $C$ be three $\U$-small $\io$-categories with $B$ lax $\U$-cocomplete, and
$i:A\to C$ and $f:A\to B$ two functors. The left Kan extension of $i$ along $f$ is given by the composite functor.
$$B\xrightarrow{N_f}\w{A}\xrightarrow{i_!}\w{C}\xrightarrow{\laxcolim_{}} C$$
\end{cor}
\begin{proof}
We have a sequence of equivalences
$$\begin{array}{rcll}
\hom_{\uHom(C,B)}(\laxcolim_{}\circ i_!\circ N_f,h)&\sim & \hom_{\uHom(C,\w{A})}( N_f,i^*\circ y^B\circ h)\\
&\sim & \hom_{\uHom(A,\w{A})}( y^A,i^*\circ y^B\circ h\circ f)&(\ref{cor:Kan extension of Yonedal along i})\\
&\sim & \hom_{\uHom(A,\w{B})}(i_!\circ y^A, y^B\circ h\circ f)&(\ref{cor:naive kan extension})\\
&\sim & \hom_{\uHom(A,\w{B})}(y^B\circ i, y^B\circ h\circ f)&(\ref{prop:left extension commutes with Yoneda})\\
&\sim & \hom_{\uHom(A,B)}( i, h\circ f)&(\ref{theo:Yoneda lemma})
\end{array}$$
natural in $h:\uHom(C,B)$.
\end{proof}

%
%
%

\cleardoublepage
\phantomsection
\addcontentsline{toc}{part}{Index of symbols} 
\printindex[notation]
\clearpage
\phantomsection
\addcontentsline{toc}{part}{Index of notions} 
\printindex[notion]

\cleardoublepage
\phantomsection
\addcontentsline{toc}{part}{Bibliography} 
\bibliography{biblio}{}
\bibliographystyle{alpha}

\end{document}